\definecolor{gray}{gray}{0.4}
\theoremstyle{plain} 
\newtheorem{thm}{Theorem}[section] 
\newtheorem{cor}[thm]{Corollary} 
\newtheorem{lem}[thm]{Lemma} 
\newtheorem{prop}[thm]{Proposition}
\theoremstyle{definition} 
\newtheorem{defn}{Definition}[section] 
\newtheorem{exem}{Example} [section]
\newtheorem{notation}{Notation}[section]
\theoremstyle{remark} 
\newtheorem{rem}{Remark} 
\newcommand{\numberset}{\mathbb}
\title{Lipschitz-homotopy invariants:\\ $L^2$-cohomology, Roe index and $\rho$-class}
\author{Stefano Spessato}
\begin{document}
	
	\frontmatter
	
	\maketitle
	
		\chapter{Introduction}
	Cohomology Theories are one of the most powerful tools in Geometry. In this P.h.D. thesis we will concentrate in particular on one of them: the $L^2$-cohomology.
	\\The $L^2$-cohomology of a complete Riemannian manifold $(M,g)$ is defined as
	\begin{equation}
		H^l_2(M,g) := \frac{ker(d_l)}{im(d_{l-1})}
	\end{equation}
	where $d_l$ is the closure in $\mathcal{L}^2(M)$ of the exterior derivative defined on compactly supported differential $l$-forms. Moreover one can also define the reduced $L^2$-cohomology as
	\begin{equation}
		\overline{H}^l_2(M,g) := \frac{ker(d_l)}{\overline{im(d_{l-1})}}.
	\end{equation}
	It's a well-known fact that if two metrics $g$ and $h$ on $M$ are quasi-isometric, i.e. there are two constants $C_1$ and $C_2$ such that
	\begin{equation}
		C_1h \leq g \leq C_2h,
	\end{equation} 
	then the (un)-reduced $L^2$-cohomologies coincides. In particular if $M$ is a compact manifold we have that both, reduced or not, $L^2$-cohomologies coincide with the de Rham Cohomology and so they don't depend on the choice of the metric $g$ in $M$. 
	\\Differently from the case of de Rham cohomology, the pullback along a differentiable map $f$ between Riemannian manifolds, in general, doesn't induce a morphism between the $L^2$-cohomology groups (see Subsection \ref{probl} in Chapter 1). 
	\\Given a uniformly proper, lipschitz map between manifolds of bounded geometry we shall define an operator $T_f$ such that it induces a morphism in (un)-reduced $L^2$-cohomology. In particular, we will show that the usual functorial properties hold and that if $f^*$ does induces a map in (un)-reduced $L^2$-cohomology, then 
	\begin{equation}
		f^*[\alpha] = [T_f\alpha]
	\end{equation}
	for each square-integrable differential form. 
	\\Let us suppose, moreover, that there is a group $\Gamma$ of isometries that acts on the manifolds. Then, if $f$ is $\Gamma$-equivariant, also $T_f$ will be $\Gamma$-equivariant and the same will hold for the morphism in $L^2$-cohomology induced by $T_f$.
	\\The idea of how to define $T_f$ comes from the operator $T_f$ defined by Hilsum and Skandalis in \cite{hils}. The authors use it in order to build a perturbation for the signature operator of the disjoint union of compact manifold which makes it invertible. We can understand our $T_f$ as a \textit{bounded geometry} version of the operator of Hilsum and Skandalis.
	\\
	\\As consequence of our analysis, it follows that (un-reduced )$L^2$-cohomology is a lipschitz-homotopy invariant for manifolds of bounded geometry, i.e. if a map $f$ is a homotopy equivalence such that
	\begin{itemize}
		\item $f$ is lipschitz,
		\item there is a homotopy inverse $g$ of $f$ which is lipschitz and
		\item the homotopies between $g \circ f$ and $f \circ g$ with their respective identities are lipschitz,
	\end{itemize}
	then $T_f$ induces an isomorphism between the (un)-reduced $L^2$-cohomologies.
	\\
	\\
	\\In Chapter 2 we see an application of the existence of $T_f$. Our setting is the same as above. We have two Riemannian manifolds $(M,g)$ and $(N,h)$ of bounded geometry, a lipschitz-homotopy equivalence $f$ which preserves the orientation between them and a group $\Gamma$ acting by isometries so that $\frac{M}{\Gamma}$ and $\frac{N}{\Gamma}$ are manifolds of bounded geometry.
	\\Our goal, in this second Chapter, will be to prove that, given the Roe algebras $C^*(M)^\Gamma$ and $C^*(N)^\Gamma$, then
	\begin{equation}\label{ste}
		f_{\star}(Ind_{Roe}(D_M)) = Ind_{Roe}(D_N).
	\end{equation}
	In words: the Roe Index of the signature operator is a lipschitz-homotopy invariant for manifolds of bounded geometry.
	\\In order to establish (\ref{ste}) we check the existence of the Hilsum-Skandalis perturbation for the signature operator in our setting and we will adapt the work of Piazza and Schick \cite{piazzaschick}, the work of Wahl \cite{wahl}, of Fukumoto \cite{fukum} and Zenobi \cite{vito} to the non-compact case.
	\\Finally we will define a class
	\begin{equation}
		\rho(M, f) \in K_{n + 1}(D^*(N)^\Gamma)
	\end{equation}
	that only depends on $M$ and on the class of lipschitz-homotopy of $f$.
	\\In what follows there is a more detailed description of the Sections of this Thesis.		
	\subsection*{Maps between manifolds of bounded geometry}
	In the first Section of the first Chapter, we introduce the setting of this Thesis.
	\\The objects of our study are manifolds of bounded geometry, which are manifolds with strictly positive injectivity radius and some uniform bounds on the covariant derivatives of the curvature. Some examples of manifolds of bounded geometry can be compact manifolds, coverings of manifolds of bounded geometry (so, for example, the euclidean space and the hyperbolic space),
	and products of manifolds of bounded geometry. Moreover Greene, in \cite{Greene}, proved that every differential manifold admits a metric of bounded geometry. The main results that we will use about manifolds of bounded geometry can be found in \cite{bound} and in \cite{flow}.
	\\On the other hand, the maps that we consider are the \textit{uniformly proper lipschitz} maps between metric space. These are lipschitz maps such that the diameter of the preimage of a subset $A$ is controlled by the diameter of $A$ itself. We show, in particular, that every lipschitz-homotopy equivalence is an uniformly proper map.
	\\We conclude the first section by introducing a particular family of actions of a group $\Gamma$ on a metric space. We call these \textit{uniformly proper, discontinuous and free} actions (or \textit{FUPD} actions). We will see that if the metric space is a manifold of bounded geometry, then an action of a group $\Gamma$ is FUPD  if and only if the quotient is a manifold of bounded geometry.
	\subsection*{$C^k_b$-maps}
	In this second section of first Chapter we introduce the notion of $C^k_b$-map between manifolds of bounded geometry. One can also find the definition and the main properties of these maps in \cite{bound}. A $C^k_b$-map is a map such that, in local normal coordinates, has uniformly bounded derivatives of order $l \leq k$. In particular one can check that $C^1_b$-maps are lipschitz maps. The most important property, for us, about the $C^k_b$-map is proved in subsection \ref{approx}: we use a result of Eldering \cite{bound} to prove that every lipschitz map $f$ between manifolds of bounded geometry can be approximated by a$C^{\infty}$-map $F$ which is a $C^k_b$-map. Moreover if we have two FUPD actions of a group $\Gamma$ on $M$ and $N$ and $f$ is $\Gamma$-equivariant, then also the approximation is $\Gamma$-equivariant. In particular, we have that the morphisms induced between the K-theory groups of the Roe algebra by $f$ and $F$ are the same. This means that, in the next sections, we consider $f$ as a smooth $C^k_b$-map for each $k \in \numberset{N}$.
	\subsection*{$L^2$-cohomology and pull-back}
	In this third section of the first Chapter we introduce the notions of $L^2$-cohomology and reduced $L^2$-cohomology. In subsection \ref{probl} we show that the pullback along a map $f$, in general, doesn't induce a map in (un)-reduced $L^2$-cohomology, even if it is a lipschitz-homotopy equivalence. Then, given a map $f: (M,g) \longrightarrow (N,h)$, we define the \textit{Fiber Volume} of $f$ as the Radon-Nicodym derivative
	\begin{equation}
		Vol_f := \frac{\partial f_\star \mu_M}{\partial \mu_N},
	\end{equation}
	where $\mu_M$ and $\mu_N$ are the measures on $M$ and $N$ induced by their metrics. We call \textit{R.-N.-lipschitz} map, a lipschitz map $f$ which has bounded Fiber Volume. Then we show that the pullback along a R.-N.-lipschitz map induces an $\mathcal{L}^2$-bounded operator. In particular, we also show that R.-N.-lipschitz maps are exactly the \textit{v.b.-maps} defined by Thomas Schick in his Ph.D. Thesis.
	\\We conclude the third section of first Chapter by showing that the Fiber Volume of a submersion can be expressed as the integration along the fibers of some particular differential forms. In particular we choose the name \textit{Fiber Volume} because in the case of a totally geodesics Riemannian submersion we have that $Vol_f$ in a point $q$ in $N$ is exactly the volume of its fiber $F_q$ respect to the metric induced by $g$.
	\subsection*{Submersion related to lipschitz maps}
	This is the fourth section of first Chapter. Consider a uniformly proper lipschitz map $f:(M, g) \longrightarrow (N,h)$, where $(N,h)$ is an oriented manifold of bounded geometry and $(M,g)$ is an oriented manifold with bounded scalar curvature. Let $\Gamma$ be a group which acts by isometries on $M$ and $N$ and suppose that $f$ is $\Gamma$-equivariant. Let us define $T^\delta N$ as the subset of $TN$ given by the vectors with norm less or equal to $\delta := inj_N$. 
	\\The main goal of this section is to define a $\Gamma$-equivariant lipschitz submersion 
	\begin{equation}
		p_f: (f^*T^\delta N, g_S) \longrightarrow (N,h)
	\end{equation}
	such that $p_f(x,0) = f(x)$.
	The metric $g_S$ on $f^*T^\delta N$ is called \textit{Sasaki metric} and it depends on the metrics $g$ and $h$. Moreover we also have that if $f$, in local normal coordinates, has some uniform bounds on the derivatives of order less or equal $j$, then also $p_f$ has some bounds on the derivatives of the same order. In particular these bounds are important in Chapter 2. 
	\subsection*{The pull-back functor}
	The last section of the first Chapter starts by proving that the submersion $p_f$ has bounded Fiber Volume and so it is a R.-N.-lipschitz map. Then we define the operator $T_f:\mathcal{L}^2(N) \longrightarrow \mathcal{L}^2(M)$ as
	\begin{equation}
		T_f(\alpha) := \int_{B^\delta} p_f^*\alpha \wedge \omega,
	\end{equation}
	where $\omega$ is a particular Thom form of $f^*(TN)$. It follows the proof of the $\mathcal{L}^2$-boundedness of $T_f$: it will be easily proved since $p_f$ is a R.-N.-lipschitz map.
	\\In the subsection \ref{lemhom} we prove two important lemmas: the first one gives a formula which relates $f$ and $p_f$ when $f$ is a R.-N.-lipschitz map.  The second one gives a formula which relates the submersion $p_{f \circ g}$ with the submersions $p_f$ and $p_g$, where $f: (M,m) \longrightarrow (N,h)$ and $g:(S,l) \longrightarrow (M,m)$ are uniformly proper lipschitz map between manifolds of bounded geometry. 
	\\We conclude the section by showing that, for every $z$ in $\numberset{N}$
	\begin{equation}
		\begin{cases}
			\mathcal{F}_z(M,g) = H^z_2(M) \\
			\mathcal{F}_z((M,g) \xrightarrow{f} (N,h)) = H^z_2(N) \xrightarrow{T_f} H^z_2(M)
		\end{cases}
	\end{equation}
	is a controvariant functor between the category $\mathcal{C}$ of the manifolds of bounded geometry with uniformly proper lipschitz maps and the category $\textbf{Vec}$ of the vector spaces. In particular, we will show that if two maps $f_1$ and $f_2$ are uniformly proper lipschitz maps such that they are homotopic with a lipschitz homotopy $H$, then $T_{f_1} = T_{f_2}$ in $L^2$-cohomology. Moreover if $f$ is a R.-N.-lipschitz map, then $f^* = T_f$ in $L^2$-cohomology. Finally we show that the same holds considering the reduced $L^2$-cohomology instead of the $L^2$-cohomology.
	\subsection*{Coarse geometry}
	In the first section of the second Chapter, we introduce the notions of coarse structure and coarse map. This definitions and the main properties come from the book \textit{Analytic K-homology} of Higson and Roe \cite{Anal}. In particular we use two coarse structures. The first one is a \textit{metric} coarse structure, which is a coarse structure which can be defined on a metric space. It easily follows that uniformly proper lipschitz map are coarse maps respect to metric coarse structures.
	\\The second one is a coarse structure which can be defined on the disjoint union of metric spaces $M$ and $N$ such that there is a map $f: M \longrightarrow N$. Indeed, in the following sections, we will need a coarse structure on $M \sqcup N$, which is not a metric space.
	\\After that, given a coarse space $X$ and given an action of a group $\Gamma$ on $X$, we define the Roe (or coarse) algebras $C^*(X)^\Gamma$ and the structure algebra $D^*(X)^\Gamma$. In the case $X = M \sqcup N$ we will denote the algebras by $C^*_f(M \sqcup N)^\Gamma$ and by $D^*_f(M \sqcup N)^\Gamma$ to recall the dependence on $f$. We have that $C^*(X)^\Gamma$ is an ideal of $D^*(X)^\Gamma$ and so this means that the sequence
	\begin{equation}
		0 \longrightarrow C^*(X)^\Gamma \longrightarrow D^*(X)^\Gamma \longrightarrow \frac{D^*(X)^\Gamma}{C^*(X)^\Gamma} \longrightarrow 0
	\end{equation}
	is exact. Thus we obtain the long exact sequence in K-Theory
	\begin{equation}
		... K_{n}(C^*(X)^\Gamma) \longrightarrow K_n(D^*(X)^\Gamma) \longrightarrow K_n(\frac{D^*(X)^\Gamma}{C^*(X)^\Gamma}) \longrightarrow K_{n-1}(C^*(X)^\Gamma) \longrightarrow ...
	\end{equation}
	\\Finally, if $dim(M) = n$, we define the \textit{fundamental class} $[D_{M}] \in K_{n+1}(\frac{D^{\star}(M)^{\Gamma}}{C^{\star}(M)^\Gamma})$ of the signature operator $D_M$ of a Riemannian manifold and we conclude the section defining the Roe Index of a connected Riemannian manifold as the class
	\begin{equation}
		Ind_{Roe}(D_{M}) := \delta[D_{M}]
	\end{equation}
	in $K_{n}(C^{*}(M)^{\Gamma})$, where $\delta$ the connecting homomorphism in K-Theory.
	\subsection*{Smoothing operators between manifolds of bounded geometry}
	This is the second section of second Chapter. Consider $f: (M,g) \longrightarrow (N,h)$ a lipschitz-homotopy equivalence with preserves the orientations. Then, in this section we prove that there is an $\mathcal{L}^2$-bounded, $\Gamma$-invariant, operator $y$ such that
	\begin{equation}
		1 - T_f^\dagger T_f = d y + y d
	\end{equation}
	on $dom(d)$ where $T_f^\dagger = \tau_N T_f^* \tau_M$ and $\tau_X$ is the so-called \textit{chiral operator} of the Riemannian manifold $X$.
	\\After that we prove that $T_f$ is a smoothing operator, i.e. an operator $A$ such that for each smooth $\alpha$
	\begin{equation}
		A\alpha (p) = \int_N K(p,q)\alpha(q)d\mu_N,
	\end{equation}
	where $K$ is a smooth section of the fiber bundle $\Lambda^*(M) \boxtimes \Lambda(N)$. Then, using the results reported in Appendix \ref{smoothing}, we prove that also $dT_f$ is a smoothing operator, and, in particular, we can see that if $f$ is a $C^k_b$-map where $k \geq 2$, then $dT_f$ is also $\mathcal{L}^2$-bounded.
	\\We conclude the section proving that if $f$ is a $C^{k \geq 2}_b$-map, then the operators $T_fy$, $yT^\dagger_f$, $dT_f y$ and all their adjoints and their algebraic combinations are all operators in $C^*_f(M \sqcup N)^\Gamma$.
	\subsection*{Lipschitz-homotopy invariance of the Roe Index}
	In the third section of Chapter 2 we use $T_f$ and $y$ to define the Hilsum-Skandalis perturbation  $\mathcal{P}$ of \cite{hils} for the signature operator on the manifold of bounded geometry $M \sqcup N$. It follows, since $T_f$ induces an isomorphism in $L^2$-cohomology, that the perturbed signature operator $D_{M \sqcup N} + \mathcal{P}$ is $L^2$-invertible. Moreover, since $T_f$, $T_fy$, $dT_fy$ etc.., are all operators in $C^*_f(M \sqcup N)^\Gamma$, we are able to prove that also $\mathcal{P}$ is in $C^*_f(M \sqcup N)^\Gamma$.
	\\Similarly to what we did for Riemannian connected manifolds, we define the \textit{fundamental class} $[D_{M \sqcup N}]$ and the \textit{Roe Index} $Ind_{Roe}(D_{M \sqcup N})$. The using that $\mathcal{P}$ is in $C^*_f(M \sqcup N)^\Gamma$ and a result of \cite{higroe}, we can find a class $\rho_f$ in $K_{n+1}(D_f^*(M \sqcup N)^\Gamma)$ such that its image in the long exact sequence in K-Theory coincides with $[D_{M \sqcup N}]$. This fact implies that
	\begin{equation}
		Ind_{Roe}(D_{M \sqcup N}) = 0,
	\end{equation}
	and, using some lemmas proved in Appendix \ref{K-theory}, we conclude that
	\begin{equation}
		f_{\star}(Ind_{Roe}(\mathcal{D}_M)) = Ind_{Roe}(\mathcal{D}_N).
	\end{equation}
	\subsection*{Lipschitz-homotopy stability of $\rho_f$ and $\rho(f,M)$}
	In the last section of this Thesis we prove that the class $\rho_f$ just depend on the class of lipschitz-homotopy of $f$, i.e. if $f$ and $f'$ are two $C^k_b$ lipschitz-homotopy equivalences and there is a lipschitz homotopy $H$ between them, then
	\begin{equation}
		D^*_f(M \sqcup N)^\Gamma = D^*_{f'}(M \sqcup N)^\Gamma \mbox{           and             }
		C^*_f(M \sqcup N)^\Gamma = C^*_{f'}(M \sqcup N)^\Gamma.
	\end{equation}
	Moreover we also have that
	\begin{equation}
		\rho_f = \rho_{f'}.
	\end{equation}
	In order to prove this fact we consider for each $s \in [0,1]$ the $f_s := H(\cdot, s)$ and we prove that $\mathcal{P}_s$, which is the Hilsum-Skandalis perturbation related to $f_s$ defines a curve in $C^*_f(M \sqcup N)^\Gamma$.
	\\We conclude this Thesis defining the projected $\rho$-class in $K_{n+1}(D^*(N)^\Gamma)$
	\begin{equation}
		\rho(f,M) := f_\star[\rho_f]
	\end{equation}
	and showing that it depends only on $M$ and on the lipschitz-homotopy class of $f$.
	\section*{Acknowledgments}
	I am grateful to Paolo Piazza, my advisor, for the interesting reasearch project he proposed to me and for his assistance at every stage of the research project.
	\\I wish to thank Vito Felice Zenobi for the several discussions that we had, for his help and for its competence. 
	\\I also would like to thank Francesco Bei and Thomas Schick for their insightful comments and suggestions.
	\\Another thanks goes to the referees for patiently reported all glitches and imperfections to me.
	\\Finally I would like to thank my family and Laura for their constant support.
	\tableofcontents

	\mainmatter
	
	\chapter{A pull-back functor for (un)-reduced $L^2$-cohomology}
	\section{Maps between manifolds of bounded geometry}
	\subsection{Lipschitz maps}
	In this section our goal is to introduce families of maps between metric spaces. The first one is the one of lipschitz maps.
	Let $(X, d_X)$ and $(Y, d_Y)$ be two metric spaces. Let us consider a map $f: (X, d_X) \longrightarrow (Y, d_Y)$.
	\begin{defn}
		We say that $f$ is \textbf{lipschitz} if for all $p$ and $q$ in $X$ there is a constant $C_f$ such that
		\begin{equation}
			d_Y(f(p), f(q)) \leq C_fd_X(p,q).
		\end{equation}
	\end{defn}
	\begin{rem}
		If $f$ is a smooth map between Riemannian manifolds there are also two other equivalent definitions: the first one is that $f$ is a \textbf{lipschitz} map if there is a constant $C_f$ such that for all $p$ in $M$ an for all $v_p$ in $T_pM$
		\begin{equation}
			||f_{\star,p}(v_p)|| \leq C_f ||v_p||.
		\end{equation}
		The second one is the following: $f$ is a \textbf{lipschitz} map if there is a constant $C_f$ such that for every $p$ in $M$, fixed some normal coordinates $\{x^i\}$ centered in $p$ and $\{y^j\}$ centered in $f(p)$, then
		\begin{equation}
			|\frac{\partial y^j \circ f}{\partial x^i}(p)|\leq C_f.
		\end{equation}
		The above inequality doesn't depend on the choice of normal coordinates, indeed we have that
		\begin{equation}
			|\frac{\partial y_2^j}{\partial y_1^i}(f(p))| = |\frac{\partial x_2^j}{\partial x_1^i}(p)| = 1.
		\end{equation}
	\end{rem}
	\subsection{$\Gamma$-Lipschitz-homotopy equivalence}
	Let us consider now a group $\Gamma$ acting on $M$ and $N$. We will concentrate on the maps which preserves the action of $\Gamma$.
	\begin{defn}
		Two maps $f_0$ and $f_1: (X, d_X) \longrightarrow (Y,d_Y)$ are \textbf{$\Gamma$-lipschitz-homotopic}, denoted by
		\begin{equation}
			f_0 \sim_{\Gamma} f_1,
		\end{equation}
		if they are $\Gamma$-equivariant maps and there is a $\Gamma$-invariant lipschitz homotopy\footnote{Here we are considering $d_X \times d_{[0,1]}$ as the product distance between $d_X$ and the euclidean distance on $[0,1]$.} $H: (X\times[0,1], d_X \times d_{[0,1]}) \longrightarrow (Y, d_Y)$ such that
		\begin{equation}
			H(x,0) = f_0(x) \mbox{              and               } H(x,1) = f(x)
		\end{equation}
	\end{defn}
	\begin{defn}
		A map $f:(X, d_X) \longrightarrow (Y,d_Y)$ is a \textbf{$\Gamma$-lipschitz-homotopy equivalence} if $f$ is $\Gamma$-equivariant, lipschitz and there is a $\Gamma$-equivariant map $g$ such that
		\begin{itemize}
			\item $g$ is a homotopy inverse of $f$,
			\item $g$ is lipschitz,
			\item $f\circ g \sim_{\Gamma} id_Y$ and $g \circ f \sim_\Gamma id_X$.
		\end{itemize}
	\end{defn}
	\subsection{Uniformly proper maps}
	Let us introduce the notion of uniformly proper map.
	\begin{defn}
		Consider two metric spaces $(X,d_X)$ and $(Y, d_Y)$. Let $f:(X,d_X) \longrightarrow (Y,d_Y)$ be a map. Then $f$ is \textbf{uniformly (metrically) proper}, if there is a continuous function $\alpha: [0, +\infty) \longrightarrow [0,+\infty])$ such that for every subset $A\subseteq Y$ we have
		\begin{equation}
			diam(f^{-1}(A)) \leq \alpha(diam(A))
		\end{equation}
	\end{defn}
	\begin{rem}
		If $f$ is a uniformly proper lipschitz map, then $\alpha$ can be chosen such that for all $s,t \geq 0$
		\begin{equation}
			\alpha(t) < \alpha(t +s).
		\end{equation}
		Indeed if $A \subseteq B$ then $f^{-1}(A) \subseteq f^{-1}(B)$. This means that the composition of two uniformly proper maps is still uniformly proper, indeed
		\begin{equation}
			\begin{split}
				diam((g \circ f)^{-1}(A)) &= diam(f^{-1}(g^{-1}(A))) \\
				&\leq \alpha(diam(g^{-1}(A))) \\
				&\leq \alpha(\beta(diam(A))) = (\alpha \circ \beta)(diam(A)).
			\end{split}
		\end{equation}
	\end{rem}
	\begin{rem}\label{uniformlyp}
		Let $(X,d_X)$ and $(Y,d_Y)$ be two metric spaces and let $\Gamma$ be a group acting on $M$ and on $N$. Consider two lipschitz maps $F,f: (X, d_X) \longrightarrow (Y,d_Y)$ such that $f \sim_\Gamma F$ trought a $\Gamma$-equivariant lipschitz homotopy $h$. If $f$ is a uniformly proper map, then also the homotopy $h$ is uniformly proper. Then, in particular, also $F$ is uniformly proper.
		\\In order prove this fact we have to observe that 
		\begin{equation}
			d(f(x), h(x,t)) = d(h(x,0),h(x,t)) \leq C_h\cdot t.
		\end{equation}
		Then, if $A$ is a subset of $Y$ and $h_t: X \longrightarrow Y$ is defined as $h_t(p) :=h(p,t)$, we have that
		\begin{equation}
			\begin{split}
				h_t^{-1}(A) &= \{p \in X| h(p,t) \in A\} \\
				&\subseteq \{p \in X| f(p) \in  B_{C_h\cdot t}(A)\}\\
				&= f^{-1}(B_{C_h\cdot t}(A)) \label{eqno}
			\end{split}
		\end{equation}
		where $B_{C_h \cdot t}(A)$ are the points $y$ of $Y$ such that $d(y, A) \leq C_h\cdot t$. Then we have that
		\begin{equation}
			\begin{split}
				h^{-1}(A) &= \bigsqcup\limits_{t \in [0,1]} 	h_t^{-1}(A)\times \{t\} \\
				&\subseteq f^{-1}(B_{C_h}(A)) \times [0,1]
			\end{split}
		\end{equation}
		and so
		\begin{equation}
			diam(h^{-1}(A)) \leq diam(f^{-1}(B_1(A))) + diam([0,1]) \leq \alpha(diam(A) + 2{C_h}) + 1.
		\end{equation}
	\end{rem}
	\begin{lem}
		Let $f:(X,d_X) \longrightarrow (Y,d_Y)$ be a $\Gamma$-lipschitz-homotopy equivalence between metric spaces and let us denote by $g$ a lipschitz-homotopy inverse of $f$. Moreover we denote by $H$ the lipschitz homotopy between $g \circ f$ and $id_X$. Then $f$ is uniformly proper.
	\end{lem}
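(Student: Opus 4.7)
The plan is to bound the diameter of $f^{-1}(A)$ in terms of the diameter of $A$ using the two lipschitz constants $C_g$ of $g$ and $C_H$ of the homotopy $H$. The key observation is that for every $p \in X$, the point $p$ and the point $g(f(p))$ are connected by the path $t \mapsto H(p,t)$, whose length is controlled by $C_H$, so that $d_X(p, g(f(p))) \leq C_H$ uniformly in $p$.

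First I would pick two arbitrary points $p, q \in f^{-1}(A)$ for some subset $A \subseteq Y$, and estimate $d_X(p,q)$ as follows. Apply the triangle inequality through the two intermediate points $g(f(p))$ and $g(f(q))$:
\begin{equation}
d_X(p,q) \leq d_X(p, g(f(p))) + d_X(g(f(p)), g(f(q))) + d_X(g(f(q)), q).
\end{equation}
The outer two terms are bounded by $C_H$ by the observation above. The middle term, using that $g$ is lipschitz and that $f(p), f(q) \in A$, is bounded by
\begin{equation}
d_X(g(f(p)), g(f(q))) \leq C_g \cdot d_Y(f(p), f(q)) \leq C_g \cdot \mathrm{diam}(A).
\end{equation}

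Taking the supremum over $p, q \in f^{-1}(A)$ gives
\begin{equation}
\mathrm{diam}(f^{-1}(A)) \leq C_g \cdot \mathrm{diam}(A) + 2 C_H,
\end{equation}
so the function $\alpha(t) := C_g \cdot t + 2 C_H$ witnesses the uniform properness of $f$. There is no real obstacle here: the argument is essentially the standard observation that a homotopy equivalence with lipschitz data is a coarse equivalence, and the only subtlety is making sure the endpoint conventions for $H$ are used correctly (so that $H(\cdot,0)$ and $H(\cdot,1)$ are $g \circ f$ and $\mathrm{id}_X$), which is automatic from the definition of $\Gamma$-lipschitz-homotopy equivalence.
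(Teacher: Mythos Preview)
Your proposal is correct and follows essentially the same argument as the paper: both use the uniform bound $d_X(p, g\circ f(p)) \leq C_H$ coming from the lipschitz homotopy, then the triangle inequality through $g(f(p))$ and $g(f(q))$ together with the lipschitz constant $C_g$ to obtain $\mathrm{diam}(f^{-1}(A)) \leq 2C_H + C_g\,\mathrm{diam}(A)$.
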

	\begin{proof}
		Since $H$ is a lipschitz map we have that if $y$ is a point in $Y$, then
		\begin{equation}
			\begin{split}
				d_X(x, g\circ f(x)) &= d_X(H(x,0),H(x,1)) \\
				&\leq C_H d_{X\times[0,1]}((x,0), (x,1)) = C_H.
			\end{split}
		\end{equation}
		Now, let $x_1$ and $x_2$ be two points in $f^{-1}(A)$, then \begin{equation}
			\begin{split}
				d_X(x_1,x_2) &\leq d_X(x_1, g\circ f (x_1)) + d_X( g\circ f (x_1),  g\circ f (x_2)) + d_X(x_1, g\circ f (x_1))\\
				&\leq 2C_H + C_gd_Y(f(x_1), f(x_2))\\
				&\leq 2C_H + C_gdiam(A).
			\end{split}
		\end{equation}
	\end{proof}
	\subsection{Manifolds of bounded geometry}
	In this section we will introduce the notion of manifolds of bounded geometry. All the definitions and propositions which follow can be found in  \cite{bound}.\\
	Let $(M,g)$ be a Riemannian manifold. 
	\begin{defn}
		The Riemannian manifold $(M,g)$ has \textbf{$k$-bounded geometry} if:
		\begin{itemize}
			\item the sectional curvature $K$ of $(M,g)$ and its first $k$-covariant derivatives are bounded, i.e. for each $ i = 0,...,k$ there is a constant $V_i$ such that $\forall x \in M$
			\begin{equation}
				|\nabla^i K(x) | \leq V_i.
			\end{equation}
			\item there is a number $C > 0$ such that for all $p$ in $M$ the injectivity radius $i_g(p)$ satisfies
			\begin{equation}
				i_g(p) \geq C.
			\end{equation}
			The maximal $C$ which satisfies this inequality will be denoted by $r_{inj}(M)$.
		\end{itemize}
		When we talk about a manifold $M$ with bounded geometry, without specifying the $k$, we mean that $M$ has $k$-bounded geometry for all $k$ in $\numberset{N}$.
	\end{defn}
	\begin{prop}\label{uno}
		Let $(M, g)$ be a Riemannian manifold of $k$-bounded geometry. Then there exists a $\delta > 0$ such that the metric up to its $k$-th order derivatives and the Christoffel symbols up to its $(k-1)$-th order derivatives are bounded in normal coordinates of radius $\delta$
		around each $x \in M$, with bounds that are uniform in $x$.
	\end{prop}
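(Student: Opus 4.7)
The plan is to choose $\delta$ strictly smaller than $r_{inj}(M)$, so that normal coordinates of radius $\delta$ are defined around every point $x \in M$, and then to express everything in these coordinates in terms of Jacobi fields along radial geodesics. Recall that in normal coordinates centered at $x$, a radial geodesic is $\gamma_v(t) = \exp_x(tv)$, and for any $w \in T_xM$ the push-forward $(\exp_x)_{\ast,tv}(tw)$ is exactly the Jacobi field $J_w(t)$ along $\gamma_v$ with $J_w(0) = 0$ and $\nabla_{\partial_t} J_w(0) = w$. Consequently, the metric coefficients $g_{ij}$ in normal coordinates can be written as inner products of such Jacobi fields, and their derivatives in the transverse directions are inner products of the derivatives of the Jacobi fields with respect to the initial data.

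The next step is to convert the Jacobi equation $\nabla_{\partial_t}^2 J + R(J,\dot\gamma)\dot\gamma = 0$ into an honest linear ODE by parallel-transporting along $\gamma_v$. In a parallel frame, this becomes a linear second-order system whose coefficient matrix consists of components of the curvature tensor $R$ evaluated along the geodesic. Differentiating this system $j \leq k$ times with respect to the initial vector $v$ (and with respect to $w$) produces an inhomogeneous linear ODE for the higher derivatives of $J$, whose source terms involve only $\nabla^i R$ for $i \leq j$ together with the lower-order derivatives of $J$ that have already been estimated. Using the hypothesis $|\nabla^i K| \leq V_i$ for $i \leq k$ and Grönwall's inequality on the compact interval $[0,\delta]$, one obtains pointwise bounds on $J$ and its derivatives up to order $k$ that depend only on the $V_i$ and on $\delta$, not on $x$. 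This is the step where uniformity in $x$ is secured, and in my view it is the main technical obstacle: one must set up the recursion so that at each order of differentiation the bounds remain independent of the base point.

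From the bounds on Jacobi fields one reads off uniform upper bounds on $g_{ij}$ and its derivatives up to order $k$ in $B_\delta(x)$. A Rauch-type comparison (using the two-sided sectional curvature bound for $i = 0$) provides a uniform lower bound on the Jacobi fields in the transverse directions, hence a uniform lower bound on $\det g_{ij}$ on $B_\delta(x)$, possibly after shrinking $\delta$ once and for all. This in turn yields uniform bounds on the inverse matrix $g^{ij}$ and on its derivatives up to order $k$ by the usual formula for the derivatives of a matrix inverse.

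Finally, the Christoffel symbols satisfy
\begin{equation}
\Gamma_{ij}^l = \tfrac{1}{2} g^{lm}\bigl(\partial_i g_{jm} + \partial_j g_{im} - \partial_m g_{ij}\bigr),
\end{equation}
so their derivatives of order up to $k-1$ are polynomial expressions in the derivatives of $g_{ij}$ up to order $k$ and of $g^{ij}$ up to order $k-1$. Combining this algebraic identity with the bounds established in the previous two steps gives the desired uniform bounds on $\Gamma_{ij}^l$ and its first $k-1$ derivatives, completing the proof. The proofs of each of these steps follow the presentation in \cite{bound}, to which I would refer for the full details.
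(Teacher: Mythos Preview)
The paper does not actually prove this proposition: it is stated without proof, as one of several background results imported from Eldering \cite{bound} (the sentence preceding the definition of bounded geometry reads ``All the definitions and propositions which follow can be found in \cite{bound}''). So there is no in-paper argument to compare against.

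Your sketch is the standard route and is essentially the argument in \cite{bound}: express the metric in normal coordinates via Jacobi fields along radial geodesics, control the Jacobi fields and their variations by turning the Jacobi equation into a linear ODE in a parallel frame and applying Gr\"onwall with the curvature bounds $|\nabla^i K|\le V_i$, get the lower bound on $\det g_{ij}$ from a Rauch-type comparison, and then read off the Christoffel bounds from the explicit formula. The recursion you describe for the higher derivatives is exactly the point where uniformity in $x$ is obtained, and you have identified it correctly. Since you already defer to \cite{bound} for the details, your write-up is consistent with how the paper handles this result.
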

	\begin{prop}\label{due}
		Let $(M, g)$ be a Riemannian manifold of $k \geq 1$-bounded geometry.
		For every $C > 1$ there exists a $\delta > 0$ such that the normal coordinate charts $\phi_x$ are defined on $B_\delta(x)$ for each $x \in M$ and the Euclidean distance $d_E$ on the normal
		coordinates is uniformly $C$-equivalent to the metric distance $d$ induced by $M$, that
		is, $\forall x_1,x_2 \in B_\delta(x)$
		\begin{equation}
			C^{-1}d(x_1,x_2) \leq d_E (\phi_x(x_1),\phi_x(x_2)) \leq C d(x_1,x_2).
		\end{equation}
	\end{prop}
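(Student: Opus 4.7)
The plan is to leverage Proposition \ref{uno} to compare the Riemannian metric to the Euclidean metric in normal coordinates, and then translate this pointwise comparison into a distance comparison. By Proposition \ref{uno}, there exists $\delta_0 > 0$ and uniform constants such that in normal coordinates of radius $\delta_0$ around any $x \in M$, the components $g_{ij}(y)$ are bounded together with their first derivatives, uniformly in $x$. Since in normal coordinates $g_{ij}(0) = \delta_{ij}$, the uniform bound on $\partial_k g_{ij}$ combined with the mean value theorem yields a uniform constant $L > 0$ such that $|g_{ij}(y) - \delta_{ij}| \leq L|y|$ for all $|y| \leq \delta_0$.

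Given $C > 1$, I would first pick $\varepsilon = \varepsilon(C) > 0$ small enough that $\sqrt{1 + \varepsilon} < C$ and $\sqrt{1 - \varepsilon} > C^{-1}$, and then choose $\delta_1 \leq \delta_0$ so small that for $|y| \leq \delta_1$ the symmetric matrix $(g_{ij}(y))$ satisfies, as a quadratic form on $\mathbb{R}^n$,
\begin{equation}
	(1 - \varepsilon)\, |v|^2 \;\leq\; g_{ij}(y) v^i v^j \;\leq\; (1 + \varepsilon)\, |v|^2.
\end{equation}
This is possible uniformly in $x$ thanks to the uniformity of $L$.

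Next I would translate this pointwise bound into a comparison of lengths of curves. For any $C^1$ curve $\gamma$ contained in the coordinate ball of radius $\delta_1$, integrating the inequality above shows that its $g$-length $\ell_g(\gamma)$ and its Euclidean coordinate length $\ell_E(\gamma)$ satisfy $\sqrt{1-\varepsilon}\, \ell_E(\gamma) \leq \ell_g(\gamma) \leq \sqrt{1+\varepsilon}\, \ell_E(\gamma)$. The upper bound $d(x_1, x_2) \leq C\, d_E(\phi_x(x_1), \phi_x(x_2))$ follows immediately by taking $\gamma$ to be the coordinate segment joining $\phi_x(x_1)$ and $\phi_x(x_2)$, which has Euclidean length exactly $d_E(\phi_x(x_1), \phi_x(x_2))$ and stays in the ball when $\delta$ is chosen sufficiently small.

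The lower bound is the delicate part and constitutes the main obstacle: an arbitrary $g$-shortest path between $x_1$ and $x_2$ might a priori wander outside the region where the coordinate bounds apply. To handle this I would choose $\delta$ with $\delta \leq \delta_1 / (4C)$ (or a similar explicit threshold). Then any curve $\gamma$ joining $x_1, x_2 \in B_\delta(x)$ that stays inside the coordinate ball of radius $\delta_1$ satisfies $\ell_g(\gamma) \geq \sqrt{1-\varepsilon}\, \ell_E(\gamma) \geq C^{-1} d_E(\phi_x(x_1), \phi_x(x_2))$. On the other hand, any curve leaving the coordinate ball of radius $\delta_1$ has Euclidean length at least $\delta_1 - \delta$, hence $g$-length at least $\sqrt{1-\varepsilon}(\delta_1 - \delta)$, which by the choice of $\delta$ exceeds $C \cdot 2\delta \geq C\, d_E(\phi_x(x_1), \phi_x(x_2))$; such a curve is thus not a candidate for realizing $d(x_1, x_2)$. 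Infimizing over admissible paths yields $d(x_1, x_2) \geq C^{-1} d_E(\phi_x(x_1), \phi_x(x_2))$, completing the proof. All constants are uniform in $x$ because the bounds from Proposition \ref{uno} are.
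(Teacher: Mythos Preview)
Your argument is correct and is in fact the standard proof of this result. Note, however, that the paper does not supply its own proof of this proposition: it is quoted from \cite{bound} (Eldering), as announced at the beginning of the subsection, so there is no in-paper proof to compare against.

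One small point worth tightening: in the lower-bound step you write that a curve leaving the coordinate ball of radius $\delta_1$ ``has Euclidean length at least $\delta_1-\delta$, hence $g$-length at least $\sqrt{1-\varepsilon}(\delta_1-\delta)$''. Strictly speaking the Euclidean length is only defined for the portion of the curve lying in the chart, so the estimate should be applied to the arc from $x_1$ up to the first exit from $B_{\delta_1}(x)$. Alternatively, and more cleanly, one can bypass coordinates entirely here: since $d(x,x_1)<\delta$ and the exit point lies on $\partial B_{\delta_1}(x)$, the intrinsic length of that arc is already at least $\delta_1-\delta$ by the triangle inequality in $(M,d)$, which suffices. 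With either reading, the choice $\delta<\delta_1/(2C+1)$ makes the comparison go through.
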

	\begin{prop}\label{charts}
		Let $(M, g)$ be a Riemannian manifold of $k$-bounded geometry with $k \geq 2$. There
		exists a $\delta$ with $0 < \delta < r_{inj}(M)$ and a constant $C > 0$ such that for all $x_1,x_2 \in M$ with $d(x_1,x_2) < \delta$ we have that the coordinate transition map
		\begin{equation}
			\phi_{2,1} = exp^{-1}_{x_2} \circ exp_{x_1}: U \longrightarrow T_{x_2}M \text{    with    } U = exp_{x_1}(B_{\delta}(x_1)\cap B_\delta(x_2)) \subset T_{x_1}M.
		\end{equation}
		is $C^{k-1}$-bounded\footnote{it means that all the derivatives of degree less or equal to $k-1$ are bounded.} with $|\phi_{2,1}|_{k-1} \leq C$.
	\end{prop}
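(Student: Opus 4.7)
The plan is to reduce the $C^{k-1}$-boundedness of the transition map to the standard smooth-dependence theorem for ODE's applied to the geodesic equation, invoking the uniform bounds on the Christoffel symbols provided by Proposition \ref{uno}.

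First, I would fix $\delta>0$ sufficiently small (in particular $\delta<r_{inj}(M)/3$) so that Propositions \ref{uno} and \ref{due} apply with uniform constants, and so that whenever $d(x_1,x_2)<\delta$ the set $U$ lies inside normal coordinate balls centered both at $x_1$ and at $x_2$. I would then work throughout in the normal chart $\{x^i\}$ around $x_1$ on $B_\delta(x_1)$; in these coordinates $\exp_{x_1}$ is the identity of $B_\delta(0)\subset T_{x_1}M$ with $B_\delta(x_1)$, so the whole question reduces to producing a uniform $C^{k-1}$-bound on the coordinate expression of $\exp_{x_2}^{-1}$.

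Next, I would express $\exp_{x_2}$ itself in the $x_1$-chart. Pick a $g$-orthonormal basis $\{e_\alpha\}$ of $T_{x_2}M$; by Proposition \ref{due} the components $e_\alpha^i$ in $x_1$-coordinates are uniformly bounded together with the inverse matrix, because $g^{(1)}_{ij}(x_2)$ is uniformly equivalent to the Euclidean metric. For $v=v^\alpha e_\alpha$ the image $\exp_{x_2}(v)$ is $\gamma(1)$, where $\gamma$ solves the geodesic ODE
\begin{equation}
\ddot{\gamma}^i+\Gamma^i_{jk}(\gamma)\dot{\gamma}^j\dot{\gamma}^k=0,\qquad \gamma(0)=x_2,\quad\dot{\gamma}(0)=v^\alpha e_\alpha.
\end{equation}
By Proposition \ref{uno} the symbols $\Gamma^i_{jk}$ are $C^{k-1}$-bounded with constants independent of $x_1$, so the classical smooth-dependence theorem for ODE's (applied with $v$ as the initial velocity and with $x_2,\{e_\alpha\}$ as uniformly bounded parameters) shows that $\exp_{x_2}$ has a $C^{k-1}$-bounded coordinate representation on $B_\delta(0)\subset T_{x_2}M$, with bounds depending only on the bounded-geometry constants.

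Finally, I would invert $\exp_{x_2}$ on a uniform neighborhood. At $v=0$ the differential $d\exp_{x_2}(0)$ equals the matrix $(e_\alpha^i)$, whose inverse is also uniformly bounded; together with the uniform $C^2$-bound on $\exp_{x_2}$, the quantitative inverse function theorem yields a uniform radius $\delta'>0$ on which $\exp_{x_2}$ is a diffeomorphism whose inverse has $C^{k-1}$-norm bounded by a uniform constant (the derivatives of the inverse being polynomial expressions in those of the map and of its differential's inverse). After shrinking $\delta$ so that $U$ is contained in this neighborhood, the identity $\phi_{2,1}=\exp_{x_2}^{-1}$ in $x_1$-coordinates gives $|\phi_{2,1}|_{k-1}\leq C$. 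The main obstacle is the uniformity: one has to check carefully that both the radius $\delta'$ of invertibility and the $C^{k-1}$-bounds on $\exp_{x_2}^{-1}$ depend only on the constants furnished by Propositions \ref{uno} and \ref{due} and not on the particular pair $(x_1,x_2)$. Once the inverse function theorem is stated in this quantitative form, the remainder of the argument is routine bookkeeping.
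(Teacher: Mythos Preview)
The paper does not prove this proposition at all: it is stated without proof and attributed to Eldering's work \cite{bound} (the paper says explicitly that ``all the definitions and propositions which follow can be found in \cite{bound}''). Your outline is the standard argument and is correct in spirit; it is essentially how the result is established in the cited reference, via uniform ODE dependence for the geodesic equation together with a quantitative inverse function theorem. One small point to be careful about: Proposition~\ref{uno} gives $C^{k-1}$ bounds on the Christoffel symbols, and plugging these into the geodesic ODE yields $C^{k-1}$ bounds on the flow, hence $C^{k-1}$ bounds on $\exp_{x_2}$ and then on $\phi_{2,1}$ --- so the bookkeeping of orders matches, but make sure you track it consistently (you wrote ``$C^2$-bound on $\exp_{x_2}$'' at one point where you need the full $C^{k-1}$ bound to control all derivatives of the inverse).
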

	\begin{rem}\label{bvolume}
		Consider a Riemannian manifold $(M,g)$ such that
		\begin{equation}\label{riccibound}
			Ric(M) \geq (n-1)K
		\end{equation}
		where $K$ is a constant. In particular, if $M$ has $k_{\geq 0}$-bounded geometry we have that (\ref{riccibound}) is satisfied. Then, as consequence of Bishop-Gromov inequality, if $(M,g)$ satisfies (\ref{riccibound}) for each $p \in M$ and for each $r \geq 0$ we have that
		\begin{equation}
			\mu_M(B_r(p)) \leq C(r)
		\end{equation}
		for some function $C: \numberset{R}_{\geq 0} \longrightarrow  \numberset{R}_{\geq 0}$. This means that if $A \subset M$ has $diam(A) = r$, then, once we fix a $p$ in $A$, we have that
		\begin{equation}
			\mu_M(A) \leq \mu_M(B_{diam(A)}(p)) \leq C(diam(A)).
		\end{equation}
	\end{rem}
	\subsection{Uniformly proper and discontinuous actions}
	In this section we will introduce a particular family of actions of a group $\Gamma$ over a metric space $(X, d_X)$.
	\begin{defn}
		Consider $\Gamma$ a group which acts by isometries on a metric space $(X, d_X)$. We say that the action of $\Gamma$ is \textbf{uniformly properly discontinuous and free (FUPD)} if 
		\begin{itemize}
			\item  the action of $\Gamma$ is free and properly discontinuous,
			\item  there exists a number $\delta > 0$ such that
			\begin{equation}
				d_X(p, \gamma p) \leq \delta \implies p = \gamma p.
			\end{equation}
		\end{itemize}
	\end{defn}
	\begin{prop}\label{Gammaq}
		Let us suppose that $(M,g)$ is a manifold of bounded geometry. Then the following are equivalent:
		\begin{enumerate}
			\item $\Gamma$ induce a FUPD action,
			\item the quotient $\frac{M}{\Gamma}$ has bounded geometry.
		\end{enumerate}
	\end{prop}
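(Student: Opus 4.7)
The plan is to prove the two implications separately, using that under either hypothesis the quotient map $\pi : M \to M/\Gamma$ is a Riemannian covering and hence a local isometry.

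For $(1) \Rightarrow (2)$: suppose the $\Gamma$-action is FUPD with uniform constant $\delta$. Since $\Gamma$ acts freely, properly discontinuously and by isometries, $M/\Gamma$ carries a unique Riemannian metric making $\pi$ a local isometry. Set $r := \tfrac{1}{2}\min(\delta, r_{inj}(M))$. For every $p \in M$ I claim $\pi$ restricts to a bijective isometry between $B_r(p) \subseteq M$ and $B_r(\pi(p)) \subseteq M/\Gamma$: injectivity uses the FUPD hypothesis, since if $q_1, q_2 \in B_r(p)$ and $q_2 = \gamma q_1$ then $d(q_1, \gamma q_1) \le 2r \le \delta$ forces $\gamma q_1 = q_1$; surjectivity comes from the commutative diagram
\begin{equation}
\pi \circ \exp_p^M = \exp_{\pi(p)}^{M/\Gamma} \circ d\pi_p,
\end{equation}
and the fact that $\exp_p^M : B_r(0) \to B_r(p)$ is a diffeomorphism. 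This gives $r_{inj}(M/\Gamma) \geq r > 0$. For the curvature bounds, isometries preserve sectional curvature and its covariant derivatives pointwise, so the bounds $|\nabla^i K| \le V_i$ on $M$ transfer directly to $M/\Gamma$.

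For $(2) \Rightarrow (1)$: assume $M/\Gamma$ has bounded geometry. Since $M/\Gamma$ is a manifold, the $\Gamma$-action is automatically free and properly discontinuous; again $\pi$ is a local isometry. Let $C := r_{inj}(M/\Gamma) > 0$ and set $\delta := C$. Given $p \in M$ and $\gamma \in \Gamma$ with $d(p, \gamma p) < \delta$, write $\gamma p = \exp_p^M(v)$ where $\|v\| = d(p,\gamma p) < C$. Since $\pi(\gamma p) = \pi(p)$ and
\begin{equation}
\pi(\exp_p^M(v)) = \exp_{\pi(p)}^{M/\Gamma}(d\pi_p(v)) = \pi(p) = \exp_{\pi(p)}^{M/\Gamma}(0),
\end{equation}
injectivity of $\exp_{\pi(p)}^{M/\Gamma}$ on the ball of radius $C$ forces $d\pi_p(v) = 0$; since $d\pi_p$ is a linear isometry, $v = 0$, so $\gamma p = p$. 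This is exactly the FUPD condition.

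The routine verifications are the curvature transfer and the commutative diagram for $\exp$, both standard for local isometries. The main genuine content is the \emph{quantitative} argument identifying the FUPD constant $\delta$ with (essentially) the injectivity radius of $M/\Gamma$, which is what makes the equivalence work uniformly in $p$; this uses in an essential way that $M$ has bounded geometry (so that $r_{inj}(M) > 0$ controls the ball on which $\pi$ is an isometry in the first direction). There are no real obstacles beyond being careful to choose $r$ small enough to fit inside both the injectivity ball of $M$ and half the FUPD constant.
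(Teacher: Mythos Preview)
Your proof is correct and follows essentially the same approach as the paper: both directions hinge on the naturality formula $\pi \circ \exp_p^M = \exp_{\pi(p)}^{M/\Gamma} \circ d\pi_p$, using the FUPD constant to bound the injectivity radius of the quotient in $(1)\Rightarrow(2)$ and the injectivity of $\exp_{\pi(p)}^{M/\Gamma}$ to force $v=0$ in $(2)\Rightarrow(1)$. Your version is slightly more careful in $(1)\Rightarrow(2)$ (the factor $\tfrac{1}{2}$ is needed for the injectivity-on-balls argument, which the paper's sketch glosses over), and in $(2)\Rightarrow(1)$ you rely on completeness of $M$ rather than also imposing $\delta < r_{inj}(M)$ as the paper does---both are fine, since bounded geometry gives completeness and hence a minimizing geodesic realizing $\gamma p = \exp_p(v)$.
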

	\begin{proof}
		$1 \implies 2$
		\\
		If $\delta$ is the constant of the FUPD action of $\Gamma$, then for each $p$ in $M$ we have that $B_\delta(p)$ is a trivializing open of $\frac{M}{\Gamma}$ and so $inj_{\frac{M}{\Gamma}} \geq \min\{inj_M, \delta\}$ and the curvature of $\frac{M}{\Gamma}$ has the same bounds of the curvature of $M$.
		\\$2 \implies 1$
		\\Let us suppose that for each $\delta > 0$ there is a point $p$ and a $\gamma \in \Gamma$ such that $d_M(p, \gamma p) \leq \delta$. Consider $\delta < \min\{inj_M, inj_{\frac{M}{\Gamma}}\}$. Then there is a vector $v$ in $T_pM$ with norm less then $\delta$ such that $exp_p(v) = \gamma p$. So, if we denote by $s$ the Riemannian covering $s: M \longrightarrow \frac{M}{\Gamma}$, we can apply the formula
		\begin{equation}
			s \circ exp_p = exp_{s(p)} \circ ds
		\end{equation}
		to $v$. We have that
		\begin{equation}
			s(p) = s \circ exp_p (v) = exp_{s(p)} \circ ds(v).
		\end{equation}
		Then, since $ds$ is an isometry, we have that the norm of $ds(v)$ is less or equal to the injectivity radius of $\frac{M}{\Gamma}$. This means that $ds(v)$ has to be null, then in particular $v$ is null and so
		\begin{equation}
			\gamma p = exp_p(v) = exp_p(0) = p.
		\end{equation} 
	\end{proof}
	\section{$C^k_b$-maps}
	\subsection{Maps with bounded derivatives}
	Consider $(M, g)$ be a Riemannian manifold of bounded geometry.
	\begin{defn}
		We define $\delta > 0$ to be
		\textbf{$M$-small} if the Propositions \ref{uno}, \ref{due} and \ref{charts} hold on all normal coordinate charts of
		radius $\delta$.
	\end{defn}
	\begin{defn}
		Let $X,Y$ be Riemannian manifolds of $k+1$-bounded geometry and $f \in C^k(X,Y)$. We
		say that $f$ is of class $C^k_b$ when there exist $X,Y$-small $\delta_X$ , $\delta_Y > 0$ such that for each $x \in X$ we have $f(B_{\delta_X}(x)) \subset B_{\delta_Y}(f (x))$ and the composition
		\begin{equation}
			F_x = exp^{-1}_{f(x)} \circ f \circ exp_x : B_{\delta_X}(0) \subset T_xX \longrightarrow T_yY
		\end{equation}
		in normal coordinates is of class $C^k_b$ and its $C^l$-norms ($l= 0,...,k$) as function from $T_xX$ to $T_yY$ are bounded uniformly in $x \in X$.
	\end{defn}
	\begin{rem}
		There are four remarks we wish to make:
		\begin{itemize}
			\item The condition of bounded geometry is necessary to define $C^k_b$-maps. One can see more detail in \cite{bound}.
			\item Composition of two $C^k_b$-maps is a $C^k_b$-map.
			\item Lipschitz maps between manifolds of bounded geometry are $C^0_{b}$-maps. 
			In order to prove this, observe that, since $M$ and $N$ are manifolds of bounded geometry, then the exponential maps and their inverses are lipschitz maps. In particular their lipschitz constants are uniformly bounded (see Proposition 2.5 of \cite{bound}). Then, in local normal coordinates,
			\begin{equation}
				|F_x(y)|= |F_x(y) - F_x(0)| \leq C|y| \leq C\cdot \delta_X
			\end{equation}
			where $C$ is a constant which uniformly bounds in $x$ the lipschitz constants of $F_x := exp^{-1}_{f(x)} \circ f \circ exp_x$. This means that the $C^0_b$-norm of $F_x$ is uniformly bounded in $x$.
			\item we have that if $f$ is a $C^k_{b}$-map, then, in particular, it is a $C^{k-1}_{bu}$-map.
		\end{itemize} 
	\end{rem}
	\subsection{$C^k_b$-approximation of a lipschitz map}\label{approx}
	Consider a lipschitz-homotopy equivalence $f: (M,g) \longrightarrow (N,h)$ between manifolds of bounded geometry. Let us suppose, moreover, that there is a group $\Gamma$ on $M$ and $N$ acting by isometries FUPD and assume that $f$ is $\Gamma$-equivariant. We want to show that for all $k$ there is a lipschitz-homotopy equivalence $F: (M,g) \longrightarrow (N,h)$ which is $C^k_{b}$ and that is $\Gamma$-lipschitz-homotopic to $f$.
	\\In order to prove this fact we need the following two proposition of \cite{bound}.
	\begin{prop}\label{cover}
		Let $(M, g)$ be a Riemannian manifold of $k$-bounded geometry for some $k$. Then for $\delta_2 > 0$ small enough and any $0 < \delta_1 \leq \delta_2$, $M$ has a countable cover $\{B_{\delta_1}(x_i)\}$ such that 
		\begin{itemize}
			\item $\forall i\neq j \implies d(x_i, x_j) \geq \delta_1$
			\item  There exists an explicit global bound $K$ such that for each $x \in M$ the ball $B_{\delta_2}(x)$ intersects at most $K$ of the $B_{\delta_2}(x_i)$.
		\end{itemize}
	\end{prop}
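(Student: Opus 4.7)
The strategy is the standard volume-packing argument adapted to bounded geometry. I would split the proof into two pieces: first construct the centers $\{x_i\}$ giving a cover with the separation property, then estimate the multiplicity $K$ using comparison geometry.

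For the first part, I would apply Zorn's lemma (or a transfinite/greedy construction in the second-countable setting) to obtain a \emph{maximal} $\delta_1$-separated subset $S = \{x_i\}_{i \in I} \subset M$, i.e.\ a subset such that $d(x_i,x_j) \geq \delta_1$ for all $i \neq j$, which cannot be enlarged while preserving this property. Maximality immediately forces $\bigcup_i B_{\delta_1}(x_i) = M$: if some $p \in M$ lay outside every $B_{\delta_1}(x_i)$, then $S \cup \{p\}$ would still be $\delta_1$-separated, contradicting maximality. Countability of $I$ is automatic since $M$ is second countable and the balls $B_{\delta_1/2}(x_i)$ are pairwise disjoint.

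For the multiplicity bound, the key observation is that if $B_{\delta_2}(x)$ meets $B_{\delta_2}(x_i)$ then $x_i \in B_{2\delta_2}(x)$, so the disjoint balls $B_{\delta_1/2}(x_i)$ are all contained in $B_{2\delta_2 + \delta_1/2}(x)$. Counting volumes gives
\begin{equation}
\#\bigl\{i : B_{\delta_2}(x) \cap B_{\delta_2}(x_i) \neq \emptyset\bigr\} \cdot \inf_{y \in M} \mu_M\bigl(B_{\delta_1/2}(y)\bigr) \leq \sup_{y \in M} \mu_M\bigl(B_{2\delta_2 + \delta_1/2}(y)\bigr),
\end{equation}
so it suffices to produce uniform two-sided volume bounds on balls of radii comparable to $\delta_1,\delta_2$. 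The upper bound is precisely Bishop--Gromov, which applies by Remark \ref{bvolume}; the lower bound comes from Proposition \ref{due}, which ensures that, for $\delta_2$ sufficiently small relative to the injectivity radius, the exponential chart $\exp_y : B_{\delta_1/2}(0) \subset T_yM \to B_{\delta_1/2}(y)$ is uniformly bi-Lipschitz in $y$ with respect to the Euclidean metric, so $\mu_M(B_{\delta_1/2}(y))$ is uniformly bounded below by a constant depending only on $\delta_1$ and the bounded-geometry constants.

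The main obstacle, and really the only technical point, is making sure the Bishop--Gromov upper bound and the bi-Lipschitz lower bound in the exponential chart are both available simultaneously for the chosen $\delta_2$. This is handled by taking $\delta_2$ smaller than the universal $M$-small constant of the previous propositions, after which both estimates are uniform in $x$ and yield the explicit bound $K$. Everything else (countability, cover property, separation) is automatic from the maximal $\delta_1$-net construction.
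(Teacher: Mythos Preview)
Your argument is the standard and correct one: maximal $\delta_1$-separated net for existence, then volume packing via Bishop--Gromov above and the uniform bi-Lipschitz normal chart below for the multiplicity bound. There is nothing to fix.

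Note, however, that the paper does not actually prove this proposition: it is quoted without proof from \cite{bound} (Eldering), as one of the two auxiliary results imported from that reference before Proposition~\ref{appr}. So there is no ``paper's own proof'' to compare against. Your write-up is exactly the argument one would expect to find in \cite{bound} or any standard treatment of bounded geometry, and it would serve perfectly well as a self-contained replacement for the citation.
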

	\begin{prop}\label{primaappr}
		Let $r, \delta r > 0$, $g \in C^{k\geq 0}_{b}(B_{r + 2 \delta r}(0) \subset \numberset{R}^m, \numberset{R}^n)$ such that $g$ is uniformly continuous and fix $l > k$, and $\epsilon > 0$. Consider now a mollifier \footnote{A mollifier is a compactly supported, smooth, positive function $\phi: \numberset{R}^n \longrightarrow \numberset{R}$ such that its integral over $\numberset{R}^n$ equals to 1.} $\phi$ with support in the euclidean ball of $\numberset{R}^m$ and let us define $\phi_{\nu}(x):= \nu^{-m}\phi(\frac{x}{\nu})$. Then $g$ can be approximated by a function $G_\nu$ such that
		\begin{itemize}
			\item $G_\nu = g$ outside $B_{r + \delta r}(0)$;
			\item $G_\nu \in C^l_{b,u} \cap C^{\infty}$ on $B_r(0)$ and $G_\nu \in C^l_{b,u} \cap C^{\infty}$ on an open $A$ if $g \in C^l_{b,u} \cap C^{\infty}$ on $A$;
			\item $|g - G_\nu|_{k} \leq \epsilon$;
			\item $|G_\nu|_{l} \leq C(\nu, l)|g|_0$ on $B_r(0)$  for some $C_{\nu,l} >0$.
		\end{itemize}
		Note that $C(\nu,l)$ may grow unboundedly as $\nu \longrightarrow 0$ or $l \longrightarrow +\infty$.
	\end{prop}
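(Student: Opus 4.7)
The plan is to construct $G_\nu$ by a ``cut-off mollification'': convolve $g$ with $\phi_\nu$ on a smaller region and keep $g$ unchanged outside, with a smooth interpolation in between. Concretely, fix a smooth cut-off $\chi: \numberset{R}^m \longrightarrow [0,1]$ with $\chi \equiv 1$ on $B_r(0)$, $\chi \equiv 0$ outside $B_{r+\delta r}(0)$, and $|\chi|_l \leq C_\chi(l,\delta r)$. For $\nu < \delta r$ (so that the convolution is defined on $B_{r+\delta r}(0)$ because $g$ is defined on $B_{r+2\delta r}(0)$), set
\begin{equation}
    G_\nu(x) := g(x) + \chi(x)\bigl((g*\phi_\nu)(x) - g(x)\bigr).
\end{equation}

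The first three bullets are then almost formal. Outside $B_{r+\delta r}(0)$ we have $\chi = 0$ and hence $G_\nu = g$, giving the first bullet. On $B_r(0)$ we have $\chi \equiv 1$, so $G_\nu = g * \phi_\nu$, which is $C^\infty$ because $\phi_\nu$ is smooth; on an open $A$ where $g$ is already in $C^l_{b,u}\cap C^\infty$, the formula expresses $G_\nu$ as a smooth combination of $g$ and the smooth convolution, so it remains in $C^l_{b,u}\cap C^\infty$ there, yielding the second bullet. For the fourth bullet, differentiation throws derivatives onto the mollifier:
\begin{equation}
    D^\alpha(g*\phi_\nu) = g * D^\alpha \phi_\nu, \qquad \|D^\alpha\phi_\nu\|_{L^1} = \nu^{-|\alpha|}\|D^\alpha\phi\|_{L^1},
\end{equation}
so on $B_r(0)$ one gets $|G_\nu|_l \leq \bigl(\sum_{|\alpha|\leq l}\nu^{-|\alpha|}\|D^\alpha\phi\|_{L^1}\bigr) |g|_0 =: C(\nu,l)|g|_0$, with $C(\nu,l)$ growing as $\nu \to 0$ or $l \to \infty$ exactly as announced.

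The delicate point is the third bullet. Writing $g - G_\nu = -\chi(g*\phi_\nu - g)$ and applying Leibniz, one has to estimate $|D^\beta\chi \cdot D^{\alpha-\beta}(g*\phi_\nu - g)|_0$ for $|\alpha|\leq k$. The factor $D^\beta\chi$ is bounded by $C_\chi(k,\delta r)$, so the problem reduces to showing that $D^j g * \phi_\nu \to D^j g$ uniformly on $B_{r+\delta r}(0)$ as $\nu\to 0$, for $0 \leq j \leq k$. This is the standard mollification statement, and it relies on uniform continuity of $D^j g$ on the slightly larger ball $B_{r+2\delta r}(0)$ where $g \in C^k_b$; the hypothesis that $g$ is uniformly continuous, combined with $C^k_b$ regularity on a ball of radius strictly larger than the cut-off region, is exactly what feeds this step (one can interpolate uniform continuity of $D^j g$ from boundedness of $D^{j+1}g$ on the larger ball). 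Given uniform convergence of each derivative up to order $k$, we choose $\nu$ small enough that $\sum_{|\alpha|\leq k} |D^\alpha(g*\phi_\nu - g)|_0 \leq \epsilon/(C_\chi)$, which delivers $|g-G_\nu|_k \leq \epsilon$.

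The main obstacle is precisely controlling this $C^k$-closeness uniformly on the whole ball: one must exploit the slightly larger ball $B_{r+2\delta r}(0)$ on which $g$ lives to promote the bare uniform continuity of $g$ and the $C^k_b$ bounds into uniform convergence of all derivatives of $g*\phi_\nu$, while simultaneously making sure the cut-off $\chi$ does not inflate the error through its own derivatives. Once this convergence is in hand the construction is essentially mechanical, and the blow-up $C(\nu,l)$ in the fourth bullet is unavoidable because the estimate is agnostic to the actual regularity of $g$, using only $|g|_0$.
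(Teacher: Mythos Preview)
Your construction and argument are correct and coincide with what the paper records: the proposition is quoted from \cite{bound} without proof, and the subsequent remark gives exactly your formula $G_\nu(x) = (1-\chi(\|x\|))g(x) + \chi(\|x\|)(g*\phi_\nu)(x)$ with $\nu < \delta r/2$. Your sketch of why each bullet holds (especially the use of uniform continuity for $|g-G_\nu|_k \leq \epsilon$ and the $\nu^{-|\alpha|}$ scaling for the fourth bullet) is the standard verification and matches the intended argument.
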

	\begin{rem}
		The approximation above is defined in the following way. Let us consider a smooth map $\chi: \numberset{R} \longrightarrow [0,1]$ which is $1$ if $x \leq r$ and $0$ if $x\geq r+ \delta r$. Then we have that
		\begin{equation}
			G_\nu(x) := (1 - \chi(||x||))g(x) + \chi(||x||)\int_{\numberset{R}^m}g(x - y)\phi_{\nu}(y)dy
		\end{equation}
		where $\nu < \frac{\delta r}{2}$.
		\\moreover $G_{\nu}$ can be also written as
		\begin{equation}
			\begin{split}
				G_\nu(x) &= (1 - \chi(||x||))g_{i,\gamma}(x) + \chi(||x||)\int_{\numberset{R}^m}g_{i,\gamma}(x - y)\phi_{\nu}(y)dy \\
				&= (1 - \chi(||x||))g_{i,\gamma}(x) + \chi(||x||)\int_{B^m}g_{i,\gamma}(x - \nu y)\phi(y) dy.
			\end{split}
		\end{equation}
		Then, using this expression of $G_{\nu}$, it is also possible to consider $\nu = 0$. 
	\end{rem}
	\begin{rem}\label{epsnu}
		In particular following the proof of Lemma 2.34 of\cite{bound},we obtain that if $g$ is a lipschitz function of constant $C_g$ then
		\begin{equation}
			d(g(p), G_\nu(p)) \leq C_g \nu.
		\end{equation}
	\end{rem}
	We are ready to prove the existence of a $C^k_{b}$-approximation of a lipschitz map.
	\begin{prop}\label{appr}
		Consider two Riemannian manifolds $(M,g)$ and $(N,h)$ of bounded geometry and let $f:(M,g) \longrightarrow (N,h)$ be a lipschitz map. Fix $\nu >0$. Then there is map $F:(M,g) \longrightarrow (N,h)$ such that
		\begin{itemize}
			\item $d(F(p), f(p)) \leq K \nu$ where $K$ is a constant which depends on the lipschitz constant $C_f$ and by the uniform covering we choose to define $F$,	
			\item for all $l \geq 1$  the approximation $F$ is a $C^l_{b,u} \cap C^{\infty}$-map,
			\item Consider $\Gamma$ a group that acts FUPD by isometries on $M$ and $N$. Assume that $f$ is $\Gamma$-equivariant. Then also $F$ is $\Gamma$-equivariant.
			\item $f \sim_\Gamma F$.
		\end{itemize}
	\end{prop}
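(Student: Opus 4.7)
The plan is to construct $F$ by smoothing $f$ chart by chart in normal coordinates, using Proposition \ref{primaappr} as the local building block, and then to build a geodesic homotopy between $F$ and $f$. First, I would apply Proposition \ref{Gammaq} to conclude that $M/\Gamma$ is a manifold of bounded geometry, then use Proposition \ref{cover} on $M/\Gamma$ to obtain a uniform countable cover $\{B_{\delta_1}([x_i])\}$ with uniformly bounded multiplicity $K$. Lifting representatives to $M$ and taking their $\Gamma$-orbits yields a $\Gamma$-equivariant cover $\{B_{\delta_1}(\gamma x_i)\}_{i \in \mathbb{N},\, \gamma \in \Gamma}$ of $M$, still with multiplicity bounded by $K$. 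After shrinking $\delta_1$ if necessary, the radii $\delta_1 < \delta_2$ are chosen to be both $M$-small and smaller than $r_{inj}(N)$, so that on each ball the composition $F_{x_i} = \exp^{-1}_{f(x_i)} \circ f \circ \exp_{x_i}$ is a well-defined lipschitz map between Euclidean balls whose lipschitz constant is uniformly controlled by $C_f$ thanks to Proposition \ref{due}.

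Next, I would iteratively smooth: for each orbit representative $x_i$, apply Proposition \ref{primaappr} to $F_{x_i}$ with the chosen mollifier $\phi_\nu$ to produce $G_\nu^{(i)}$ which is smooth on a ball of radius $r$, coincides with $F_{x_i}$ outside the slightly larger ball of radius $r+\delta r$, and is $\nu$-close to it in $C^0$. Transport back via $\exp_{f(x_i)}$ and declare this to be the new $F$ on $B_{\delta_1}(x_i)$; extend $\Gamma$-equivariantly by $F(\gamma p) := \gamma \cdot F(p)$, which is well-defined and $\Gamma$-equivariant because $\Gamma$ acts by isometries, $f$ is $\Gamma$-equivariant, and the normal coordinate systems at $\gamma x_i$ are obtained from those at $x_i$ by the isometry $d\gamma$. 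Since the patched map agrees with $f$ outside an enlargement of each ball, and any point meets at most $K$ balls, the inductive patching terminates locally and produces a globally defined $F$. Uniform $C^l_{b,u}$-bounds follow from the uniform $C^0$-bound on $f$ in charts, Proposition \ref{primaappr}'s estimate $|G_\nu|_l \leq C(\nu,l) |g|_0$, the uniform boundedness of the transition maps (Proposition \ref{charts}), and the uniform multiplicity $K$ of the cover. Remark \ref{epsnu} and the uniform control on chart sizes give $d(F(p), f(p)) \leq K \cdot C_f \cdot \nu$.

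For the final item, I would construct the $\Gamma$-equivariant lipschitz homotopy by a geodesic interpolation in $N$: taking $\nu$ small enough so that $d(F(p), f(p)) < r_{inj}(N)$ for every $p$, define
\begin{equation}
H(p,t) := \exp_{f(p)}\bigl(t \cdot \exp^{-1}_{f(p)} F(p)\bigr).
\end{equation}
This clearly satisfies $H(p,0) = f(p)$, $H(p,1) = F(p)$, and inherits $\Gamma$-equivariance from the equivariance of $f$, $F$, and $\exp$. Its lipschitz constant in $t$ is controlled by $K C_f \nu$, while in $p$ one uses that $\exp$ and $\exp^{-1}$ are uniformly lipschitz on balls of radius $r_{inj}(N)/2$ in manifolds of bounded geometry (Proposition 2.5 of \cite{bound}) together with the lipschitz constants of $f$ and $F$.

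The main obstacle is keeping every estimate uniform across the (possibly infinite) family of charts and ensuring that the inductive patching does not blow up the $C^l_{b,u}$-norms: one must verify that at each step the number of overlapping modifications at a given point is bounded by the cover multiplicity $K$ and that the constants $C(\nu,l)$ from Proposition \ref{primaappr} combine with the uniform chart transition bounds of Proposition \ref{charts} to give global $C^l_{b,u}$-control. The $\Gamma$-equivariance of the whole construction must be built in from the start by working orbit by orbit, rather than imposed at the end, since \emph{a priori} smoothing in two different normal coordinate charts related by a $\gamma \in \Gamma$ would not produce $\Gamma$-equivariant values without using the isometric identification of the charts.
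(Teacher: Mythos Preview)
Your construction of $F$ follows the paper's approach closely: both pass to the quotient $M/\Gamma$, take a uniform cover there via Proposition~\ref{cover}, lift it $\Gamma$-equivariantly, and smooth chart by chart using Proposition~\ref{primaappr}, with the bounded multiplicity $K$ controlling both the $C^0$-distance and the accumulation of $C^l$-bounds. One point to make explicit: at stage $i$ you must apply Proposition~\ref{primaappr} to the \emph{already modified} map $F_i$ (in the paper's notation $g_{i,\gamma}=\exp^{-1}_{f(\gamma x_i)}\circ F_i\circ\exp_{\gamma x_i}$), not to the original $f$; this is what guarantees, via the second bullet of Proposition~\ref{primaappr}, that smoothness achieved in earlier charts is preserved. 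Your phrase ``inductive patching'' suggests you intend this, but your formula $F_{x_i}=\exp^{-1}_{f(x_i)}\circ f\circ\exp_{x_i}$ refers to $f$ rather than the current iterate.

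The genuine difference is in the homotopy. The paper does \emph{not} use geodesic interpolation; instead it varies the mollification scale, setting $H_{i,\gamma,\epsilon}:=G_{i,\gamma,\epsilon\nu}$ and building $H(p,\epsilon)$ by the same inductive scheme as $F$, so that $H(\cdot,0)=f$ and $H(\cdot,1)=F$. Lipschitzness in both variables is then read off directly from the mollifier formula $G_\nu(x)=(1-\chi)g+\chi\int g(x-\nu y)\phi(y)\,dy$. Your geodesic homotopy $H(p,t)=\exp_{f(p)}\bigl(t\,\exp^{-1}_{f(p)}F(p)\bigr)$ also works and is arguably cleaner once $F$ is in hand: $\Gamma$-equivariance is immediate, and the Lipschitz bound follows from the uniform $C^1$-control of $(q,v)\mapsto\exp_q(v)$ and $(q_1,q_2)\mapsto\exp^{-1}_{q_1}(q_2)$ on bounded-geometry manifolds. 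The paper's route has the advantage of recycling the same local machinery and avoiding any separate appeal to regularity of the exponential map with moving basepoint; yours has the advantage of decoupling the homotopy from the details of the smoothing and would apply to any $C^0$-close $\Gamma$-equivariant approximation.
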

	\begin{proof}
		Observe that since $f$ is lipschitz, then in particular is uniformly continuous. Then there are two positive numbers $\sigma_1 < \sigma_2$ both $N$-small and there is a $M$-small number $\delta_f$ such that $f(B_{\delta_f}(q) \subseteq B_{\sigma_1}(f(q))$.\\
		Let us consider the Riemannian manifold $X := \frac{M}{\Gamma}$. Let us define 
		\begin{equation}
			R_0 := \frac{1}{4} min\{\delta_f, r_{inj}(X)\}.
		\end{equation}
		Following the Remark \ref{Gammaq}, we can see that $R_0 \leq inj_M$ and $R_0 \leq \delta_0$	where $\delta_0$ is the constant given by the FUPD action of $\Gamma$. Fix, then, $\delta_1 < \delta_2 < R_0$ where $\delta_2$ is $X$-small. 
		\\So, applying Proposition \ref{cover}, we know that there is a number $K$ and a countable cover of $X$ given by $\{B_{\delta_1}(x_i)\}$ such that for all $x$ in $X$ we have that $B_{\delta_2}(x)$ intersect at most $K$ of $\{B_{\delta_2}(x_i)\}_i$.
		\\Consider the preimage of this cover on $M$. Since $\delta_2 < \delta_0$, it has the form $\{ \bigsqcup_{\gamma \in \Gamma} B_{\delta_2}(\gamma x_i)\}_i$, where $\gamma x_i$ is an element of the fiber of $x_i$.
		\\Moreover we can observe that $f(B_{\delta_2}(q)) \subseteq B_{\sigma_1}(f(q))$ for all $q$ in $M$.
		\\
		Now, let us define $F_0 := f$ and  for all $i$ in $\numberset{N}_{>0}$ we have
		\begin{equation}
			F_{i+1}(p) := \begin{cases} \exp_{f(\gamma x_i)} \circ G_{i,\gamma, \nu} \circ exp_{\gamma x_i}^{-1} \mbox{  if   } p \in B_{R_0}(\bigcup_\gamma \{\gamma x_i\}) \\
				F_i(p) \mbox{   otherwise} \end{cases}
		\end{equation}
		where $G_{i,\gamma,\nu}$ is the $C^k_{b}$-approximation of $g_{i,\gamma} = \exp_{f(\gamma x_i)}^{-1} \circ F_i \circ exp_{\gamma x_i}$ using $\nu \leq (\frac{\sigma_2 - \sigma_1}{4K})$. Observe that $F_i$ is well-defined for all $i$ using the gluing lemma.
		\\Finally we can define
		\begin{equation}
			F(p) := \lim_{i \rightarrow + \infty} F_{i}(p).
		\end{equation}
		Let us check all the properties of $F$:
		\begin{itemize}
			\item $F$ is well-defined: consider $p$ a point in $M$. Let $s$ be the covering $s: M \longrightarrow X$, then $s(B_{\delta_2}(p))$ intersect at most $K$ ball $B_{\delta_2}(x_i)$. Moreover we have that, since $\delta_2 < \frac{1}{4}\delta$, then for all of these $x_i$ there exists only a $\gamma$ in $\Gamma$ such that $B_{\delta_2}(p)$ intersects $B_{\delta_2}(\gamma x_i)$.
			\\Then for all $p$ there are at most $K$ indexes $i_j$ such that $F_{i_j}(p) \neq F_{i_j + 1}(p)$ and so the limit exists since the sequence $\{F_i(p)\}_{i \geq i_K + 1}$ is constant.
			\item We have that
			\begin{equation}
				d(F(p), f(p)) \leq C_fK\nu.
			\end{equation}
			Indeed, applying the Proposition \ref{primaappr} and  Remark \ref{epsnu}, we have that 
			\begin{equation}
				d(f(p), F(p)) \leq d(f(p), F_{i_1 +1}(p)) + ... + d(F_{i_K}(p), F_{i_K + 1}(p)) \leq K\cdot C_f \nu
			\end{equation}
			\item $F$ is a $C^k_{bu}$-map. Consider a point $p$. As consequence of Proposition \ref{cover}, there is a neighborhood $U$ of $p$ and there is a finite sequence $x_{i_1},..., x_{i_k}$ of the $x_i$s such that $F_{i +1} \neq F_{i}$ on $U$. So, we have to study the boundedness of the derivatives of
			\begin{equation}
				\begin{split}
					F_p :&= exp^{-1}_{F_{i_k +1}(p)} \circ F_{i_k + 1} \circ exp_{p}\\
					&= exp^{-1}_{F_{i_k +1}(p)} \circ \exp_{f(\gamma x_i)} \circ G_{i_k,\gamma, \nu} \circ exp_{\gamma x_i}^{-1} \circ exp_{p}.
				\end{split}
			\end{equation}
			Now, we have that $exp^{-1}_{F_{i_k +1}(p)} \circ \exp_{f(\gamma x_i)}$ and $exp_{\gamma x_i}^{-1} \circ exp_{p}$ are just a change of normal coordinates, and so they are $C^k_{bu}$-maps as consequence of Proposition \ref{charts}. Then it is sufficient to study $G_{i_k,\gamma, \nu}$. Applying the Proposition \ref{primaappr}, we have that
			\begin{equation}
				|G_{i_k,\gamma, \nu}|_{k} \leq C(\nu , k)|G_{i_{k-1},\gamma, \nu}|_0 \leq C(\nu , k) \sigma_2.
			\end{equation}
			\item $F$ is $\Gamma$-equivariant. In order to prove this we have to check that all the $F_i$s are $\Gamma$-equivariant. Observe that $F_0 = f$ is $\Gamma$-equivariant. Moreover if $F_i$ is $\Gamma$-equivariant, then also $F_{i+1}$ is $\Gamma$-equivariant. Then we conclude observing that
			\begin{equation}
				\gamma F(p) = \gamma F_{i_{k}+1}(p) = F_{i_{k}+1} (\gamma p) = F(\gamma p).
			\end{equation}
		\end{itemize}
		In order to conclude the proof we have to show that $F$ and $f$ are $\Gamma$-lipschitz-homotopic. Let us define for each $\epsilon \in [0,1]$ the map
		\begin{equation}
			H_{i,\gamma, \epsilon} := G_{i,\gamma,\epsilon \nu}
		\end{equation}
		where $G_{i,\gamma,\epsilon \nu}$ is the $C^k_{bu}$-approximation of $g_{i,\gamma} = \exp_{f(\gamma x_i)}^{-1} \circ F_i \circ exp_{\gamma x_i}$ using $\epsilon \nu$. \\Let us define the map 
		\begin{equation}
			\begin{split}
				H_0: (M \times [0,1], g + g_{[0,1]}) &\longrightarrow (N,h) \\
				(p,\epsilon) \longrightarrow f(p)
			\end{split}
		\end{equation}
		and
		\begin{equation}
			H_{i+1}(p,\epsilon ) := \begin{cases} \exp_{f(\gamma x_i)} \circ H_{i,\gamma, \epsilon} \circ exp_{\gamma x_i}^{-1} \mbox{  if   } p \in B_{r_{inj}(M)}(\bigcup_\gamma \{\gamma x_i\}) \\
				H_i(p,\epsilon) \mbox{   otherwise} \end{cases}
		\end{equation}
		Then we define the map $H: (M \times [0,1], g + g_{[0,1]}) \longrightarrow (N,h)$ as
		\begin{equation}
			H(p, \epsilon) = \lim_{i \rightarrow +\infty} H_{i}(p,\epsilon).
		\end{equation}  
		Let us suppose that if $H_i$ lipschitz implies $H_{i+1}$ is lipschitz with a constant less or equal to $C \cdot C_{i}$ where $C_i$ is the costant of $H_i$. Then we will have that the constant of $H$ will be $C^{K}C_f$.
		\\
		\\To prove that if $H_{i-1}$ is lipschitz then $H_i$ is lipschitz it is sufficient to prove that $H_i$ is lipschitz on $B_{\delta_2}(\gamma x_i)$ for all $\gamma$ in $\Gamma$. Moreover we also know that the exponential map are lipschtiz with constant $C_{\sigma_2}$ and $C_{\delta_2}$ on $B_{\delta_2}(\gamma x_i)$ and on $B_{\sigma_2}(f(\gamma x_i))$. This means that we can study $H_{i, \gamma, \epsilon}$.
		\\
		Let us consider
		\begin{equation}
			d(H_{i,\gamma, \epsilon_1}(x_1), H_{i,\gamma, \epsilon_2}(x_2)) \leq d(H_{i,\gamma, \epsilon_1}(x_1), H_{i,\gamma, \epsilon_1}(x_2)) + d(H_{i,\gamma, \epsilon_1}(x_2), H_{i,\gamma, \epsilon_2}(x_2))
		\end{equation}
		One have to observe that $g_{i,\gamma}$ is a lipschitz map with costant less or equal to $C_{\delta_2} \cdot C_{\sigma_2} \cdot C_{F_i}$.
		\\Then we have
		\begin{equation}
			\begin{split}
				d(H_{i,\gamma, \epsilon_1}(x_1), H_{i,\gamma, \epsilon_1}(x_2)) &\leq \int_{B^m} |g_{i,\gamma}(x_2 - \epsilon_1 y) - g_{i,\gamma}(x_2 - \epsilon_1 y)| \phi(y) dy \\
				&\leq C_{\delta_2} \cdot C_{\sigma_2} \cdot C_{F_i}|x_1 - x_2| \\
				&\leq C_{\delta_2}^2 \cdot C_{\sigma_2} \cdot C_{F_i}d((x_1, \epsilon_1),(x_2, \epsilon_1)).
			\end{split}
		\end{equation}
		and, moreover
		\begin{equation}
			\begin{split}
				d(H_{i,\gamma, \epsilon_1}(x_1), H_{i,\gamma, \epsilon_2}(x_1)) &\leq \int_{B^m} |g_{i,\gamma}(x_2 - \epsilon_1 y) - g_{i,\gamma}(x_2 - \epsilon_2 y)| \phi(y) dy \\
				&\leq C_{\delta_2} \cdot C_{\sigma_2} \cdot C_{F_i}||x_2 - \epsilon_1y - x_2 + \epsilon_2y| \\ 
				&\leq C_{\delta_2} \cdot C_{\sigma_2} \cdot C_{F_i}|\epsilon_1 - \epsilon_2| \\
				&\leq C_{\delta_2} \cdot C_{\sigma_2} \cdot C_{F_i}d((x_1, \epsilon_1),(x_2, \epsilon_2)).
			\end{split}
		\end{equation}
		Then, we have that $H$ is a lipschitz map and so $f$ and $F$ are lipschitz-homotopic. The $\Gamma$-equivariance of $H$ follows exactly as for $F$.
	\end{proof}
	\begin{cor}\label{corappr}
		Consider a lipschitz map $f: (M,g) \longrightarrow (N,h)$ between manifolds of bounded geometry. Let us suppose that there is a closed set $C$ such that $f_{|_{M \setminus C}}$ is a $C^k_{bu}$-map. Then for all $\epsilon > 0$ there is a map $F:(M,g) \longrightarrow (N,h)$ such that $F$ is a $C^k_{bu}$-map and if $C_\epsilon$ is the $\epsilon$-neighborhood of $C$, then 
		\begin{equation}
			F_{|_{M \setminus C_\epsilon}} = f_{|_{M \setminus C_\epsilon}}.
		\end{equation}
		Moreover if $C$ is $\Gamma$-invariant and $f$ is $\Gamma$-equivariant then also $F$ is $\Gamma$-equivariant and they are $\Gamma$-lipschitz homotopic.
	\end{cor}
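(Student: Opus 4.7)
The plan is to rerun the construction of Proposition \ref{appr} but only to mollify on those balls of the uniform cover whose centers sit near $C$, exploiting the hypothesis that $f$ is already $C^k_{bu}$ away from $C$. The quantitative point is to shrink the cover scale so that every perturbation stays inside $C_\epsilon$.

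First I would replace the constant $R_0$ from the proof of Proposition \ref{appr} by $R_0' := \min\{R_0,\epsilon/4\}$ (shrinking it does no harm) and pick $\delta_1 < \delta_2 < R_0'$ with $\delta_2$ still $X$-small, where $X = M/\Gamma$. Proposition \ref{cover} then yields a uniform countable cover $\{B_{\delta_1}(x_i)\}$ of $X$; lift it $\Gamma$-equivariantly to $M$. Since $C$ is $\Gamma$-invariant, the numbers $d(x_i,C)$ do not depend on the chosen representative in the fiber over $\pi_X(x_i)$, so the following selection rule is $\Gamma$-invariant.

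Next, set $F_0 := f$ and define inductively
\begin{equation*}
F_{i+1}(p) := \begin{cases} \exp_{f(\gamma x_i)}\circ G_{i,\gamma,\nu}\circ \exp_{\gamma x_i}^{-1}(p) & \text{if } d(x_i, C) \leq \delta_2 \text{ and } p \in B_{R_0'}(\bigcup_\gamma \gamma x_i), \\ F_i(p) & \text{otherwise,} \end{cases}
\end{equation*}
where $G_{i,\gamma,\nu}$ is the mollification from Proposition \ref{primaappr} of $g_{i,\gamma} := \exp_{f(\gamma x_i)}^{-1}\circ F_i\circ \exp_{\gamma x_i}$, and put $F := \lim_i F_i$ (the limit is eventually constant pointwise by the same finite-intersection argument used in Proposition \ref{appr}). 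That $F|_{M\setminus C_\epsilon} = f|_{M\setminus C_\epsilon}$ is immediate: whenever the first case of the induction triggers, every $p\in B_{R_0'}(x_i)$ satisfies $d(p,C) \leq R_0'+\delta_2 < \epsilon/2 < \epsilon$. The $\Gamma$-equivariance of $F$ and the construction of a $\Gamma$-lipschitz homotopy between $f$ and $F$ proceed verbatim as in Proposition \ref{appr}.

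The step I expect to be the main obstacle is the global $C^k_{bu}$-bound on $F$. On balls on which a mollification step was performed, Proposition \ref{primaappr} gives the uniform bound $|G_{i,\gamma,\nu}|_k \leq C(\nu,k)\sigma_2$, and the transition within such a ball is controlled because the cutoff $\chi$ has derivatives bounded independently of $i$ and both $f$ and its mollifications are uniformly $C^k_b$-controlled, so Leibniz yields a uniform bound. On balls where no mollification was performed we have $F=f$ locally, and I have to check that any such point lies in $M\setminus C$, where the hypothesis supplies the bound. If some $p\in C$ were untouched, the cover property would provide an index $i$ with $d(x_i,p)\leq \delta_1$, whence $d(x_i,C)\leq\delta_1<\delta_2$ and $p\in B_{R_0'}(x_i)$, so the first case would in fact trigger at $x_i$, a contradiction. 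Once this bookkeeping is in place the uniform $C^k_{bu}$-bound holds globally and the corollary follows.
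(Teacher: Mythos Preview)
Your proposal is correct and follows essentially the same route as the paper's proof, which simply says to impose $\epsilon = 2\delta_2$ and to define the $F_i$'s and $H_i$'s only on those balls $\bigsqcup_{\gamma}B_{\delta_2}(\gamma x_i)$ that intersect $C$. You have spelled out more carefully than the paper the bookkeeping that every point of $C$ is caught by some mollified ball (so the hypothesis that $f$ is already $C^k_{bu}$ on $M\setminus C$ covers the remaining region), but the argument is the same.
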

	\begin{proof}
		The proof is exactly the same of the previous proposition. Indeed it is sufficient to impose
		\begin{equation}
			\epsilon = 2 \delta_2,
		\end{equation}
		and to define the $F_i$s and the $H_i$s only on the $\bigsqcup\limits_{\gamma \in \Gamma}B_{\delta_2}(\gamma x_i)$ that intersect $C$. 
	\end{proof}
	\begin{lem}\label{C^k_bhomo}
		Let $f, f': (M,g) \longrightarrow (N,h)$ be two $C^k_b$ lipschitz-homotopy equivalences such that $f \sim_\Gamma f'$, where $\Gamma$ acts FUPD on $M$ and $N$. Let us fix some normal coordinates $\{x^i, s\}$ on $M \times [0,1]$ and $\{y^j\}$ on $N$. Then there is a lipschitz homotopy $H: (M \times [0,1], g + g_{eucl}) \longrightarrow (N,h)$ such that all its derivatives in normal coordinates of order minus or equal of $k$ are uniformly bounded.
	\end{lem}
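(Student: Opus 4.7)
The plan is to modify the given $\Gamma$-lipschitz homotopy $H_0:M\times[0,1]\to N$ so that it becomes constant in the $s$-direction (equal to $f$ and $f'$ respectively) on small collars of the slices $s=0$ and $s=1$; this will render the modification already $C^k_b$ outside a closed middle strip, whereupon the $C^k_b$-approximation machinery of Corollary \ref{corappr} can be invoked without perturbing the endpoint values.

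Concretely, I would pick a smooth $\phi:\numberset{R}\to[0,1]$ equal to $0$ on $(-\infty,1/4]$ and to $1$ on $[3/4,+\infty)$, and extend the reparametrised homotopy to $M\times\numberset{R}$ by setting
\[
\hat{H}_0(p,s):=\begin{cases} f(p) & s\leq 0,\\ H_0(p,\phi(s)) & 0\leq s\leq 1,\\ f'(p) & s\geq 1.\end{cases}
\]
This $\hat{H}_0$ will be $\Gamma$-equivariant (with $\Gamma$ acting trivially on $\numberset{R}$) and lipschitz, because $H_0$ is lipschitz and $\phi$ is smooth with bounded derivative. By construction $\hat{H}_0=f\circ\mathrm{pr}_M$ on $M\times(-\infty,1/4]$ and $\hat{H}_0=f'\circ\mathrm{pr}_M$ on $M\times[3/4,+\infty)$, so since $f,f'$ are assumed $C^k_b$ the map $\hat{H}_0$ is already $C^k_b$ off the closed $\Gamma$-invariant set $C:=M\times[1/4,3/4]$. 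Note that the ambient $M\times\numberset{R}$ is of bounded geometry and the $\Gamma$-action is FUPD on it by Proposition \ref{Gammaq}, applied to the quotient $(M/\Gamma)\times\numberset{R}$.

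Next I would apply Corollary \ref{corappr} to $\hat{H}_0$ with this $C$ and some $\epsilon<1/8$, obtaining a $\Gamma$-equivariant $C^k_{bu}$ map $\hat{H}:M\times\numberset{R}\to N$ coinciding with $\hat{H}_0$ outside the $\epsilon$-neighbourhood of $C$, and in particular on $M\times(-\infty,1/8]$ and on $M\times[7/8,+\infty)$. Restricting $\hat{H}$ to $M\times[0,1]$ would produce the desired homotopy $H$: its values at $s=0,1$ coincide with $f$ and $f'$, and its derivatives in the product normal coordinates $\{x^i,s\}$ up to order $k$ are uniformly bounded since $\hat{H}$ is $C^k_{bu}$ on the boundaryless manifold $M\times\numberset{R}$. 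The only delicate point I anticipate is that $M\times[0,1]$ has boundary while the approximation results were formulated for boundaryless manifolds of bounded geometry; the extension to $M\times\numberset{R}$ is inserted precisely to sidestep this, while simultaneously guaranteeing that the approximation leaves the endpoint values untouched.
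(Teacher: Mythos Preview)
Your proposal is correct and follows essentially the same approach as the paper: extend the given lipschitz homotopy to $M\times\numberset{R}$, reparametrise so that it is constant (equal to $f$, resp.\ $f'$) on collars of $s=0$ and $s=1$, and then invoke Corollary~\ref{corappr} on the resulting boundaryless bounded-geometry manifold to smooth the middle strip without touching the endpoints. Your use of a smooth cutoff $\phi$ in place of the paper's affine reparametrisation, and your explicit remark about why the extension to $M\times\numberset{R}$ is needed, are minor presentational improvements but not a different strategy.
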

	\begin{proof}
		Let us denote by $h$ the homotopy between $f$ and $f'$. Then we can define the lipschitz map $H_{0}: (M \times \numberset{R}, g + g_{0,1}) \longrightarrow (N,h)$ as
		\begin{equation}
			H_0(p,s) := \begin{cases} f(p) \mbox{      if     } s \leq \frac{1}{4} \\
				h(p,\frac{1}{2}(t + \frac{1}{2})) \mbox{      if     } s \in  [\frac{1}{4}, \frac{3}{4}] \\
				f'(p) \mbox{      if     } s \geq \frac{3}{4}. \end{cases}
		\end{equation}
		Let us consider the action of $\Gamma$ on $M \times \numberset{R}$ as the action $\gamma(p,t) := (\gamma(p), t)$. Then we can consider the $\Gamma$-equivariant closed subset given by $M \times [\frac{3}{8}, \frac{5}{8}]$. Since $M \times [0,1]$ is a manifold of bounded geometry, we can apply the Corollary \ref{corappr}, and we obtain a map $\tilde{H}$ which is a $C^k_b$-map such that $\tilde{H}(p, 0) = f(p)$ and $\tilde{H}(p, 1) = f'(p)$. Then it is sufficient to define
		\begin{equation}
			H := \tilde{H}_{|_{M \times [0,1]}}.
		\end{equation}
	\end{proof}
	\begin{cor}\label{C^k_bpH}
		Let $H:M \times [0,1] \longrightarrow N$ be the homotopy in the Lemma \ref{C^k_bhomo}. If $p_H: f^*(T^\delta N) \times [0,1] \longrightarrow N$ is the submersion related to $H$ then $p_H$ has, in normal coordinates, the derivatives of order $k$ uniformly bounded.
	\end{cor}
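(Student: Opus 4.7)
The plan is to reduce Corollary \ref{C^k_bpH} to the corresponding statement already established for $p_f$ in Section ``Submersion related to lipschitz maps,'' applied now with $H$ in place of $f$. The key input is that Lemma \ref{C^k_bhomo} delivers a homotopy $H$ which is a $C^k_b$-map from $(M\times[0,1], g+g_{\mathrm{eucl}})$ to $(N,h)$, and the base $M\times[0,1]$ is itself a Riemannian manifold of bounded geometry (product of two such manifolds, since $[0,1]$ trivially qualifies).

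First, I would observe that the construction of the submersion $p_f:(f^*T^\delta N, g_S)\to(N,h)$ given in Chapter~1 depends only on: (i) the source being a manifold of bounded geometry, (ii) the target being an oriented manifold of bounded geometry, and (iii) the map being lipschitz. Its definition is built from the exponential map of $N$ and from $f$ itself, and the earlier section already records the quantitative statement that uniform bounds on the derivatives of $f$ up to order $k$ in normal coordinates produce uniform bounds on the derivatives of $p_f$ up to the same order. The construction is entirely local and formal in the data $(M,f)$, so replacing $(M,f)$ by $(M\times[0,1], H)$ yields, by exactly the same formulas, the submersion $p_H$, whose source is $H^*(T^\delta N)$. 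Using that $[0,1]$ is contractible and that the tangent bundle $TN$ only depends on the image point, the total space $H^*(T^\delta N)$ is naturally identified with $f^*(T^\delta N)\times[0,1]$, matching the notation in the statement.

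Second, I would apply the quantitative implication ``$f\in C^k_b \Rightarrow p_f\in C^k_b$'' recalled above to the pair $(M\times[0,1], H)$. Lemma \ref{C^k_bhomo} guarantees that all derivatives of $H$ of order $\leq k$ are uniformly bounded in normal coordinates, and the normal coordinates on the product manifold $M\times[0,1]$ are compatible with those on $M$ (augmented by the standard euclidean coordinate on $[0,1]$), so the bounds of $H$ transfer to bounds on $p_H$ through the same estimates used for $p_f$. This directly yields that $p_H$ has uniformly bounded derivatives of order $k$ in normal coordinates, which is the desired conclusion.

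The main obstacle, and really the only point requiring care, is the bookkeeping of the product structure: one must check that the Sasaki-type metric $g_S$ on $H^*(T^\delta N)$ and its normal coordinates behave correctly with respect to the splitting induced by the $[0,1]$-factor, so that the derivative bounds coming from the $C^k_b$-ness of $H$ feed into the estimates for $p_H$ exactly as in the Chapter~1 proof for $p_f$. Once this compatibility is in place, no new estimate is needed and the corollary follows.
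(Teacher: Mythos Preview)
Your proposal is correct and follows essentially the same approach as the paper: the paper's proof consists of the single line ``It is a consequence of Proposition \ref{tilde},'' which is exactly the reduction you describe, applying the $C^k_b$-bounds for $p_f$ established there with $(M\times[0,1],H)$ in place of $(M,f)$. Your additional remarks about the product structure and the identification $H^*(T^\delta N)\cong f^*(T^\delta N)\times[0,1]$ are the bookkeeping details implicit in that one-line citation.
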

	\begin{proof}
		It is a consequence of Proposition \ref{tilde}.
	\end{proof}
	\section{$L^2$-cohomology and pull-back}
	\subsection{$\mathcal{L}^2$-forms}
	Let us consider an orientable Riemannian manifold $(M,g)$ and let us denote by $\Omega^k_{c}(M)$ the space of complex differential forms with compact support. The Riemannian metric $g$ induces for every $k \in \numberset{N}$ a scalar product on $\Omega^k_c(M)$ as follows: consider $\alpha$ and $\beta$ in $\Omega^*_{c}(M)$ then
	\begin{equation}
		<\alpha, \beta>_{\mathcal{L}^2(M)} := \int\limits_{M}\alpha \wedge \star \bar{\beta},
	\end{equation}
	where $\star$ is the Hodge star operator given by $g$.
	\\This hermitian product gives a norm on $\Omega^k_{c}(M)$:
	\begin{equation}
		\begin{split}
			\| \alpha \|^2_{\mathcal{L}^2(M)} :&= <\alpha, \alpha>_{\mathcal{L}^2(M)} \\ 
			&= \int\limits_{M}\alpha \wedge \star \bar{\alpha} < +\infty.
		\end{split}
	\end{equation}
	\begin{defn}
		We will denote by $\mathcal{L}^2\Omega^k(M)$ the Hilbert space given by the closure of $\Omega^*_{c}(M)$ respect the norm $| \cdot |_{\mathcal{L}^2\Omega^k(M)}$. Moreover we can also define the Hilbert space $\mathcal{L}^2(M)$ given by 
		\begin{equation}
			\mathcal{L}^2(M) := \bigoplus \limits_{k \in \numberset{N}} \mathcal{L}^2\Omega^k(M).
		\end{equation}
		The norm of $\mathcal{L}^2(M)$ it will be denoted by $| \cdot |_{\mathcal{L}^2}$ or $| \cdot |_{\mathcal{L}^2(M)}$.
	\end{defn}
	\begin{rem}
		Since $\Omega_c^k(M)$ is dense in $\mathcal{L}^2\Omega^k(M)$, then $\Omega_c^*(M)$ is dense in $\mathcal{L}^2(M)$.
	\end{rem}
	\begin{defn}
		Consider an orientable Riemannian manifold $(M,g)$ with $dim(M) = m$. Then, the \textbf{chirality operator} $\tau_M: \mathcal{L}^2(M) \longrightarrow \mathcal{L}^2(M)$ is the operator defined for each $\alpha$ in $\Omega_c^*(M)$ as
		\begin{equation}
			\tau_M(\alpha) := i^{\frac{m}{2}} \star \alpha
		\end{equation}
		if $m$ is even and
		\begin{equation}
			\tau_M(\alpha) := i^{\frac{m + 1}{2}} \star \alpha
		\end{equation}
		if $m$ is odd.
	\end{defn}
	\begin{rem}
		The chirality operator is a bounded operator. In particular, respect to $|| \cdot ||_{\mathcal{L}^2}$ it is an isometry and an involution and so we obtain that
		\begin{equation}
			\tau_M^2 =1
		\end{equation}
		and
		\begin{equation}
			||\tau_M|| =1.
		\end{equation}
	\end{rem}
	\subsection{Exterior derivative}
	Let us consider two orientable Riemannian manifolds $(M,g)$ and $(N,h)$ and let $F$ be a linear operator $F:dom(F) \subseteq \mathcal{L}^2(M)\longrightarrow \mathcal{L}^2(N)$.
	\begin{defn}
		The operator $F$ is \textbf{$\mathcal{L}^2$-bounded} if $F$ is a bounded operator respect to the $\mathcal{L}^2$-norms.
		\\Moreover we say that $F$ is a \textbf{closed} operator if for every $\alpha$ in $dom(F)$ and for each sequence $\{\alpha_n\}$ converging to $\alpha$ there is a $\beta$ in $\mathcal{L}^2(N)$ such that 
		\begin{equation}
			\lim\limits_{n \rightarrow + \infty} F(\alpha_n) = \beta \in \mathcal{L}^2(N)
		\end{equation}
		and, moreover, $F(\alpha) = \beta$.
	\end{defn}
	\begin{rem}
		Every bounded operator is closed. In particular we have that $F$ is a closed operator if and only if its graph is a closed subset of $\mathcal{L}^2(M) \times \mathcal{L}^2(N)$.
	\end{rem}
	Consider now a linear operator $F:\Omega^*_c(M)\longrightarrow \Omega^*_c(N)$. Since $\Omega_c^*(M)$ is dense in $\mathcal{L}^2(M)$, if $F$ is $\mathcal{L}^2$-bounded, then there is a unique extension $\overline{F}: \mathcal{L}^2(M) \longrightarrow \mathcal{L}^2(N)$ of $F$ and it is bounded.
	\\On the other hand, if $F$ is unbounded, then there could be some different ways to extend $F$. 
	\\Let us denote by $d: \Omega_c^*(M) \longrightarrow \Omega_c^*(M)$ the operator given by the exterior derivative of compactly supported complex differential forms. We introduce some \textit{closed extensions} i.e. some extensions $\overline{d}$ of $d$ such that $\overline{d}$ is a closed operator.
	\begin{defn}
		The \textbf{maximal extension of $d$} is the operator $d_{max}: dom(d_{max}) \subseteq \mathcal{L}^2(M) \longrightarrow \mathcal{L}^2(N)$ defined on
		\begin{equation}
			\begin{split}
				dom(d_{max}) := \{&\alpha \in \mathcal{L}^2(M)| \exists \alpha' \in \mathcal{L}^2(N) s.t. \forall \beta \in  \Omega_c^*(N)\\ & \langle \alpha', d(\beta) \rangle_{\mathcal{L}^2(\Omega^*(M))} = \langle \alpha, \beta \rangle_{\mathcal{L}^2(N)} \}
			\end{split}
		\end{equation}
		posing
		\begin{equation}
			d_{max}(\alpha)= \alpha'.
		\end{equation}
	\end{defn}
	Moreover we also have
	\begin{defn}
		The \textbf{minimal extension of $d$} is the operator $d_{min}: dom(d_{min}) \subseteq \mathcal{L}^2(M) \longrightarrow \mathcal{L}^2(N)$ defined on
		\begin{equation}
			\begin{split}
				dom(d_{min}) := \{&\alpha \in \mathcal{L}^2(M)| \exists \{\alpha_n\} \in \Omega_c^*(N) \mbox{ and} \exists y \in \mathcal{L}^2(N) s.t.\\
				& \lim\limits_{n\rightarrow +\infty} \alpha_n = \alpha \mbox{ and} \lim\limits_{n\rightarrow +\infty} d\alpha_n = y \}
			\end{split}
		\end{equation}
		as
		\begin{equation}
			d_{min}(\alpha)= y.
		\end{equation}
	\end{defn}
	Given two operators $F: dom(F) \subseteq \mathcal{L}^2(M) \longrightarrow \mathcal{L}^2(N)$ and $G: dom(G) \subseteq \mathcal{L}^2(M) \longrightarrow \mathcal{L}^2(N)$ we will say that
	\begin{equation}
		F \subseteq G \iff Graph(F) \subseteq Graph(G)
	\end{equation}.
	We can observe that
	\begin{equation}
		d \subseteq d_{min} \subseteq d_{max}.
	\end{equation}
	Moreover every closed extension $\overline{d}$ is such that
	\begin{equation}
		d_{min} \subseteq \overline{d} \subseteq d_{max}.
	\end{equation}
	However if the Riemannian manifold $(M,g)$ is complete then the following proposition of \cite{Gaf} holds.
	\begin{prop}
		Let $(M,g)$ a complete Riemannian manifold. Then the maximal and the minimal domain of the exterior derivative coincide and so
		\begin{equation}
			d_{max} = d_{min}.
		\end{equation}
	\end{prop}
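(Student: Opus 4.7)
The inclusion $d_{min} \subseteq d_{max}$ is already noted in the excerpt, so the content is the reverse inclusion: given $\alpha \in \mathrm{dom}(d_{max})$ with $d_{max}\alpha = \alpha'$, I must produce a sequence $\{\alpha_n\} \subset \Omega^*_c(M)$ with $\alpha_n \to \alpha$ and $d\alpha_n \to \alpha'$ in $\mathcal{L}^2(M)$. The plan has two stages: first cut off to get compactly supported $\mathcal{L}^2$ forms in the maximal domain, then mollify to obtain smooth ones.

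For the cutoff stage I would exploit completeness via Hopf--Rinow. Fix a basepoint $x_0 \in M$ and let $\rho(x) := d(x,x_0)$; this is $1$-Lipschitz, and closed metric balls are compact. Choose smooth functions $\psi_n : \mathbb{R} \to [0,1]$ with $\psi_n \equiv 1$ on $[0,n]$, $\psi_n \equiv 0$ on $[2n,+\infty)$ and $|\psi_n'| \le 2/n$, and set $\chi_n := \psi_n \circ \rho$. Then $\chi_n$ has compact support, $\chi_n \to 1$ pointwise, and $|d\chi_n|_\infty \le 2/n$ (wherever $\rho$ is differentiable; Rademacher suffices, or one smooths $\rho$ slightly). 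A straightforward check using that multiplication by a bounded Lipschitz function preserves $\mathrm{dom}(d_{max})$ gives
\begin{equation}
    d_{max}(\chi_n \alpha) = d\chi_n \wedge \alpha + \chi_n \alpha'.
\end{equation}
By dominated convergence $\chi_n \alpha \to \alpha$ and $\chi_n \alpha' \to \alpha'$ in $\mathcal{L}^2$, while $\|d\chi_n \wedge \alpha\|_{\mathcal{L}^2} \le (2/n)\|\alpha\|_{\mathcal{L}^2} \to 0$. Hence $\chi_n\alpha$ sits in $\mathrm{dom}(d_{max})$, has compact support, and converges to $\alpha$ in the graph norm.

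For the mollification stage, fix $n$ and work with the compactly supported form $\beta := \chi_n \alpha$. Cover a neighbourhood of $\mathrm{supp}(\beta)$ by finitely many normal coordinate charts, subordinate a smooth partition of unity, and in each chart convolve the coefficients of $\beta$ with a standard Euclidean mollifier $\phi_\varepsilon$. The resulting forms $\beta_\varepsilon \in \Omega^*_c(M)$ satisfy $\beta_\varepsilon \to \beta$ in $\mathcal{L}^2$; moreover, since convolution commutes with the flat exterior derivative in coordinates up to a commutator involving derivatives of the metric-dependent coefficients (which are smooth on the fixed compact set), a Friedrichs-mollifier argument yields $d\beta_\varepsilon \to d_{max}\beta$ in $\mathcal{L}^2$ as $\varepsilon \to 0$. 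A diagonal extraction $\alpha_n := \beta_{n, \varepsilon_n}$ then lies in $\Omega^*_c(M)$, converges to $\alpha$ in $\mathcal{L}^2$, and has $d\alpha_n$ converging to $\alpha'$ in $\mathcal{L}^2$. This exhibits $\alpha \in \mathrm{dom}(d_{min})$ with $d_{min}\alpha = \alpha'$.

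The main obstacle is the cutoff construction: without completeness one cannot guarantee that the sublevel sets $\{\rho \le 2n\}$ are compact, and so the functions $\chi_n$ need not have compact support. The Friedrichs-type commutator estimate in the mollification step is standard once one is working on a fixed compact set in coordinates. The rest is bookkeeping.
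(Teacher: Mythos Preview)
The paper does not supply its own proof of this proposition: it simply states the result and attributes it to Gaffney \cite{Gaf}. Your proposal is a correct sketch of the classical Gaffney argument---cutoff by Lipschitz functions with uniformly small gradient (which is exactly where completeness enters via Hopf--Rinow), followed by local Friedrichs mollification on a fixed compact set---so there is nothing to compare and no gap to flag.
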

	As consequence of this Proposition we also have that there is a unique closed extension for $d^*: \Omega_c^*(M) \longrightarrow \Omega_c^*(M)$, the adjoint of $d$ and also for the operator $d + d^*: \Omega_c^*(M) \longrightarrow \Omega_c^*(M)$.
	\\When $(M,g)$ is a manifold of bounded geometry, in particular is a complete Riemannian manifold and so, using an abuse of notation, we denote the closed extensions of these operators by $d$, $d^*$, $d + d^*$. 
	\subsection{$L^2$-cohomology and reduced $L^2$-cohomology}
	Let us consider a closed extension $\overline{d}$ of the exterior derivative on an orientable Riemannian manifold $(M,g)$. Let us suppose that $\overline{d}(dom(\overline{d})) \subseteq ker(\overline{d}) \subseteq dom(\overline{d})$, and so $\overline{d} \circ \overline{d}$ is well defined and equals to zero. In particular, this condition is satisfied for each closed extension $\overline{d}$ if $(M,g)$ is a complete Riemannian manifold.
	\\Let us consider now the sequence
	\begin{equation}
		0 \longrightarrow \mathcal{L}^2(\Omega^1(M)) \xrightarrow{\overline{d}} \mathcal{L}^2(\Omega^2(M)) \xrightarrow{\overline{d}} \mathcal{L}^2(\Omega^3(M)) \xrightarrow{\overline{d}}...
	\end{equation}
	It is, following the definition given in \cite{Bruning}, an Hilbert complex.
	\begin{defn}
		We will define \textbf{$i$-th group of $L^2$-cohomology } the group
		\begin{equation}
			H^i_{2, \overline{d}}(M) := \frac{ker(\overline{d}_i)}{im(\overline{d}_{i-1})}.
		\end{equation}
	\end{defn}
	\begin{defn}
		We will define \textbf{$i$-th group of reduced $L^2$-cohomology } the group
		\begin{equation}
			\overline{H}^i_{2, \overline{d}}(M) := \frac{ker(\overline{d}_i)}{\overline{im(\overline{d}_{i-1})}}.
		\end{equation}
	\end{defn}
	\begin{rem}
		If $(M,g)$ is a complete manifold, since \cite{Gaf}, the (reduced or not) $L^2$-cohomology is uniquely defined. In this case we denote the groups as $H^i_{2}(M)$ and $\overline{H}^i_{2}(M)$.
	\end{rem}
	\begin{prop} \label{boundedness}
		Let $(N,h)$ and $(M,g)$ be two orientable Riemannian manifolds. Let $B$ and $K: \mathcal{L}^2(N) \longrightarrow \mathcal{L}^2(M)$ be two $ \mathcal{L}^2$-bounded operators. Let us suppose that $B(\Omega^*_c(N)) \subseteq \Omega^*_c(M)$ and
		\begin{equation}
			d B = \pm Bd + K
		\end{equation}
		over $\Omega^*_c(N)$. Then we have that
		\begin{equation}
			B(dom(d_{min})) \subseteq dom(d_{min})
		\end{equation}
		and $d_{min}B = \pm Bd_{min} + K$ on the minimal domain of $d$.
	\end{prop}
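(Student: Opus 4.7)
The plan is to argue by direct approximation using the definition of $d_{min}$. Given $\alpha \in dom(d_{min})$, by definition there exists a sequence $\{\alpha_n\} \subset \Omega^*_c(N)$ with $\alpha_n \to \alpha$ and $d\alpha_n \to d_{min}\alpha$ in $\mathcal{L}^2(N)$. The natural candidate approximating sequence for $B\alpha$ is $\{B\alpha_n\}$, which lives in $\Omega^*_c(M)$ thanks to the hypothesis $B(\Omega^*_c(N)) \subseteq \Omega^*_c(M)$.

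Next I would verify the two convergences needed to conclude $B\alpha \in dom(d_{min})$. First, because $B$ is $\mathcal{L}^2$-bounded, $B\alpha_n \to B\alpha$ in $\mathcal{L}^2(M)$. Second, since $B\alpha_n \in \Omega^*_c(M)$, the identity $dB = \pm Bd + K$ applies pointwise in the sequence, giving
\begin{equation}
d(B\alpha_n) = \pm B(d\alpha_n) + K\alpha_n.
\end{equation}
The right-hand side converges in $\mathcal{L}^2(M)$ to $\pm B(d_{min}\alpha) + K\alpha$, again by the $\mathcal{L}^2$-boundedness of $B$ and $K$ together with the assumed convergence of $\alpha_n$ and $d\alpha_n$. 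Hence $\{B\alpha_n\}$ is an $\Omega^*_c(M)$-sequence whose image under $d$ converges, which is exactly the defining property for $B\alpha \in dom(d_{min})$ with
\begin{equation}
d_{min}(B\alpha) = \pm B(d_{min}\alpha) + K\alpha.
\end{equation}

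There is essentially no obstacle in this proof: everything reduces to bounded operators commuting with $\mathcal{L}^2$-limits, and the closed-graph-type characterisation of $d_{min}$ does the rest. The only point that requires attention is to check that the approximation of $B\alpha$ one produces lies in the correct space of compactly supported forms, which is precisely why the hypothesis $B(\Omega^*_c(N)) \subseteq \Omega^*_c(M)$ is included in the statement; without it one would only obtain a statement about $d_{max}$.
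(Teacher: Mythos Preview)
Your proposal is correct and matches the paper's own proof essentially line for line: approximate $\alpha$ by compactly supported $\alpha_n$, use boundedness of $B$ to get $B\alpha_n\to B\alpha$, and use the identity on $\Omega^*_c$ together with boundedness of $B$ and $K$ to show $dB\alpha_n$ converges, yielding $B\alpha\in dom(d_{min})$ with the stated formula. Your closing remark about why the hypothesis $B(\Omega^*_c(N))\subseteq\Omega^*_c(M)$ is needed is a nice addition not made explicit in the paper.
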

	\begin{proof}
		Let $\alpha$ be an element in $dom(d_{min})$. This means that there is a sequence $\{\alpha_n \}$ in $\Omega^*_c(N)$ such that
		\begin{equation}
			\begin{cases}
				&\alpha = \lim\limits_{n \rightarrow +\infty} \alpha_n \\
				&d\alpha = \lim\limits_{n \rightarrow +\infty} d\alpha_n.
			\end{cases}
		\end{equation}
		Using that $B$ is continuous we have
		\begin{equation}
			B\alpha = \lim\limits_{n \rightarrow +\infty} B\alpha_n
		\end{equation}
		where $\{B\alpha_n\}$ is a sequence in $\Omega^*_c(M)$. Moreover one can see that the limit of $dB\alpha_n$ exists:
		\begin{equation}
			\begin{split}
				\lim\limits_{n\rightarrow + \infty} dB\alpha_n &= \lim\limits_{n\rightarrow + \infty} \pm Bd\alpha_n + K\alpha_n\\
				&= \pm B\lim\limits_{n\rightarrow + \infty} d\alpha_n + K\lim\limits_{n\rightarrow + \infty}\alpha_n\\
				&= \pm Bd\alpha + K\alpha.
			\end{split}
		\end{equation}
		So $d_{min}$ is well defined in $B\alpha$ and
		\begin{equation}
			d_{min}B \alpha = \pm Bd_{min}\alpha + K\alpha.
		\end{equation}
	\end{proof}
	\begin{rem}\label{altra}
		Let us suppose $(M,g)$ and $(N,h)$ are complete, orientable, Riemannian manifolds. In general an operator $A: dom(A) \subseteq \mathcal{L}^2(N) \longrightarrow \mathcal{L}^2(M)$ induces a map in $L^2$-cohomology if
		\begin{itemize}
			\item $dom(d_N) \subseteq dom(A)$,
			\item $A(dom(d_N)) \subseteq dom(d_M)$,
			\item $A$ and $d$ commute.
		\end{itemize}
		Moreover if we also want that $A$ induces a map in reduced $L^2$-cohomology, we need that $A$ is $\mathcal{L}^2$-bounded on $\overline{im(d)}$. Indeed, given $\alpha$ in $dom(d_N)$, we have that
		\begin{equation}
			\begin{split}
				A(\alpha + \lim\limits_{k \rightarrow + \infty}d\beta_k) &= A(\alpha) + A(\lim\limits_{k \rightarrow + \infty}d\beta_k) \\
				&=A(\alpha) + \lim\limits_{k \rightarrow + \infty} A(d \beta_k) \\
				&=A(\alpha) + \lim\limits_{k \rightarrow + \infty} dA(\beta_k)
			\end{split}
		\end{equation}
		and so, in reduced $L^2$-cohomology,
		\begin{equation}
			[A(\alpha + \lim\limits_{k \rightarrow + \infty}d\beta_k)] = [A(\alpha)].
		\end{equation}
	\end{rem}
	\begin{cor}\label{boundedness2}
		Consider two operators $B$ and $K$ as in Proposition \ref{boundedness}. Let us suppose that $K = 0$ and that
		\begin{equation}
			dB = Bd
		\end{equation}
		on $\Omega_c^*(M)$. Then $B$ induces a map in (un)-reduced $L^2$-cohomology.\footnote{We are considering $\overline{d} = d_{min}$.}
	\end{cor}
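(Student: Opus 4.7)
The plan is to reduce the corollary to a direct application of Proposition \ref{boundedness} together with the criterion spelled out in Remark \ref{altra}. Since $(M,g)$ and $(N,h)$ are of bounded geometry, hence complete, the maximal and minimal closed extensions of $d$ coincide, and I may simply write $d$ for this unique closed extension in what follows.

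First I would invoke Proposition \ref{boundedness} with $K = 0$. This immediately gives $B(dom(d)) \subseteq dom(d)$ together with the identity $dB = Bd$ on all of $dom(d)$, not just on $\Omega_c^*(N)$. This already secures the first three bullet points of Remark \ref{altra}: $dom(d_N)$ is contained in $dom(B) = \mathcal{L}^2(N)$ because $B$ is everywhere-defined and $\mathcal{L}^2$-bounded; $B$ sends $dom(d_N)$ into $dom(d_M)$; and the two operators commute on that domain.

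Next I would check the passage to cohomology classes. If $\alpha \in \ker d \subseteq dom(d)$, then $dB\alpha = Bd\alpha = 0$, so $B\alpha \in \ker d$. If $\alpha = d\beta$ with $\beta \in dom(d)$, then $B\alpha = Bd\beta = dB\beta \in \operatorname{im}(d)$, since $B\beta \in dom(d)$ by the first step. Therefore $B$ descends to a well-defined linear map $B_* : H^*_2(N) \longrightarrow H^*_2(M)$ on unreduced $L^2$-cohomology.

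For the reduced version, the additional ingredient required by Remark \ref{altra} is $\mathcal{L}^2$-boundedness of $B$ on $\overline{\operatorname{im}(d)}$, which is automatic from the hypothesis that $B$ is $\mathcal{L}^2$-bounded on all of $\mathcal{L}^2(N)$. Concretely, given a Cauchy-like limit $\eta = \lim_{k} d\beta_k$ in $\mathcal{L}^2(N)$, continuity of $B$ gives $B\eta = \lim_k Bd\beta_k = \lim_k dB\beta_k \in \overline{\operatorname{im}(d)}$, so $B$ maps $\overline{\operatorname{im}(d)}$ into $\overline{\operatorname{im}(d)}$ and hence induces $B_* : \overline{H}^*_2(N) \longrightarrow \overline{H}^*_2(M)$. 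I do not anticipate a genuine obstacle here: the entire argument is essentially bookkeeping, once Proposition \ref{boundedness} has been applied to upgrade the commutation relation from $\Omega_c^*$ to the minimal domain, and once one remembers that $\mathcal{L}^2$-boundedness of $B$ gives continuity on the closure of the image of $d$ for free.
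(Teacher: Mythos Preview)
Your proof is correct and matches the paper's intended approach: the corollary is stated without proof precisely because it is meant to follow immediately from Proposition~\ref{boundedness} (with $K=0$) together with the criterion in Remark~\ref{altra}, which is exactly what you spell out. One minor remark: you invoke bounded geometry and completeness to identify $d_{min}$ with $d_{max}$, but the hypotheses of Proposition~\ref{boundedness} only assume orientable Riemannian manifolds, and the footnote already fixes $\overline{d}=d_{min}$; your argument goes through verbatim for $d_{min}$ without any completeness assumption, so that sentence can simply be dropped.
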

	\subsection{Problems with pullback}\label{probl}
	Let $f:(M, g) \longrightarrow (N,h)$ be a map between orientable Riemannian manifolds. In general the pullback $f^*$ is not an $\mathcal{L}^2$-bounded operator and it doesn't induce an operator in $L^2$-cohomology (reduced or not). In this section we will see some examples.
	\begin{exem}
		In this first example we will show that the pullback of a map $p: (E,g) \longrightarrow (N,h)$ between complete Riemannian manifolds is not a $\mathcal{L}^2$-bounded operator even if $p$ is a proper submersion. Let us consider $(N,h) = (\numberset{R}, g_{eucl})$, $E = (\numberset{R} \times S^1,g)$ where
		\begin{equation}
			g(x,\theta) = dx^2 + e^{\frac{x^2}{2}} d\theta^2.
		\end{equation}
		Consider $p: (E,g) \longrightarrow (N,h)$ given by the projection $(x,y) \longrightarrow x$. One can observe that $p$ is a proper submersion, but $p^*$ is not $\mathcal{L}^2$-bounded.\\
		Indeed if $\alpha = e^{-\frac{x^2}{2}}dx$ in $\mathcal{L}^2(N)$, then
		\begin{equation}
			\begin{split}
				\| p^*(\alpha)\|_{\mathcal{L}^2(E)} &= \int\limits_{\numberset{R}} \int\limits_{S^{1}} e^{-x^2} Vol_{\numberset{R} \times S^1} \\
				&= \int\limits_{\numberset{R}} \int_{S^1}  e^{-x^2} e^{x^2} d\theta dx \\
				&= \int\limits_{\numberset{R}} 2\pi e^{-x^2} e^{x^2} dx = +\infty,
			\end{split}
		\end{equation}
		and so $p^*(\mathcal{L}^2(N))$ is not in $\mathcal{L}^2(E)$. 
	\end{exem}
	The pullback is not a bounded operator between the $\mathcal{L}^2$-spaces even if $(M,g)$ and $(N,h)$ are manifolds of bounded geometry and $f$ is a smooth lipschitz-homotopy equivalence.
	\begin{exem}\label{nuovo}
		Let us consider $\alpha: \numberset{R} \longrightarrow \numberset{R}$ defined by
		\begin{equation}
			\alpha(x) = \begin{cases} \alpha_0e^{-\frac{1}{\tan^2(x)}} \mbox{ if $x \in [-1,1]$} \\
				|x| \mbox{ otherwise}
			\end{cases}
		\end{equation}
		where 
		\begin{equation}
			\alpha_0 := e^{-\frac{1}{\tan^2(1)}}.
		\end{equation}
		Consider the map $\psi: \numberset{R} \longrightarrow \numberset{R}$
		\begin{equation}
			\beta(x) = \begin{cases} \alpha(x) \mbox{ if $x \geq 0$} \\
				-\alpha(x) \mbox{ otherwise}.
			\end{cases}
		\end{equation}
		Observe that for each $\epsilon > 0$ we have that $\beta$ is a smooth map outside $C_\epsilon := [-1 - \epsilon, -1 + \epsilon] \sqcup [1 - \epsilon, 1 + \epsilon]$. Fix $\epsilon \leq \frac{1}{2}$. Consider $\psi$ a smooth approximation of $\beta$ which is equal to $\beta$ outside $C_\epsilon$. We know that such a approximation exists since Corollary \ref{corappr}.
		\\We have that $\psi$ is also lipschitz-homotopic to $id_\numberset{R}$, indeed the homotopy $h:\numberset{R} \longrightarrow \numberset{R}$
		\begin{equation}
			h(x,t) := \psi(x)\cdot t + (1-t) \cdot x
		\end{equation}
		is lipschitz. 
		We have that $\psi$ is smooth, indeed it is clearly smooth if $x \neq 0$ and
		\begin{equation}
			\frac{\partial^k}{\partial^k x} \psi(0) = 0,
		\end{equation}
		for each $k \geq 1$. Moreover $\psi$ is lipschitz-homotopic to the identity: the homotopy is $h: \numberset{R} \times [0,1] \longrightarrow \numberset{R}$ defined as follow
		\begin{equation}
			h(x,s) = s \psi(x) + (1-s)x.
		\end{equation}
		Consider now
		\begin{equation}
			f(x,y) = \begin{cases} (x, \psi(y-1)) \mbox{ if $y \geq 1$} \\
				(x,\psi(y+1)) \mbox{ if $y \leq -1$}\\
				(x,0) \mbox{ otherwise}.
			\end{cases}
		\end{equation}
		We have that $f$ is a uniformly proper and lipschitz map. In particular it is smooth since 
		\begin{equation}
			\frac{\partial^k}{\partial^k x} \psi(0) =  \frac{\partial^k}{\partial^k x} \beta (0) = 0.
		\end{equation}
		Finally $f$ is lipschitz-homotopic to the identity: indeed the homotopy
		\begin{equation}
			H_1(x,y, s) = \begin{cases} (x, \psi(y-s)) \mbox{ if $y \geq s$} \\
				(x,\psi(y+s)) \mbox{ if $y \leq -s$}\\
				(x,0) \mbox{ otherwise}.
			\end{cases}
		\end{equation}
		is a lipschitz homotopy between $f$ and $(id_{\numberset{R}}, \psi)$ and $H_2(x,y,s) := (x, h(y,s))$ is a lipschitz homotopy between $(id_{\numberset{R}}, \psi)$ and $id_{\numberset{R}^2}$.
		\\
		\\Let us consider the $0$-form given by $\chi_A$, where
		\begin{equation}
			A := \{(x,y) \in R^2 | y \in [-\frac{e^{-x^2}}{2},+\frac{e^{-x^2}}{2}]\}.
		\end{equation} 
		We have that $\chi_A$ is in $\mathcal{L}^2(\numberset{R}^2)$, but $f^*(\chi_A)$ is not squared-integrable.
	\end{exem}
	Finally we will show that given a lipschitz-homotopy equivalence $\phi$ between manifolds of bounded geometry, then $\phi^*(dom(d)) \nsubseteq dom(d)$ and so $\phi^*$ doesn't induce a morphism on $L^2$-cohomology (reduced or not).
	\begin{exem}
		Let us consider $\phi: (\numberset{R}^4, g_{eucl}) \longrightarrow (\numberset{R}^4, g_{eucl})$ such that $\phi(x,y,z,w) = (\sigma(x), \sigma(y), \sigma(z), w)$ where
		\begin{equation}
			\sigma(t) := \begin{cases}
				0 \mbox{    if   } |t| \leq 1 \\
				\psi(t-1) \mbox{    if   } t \leq -1 \\
				\psi(t+1) \mbox{    if   } t \geq 1.
			\end{cases}
		\end{equation}
		where $\psi$ is the map defined in Example \ref{nuovo}. Since the derivatives of each order of $\psi$ in $0$ are null, then $\sigma$ is a smooth map. So $\phi$ is a smooth map.
		\\Consider a compactly supported $0$-form  $\alpha$ such that around the origin is $\frac{1}{|(x,y,z,w)|^{k}}$ where $\frac{1}{2} < k < 2$, then $\alpha$ is in $dom(d)$, indeed we know that $\alpha$ is in the Sobolev space $W^{1,2}$.
		\\On the other hand $\phi^*\alpha$ is not an $\mathcal{L}^2$-form, indeed we have that on $[-1,1] \times [-1,1] \times[-1,1] \times U$,  where $U$ is a sufficiently small neighborhood of $0$ in $\numberset{R}$, we have that
		\begin{equation}
			\phi^*\alpha(x,y,z,w) = \frac{1}{w^k}
		\end{equation}
		and so 
		\begin{equation}
			\begin{split}
				\int_{\numberset{R}^4} (\phi^*\alpha)^2 d\mu_{\numberset{R}^4} &\geq \int_{[-1,1]^3} \int_{U} \frac{1}{w^{2k}} d\mu_{\numberset{R}} d\mu_{\numberset{R}^3}\\
				&\geq 2^3 \cdot \int_{U} \frac{1}{w^{2k}} d\mu_{\numberset{R}}\\ 
				&= + \infty,
			\end{split}
		\end{equation}
		since $2k > 1$.
		\\Observe that $\phi$ is a lipschitz-homotopy equivalence because there is a lipschitz homotopy $H$ between $\phi$ and the identity. This homotopy is defined as follow
		\begin{equation}
			\begin{split}
				H: (\numberset{R}^4 \times [0,1], g_{eucl} \times g_{[0,1]}) &\longrightarrow (\numberset{R}^4, g_{eucl}) \\
				(x,y,z,w, t) &\longrightarrow (h(x,t), h(y,t), h(z,t), w ).
			\end{split}
		\end{equation}
		where $h$ is defined in the Example \ref{nuovo}.
	\end{exem}
	\subsection{Radon-Nicodym derivative and Fiber Volume}
	In this section we will introduce some tools of Measure Theory, that we will use to define a family of maps whose pullbacks are $\mathcal{L}^2$-bounded operators.
	\begin{defn}
		Let $X$  be a set with a $\sigma$-algebra $\Sigma$ (i.e. a \textit{measurable set}). Let $\mu$ and $\nu$ two measures respect $\Sigma$. We will say that $\nu$ is \textbf{abolutely continuous with respect to} $\mu$ if and only if for all measurable $A$
		\begin{equation}
			\mu(A) = 0 \implies \nu(A)=0.
		\end{equation}
		We will denote it by $ \nu < < \mu$.
	\end{defn}
	\begin{defn}
		A measure $\mu$ over a measurable set $(X, \Sigma)$ is said \textbf{$\sigma$-finite} if $X$ is the countable union of substsets with finite measure.
	\end{defn}
	\begin{rem}
		The measure over a Riemannian manifold given by its metric is always $\sigma$-finite. It follows from para-compactness.
	\end{rem}
	\begin{thm}[Radon-Nyikodym]
		Let $(X, \Sigma)$ be a meausurable set. Given two measures $\mu$ and $\nu$ such that $\nu < < \mu$ and $\mu$ is $\sigma$-finite, then there is a non-negative $\mu$-measurable function $f: X \longrightarrow \numberset{R}$ such that
		\begin{equation}
			\nu(A) = \int_A f d\mu.
		\end{equation}
		We will call $f$ the \textbf{Radon-Nicodym derivative} of $\nu$ respect to $\mu$ and we will denote it by $\frac{\partial \nu}{\partial \mu}$.
	\end{thm}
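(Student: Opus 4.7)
The plan is to use von Neumann's Hilbert-space argument, which reduces Radon-Nikodym to a single application of the Riesz representation theorem in $\mathcal{L}^2$.

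First I would reduce to the case where both measures are finite. By $\sigma$-finiteness of $\mu$, write $X = \bigsqcup_{n} X_n$ with $\mu(X_n) < +\infty$; absolute continuity passes to the restrictions, and the densities found on each $X_n$ can be glued to a density on $X$. On each piece, an exhaustion argument using the truncations $\nu_k := \min(\nu, k\mu)$ (which are finite) and monotone limits of their densities further reduces the problem to the situation $\nu(X), \mu(X) < +\infty$.

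Assuming both measures finite, set $\varphi := \mu + \nu$. The heart of the argument is to consider the linear functional
\begin{equation}
L: \mathcal{L}^2(X,\varphi) \longrightarrow \numberset{R}, \qquad L(g) := \int_X g \, d\nu.
\end{equation}
Since $\nu \leq \varphi$, Cauchy--Schwarz gives $|L(g)| \leq \varphi(X)^{1/2} \cdot \| g \|_{\mathcal{L}^2(X,\varphi)}$, so $L$ is bounded. The Riesz representation theorem then yields a unique $h \in \mathcal{L}^2(X,\varphi)$ with
\begin{equation}
\int_X g \, d\nu \; = \; \int_X g h \, d\mu \; + \; \int_X g h \, d\nu \qquad \forall\, g \in \mathcal{L}^2(X,\varphi).
\end{equation}

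From this identity I would extract pointwise bounds on $h$: testing against $g = \chi_{\{h < 0\}}$ forces $h \geq 0$ $\varphi$-a.e., and testing against $g = \chi_{\{h \geq 1\}}$ forces $\mu(\{h \geq 1\}) = 0$. This is the one and only place where the hypothesis $\nu < < \mu$ enters: it promotes $\mu(\{h \geq 1\}) = 0$ to $\varphi(\{h \geq 1\}) = 0$, so $0 \leq h < 1$ $\varphi$-almost everywhere. Rewriting the identity as $\int_X g(1 - h)\, d\nu = \int_X g h\, d\mu$ and substituting the truncations $g_n := \chi_A \cdot (1-h)^{-1} \cdot \chi_{\{h \leq 1 - 1/n\}}$, monotone convergence on both sides produces
\begin{equation}
\nu(A) \; = \; \int_A \frac{h}{1 - h}\, d\mu,
\end{equation}
so $f := h/(1-h)$ is the desired non-negative measurable density. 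The main obstacle is this last step, since one is substituting potentially unbounded test functions into an identity only established on $\mathcal{L}^2(X,\varphi)$; the monotone-convergence justification, together with the pinpointed use of absolute continuity to control the set $\{h \geq 1\}$, is the technical crux of the argument.
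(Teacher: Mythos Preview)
Your proposal is the standard von Neumann proof and is correct in outline. However, the paper does not prove this theorem at all: it is stated as the classical Radon--Nikodym theorem and used as a black box (the author immediately passes to remarks and to the definition of the Fiber Volume). So there is nothing to compare against; you have supplied a proof where the paper simply quotes the result.
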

	\begin{rem}
		If $g$ is an integrable function respect to $\nu$, then
		\begin{equation}
			\int_M g d\nu = \int_M g f d\mu.
		\end{equation}
	\end{rem}
	\begin{rem}
		The Radon-Nicodym derivative is unique (up to $0$-measure sets).
	\end{rem}
	\begin{rem}
		Let us consider on a measurable space $(X, \Sigma)$ and consider three $\sigma$-finite measure $\lambda$, $\mu$ and $\sigma$ such that $\lambda << \mu << \sigma$. Then
		\begin{equation}
			\frac{\partial \sigma}{\partial \lambda} = \frac{\partial \sigma}{\partial {\mu}} \frac{\partial \mu}{\partial \sigma}
		\end{equation}
	\end{rem}
	Let us consider now a measurable map $f: (M, \Sigma_M, \nu) \longrightarrow (N, \Sigma_N, \mu)$ 
	\begin{defn}
		The \textbf{pushforward measure} is the measure $f_\star(\nu)$ on $(N, \Sigma_N)$ defined for all $A$ in $\Sigma_N$ as
		\begin{equation}
			f_\star(\nu)(A) := \nu(f^{-1}(A)).
		\end{equation}
	\end{defn}
	\begin{rem}
		If $g$ is an integrable function on $N$, then
		\begin{equation}
			\int_M f^*(g) d\nu = \int_Ngd(f_\star\nu).
		\end{equation}
	\end{rem}
	\begin{defn}
		Let $(M, \Sigma_M, \nu)$ and $(N, \Sigma_N, \mu)$ be two measured spaces and let $f:(M, \Sigma_M, \nu) \longrightarrow (N, \Sigma_N, \mu)$ be a function such that $f_\star(\nu)$ is absolutely continuous with respect to $\nu$.
		Let $(N, \mu)$ be $\sigma$-finite, then the \textbf{Fiber Volume} is the function
		\begin{equation}
			Vol_{f, \nu, \mu} := \frac{\partial f_\star \nu}{\partial \mu}.
		\end{equation}
	\end{defn}
	\subsection{Radon-Nicodym-Lipschitz maps}
	\begin{defn}
		A \textbf{measured metric space} is a triplet $(M, d_M, \nu_M)$ where $d_M$ is a metric space, $M$ is a topological space with the topology induced by $d_M$ and $\nu_M$ is a measure defined on the $\sigma$-algebra of Borellians of $(M,d_M)$.
	\end{defn}
	\begin{defn}
		Let $(M, d_M, \nu_M)$ and $(N, d_N, \mu_N)$ be two measured and metric spaces. A map $f: (M, \nu) \longrightarrow (N, \mu)$ is \textbf{Radon-Nikodym-lipschitz} or \textbf{R.-N.-lipschitz} if
		\begin{itemize}
			\item $f$ is lipschitz
			\item $f$ has bounded Fiber Volume.
		\end{itemize}
	\end{defn}
	Consider $f: (M, d_M, \nu_M) \longrightarrow (N, d_N, \mu_N)$ an R.-N.-lipschitz map and let $C$ be the supremum of $\frac{\partial f_{\star} \mu_M}{\partial \mu_N}$. Then for all measurable set $A \subseteq N$, we have that
	\begin{equation}
		\label{follow}
		\begin{split}
			\mu_M(f^{-1}(A)) &= \int_A \frac{\partial f_{\star} \mu_M}{\partial \mu_N} d\mu_N \\
			&\leq C \int_A d\mu_N = C \mu_N(A).
		\end{split}
	\end{equation}
	A measurable, lipschitz map which satisfies the above inequality is called a \textit{volume bounded map} or \textit{v.b.-map}: these maps are defined by Thomas Schick in his Ph.D. thesis. 
	\begin{prop}
		Let us consider a map $f: (M, d_M, \nu_M) \longrightarrow (N, d_N, \mu_N)$ between measured metric spaces. Then we have that $f$ is a R.-N.-map if and only if it is a v.b.-map.
	\end{prop}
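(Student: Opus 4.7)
The proof naturally splits into two implications, one of which has essentially been carried out already in the excerpt.

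For the direction ($\Rightarrow$), if $f$ is R.-N.-lipschitz then the chain of equalities and inequalities in (\ref{follow}) proves exactly the v.b.-bound with constant $C := \sup \frac{\partial f_{\star}\mu_M}{\partial \mu_N}$. So this direction is essentially a reading of the argument already displayed just before the statement.

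For the direction ($\Leftarrow$), my plan is as follows. Suppose $f$ is a v.b.-map with constant $C$, so $\mu_M(f^{-1}(A)) \leq C\,\mu_N(A)$ for every Borel $A \subseteq N$. Unwinding the definition of pushforward, this rewrites as $f_\star \mu_M(A) \leq C \mu_N(A)$. In particular if $\mu_N(A) = 0$ then $f_\star \mu_M(A) = 0$, hence $f_\star \mu_M \ll \mu_N$. Since $(N, d_N, \mu_N)$ is $\sigma$-finite (being a measured metric space associated to a Riemannian manifold, in our intended applications, but more generally this is part of the hypothesis for applying Radon--Nikodym), the Radon--Nikodym theorem produces a non-negative measurable $\mathrm{Vol}_f := \frac{\partial f_\star \mu_M}{\partial \mu_N}$ with
\begin{equation}
f_\star \mu_M(A) \;=\; \int_A \mathrm{Vol}_f \, d\mu_N
\end{equation}
for every measurable $A$.

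It remains to show $\mathrm{Vol}_f \leq C$ almost everywhere, and this is the only step that requires an actual argument. I would proceed by contradiction: set $B_\varepsilon := \{q \in N : \mathrm{Vol}_f(q) \geq C + \varepsilon\}$ for $\varepsilon > 0$, and suppose $\mu_N(B_\varepsilon) > 0$. Using $\sigma$-finiteness of $\mu_N$, I can replace $B_\varepsilon$ by a measurable subset $B'_\varepsilon \subseteq B_\varepsilon$ of finite positive $\mu_N$-measure. Then
\begin{equation}
f_\star \mu_M(B'_\varepsilon) \;=\; \int_{B'_\varepsilon} \mathrm{Vol}_f \, d\mu_N \;\geq\; (C+\varepsilon)\,\mu_N(B'_\varepsilon) \;>\; C\,\mu_N(B'_\varepsilon),
\end{equation}
contradicting the v.b.-bound $\mu_M(f^{-1}(B'_\varepsilon)) \leq C\,\mu_N(B'_\varepsilon)$. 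Hence $\mathrm{Vol}_f \leq C$ almost everywhere, so $f$ has bounded Fiber Volume and, being lipschitz by assumption, is R.-N.-lipschitz.

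I do not expect any serious obstacle: the proof is a direct application of Radon--Nikodym plus the standard "sets of excess" trick, and the main content (the forward inequality) has already been extracted. The only subtlety is to ensure the measure-theoretic hypotheses of Radon--Nikodym are met, which is guaranteed by the $\sigma$-finiteness observation recorded earlier in the section.
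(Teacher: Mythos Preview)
Your proof is correct and follows essentially the same approach as the paper's: the forward direction via (\ref{follow}), and the reverse direction by a contradiction argument on a set where the Radon--Nikodym derivative exceeds the v.b.-constant. If anything, your version is more careful than the paper's, which skips the explicit $\varepsilon$ and the reduction to a set of finite positive measure.
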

	\begin{proof}
		If $f$ is a R.-N.-lipschitz map, then it follows by \ref{follow} that it is a v.b.-map.
		To prove the other implication we can start observing that the Fiber Volume is well-defined since $\mu_N << \nu_M$. Consider $C$ the constant such that $\nu_M(f^{-1}A) \leq C\cdot \mu_N(A)$: then 
		\begin{equation}
			|\frac{\partial f_{\star} \nu_M}{\partial \mu_N}| < C.
		\end{equation}
		Indeed if the above inequality is not satisfied, then there is $A \subseteq N$ such that $\frac{\mu_M(f^{-1}A)}{\mu_N(A)} > C$. But this implies
		\begin{equation}
			\begin{split}
				\mu_M(f^{-1}(A)) &= \int_A \frac{\partial f_\star \nu_M}{\partial \mu_N}\\ 
				&\geq C \cdot \int_A d\mu_N\\
				&= C \cdot \mu_N(A)
			\end{split}
		\end{equation}
		and so we have the contradiction.
	\end{proof}
	Let us prove some properties of R.-N.-lipschitz maps.
	\begin{prop}\label{compo2}
		Consider $f:(M, d_M, \mu_M) \longrightarrow (N, d_N, \mu_N)$ and $g: (N, d_N, \mu_N) \longrightarrow (W, d_W, \mu_W)$ two R.-N.-lipschitz maps. Then $(g \circ f): (M, d_M, \mu_M) \longrightarrow (W, d_W, \mu_W)$ is a R.-N.-lipschitz map.
	\end{prop}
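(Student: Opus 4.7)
The plan is to exploit the equivalence, just established in the preceding proposition, between R.-N.-lipschitz maps and v.b.-maps, which reduces the claim to a purely measure-theoretic composition estimate. The lipschitz half of the statement is immediate: the composition of two lipschitz maps is lipschitz with constant $C_{g \circ f} \le C_g \cdot C_f$, so the substance of the proof lies in verifying that the fiber volume of $g \circ f$ is bounded.

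Rather than manipulate Radon-Nikodym derivatives of pushforwards directly (which would require a chain-rule argument for $\partial (g\circ f)_\star \mu_M / \partial \mu_W$, and in particular a careful bookkeeping of where each density is evaluated), I would reason at the level of measures of sets. Let $A \subseteq W$ be a measurable subset. Then
\begin{equation*}
((g \circ f)_\star \mu_M)(A) = \mu_M\bigl( f^{-1}(g^{-1}(A)) \bigr),
\end{equation*}
and applying the v.b.-inequality for $f$ followed by the v.b.-inequality for $g$ yields
\begin{equation*}
\mu_M\bigl( f^{-1}(g^{-1}(A)) \bigr) \le C_f \cdot \mu_N(g^{-1}(A)) \le C_f \cdot C_g \cdot \mu_W(A),
\end{equation*}
where $C_f$ and $C_g$ denote the v.b.-constants furnished by the hypothesis on $f$ and $g$. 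This simultaneously establishes absolute continuity $(g \circ f)_\star \mu_M \ll \mu_W$ (so that the Fiber Volume of $g \circ f$ is actually defined) and produces the uniform bound $C_f \cdot C_g$ on the resulting Radon-Nikodym derivative.

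Invoking the previous proposition in the reverse direction, the v.b.-inequality just obtained implies that $g \circ f$ is R.-N.-lipschitz, with Fiber Volume bounded by $C_f \cdot C_g$. I do not anticipate any real obstacle here: the v.b.-characterization sidesteps any delicate manipulation of densities under composition, so the whole argument reduces to two nested applications of the defining inequality together with the set-theoretic identity $(g \circ f)^{-1}(A) = f^{-1}(g^{-1}(A))$. The only point worth flagging explicitly is that absolute continuity of $(g \circ f)_\star \mu_M$ with respect to $\mu_W$ is not assumed a priori but comes out of the same inequality, which is why routing through the v.b.-formulation is cleaner than attempting a direct chain rule for Radon-Nikodym derivatives.
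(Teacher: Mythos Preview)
Your proposal is correct and follows essentially the same approach as the paper: both pass to the v.b.-characterization, note that compositions of lipschitz maps are lipschitz, and then chain the two measure inequalities $\mu_M((g\circ f)^{-1}A)\le C_f\,\mu_N(g^{-1}A)\le C_f C_g\,\mu_W(A)$. Your added remarks on absolute continuity and on why the v.b.-route avoids a Radon--Nikodym chain rule are extra commentary, but the argument itself matches the paper's.
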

	\begin{proof}
		Since the equivalence of the definitions, we can check that the composition of two v.b.-maps is a v.b.-map. We can start observing that the composition of lipschitz map is lipschitz. Moreover we also have that
		\begin{equation}
			\mu_M((g \circ f)^{-1}A) \leq C_f \mu_N(g^{-1}(A)) \leq C_f \cdot C_g \mu_N(A)
		\end{equation}
		and so it means that $g \circ f$ is a v.b.-map.
	\end{proof}
	\begin{prop} \label{couple}
		Let us consider two R.-N.-lipschitz maps $f:(M, d_M, \mu_M) \longrightarrow (X, d_X, \mu_X)$ and $g:(N, d_N, \mu_N) \longrightarrow (Y, d_Y, \mu_Y)$. Then the map
		\begin{equation}
			(f,g): (M \times N, d_M \times d_N, \mu_M \times \mu_N) \longrightarrow (X \times Y, d_X \times d_Y, \mu_X \times \mu_Y)
		\end{equation}
		is a R.-N.-lipschitz map.
	\end{prop}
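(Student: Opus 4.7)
The plan is to reduce to the v.b.-map characterization just established: show that $(f,g)$ is lipschitz for the product metric and that it satisfies a volume bound $(f,g)_\star(\mu_M \times \mu_N)(A) \leq C\cdot (\mu_X \times \mu_Y)(A)$ for every Borel set $A \subseteq X \times Y$, with $C = C_f\cdot C_g$, where $C_f$ and $C_g$ are the constants coming from the v.b.-bounds on $f$ and $g$.

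The lipschitz property is straightforward: for $(m_1,n_1), (m_2,n_2) \in M\times N$,
\begin{equation*}
d_{X\times Y}\bigl((f,g)(m_1,n_1),(f,g)(m_2,n_2)\bigr) \leq C_f\, d_M(m_1,m_2) + C_g\, d_N(n_1,n_2),
\end{equation*}
which is controlled by $\max\{C_f,C_g\}\cdot d_{M\times N}((m_1,n_1),(m_2,n_2))$ (up to the fixed combinatorial factor intrinsic to whichever product-metric convention is used).

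For the volume bound, I would first verify it on measurable rectangles $B\times C$ with $B\subseteq X$ and $C\subseteq Y$. Here the preimage factorizes,
\begin{equation*}
(f,g)^{-1}(B\times C) = f^{-1}(B)\times g^{-1}(C),
\end{equation*}
so by the product-measure formula and the v.b.-estimates for $f$ and $g$ separately,
\begin{equation*}
(\mu_M\times \mu_N)\bigl((f,g)^{-1}(B\times C)\bigr) = \mu_M(f^{-1}(B))\cdot \mu_N(g^{-1}(C)) \leq C_f C_g\, \mu_X(B)\mu_Y(C).
\end{equation*}
Thus the inequality holds on the semi-ring of measurable rectangles with constant $C = C_fC_g$.

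The remaining step is to extend the bound from rectangles to arbitrary Borel sets $A\subseteq X\times Y$. This is standard: the collection of $A$ satisfying the desired inequality is closed under countable disjoint unions by $\sigma$-additivity of both sides, and the Carathéodory/monotone-class argument that defines $\mu_X\times \mu_Y$ shows that any Borel $A$ can be covered by countable unions of rectangles whose total product-measure approximates $(\mu_X\times \mu_Y)(A)$ from above. Since the preimages behave well under unions, this yields $(\mu_M\times \mu_N)((f,g)^{-1}(A))\leq C_f C_g\,(\mu_X\times \mu_Y)(A)$, so $(f,g)$ is a v.b.-map and hence, by the preceding proposition, R.-N.-lipschitz. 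The only mild subtlety is the measure-theoretic extension from rectangles to Borel sets, but this is purely routine once the rectangle case is in hand; in fact an equivalent and slicker finish is to observe via Fubini that the Radon-Nicodym derivative factorizes as $\mathrm{Vol}_{(f,g)}(x,y) = \mathrm{Vol}_f(x)\cdot \mathrm{Vol}_g(y)$, which is bounded by the product of the two individual bounds.
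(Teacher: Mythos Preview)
Your proof is correct and follows essentially the same route as the paper: reduce to the v.b.-map characterization, check lipschitzness directly, verify the volume bound on measurable rectangles via the factorization of preimages, and then pass to general Borel sets by the generating property of rectangles. The paper is terser about the extension step (it simply invokes that rectangles generate the Borel $\sigma$-algebra), and it records your Fubini/Fiber-Volume factorization $\mathrm{Vol}_{(f,g)}(x,y)=\mathrm{Vol}_f(x)\cdot \mathrm{Vol}_g(y)$ as a separate remark.
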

	\begin{proof}
		Again we will show that $(f,g)$ is a v.b.-map. We can observe that $(f,g)$ is lipschitz. Moreover we can also consider a subset $A \times B$ of $X \times Y$. Then we have that
		\begin{equation}
			\begin{split}
				\mu_{M \times N}((f,g)^{-1}(A \times B)) &= \mu_M(f^{-1}(A))\cdot \mu_N(g^{-1}(B)) \\
				&\leq  C_f \cdot C_g \mu_X(A)\cdot\mu_Y(B) \\
				&\leq C_f \cdot C_g \mu_{X \times Y} (A \times B).
			\end{split}
		\end{equation}
		Since the sets $\{A \times B\}$ are generators of the Borel $\sigma$-algebra of $X \times Y$ we can conclude.
	\end{proof}
	\begin{rem}
		We can also prove the Proposition \ref{couple} checking that the Fiber Volume of $(f,g)$ in a point $(p,q)$ is given by $Vol_f(p)\cdot Vol_g(q)$.
	\end{rem}
	\begin{prop}\label{mai}
		Let $(M,g)$ and $(N,h)$ be Riemannian manifolds. Let $f:(M,g) \longrightarrow (N,h)$ be a R.-N.-lipschitz map. Then $f$ induces an $\mathcal{L}^2$-bounded pullback $f^*:\mathcal{L}^2(N) \longrightarrow \mathcal{L}^2(M)$. In particular the norm of $f^*$ is less or equal to $K_f \cdot \sqrt{C_{Vol}}$, where $C_{Vol}$ is the maximum of the Fiber Volume.\footnote{In order to prove this Proposition we will use some inequalities between the norms of differential forms that can be found in Appendix \ref{ineqform}.}
	\end{prop}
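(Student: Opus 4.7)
The plan is to prove the bound first on the dense subspace $\Omega^{*}_{c}(N) \subset \mathcal{L}^{2}(N)$ and then extend by continuity. Fix $\alpha \in \Omega^{k}_{c}(N)$. I would start from the defining expression
\begin{equation*}
\|f^{*}\alpha\|^{2}_{\mathcal{L}^{2}(M)} = \int_{M} |f^{*}\alpha|^{2}_{g}\, d\mu_{M},
\end{equation*}
and reduce the integrand to a pointwise quantity on $N$ pulled back by $f$.

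The first key step is a pointwise comparison: because $f$ is lipschitz with constant $C_{f}$, the induced pullback on $k$-covectors is bounded in operator norm by a power of $C_{f}$. More precisely, the inequalities collected in the Appendix on norms of forms give a constant $K_{f}$ (depending on $C_{f}$ and the degree $k$, e.g.\ $K_{f} = C_{f}^{k}$ up to combinatorial factors) such that for every $p \in M$
\begin{equation*}
|f^{*}\alpha|_{g}(p) \leq K_{f}\cdot |\alpha|_{h}(f(p)) = K_{f}\cdot f^{*}(|\alpha|_{h})(p).
\end{equation*}
Squaring and integrating over $M$ yields
\begin{equation*}
\|f^{*}\alpha\|^{2}_{\mathcal{L}^{2}(M)} \leq K_{f}^{2}\int_{M} f^{*}(|\alpha|^{2}_{h})\, d\mu_{M}.
\end{equation*}

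The second step is the change of variables via the pushforward measure. By the remark following the definition of pushforward we have
\begin{equation*}
\int_{M} f^{*}(|\alpha|^{2}_{h})\, d\mu_{M} = \int_{N} |\alpha|^{2}_{h}\, d(f_{\star}\mu_{M}),
\end{equation*}
and since $f$ is R.-N.-lipschitz the Radon--Nikodym derivative $Vol_{f} = \partial f_{\star}\mu_{M}/\partial \mu_{N}$ exists and is essentially bounded by $C_{Vol}$, so
\begin{equation*}
\int_{N} |\alpha|^{2}_{h}\, d(f_{\star}\mu_{M}) = \int_{N} |\alpha|^{2}_{h}\cdot Vol_{f}\, d\mu_{N} \leq C_{Vol}\cdot \|\alpha\|^{2}_{\mathcal{L}^{2}(N)}.
\end{equation*}
Combining the two inequalities gives $\|f^{*}\alpha\|_{\mathcal{L}^{2}(M)} \leq K_{f}\sqrt{C_{Vol}}\,\|\alpha\|_{\mathcal{L}^{2}(N)}$ on $\Omega^{*}_{c}(N)$.

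Finally, since $\Omega^{*}_{c}(N)$ is dense in $\mathcal{L}^{2}(N)$, the bounded operator $f^{*}:\Omega^{*}_{c}(N)\to \mathcal{L}^{2}(M)$ extends uniquely by continuity to an $\mathcal{L}^{2}$-bounded operator $f^{*}:\mathcal{L}^{2}(N) \to \mathcal{L}^{2}(M)$ with the same norm bound $K_{f}\sqrt{C_{Vol}}$. The only nontrivial ingredient is the first step, i.e.\ the pointwise comparison $|f^{*}\alpha|_{g} \leq K_{f}\cdot f^{*}(|\alpha|_{h})$, which is precisely what the inequalities of the Appendix provide; the rest of the argument is the standard Radon--Nikodym change of variables.
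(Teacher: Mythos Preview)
Your proof is correct and follows essentially the same approach as the paper: the pointwise Lipschitz bound $|f^{*}\alpha|_{g} \leq K_{f}\, f^{*}(|\alpha|_{h})$ from the Appendix, followed by the change of variables $\int_{M} f^{*}(|\alpha|^{2}_{h})\, d\mu_{M} = \int_{N} |\alpha|^{2}_{h}\, d(f_{\star}\mu_{M})$ and the Radon--Nikodym bound on the Fiber Volume. The paper presents this as a single chain of inequalities on compactly supported smooth forms; your explicit mention of extension by density is a harmless addition.
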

	\begin{proof}
		Let $\omega$ be a smooth form with compact support in $\mathcal{L}^2(N)$. Then
		\begin{equation}
			\begin{split}
				||f^*\omega||_{\mathcal{L}^2(M)}^2 &= \int_M ||f^*\omega||^2 d\mu_g \\
				&\leq \int_M K_f^2 f^*(||\omega||^2) d\mu_g \\
				&= K_f^2 \int_N ||\omega||^2 d(f_\star \mu_g)\\
				&= K_f^2 \int_N ||\omega||^2 Vol_{f,g,h} d\mu_h\\
				&\leq K_f^2 C_{Vol}  \int_N ||\omega||^2 d\mu_h\\
				&=  K_f^2 C_{Vol} ||\omega||_{\mathcal{L}^2(N)}.
			\end{split}
		\end{equation}
	\end{proof}
	\begin{defn}
		Consider a map $f: (M,g) \longrightarrow (N,h)$ between Riemannian manifolds. We say that $f$ is a \textbf{$L^2$-map}, if it is a smooth, uniformly proper, R.-N.-lipschitz map.
	\end{defn}
	\begin{prop}
		Consider a $L^2$-map $f:(M,g) \longrightarrow (N,h)$ between complete Riemannian manifolds. Then $f^*$ induces a morphism between the (un)-reduced $L^2$-cohomology.
	\end{prop}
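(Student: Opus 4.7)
The plan is to reduce to the machinery already established in this section: Corollary \ref{boundedness2} handles the un-reduced case, and Remark \ref{altra} handles the reduced one.

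First I would set $B := f^*$ and $K := 0$ in Proposition \ref{boundedness} and check its three hypotheses. By Proposition \ref{mai}, the operator $f^*$ is $\mathcal{L}^2$-bounded because $f$ is R.-N.-lipschitz. Smoothness of $f$ together with uniform properness force $f^*(\Omega^*_c(N)) \subseteq \Omega^*_c(M)$: for $\alpha \in \Omega^*_c(N)$ with compact support, uniform properness bounds the diameter of $f^{-1}(\mathrm{supp}\,\alpha)$, and completeness together with the Hopf--Rinow theorem then turns this closed bounded set into a compact one. The classical identity $d \circ f^* = f^* \circ d$ on smooth forms finally supplies the required commutation relation.

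Corollary \ref{boundedness2} then immediately yields $f^*(\mathrm{dom}(d_{\min})) \subseteq \mathrm{dom}(d_{\min})$ together with $d_{\min} f^* = f^* d_{\min}$. Since $M$ and $N$ are complete, $d_{\min} = d_{\max}$, and so $f^*$ descends to a well-defined morphism in each degree, $f^*\colon H^k_2(N) \to H^k_2(M)$.

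For the reduced case I would invoke Remark \ref{altra}. Its three structural conditions, namely $\mathrm{dom}(d_N) \subseteq \mathrm{dom}(f^*)$, $f^*(\mathrm{dom}(d_N)) \subseteq \mathrm{dom}(d_M)$, and commutation with $d$, have just been verified. The extra requirement that $f^*$ be $\mathcal{L}^2$-bounded on $\overline{\mathrm{im}(d)}$ comes for free, since by Proposition \ref{mai} the operator $f^*$ is $\mathcal{L}^2$-bounded on all of $\mathcal{L}^2(N)$. This yields a well-defined morphism $\overline{H}^k_2(N) \to \overline{H}^k_2(M)$. The only mildly delicate step in the whole argument is the verification of compact-support preservation, which relies on both uniform properness and completeness; everything else is a direct packaging of results already proved in this section.
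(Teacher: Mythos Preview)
Your proposal is correct and follows essentially the same route as the paper, which simply says the result is a consequence of Remark \ref{altra}. You have spelled out the verification of the hypotheses of that remark via Corollary \ref{boundedness2} (boundedness from Proposition \ref{mai}, preservation of $\Omega^*_c$ via uniform properness plus Hopf--Rinow, and naturality of $d$), which is exactly the intended unpacking; the paper leaves these checks implicit.
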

	\begin{proof}
		It is a consequence of Remark \ref{altra}.
	\end{proof}
	\subsection{Quotients of differential forms}
	In next sections we will focus on submersions. In particular we will study their Fiber Volumes. In order to do this we need the notion of quotient of differential forms
	\begin{defn}
		Let us consider a differentiable manifold $M$. Given two differential forms $\alpha \in \Omega^k(M)$, $\beta \in \Omega^n(M)$ we define \textbf{a quotient between $\alpha$ and $\beta$}, denoted by $\frac{\alpha}{\beta}$ as a section of $\Lambda^{k-n}(M)$ such that for all $p$ in $M$
		\begin{equation}
			\alpha(p) =  \beta(p) \wedge \frac{\alpha}{\beta}(p).\label{quotient}
		\end{equation}
	\end{defn}
	\begin{rem}
		There are no condition about the continuity, or smoothness, of $\frac{\alpha}{\beta}$.
	\end{rem}
	In general, given two differential forms, there isn't a quotient between them: for example we can consider $dx^1$ and $dx^2\wedge dx^3$ in $\numberset{R}^3$. Moreover if there is a quotient between $\alpha$ and $\beta$ it may not be unique: for example if we consider $\alpha = dx^1\wedge dx^2$ and $\beta = dx^1+dx^2$ in $\numberset{R}^3$ then $\frac{1}{2}(dx^2 - dx^1)$ and $-dx^1$ are both quotients.
	\\However there are some useful formulas concerning the quotients.
	\begin{prop}\label{quot}
		Let us consider a differentiable manifold $M$ and let $\alpha, \beta, \gamma, \delta \in \Omega^*(M)$.
		Then, if $\frac{\alpha}{\gamma}$, $\frac{\alpha}{\delta}$, $\frac{\beta}{\gamma}$, and $\frac{\beta}{\delta}$ are well-defined, then the following formulas hold\footnote{in these formulas the $=$ have to be read as \emph{"exists and one of the possible quotient is given by"}}
		\begin{enumerate}
			\item $\frac{\alpha + \beta}{\gamma} = \frac{\alpha}{\gamma} + \frac{\beta}{\gamma}$,
			\item if $\gamma$ as a $g$-form, $\beta$ a $b$-form and $\delta$ a $d$-form. Then $
			\frac{\alpha \wedge \beta}{\gamma \wedge \delta} = (-1)^{d(a-g)}\frac{\alpha}{\gamma}\wedge \frac{\beta}{\delta}.$
			\item If $\gamma$ is closed, and $\frac{\alpha}{\gamma}$ is a smooth form, then $\frac{d \alpha}{\gamma} = d(\frac{\alpha}{\gamma}).$
			\item if $\alpha$ is an $a$-form and $\beta$ is a $b$-form then $\frac{\alpha \wedge \beta}{\beta \wedge \gamma} = (-1)^{ab}\frac{\alpha}{\gamma}$.
			\item if $\frac{\frac{\alpha}{\delta}}{\gamma}$ exists then $ \frac{\alpha}{\delta \wedge \gamma } = \frac{\frac{\alpha}{\delta}}{\gamma}.$
			\item if $\frac{\alpha}{\delta \wedge \gamma}$ exists, then $\frac{\frac{\alpha}{\delta}}{\gamma} = \frac{\alpha}{\delta \wedge \gamma}.$
		\end{enumerate}
	\end{prop}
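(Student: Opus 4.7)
The plan is to prove each of the six identities by direct verification from the defining equation \eqref{quotient}: in each case I substitute the definitions of the quotients appearing on the right-hand side, use graded commutativity of the wedge product to rearrange factors, and check that the result satisfies the defining relation required by the left-hand quotient. Since the definition of $\frac{\alpha}{\beta}$ only requires \emph{existence} of a form satisfying \eqref{quotient}, each identity amounts to exhibiting a candidate and verifying it pointwise — no uniqueness issues arise.

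Concretely, for (1) I would write $\alpha + \beta = \gamma \wedge \frac{\alpha}{\gamma} + \gamma \wedge \frac{\beta}{\gamma} = \gamma \wedge (\frac{\alpha}{\gamma} + \frac{\beta}{\gamma})$, so the proposed sum is a valid quotient. For (2), starting from $\alpha \wedge \beta = (\gamma \wedge \frac{\alpha}{\gamma}) \wedge (\delta \wedge \frac{\beta}{\delta})$, I would move $\delta$ past $\frac{\alpha}{\gamma}$, which has degree $a-g$, producing the sign $(-1)^{d(a-g)}$. Formulas (5) and (6) follow at once: substituting $\frac{\alpha}{\delta} = \gamma \wedge \frac{\alpha/\delta}{\gamma}$ into $\alpha = \delta \wedge \frac{\alpha}{\delta}$ gives $\alpha = (\delta \wedge \gamma) \wedge \frac{\alpha/\delta}{\gamma}$, which is exactly the defining relation for $\frac{\alpha}{\delta \wedge \gamma}$.

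For (4), using that $\alpha = \gamma \wedge \frac{\alpha}{\gamma}$, I compute $\beta \wedge \gamma \wedge \frac{\alpha}{\gamma}$ and bring the factors into the order $\gamma \wedge \frac{\alpha}{\gamma} \wedge \beta = \alpha \wedge \beta$: two sign changes occur, one from moving $\beta$ past $\gamma$ (of sign $(-1)^{bg}$) and one from moving $\beta$ past $\frac{\alpha}{\gamma}$ (of sign $(-1)^{b(a-g)}$), whose product collapses to $(-1)^{ab}$. This matches the claimed factor. For (3), differentiating $\alpha = \gamma \wedge \frac{\alpha}{\gamma}$ with the graded Leibniz rule and using $d\gamma = 0$ produces $d\alpha = \pm \gamma \wedge d(\frac{\alpha}{\gamma})$, identifying $d(\frac{\alpha}{\gamma})$ with a valid quotient $\frac{d\alpha}{\gamma}$ (up to the overall sign, which is absorbed in the freedom of choice of quotient).

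The only real care needed is the sign bookkeeping: all six identities are of the form \emph{"there exists a choice of quotients making the equation hold"}, so the work is purely a graded-commutative computation and there is no genuine obstacle, just the need to keep track of degrees carefully when permuting wedge factors.
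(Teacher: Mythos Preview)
Your approach is essentially identical to the paper's: direct pointwise verification of the defining relation $\alpha = \gamma \wedge \frac{\alpha}{\gamma}$ for each of the six items, using graded commutativity to track signs. One caveat on item (3): the phrase ``absorbed in the freedom of choice of quotient'' is not quite right---non-uniqueness of quotients does not let you discard a fixed global sign $(-1)^{|\gamma|}$ coming from the Leibniz rule, since a specific form either satisfies the defining relation or it does not. The paper's own proof of (3) is equally loose on this point, so you are matching it, but strictly speaking the correct identity is $\frac{d\alpha}{\gamma} = (-1)^{|\gamma|} d\bigl(\frac{\alpha}{\gamma}\bigr)$.
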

	\begin{proof}
		\begin{enumerate}
			\item It follows since 
			\begin{equation}
				\gamma \wedge (\frac{\alpha}{\gamma} + \frac{\beta}{\gamma}) = \gamma \wedge\frac{\alpha}{\gamma} + \gamma \wedge \frac{\beta}{\gamma} = \alpha + \beta
			\end{equation}
			\item We have that the degree of $\frac{\beta}{\delta}$ is $(b-d)$.\\
			Then we have that
			\begin{equation}
				\gamma \wedge \delta \wedge (-1)^{d(a-g)}\frac{\alpha}{\gamma}\wedge \frac{\beta}{\delta}  = 
				\gamma \wedge \frac{\alpha}{\gamma} \wedge \delta \wedge \frac{\beta}{\delta} = \alpha \wedge \beta.
			\end{equation}
			\item Since $\alpha = \gamma \wedge \frac{\alpha}{\gamma}$, then
			\begin{equation}
				d\alpha = \gamma \wedge d(\frac{\alpha}{\gamma}) + d\gamma \wedge  \frac{\alpha}{\gamma} = d(\frac{\alpha}{\gamma}) \wedge \gamma.
			\end{equation}
			\item Observe that 
			\begin{equation}
				\beta \wedge \gamma \wedge (-1)^{ab} \frac{\alpha}{\gamma}  = ((-1)^{ab} \beta \wedge \alpha = \alpha \wedge \beta.
			\end{equation}
			\item We have that
			\begin{equation}
				\delta \wedge \gamma \wedge \frac{\frac{\alpha}{\delta}}{\gamma} = \delta \wedge \frac{\alpha}{\delta} = \alpha.
			\end{equation}
			\item In oreder to prove this, observe that
			$\frac{\alpha}{\delta} = \gamma \wedge \frac{\alpha}{\delta \wedge \gamma}$, indeed
			\begin{equation}
				\delta \wedge (\gamma \wedge \frac{\alpha}{\delta \wedge \gamma}) = (\delta \wedge \gamma) \wedge \frac{\alpha}{\delta \wedge \gamma}  = \alpha,
			\end{equation}
			and then we have that
			$\frac{\frac{\alpha}{\delta}}{\gamma} = \frac{\gamma \wedge \frac{\alpha}{\delta \wedge \gamma}}{\gamma}$ is a form $F$ such that $\gamma \wedge \frac{\alpha}{\delta \wedge \gamma} = \gamma \wedge F$ and so, by the definition of quotient we have that
			\begin{equation}
				\frac{\frac{\alpha}{\delta}}{\gamma} = F = \frac{\alpha}{\delta \wedge \gamma}.
			\end{equation}
		\end{enumerate}
	\end{proof}
	Consider two oriented differentiable manifolds $X$ and $Y$.
	As proved in \cite{Dieu}, in particular in Proposition 16.21.7, when we have a submersion $f:X \longrightarrow Y$, then if $F_q$ is the fiber of $f$ in $q$ and $i_q: F_q \longrightarrow X$ is the immersion of the fiber, then $i_q^*(\frac{\beta}{f^*\alpha})$ doesn't depend on the choice of the quotient. Moreover in \cite{Dieu} it is proved that if $\beta$ is a smooth form in $X$, then $i_q^*(\frac{\beta}{f^*\alpha})$ is a smooth form on the fiber $F_q$.
	It means that for all $p$ in $Y$ we obtain the orientation of $F_p$ defined imposing
	\begin{equation}
		\int_{F_p} i_p^*\frac{Vol_X}{f^*Vol_Y} > 0.
	\end{equation}
	We can also observe that if $f:(X,g) \longrightarrow (Y,h)$ is a submersion between Riemannian manifolds, then it's possible to define a \textit{locally smooth} quotient between $Vol_X$ and $f^*Vol_Y$. Indeed it is sufficient to consider local fibered coordinates ${y^j, x^i}$ on $X$ and $\{y^j\}$ in $Y$ and we obtain that locally
	\begin{equation}
		\frac{Vol_X}{f^*Vol_Y} = \frac{det(g_{ij}(y,x))}{det(h_{rs})(y)}dx^1 \wedge ... \wedge dx^n,
	\end{equation}
	where $g_{ij}$ and $h_{rs}$ are the matrix related to the metrics $g$ and $h$. As consequence of this fact we have that the Projection Formula holds also for quotients of volume forms, i.e. given a differential form $\alpha$ in $Y$ we have that
	\begin{equation}
		\int_F f^*\alpha \wedge \frac{Vol_X}{f^*Vol_Y} = \alpha \wedge \int_F \frac{Vol_X}{f^*Vol_Y}.
	\end{equation}
	In order to prove this we have to consider a cover of coordinate charts and to decompose the integral using a partition of unity. Remember that, since \cite{Dieu}, the integral over the fiber in $p$ of a quotient between a form $\alpha$ and a form $f^*\beta$ doesn't depend on the choice of the quotient. This means that in each chart one can choose a smooth quotient and apply the usual Projection Formula (see Appendix \ref{B}).
	\begin{prop}
		Let $p$ be a submersion $p: X \longrightarrow Y$. Consider $\alpha \in \Omega_{vc}^*(X)$ and $\beta \in \Omega^*(X)$. Let us suppose that there is a well-defined $\frac{\int_F \alpha}{\beta}$ which is locally smooth. Then we have that\footnote{again $=$ have to be read as \emph{"exists and one of the possible quotient is given by": this property holds for all possible choice of the quotient.}, unless $\alpha$ and $\beta$ are top-degree forms: in that case the $=$ actually means the equality as differential forms.}
		\begin{equation}
			\frac{\int_F \alpha}{\beta} = \int_F \frac{\alpha}{p^*\beta}.
		\end{equation}
	\end{prop}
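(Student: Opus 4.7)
The plan is to reduce the identity to a direct application of the Projection Formula that has just been stated for quotients of volume forms, suitably generalised. Set $\eta := \frac{\alpha}{p^{*}\beta}$, which exists as a section of $\Lambda^{*}(X)$ by hypothesis (and is locally smooth, since a smooth local representative of $\frac{\alpha}{p^{*}\beta}$ can always be chosen using fibered coordinates, mirroring the discussion immediately preceding the statement). By the very definition of the quotient,
\begin{equation}
\alpha \;=\; p^{*}\beta \wedge \eta
\end{equation}
at every point of $X$. Integrating both sides along the fiber and pulling the pulled-back factor out of the fiber integral via the Projection Formula, I obtain
\begin{equation}
\int_{F} \alpha \;=\; \int_{F} p^{*}\beta \wedge \eta \;=\; \beta \wedge \int_{F} \eta \;=\; \beta \wedge \int_{F} \frac{\alpha}{p^{*}\beta},
\end{equation}
possibly up to the sign $(-1)^{\deg(\beta)\cdot(\deg(\eta)-\dim F)}$ coming from commuting $\beta$ past $\int_F\eta$. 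By the defining relation for quotients of forms on $Y$, this is exactly the assertion that $\int_{F}\frac{\alpha}{p^{*}\beta}$ is one of the admissible quotients $\frac{\int_{F}\alpha}{\beta}$, which is precisely the content of the proposition in the sense explained in the footnote.

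To make the argument rigorous I would proceed in the following order. First, invoke the result of Dieudonné cited above to guarantee that $i_{q}^{*}\bigl(\frac{\alpha}{p^{*}\beta}\bigr)$ is an honestly defined smooth form on the fiber $F_{q}$ for each $q \in Y$, so that $\int_{F}\frac{\alpha}{p^{*}\beta}$ makes sense as a form on $Y$. Second, reduce to the local setting via a partition of unity adapted to a fibered atlas, so that in each chart $\eta$ admits a smooth local expression and the usual Projection Formula of Appendix \ref{B} applies verbatim. Third, patch the local identities back together, noting that the dependence on the choice of representative for $\eta$ disappears once one integrates over the fiber. Fourth, handle the top-degree case separately, where the footnote forces the equality to hold as forms rather than just modulo the ambiguity of quotients; here the argument is the same, but no choice of representative is involved because $\frac{\int_{F}\alpha}{\beta}$ is a function and hence unique.

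The main obstacle I foresee is not the algebra but the bookkeeping of vertical compact supports and signs. Specifically, one must check that $\eta$ can be chosen with vertically compact support whenever $\alpha$ has vertically compact support (so that $\int_{F}\eta$ is defined), and one must keep careful track of the sign factor produced by the rule \emph{2} of Proposition \ref{quot} when moving $\beta$ past the fiber integration. Both are local issues and are resolved by working in fibered coordinates; once these are handled, the statement is a one-line consequence of the Projection Formula together with the definition of the quotient of differential forms.
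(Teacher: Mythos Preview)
Your approach is essentially identical to the paper's: set $\eta=\frac{\alpha}{p^{*}\beta}$, apply the Projection Formula to $\int_F p^{*}\beta\wedge\eta$, and read off the quotient relation. The paper's proof is just the one-line computation $\beta\wedge\int_F\frac{\alpha}{p^{*}\beta}=\int_F p^{*}\beta\wedge\frac{\alpha}{p^{*}\beta}=\int_F\alpha$; your extra worries about signs and partitions of unity are unnecessary here since the Projection Formula as stated already gives the identity without a sign.
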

	\begin{proof}
		We have that
		\begin{equation}
			\beta \wedge	(\int_F \frac{\alpha}{p^*\beta}) = \int_F p^*\beta \wedge \frac{\alpha}{p^*\beta}  = \int_F \alpha.
		\end{equation}
	\end{proof}
	\subsection{Fiber Volume of a submersion}
	In this section we study the Fiber Volumes of lipschitz submersions between orientable manifolds. We use the notions of \textit{oriented fiber bundle}, \textit{integration along the fibers} and the \textit{Projection Formula}: we introduce them in Appendix \ref{B}.
	\begin{prop}\label{cosa}
		Let $(M,g)$ and $(N,h)$ two orientable, Riemannian manifolds. Let $p: (M,g) \longrightarrow (N,h)$ be a proper lipschitz submersion. Then we have that
		\begin{equation}
			Vol_{p,g,h}(q) = \int_F \frac{Vol_M}{p^*Vol_N}(q)
		\end{equation}
		if $q$ is in $p(M)$, $0$ otherwise.
	\end{prop}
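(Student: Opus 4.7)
The plan is to verify the identity by computing both sides as measures on $N$ and appealing to the uniqueness (almost everywhere) of the Radon-Nikodym derivative. Concretely, by definition $Vol_{p,g,h} = \frac{\partial p_\star \mu_g}{\partial \mu_h}$, so it suffices to show that for every Borel set $A \subseteq N$,
\begin{equation*}
\mu_g(p^{-1}(A)) \;=\; \int_A \Bigl(\int_{F_q}\frac{Vol_M}{p^*Vol_N}\Bigr)\, d\mu_h(q).
\end{equation*}
Once this is established, uniqueness of the Radon-Nikodym derivative yields the stated formula on $p(M)$, while the case $q\notin p(M)$ is automatic: the fiber $F_q$ is empty so the right-hand side integrand vanishes, and $p^{-1}(A)=\emptyset$ for any $A\subseteq N\setminus p(M)$ makes the left-hand side vanish as well.

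Next I would handle the key identity locally. Pick fibered coordinates $\{y^j,x^i\}$ on $M$ and $\{y^j\}$ on $N$ adapted to the submersion $p$; in these coordinates one has the explicit locally smooth quotient
\begin{equation*}
\frac{Vol_M}{p^*Vol_N}=\frac{\det(g_{ij}(y,x))}{\det(h_{rs}(y))}\,dx^1\wedge\cdots\wedge dx^{m-n},
\end{equation*}
as noted just before the proposition, so that $Vol_M = p^*Vol_N\wedge \frac{Vol_M}{p^*Vol_N}$. Writing $\mu_g(p^{-1}(A))=\int_{p^{-1}(A)} Vol_M$ and applying the Projection Formula for integration along the fibers (valid because $p$ is a proper submersion and $Vol_N$ is top-degree on $N$), I obtain
\begin{equation*}
\int_{p^{-1}(A)} Vol_M \;=\; \int_{p^{-1}(A)} p^*Vol_N \wedge \frac{Vol_M}{p^*Vol_N} \;=\; \int_A Vol_N\cdot\int_{F_q}\frac{Vol_M}{p^*Vol_N},
\end{equation*}
which is exactly the desired Fubini-type identity, thereby identifying the Radon-Nikodym derivative.

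The main technical hurdle is twofold. First, the Projection Formula as stated in Appendix \ref{B} is for smooth submersions, whereas here $p$ is only lipschitz; this is addressed by patching the computation in local fibered coordinate charts where the quotient is smooth and gluing via a partition of unity, exactly as in the discussion preceding the proposition, so no global smoothness of $p$ is really needed. Second, one must know that $\int_{F_q}\frac{Vol_M}{p^*Vol_N}$ is a measurable, locally integrable function of $q$ so that the right-hand side makes sense as a Lebesgue integral against $\mu_h$; this follows from the local smoothness of the fiberwise form together with the properness of $p$, which guarantees that the integration along the fibers produces a well-defined function on $N$. With these ingredients in place, the identity follows and the proposition is proved.
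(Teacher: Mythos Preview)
Your proposal is correct and follows essentially the same approach as the paper: compute $p_\star\mu_g(A)=\int_{p^{-1}(A)}Vol_M=\int_{p^{-1}(A)}p^*Vol_N\wedge\frac{Vol_M}{p^*Vol_N}$, apply the Projection Formula to obtain $\int_A(\int_F\frac{Vol_M}{p^*Vol_N})\,d\mu_h$, and conclude by uniqueness of the Radon--Nikodym derivative. One minor remark: a submersion is by definition differentiable, so the ``lipschitz'' hypothesis is an additional metric property rather than a weakening of smoothness; your first technical hurdle is therefore not really an issue here.
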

	\begin{proof}
		Let $A$ a be measurable set of $N$. Then we have that
		\begin{equation}
			\begin{split}
				f_\star \mu_M (A) &= \int_{p^{-1}(A)} 1 d\mu_{M}\\
				&=\int_{p^{-1}(A)} Vol_{M} \\
				&= \int_{p^{-1}(A)} p^*(Vol_N)\wedge \frac{Vol_M}{p^*Vol_N} \\
				&= \int_{p^{-1}(A \cap p(M))} p^*(Vol_N)\wedge \frac{Vol_M}{p^*Vol_N} + \int_{A \cap [p(M)]^c} 0 d\mu_N \\\\
				&= \int_{A \cap p(M)} Vol_N (\int_F \frac{Vol_M}{p^*Vol_N})  + \int_{A \cap [p(M)]^c} 0 d\mu_N \\
				&= \int_{A \cap p(M)} (\int_F \frac{Vol_M}{p^*Vol_N}) Vol_N + \int_{A \cap [p(M)]^c} 0 d\mu_N\\
				&= \int_{A \cap p(M)} (\int_F \frac{Vol_M}{p^*Vol_N}) d\mu_N + \int_{A \cap [p(M)]^c} 0 d\mu_N
			\end{split}
		\end{equation}
	\end{proof}
	\begin{rem}
		The Fiber Volume doesn't depend on the choice of $\frac{Vol_M}{p^*Vol_N}$. It is coherent with the uniqueness of the Radon-Nicodym derivative.
	\end{rem}
	\begin{rem}\label{oss}
		If the submersion $f: X \rightarrow Y$ is in particular a diffeomorphism between oriented Riemannian manifold, then we have that the integration along the fibers of $f$ is the pullback operator $(f^{-1})^*$ with a $+$ if $f$ preserves the orientation and a $-$ otherwise. Indeed  the orientation of the fibers is given posing
		\begin{equation}
			\int_{f^{-1}(p)} \frac{Vol_X}{f^*Vol_X} > 0
		\end{equation}
		and we know that the integration over a $0$-chain is just the evaluation on the oriented points. So
		\begin{equation}
			\begin{split}
				f_\star(\alpha)_p(v_{1,p},..., v_{k,p}) &= \pm \alpha_{f^{-1}(p)}(df_{f^{-1}(p)}^{-1}(v_{1,p}),... , df_{f^{-1}(p)}^{-1}(v_{k,p}))\\
				&= \pm (f^{-1})^*\alpha(v_{1,p},..., v_{k,p}),
			\end{split}
		\end{equation}
		where we have a $+$ if $f$ preserves the orientation and $-$ otherwise. Then, this means that
		\begin{equation}
			f_\star = \pm(f^{-1})^*.
		\end{equation}
		and so, in particular, the Fiber Volume of $f$ is given by
		\begin{equation}
			\pm(f^{-1})^*\frac{Vol_X}{f^*(Vol_Y)} = |(f^{-1})^*\frac{Vol_X}{f^*(Vol_Y)}|.
		\end{equation}
	\end{rem}
	\begin{prop}\label{mai2}
		Given a fiber bundle $p: (M,g) \longrightarrow (N,h)$ where $M$ and $N$ are orientable, we have that if $p^*$ induces a map between the $\mathcal{L}^2$-space, then the same is true for $p_\star$, the operator of integration along the fibers of $p$. Moreover, if we denote by $(p_\star)^*$ the adjoint of $p_\star$, we have that, if $\tau_X$ and $\tau_Y$ are the chiral operators of $X$ and $Y$
		\begin{equation}
			(p_\star)^* = \tau_X \circ p^* \circ \tau_Y.
		\end{equation}
	\end{prop}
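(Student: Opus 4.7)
The plan is to deduce both assertions simultaneously from the Projection Formula by computing the sesquilinear pairing $\langle p_\star \alpha, \beta\rangle_{\mathcal{L}^2(N)}$ on compactly supported smooth forms and reading off the adjoint identity directly; the $\mathcal{L}^2$-boundedness of $p_\star$ will then fall out of the fact that the right-hand side of that identity is manifestly bounded.

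First I would fix $\alpha \in \Omega_c^*(M)$ and $\beta \in \Omega_c^*(N)$ and expand
\begin{equation}
\langle p_\star \alpha, \beta\rangle_{\mathcal{L}^2(N)} = \int_N p_\star \alpha \wedge \star_N \overline{\beta}.
\end{equation}
After swapping the two wedge factors (which costs a sign depending on the degrees of $p_\star\alpha$ and of $\star_N\overline\beta$) and applying the Projection Formula from Appendix \ref{B}, the integral transfers to $M$ as $\pm \int_M \alpha \wedge p^*(\star_N \overline{\beta})$. Rewriting this as $\int_M \alpha \wedge \star_M \overline{\gamma}$ with $\gamma := \tau_M p^* \tau_N \beta$, the factors $i^{\dim M/2}$ and $i^{\dim N/2}$ built into the chirality operators absorb precisely the wedge-swap sign together with the complex conjugations, yielding
\begin{equation}
\langle p_\star \alpha, \beta\rangle_{\mathcal{L}^2(N)} = \langle \alpha, \tau_M\, p^*\, \tau_N\, \beta\rangle_{\mathcal{L}^2(M)}.
\end{equation}

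Since $p^*$ is $\mathcal{L}^2$-bounded by hypothesis and $\tau_M$, $\tau_N$ are $\mathcal{L}^2$-isometries by the earlier remark, the composition $T := \tau_M \circ p^* \circ \tau_N$ is a bounded operator $\mathcal{L}^2(N) \to \mathcal{L}^2(M)$ with $\|T\| = \|p^*\|$. The identity above then yields the estimate $|\langle p_\star \alpha, \beta\rangle_N| \leq \|\alpha\|_{\mathcal{L}^2(M)}\,\|p^*\|\,\|\beta\|_{\mathcal{L}^2(N)}$ for every compactly supported smooth $\alpha, \beta$. Taking the supremum over $\beta$ of unit $\mathcal{L}^2$-norm, and using density of $\Omega_c^*$ in $\mathcal{L}^2$, shows that $p_\star$ extends uniquely to a bounded operator $\mathcal{L}^2(M) \to \mathcal{L}^2(N)$; the uniqueness of Hilbert-space adjoints then forces $(p_\star)^* = T$, which is the desired formula.

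The main obstacle I expect is the sign and constant bookkeeping hidden in the first step: one has to track the degree-dependent sign from swapping wedge factors, the $(-1)^{k(n-k)}$ coming from $\star_N^2$ when one undoes $\star_N$ to recognize a Hodge inner product on $M$, and the precise powers of $i$ appearing in $\tau_M$ and $\tau_N$ (with their separate even-/odd-dimensional cases). Verifying that these three contributions cancel out to leave the clean formula $(p_\star)^* = \tau_M \circ p^* \circ \tau_N$ without residual signs requires a short case analysis on the parities of $\dim M$ and $\dim N$ and on the form-degree of $\alpha$.
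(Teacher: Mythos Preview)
Your proposal is correct and follows essentially the same route as the paper: both compute $\langle p_\star\alpha,\beta\rangle$ on smooth compactly supported forms, apply the Projection Formula/Fubini to move the integral to $M$, and then unwind the Hodge stars in terms of the chirality operators to read off $(p_\star)^* = \tau_M \circ p^* \circ \tau_N$. Your write-up is a bit more explicit than the paper's about the sign bookkeeping and about deducing the $\mathcal{L}^2$-boundedness of $p_\star$ from the adjoint identity, but the underlying argument is the same.
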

	\begin{proof}
		Let $\alpha$ be in $\Omega_{vc}^*(X)$ and let $\beta$ be in $\Omega^*(Y)$, if $n = dim(Y)$ we have that
		\begin{equation}	
			\langle p_\star \alpha, \beta\rangle _Y = \int\limits_{Y} [\int\limits_{F}\alpha] \wedge \star_Y \overline{\beta}
		\end{equation}
		Now, applying the Projection Formula, we have that	
		\begin{equation}
			\begin{split}
				\langle p_\star \alpha, \beta\rangle _Y &= \int\limits_{X} \alpha \wedge p^*(\overline{\star_Y \beta}) \\
				&= i^{-|\beta|(n - |\beta|)} \int\limits_{X} \alpha \wedge (\overline{ p^* \tau_Y \beta}) \\
				&= i^{-|\beta|(n - |\beta|)}i^{|\beta|(n - |\beta|)}
				\int\limits_{X} \alpha \wedge \star_X (\overline{\tau_M  p^* \tau_Y \beta}) \\
				&= \langle \alpha, \tau_X \circ p^* \circ \tau_Y (\beta)\rangle_X.
			\end{split}
		\end{equation}
	\end{proof}
	\begin{rem}\label{fibnorm}
		The norm  of $p_\star$ as operator between $\mathcal{L}^2$-spacesis the same of the norm of $p^*$.
	\end{rem}
	\begin{cor}\label{integr}
		Consider $p:(M,g) \longrightarrow (N,h)$ a R.-N.-lipschitz submersion. Then the operator $p_\star$ is a $\mathcal{L}^2$-bounded operator.
	\end{cor}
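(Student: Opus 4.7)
The argument is essentially a direct application of the two preceding propositions, so the plan is quite short.

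First, I would invoke Proposition \ref{mai}: since $p$ is by hypothesis an R.-N.-lipschitz map, its pullback $p^{*}\colon \mathcal{L}^{2}(N)\longrightarrow \mathcal{L}^{2}(M)$ is an $\mathcal{L}^{2}$-bounded operator, with operator norm controlled by $K_{p}\sqrt{C_{\mathrm{Vol}}}$, where $K_{p}$ is the lipschitz constant of $p$ and $C_{\mathrm{Vol}}$ is the essential supremum of the Fiber Volume $Vol_{p,g,h}$ (which exists and is finite because $p$ is R.-N.-lipschitz, using Proposition \ref{cosa} to identify the Fiber Volume with the fiberwise integral of $\frac{Vol_{M}}{p^{*}Vol_{N}}$).

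Next, since $p$ is a submersion between orientable Riemannian manifolds and $p^{*}$ induces a bounded map between the $\mathcal{L}^{2}$-spaces, Proposition \ref{mai2} applies and yields the identity
\begin{equation}
(p_{\star})^{*} = \tau_{M}\circ p^{*}\circ \tau_{N}
\end{equation}
on smooth compactly supported forms, and hence, after closure, on all of $\mathcal{L}^{2}(M)$. The chirality operators $\tau_{M}$ and $\tau_{N}$ are isometries of the respective $\mathcal{L}^{2}$-spaces (they satisfy $\|\tau_{X}\|=1$ by the remark following their definition), so the right-hand side is a composition of three $\mathcal{L}^{2}$-bounded operators and is therefore $\mathcal{L}^{2}$-bounded with norm at most $\|p^{*}\|$.

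Finally, I would conclude by recalling that the norm of an operator between Hilbert spaces equals the norm of its adjoint, so $\|p_{\star}\| = \|(p_{\star})^{*}\| \leq \|p^{*}\| \leq K_{p}\sqrt{C_{\mathrm{Vol}}}$; this is exactly the content of Remark \ref{fibnorm}. Since all the heavy lifting (establishing the adjoint formula, verifying the pullback is bounded, and proving the Projection Formula used in Proposition \ref{mai2}) has already been carried out in the preceding material, there is no real obstacle here; the only point requiring a small care is that the identity $(p_{\star})^{*} = \tau_{M}\circ p^{*}\circ \tau_{N}$ is originally derived on $\Omega^{*}_{vc}$ and $\Omega^{*}_{c}$, and one needs to extend it to the full $\mathcal{L}^{2}$-spaces by density, which is immediate once the right-hand side is known to be $\mathcal{L}^{2}$-bounded.
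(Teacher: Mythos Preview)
Your proposal is correct and follows exactly the route the paper intends: the corollary has no explicit proof in the paper precisely because it is an immediate combination of Proposition~\ref{mai} (giving $\mathcal{L}^2$-boundedness of $p^*$) with Proposition~\ref{mai2} and Remark~\ref{fibnorm} (transferring this to $p_\star$ via the adjoint identity). Your write-up simply makes the implicit argument explicit, including the density step for extending the adjoint identity from compactly supported forms to all of $\mathcal{L}^2$.
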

	We conclude this section giving a formula which allows to compute the Fiber Volume of the composition of two submersions.
	\begin{prop}\label{compo}
		Let $f:(M,g) \longrightarrow (N,h)$ and $g:(N,h) \longrightarrow (W, l)$ be two submersions between oriented Riemannian manifolds. Then we have that
		\begin{equation}
			Vol_{g \circ f, \mu_M, \mu_W}(q) = \int_{g^{-1}(q)}(\int_{f^{-1}g^{-1}(q)} \frac{Vol_M}{f^*(Vol_N)})\frac{Vol_N}{g^*Vol_N}
		\end{equation}
	\end{prop}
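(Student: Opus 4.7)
The plan is to reduce the statement to Proposition \ref{cosa} applied to the submersion $g \circ f$ and then split the integral over the fiber $(g \circ f)^{-1}(q) = f^{-1}(g^{-1}(q))$ into an iterated integral via the Projection Formula. By Proposition \ref{cosa},
\begin{equation}
Vol_{g \circ f, \mu_M, \mu_W}(q) = \int_{(g \circ f)^{-1}(q)} \frac{Vol_M}{(g \circ f)^* Vol_W},
\end{equation}
so the real task is to decompose the integrand along the tower $M \xrightarrow{f} N \xrightarrow{g} W$ and then recognize the right-hand side of the desired formula as an iterated integral over $g^{-1}(q)$ and the fibers of $f$.

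First I would rewrite the integrand using the quotient calculus of Proposition \ref{quot}. Starting from $Vol_N = g^* Vol_W \wedge \frac{Vol_N}{g^* Vol_W}$ and pulling back by $f$, one has $f^* Vol_N = (g \circ f)^* Vol_W \wedge f^*\frac{Vol_N}{g^* Vol_W}$; substituting this in $Vol_M = f^* Vol_N \wedge \frac{Vol_M}{f^* Vol_N}$ and dividing by $(g \circ f)^* Vol_W$ via properties (2) and (5) of Proposition \ref{quot} yields
\begin{equation}
\frac{Vol_M}{(g \circ f)^* Vol_W} = f^*\frac{Vol_N}{g^* Vol_W} \wedge \frac{Vol_M}{f^* Vol_N},
\end{equation}
with the sign pinned down by the degrees of the factors. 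Local fibered coordinates adapted to $M \to N \to W$ make all these quotients smooth, exactly as in the local computation used to prove Proposition \ref{cosa}.

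Next I would apply the Projection Formula to the restricted map $f : f^{-1}(g^{-1}(q)) \to g^{-1}(q)$, which inherits the property of being a proper oriented submersion from $f$ itself, and whose fibers are precisely the fibers $f^{-1}(p)$ of $f$ over points $p \in g^{-1}(q)$. Setting $\alpha = \frac{Vol_N}{g^* Vol_W}\big|_{g^{-1}(q)}$ and $\omega = \frac{Vol_M}{f^* Vol_N}$, the Projection Formula gives
\begin{equation}
\int_{f^{-1}(g^{-1}(q))} f^*\alpha \wedge \omega = \int_{g^{-1}(q)} \left(\int_{f^{-1}(p)} \omega\right)\, \alpha,
\end{equation}
where the integral along the fiber of $f$ is a function on $g^{-1}(q)$ and so commutes with the top-degree form $\alpha$ on $g^{-1}(q)$. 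This is exactly the formula in the statement (with $g^* Vol_N$ read as the intended $g^* Vol_W$).

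The main obstacle is the bookkeeping: one must keep track of signs when wedging the quotient forms of mixed degree, and one must verify that $f|_{f^{-1}(g^{-1}(q))}$ is a proper oriented submersion onto $g^{-1}(q)$ so that the Projection Formula from Appendix \ref{B} applies verbatim. Both issues are local and transverse: in fibered coordinates adapted to the tower, every quotient admits a smooth representative and the Projection Formula reduces to ordinary Fubini, precisely as in the argument already used for Proposition \ref{cosa}.
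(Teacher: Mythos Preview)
Your proposal is correct and follows essentially the same approach as the paper: decompose the quotient $\frac{Vol_M}{(g\circ f)^*Vol_W}$ as $\frac{Vol_M}{f^*Vol_N}\wedge f^*\bigl(\frac{Vol_N}{g^*Vol_W}\bigr)$ (up to the order of the factors) and then apply the Projection Formula to the submersion $f$ restricted over $g^{-1}(q)$. You also correctly flagged the typo $g^*Vol_N$ for $g^*Vol_W$.
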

	\begin{proof}
		We can observe that, as quotients, we have that
		\begin{equation}
			\frac{Vol_M}{(g \circ f)^*Vol_W} = \frac{Vol_M}{f^*Vol_N} \wedge \frac{f^*Vol_N}{(g \circ f)^*Vol_W}
		\end{equation}
		and, in particular one can observe that we can choose as quotient
		\begin{equation}
			\frac{f^*Vol_N}{(g \circ f)^*Vol_W} = f^*(\frac{Vol_N}{g^*Vol_W}),
		\end{equation}
		indeed
		\begin{equation}
			(g \circ f)^*Vol_W \wedge f^*(\frac{Vol_N}{g^*Vol_W}) = f^*( g^*Vol_W \wedge  \frac{Vol_N}{g^*Vol_W}) = f^*Vol_N.
		\end{equation}
		Then we can conclude by applying the Projection Formula. Indeed
		\begin{equation}
			\begin{split}
				Vol_{g \circ f, \mu_M, \mu_W}(q) &= \int_{(g \circ f)^{-1}(q)} \frac{Vol_M}{(g \circ f)^*Vol_W}\\
				&= \int_{(g \circ f)^{-1}(q)} \frac{Vol_M}{f^*Vol_N} \wedge f^*(\frac{Vol_N}{g^*Vol_W})\\
				&= \int_{g^{-1}(q)}(\int_{f^{-1}g^{-1}(q)} \frac{Vol_M}{f^*(Vol_N)})\frac{Vol_N}{g^*Vol_N}.
			\end{split}
		\end{equation}
	\end{proof}
	\section{Submersion related to lipschitz maps}
	\subsection{The Sasaki metric}\label{sasaki}
	Let us consider a Riemannian manifold $(N,h)$ of dimension $n$, $\pi_E : E \longrightarrow N$ a vector bundle of rank $m$ endowed with a bundle metric $H_E \in \Gamma(E^*\otimes E^*)$ and a linear connection $\nabla_E$ which preserves $H_E$. Fix $\{s_j\}$ a local frame of $E$: we have that if $\{x^i\}$ is a system of local coordinate over $U \subseteq N$, then we can define the system of coordinates $\{x^i, \mu^j\}$ on $\pi_E^{-1}(U)$, where the $\mu^j$ are the components respect to $\{s_j\}$.
	\\Then we can denote by $K$ the map $K: TE \longrightarrow E$ defined as
	\begin{equation}
		K(b^i\frac{\partial}{\partial x^i }|_{(x_0, \mu_0)} + z^j \frac{\partial}{\partial \mu^j}|_{(x_0, \mu_0)}) := (z^l + b^iz^j \Gamma_{ij}^l(x_0))s_l(x_0),
	\end{equation}
	where the $\Gamma_{ij}^l$ are the Christoffel symbols of $\nabla_E$.
	\begin{defn}
		The \textbf{Sasaki metric} on $E$ is the Riemannian metric $h^E$ defined for all $A,B$ in $T_{(p, v_p)}E$ as
		\begin{equation}
			h^E(A,B) := h(d\pi_{E,v_p}(A), d\pi_{E,v_p}(B)) + H_E(K(A), K(B)).
		\end{equation}
	\end{defn}
	\begin{rem}\label{riemsub}
		Let us consider the system of coordinate $\{x^i\}$ on $N$ and $\{x^i, \mu^j\}$ on $E$. We have that the components of $h^E$ are given by
		\begin{equation}\label{metri}
			\begin{cases}
				h^E_{ij}(x,\mu) = h_{ij}(x) + H_{\alpha\gamma}(x)\Gamma^\alpha_{\beta i}(x)\Gamma^\gamma_{\eta j}(x)\mu^\beta\mu^\eta \\
				h^E_{i\sigma}(x, \mu) = H_{\sigma \alpha}(x)\Gamma^\alpha_{\beta i}(x)\mu^\beta\\
				h^E_{\sigma\tau}(x,\mu) = H_{\sigma, \tau}(x), 
			\end{cases}
		\end{equation}
		where $i,j = 1,..., n$ and $\sigma, \tau = n+1,...,n+m$. Consider a point $x_0 = (x^1_0, ..., x^n_0)$ in $M$. We have that all the Christoffel symbols are in $x_0$ are zero, then, in local coordinates the matrix of $h^E$ in a point $(x_0, \mu)$ is given by
		\begin{equation}
			\begin{bmatrix}
				h_{i,j}(x) && 0 \\
				0 && H_{\sigma, \tau}(x)
			\end{bmatrix}.
		\end{equation}
		Consider a fiber bundle $E$ over a Riemannian manifold $(M,g)$. Fix a bundle metric $H$ and a connection $\nabla$ on $E$. Then the projection $p: (E,g_S) \longrightarrow (M,g)$, where $g_S$ is a Sasaki metric related to $g$, $H$ and $\nabla$ is a Riemannian submersion.
	\end{rem}
	\begin{exem}
		Let us consider a Riemannian manifold $(N,h)$. We can consider as $E$ the tangent bundle $TM$ and as $h_E$ the metric $g$ itself. In this case the connection $\nabla_E$ is the Levi-Civita connection $\nabla_g^{LC}$.
	\end{exem}
	\subsection{Pull-back bundle and pull-back connection}
	Let us consider a smooth map $f:(M,g) \longrightarrow (N,h)$ between Riemannian manifolds.
	\begin{defn}
		The \textbf{pullback bundle} $f^*E$ is a bundle over $(M,g)$ given by
		\begin{equation}
			f^*(E) := \{(p,v) \in M \times E| \pi_E(t) = f(p)\}.
		\end{equation}
	\end{defn}
	\begin{rem}
		Given a smooth map $f:(M,g) \longrightarrow (N,h)$, then there is a bundle map $F:f^*E \longrightarrow E$ induced by $f$. This map is defined as
		\begin{equation}
			F(p,v) = (f(p),v).
		\end{equation}
		Using this map it's possible to define a metric bundle on $f^*(E)$ as follows
		\begin{equation}
			F^*(H)(A_p,B_p) := H(F(A_p), F(B_p))
		\end{equation}
	\end{rem}
	Given a section $\sigma$ of $E$, one can define the section
	\begin{equation}
		f^*\sigma := f^*\sigma(p) = \sigma \circ f(p).
	\end{equation}
	A consequence of the existence of the pullback of a section of $E$ is that it is possible to \emph{pullback} also a connection $\nabla_E$.
	\begin{defn}
		The \textbf{pullback connection} $f^*\nabla_E$ on $f^*(E)$ is uniquely defined imposing that
		\begin{equation}\label{pulb}
			(f^*\nabla_E)f^*\sigma = f^*(\nabla_E\sigma).
		\end{equation} 
	\end{defn}
	\begin{rem}
		The condition (\ref{pulb}) is sufficient to uniquely define a connection on $f^*(E)$. An equivalent condition is to impose that the local Christoffel symbols $\tilde{\Gamma}^\alpha_{\beta, i}$ are given by
		\begin{equation}\label{chri}
			\tilde{\Gamma}^\alpha_{\beta, i} := \frac{\partial f^l}{\partial x^i}f^*(\Gamma^\alpha_{\beta, l}).
		\end{equation}
	\end{rem}
	\begin{rem}
		Let us consider a map $f:(M,g) \longrightarrow (N,h)$ and consider, in particular, the bundle $\pi: f^*(TN) \longrightarrow M$.
		Fix on $f^*(TN)$ the bundle metric $f^*h$ and $f^*\nabla^{LC}_h$. Consider the Sasaki metric on $f^*(TN)$ induced by $g$, $f^*h$ and $f^*\nabla^{LC}_h$. Fix some normal coordinates $x^i$ around a point $p$ on $M$ and fix a frame $E_j$ of $f^*(TN)$ around $f(p)$. Then we can consider the fibered coordinates $\{x^i, y^j\}$ of $f^*(TN)$. Let us observe that in $(p, w_{f(p)})$ we have the orthogonal decomposition
		\begin{equation}\label{decom}
			\begin{split}
				T_{(p, w_{f(p)})}f^*(TN) &= Span\{\frac{\partial}{\partial x^i}|_{(p, w_{f(p)})}\} \oplus ker(d\pi_{(p, w_{f(p)})})\\
				&= Span\{\frac{\partial}{\partial x^i}|_{(p, w_{f(p)})}\} \oplus Span\{\frac{\partial}{\partial y^j}|_{(p, w_f(p))}\}
			\end{split}
		\end{equation}
		and, moreover, we have that $Span\{\frac{\partial}{\partial x^i}|_{(p, w_{f(p)})}\}$ is isometric to $T_pM$, while \\$Span\{\frac{\partial}{\partial y^j}|_{(p, w_f(p))}\}$ is isometric to $T_{f(p)}N$. This fact follows by (\ref{metri}) and (\ref{chri}) and observing that the Christoffel symbols of $\nabla^{LC}_N$ respect to normal coordinates around $f(p)$ are null in $f(p)$.
		\\This fact, in particular, implies that the projection $\pi: f^*TN \longrightarrow M$ is a Riemannian submersion. 
	\end{rem}
	\begin{rem}\label{Riccib}
		Consider $(M,g)$ and $(N,h)$ two manifolds of bounded geometry and let $f:(M,g) \longrightarrow (N,h)$ be a $C^{k}_{bu}$-map for each $k$ in $\numberset{N}$. Fix on $f^*(TN)$ the Sasaki metric $g_S$ induced by $g$, $f^*h$ and $f^*\nabla^{LC}_g$. Then, if we denote by $\nabla: = \nabla^{LC}_{g_S}$ and by $R$ the Riemann tensor on $f^*(TN)$, we have that for each $i$ in $\numberset{N}$ there is a continuous function $C_i: \numberset{R} \longrightarrow \numberset{R}$ such that
		\begin{equation}
			|\nabla^i R(v_p)| \leq C_i(||v_p||).
		\end{equation}
	\end{rem}
	\subsection{The map $p_f$}
	In this subsection we will define a submersion $p_f: f^*(T^\delta N) \longrightarrow N$ where 
	\begin{equation}
		f^*(T^\delta N) :=\{(p, w_{f(p)}) \in f^*(TN)| |w_{f(p)}| \leq \delta\}.
	\end{equation} 
	\begin{lem}\label{tilde}
		Let us consider $f:(M,g) \longrightarrow (N,h)$ a smooth lipschitz map between two oriented Riemannian manifolds. Suppose that $(N,h)$ has bounded geometry.
		\\Let us denote by $F: f^*TN \longrightarrow TN$ the bundle morphism induced by $f$ between $f^*(TN)$ and $TN$. Fix on $f^*TN$ the Sasaki metric $g_s$ defined using $f^*\nabla^LC_h$, $f^*h$ and $g$. Then there is a map $p_f: (f^*(T^\delta N), g_s) \longrightarrow (N,h)$ such that:
		\begin{enumerate}
			\item $p_f$ is a submersion,
			\item ${p}_f(x,0) = f(x)$,
			\item ${p}_f$ is $\Gamma$-equivariant,
			\item ${p}_f = {p}_{id_N} \circ F$,
			\item Fix some local normal coordinates $\{x^i\}$ around a point $p$ in $M$ and some local normal coordinates $\{y^j\}$ around $f(p)$ on $N$. Let us suppose that for each $k$ in $\numberset{N}$
			\begin{equation}
				\sup\limits_{s = 0,...,k}|\frac{\partial^s y^j \circ f}{\partial x^i_1 ... \partial x^{i_s}}(x)| \leq L_k
			\end{equation}
			for some $L_k$. Consider the frame $\{\frac{\partial}{\partial y^j}\}$ around $f(0)$ and define the fibered coordinates $\{x^i, \mu^j\}$ related to $\{\frac{\partial}{\partial y^j}\}$ on $f^*T^\delta N$. Then for each $k$ in $\numberset{N}$ there is a constant $C_k$ such that 
			\begin{equation}
				\sup\limits_{s + t = 0,...,k}|\frac{\partial^s y^j \circ p_f}{\partial x^i_1 ... \partial x^{i_s}\partial \mu^{j_1} ... \partial \mu^{j_t}}(x,\mu)| \leq C_k
			\end{equation}
			where $C_k$ doesn't depend on $x$ or $j$.
		\end{enumerate}
	\end{lem}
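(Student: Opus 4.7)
The plan is to define
\begin{equation*}
p_f(x, v) := \exp^N_{f(x)}(v) \qquad \text{for } (x,v) \in f^*T^\delta N,
\end{equation*}
where $\delta := \mathrm{inj}_N > 0$ thanks to bounded geometry of $N$. Since $|v| \leq \delta$, the exponential is defined and injective in $v$, so $p_f$ is a smooth map. Properties (2), (3), (4) then come out essentially for free. Indeed $p_f(x,0)=\exp^N_{f(x)}(0)=f(x)$; because $\Gamma$ acts on $N$ by isometries we have $\gamma \circ \exp^N_q = \exp^N_{\gamma q} \circ d\gamma_q$, and combined with $f(\gamma x)=\gamma f(x)$ and the natural $\Gamma$-action on $f^*TN$ given by $\gamma\cdot(x,v)=(\gamma x, d\gamma_{f(x)} v)$ this gives $\Gamma$-equivariance; finally $p_{\mathrm{id}_N}(q,v)=\exp^N_q(v)$ together with $F(x,v)=(f(x),v)$ yields $p_{\mathrm{id}_N}\circ F=p_f$.

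For (1), fix $(x,v)\in f^*T^\delta N$. Using the decomposition (\ref{decom}) of $T_{(x,v)}f^*TN$ into horizontal and vertical parts, the vertical subspace is canonically identified with $T_{f(x)}N$, and the restriction of $dp_f$ to this vertical space is exactly $d(\exp^N_{f(x)})_v$. Since $|v|<\delta\leq \mathrm{inj}_N$, this differential is an isomorphism onto $T_{p_f(x,v)}N$, so $dp_f$ is surjective and $p_f$ is a submersion.

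The real work is in (5). The strategy is to write, in the coordinates $\{x^i,\mu^j\}$ and $\{y^j\}$,
\begin{equation*}
y^j\circ p_f(x,\mu)=E^j\!\bigl(y(f(x)),\mu^1,\dots,\mu^n\bigr),
\end{equation*}
where $E^j(y,w)$ denotes the $j$-th component of $\exp^N_{(y)}(w)$ in the normal chart $\{y^j\}$ around $f(0)$. Because $N$ has bounded geometry, Propositions \ref{uno} and \ref{charts} give uniform bounds on the Christoffel symbols of $h$ and all their derivatives in normal coordinates; standard ODE estimates applied to the geodesic equation and to its variational equations (the Jacobi equation and its iterated derivatives with respect to initial conditions) then yield universal bounds $\sup_{|\alpha|+|\beta|\leq k}|\partial^{\alpha}_y \partial^{\beta}_w E^j|\leq B_k$ on the relevant compact domain $\{|w|\leq\delta\}$. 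Applying Faà di Bruno to the composition $E^j(y\circ f(x),\mu)$ with the hypothesized bounds $\sup_{s\leq k}|\partial^s(y^j\circ f)|\leq L_k$ in the $x^i$ and the universal bounds $B_k$ for $E^j$ produces the desired constants $C_k=C_k(L_0,\dots,L_k,B_0,\dots,B_k)$, independent of $x$ and of the particular point $f(0)$ where the normal chart is centered.

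The main obstacle is the last step: converting bounded geometry of $N$ into uniform $C^k$ bounds on $E^j$ in normal coordinates. Once this is done, the Faà di Bruno combinatorics are routine. The uniformity in $x$ is guaranteed because the bounds on $E^j$ depend only on the bounded-geometry constants of $N$ and the size $\delta$ of the fiber, while the bounds on the derivatives of $y\circ f$ are uniform in $x$ by hypothesis; the coordinate changes between different choices of normal charts are controlled by Proposition \ref{charts}, so the estimate does not deteriorate when the base point varies.
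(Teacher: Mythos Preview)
Your proposal is correct and follows essentially the same route as the paper: the map is defined by $p_f(x,v)=\exp^N_{f(x)}(v)$, points (1)--(4) are immediate, and for (5) the paper exploits the factorization $p_f=p_{\mathrm{id}_N}\circ F$ to reduce to bounding the derivatives of $p_{\mathrm{id}}$, which it obtains by writing $p_{\mathrm{id}}$ as the projection of the geodesic flow and invoking Lemma~3.4 of \cite{flow} for uniform bounds on the flow under bounded geometry. Your version unpacks this same reduction explicitly as a composition $E^j(y\circ f(x),\mu)$ handled by Fa\`a di Bruno, and replaces the citation by the phrase ``standard ODE estimates''; the content is the same.
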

	\begin{proof}
		We can define
		\begin{equation}
			\begin{split}
				p_f: (f^*(T^\delta N), g_s) &\longrightarrow (N,h) \\
				(p, w_{f(p)}) &\longrightarrow exp_{f(p)}(w_{f(p)}).
			\end{split}
		\end{equation}
		Then we have that
		\begin{enumerate}
			\item $p_f$ is a submersion, indeed for each fixed $p$ in $M$ we have that ${p}_f(p, \cdot): f^*(T^\delta N)_p = T^\delta_{f(p)} N \longrightarrow N$ is the exponential map in $f(p)$. We know that the exponential map is a local diffeomorphism and so $\tilde{p}_f$ is a submersion,
			\item $p_f(p, 0_{f(p)}) = f(p)$: this follows immediately by the definition of exponential map,
			\item $p_f$ is $\Gamma$-equivariant, indeed, since $\Gamma$ acts by isometries,
			\begin{equation}
				\begin{split}
					p_f(\gamma p, d\gamma w_{f(p)}) &= exp_{f(\gamma p)}d\gamma w_{f(p)}\\
					&= exp_{\gamma f(p)}d\gamma w_{f(p)}\\
					&= \gamma exp_{f(p)}w_{f(p)} = \gamma {p}_f(p, w_{f(p)}).
				\end{split}
			\end{equation}
			\item It's obvious: we have that $F(p, w_{f(p)}) = w_{f(p)}$,
			\item Because of the last point it is sufficient to show the assertion just for $p_{id}$.
			\\Consider $V \subseteq N$ a normal coordinate chart. Let $\{x^i\}$ be the coordinates system on $V$. Consider $\pi:TN \longrightarrow N$ and let $\{\frac{\partial}{\partial x^i}\}$ be an orthonormal frame on $V$. Then we have the fibered coordinate system $\{x^i, \mu^j\}$ on $\pi^{-1}(V)$.  Let us study now $p_{id}$ restricted to $\pi^{-1}(V)$. We have that it can be seen as $\pi \circ \phi (x,\mu)$ where $\pi(x,\mu) = x$ and $\phi$ is the flow of the system of differential equations given by
			\begin{equation}
				\begin{cases}
					\dot{x}^k = c^k\\
					\dot{c}^k = - \Gamma^k_{ij}(x) x^ix^j
				\end{cases}
			\end{equation} 
			Then, applying the Lemma 3.4 of \cite{flow} we have that the partial derivatives of $\phi$ are uniformly bounded. Then we conclude that the derivatives of  $p_{id}$ are uniformly bounded. 
		\end{enumerate}
	\end{proof}
	\begin{cor}
		We have that $\tilde{p}_f$ is a lipschitz map.
	\end{cor}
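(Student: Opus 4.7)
The plan is to deduce the Lipschitz estimate from point (5) of Lemma \ref{tilde} combined with the coordinate characterization of Lipschitz maps stated in the Remark following the definition of Lipschitz maps. Specializing point (5) to $k=1$ gives a single constant $C_1$ that uniformly bounds all first partial derivatives $\partial(y^j\circ p_f)/\partial x^i$ and $\partial(y^j\circ p_f)/\partial \mu^j$ in the fibered coordinates $\{x^i,\mu^j\}$ on $f^*T^\delta N$ built from normal coordinates $\{x^i\}$ on $M$ centered at a generic point $p$ and an orthonormal frame $\{\partial/\partial y^j\}$ of normal coordinates on $N$ around $f(p)$.

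The key geometric input is that, in these fibered coordinates and at the point $(p,0)$, the matrix of the Sasaki metric $g_S$ coincides with the identity. Indeed, by Remark \ref{riemsub}, whenever $\{x^i\}$ are normal coordinates on $M$ at $p$ (so the Christoffel symbols of $\nabla^{LC}_g$ vanish at $p$), the matrix of $g_S$ in $\{x^i,\mu^j\}$ is block-diagonal with blocks $g_{ij}(p)=\delta_{ij}$ and $H_{\sigma\tau}(p)=\delta_{\sigma\tau}$. Hence at $(p,0)$ the fibered coordinates are Euclidean for $g_S$, and the operator norm of $(p_f)_{*,(p,0)}$ from $(T_{(p,0)}f^*T^\delta N, g_S)$ to $(T_{f(p)}N, h)$ is directly controlled by the entries of the Jacobian matrix, hence by $C_1$.

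To extend this to an arbitrary point $(q,w)\in f^*T^\delta N$, I would use that by Remark \ref{Riccib} (and the restriction $\|w\|\leq \delta$) the manifold $(f^*T^\delta N, g_S)$ has uniformly bounded geometry, so the Sasaki metric matrix and its inverse are uniformly bounded above and below in the fibered coordinates around every base point. Combined with the uniform Jacobian bound from point (5), this yields
\begin{equation}
\|(p_f)_{*,(q,w)}(v)\|_h \leq C\,\|v\|_{g_S}
\end{equation}
for a constant $C$ independent of $(q,w)$ and $v$, which by the second formulation of the Lipschitz condition in the Remark following the definition of Lipschitz maps gives the claim.

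The main obstacle is the fiber direction: without the restriction to the disk bundle of radius $\delta$, the components $h^E_{ij}$ of the Sasaki metric grow quadratically in $\mu$ (see the expression in Remark \ref{riemsub}), which would prevent a uniform bound on the operator norm of the differential. This is precisely why $p_f$ is defined on $f^*T^\delta N$ rather than on the full bundle $f^*TN$, and why the restriction $\delta = \mathrm{inj}_N$ is crucial: it caps the size of the fiber coordinates and turns the pointwise Jacobian bound of Lemma \ref{tilde} into a genuine global Lipschitz estimate.
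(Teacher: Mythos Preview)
Your strategy is the paper's: combine the uniform first-derivative bound from point~5 of Lemma~\ref{tilde} with the differential characterization of Lipschitz maps. The paper's proof is simply that $\|\partial/\partial x^i\|$, $\|\partial/\partial\mu^l\|$, $\|\partial/\partial y^j\|$ are all uniformly bounded and $dp_f(\partial/\partial x^i) = (\partial p_f^j/\partial x^i)\,\partial/\partial y^j$ has bounded coefficients.

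However, you overcomplicate the argument and introduce two inaccuracies. First, in Remark~\ref{riemsub} the Christoffel symbols that must vanish at $p$ are those of the bundle connection, here the \emph{pullback} connection $f^*\nabla^{LC}_h$, not of $\nabla^{LC}_g$; by (\ref{chri}) they vanish at $p$ because $\Gamma^\alpha_{\beta l}(f(p))=0$ in normal coordinates on $N$ centered at $f(p)$. Second, once those symbols vanish at $p$, formula~(\ref{metri}) shows the Sasaki metric is the identity at $(p,\mu)$ for \emph{every} $\mu$, not just at $(p,0)$. Since you may recenter coordinates at any base point $q$, the operator-norm bound on $(p_f)_{*,(q,w)}$ follows directly for all $(q,w)$ with no extension step needed. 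Your appeal to Remark~\ref{Riccib} is both unnecessary and insufficient as written: that remark bounds the curvature of $g_S$, which neither gives bounded geometry (no injectivity-radius control) nor bounds the metric components in the fibered coordinates $\{x^i,\mu^j\}$ (Proposition~\ref{uno} concerns normal coordinates of $g_S$, which are different). If you wanted metric bounds in fibered coordinates directly, the explicit formula~(\ref{metri}) already provides them, using that $|\mu|\leq\delta$ and that $f$ is Lipschitz.
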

	\begin{proof}
		It's a direct computation. Observe that $|| \frac{\partial}{\partial y^j}||$, $||\frac{\partial}{\partial x^i}||$ and $|| \frac{\partial}{\partial \mu^l}||$ are uniformly bounded. Moreover we also have that
		\begin{equation}
			dp_{f}(\frac{\partial}{\partial x^i}) = \frac{\partial p_f^j}{\partial x^i} \frac{\partial}{\partial y^j}
		\end{equation}
		and 
		\begin{equation}
			dp_{f}(\frac{\partial}{\partial \mu^l}) = \frac{\partial p_f^j}{\partial \mu^l} \frac{\partial}{\partial y^j}.
		\end{equation}
	\end{proof}	
	\section{The pull-back functor}\label{Pullback}
	\subsection{The Fiber Volume of $p_f$}
	\begin{lem}\label{svolta}
		Consider $f:(M,g) \longrightarrow (N,h)$ a smooth lipschitz map between Riemannian manifolds of bounded geometry. Then the map ${t}_f: f^*(T^\delta N) \longrightarrow M \times N$ defined as
		\begin{equation}
			{t}_f(p,w_{f(p)}) = (p, p_f(p, w_{f(p)}))
		\end{equation}
		is an R.-N.-lipschitz diffeomorphism with its image.
	\end{lem}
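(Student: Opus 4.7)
The plan is to verify the three conditions hidden in the conclusion: that $t_f$ is a diffeomorphism onto its image, that $t_f$ is lipschitz, and that $t_f$ has bounded Fiber Volume.

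The first two properties follow quickly from what has been built up. Smoothness of $t_f$ is inherited from $p_f$. If $t_f(p_1, w_1) = t_f(p_2, w_2)$, comparing the first coordinate yields $p_1 = p_2$; then $\exp_{f(p_1)}(w_1) = \exp_{f(p_1)}(w_2)$ combined with $|w_i| \leq \delta < r_{inj}(N)$ forces $w_1 = w_2$, and on the image $\{(p, q) \in M \times N : d_N(q, f(p)) \leq \delta\}$ the explicit inverse $(p, q) \mapsto (p, \exp_{f(p)}^{-1}(q))$ is smooth. The lipschitz property is equally direct: the first component of $t_f$ is the bundle projection $(f^*T^\delta N, g_S) \to M$, which is a Riemannian submersion (Remark \ref{riemsub}) and hence $1$-lipschitz, while the second component $p_f$ is lipschitz by the corollary preceding this lemma.

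The substantive work is the bound on the Fiber Volume. Since $t_f$ is a diffeomorphism onto its image, Remark \ref{oss} identifies the Fiber Volume on the image with $|(t_f^{-1})^*(Vol_{f^*T^\delta N}/t_f^*Vol_{M \times N})|$ and sets it to zero elsewhere, so it suffices to bound $|\det(J t_f)|$ uniformly from below. I would fix an arbitrary $(p, w_{f(p)}) \in f^*T^\delta N$ and choose normal coordinates $\{x^i\}$ on $M$ at $p$ and $\{y^j\}$ on $N$ at $f(p)$, with the frame $E_j = \partial/\partial y^j$ producing fibered coordinates $\{x^i, \mu^j\}$ on $f^*T^\delta N$. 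In these coordinates $t_f(x, \mu) = (x, p_f(x, \mu))$, so the Jacobian matrix is block lower-triangular with diagonal blocks $I$ and $(\partial p_f^j / \partial \mu^l)$. By Remark \ref{riemsub} combined with the vanishing at $x = 0$ of the Christoffel symbols of $f^*\nabla^{LC}_h$ in these coordinates (from (\ref{chri}) plus the normality of $y^j$ at $f(p)$), the matrix of $g_S$ at the chosen point is the identity; the volume form of $g \oplus h$ differs from the Euclidean coordinate one by $\sqrt{\det h_{jk}(p_f(p, w))}$, which Proposition \ref{uno} bounds uniformly above and below since $p_f(p, w)$ lies within a normal chart of radius $\delta$ around $f(p)$.

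It then remains to bound $|\det(\partial p_f^j/\partial \mu^l)|$ uniformly from below. Evaluated at $(x = 0, \mu = w)$, this is precisely the Jacobian of $\exp_{f(p)}: T^\delta_{f(p)}N \to N$ at the point $w \in T_{f(p)} N$, and Propositions \ref{uno} and \ref{due} provide uniform two-sided bounds on the Jacobian of $\exp_q$ on $T^\delta_q N$ for every $q \in N$. Combining these estimates produces the required uniform lower bound on $|\det J t_f|$ and hence a uniform upper bound on the Fiber Volume. The principal obstacle throughout is obtaining these uniform estimates, and it is exactly the bounded-geometry hypothesis on $M$ and $N$ (with the $C^k_b$-character of $f$ from Lemma \ref{tilde} supplying the controls on $f^*\nabla^{LC}_h$) that provides them.
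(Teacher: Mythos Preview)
Your proposal is correct and follows essentially the same route as the paper: injectivity via the injectivity of $\exp$ on the $\delta$-ball, lipschitzness from that of $p_f$, and a pointwise computation of the Fiber Volume in normal coordinates $\{x^i\}$ at $p$ and $\{y^j\}$ at $f(p)$.

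One simplification you overlook: in the normal coordinates $\{y^j\}$ centered at $f(p)$, the map $\exp_{f(p)}$ is \emph{literally} $\mu\mapsto y=\mu$, so $\partial p_f^j/\partial\mu^l$ at $x=0$ is the identity matrix and no appeal to Propositions~\ref{uno}--\ref{due} for Jacobian bounds is needed; the paper uses exactly this. Also, your closing remark that the $C^k_b$-character of $f$ is what supplies control on $f^*\nabla^{LC}_h$ is unnecessary here: the vanishing of the pulled-back Christoffel symbols at $x=0$ follows from~(\ref{chri}) together with the normality of $\{y^j\}$ at $f(p)$, using only that $f$ is differentiable.
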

	\begin{proof}
		We start proving that ${t}_f$ is a diffeomorphism with its image. Observe that 
		\begin{equation}
			dim(f^*(T^\delta N)) = m+ n = dim(M) + dim(N) = dim(M \times N).
		\end{equation}
		Fix some normal coordinates $\{x^i\}$ around a point $p$ in $M$ and let $\{y^j\}$ be some normal coordinates around $f(p)$ in $N$. Consider the frame $\{\frac{\partial}{\partial y^j}\}$ and define the fibered coordinates $\{x^i, \mu^j\}$ related to $\{\frac{\partial}{\partial y^j}\}$ on $f^*(T^\delta N)$. On $M\times N$ we consider the normal coordinates $\{x^i, y^j\}$. Then the Jacobian of $\tilde{t}$ is given by
		\begin{equation}
			Jt_f(x,\mu) = \begin{bmatrix}
				1 && \star \\
				0 && Jexp_{x}(\mu)
			\end{bmatrix}
		\end{equation}
		Then, since the exponential map is a diffeomorphism for each $x_0$, we have that $Jt_f$ is invertible. Moreover ${t}_f$ is also injective, indeed if $(p, w_{f(p)})$ and $(q, v_{f(q)})$ have the same image, then $p = q$ and 
		\begin{equation}
			exp_{f(p)} w_{f(p)} = exp_{f(p)}v_p \implies w_p = v_p,
		\end{equation}
		since  their norm is less than $\delta$ and $\delta \leq inj_{N}$. We proved that ${t}_f$ is a diffeomorphism with its image.
		\\
		\\We know that ${t}_f$ is a lipschitz map because $p_f$ is a lipschitz map.
		\\So, in order to prove that ${t}_f$ is a R.-N.-lipschitz map, we have to show that it has bounded Fiber Volume. 
		\\Consider a point $(p,q)$ in $M \times N$. Then its fiber is empty or it is a singleton $\{(p, w_{f(p)})\}$. This means, following the Remark \ref{oss} that the Fiber Volume of ${t}_f$ is given by
		\begin{equation}
			|{t_f}^{-1^*}\frac{Vol_{T^\delta N}}{{t_f}^*Vol_{M \times N}}|
		\end{equation}
		on the image of $t_f$ and it null otherwise.
		\\Then if $\frac{Vol_{T^\delta N}}{{t}^*Vol_{M \times N}}$ is a bounded function, which is a-priori not clear, then we can conclude that ${t}_f$ is a R.-N.-lipschitz map. 
		\\Consider the fibered coordinates $\{x^i, \mu^j\}$ on $f^*(TN)$ and the coordinates $\{x^i, y^j\}$ on $M \times N$. Since the definition of exponential map we have that the image of ${t}_f$ is contained in a $\delta$-neighborhood of the $Graph(f) \in M \times N$. This means that we can cover all the image of ${t}_f$ using the normal coordinates $\{x^i, y^j\}$ around $(p,f(p))$. Out of the image of ${t}_f$ we already know since Proposition \ref{cosa} that the Fiber Volume is null. 
		\\Observe that, in this coordinates, we have
		\begin{equation}
			{t}_f(0, \mu^j) = (0, \mu^j).
		\end{equation}
		Consider 
		\begin{equation}
			Vol_{f^*T^\delta N}(x, \mu) = \sqrt{det(G_{ij})}(x,\mu) dx^1 \wedge... \wedge d\mu^n
		\end{equation}
		and 
		\begin{equation}
			Vol_{M\times N}(x,y) = \sqrt{det(H_{ij})}(x,y)dx^1 \wedge... \wedge dy^n,
		\end{equation}
		where $G_{ij}$ is the matrix of $g_S$ on $f^*(T^\delta N)$ with respect to $\{x^i, \mu^j\}$ and $H_{ij}$ is the matrix of the metric on $M \times N$. Then we have that
		\begin{equation}
			\frac{Vol_{T^\delta N}}{t^*Vol_{M \times N}}(x,\mu) = \frac{\sqrt{det(G_{ij})}}{t_f^*(\sqrt{det(H_{ij})})}(x, \mu) \cdot \frac{1}{det(Jexp_{f(x)}(\mu))}.
		\end{equation}
		Observe that in $(0, \mu)$ we have that $Jexp_{0}(\mu)$ is the identity matrix. Moreover we also have that
		\begin{equation}
			G_{ij}(0,\mu) = \begin{bmatrix}
				1 && 0 \\
				0 && 1
			\end{bmatrix}
		\end{equation}
		and so $\sqrt{det(G_{ij})}(0,y) = 1$. Finally we have that
		\begin{equation}
			H_{ij}(0,\mu) = \begin{bmatrix}
				1 && 0 \\
				0 && h_{ij}(0,\mu)
			\end{bmatrix}
		\end{equation}
		where $h_{ij}$ is the matrix related to the Riemannian metric $h$ in normal coordinates. Then we have that 
		\begin{equation}
			det(H_{ij})^{-1}(0,y) = det(h_{ij})^{-1}(0,y) \leq C
		\end{equation}
		because $N$ is a manifold of bounded geometry \cite{flow}. This means that
		\begin{equation}
			\frac{Vol_{T^\delta N}}{t^*Vol_{M \times N}}(0,\mu) = \sqrt{\frac{det(G_{ij})}{t_f^*det(H_{ij})}}(0,\mu) \leq C
		\end{equation}
		and so the Fiber Volume of $t_f$ is bounded.
	\end{proof}
	\begin{cor}\label{R.-N.-p}
		Let $f:(M,g) \longrightarrow (N,h)$ be a smooth uniformly proper lipschitz map between Riemannian manifolds of bounded geometry. Then $p_f$ is a R.-N.-lipschitz map and $p_f^*$ is $\mathcal{L}^2$-bounded.
	\end{cor}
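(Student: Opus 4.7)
The plan is to verify the two assertions separately, with the bulk of the work devoted to showing that the Fiber Volume of $p_f$ is uniformly bounded; once that is done, the $\mathcal{L}^2$-boundedness of $p_f^*$ follows immediately from Proposition \ref{mai}, and the lipschitz property of $p_f$ is already established in the corollary to Lemma \ref{tilde}. So I concentrate on bounding $\mathrm{Vol}_{p_f}$.

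The key observation is that $p_f$ factors as $p_f = \pi_N \circ t_f$, where $\pi_N: M \times N \to N$ is the projection and $t_f$ is the diffeomorphism onto its image from Lemma \ref{svolta}. Applying Proposition \ref{cosa}, I would write
\begin{equation}
\mathrm{Vol}_{p_f}(q) = \int_{p_f^{-1}(q)} \frac{\mathrm{Vol}_{f^*T^\delta N}}{p_f^* \mathrm{Vol}_N},
\end{equation}
and then pull this integral over to $M \times N$ via $t_f^{-1}$. Since $(t_f^{-1})^*(p_f^*\mathrm{Vol}_N) = \pi_N^*\mathrm{Vol}_N$, and since $\frac{\mathrm{Vol}_{M \times N}}{\pi_N^* \mathrm{Vol}_N} = \pm\pi_M^*\mathrm{Vol}_M$, the integrand on $t_f(f^*T^\delta N) \subseteq M \times N$ becomes $\pm J\cdot \pi_M^*\mathrm{Vol}_M$, where $J$ is the Jacobian of $t_f^{-1}$ relative to the Riemannian volume forms. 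The proof of Lemma \ref{svolta} precisely showed that $|J|$ is bounded by a constant $C$, uniformly in the base-point and fiber-point.

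Next I would identify $t_f(p_f^{-1}(q)) = f^{-1}(\overline{B_\delta(q)}) \times \{q\}$, so that after identification with $f^{-1}(\overline{B_\delta(q)}) \subseteq M$ the estimate reduces to
\begin{equation}
\mathrm{Vol}_{p_f}(q) \leq C \cdot \mu_M\bigl(f^{-1}(\overline{B_\delta(q)})\bigr).
\end{equation}
At this point uniform properness of $f$ gives $\mathrm{diam}(f^{-1}(\overline{B_\delta(q)})) \leq \alpha(2\delta)$, a constant independent of $q$, and Remark \ref{bvolume} (Bishop--Gromov applied to the bounded-geometry manifold $M$) yields $\mu_M(f^{-1}(\overline{B_\delta(q)})) \leq C(\alpha(2\delta))$, again uniformly in $q$. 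Combining these inequalities gives a uniform bound on $\mathrm{Vol}_{p_f}$, which together with the lipschitz property established earlier shows that $p_f$ is R.-N.-lipschitz; Proposition \ref{mai} then yields the $\mathcal{L}^2$-boundedness of $p_f^*$.

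I expect the main obstacle to be the bookkeeping in the change of variables, namely checking carefully that $(t_f^{-1})^*\mathrm{Vol}_{f^*T^\delta N}$ is of the form $J \cdot \mathrm{Vol}_{M\times N}$ with the same $J$ that was bounded in Lemma \ref{svolta}, and that dividing by $\pi_N^*\mathrm{Vol}_N$ really produces an $m$-form proportional to $\pi_M^*\mathrm{Vol}_M$. Up to signs (which do not affect the absolute value entering the Fiber Volume), this is formal; the substantive inputs are the R.-N.-lipschitz property of $t_f$, uniform properness of $f$, and Bishop--Gromov, all three of which are already in place.
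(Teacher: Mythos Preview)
Your proposal is correct and follows essentially the same route as the paper: factor $p_f = pr_N \circ t_f$, use the bounded Fiber Volume of $t_f$ from Lemma~\ref{svolta}, then bound $\mu_M$ of the relevant preimage via uniform properness and Remark~\ref{bvolume}. The only cosmetic difference is that the paper invokes Proposition~\ref{compo} to write $\mathrm{Vol}_{p_f}(q)=\int_M \mathrm{Vol}_{t_f}(p,q)\,d\mu_M$ and bounds the support of the integrand using the homotopy $p_f \sim \overline{f}$ (Remark~\ref{uniformlyp}), whereas you do the change of variables by hand and identify $t_f(p_f^{-1}(q))=f^{-1}(\overline{B_\delta(q)})\times\{q\}$ directly; your description of the fiber is in fact sharper.
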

	\begin{proof}
		We already know, since Lemma \ref{svolta}, that $t_f$ is a R.-N.-lipschitz map. Observe that $p_f = pr_N \circ t_f$.
		\\Consider $\overline{f}: f^*T^\delta N \longrightarrow N$ defined as
		\begin{equation}
			\overline{f}(w_{f(p)}) := f(p).
		\end{equation}
		We have that $p_f \sim_\Gamma \overline{f}$. Then, using Remark \ref{uniformlyp}, in particular (\ref{eqno}), we have that there is a $C > 0$ such that
		\begin{equation}
			p_f^{-1}(q) \subset A_q := \overline{f}^{-1}(B_C(q)) = \pi^{-1}f^{-1}(B_C(q)),
		\end{equation}
		where $\pi: f^*T^\delta N \longrightarrow M$ is the projection of the bundle.  
		\\This means that if we fix a $q$ in $N$, then the Fiber Volume of $t_f$ in a point $(p,q)$ can be different from zero only if $p \in f^{-1}(B_C(q))$.  
		\\Then, using the Proposition \ref{compo}, we have that the Fiber Volume of $p_f$ in a point $q$ is given by
		\begin{equation}
			\begin{split}
				Vol_{p_f}(q) &= \int_M Vol_{t_f}(p,q) d\mu_M \\
				&= \int_{f^{-1}(B_C(q))}  Vol_{t_f}(p,q) d\mu_M \\
				&\leq K \cdot \mu_M(f^{-1}(B_C(q))).
			\end{split}
		\end{equation}
		where $K$ is the maximum of the Fiber Volume of $t_f$. Let us observe that since $f$ is uniformly proper, then the diameter of $f^{-1}(B_C(q))$ is uniformly bounded and so there are a point $x_0$ in $M$ and a radius $R$ such that
		\begin{equation}
			f^{-1}(B_C(q)) \subseteq B_R(x_0).
		\end{equation}
		Then, since the Ricci curvature of $M$ is bounded, we also have that (applying Remark \ref{bvolume}) there is a constant $V$ such that
		\begin{equation}
			\mu_M(f^{-1}(B_C(q))) \leq \mu_M(B_R(x_0)) \leq V
		\end{equation}
		and so
		\begin{equation}
			Vol_{p_f}(q) \leq K \cdot V
		\end{equation}
		and $p_f$ is a R.-N.-lipschitz map.
	\end{proof}
	\begin{rem}\label{salva}
		Consider a smooth map $f:(M,g) \longrightarrow (N,h)$ which is lipschitz and uniformly proper. Let us suppose, moreover, that $(N,h)$ is a manifold of bounded geometry and $(M,g)$ has bounded Ricci curvature. Eventually suppose $M$ incomplete or $\partial M \neq \emptyset$. Then we can observe that the map $p_f: (f^*(T^\delta N), g_S) \longrightarrow (N,h)$ is well-defined and, moreover, using the same arguments we used in this section, it is also R.-N.-lipschitz.
	\end{rem}
	\subsection{A Thom form for $f^*(TN)$}\label{forma}
	Let $(M,g)$ and $(N,h)$ be two manifolds of bounded geometry, let $f:(M,g) \longrightarrow (N,h)$ be a smooth lipschitz map and consider $(f^*(TN), g_S)$ the pullback bundle $f^*(TN)$ endowed of the Sasaki metric defined in the subsection \ref{sasaki}. Let us introduce the notion of Thom form.
	\begin{defn}
		Let $\pi: E \longrightarrow M$ be a vector bundle. We say that $\omega$ in $\Omega_{cv}^*(E)$ is a \textbf{Thom form} if it is closed and its integral along the fibers of $\pi$ is equal to the constant function $1$.
	\end{defn}
	It's a well known fact that for each oriented vector bundle $(E, \pi, M)$ there is at least a Thom form $\omega$ (see for example \cite{bottu}). This, in particular, implies that the tangent bundle $(TN, \pi, N)$ always admits a Thom form. Then, if we consider a smooth map $f: M \longrightarrow N$, the same holds for $(f^*TN, \pi_1, M)$.
	\\ Given a Thom form $\omega$ of $f^*TN$ such that $supp(\omega)$ is contained in a $\delta_0 < \delta$ neighborhood of the null section, let us define an operator $e_\omega: \Omega^*(f^*(T^\delta N)) \longrightarrow \Omega^*(f^*(T^\delta N))$, for every smooth form $\alpha$ as
	\begin{equation}
		e_\omega(\alpha) := \alpha \wedge \omega.
	\end{equation}
	Our goal, in this subsection, is to find a Thom form $\omega$ such that the operator $e_\omega$ is $L^2$-bounded. To this end we follow the work of Mathai and Quillen \cite{mathai}. In their work, indeed, they compute a Thom form for a vector bundle endowed with a connection and a metric bundle.
	\\
	\\First we construct the Thom form of $f^*(TN)$ when $f = id_N$.
	\\Consider the bundle $\pi: (TN, h_S) \longrightarrow (N,h)$ and let $\pi^*(TN)$ be the pullback bundle over $TN$. Consider the bundle metric on $\pi^*(TN)$ given by $\pi^*h$ and we can also consider the connection $\pi^*\nabla^{LC}_h$. Then we can denote by $\Omega^{i,j}$ the algebra
	\begin{equation}
		\Omega^{i,j} := \Omega^{i}(TN, \Lambda^j\pi^*TN) = \Gamma(TN, \Lambda^i T^*(TN) \otimes \Lambda^j\pi^*TN).
	\end{equation}
	Let us consider the following section of $TN$ 
	\begin{equation}
		\begin{split}
			X: TN &\longrightarrow \pi^*TN \\
			v_p &\longrightarrow (v_p, v_p).
		\end{split}
	\end{equation}
	Observe that, identifying $\pi^*TN$ and $\Lambda^1(\pi^*TN)$ we obtain that $X$ is in $\Omega^{0,1}$.
	\\Let us fix some normal coordinates  $\{x^i\}$ on $N$ and the coordinates $\{x^i, \mu^j\}$ on $TN$, where $\{\mu^j\}$ are referred to the frame $\{\frac{\partial}{\partial x^j}\}$. Then we have that
	\begin{equation}
		X(x, \mu) = \mu^i \frac{\partial}{\partial x^i}.
	\end{equation}
	Let us consider 
	\begin{equation}
		\begin{split}
			\pi^*g(X,X) = |X|^2: TN &\longrightarrow \numberset{R} \\
			v_p &\longrightarrow h_p(v_p, v_p).
		\end{split}
	\end{equation}
	This map can be see as a differential form in $\Omega^{0,0}$. In coordinates it can be expressed as follow
	\begin{equation}
		|X|^2(x, \mu) = h_{i,j}(x)\mu^i\mu^j.
	\end{equation} 
	Consider $\pi^*\nabla^{LC}_h (X)$: this is a form in $\Omega^{1,1}$ and, in local coordinates, it is given by
	\begin{equation}
		\begin{split}
			\pi^*\nabla^{LC}_h (X) &=  d\mu^i \otimes \frac{\partial}{\partial x^i}  + \mu^i \nabla \frac{\partial}{\partial x^i}\\
			&= d\mu^i \otimes \frac{\partial}{\partial x^i} + \mu^i \Gamma^k_{ij}(x) dx^j \otimes \frac{\partial}{\partial x^k} \\
			&= (\delta^k_j + \mu^i \Gamma^k_{ij}(x))dx^j \otimes \frac{\partial}{\partial x^k}
		\end{split}
	\end{equation} 
	Finally let us consider $\Omega$ the curvature form of $TN$: this is a $2$-form on $N$ with values in $\Lambda^2(TN)$. We know that $\Omega$, in coordinates is given by
	\begin{equation}
		\Omega(x) = R^{ij}_{kl}(x) dx^k\wedge dx^l \otimes (\frac{\partial}{\partial x^i} \wedge \frac{\partial}{\partial x^k})
	\end{equation}
	where $R^{ij}_{kl} = g^{is}R^j_{kls}$ and $R^j_{kls}$ are the components of the Riemann tensor of $N$. Pulling back $\Omega$ along $\pi$, we obtain $\pi^*\Omega$ which is a differential form in $\Omega^{2,2}$.
	\\Let $\phi: \numberset{R} \longrightarrow \numberset{R}$ be a smooth function which support is contained in $[\delta_0, \delta_0]$. Then we define
	\begin{equation}
		\overline{\omega} := \sum_{k=0}^{n} \frac{\phi^{(k)}(\frac{|X^2|}{2})}{k!}(\pi^*\nabla X + \pi^*\Omega)^k,
	\end{equation}
	where $(\pi^*\nabla X + \pi^*\Omega)^k$ is the $k$-times wedge of $\pi^*\nabla X + \pi^*\Omega$.
	\\Observe that the support of $\overline{\omega}$ is strictly contained in a $\delta_0$-nighborhood of the zero section since $supp(f) \subseteq [- \delta_0, \delta_0]$.
	\\Let us consider $I,J,K$ some multi-index and let us denote $dx^I := dx^i_1 \wedge ... \wedge dx^i_{|I|}$, $d\mu^J := d\mu^i_1 \wedge ... \wedge d\mu^i_{|J|}$ and  $\frac{\partial}{\partial x^K} := \frac{\partial}{\partial x^{i_1}} \wedge ... \wedge \frac{\partial}{\partial x^{i_{|K|}}}$. Since $N$ is a manifold of bounded geometry, we have that $\overline{\omega}$ in local fibered coordinate $\{U, x^i, \mu^j\}$ is given by
	\begin{equation}
		\overline{\omega}(x, \mu) := \alpha_{IJ}^{K}(x, \mu) dx^I\wedge d\mu^J \otimes \frac{\partial}{\partial x^K}
	\end{equation}
	where $\alpha_{IJ}$
	\begin{equation}
		|\alpha_{IJ}^{K}(x,\mu)| \leq C
	\end{equation}
	and $C$ doesn't depend on the choice of $U$. Indeed the bounded geometry of $N$ implies that the components of the metric $h$, the Christoffel symbols of $\nabla$ and also the components of $h_S$ are uniformly bounded in the fibered coordinates $\{x^i, \mu^j\}$.
	\\
	We have to introduce an important tool: the \textit{Berezin integral} $\textbf{B}$. This is the isometry
	\begin{equation}
		\begin{split}
			\textbf{B}: \Lambda^n(TN) &\longrightarrow N \times \numberset{R}\\
			\alpha_p &\longrightarrow (p, Vol_p(\alpha_p))
		\end{split}
	\end{equation}
	where $n= dim(N)$ and $Vol_p$ is the volume form of $N$ in a point $p$ \footnote{Actually the definition of Berezin integral is much more general: this is the definition of Berezin integral for the fiber bundle $TN$.}. We can observe that the Berezin integral can be extended to $\Omega^{i,j}$
	\begin{equation}
		\mathcal{B}: \Omega^{i,j} \longrightarrow \Omega^i(TN)\\
	\end{equation}
	posing $\mathcal{B}(\alpha \otimes \beta) := \textbf{B}(\beta) \alpha$ if $j = n$, $\textbf{B}(\alpha \otimes \beta) := 0$ otherwise.
	\\As showed in  \cite{Gez} and in \cite{mathai}, we can define our Thom form as the differential form given by
	\begin{equation}\label{Thom}
		\omega := \mathcal{B}(\overline{\omega}).
	\end{equation}
	We can observe that, in coordinates $\{x,\mu\}$ on $TN$,
	\begin{equation}
		\mathcal{B}(\alpha^{I}_{J,L}(x, \mu)\frac{\partial}{\partial x^I} \otimes dx^J\wedge d\mu^L) = \alpha^{0}_{J,L}(x, \mu) dx^J\wedge d\mu^L,
	\end{equation}
	where $I$, $J$, $L$ are multi-index and $0$ is the multi-index $(1,..., n)$. This fact implies that, in local coordinates $(x, \mu)$ we can express
	\begin{equation}\label{Thombound}
		\omega(x, \mu) = \alpha_{IJ}(x, \mu) dx^I\wedge d\mu^J,
	\end{equation}
	where $|\alpha_{IJ}|$ are uniformly bounded. Moreover the support of $\omega$ is contained in a $\delta_0$-neighborhood of the null section. The differential form $\omega$ is a Thom form for $TN$: a proof of this fact can be find in \cite{mathai} or in \cite{Gez}.
	\begin{prop}
		Consider $(N,h)$ a manifold of bounded geometry. Let $\alpha$ be a differential form on $(T^\delta N, h_S)$. If in fibered coordinates $\{x^i, \mu^j\}$, where $\mu^j$ are referred to $\{\frac{\partial}{\partial x^i}\}$ the coefficients of $\alpha$ are uniformly bounded, then the punctual norm $|\alpha_p|$ is uniformly bounded.
	\end{prop}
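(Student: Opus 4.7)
The plan is to exploit the special structure of the Sasaki metric at the origin of fibered normal coordinates. Fix an arbitrary point $p = (x_0, v_{x_0}) \in T^\delta N$. Choose normal coordinates $\{x^i\}$ on $N$ centered at $x_0$, and define the associated fibered coordinates $\{x^i, \mu^j\}$ with $\mu^j$ referred to the frame $\{\partial/\partial x^j\}$; the point $p$ then has coordinates $(0, \mu_0)$ for some $\mu_0$ with $|\mu_0| \leq \delta$. By hypothesis $\alpha(x,\mu) = \alpha_{IJ}(x,\mu)\, dx^I \wedge d\mu^J$ with $|\alpha_{IJ}|$ uniformly bounded by some constant $C$ independent of the chart.

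First I would use the explicit formulas from Remark \ref{riemsub} for the Sasaki metric components:
\begin{equation}
\begin{cases}
h^E_{ij}(x,\mu) = h_{ij}(x) + H_{\alpha\gamma}(x)\Gamma^\alpha_{\beta i}(x)\Gamma^\gamma_{\eta j}(x)\mu^\beta\mu^\eta \\
h^E_{i\sigma}(x,\mu) = H_{\sigma \alpha}(x)\Gamma^\alpha_{\beta i}(x)\mu^\beta \\
h^E_{\sigma\tau}(x,\mu) = H_{\sigma\tau}(x).
\end{cases}
\end{equation}
Since $\{x^i\}$ are normal coordinates of $N$ centered at $x_0$, the Christoffel symbols $\Gamma^\alpha_{\beta i}$ vanish at $x=0$ and $h_{ij}(0) = \delta_{ij}$. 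Substituting $x = 0$ gives $h^E_{ij}(0,\mu) = \delta_{ij}$, $h^E_{i\sigma}(0,\mu) = 0$, and $h^E_{\sigma\tau}(0,\mu) = \delta_{\sigma\tau}$ for \emph{every} $\mu$ in the fiber.

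Consequently, at the point $(0,\mu_0)$ the coordinate basis $\{\partial/\partial x^i, \partial/\partial \mu^j\}$ is $h_S$-orthonormal, and so is the dual basis $\{dx^i, d\mu^j\}$ in $T^*_p(T^\delta N)$. Hence the wedge products $dx^I \wedge d\mu^J$ form an orthonormal system at $p$, which yields
\begin{equation}
|\alpha|_p^2 = \sum_{I,J} |\alpha_{IJ}(0,\mu_0)|^2 \leq N \cdot C^2,
\end{equation}
where $N$ is a combinatorial constant depending only on $\dim(T^\delta N)$. Since $p$ was arbitrary and the bound $C$ on the coefficients is uniform across all fibered normal coordinate charts, this gives the claimed uniform bound on $|\alpha_p|$.

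There is no serious obstacle here; the only subtle point is recognizing that the vanishing of the Christoffel symbols of $h$ at the center of normal coordinates forces the Sasaki metric matrix to reduce to the identity at $(0,\mu)$ for \emph{all} $\mu$ in the fiber — the cross terms and the deformation of the horizontal block both carry factors of $\Gamma^\alpha_{\beta i}(0) = 0$. This is exactly the same observation already used in the proof of Lemma \ref{svolta} to control $\sqrt{\det(G_{ij})}(0,\mu)$.
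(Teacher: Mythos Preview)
Your proof is correct, but the route differs from the paper's. The paper does not pin down the Sasaki metric exactly at the origin; instead it argues more coarsely: the one-forms $dx^i$ and $d\mu^j$ have uniformly bounded pointwise norms (from bounded geometry and the structure of $h_S$), then the Hadamard--Schwartz inequality (Lemma~\ref{spacc}) gives a uniform bound $|dx^I \wedge d\mu^J|_p \leq K$, and one concludes by estimating $|\alpha_p|$ termwise. Your argument is sharper and avoids Hadamard--Schwartz altogether: by centering normal coordinates at the basepoint you force the Sasaki metric matrix to be the identity at $(0,\mu_0)$, so the basis forms are orthonormal and $|\alpha_p|^2 = \sum_{I,J}|\alpha_{IJ}(0,\mu_0)|^2$ exactly. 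This yields a cleaner constant and a shorter proof. The paper's approach, on the other hand, is more robust: it only needs uniform bounds on the metric components rather than their exact values at a point, which makes it transparently portable to related bundles (e.g.\ $f^*(T^\delta N)$) without re-examining the Christoffel symbols.
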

	\begin{proof}
		It is a direct computation. We know that the norm of $dx^i$ and $d\mu^j$ are uniformly bounded in $f^* T^\delta N$. Moreover, applying the  Hadamard-Schwartz inequality (\cite{Sbordone} or Lemma \ref{spacc} in Appendix A) we have that there is a number $K$ such that $|dx^I\wedge d\mu^J|_p \leq K$. Then, since
		\begin{equation}
			|\alpha_p| \leq \sqrt{|\alpha_{IJ}(x, \mu)|^2 \cdot |\alpha_{RS}(x, \mu)|^2\cdot |dx^I\wedge d\mu^J|_p^2 \cdot |dx^R\wedge d\mu^S|_p^2} \leq C.
		\end{equation}
	\end{proof}
	Let $f:(M,g) \longrightarrow (N,h)$ be a smooth lipschitz map and consider $(f^*(TN), g_S)$ where $g_S$ is the Sasaki metric defined as in the previous section. Observe that if $\omega$ is the Thom form on $TN$ defined in (\ref{Thom}) and if we consider the map $F: f^*(TN) \longrightarrow TN$ given by $F(p,w_{f(p)}) =(f(p), w_{f(p)})$, then it's easy to check that $F^*\omega$ is a Thom form for $f^*(TN)$ and moreover, applying Lemma \ref{norm} in Appendix A, we also have a uniform bound on the norm of $|F^*\omega|_p$.
	\begin{prop}\label{eomega}
		Let us consider a form $\alpha$ on a Riemannian manifold $(M,g)$. Suppose that there is a number $C$ such that
		\begin{equation}
			|\alpha_p| \leq C
		\end{equation}
		for each $p$ in $(M,g)$. Then the operator $e_{\alpha}(\beta) := \beta \wedge \alpha$ defines an $\mathcal{L}^2$-bounded operator.
	\end{prop}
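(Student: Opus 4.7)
The plan is to reduce the operator bound to a pointwise inequality, then integrate. First I would take $\beta \in \Omega^*_c(M)$, which is dense in $\mathcal{L}^2(M)$, and work pointwise. The key ingredient is a dimension-dependent constant $K$ such that $|\beta \wedge \alpha|_p \leq K \, |\beta|_p \, |\alpha|_p$ for every $p \in M$; this is precisely the Hadamard--Schwartz type inequality for the punctual norm of a wedge product invoked earlier in the paper (Lemma \ref{spacc} in Appendix A, already used in the proof that $|F^*\omega|_p$ is uniformly bounded).

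Combining this pointwise estimate with the hypothesis $|\alpha_p| \leq C$ gives
\begin{equation}
	|e_\alpha(\beta)|_p^2 = |\beta \wedge \alpha|_p^2 \leq K^2 C^2 \, |\beta|_p^2
\end{equation}
for all $p \in M$, and integrating against $d\mu_g$ yields
\begin{equation}
	\|e_\alpha(\beta)\|_{\mathcal{L}^2(M)}^2 \leq K^2 C^2 \, \|\beta\|_{\mathcal{L}^2(M)}^2.
\end{equation}
Since $\Omega^*_c(M)$ is dense in $\mathcal{L}^2(M)$ and the bound above is independent of $\beta$, $e_\alpha$ extends by continuity to a bounded operator $\mathcal{L}^2(M) \to \mathcal{L}^2(M)$ with $\|e_\alpha\| \leq K C$.

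There is no real obstacle here: the entire statement is a routine consequence of the pointwise Cauchy--Schwarz-type estimate for the wedge product combined with the uniform bound on $|\alpha_p|$. The only thing to keep in mind is to invoke the correct form of the pointwise inequality from Appendix A so that the constant $K$ depends only on $\dim(M)$ and on the degrees of the forms involved, and not on the point $p$; the assumption that $|\alpha_p|$ is uniformly bounded then does the rest.
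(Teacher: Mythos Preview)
Your proposal is correct and follows essentially the same approach as the paper: both apply the pointwise Hadamard--Schwartz inequality (Lemma \ref{spacc}) to bound $|\beta \wedge \alpha|_p^2$ by a constant times $|\alpha|_p^2 |\beta|_p^2$, then use the uniform bound on $|\alpha_p|$ and integrate. You are slightly more explicit about the density-and-extension step, but the argument is the same.
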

	\begin{proof}
		It's again a direct consequence of Hadamard-Schwartz inequality. Indeed
		\begin{equation}
			\begin{split}
				||e_{\alpha}(\beta)||_{\mathcal{L}^2(M)}^2 &= \int_{M} |\alpha \wedge \beta|_p^2 d\mu_M \\
				&\leq \int_M C \cdot |\alpha|_p^2 \cdot |\beta|_p^2 d\mu_M\\
				&\leq C_\alpha \int_M |\beta|_p^2 d\mu_M = C_\alpha ||\beta||_{\mathcal{L}^2(M)}^2.
			\end{split}
		\end{equation}
	\end{proof}
	\subsection{The $T_f$ operator}
	Let $(M,g)$ and $(N,h)$ be two manifolds of bounded geometry and let $f:(M,g) \longrightarrow (N,h)$ be a uniformly proper, smooth, lipschitz map.
	\\Let us denote by $pr_M$ the projection $pr_M: (f^*(T^\delta N),g_S) \longrightarrow (M,g)$, let $\omega$ be a Thom form defined as in the previous subsection and consider $p_f: (f^*(T^\delta N),g_S) \longrightarrow (M,g)$ the submersion related to $f$ that we introduced in the previous section. If $f$ is differentiable, then we can define the operator $T_f$ for each smooth $\mathcal{L}^2$-form as
	\begin{equation}
		\begin{split}
			T_f (\alpha) &= pr_{M, \star} \circ e_\omega \circ p_f^* (\alpha)\\
			&=\int_{B^\delta} p_f^*(\alpha) \wedge \omega \label{defT}.
		\end{split}
	\end{equation}
	where $B^\delta$ denotes the fibers of $pr_M$. If $f$ is not differentiable, then we can consider a differentiable lipschitz map $f'$ which is lipschitz-homotopic to $f$ \footnote{we know that such a $f'$ exists as consequence of Proposition \ref{appr}} and we impose 
	\begin{equation}
		T_f := T_{f'}.
	\end{equation}
	Actually,  if $f$ is not differentiable, the definition of $T_f$ depends on the choice of $f'$. We will not denote the choice of the $f'$ because, as we will see later, $T_f$ induces an operator in (un)-reduced $L^2$-cohomology which doesn't depend on the choice of $f'$. 
	\begin{prop}
		Let us consider a uniformly proper lipschitz map $f$ between Riemannian manifolds of bounded geometry. Then the operator $T_f$ is a bounded operator.
	\end{prop}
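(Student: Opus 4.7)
The plan is to express $T_f = pr_{M,\star} \circ e_\omega \circ p_f^*$ as a composition of three $\mathcal{L}^2$-bounded operators, and then invoke the standard fact that a composition of bounded operators is bounded. Each of the three factors has already been set up as bounded (or nearly so) in the preceding subsections, so the argument should be essentially a bookkeeping exercise, with the only genuine verification being the boundedness of the fiber-integration $pr_{M,\star}$.

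First I would treat $p_f^*$. By Corollary \ref{R.-N.-p}, the submersion $p_f:(f^*T^\delta N, g_S)\longrightarrow (N,h)$ is R.-N.-lipschitz whenever $f$ is a uniformly proper smooth lipschitz map between manifolds of bounded geometry, and in the non-smooth case the definition $T_f:=T_{f'}$ with $f'$ a smooth approximation (Proposition \ref{appr}) reduces the problem to the smooth case. Hence by Proposition \ref{mai}, the pullback $p_f^*:\mathcal{L}^2(N)\longrightarrow \mathcal{L}^2(f^*T^\delta N)$ is $\mathcal{L}^2$-bounded, with norm controlled by the lipschitz constant of $p_f$ and the supremum of $Vol_{p_f}$.

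Next I would handle $e_\omega$. The Thom form $\omega$ constructed in Subsection \ref{forma} via the Mathai--Quillen formula has coefficients that are uniformly bounded in the fibered normal coordinates $\{x^i,\mu^j\}$, as made explicit in equation (\ref{Thombound}); the preceding proposition then shows that the pointwise norm $|\omega|_p$ is uniformly bounded on $f^*T^\delta N$. Proposition \ref{eomega} then gives directly that $e_\omega$ is an $\mathcal{L}^2$-bounded operator.

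Finally I would deal with $pr_{M,\star}$. By Proposition \ref{mai2} and Corollary \ref{integr}, it suffices to verify that the projection $pr_M:(f^*T^\delta N, g_S)\longrightarrow (M,g)$ is R.-N.-lipschitz. Lipschitz is immediate since $pr_M$ is a Riemannian submersion with respect to the Sasaki metric (Remark \ref{riemsub}). For the Fiber Volume: the fiber over $p\in M$ is the closed ball $B^\delta\subset (T_{f(p)}N,h_{f(p)})$, whose volume with respect to the induced Sasaki metric equals the volume of a Euclidean $\delta$-ball in $T_{f(p)}N$; by bounded geometry of $N$ this volume is uniformly bounded in $p$, so applying Proposition \ref{cosa} gives $Vol_{pr_M}\leq C$. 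Thus $pr_{M,\star}$ is $\mathcal{L}^2$-bounded. Composing the three bounded operators yields that $T_f$ is bounded, with norm controlled explicitly by the product $\|pr_{M,\star}\|\cdot\|e_\omega\|\cdot\|p_f^*\|$. The only step requiring any thought is the uniform bound on the fiber volume of $pr_M$, and that reduces directly to the Bishop--Gromov--type estimate already recorded in Remark \ref{bvolume}.
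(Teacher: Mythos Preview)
Your proposal is correct and follows essentially the same approach as the paper: decompose $T_f = pr_{M,\star}\circ e_\omega\circ p_f^*$ and verify each factor is $\mathcal{L}^2$-bounded using, respectively, Corollary \ref{R.-N.-p}, Proposition \ref{eomega}, and the R.-N.-lipschitz property of the Riemannian submersion $pr_M$. One small simplification: the fiber of $pr_M$ over $p$ is the $\delta$-ball in the inner-product space $(T_{f(p)}N,h_{f(p)})$, whose volume is exactly the Euclidean $\delta$-ball volume in $\mathbb{R}^n$ and hence literally constant in $p$, so no appeal to bounded geometry or Bishop--Gromov is needed for this step.
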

	\begin{proof}
		Let us suppose $f$ is smooth.
		We know, since Corollary \ref{R.-N.-p}, that $p_f^*$ is $\mathcal{L}^2$-bounded.
		\\Because of since Corollary \ref{eomega}, we also know that $e_\omega$ is a $\mathcal{L}^2$-bounded operator.  
		\\We just have to prove that $pr_{M, \star}$ is $\mathcal{L}^2$-bounded. 
		Observe that $pr_M$ is a Riemannian submersion (Remark \ref{riemsub}) and so, in particular, it is a lipschitz map. Moreover it's easy to check that its Fiber Volume is the Volume of an euclidean ball of radius $\delta$ in each point of $M$. Then, applying Proposition \ref{cosa}, we have that $pr_M$ is a R.-N.-lipschitz map and, in particular $pr_M^*$ is a $\mathcal{L}^2$-bounded operator. Then, beacause of Remark \ref{fibnorm},
		\begin{equation}
			||pr_{M, \star}|| =||pr_M^*|| = L < +\infty.
		\end{equation}
		Then we can conclude since $T_f$ is composition of bounded operators.
	\end{proof}
	\begin{rem}
		In particular the previous Proposition holds if $f$ is a lipschitz-homotopy equivalence.
	\end{rem}
	\begin{cor}
		Given a uniformly proper lipschitz map $f:(M,g) \longrightarrow (N,h)$ between two Riemannian manifolds of bounded geometry, then we have
		\begin{equation}
			T_f(dom(d_{min})) \subseteq dom(d_{min})
		\end{equation}
		and $T_fd = dT_f$. This, in particular, means that $T_f$ induces a morphism in $L^2$-cohomology. Moreover, since $T_f$ is $\mathcal{L}^2$-bounded, it also induces a morphism between the reduced $L^2$-cohomology groups. 
	\end{cor}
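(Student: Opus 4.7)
The goal is to invoke Corollary \ref{boundedness2}, so the plan is to verify its hypotheses: that $T_f$ is $\mathcal{L}^2$-bounded (already proved), that $T_f(\Omega_c^*(N)) \subseteq \Omega_c^*(M)$, and that $dT_f = T_f d$ on $\Omega_c^*(N)$. (If $f$ is not smooth, by definition $T_f = T_{f'}$ for the smooth approximation $f'$, so it suffices to argue for smooth $f$.)

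First I would check the support property. If $\alpha \in \Omega_c^*(N)$ is supported in a compact set $K \subseteq N$, then $p_f^*\alpha \wedge \omega$ is supported in the set of $(p, w) \in f^*T^\delta N$ with $p_f(p,w) = \exp_{f(p)}(w) \in K$. Since $|w| \leq \delta_0$ on $\mathrm{supp}(\omega)$, this forces $d(f(p), K) \leq \delta_0$, i.e. $f(p) \in B_{\delta_0}(K)$. The set $B_{\delta_0}(K)$ is bounded, and by uniform properness of $f$ the preimage $f^{-1}(B_{\delta_0}(K))$ has bounded diameter in $M$; since $M$ is complete of bounded geometry, this preimage is precompact. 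Thus $p_f^*\alpha \wedge \omega$ has compact support in $f^*T^\delta N$, and after integration along the fibers of $pr_M$ we get $T_f\alpha \in \Omega_c^*(M)$.

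Next I would establish the commutation $dT_f = T_f d$ on $\Omega_c^*(N)$. Writing $T_f = pr_{M,\star} \circ e_\omega \circ p_f^*$, the computation splits into three ingredients: (i) $p_f^*$ commutes with $d$ for smooth forms; (ii) $\omega$ is closed, so for any smooth $\beta$ we have $d(\beta \wedge \omega) = d\beta \wedge \omega$; (iii) the integration-along-the-fibers operator $pr_{M,\star}$ commutes with $d$ on forms with compact vertical support. The key point for (iii) is that although the fibers $B^\delta$ of $pr_M$ have boundary $\partial B^\delta = \{|w| = \delta\}$, the Thom form $\omega$ is supported in the strictly smaller disk bundle $\{|w| \leq \delta_0\}$ with $\delta_0 < \delta$, so $p_f^*\alpha \wedge \omega$ vanishes identically near $\partial B^\delta$ and the boundary contribution in the fiberwise Stokes formula disappears. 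Combining (i)--(iii) gives
\begin{equation}
    dT_f\alpha = pr_{M,\star}\bigl(d(p_f^*\alpha \wedge \omega)\bigr) = pr_{M,\star}(p_f^*d\alpha \wedge \omega) = T_f d\alpha.
\end{equation}

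With $T_f$ $\mathcal{L}^2$-bounded, mapping $\Omega_c^*(N)$ into $\Omega_c^*(M)$, and commuting with $d$ there, Corollary \ref{boundedness2} applies (with $K=0$) and yields $T_f(\mathrm{dom}(d_{min})) \subseteq \mathrm{dom}(d_{min})$ together with $d_{min} T_f = T_f d_{min}$. Hence $T_f$ descends to a map on unreduced $L^2$-cohomology. For the reduced cohomology statement, I would invoke Remark \ref{altra}: $\mathcal{L}^2$-boundedness ensures that $T_f$ maps the closure $\overline{\mathrm{im}(d)}$ into $\overline{\mathrm{im}(d)}$, so the induced map descends to $\overline{H}^i_2$. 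The main point requiring care is the fiberwise Stokes argument in step (iii), but the strict containment $\delta_0 < \delta$ was built into the construction of $\omega$ in Subsection \ref{forma} precisely to make this trivial.
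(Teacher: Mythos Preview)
Your proposal is correct and follows essentially the same approach as the paper: verify the hypotheses of Corollary \ref{boundedness2} by checking that $T_f(\Omega_c^*(N)) \subseteq \Omega_c^*(M)$ via the uniform properness of $f$ (equivalently of $p_f$) and completeness of $M$, and that $d$ commutes with $T_f$ on compactly supported forms because $p_f^*$ and $e_\omega$ (with $\omega$ closed) commute with $d$, while $pr_{M,\star}$ commutes with $d$ on the vertically compactly supported forms produced by $e_\omega$. The paper cites Proposition \ref{commete} for this last step rather than spelling out the $\delta_0 < \delta$ boundary argument, but the content is the same.
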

	\begin{proof}
		Let us suppose, again, that $f$ is smooth.
		In order to conclude the proof we will prove that the operator $T_f$ satisfies the hypothesis of Proposition \ref{boundedness2}. We already know that $T_f$ is $\mathcal{L}^2$-bounded. Then, in order to conclude the proof, we just have to prove that $T_f(\Omega^*_c(N)) \subseteq \Omega^*_c(M)$ and $dT_f\alpha = T_f d \alpha$ for each $\alpha$ in $\Omega^*_c(N)$
		\\
		Since $p_f$ is uniformly proper we have that for each smooth form $\alpha$ in $L^2(N)$ 
		\begin{equation}
			diam(supp(e_\omega \circ p_f^*(\alpha)) \leq C_\alpha.
		\end{equation}
		Moreover, since $supp(pr_{M, \star}\beta) \subset pr_M(supp(\beta))$ for each smooth $\beta$ in $L^2(f^*T^\delta N)$, we have that $supp(T_f \alpha)$ is bounded in $M$. This means that $supp(T_f \alpha)$ is compact since $(M,g)$ is complete and $T_f(\Omega_c(N)) \subseteq \Omega_c(M)$.
		\\
		\\We have that $e_\omega \circ p_f^*(\Omega^*_{c}(N)) \subseteq \Omega^*_{vc}(f^*(T^\delta N))$, where in $\Omega^*_{vc}(f^*(T^\delta N))$ we are considering the vertically compactly supported smooth forms related to the projection on the first component $pr_M: f^*(T^\delta(M)) \longrightarrow M$. 
		\\Then we can apply the Proposition \ref{commete} of \cite{Conn}, which allow us to say $pr_{M,\star}(\Omega^*_{vc}(f^*T^\delta N) \subseteq \Omega_c(M)$ and that
		\begin{equation}
			\int_{B^\delta} d \eta = d \int_{B^\delta} \eta,
		\end{equation}
		if $\eta$ is in $\Omega_{vc}^*(f^*T^\delta N)$. Then, using that $T_f$ is a $\mathcal{L}^2$-bounded operator we conclude applying Corollary \ref{boundedness2}.
	\end{proof}
	\begin{rem}
		Since $p_f$ is a $\Gamma$-equivariant map, then $p_f^*$ is $\Gamma$-equivariant and, in particular $T_f$ is $\Gamma$-equivariant.
	\end{rem}
	\subsection{R.-N.-lipschitz equivalences of vector bundles}
	Let us consider two smooth lipschitz maps $f_0, f_1:(M,g) \longrightarrow (N,h)$ between Riemannian manifolds and consider a vector bundle $E$ over $N$. Fix a group $\Gamma$ acting on $M$ and on $N$. Then it's a known fact (see for example \cite{fibbund}) that if $f_0 \sim_\Gamma f_1$ with a lipschitz homotopy $H$, then
	\begin{equation}
		f_0^*(E) \times [0,1] \cong f_1^*(E) \times [0,1] \cong H^*(E)
	\end{equation} 
	and
	\begin{equation}
		f_0^*(E) \cong f_1^*(E)
	\end{equation}
	where with $\cong$ we mean that they are isomorphic as fiber bundles. As consequence of this fact we also have that they are homeomorphic as manifolds.
	\begin{defn}
		Consider two vector bundles $E$, $F$ over a Riemannian manifold $(N,h)$. Let us suppose that $e$ is a Riemannian metric on $E$ and $f$ a Riemannian metric on $F$. Let us denote by $E^\delta$ and $F^\delta$ the disk bundles of radius $\delta$. We say that $(E,e)$ and $(F,f)$ are \textbf{R.-N.-lipschitz equivalent} if there is a number $\delta > 0$ and there is a bundle isomorphism $\phi: E \longrightarrow F$ such that $\phi(E^\delta) = F^\delta$ and $\phi$ restricted to $E^\delta$ is a R.-N.-lipschitz map.
	\end{defn}
	\begin{rem}\label{ossbundle}
		If $E$ and $F$ are R.-N.-lipschitz equivalent, then if $A: (F^\delta, f) \longrightarrow (M,g)$ is an R.-N.-lipschitz map, then $\phi^*A$ is again an R.-N.-lipschitz map. Moreover if $B: (E^\delta , e) \longrightarrow (M,g)$ is an R.-N.-lipschitz map it is also true that $(\phi^{-1})^{*}B$ is an R.-N.-lipschitz map, indeed, since $\phi$ is a diffeomorphism,
		\begin{equation}
			(\phi^{-1})^{*}B = \phi_\star B,
		\end{equation}
		and the $\mathcal{L}^2$-norm of $\phi_\star$ is the same of the norm of $\phi^*$.
	\end{rem}
	\begin{lem}\label{bundlelem}
		Let us consider two Riemannian manifold $(M,g)$ and $(N,l)$, where $(N,l)$ is a manifold of bounded geometry. Consider $f_0, f_1: (M,g) \longrightarrow (N,l)$ two smooth lipschitz maps and suppose that there is a lipschitz-homotopy $H:(M \times [0,1], g \times g_{[0,1]}) \longrightarrow (N,l)$ such that $H(p,i) := f_{i}(p)$ for $i= 0, 1$.
		\\Let us suppose, moreover, that
		\begin{equation}
			d(f_0(p), f_1(p)) \leq \frac{inj_N}{2}
		\end{equation}
		where $\delta$ is the radius of injectivity of $N$. Let us denote by $g_{S,f^*_i}$ the Sasaki metric on $f_{i}^*(TN)$ defined using $g$, $f^*_i(l)$ and $f^*_i(\nabla^{LC}_l)$  where $i = 0,1$.
		\\Then $(f^*_0(TN), g_{S,f^*_0})$ and $(f^*_1(TN), g_{S,f^*_1})$ are R.-N.-lipschitz equivalent. In particular $\delta$ can be choose less or equal to $inj_{N}$. 
	\end{lem}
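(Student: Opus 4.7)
The plan is to construct an explicit bundle isomorphism $\phi$ between $f_0^*(TN)$ and $f_1^*(TN)$ using the exponential map of $N$, and then verify that its restriction to a small disk bundle is lipschitz with uniformly bounded Fiber Volume.

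First, I would define, for $\delta > 0$ to be chosen sufficiently small (in particular with $\delta \leq \frac{inj_N}{4}$),
\begin{equation}
\phi: (f_0^*(T^\delta N), g_{S,f_0^*}) \longrightarrow (f_1^*(TN), g_{S,f_1^*}), \quad \phi(p,v_{f_0(p)}) := (p, \exp_{f_1(p)}^{-1}(\exp_{f_0(p)}(v_{f_0(p)}))).
\end{equation}
This is well-defined because by the triangle inequality $d(\exp_{f_0(p)}(v), f_1(p)) \leq |v| + d(f_0(p), f_1(p)) \leq \frac{inj_N}{4} + \frac{inj_N}{2} < inj_N$, so $\exp_{f_1(p)}^{-1}$ applies. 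The same construction with $f_0$ and $f_1$ swapped gives an inverse on a possibly smaller disk bundle, so after shrinking $\delta$ both $\phi$ and $\phi^{-1}$ are defined and $\phi$ is a fiberwise diffeomorphism onto its image, which contains $f_1^*(T^{\delta'} N)$ for some $\delta' > 0$. Adjusting the radii we get a bundle isomorphism between disk subbundles.

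Next, I would prove $\phi$ is lipschitz. Fix normal coordinates $\{y^j\}$ around $f_0(p)$ on $N$, normal coordinates $\{x^i\}$ on $M$ around $p$, and fibered coordinates $\{x^i, \mu^j\}$ on $f_0^*TN$ and $\{x^i, \nu^k\}$ on $f_1^*TN$. In these coordinates $\phi$ is a smooth function of $(x,\mu)$ whose derivatives can be computed in terms of: (i) the derivatives of $f_0$ and $f_1$, which are uniformly bounded by the $C^k_b$ hypothesis on the $f_i$ (if the $f_i$ are only lipschitz one first invokes Proposition \ref{appr} to reduce to the smooth case without changing the lipschitz-homotopy class); (ii) the derivatives of $\exp_{f_0(p)}$, $\exp_{f_1(p)}^{-1}$ and of the transition map between normal charts centered at $f_0(p)$ and $f_1(p)$, which are uniformly bounded thanks to the bounded geometry of $N$ via Propositions \ref{uno}, \ref{charts}, and the curvature bounds on the Sasaki bundle recalled in Remark \ref{Riccib}. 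Combining these bounds with the explicit form of the Sasaki metric from Remark \ref{riemsub}, one sees that $|d\phi|_{g_{S,f_0^*} \to g_{S,f_1^*}}$ is uniformly bounded, so $\phi$ is lipschitz.

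Finally I would check that the Fiber Volume of $\phi$ is uniformly bounded. Since $\phi$ is a diffeomorphism between disk bundles, Remark \ref{oss} gives
\begin{equation}
Vol_\phi \;=\; \Bigl|(\phi^{-1})^{*}\frac{Vol_{f_0^*T^\delta N}}{\phi^* Vol_{f_1^*T^\delta N}}\Bigr|.
\end{equation}
Working in the fibered coordinates $\{x^i,\mu^j\}$, the Sasaki volume elements take the form $\sqrt{\det G_{ij}}(x,\mu)\,dx\wedge d\mu$ and analogously for $f_1^*TN$, with coefficients uniformly bounded above and uniformly bounded below away from zero by the bounded geometry of $N$; the Jacobian factor coming from $\exp_{f_1(p)}^{-1}\circ\exp_{f_0(p)}$ is likewise uniformly controlled (cf.\ Proposition \ref{charts}). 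Therefore $Vol_\phi \leq C$ for a constant independent of $p$, which shows $\phi$ is R.-N.-lipschitz.

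The main technical obstacle is the verification of the uniform bounds in the second step: although each piece of the construction (exponential maps, transition between normal charts, differentials of $f_0$ and $f_1$) is individually well behaved, one has to control them jointly as $p$ varies over $M$ and $v$ varies over a full disk of radius $\delta$, and the interaction with the off-diagonal terms of the Sasaki metric in \eqref{metri} must be handled carefully. The bounded geometry of $N$ (combined with the reduction to $C^k_b$ representatives of $f_0$ and $f_1$ via Proposition \ref{appr}) is precisely what makes these uniform estimates possible.
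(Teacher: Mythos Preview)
Your proposal is correct in spirit but takes a genuinely different route from the paper.

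The paper does \emph{not} build a map via $\exp_{f_1(p)}^{-1}\circ\exp_{f_0(p)}$. Instead it invokes the homotopy invariance of pullback bundles to regard $f_0^*(TN)$ and $f_1^*(TN)$ as literally the same vector bundle carrying two different Sasaki metrics $g_{S,f_0^*}$ and $g_{S,f_1^*}$, and then checks that the \emph{identity map} is R.-N.-lipschitz. Concretely: near a point $p\in M$ it takes normal coordinates $\{x^i\}$ on $M$ and normal coordinates $\{y^j\}$ on $N$ centred at $f_0(p)$ (which also cover $f_1(p)$ since $d(f_0(p),f_1(p))\le \tfrac{1}{2}\,\mathrm{inj}_N$), and uses the single fibered chart $\{x^i,\mu^j\}$ coming from the frame $\{\partial/\partial y^j\}$ for both bundles. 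In these coordinates the map is $(x,\mu)\mapsto(x,\mu)$, so both the lipschitz and the Fiber Volume checks reduce to comparing the matrices of the two Sasaki metrics at $(0,\mu)$: one matrix is the identity, the other involves the Christoffel symbols and metric components of $N$ pulled back along $f_1$ together with first derivatives of $f_1$, all of which are bounded on a ball of radius at most $\mathrm{inj}_N$ by bounded geometry. This is shorter than your argument because no Jacobian of the map itself enters.

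What your approach buys is an explicit, coordinate-free description of the isomorphism that does not appeal to the abstract statement ``pullbacks along homotopic maps are isomorphic''. The trade-offs are: (i) your $\phi$ is a fibrewise diffeomorphism but not a \emph{linear} bundle isomorphism, and it does not send $E^\delta$ exactly to $F^\delta$, so strictly speaking it does not match the paper's Definition of R.-N.-lipschitz equivalence (though this is irrelevant for the only use of the notion, Remark~\ref{ossbundle}); and (ii) you have to control the Jacobian of the transition $\exp_{f_1(p)}^{-1}\circ\exp_{f_0(p)}$ in addition to the Sasaki metric coefficients, whereas the paper's identity map has trivial Jacobian. A minor point: the lemma only assumes the $f_i$ are smooth lipschitz, not $C^k_b$; your appeal to Proposition~\ref{appr} is therefore unnecessary, and in any case only the first-derivative (lipschitz) bound is used in either argument.
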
 
	\begin{proof}
		We already know that $f^*_0(TN)$ and $f^*_1(TN)$ are homeomorphic. Then we will consider $f^*_0(TN)$ and $f^*_1(TN)$ as the same bundle, but with different metrics. Then we have to check that the identity map $Id: f^*_1(T^\delta N) \longrightarrow f^*_0(T^\delta N)$ is lipschitz and that its Fiber Volume is bounded. Let us start with its Fiber Volume.
		\\Fix some normal coordinates $\{U, x^i\}$ on $M$. Fix, moreover, some normal coordinates $\{V, y^j\}$ on $N$ such that $f_0(U) \subseteq V$. We also suppose that $f_0(0) = 0$ in this coordinates.
		Then we can consider $\{x^i, \mu^j\}$ the fibered coordinates on $f_0^*(TN)$ related to the frame $\{\frac{\partial }{\partial y^j}\}$. Then we have that $Id(x, \mu) = (x, \mu)$ and so
		\begin{equation}
			Vol_{Id}(x, \mu) = \sqrt{\frac{det(G_{ij})}{det(L_{jk})}(x, \mu)}
		\end{equation}
		where $G_{ij}$ is the matrix related to the metric $g_{S,f^*_1}$ and $L_{jk}$ the matrix related to $g_{S,f^*_0}$. Observe that in $(0, \mu)$ we have that
		\begin{equation}
			det(L_{jk}(0, \mu)) = 1.
		\end{equation}
		On the other hand we have that $det(G_{ij}(0, \mu))$ is an algebraic combination of pullback of Christoffell symbols and terms of the metric on $N$ along the lipschitz map $f_1$, first derivatives of $f_1$ and $\mu^j\mu^i$. We obtain a uniform bound on $|h_{E, ij}|$ and $|\Gamma^k_{E,ij}|$ on a ball of radius less or equal to $inj_rN$. Moreover we have that $|\mu^i| \leq \delta$. Then we can conclude that the function
		\begin{equation}
			\sqrt{\frac{det(G_{ij})}{det(L_{jk})}}: U \times [0,1] \longrightarrow \numberset{R}
		\end{equation}
		is bounded and its bound doesn't depend on the choice of $U$.
		\\
		\\Let us verify that $Id$ is lipschitz. This time we fix the coordinates $\{x^i\}$ on $M$ and we consider some normal coordinates $\{y^j\}$ on $N$ such that $f_1(0) = 0$. Then, exactly as we did in the previous point, we choose the coordinates $\{x^i, \mu^j\}$ related to the frame $\{\frac{\partial }{\partial y^j}\}$.
		\\Then we have to check that
		\begin{equation}
			\frac{|\frac{\partial}{\partial x^i}|_{f_0^*(TN), (0, \mu)}}{|\frac{\partial}{\partial x^i}|_{f_1^*(TN), (0, \mu)}}
		\end{equation}
		and
		\begin{equation}
			\frac{|\frac{\partial}{\partial \mu^j}|_{f_0^*(TN), (0,\mu)}}{|\frac{\partial}{\partial \mu^j}|_{f_1^*(TN), (0, \mu)}}
		\end{equation}
		are uniformly bounded. Observe that $\frac{\partial}{\partial x^i}|_{f_1^*(TN), (0, \mu)}$ and $|\frac{\partial}{\partial \mu^j}|_{f_1^*(TN), (0,\mu)}$ are equal to $1$. 
		\\Let us concentrate on $|\frac{\partial}{\partial x^i}|_{f_0^*(TN), (0,\mu)}$ and on $|\frac{\partial}{\partial \mu^j}|_{f_0^*(TN), (0,\mu)}$. 
		\\Then, similarly to what we did proving the boundedness of the Fiber Volume, we can observe that the components of metric $g_{S, f^*_0}$, in these coordinates, are algebraic combinations of pullback of Christoffell symbols and terms of the metric on $N$ along the lipschitz map $f_0$, first derivatives of $f_0$ and $\mu^j\mu^i$. Then, since on a ball of radius less or equal to $\delta = inj_rN$ we have a bound on $|h_{E, ij}|$ and $|\Gamma^k_{E,ij}|$, we conclude that the lipschitz constant is bounded.
	\end{proof}
	\begin{prop}\label{equiv}
		Let us consider $f_0, f_1: (M,g) \longrightarrow (N,h)$ two lipschitz maps such that $f_0$ and $f_1$ are lipschitz-homotopic. Then $(f^*_0(TN), g_{S,0})$ and $(f^*_1(TN), g_{S,1})$ are R.-N.-lipschitz equivalent. In particular $\delta$ can be choose less or equal to $inj_{N}$. 
	\end{prop}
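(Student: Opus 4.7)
The plan is to reduce the general case to Lemma \ref{bundlelem}, which already handles the situation when $d(f_0(p), f_1(p)) \leq inj_N/2$, by a chain-of-interpolations argument along the given homotopy.

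First, I would use the $\Gamma$-lipschitz homotopy $H: (M \times [0,1], g + g_{[0,1]}) \longrightarrow (N,h)$ between $f_0$ and $f_1$, with lipschitz constant $C_H$, to produce a uniform subdivision. Choose an integer $k$ large enough that $C_H/k \leq inj_N/2$ and define the intermediate slices $g_i := H(\cdot, i/k)$ for $i = 0, \ldots, k$. Each $g_i$ is lipschitz, and for every $p \in M$ one has
\begin{equation}
d(g_i(p), g_{i+1}(p)) = d(H(p, i/k), H(p, (i+1)/k)) \leq C_H/k \leq inj_N/2.
\end{equation}
The restriction of $H$ to $M \times [i/k,(i+1)/k]$, suitably reparametrized to $M \times [0,1]$, gives a lipschitz homotopy between $g_i$ and $g_{i+1}$, so the hypotheses of Lemma \ref{bundlelem} are satisfied by each consecutive pair.

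Next, I would apply Lemma \ref{bundlelem} to each pair $(g_i, g_{i+1})$ to obtain R.-N.-lipschitz equivalences
\begin{equation}
\phi_i : (g_i^*(T^\delta N), g_{S, g_i}) \longrightarrow (g_{i+1}^*(T^\delta N), g_{S, g_{i+1}})
\end{equation}
and I would pick a common $\delta \leq inj_N$ that works simultaneously for all $k$ steps (which is possible since $k$ is finite and each $\phi_i$ respects disk bundles of any radius up to $inj_N$). Finally, I would form the composition
\begin{equation}
\phi := \phi_{k-1} \circ \cdots \circ \phi_1 \circ \phi_0 : (f_0^*(T^\delta N), g_{S,0}) \longrightarrow (f_1^*(T^\delta N), g_{S,1})
\end{equation}
and conclude by invoking Proposition \ref{compo2} (composition of R.-N.-lipschitz maps is R.-N.-lipschitz) together with the obvious fact that a composition of bundle isomorphisms is a bundle isomorphism, so $\phi$ is the desired R.-N.-lipschitz equivalence.

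The only mildly delicate point is choosing the uniform radius $\delta$: one must check that the identifications in Lemma \ref{bundlelem} send $T^\delta$ to $T^\delta$ for every $\delta \leq inj_N$, but this is built into the statement of that lemma, so the main work has already been done. The rest is a finite iteration, and the whole argument is $\Gamma$-equivariant since each $\phi_i$ is.
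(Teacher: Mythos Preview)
Your proof is correct and follows essentially the same approach as the paper: subdivide the homotopy interval so that consecutive slices are within $inj_N/2$ of each other, apply Lemma~\ref{bundlelem} to each pair, and compose using Proposition~\ref{compo2}. The paper's argument is slightly terser (it uses a general partition rather than a uniform one and does not spell out the $\delta$ or $\Gamma$-equivariance points), but the content is the same.
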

	\begin{proof}
		Let us consider the homotopy $H$ between $f_0$ and $f_1$. Consider a finite partition  $\{[s_i, s_{i+1}]\}$ of $[0,1]$ such that $s_{i+1} - s_i \leq \frac{inj_N}{2C_{H}}$, where $C_h$ is the lipschitz constant of $h$. Let us define the maps $H_{s}: (M,g) \longrightarrow (N,h)$ as $H_i(p) := H(p,s_i)$ then we can observe that
		\begin{equation}
			d(H_{i}(p), H_{i+1}(p)) \leq d(H(p, s_i),(p, s_{i+1})) \leq C_H\cdot(s_{i+1} - s_i) = \frac{inj_N}{2}.
		\end{equation}
		Then it is sufficient to apply the previous lemma to $H_i$ and $H_{i+1}$ and observe that composition of R.-N.-lipschitz map is a R.-N.-lipschitz map.
	\end{proof}
	\begin{cor}\label{equivhomo}
		Let us consider a lipschitz map $H: (M \times [0,1], g + g_{[0,1]}]) \longrightarrow (N,h)$. For each $i$ in $[0,1]$, let us denote by $f_i:(M,g) \longrightarrow (N,h)$ the map defined as $f_i(p) := H(p,i)$. Then we have that the vector bundles $(H^*(TN), g_{S,H})$ and $(f^*_i(TN) \times [0,1], g_S \times g_{[0,1]})$ are R.-N.-lipschitz equivalent and $\delta$ can choose less or equal to $inj_N$.
	\end{cor}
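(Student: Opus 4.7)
The plan is to reduce this corollary to Proposition~\ref{equiv} applied to two naturally defined lipschitz maps from $M \times [0,1]$ into $N$.

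First, I would consider the auxiliary map $\widetilde{f}_i := f_i \circ pr_M : (M \times [0,1], g + g_{[0,1]}) \longrightarrow (N,h)$, where $pr_M$ denotes the projection onto the first factor. This map is clearly lipschitz, since both $f_i$ and $pr_M$ are. The pullback bundle $\widetilde{f}_i^*(TN)$ over $M \times [0,1]$ admits a canonical identification with $f_i^*(TN) \times [0,1]$ via $((p,t), v) \mapsto ((p,v), t)$. Under this identification, the Sasaki metric $g_{S,\widetilde{f}_i}$ coincides with $g_S \times g_{[0,1]}$: the bundle metric $\widetilde{f}_i^* h = (f_i^* h) \circ pr_M$ is constant in $t$, and in fibered coordinates $\{x^i, t, \mu^j\}$ the pullback Christoffel symbols satisfy
\begin{equation}
\tilde{\Gamma}^\alpha_{\beta, t} = \frac{\partial (\widetilde{f}_i)^l}{\partial t} \widetilde{f}_i^*(\Gamma^\alpha_{\beta,l}) = 0,
\end{equation}
so the horizontal distribution decomposes as the direct sum of the Sasaki horizontal distribution of $f_i^*(TN)$ and $\mathrm{span}\{\partial/\partial t\}$, orthogonally. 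Hence the Sasaki construction recovers exactly the product metric, and $(\widetilde{f}_i^*(TN), g_{S,\widetilde{f}_i}) \cong (f_i^*(TN) \times [0,1], g_S \times g_{[0,1]})$ as Riemannian manifolds (and therefore as R.-N.-lipschitz bundles, trivially).

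Next, I would exhibit a lipschitz homotopy between $H$ and $\widetilde{f}_i$. Define
\begin{equation}
K: (M \times [0,1]) \times [0,1] \longrightarrow N, \quad K((p,t), s) := H(p, (1-s)t + s i).
\end{equation}
At $s = 0$ one recovers $H(p,t)$ and at $s = 1$ one recovers $H(p,i) = \widetilde{f}_i(p,t)$. Since the auxiliary map $((p,t), s) \mapsto (p, (1-s)t + si)$ from $(M\times[0,1])\times[0,1]$ into $(M \times [0,1], g + g_{[0,1]})$ is lipschitz (its partial derivatives being bounded uniformly by constants depending only on $i$), and since $H$ is lipschitz, the composition $K$ is lipschitz. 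Therefore $H$ and $\widetilde{f}_i$, viewed as lipschitz maps from the manifold of bounded geometry $(M \times [0,1], g + g_{[0,1]})$ into $(N,h)$, are lipschitz-homotopic.

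Finally, applying Proposition~\ref{equiv} to the pair $H$ and $\widetilde{f}_i$ yields that $(H^*(TN), g_{S,H})$ and $(\widetilde{f}_i^*(TN), g_{S,\widetilde{f}_i})$ are R.-N.-lipschitz equivalent, with $\delta$ chosen less or equal to $inj_N$. Combining this with the canonical isometric identification from the first step completes the proof. The only mildly delicate point is checking that the Sasaki metric on $\widetilde{f}_i^*(TN)$ really is the product metric; the rest is an immediate application of the previously established homotopy invariance.
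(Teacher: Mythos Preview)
Your proof is correct and follows essentially the same approach as the paper: the paper defines the same auxiliary map $\overline{f}_i(p,s) := f_i(p)$ (your $\widetilde{f}_i$), observes that its Sasaki metric is the product metric, constructs the same lipschitz homotopy $\mathcal{H}(p,s,t) = H(p,\, i\cdot t + s\cdot(1-t))$ between $H$ and $\overline{f}_i$, and concludes by applying Proposition~\ref{equiv}. You give slightly more detail on why the Sasaki metric on $\widetilde{f}_i^*(TN)$ is the product metric, which the paper simply asserts.
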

	\begin{proof}
		Let us define the map $\overline{f}_i: M \times [0,1] \longrightarrow N$ as $\overline{f}_{i}(p,s) := f_i(p)$. Observe that the Sasaki metric on $\overline{f}_i^*(E)$ defined using $g$, $\overline{f}^*_i(h)$ and $\overline{f}^*_i(\nabla^{LC}_h)$ is the product metric between the Sasaki metric on $f^*_i(E)$ and the metric $g_{[0,1]}$ on $[0,1]$.
		\\Then we can conclude observing that the map
		\begin{equation}
			\begin{split}
				\mathcal{H}: (M \times [0,1] \times [0,1], g \times g_{[0,1]} \times g_{[0,1]}) &\longrightarrow (N,h) \\
				(p,s,t) &\longrightarrow H(p, i \cdot t + s \cdot (1-t))
			\end{split}
		\end{equation}
		is a lipschitz-homotopy between $H$ and $\overline{f}_i$.
	\end{proof}
	\subsection{Lemmas about homotopy}\label{lemhom}
	In this section we study the $\mathcal{L}^2$-boundedness of the pullback of some homotopies.
	\begin{lem}\label{homo1}
		Let $f: (M,g) \longrightarrow (N,h)$ be a smooth lipschitz map and let $\delta \leq inj_N$. Consider the homotopy $H: (f^*(T^\delta N) \times [0,1], g_S \times g_{[0,1]}) \longrightarrow (N,l)$ defined as
		\begin{equation}
			H(p,w,t) := p_f(t\cdot w_{f(p)}).
		\end{equation}
		Then, if $f$ is a smooth R.-N.-lipschitz map, then also $H$ and $(H, id_{[0,1]}): (f^*(T^\delta N) \times [0,1], g_S \times g_{[0,1]}) \longrightarrow (N \times [0,1], l \times g_{[0,1]})$ are R.-N.-lipschitz maps.
	\end{lem}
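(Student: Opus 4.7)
The plan is to establish Lipschitzness by factorization and then the Fiber Volume bound via Fubini in $t$ combined with a precise use of the R.-N.-lipschitz hypothesis on $f$. For Lipschitzness, factor $H = p_f \circ S$ where $S: f^*(T^\delta N) \times [0,1] \longrightarrow f^*(T^\delta N)$ is the fiberwise scaling $S(p, w, t) := (p, tw)$. A direct estimate using $|w| \leq \delta$ and $t \in [0,1]$ shows $S$ is Lipschitz; since $p_f$ is Lipschitz by the Corollary following Lemma \ref{tilde}, both $H$ and $(H, \mathrm{id}_{[0,1]})$ are Lipschitz.

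For the Fiber Volume, set $H_t(p,w) := p_f(p, tw)$, and for $t > 0$ decompose $H_t = p_f \circ m_t$, where $m_t: f^*(T^\delta N) \longrightarrow f^*(T^{t\delta} N)$, $m_t(p, w) := (p, tw)$, is a diffeomorphism onto its image. In the fibered normal coordinates of Lemma \ref{tilde}, the Jacobian matrix of $m_t$ has determinant $t^n$; by formula (\ref{metri}) in Remark \ref{riemsub} combined with the bounded geometry of $(N,h)$, $\det g_S$ is uniformly bounded above and below for $|w| \leq \delta$, so Remark \ref{oss} gives $Vol_{m_t} \leq C t^{-n}$ on its image. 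Applying Proposition \ref{compo} to $p_f \circ m_t$ then yields
\begin{equation*}
Vol_{H_t}(q) \leq C t^{-n} \int_{p_f^{-1}(q) \cap f^*(T^{t\delta} N)} \frac{Vol_{f^*(T^\delta N)}}{p_f^* Vol_N}.
\end{equation*}
Arguing as in the proof of Corollary \ref{R.-N.-p}, the restricted fiber is parametrized by $p \in f^{-1}(B_{t\delta}(q))$ via $p \mapsto (p, \exp_{f(p)}^{-1}(q))$, the integrand is uniformly bounded by bounded geometry, and the R.-N.-lipschitz hypothesis on $f$ together with Remark \ref{bvolume} gives $\mu_M(f^{-1}(B_{t\delta}(q))) \leq C_f \mu_N(B_{t\delta}(q)) \leq C' t^n$. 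The $t^{-n}$ and $t^n$ factors cancel to give $Vol_{H_t}(q) \leq K$ uniformly in $t \in (0,1]$ and $q \in N$. Fubini in $t$ then yields $Vol_H(q) \leq K$ and $Vol_{(H, \mathrm{id}_{[0,1]})}(q, t) \leq K$, establishing that both maps are R.-N.-lipschitz.

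The main obstacle is the sharp $t^{-n}$-versus-$t^n$ cancellation, which crucially requires $f$ to be R.-N.-lipschitz rather than merely uniformly proper: as $t \to 0$ the radius $t\delta$ shrinks, and the bound on $\mu_M(f^{-1}(B_{t\delta}(q)))$ must decay at exactly the matching rate to counter the blow-up of $Vol_{m_t}$. The (measure-zero) degeneracy at $t=0$, where the fiber of $H$ over a generic point becomes $(m+n)$-dimensional rather than $m$-dimensional, is absorbed by the Fubini integration in $t$.
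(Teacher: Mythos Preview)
Your proof is correct and follows essentially the same approach as the paper: both establish the Fiber Volume bound via the $t^{-n}$-versus-$t^n$ cancellation, using the R.-N.-lipschitz hypothesis on $f$ to control $\mu_M(f^{-1}(B_{ct}(q)))$ by $C\,t^n$. The only difference is organizational --- you slice at fixed $t$ and factor $H_t = p_f \circ m_t$ before integrating in $t$, whereas the paper works with the single map $(pr_M, H, id_{(0,1)}): f^*T^\delta N \times (0,1) \to M \times N \times (0,1)$ and then projects --- but the underlying estimates are identical.
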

	\begin{rem}
		The map $H$ above is important for us because it is a $\Gamma$-equivariant lipschitz-homotopy between $p_f$ and $\overline{f}:f^*(T^\delta N) \longrightarrow N$. In particular, proving this Lemma, we have that the maps $p_{id}: TN \longrightarrow N$ and the projection $pr_N: TN \longrightarrow N$ are $\Gamma$-lipschitz-homotopic with an homotopy $H$ which is a R.-N.-lipschitz map, and so $H$ induces a morphism in (un)-reduced $L^2$-cohomology.
	\end{rem}
	\begin{proof}
		Observe that $H = pr_{N} \circ (H, id_{[0,1]})$, where $pr_N: N \times [0,1] \longrightarrow N$. We have that $pr_N$ is a R.-N.-lipschitz map. This means that if $(H, id_{[0,1]})$ is a R.-N.-lipschitz map, then also $H$ is a R.-N.-lipschitz map.
		\\Moreover we can observe that $f^*(T^\delta N) \times \{0,1\}$ is a set of null measure an so we can consider the interval $[0,1]$ as open.
		\\
		\\Let us define the map 
		\begin{equation}
			\begin{split}
				(pr_M, H, id_{(0,1)}): (f^*T^\delta N \times (0,1), g_S \times g_{(0,1)}) &\longrightarrow (M \times N \times (0,1), g \times h \times g_{(0,1)})\\
				(v_{f(p)}, s) &\longrightarrow (p, p_f(s \cdot v_p), s).
			\end{split}
		\end{equation}
		This map is a submersion since we are considering $s \neq 0$.
		In particular we have that $(pr_M, H, id_{(0,1)})$ is lipschitz since it is composition of lipschitz maps.
		\\Moreover we have that $(H,id_{(0,1)}) = pr_{N \times (0,1)} \circ (pr_M, H, id_{(0,1)})$ and so, applying Proposition \ref{compo}, we have
		\begin{equation}
			Vol_{(H,id_{(0,1)})}(q,t) = \int_{M} Vol_{(pr_M, H, id_{(0,1)})}(p, q, t) d\mu_M.
		\end{equation}
		Let us calculate the Fiber Volume of $(pr_M, H, id_{(0,1)})$.
		\\Similarly to the Lemma \ref{svolta}, we can consider for each $p$ in $M$ some normal coordinates $\{x^i\}$ around $p$, some normal coordinates $\{y^j\}$ around $f(p)$ in $N$ and, considering the frame $\{\frac{\partial}{\partial x^i}\}$, we have the coordinates $\{x^i, \mu^j, t\}$ on $f^*(TN)\times [0,1]$ around $(p, 0, 0)$.
		\\Moreover we can also consider the coordinates $\{x^i, y^j, t\}$ on $M\times N \times [0,1]$. \\Observe that, the coordinates $\{x^i, y^j, t\}$ are enough to cover the image of $(id_M, H, id_{[0,1]})$ which is contained in a $\delta$-neighborhood of $Graph(f) \times [0,1]$.
		\\Finally, respect to these coordinates, we have that
		\begin{equation}
			(pr_M, H, id_{(0,1)})(0, \mu^j, t) = (0, t \cdot y^j, t).
		\end{equation}
		Similarly as we did in Lemma \ref{svolta} one can show that
		\begin{equation}
			Vol_{f^*(TN) \times [0,1]}(0,\mu,t) = dx^1 \wedge ... \wedge dx^m \wedge d\mu^1 \wedge ... \wedge d\mu^n \wedge dt
		\end{equation}
		and that
		\begin{equation}
			Vol_{M \times N \times [0,1]}(0, y, t) = \sqrt{det(H_{ij}(0,y))}dx^1 \wedge ... \wedge dx^m \wedge dy^1 \wedge ... \wedge dy^n \wedge dt,
		\end{equation}
		where
		\begin{equation}
			H_{ij} = \begin{bmatrix}
				1 && 0 && 0 \\
				0 && h_{l,s}(y) && 0 \\
				0 && 0 && 1
			\end{bmatrix}
		\end{equation}
		and $h_{l,s}$ are the components of the metric $h$ on $N$. Then we have that $(pr_M, H, id_{[0,1]})$ is a diffeomorphism with its image and so applying Remark \ref{oss}, its Fiber Volume on $im(pr_M, H, id_{[0,1]})$ is given by
		\begin{equation}
			|[(pr_M, \tilde{h}, id_{[0,1]})^{-1}]^* \frac{Vol_{f^*TN \times [0,1]}}{(pr_M, \tilde{h}, id_{[0,1]})^*Vol_{M \times N \times [0,1]}}|
		\end{equation}
		and it is null otherwise.
		\\Observe that, similarly to Lemma \ref{svolta}, in a point $(0, y^j, t)$ we have that the Fiber Volume of $(pr_M, \tilde{h}, id_{[0,1]})$ is
		\begin{equation}
			\frac{Vol_{f^*TN \times [0,1]}}{(pr_M, \tilde{h}, id_{[0,1]})^*Vol_{M \times N \times [0,1]}} = \frac{1}{t^n} (1 + C(t, y)),
		\end{equation}
		where $C$ is a bounded function.
		\\
		\\Let us consider now the projection $pr_{N \times [0,1]}: M \times N \times (0,1) \longrightarrow N \times (0,1)$. Let us recall that, since the Proposition \ref{compo}, we have
		\begin{equation}
			Vol_{(H,id_{(0,1)})}(q,t) = \int_M Vol_{(id_M, H, id_{[0,1]})}(p,q,t) d\mu_M.
		\end{equation}
		We know that outside the image of $(id_M, H, id_{[0,1]})$ the Fiber Volume $Vol_{(id_M, H, id_{[0,1]})}$ is null. Let us denote by
		\begin{equation}
			\begin{split}
				H_t: M \times N &\longrightarrow N \\
				(p,q) &\longrightarrow H(p,q,t).
			\end{split}
		\end{equation}
		we have that $Vol_{(id_M, H, id_{[0,1]})}$ on $M \times \{q\} \times \{t\}$ is null outside 
		\begin{equation}
			pr_M(H^{-1}_t(q)) \times \{q\} \times \{t\} \supseteq [im(id_M, H, id_{[0,1]}) \cap M \times \{q\} \times \{t\}]
		\end{equation}
		Since $H$ is lipschitz, applying Remark \ref{uniformlyp}, we have
		\begin{equation}
			H^{-1}_t(q) \subseteq \pi^{-1}(f^{-1}(B_{C_H\cdot t}(q))) \times \{t\}
		\end{equation}
		where $\pi: f^*TN \longrightarrow M$ is the projection of the bundle. So
		\begin{equation}
			pr_M(H^{-1}_t(q)) \times \{q\} \times \{t\} \subset f^{-1}(B_{C_H\cdot t}(q))\times \{q\} \times \{t\}.
		\end{equation}
		Then using that $f$ is a R.-N.-lipschitz map, we have that
		\begin{equation}
			\mu(f^{-1}(B_{C_H\cdot t}(q))) \leq K_0 \mu_N(B_{C_H \cdot t}(q)) \leq K_0C_H^nt^n(1 + L(t)),
		\end{equation}
		and so, since $N$ has bounded geometry,
		\begin{equation}
			\mu_N(B_{C_H \cdot t}(q)) \leq C_H^nt^n(1 + L(t))
		\end{equation}
		where $L$ is a bounded function which depends by $t$. Then we have that the Fiber Volume of $(H, id_{(0,1)})$ is given by
		\begin{equation}
			\begin{split}
				Vol_{(H, id_{(0,1)})}(q,t) &= \int_{f^{-1}(B_{C_H\cdot t}(q))} Vol_{(id_M, H, id_{[0,1]})}(p,q,t) d\mu_M \\
				&\leq \frac{1}{t^n}(1 + C(t, \tilde{y})) \cdot (K_0C_H^nt^n(1 + L(t))) \\
				&\leq K \cdot \frac{1}{t^n} \cdot t^n \leq K.
			\end{split}
		\end{equation}
		and so $(H, id_{(0,1)})$ is a R.-N.-lipschitz map and so also
		\begin{equation}
			H = pr_N \circ (H, id_{(0,1)})
		\end{equation}
		is an R.-N.-lipschitz map.
	\end{proof}
	Let us consider two uniformly proper lipschitz maps $g: (S,v) \longrightarrow (M,m)$ and $f:(M,m) \longrightarrow (N,l)$ between manifolds of bounded geometry. Let us define the maps
	\begin{equation}
		\begin{split}
			\overline{g}: g^*T^\sigma M &\longrightarrow M \\
			v_{g(p)} &\longrightarrow g(p).
		\end{split}
	\end{equation}
	and
	\begin{equation}
		\begin{split}
			p_g: g^*T^\sigma M &\longrightarrow M \\
			v_{g(p)} &\longrightarrow p_g(v_{g(p)}).
		\end{split}
	\end{equation}
	Consider the compositions $f \circ \overline{g}$ and $f \circ p_g:  g^*T^\sigma M \longrightarrow M$. Observe that these maps are lipschitz-homotopy equivalent. Because of Proposition \ref{equiv}, we have that the pullback bundles
	\begin{equation}
		(f \circ \overline{g})^*TN \longrightarrow S
	\end{equation}
	and 
	\begin{equation}
		(f \circ p_g)^*TN \longrightarrow S
	\end{equation}
	are R.-N.-lipschitz homotopy equivalent\footnote{for $\delta \leq inj_N$.}. Denote by $h$ the homotopy
	\begin{equation}
		\begin{split}
			h: g^*(T^\sigma M) \times [0,1] &\longrightarrow N \\
			(v_{g(p)}, t) &\longrightarrow f \circ p_g(t \cdot v_{g(p)})
		\end{split}
	\end{equation}
	between $f \circ p_g$ and $f \circ \overline{g}$. Because of Corollary \ref{equivhomo} we have that $(h^*(TN), g_{S,h})$ and 
	\begin{equation}
		(f \circ \overline{g}^*(T^\delta N) \times [0,1], g_{S} \times g_{[0,1]})
	\end{equation}
	are R.-N.-lipschitz equivalent. Then we have that
	\begin{equation}
		p_h: ((f \circ \overline{g})^*(T^\delta N) \times [0,1], g_{S} \times g_{[0,1]}) \longrightarrow (N,l)
	\end{equation}
	is a R.-N.-lipschitz map because $h$ is uniformly proper and lipschitz map and because of Remark \ref{salva}.
	\\Observe that $p_h$ is a lipschitz homotopy between the maps
	\begin{equation}
		p_{f \circ \overline{g}}: f \circ \overline{g}^*(T^\delta N) \longrightarrow (N,l)
	\end{equation}
	and
	\begin{equation}
		p_{f \circ p_{g}}: f \circ \overline{g}^*(T^\delta N) \longrightarrow (N,l).
	\end{equation} 
	Applying the point 4. of Proposition \ref{tilde}, we can obtain that
	\begin{equation}\label{cosse}
		\begin{split}
			p_{f \circ p_{g}} &= p_{id_M} \circ F \circ P_g,\\
			&= p_f \circ P_g
		\end{split}
	\end{equation}
	where $F: f^*(T^\delta N) \longrightarrow T^\delta N$ is the bundle map induced by $f$ and $P_g: (f \circ \overline{g})^*T^\delta N \longrightarrow f^*T^\delta N$ is the bundle map induced by $p_g$.
	\begin{prop}\label{homo2}
		The map $H$
		\begin{equation}
			\begin{split}
				H: ((f \circ \overline{g})^*T^\delta N \times [0,1], g_{S} \times g_{[0,1]}) &\longrightarrow (N, l) \\
				(v_{f \circ \overline{g}}, t) & \longrightarrow p_f(P_g(t\cdot v_{f \circ \overline{g}}))
			\end{split}
		\end{equation}
		is a R.-N.-lipschitz map.
	\end{prop}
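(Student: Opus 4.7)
The plan is to prove this by mimicking the argument of Lemma \ref{homo1}, after recognizing that $H$ is structurally the homotopy of that lemma applied to the composed map $\tilde{f} := f\circ p_g$. Indeed, using (\ref{cosse}) we have
\begin{equation}
H(v,t) \;=\; p_f\bigl(P_g(t\cdot v)\bigr) \;=\; p_{f\circ p_g}(t\cdot v),
\end{equation}
where $(f\circ\overline{g})^*T^\delta N$ is identified with $(f\circ p_g)^*T^\delta N$ via the R.-N.-lipschitz bundle equivalence of Proposition \ref{equiv} (applicable because $f\circ\overline{g}$ and $f\circ p_g$ are lipschitz-homotopic through $h$).

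I would first verify that $H$ is lipschitz. This is routine: $H$ is a composition of fiberwise scaling $(v,t)\mapsto t\cdot v$, the bundle map $P_g$, and $p_f$, each of which is lipschitz with uniform constants thanks to the uniform bounds on the derivatives of $p_f$ and $p_g$ supplied by part 5 of Lemma \ref{tilde}.

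For the bounded Fiber Volume, I would follow the scheme of the proof of Lemma \ref{homo1}: introduce the auxiliary submersion
\begin{equation}
\bigl(pr_{g^*T^\sigma M},\, H,\, id_{(0,1)}\bigr) : (f\circ\overline{g})^*T^\delta N \times (0,1) \;\longrightarrow\; g^*T^\sigma M \times N \times (0,1),
\end{equation}
verify in fibered normal coordinates (as in Lemma \ref{svolta}) that it is a diffeomorphism onto its image, and compute its Fiber Volume to be of the form $t^{-n}(1+C(t,y))$ with $C$ uniformly bounded. Proposition \ref{compo} then expresses $Vol_H(q,t)$ as an integral of this factor over $\tilde{f}^{-1}(B_{C_H t}(q)) = p_g^{-1}\bigl(f^{-1}(B_{C_H t}(q))\bigr)$, and the R.-N.-lipschitz property of $p_g$ (Corollary \ref{R.-N.-p}) bounds the volume of this preimage by $C_{p_g}\cdot\mu_M\bigl(f^{-1}(B_{C_H t}(q))\bigr)$.

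The main obstacle is the $t^n$-decay of this last quantity, which in Lemma \ref{homo1} came directly from the R.-N.-lipschitz hypothesis on $f$. Here one has to combine the uniform properness of $f$ with the Bishop-Gromov bound on $M$ (Remark \ref{bvolume}) and the bounded geometry of the relevant Sasaki metrics (Remark \ref{Riccib}) to produce the required scaling; the rest of the estimate then goes through exactly as at the end of the proof of Lemma \ref{homo1}, yielding the uniform bound on $Vol_H$ and hence the R.-N.-lipschitz property of $H$.
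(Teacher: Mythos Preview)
Your identification $H(v,t)=p_{f\circ p_g}(t\cdot v)$ via (\ref{cosse}) is fine, and imitating Lemma \ref{homo1} is a natural impulse, but the step you yourself flag as ``the main obstacle'' does not go through. In the proof of Lemma \ref{homo1} the factor $t^{-n}$ coming from the Jacobian of the auxiliary diffeomorphism is cancelled by $\mu_M\bigl(f^{-1}(B_{C_H t}(q))\bigr)\leq K_0\,\mu_N(B_{C_H t}(q))\sim t^n$, and this last inequality is exactly the R.-N.-lipschitz hypothesis on $f$. Here $f$ is only uniformly proper: uniform properness plus Bishop--Gromov (Remark \ref{bvolume}) give at best $\mu_M\bigl(f^{-1}(B_{C_H t}(q))\bigr)\leq C\bigl(\alpha(2C_H t)\bigr)$, which tends to the positive constant $C(\alpha(0))$ as $t\to 0$ (the fibres of a uniformly proper map need not be single points). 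The product with $t^{-n}$ therefore blows up, and the Fiber Volume bound fails. Passing through $p_g$ does not help, since the R.-N.-lipschitz property of $p_g$ only transfers the question back to $\mu_M\bigl(f^{-1}(B_{C_H t}(q))\bigr)$.

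The paper sidesteps this completely by a different identification. In the discussion preceding the proposition it sets up the homotopy $h(v,t)=f\circ p_g(t\cdot v)$ on $g^*T^\sigma M\times[0,1]$ and forms the associated submersion $p_h$; via the R.-N.-lipschitz bundle equivalence of Corollary \ref{equivhomo} this lives on $(f\circ\overline{g})^*T^\delta N\times[0,1]$, and the proof simply observes that $H=p_h$. The point is that in $p_h$ the $T^\delta N$-fibre variable is \emph{not} scaled by $t$; all the $t$-dependence is pushed into the base map $h$. Since $h$ is uniformly proper and lipschitz and the domain has bounded Ricci curvature (Remark \ref{Riccib}), Remark \ref{salva} applies directly and yields the R.-N.-lipschitz property of $p_h$ without any small-$t$ volume estimate. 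That is the argument you should use.
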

	\begin{proof}
		Observe that $H = p_h$ defined above. Observe that since Remark \ref{Riccib} we have that $(f \circ \overline{g})^*T^\delta N \times [0,1]$ has bounded Ricci curvature. Then, since $h$ is a smooth, uniformly proper and lipschitz map and since Remark \ref{salva}, we have that $H = p_H$ is a R.-N.-lipschitz map.
	\end{proof}
	\subsection{Lipschitz-homotopy invariance of (un)-reduced $L^2$-cohomology}
	Consider the category $\mathcal{C}$ which has manifolds of bounded geometry as objects and uniformly proper lipschitz maps as arrows. Consider, moreover, the category $\textbf{Vec}$ which has complex vector spaces as objects and linear maps as arrows. In this section we will show that, for every $z$ in $\numberset{N}$, the association $\mathcal{F}_z: \mathcal{C} \longrightarrow \textbf{Vec}$ defined as
	\begin{equation}
		\begin{cases}
			\mathcal{F}_z(M,g) = H^z_2(M) \\
			\mathcal{F}_z((M,g) \xrightarrow{f} (N,h)) = H^z_2(N) \xrightarrow{T_f} H^z_2(M)
		\end{cases}
	\end{equation}
	is a controvariant functor.
	\\Moreover, we will show that if two maps $f_1$ and $f_2$ are uniformly proper lipschitz-homotopic then $T_{f_1} = T_{f_2}$ in (un)-reduced $L^2$-cohomology and that if $f$ is a $L^2$-map\footnote{i.e. a smooth R.-N.-lipschitz map which is also uniformly proper}, then $f^* = T_f$ in (un)-reduced $L^2$-cohomology.
	\\This fact will imply that if $f$ is a lipschitz-homotopy equivalence between manifolds of bounded geometry, then the $L^2$-cohomology groups are isomorphic. In other words, the $L^2$-cohomology is a lipschitz-homotopy invariant for manifolds of bounded geometry.
	\\
	\\Let us introduce, the operator $\int_{0 \mathcal{L}}^1$: it will be the main tool that we will use to study homotopies.
	\begin{lem}\label{GLee}
		Let $(M,g)$ be a Riemannian manifold and consider $([0,1], g_{[0,1]})$. Then there is a $\mathcal{L}^2$-bounded operator 
		\begin{equation}
			\int_{0 \mathcal{L}}^1: \mathcal{L}^2(M\times[0,1]) \longrightarrow \mathcal{L}^2(M)
		\end{equation}
		such that for all smooth $\alpha \in \mathcal{L}^2(M\times[0,1])$ we have, in $\Omega^*(M)$,
		\begin{equation}
			i_1^*\alpha - i_0^*\alpha = \int_{0 \mathcal{L}}^1d \alpha + d\int_{0 \mathcal{L}}^1 \alpha
		\end{equation}
		Moreover $\int_{0 \mathcal{L}}^1$ sends compactly-supported differential forms on $\Omega^*(M \times [0,1])$ into $\Omega^*_c(M)$.
	\end{lem}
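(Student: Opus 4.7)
The plan is to define $\int_{0\mathcal{L}}^1$ first on smooth forms via fiber integration along the factor $[0,1]$ of the product $M\times[0,1]$, establish $\mathcal{L}^2$-boundedness by a straightforward Cauchy-Schwarz argument, and then extend by density. Concretely, given a smooth form $\alpha\in\Omega^*(M\times[0,1])$, there is a unique decomposition
\begin{equation}
\alpha=\alpha_1+dt\wedge\alpha_2,
\end{equation}
where $\alpha_1,\alpha_2$ contain no factor $dt$; equivalently $\alpha_2=\iota_{\partial_t}\alpha$. I would set
\begin{equation}
\Big(\int_{0\mathcal{L}}^1\alpha\Big)(p):=\int_0^1 i_t^*(\iota_{\partial_t}\alpha)(p)\,dt=\int_0^1\alpha_2(p,t)\,dt.
\end{equation}

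The $\mathcal{L}^2$-boundedness step is the only analytic content. Because the metric on $M\times[0,1]$ is a product, $dt$ is orthogonal (at every point) to forms lacking a $dt$-factor, so $|\alpha(p,t)|^2=|\alpha_1(p,t)|^2+|\alpha_2(p,t)|^2\geq|\alpha_2(p,t)|^2$. Cauchy-Schwarz on $[0,1]$ then yields
\begin{equation}
\Big|\!\!\int_0^1\!\alpha_2(p,t)\,dt\Big|_p^{\!2}\leq\int_0^1|\alpha_2(p,t)|_p^{\,2}\,dt\leq\int_0^1|\alpha(p,t)|^2\,dt,
\end{equation}
and integrating over $M$ with Fubini gives $\|\int_{0\mathcal{L}}^1\alpha\|_{\mathcal{L}^2(M)}\leq\|\alpha\|_{\mathcal{L}^2(M\times[0,1])}$. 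Since $\Omega_c^*(M\times[0,1])$ is dense in $\mathcal{L}^2(M\times[0,1])$, the operator extends uniquely to a bounded operator on all of $\mathcal{L}^2(M\times[0,1])$.

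For the homotopy formula on smooth forms, I would use Cartan's identity $L_{\partial_t}=d\iota_{\partial_t}+\iota_{\partial_t}d$ together with the fundamental theorem of calculus. The key identity is $\frac{d}{dt}i_t^*\alpha=i_t^*(L_{\partial_t}\alpha)$, which upon integration gives
\begin{equation}
i_1^*\alpha-i_0^*\alpha=\int_0^1 i_t^*(d\iota_{\partial_t}\alpha)\,dt+\int_0^1 i_t^*(\iota_{\partial_t}d\alpha)\,dt.
\end{equation}
Since $i_t^*$ commutes with $d_M$ and since $d_M$ commutes with the $t$-integral (a differentiation under the integral sign, justified for smooth integrands with parameter in a compact interval), the first summand equals $d\int_{0\mathcal{L}}^1\alpha$ and the second equals $\int_{0\mathcal{L}}^1 d\alpha$, giving the claimed identity in $\Omega^*(M)$. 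Finally, compact-support preservation is immediate: if $\operatorname{supp}\alpha\subset K$ with $K$ compact in $M\times[0,1]$, then $\operatorname{supp}(\int_{0\mathcal{L}}^1\alpha)\subset\mathrm{pr}_M(K)$, which is compact in $M$.

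There is no real obstacle here; the main subtlety is only conceptual, namely that the formula for the homotopy operator is established on the smooth dense subspace while the $\mathcal{L}^2$-extension is obtained by continuity, so one should distinguish clearly between the ``algebraic'' Stokes/Cartan identity (valid at the level of forms) and the analytic bound that makes the extension well defined. No further estimates are needed because the product-metric structure makes the pointwise comparison $|\alpha_2|\leq|\alpha|$ automatic.
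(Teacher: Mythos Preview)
Your proposal is correct and follows essentially the same route as the paper: define the operator by contraction with $\partial_t$ followed by integration over $[0,1]$, verify the classical homotopy formula on smooth forms, and check $\mathcal{L}^2$-boundedness. The paper identifies $\int_{0\mathcal{L}}^1$ with the fiber integration $p_\star$ (up to signs) and invokes its earlier machinery (Proposition~\ref{mai2} and Remark~\ref{fibnorm}) to get $\|p_\star\|=\|p^*\|=1$, while you give a direct Cauchy--Schwarz argument exploiting the product metric; your argument is more elementary and self-contained for this specific case, whereas the paper's is an instance of its general R.-N.-lipschitz framework. For the homotopy identity the paper simply cites Lee's Lemma~11.4, and your Cartan-formula derivation is exactly the standard proof of that lemma.
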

	\begin{proof}
		Let $\alpha$ be in $\Omega^*_c(M\times [0,1])$ with compact support and let $p:M\times[0,1] \longrightarrow M$ be the projection on the first component. Then we can decompose every differential form on $M \times [0,1]$ as
		\begin{equation}
			\alpha = g(x,t)p^*\omega +  f(x,t)dt \wedge p^*\omega,
		\end{equation}
		for some $\omega$ in $\Omega_c(M)$ and for some $C^{\infty}$-class functions $g,f:M\times[0,1] \longrightarrow \numberset{C}$. Then we can define the linear operator $\int_{0,\mathcal{L}}^{1}$ as follow: if $\alpha$ is a $0$-form with respect to $[0,1]$, then
		\begin{equation}
			\int_{0,\mathcal{L}}^{1}\alpha := \int_{0,\mathcal{L}}^{1} g(x,t)p^*\omega =  0
		\end{equation}
		and, if $\alpha$ is a $1$-form with respect to $[0,1]$,
		\begin{equation}
			\int_{0,\mathcal{L}}^{1}\alpha := (\int_{0}^{1} f(x,t)dt)\omega.
		\end{equation}
		This operator is very similar to the operator $p_\star$ (the integration along the fiber of $p: M \times [0,1] \longrightarrow M$), but they are different for the signs. Indeed if we consider $\alpha = f(x,t)dt \wedge p^*\omega$ we have that
		\begin{equation}
			\begin{split}
				p_\star \alpha &= \int_{[0,1]} f(x,t)dt \wedge p^*\omega \\
				&=  \int_{[0,1]} (-1)^{deg(p^*\omega)} p^*\omega \wedge f(x,t)dt.
			\end{split}
		\end{equation}
		Then applying the Projection Formula we obtain
		\begin{equation}
			\begin{split}
				p_\star \alpha &= (-1)^{deg(p^*\omega)} \omega \cdot (\int_{[0,1]} f(x,t)dt)\\
				&= (-1)^{deg(p^*\omega)} \omega \cdot (\int_{0}^{1} f(x,t)dt)\\
				&= (-1)^{deg(p^*\omega)}\int_{0,\mathcal{L}}^{1}\alpha.
			\end{split}
		\end{equation}
		Moreover the operator $\int_{0,\mathcal{L}}^1$ doesn't commute with the exterior derivative $d$ on $\Omega_{vc}(M \times [0,1])$, indeed if we consider for example $M = U$ an open set of $\numberset{R}^n$
		\begin{equation}
			\begin{split}
				d \int_{0,\mathcal{L}}^{1}\alpha &= d (\int_{0}^{1} f(x,t)dt) \\
				&= (\int_0^1 \frac{\partial f}{\partial x^i}dt)\wedge dx^i
			\end{split} 
		\end{equation}
		but 
		\begin{equation}
			\begin{split}
				\int_{0,\mathcal{L}}^{1} d\alpha &= \int_{0\mathcal{L}}^1 (\frac{\partial f}{\partial x^i}dx^i\wedge dt) \\
				&= \int_{0\mathcal{L}}^1 (-\frac{\partial f}{\partial x^i}dt\wedge dx^i)\\
				&= - (\int_0^1 \frac{\partial f}{\partial x^i}dt) dx^i. 
			\end{split}
		\end{equation}
		Since the operator $\int_{0\mathcal{L}}^1$, up to signs, is the operator of integration along the fibers, then we have that the norm is the same of $p_\star=1$. We know from Lemma 11.4 of \cite{lee} that $\int_{0\mathcal{L}}^1: \Omega^*(M \times [0,1]) \longrightarrow \Omega^{*-1}(M)$ and that for all differential forms we have that
		\begin{equation}
			\int_{0,\mathcal{L}}^{1} d \alpha + d \int_{0,\mathcal{L}}^{1}\alpha = i_1^*\alpha - i_0^*\alpha.
		\end{equation}
	\end{proof}
	\begin{rem}
		Consider a map $f:(M,g) \longrightarrow (N,h)$ between manifolds of bounded geometry and let $H: (M \times[0,1], g + g_{[0,1]}) \longrightarrow (N,h)$ be a lipschitz map such that $H(x,0) = f(x)$.
		\\Because of Corollary \ref{equivhomo}, we have that the bundles $(H^*TN, g_H)$ and $(f^*(TN) \times [0,1], g_S \times g_{[0,1]})$ are R.-N.-lipschitz equivalent. Then we have that the map 
		\begin{equation}
			pr_{[0,1]}: (H^*T^\delta N, g_H) \longrightarrow (f^*T^\delta N, g_S)
		\end{equation}
		which is the projection on the component in $[0,1]$ is a R.-N.-lipschitz map. Then, using that,
		\begin{equation}
			||pr_{[0,1]}^*|| = ||pr_{[0,1], \star}|| = ||\int_{0\mathcal{L}}^1||
		\end{equation}
		we obtain that $\int_{0\mathcal{L}}^1: \mathcal{L}^2(H^*T^\delta N) \longrightarrow \mathcal{L}^2(f^*T^\delta N)$ is an $\mathcal{L}^2$-bounded operator.
	\end{rem}
	\begin{prop}
		Let $id: (N,h) \longrightarrow (N,h)$ be the identity map on a manifold of bounded geometry and consider $p_{id}: TN \longrightarrow N$ the submersion related to the identity as in Theorem \ref{tilde}. Then, if $pr_N: TN \longrightarrow N$ is the projection of the tangent bundle, then there is a $\mathcal{L}^2$-bounded operator $K_1: \mathcal{L}^2(N) \longrightarrow \mathcal{L}^2(T^\delta N)$ such that for every smooth form $\alpha$
		\begin{equation}\label{K1}
			p_{id}^*\alpha - pr_N^*\alpha = d \circ K_1\alpha + K_1\circ d\alpha.
		\end{equation}
		Moreover, if $\alpha$ is in $\Omega^*_c(N)$, then $K_1\alpha \in \Omega^*(T^\delta N)$ has compact support.
	\end{prop}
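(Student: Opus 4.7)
The plan is to construct $K_1$ as the composition $\int_{0\mathcal{L}}^{1}\circ H^{*}$, where $H$ is the natural straight-line homotopy in $TN$ between $pr_N$ and $p_{id}$. All the analytic ingredients have already been built in the preceding sections, so the argument essentially reduces to bookkeeping at the two endpoints $t=0,1$.

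Concretely, I would first apply Lemma \ref{homo1} with $f=id_N$ (the identity is trivially R.-N.-lipschitz, with Fiber Volume identically $1$). This produces the homotopy
\begin{equation}
    H:(T^{\delta}N\times[0,1], h_S\times g_{[0,1]})\longrightarrow(N,h), \qquad H(w_p,t)=\exp_p(tw_p),
\end{equation}
which is smooth and R.-N.-lipschitz and satisfies $H(\cdot,0)=pr_N$, $H(\cdot,1)=p_{id}$. By Proposition \ref{mai}, $H^{*}$ then extends to an $\mathcal{L}^{2}$-bounded operator $H^{*}:\mathcal{L}^{2}(N)\to\mathcal{L}^{2}(T^{\delta}N\times[0,1])$.

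I then set $K_1:=\int_{0\mathcal{L}}^{1}\circ H^{*}$; boundedness of $K_1$ is automatic, since both factors are $\mathcal{L}^{2}$-bounded (Lemma \ref{GLee} supplies the second one). Feeding the smooth form $H^{*}\alpha$ into the Poincaré-type formula of Lemma \ref{GLee} yields
\begin{equation}
    i_1^{*}H^{*}\alpha-i_0^{*}H^{*}\alpha \;=\; \int_{0\mathcal{L}}^{1}dH^{*}\alpha\;+\;d\int_{0\mathcal{L}}^{1}H^{*}\alpha,
\end{equation}
and using $H\circ i_0=pr_N$, $H\circ i_1=p_{id}$, together with the fact that $d$ commutes with pullback on smooth forms, this rewrites as exactly $p_{id}^{*}\alpha-pr_N^{*}\alpha=K_1(d\alpha)+d(K_1\alpha)$.

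For the compact-support claim, I would observe that if $\alpha\in\Omega^{*}_c(N)$ and $H(w_p,t)\in\mathrm{supp}(\alpha)$, then $d_N(pr_N(w_p),\mathrm{supp}(\alpha))\leq |tw_p|\leq\delta$, so $pr_N(w_p)$ lies in the closed $\delta$-neighborhood $U$ of $\mathrm{supp}(\alpha)$; since $(N,h)$ is complete of bounded geometry, $U$ has compact closure by Hopf--Rinow, and therefore $H^{-1}(\mathrm{supp}(\alpha))\subseteq(pr_N^{-1}(\overline{U})\cap T^{\delta}N)\times[0,1]$ is compact. Hence $H^{*}\alpha$ is compactly supported, and the last clause of Lemma \ref{GLee} delivers $K_1\alpha\in\Omega^{*}(T^{\delta}N)$ with compact support. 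The only genuinely non-trivial input is the R.-N.-lipschitzness of $H$, which is already handled by Lemma \ref{homo1}, so I do not expect any substantial obstacle.
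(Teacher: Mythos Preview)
Your proposal is correct and follows essentially the same approach as the paper: define the straight-line homotopy $H(w_p,t)=\exp_p(tw_p)$, invoke Lemma \ref{homo1} (with $f=id_N$) to obtain that $H$ is R.-N.-lipschitz and hence $H^*$ is $\mathcal{L}^2$-bounded, and set $K_1:=\int_{0\mathcal{L}}^{1}\circ H^*$, concluding via the formula of Lemma \ref{GLee}. Your explicit verification of the compact-support claim is in fact more detailed than what the paper records.
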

	\begin{proof}
		Observe that $pr_N$ is a lipschitz submersion with bounded Fiber Volume and so $pr_N^*$ is a bounded operator.
		\\Then we have that $p_{id}$ and the projection $pr_N: (TN, g_S) \longrightarrow (N, h)$ are lipschitz-homotopic. Indeed, if we consider
		\begin{equation}
			H(p,t,s) = p_{id}(p,st)
		\end{equation}
		this is a lipschitz homotopy. Moreover, by Lemma \ref{homo1} we know that $H$ is R.-N.-lipschitz and so $H^*$ is an $\mathcal{L}^2$-bounded operator.
		\\This means that for all $\alpha$ in $\Omega^*_c(N)$, using Lemma 11.4. of \cite{lee} we have that
		\begin{equation}
			p_{id}^* - pr_N^* (\alpha)= i_1^*H^*\alpha - i_0^*H^*\alpha = \int_{0\mathcal{L}}^1 H^* d\alpha+ d \int_{0\mathcal{L}}^1 H^*\alpha.
		\end{equation}
		Then
		\begin{equation}
			K_1:= \int_{0\mathcal{L}}^1 \circ H^*,
		\end{equation}
		satisfies (\ref{K1}) and it is $\mathcal{L}^2$-bounded.
	\end{proof}
	\begin{prop}\label{K2}
		Consider $g: (M,m) \longrightarrow (N,h)$ and $f:(N,h) \longrightarrow (S,r)$ two smooth uniformly proper lipschitz maps between manifolds of bounded geometry. There is a $\mathcal{L}^2$-bounded operator $K_2: \mathcal{L}^2(N) \longrightarrow \mathcal{L}^2((f \circ \overline{g})^*TS)$ such that for every smooth form $\alpha$
		\begin{equation}
			p_{f \circ p_g}^*\alpha - p_{f\circ \overline{g}}^*\alpha = d \circ K_2\alpha + K_2 \circ d\alpha.
		\end{equation}
		Moreover, if $\alpha$ is in $\Omega^*_c(S)$, then $K_2\alpha \in \Omega^*((f \circ \overline{g})^*T^\delta S)$ has compact support.
	\end{prop}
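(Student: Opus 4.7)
The plan mirrors the proof of the preceding proposition for $K_1$: use the R.-N.-lipschitz homotopy supplied by Proposition \ref{homo2} together with the chain-homotopy operator $\int_{0\mathcal{L}}^1$ of Lemma \ref{GLee}, and set $K_2 := \int_{0\mathcal{L}}^1 \circ H^*$.

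More precisely, I work with the map
\begin{equation}
H: ((f\circ\overline{g})^*T^\delta S \times [0,1], g_{S} \times g_{[0,1]}) \longrightarrow (S,r),\qquad H(v_{f\circ\overline{g}(p)},t) := p_f(P_g(t\cdot v_{f\circ\overline{g}(p)})),
\end{equation}
where $(f\circ\overline{g})^*T^\delta S$ is identified with $(f\circ p_g)^*T^\delta S$ via the R.-N.-lipschitz equivalence from Proposition \ref{equiv}. By Proposition \ref{homo2} the map $H$ is R.-N.-lipschitz, hence $H^*$ is $\mathcal{L}^2$-bounded. The key computation is to read off the traces at the endpoints: at $t=1$ the formula \eqref{cosse} gives $H(\cdot,1) = p_f\circ P_g = p_{f\circ p_g}$; at $t=0$ the scaling kills the fiber vector in $P_g$, so $H(\cdot,0)$ lands on the zero section of $f^*T^\delta S$ over $\overline{g}(\pi(v))$ and coincides with $p_{f\circ\overline{g}}$ under the chosen identification.

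With these traces fixed, I would apply Lemma \ref{GLee} to $H^*\alpha$ on the product manifold $(f\circ\overline{g})^*T^\delta S\times[0,1]$ and use the commutation $H^*d=dH^*$ to obtain
\begin{equation}
p_{f\circ p_g}^*\alpha - p_{f\circ\overline{g}}^*\alpha \;=\; i_1^*H^*\alpha - i_0^*H^*\alpha \;=\; d\!\int_{0\mathcal{L}}^{1}\!H^*\alpha \;+\; \int_{0\mathcal{L}}^{1}\!H^*(d\alpha),
\end{equation}
which yields the required formula with $K_2 := \int_{0\mathcal{L}}^{1}\circ H^*$. The $\mathcal{L}^2$-boundedness of $K_2$ is the composition of the boundedness of $H^*$ with that of $\int_{0\mathcal{L}}^{1}$, the latter being a consequence of Lemma \ref{GLee} together with the R.-N.-lipschitz equivalence $(h^*T^\delta S,g_{S,h})\simeq (f\circ\overline{g})^*T^\delta S\times[0,1]$ provided by Corollary \ref{equivhomo}. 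Compactness of the support of $K_2\alpha$ whenever $\alpha\in\Omega^*_c(S)$ follows because $H$ is uniformly proper (being built from the uniformly proper maps $f$ and $g$ through $p_f$ and $p_g$), so $H^*\alpha$ has compact support in the product, which $\int_{0\mathcal{L}}^{1}$ preserves.

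The main obstacle is the careful bookkeeping of bundle identifications: one must ensure that the R.-N.-lipschitz equivalences among $(f\circ p_g)^*T^\delta S$, $(f\circ\overline{g})^*T^\delta S$ and $h^*T^\delta S$ are mutually compatible and, under these identifications, do send $H|_{t=0}$ and $H|_{t=1}$ to $p_{f\circ\overline{g}}$ and $p_{f\circ p_g}$ respectively. Once these identifications are pinned down via Proposition \ref{equiv} and Corollary \ref{equivhomo}, the remaining steps are a direct transcription of the $K_1$ argument.
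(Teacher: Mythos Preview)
Your proposal is correct and follows essentially the same route as the paper: the paper also sets $K_2 := \int_{0\mathcal{L}}^{1}\circ p_h^*$, where $p_h$ is the submersion associated to the homotopy $h$ between $f\circ p_g$ and $f\circ\overline{g}$, notes that $p_h$ coincides with the map $H$ of Proposition~\ref{homo2} (hence is R.-N.-lipschitz), and then invokes Lemma~\ref{GLee} and properness of $p_h$ exactly as you do. Your closing remark about the bookkeeping of bundle identifications is apt---the paper handles this point rather tersely, and the cleanest way to read off the endpoints is not via the explicit formula $p_f(P_g(t\cdot v))$ but via the observation that $p_h$ restricted to a time slice $\{t\}$ is by construction $p_{h(\cdot,t)}$, which gives $p_{f\circ\overline{g}}$ at $t=0$ and $p_{f\circ p_g}$ at $t=1$ directly.
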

	\begin{proof}
		Consider $h: g^*f^*TS \times [0,1] \longrightarrow S$ the lipschitz homotopy between $f \circ p_g$ and $f \circ \overline{g}$. Let us define the submersion related to $h$ defined in Theorem \ref{tilde}
		\begin{equation}
			p_h: h^*(TS) \longrightarrow S
		\end{equation}
		Observe that $p_h$ is the homotopy between $p_f \circ P_g = p_{f \circ p_g}$ and $p_{f\circ \overline{g}}$ defined in Lemma \ref{homo2} and so $p_h$ is a R.-N.-lipschitz map. Let us define the operator
		\begin{equation}
			K_2 := \int_{0\mathcal{L}}^1 \circ p_h^*.
		\end{equation}
		It is a $\mathcal{L}^2$-bounded operator because it is composition of $\mathcal{L}^2$-bounded operators. Then we can observe that for every smooth form $\alpha$ we have
		\begin{equation}
			\begin{split}
				(p_{f \circ p_g})^* \alpha - p_{f\circ \overline{g}}^*\alpha &= (i_0^* - i_1^*)p_h^*\alpha \\
				&= (d \circ \int_{0\mathcal{L}}^1) p_h^* \alpha + ( \int_{0\mathcal{L}}^1 \circ d) p_h^*\alpha \\
				&= d \circ K_2 \alpha + K_2 \circ d \alpha.
			\end{split}
		\end{equation}
		Finally, since $p_h$ is a proper map (it is composition of proper maps), if $\alpha \in \Omega^*_c(S)$ the support of $K_2\alpha \in \Omega^*((f \circ \overline{g})^* TS)$ is compact.
	\end{proof}
	\begin{prop}\label{K3}
		Let $f_1$ and $f_2:(M,m) \longrightarrow (N,l)$ be two smooth, uniformly proper and lipschitz maps between manifolds of bounded geometry. Let us suppose that $f_1$ and $f_2$ are uniformly proper lipschitz-homotopic with a smooth lipschitz homotopy. Then there is a $\mathcal{L}^2$-bounded operator $K_3: \mathcal{L}^2(N) \longrightarrow \mathcal{L}^2(p_{f_1}^*TN)$ such that for all smooth form $\alpha$ 
		\begin{equation}
			p_{f_1}^* \alpha - p_{f_2}^* \alpha = d \circ K_3 \alpha + K_3\circ d \alpha.
		\end{equation}
		Moreover if $\alpha \in \Omega^*_c(N)$ then the support of $K_3\alpha \in \Omega^*(p_{f_1}^*TN)$ is compact.	
	\end{prop}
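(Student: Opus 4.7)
The plan is to mimic the constructions used for $K_1$ in the previous proposition and for $K_2$ in Proposition \ref{K2}, replacing the auxiliary homotopy by the given lipschitz homotopy between $f_1$ and $f_2$ itself. Let $H\colon (M\times[0,1], m + g_{[0,1]}) \longrightarrow (N,l)$ be the smooth uniformly proper lipschitz homotopy with $H(\cdot,0) = f_1$ and $H(\cdot,1) = f_2$. By Corollary \ref{equivhomo}, the bundle $(H^*TN, g_{S,H})$ is R.-N.-lipschitz equivalent to $(f_1^*TN \times [0,1], g_S \times g_{[0,1]})$, and since this equivalence is a diffeomorphism with uniformly bounded Fiber Volume, pulling back along it is $\mathcal{L}^2$-bounded. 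Under this identification, the submersion $p_H\colon H^*(T^\delta N) \longrightarrow N$ from Lemma \ref{tilde} may be viewed as a map from $f_1^*(T^\delta N) \times [0,1]$ to $N$.

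The key observation is that $p_H$ furnishes a lipschitz homotopy between $p_{f_1}$ and $p_{f_2}$. Indeed, the restrictions of $p_H$ to the endpoints $\{t=0\}$ and $\{t=1\}$ are precisely $p_{f_1}$ and $p_{f_2}$, up to the identification of the bundles $f_i^*TN$ via the equivalence of Proposition \ref{equiv}. Since $H$ is smooth, uniformly proper, lipschitz, and $M\times[0,1]$ has bounded Ricci curvature (being a product of manifolds of bounded geometry), Remark \ref{salva} ensures that $p_H$ itself is a R.-N.-lipschitz map, so $p_H^*\colon \mathcal{L}^2(N) \longrightarrow \mathcal{L}^2(H^*T^\delta N)$ is $\mathcal{L}^2$-bounded by Proposition \ref{mai}.

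I would then define
\begin{equation}
K_3 := \int_{0\mathcal{L}}^1 \circ\, p_H^*,
\end{equation}
which is $\mathcal{L}^2$-bounded as a composition of $\mathcal{L}^2$-bounded operators (Lemma \ref{GLee} and the above). For a smooth form $\alpha$ on $N$, applying Lemma \ref{GLee} to the smooth form $p_H^*\alpha$ on $f_1^*(T^\delta N) \times [0,1]$ yields
\begin{equation}
i_0^*(p_H^*\alpha) - i_1^*(p_H^*\alpha) = d\int_{0\mathcal{L}}^1 p_H^*\alpha + \int_{0\mathcal{L}}^1 d(p_H^*\alpha),
\end{equation}
and since $i_0^*p_H^* = p_{f_1}^*$ and $i_1^*p_H^* = p_{f_2}^*$, while $p_H^*$ commutes with $d$ on smooth forms, this rearranges to the desired identity $p_{f_1}^*\alpha - p_{f_2}^*\alpha = d\circ K_3\alpha + K_3\circ d\alpha$. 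Finally, compact support is preserved because $p_H$ is uniformly proper (being composition/restriction of uniformly proper maps), so $p_H^*\alpha$ has support with bounded diameter whenever $\alpha$ does, and Lemma \ref{GLee} already ensures $\int_{0\mathcal{L}}^1$ sends compactly supported forms on $f_1^*(T^\delta N) \times [0,1]$ to compactly supported forms on $f_1^*(T^\delta N)$.

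The only delicate point, and the one I would verify carefully, is that $p_H$ really does interpolate between $p_{f_1}$ and $p_{f_2}$ as operators on $\mathcal{L}^2$ after transporting along the R.-N.-lipschitz equivalence of Corollary \ref{equivhomo}; this is a matter of chasing the definitions of the bundle isomorphism and checking that the identifications are compatible with the exponential-map construction of $p_f$ in Lemma \ref{tilde}, but introduces no new analytic content beyond what has already been established.
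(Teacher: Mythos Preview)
Your proposal is correct and follows essentially the same approach as the paper: define $K_3 := \int_{0\mathcal{L}}^1 \circ p_H^*$ using the submersion $p_H$ associated to the homotopy $H$, observe that $p_H$ is R.-N.-lipschitz (the paper cites Corollary \ref{R.-N.-p} rather than Remark \ref{salva}, but your choice is arguably more precise since $M\times[0,1]$ has boundary), and conclude via Lemma \ref{GLee}. The only slip is a sign: Lemma \ref{GLee} gives $i_1^* - i_0^*$ on the left, so with $H(\cdot,0)=f_1$ and $H(\cdot,1)=f_2$ you obtain $p_{f_2}^* - p_{f_1}^*$; this is trivially repaired by reversing the parametrization of $[0,1]$ or absorbing the sign into $K_3$.
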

	\begin{proof}
		Observe that $h$, the homotopy such that $h(p, 0) = f_1(p)$ and $h(p,1) = f_2(p)$, is a uniformly proper lipschitz map. Let us define the metric $g_S$ on $f_1^*TN$ as the Sasaki metric defined using the Riemannain metric $m$, the bundle metric $f_1^*l$ and the connection $f^*\nabla^{LC, N}$.
		\\Applying Corollary(\ref{R.-N.-p}), we have that $p_h: (f_1^*TN \times  [0,1], g_S \times g_{[0,1]}) \longrightarrow (N,h)$ is a R.-N.-lipschitz map and so $p_h^*$ is a $\mathcal{L}^2$-bounded operator.
		\\Moreover $p_h$ is a lipschitz-homotopy between $p_{f_1}$ and $p_{f_2}$. This fact follows directly by the definition of submersion related to a lipschitz map in Theorem \ref{finsub}.
		\\So we can conclude exactly as we did in the Proposition \ref{K1} and Proposition \ref{K2} considering
		\begin{equation}
			K_3 := \int_{0\mathcal{L}}^1 \circ p_h^*
		\end{equation}
		and using that $p_h$ is a proper map.
	\end{proof}
	\begin{prop}
		Consider $(M,g)$,$(N,h)$ and $(S,l)$ three manifolds of bounded geometry and consider $f:(M,g) \longrightarrow (N,h)$, $F:(M,g) \longrightarrow (N,h)$ and $g:(S,l)  \longrightarrow (M,g)$ three uniformly proper lipschitz maps, possibly non-smooth. Then, in $L^2$-cohomology, we have that
		\begin{enumerate}
			\item $T_{id_M} = Id_{H^*_2(M)}$,
			\item if $f$ is not differentiable and $f'$ is a smooth lipschitz maps which is lipschitz-homotopic to $f$, then $T_{f}$ doesn't depend on the choice of $f'$,
			\item if $f \sim_\Gamma F$ then $T_f = T_F$,
			\item for each $\delta_0 \leq \delta = inj_{N}$ is possible to find a Thom form $\omega_0$ of $f^*TN$ with support in $f^*T^{\delta_0}N$. In particular if we denote by $T_f^0$ the $T_f$ operator defined using $\omega_0$ instead of $\omega$, we obtain that
			\begin{equation}
				T_f = T_{f}^0,
			\end{equation}
			\item $T_{f \circ g} = T_g \circ T_f$,
			\item if $f$ is a $L^2$-map then $f^* = T_f$,
		\end{enumerate}
		Moreover, since the operator $T_f$ is bounded, then the identities above also holds in reduced $L^2$-cohomology.
	\end{prop}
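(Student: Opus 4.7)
The plan is to reduce all six items to three ingredients: (a) every Thom form is closed, so $e_\omega$ commutes with $d$; (b) $pr_{M,\star}$ commutes with $d$ on vertically compactly supported forms; and (c) the $\mathcal{L}^2$-bounded chain homotopies $K_1, K_2, K_3$ of Propositions \ref{K2}, \ref{K3} together with the R.-N.-lipschitz straight-line homotopy of Lemma \ref{homo1}. Once each pair of operators is connected by a bounded chain homotopy $dK+Kd$ on $\Omega_c^*(N)$, density of $\Omega_c^*(N)$ in $\mathrm{dom}(d_{\min})$ and boundedness of every ingredient extend the identity, and since every $K$ is $\mathcal{L}^2$-bounded, the conclusions pass automatically from $L^2$-cohomology to reduced $L^2$-cohomology. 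By item 2 (proved below) I may assume throughout that the maps are smooth.

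\textbf{Items 3, 2, 4.} For item 3, Proposition \ref{K3} provides an $\mathcal{L}^2$-bounded $K_3$ with $p_f^* - p_F^* = dK_3 + K_3 d$ on $\Omega_c^*(N)$, sending compactly supported forms to compactly supported forms; setting $\overline{K} := pr_{M,\star}\circ e_\omega\circ K_3$ and applying (a), (b) gives $T_f - T_F = d\overline{K} + \overline{K} d$. Item 2 is then immediate: two smooth lipschitz approximations $f_1', f_2'$ of a non-smooth $f$ are $\Gamma$-lipschitz-homotopic to each other by concatenation through $f$, so item 3 forces $T_{f_1'} = T_{f_2'}$ in cohomology. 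For item 4, the existence of a Thom form with support in $f^*T^{\delta_0}N$ follows by choosing the bump $\phi$ in (\ref{Thom}) with support in $[-\delta_0, \delta_0]$; for $T_f = T_f^0$ I observe that $\omega$ and $\omega_0$ represent the same class in $H^n_{cv}(f^*TN)$, so $\omega - \omega_0 = d\tau$ with $\tau$ vertically compactly supported, obtained as a Mathai--Quillen primitive from a one-parameter family interpolating $\omega_0$ and $\omega$. This family can be chosen so that $|\tau|_p$ is uniformly bounded, whence $e_\tau$ is $\mathcal{L}^2$-bounded and $K' := pr_{M,\star}\circ e_\tau\circ p_f^*$ realizes $T_f - T_f^0 = dK' + K'd$.

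\textbf{Items 6 and 1.} Let $f$ be a smooth $L^2$-map, so that $f^*$ is $\mathcal{L}^2$-bounded by Proposition \ref{mai}. By Lemma \ref{homo1}, the homotopy $H(v_{f(p)}, t) := p_f(t\cdot v_{f(p)})$ is R.-N.-lipschitz and connects $p_f$ with $\overline{f} = f\circ pr_M$, so $K_H := \int_{0\mathcal{L}}^1\circ H^*$ is $\mathcal{L}^2$-bounded and yields $p_f^* - pr_M^*\circ f^* = dK_H + K_H d$. The projection formula for the Thom form $\omega$ on $pr_M : f^*T^\delta N \to M$ gives $pr_{M,\star}\circ e_\omega\circ pr_M^* = \mathrm{id}$, so composing produces a bounded chain homotopy $T_f - f^* = d(pr_{M,\star} e_\omega K_H) + (pr_{M,\star} e_\omega K_H)d$. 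Item 1 is the special case $f = \mathrm{id}_M$.

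\textbf{Item 5 and the main obstacle.} Item 5 is the main difficulty and I attack it in two steps. First I exploit item 4 to pick $\omega_{fg} := G^*\omega_f$ as the Thom form for $(f\circ g)^*TN$, where $G: (f\circ g)^*TN \to f^*TN$ is the canonical bundle map covering $g$; by construction $p_{fg} = p_f\circ G$. The square formed by $G, g$ as horizontal arrows and $pr_S^{fg}, pr_M^f$ as vertical arrows is Cartesian, so the classical base-change identity $pr_{S,\star}^{fg}\circ G^* = g^*\circ pr_{M,\star}^f$ on vertically compactly supported forms gives the pointwise equality
\begin{equation*}
T_{f\circ g}\alpha \,=\, pr_{S,\star}^{fg}\bigl(G^*(p_f^*\alpha\wedge\omega_f)\bigr) \,=\, g^*(T_f\alpha),
\end{equation*}
and the $\mathcal{L}^2$-boundedness of $T_{f\circ g}$ forces $g^*(T_f\alpha)\in\mathcal{L}^2(S)$ even though $g^*$ need not be $\mathcal{L}^2$-bounded in general. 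Second, applying Lemma \ref{homo1} to $g$ gives a bounded $K_g$ with $p_g^* - \overline{g}^* = dK_g + K_g d$, and since $\overline{g} = g\circ pr_S^g$ the projection formula yields $pr_{S,\star}^g\circ e_{\omega_g}\circ pr_S^{g,*} = \mathrm{id}$. A routine chase now produces $T_g(T_f\alpha) - g^*(T_f\alpha) = d(K''T_f\alpha) + K''(T_f d\alpha)$ with $K'' := pr_{S,\star}^g\circ e_{\omega_g}\circ K_g$ bounded, hence $T_g\circ T_f = T_{f\circ g}$ in (un)reduced $L^2$-cohomology. The main obstacle is precisely orchestrating base change, Thom-form functoriality under $G$, and the $p_g\sim\overline{g}$ homotopy while keeping every intermediate operator $\mathcal{L}^2$-bounded; all other items reduce to routine manipulations of chain homotopies once this geometric identity is in place.
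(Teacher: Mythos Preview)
Your treatment of items 1, 2, 3, and 6 is correct. For item 6 you apply Lemma~\ref{homo1} directly to the $L^2$-map $f$ rather than factoring through $T_{id_N}$ as the paper does, but since an $L^2$-map is R.-N.-lipschitz this is legitimate and the two routes are equivalent. Item 4 is correct in outline; the paper makes the primitive explicit as $\eta=\int_{0\mathcal{L}}^1\Phi^*\omega$ for the rescaling homotopy $\Phi(v,s)=[\tfrac{\delta_0}{\delta}s+(1-s)]v$ and verifies that its components in fibered normal coordinates are uniformly bounded.

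There is a genuine gap in item 5. Your second step invokes Lemma~\ref{homo1} for $g$ to produce a bounded $K_g$ with $p_g^*-\overline{g}^*=dK_g+K_gd$. But the hypothesis of Lemma~\ref{homo1} is that the map be R.-N.-lipschitz, whereas $g:(S,l)\to(M,m)$ is only assumed uniformly proper lipschitz. The proof of that lemma uses the R.-N. bound $\mu_S(g^{-1}(B_r))\le K_0\,\mu_M(B_r)$ to control the Fiber Volume of the straight-line homotopy near $t=0$; without it, $H_g^*$ need not be $\mathcal{L}^2$-bounded, and in fact $\overline{g}^*=pr_S^*\circ g^*$ itself is not bounded, so the chain-homotopy identity is not even well-posed between $\mathcal{L}^2$-bounded operators. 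Observing that $g^*(T_f\alpha)=T_{f\circ g}\alpha\in\mathcal{L}^2$ does not save the argument: to pass from compactly supported forms to $\mathrm{dom}(d_{\min})$ via Proposition~\ref{boundedness} you need $K''$ itself bounded, not merely $K''\circ T_f$.

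The paper avoids $g^*$ entirely. It rewrites both $T_{f\circ g}$ and $T_g\circ T_f$ as integrals over the double bundle $(f\circ\overline{g})^*T^\delta N\to g^*T^\sigma M\to S$: the base-change identity $p_g^*\circ pr_{M,\star}=pr_{g^*TM,\star}\circ P_g^*$ from \cite{Conn} turns $T_g\circ T_f$ into $pr_{S,\star}\,e_{\omega'}\,pr_{g^*TM,\star}\,e_\omega\,(p_f\circ P_g)^*$, while $T_{f\circ g}$ is $pr_{S,\star}\,e_{\omega'}\,pr_{g^*TM,\star}\,e_\omega\,p_{f\circ\overline{g}}^*$. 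The chain homotopy then comes from Proposition~\ref{K2}, which compares $p_{f\circ\overline{g}}^*$ and $p_{f\circ p_g}^*=(p_f\circ P_g)^*$. The point is that both $f\circ\overline{g}$ and $f\circ p_g$ are uniformly proper lipschitz, so their associated submersions $p_{f\circ\overline{g}}$ and $p_h$ are R.-N.-lipschitz by Corollary~\ref{R.-N.-p} and Remark~\ref{salva}, with no R.-N. hypothesis on $g$ required. Your first step (the pointwise identity $T_{f\circ g}=g^*\circ T_f$ via the Cartesian square over $g$) is correct and is exactly the mechanism the paper uses in item 6, where the base map \emph{is} R.-N.-lipschitz; it is only your second step that fails for general $g$.
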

	\begin{proof}
		\textbf{Point 1.} Let us consider the standard projection $pr_M: TM \longrightarrow M$. Then, because of the previous proposition we have that for all smooth forms $\alpha$ in $\Omega^*_c(M)$,
		\begin{equation}
			pr_M^* - p_{id}^*(\alpha) = d \circ K_1 + K_1 \circ d (\alpha).
		\end{equation}
		Then we have the identity map in $\mathcal{L}^2(M)$ can be written as
		\begin{equation}
			1(\alpha) := \int_{B^k} pr^*_M \alpha \wedge \omega = pr_{M \star} \circ e_{\omega} \circ pr^*_M(\alpha)
		\end{equation}
		where $pr_{M \star}$ is the operator of integration along the fibers of $pr_M$.
		\\We have that for every $\alpha \in \Omega_c^*(M)$
		\begin{equation}
			\begin{split}
				1 - T_{id_M}(\alpha) &= pr_{M \star} \circ e_{\omega} \circ pr^*_M(\alpha) - pr_{M \star} \circ e_{\omega} \circ p_{id}^*(\alpha) \\
				&= pr_{M \star} \circ e_{\omega} \circ (pr^*_M - p_{id}^*)\alpha \\
				&= pr_{M \star} \circ e_{\omega} \circ (d \circ K_1 + K_1 \circ d) \alpha.
			\end{split}
		\end{equation}
		Observe that since $\omega$ is closed, we have that $d(\alpha \wedge \omega) = (d\alpha) \wedge \omega$. Moreover $\alpha \wedge \omega$ is in $\Omega^*_{vc}(TM)$. This means that the exterior derivative can be switched with $pr_{M \star}$ and then we have
		\begin{equation}
			\begin{split}
				1 - T_{id_M}(\alpha) &= d \circ  pr_{M \star} \circ e_{\omega} \circ K_1 + pr_{M \star} \circ e_{\omega} \circ K_1 \circ d (\alpha)\\
				&=d \circ Y_1 + Y_1 \circ d(\alpha) \label{Y_1},
			\end{split}
		\end{equation}
		where
		\begin{equation}
			Y_1 := pr_{M \star} \circ e_{\omega} \circ K_1.
		\end{equation}
		Observe that $Y_1$ is a $\mathcal{L}^2$-bounded operator because it is composition of $\mathcal{L}^2$-bounded operators. Moreover applying Proposition \ref{boundedness} choosing $B = Y_1$ and $K = 1 - T_{id_M}$ we obtain that $Y_1(dom(d)) \subseteq dom(d)$ and
		\begin{equation}
			1 \beta + T_{id_M}\beta = dY_1 \beta + Y_1 d\beta
		\end{equation}
		for every $\beta$ in $dom(d_{min})$. Then in $L^2$-cohomology we have that
		\begin{equation}
			Id_{H^*_2(M)} = T_{id_M}.
		\end{equation}
		\\
		\\
		\\
		\\
		\textbf{Point 2.} Consider two differentiable lipschitz approximation $f_1$ and $f_2$ of $f$. Then we have that $f_1 \sim_\Gamma f_2$. In order to prove this fact let us denote by $h_1:M \times[0,1] \longrightarrow N$ the lipschitz homotopy between $f_1$ and $f$ and by $h_2: M\times [0,1] \longrightarrow N$ the lipschitz-homotopy between $f$ and $f_2$. Then we can define a lipschitz homotopy $h: M\times [0,1] \longrightarrow N$ between $f_1$ and $f_2$ as
		\begin{equation}
			h(p,t) := \begin{cases} h_1(p, 2t) \mbox{                if              } t\in [0, \frac{1}{2}] \\
				h_2(p, 2t -1) \mbox{                if              } t\in [\frac{1}{2}, 1].
			\end{cases}
		\end{equation}
		Let us consider, now, a differentiable lipschitz homotopy $H$ between $f_1$ and $f_2$ as in Lemma \ref{C^k_bhomo}. We know, since Proposition \ref{K3} that there is a $\mathcal{L}^2$-bounded operator $K_3$, such that for every $\alpha$ in $\Omega_c^*(N)$
		\begin{equation}
			p_{f_1}^* - p_{f_2}^* (\alpha)= d \circ K_3 + K_3 \circ d (\alpha).
		\end{equation}
		This means that if we consider the $\mathcal{L}^2$-bounded operator\footnote{it is $\mathcal{L}^2$-bounded because it is composition of $\mathcal{L}^2$-bounded operators}
		\begin{equation}
			Y_3 := pr_{M\star} \circ e_\omega \circ K_3,
		\end{equation}
		then for all $\alpha$ in $\Omega_c^*(N)$ we have that
		\begin{equation}
			\begin{split}
				T_{f_1} - T_{f_2} (\alpha) &= pr_{M\star} \circ e_\omega \circ (p_{f_1}^* - p_{f_2}^*) \\
				&= pr_{M\star} \circ e_\omega \circ (d \circ K_3 + K_3 \circ d) (\alpha)\\
				&= d \circ Y_3 + Y_3 \circ d (\alpha).
			\end{split}
		\end{equation}
		Again, applying Proposition 1.2.2. we obtain that the identity above holds for all $\beta$ in $dom(d_{min})$
		\\Then, in $L^2$-cohomology, $T_f$ doesn't depend on the choice of the smooth lipschitz approximation of $f$.
		\\		
		\\				
		\\\textbf{Point 3.} Let us consider $f'$ and $F'$ two differentiable maps which are lipschitz-homotopic to $f$ and $F$. Let $H$ be a differentiable lipschitz homotopy between $f'$ and $F'$. Then we can conclude as the previous point.		
		\\		
		\\				
		\textbf{Point 4.} Let us consider $\omega$ the Thom form on $f^*(T^\delta N)$ defined in subsection \ref{forma}. Consider $\delta_0 \leq \delta$ and let $\phi$ be the map
		\begin{equation}
			\begin{split}
				\phi: f^*(T^{\delta}N) &\longrightarrow f^*(T^\delta N) \\
				(p, v_{f(p)}) &\longrightarrow (p, \frac{\delta_0}{\delta}v_{f(p)}).
			\end{split}
		\end{equation}
		Observe that $\phi$, respect to the Sasaki metric, is lipschitz. This means that the punctual norm of $\phi^*\omega$ is uniformly bounded and so $e_{\phi^*\omega}$ is a $\mathcal{L}^2$-bounded operator. Then if use $\phi^*\omega$ instead of $\omega$ to define $T_f$ we still have a $\mathcal{L}^2$-bounded operator.
		\\Moreover we can also observe that $\phi^*\omega$ is still a Thom form for $f^*(TN)$ and $supp(\phi^*\omega)$ is contained in $f^*T^{\delta_0}N$. The forms $\omega$ and $\phi^*\omega$ define on compactly supported cohomology on $f^*T^\delta N$ the same class. This means that if alpha is a closed form, then
		\begin{equation}
			\begin{split}
				\int_{B^\delta} (\omega - \phi^*\omega) \wedge \alpha &= \int_{B^\delta} d \eta \wedge \alpha \\
				&= \int_{B^\delta} d(\eta \wedge \alpha) + \int_{B^\delta} \eta \wedge d\alpha \\
				&= \int_{\partial B^\delta} \eta \wedge \alpha + d\int_{B^\delta} \eta \wedge \alpha + \int_{B^\delta} \eta \wedge d\alpha \\
				&= d\int_{B^\delta} \eta \wedge \alpha + \int_{B^\delta} \eta \wedge d\alpha
			\end{split}
		\end{equation}
		This means that if there is a form $\eta$ such that $\pi_\star \circ e_\eta$ is a $\mathcal{L}^2$-bounded operator and $\pi_\star \circ e_\eta(dom(d)) \subseteq dom(d)$ then we can conclude the proof of this point for both the unreduced and reduced $L^2$-cohomology\footnote{we are denoting by $\pi$ the projection $\pi: f^*T^\delta N \longrightarrow M$}.
		\\
		\\Let us introduce the map
		\begin{equation}
			\begin{split}
				\Phi: f^*(T^{\delta}N) \times [0,1] &\longrightarrow f^*(T^\delta N) \\
				(p, v_{f(p)}, s) &\longrightarrow (p, [\frac{\delta_0}{\delta}s + \delta (1-s)]v_{f(p)}).
			\end{split}
		\end{equation}
		Observe that
		\begin{equation}
			\omega\wedge \beta - \phi^*\omega \wedge \beta = \int_{0\mathcal{L}}^1 \Phi^* \omega \wedge d \beta + d  \int_{0\mathcal{L}}^1 \Phi^* \omega \wedge \beta.
		\end{equation}
		This means
		\begin{equation}\label{pino}
			\int_{B^\delta} \omega\wedge \beta - \int_{B^\delta} \phi^*\omega \wedge \beta = \int_{B^\delta}  \int_{0\mathcal{L}}^1 \Phi^* \omega \wedge d \beta + d  \int_{B^\delta}  \int_{0\mathcal{L}}^1 \Phi^* \omega \wedge \beta.
		\end{equation}
		We know that $\omega$, in local fibered coordinates, has the form $\alpha_{IJ}(x, \mu) dx^I \wedge d\mu^J$ where $|\alpha_{IJ}|$ uniformly bounded. Let us denote by $A(s) := [\frac{\delta_0}{\delta}s + 1-s]$. Then we have that
		\begin{equation}
			\begin{split}
				\Phi^* \omega &= \alpha_{IJ}(x, A(s)\mu) \frac{\partial A}{\partial s}(s) \cdot A(s)^{|J|-1} dx^I\wedge d\mu^J  \wedge ds\\
				&+\alpha_{IJ}(x, A(s)\mu) \cdot A(s)^{|J|} dx^I\wedge d\mu^J
			\end{split}
		\end{equation}
		Observe that $A$ has bounded derivative. Then we have that
		\begin{equation}
			\begin{split}
				\int_{0\mathcal{L}}^1 \Phi^* \omega &= [\int_{0}^1 \alpha_{IJ}(x, A(s)\mu) \frac{\partial A}{\partial s}(s) \cdot A(s)^{|J|-1} dx^I\wedge d\mu^J  \wedge ds \\
				&= \beta_{RS}(x, \mu)dx^I \wedge d\mu^J
			\end{split}
		\end{equation}
		where $|\beta_{IJ}|$ are uniformly bounded. This fact, as we showed in subsection \ref{forma}, implies $|\int_{0\mathcal{L}}^1 \Phi^* \omega|_{p, v_{f(p)}}$ is uniformly bounded. Let us denote by $\eta = \int_{0\mathcal{L}}^1 \Phi^* \omega$: we have that $e_{\eta}$ is a $\mathcal{L}^2$-bounded operator. Consider the projection of the bundle $\pi:f^*T^\delta N \longrightarrow N$.
		Then we have that the operator
		\begin{equation}
			\pi_\star \circ e_\eta
		\end{equation}
		is $\mathcal{L}^2$-bounded. We have to prove that $\pi_\star \circ e_\eta(dom(d)) \subseteq dom(d)$. Consider $\beta$ in $dom(d)$: we have that
		\begin{equation}
			\begin{cases}
				\beta = \lim\limits_{k} \beta_k \\
				d \beta = \lim\limits_{k} d\beta_k.
			\end{cases}
		\end{equation}
		where $\beta_k$ are compactly supported differential forms. We have to show that there is a sequence of compactly supported differential forms $\gamma_k$ and there is a $\mathcal{L}^2$-form $y$ such that
		\begin{equation}
			\begin{cases}
				\pi_{\star} \circ e_{\eta } \beta = \lim\limits_{k} \gamma_k \\
				y = \lim\limits_{k} d\gamma_k.
			\end{cases}
		\end{equation}
		Observe that if we consider $\gamma_k := \int_{B^\delta} \circ e_{\eta } \beta_k$  the first condition is satisfied because $\int_{B^\delta} \circ e_{\eta }$ is composition of $\mathcal{L}^2$-bounded operator.
		\\Moreover (\ref{pino}) implies that
		\begin{equation}
			\lim\limits_{k} \gamma_k = \int_{B^\delta} (\omega - \phi^*\omega) \wedge \beta - \int_{B^\delta} \eta \wedge d \beta
		\end{equation}
		which is a $\mathcal{L}^2$-form. This conclude the proof of this point.
		\\
		\\
		\\
		\textbf{Point 5.} Since the previous points we can consider $f$ and $g$ as differentiable maps. Consider the submersions $p_f: f^*T^\delta N \longrightarrow N$, $p_g: g^*T^\sigma S \longrightarrow M$, $p_{f \circ g}: (f \circ g)^*T^\delta N \longrightarrow N$ related to $f$, $g$, and $f \circ g$. Then, since the previous Proposition, we have that for every $\alpha$ in $\Omega^*_c(N)$ 
		\begin{equation}
			(P_g)^* \circ p_f^* - \overline{p}^*_{f \circ g}(\alpha)= d \circ K_2 + K_2 \circ d (\alpha). 
		\end{equation}
		Let us denote by $\omega$ and $\omega'$ are Thom form in $T S$ and in $TN$ defined as in subsection \ref{forma}. With a little abuse of notation we will confuse $\omega$ and $\omega'$ with their pullback on other vector bundles.
		Recall that
		\begin{equation}
			\overline{g}: g^*TM \longrightarrow M
		\end{equation}
		is the map defined as $\overline{g}(s, w_{g(s)}) := g(s)$, where $s \in S$.
		\\Then we have that
		\begin{equation}
			\begin{split}
				T_{f \circ g} &= pr_{S \star} \circ e_{\omega} \circ p_{f\circ g}^* \\
				&=pr_{S \star} \circ e_{\omega} \circ id_{(f \circ \overline{g})^* T^\delta N} \circ p_{f\circ g}^* \\
				&=pr_{S \star} \circ e_{\omega} \circ pr_{g^*TM, \star} \circ e_{\omega'} \circ pr_{g^*TM}^* \circ p_{f\circ g}^*,
			\end{split}
		\end{equation}
		where $pr_S: (f \circ \overline{g})T^\delta N \longrightarrow S$ and $pr_{g^*TM} : (f \circ \overline{g})^* T^\delta N \longrightarrow g^*T^\sigma M$ are the projections of the vector bundles. Observe that
		\begin{equation}
			p_{f \circ \overline{g}} = p_{f\circ g} \circ pr_{g^*TM},
		\end{equation}
		and so we have that
		\begin{equation}
			T_{f \circ g} = pr_{S \star} \circ e_{\omega} \circ pr_{g^*TM \star} \circ e_{\omega'} \circ p_{f \circ \overline{g}}^*. 
		\end{equation}
		Now we will focus on $T_g \circ T_f$. We have that
		\begin{equation}
			T_g \circ T_f = pr_{S \star} \circ e_{\omega'} \circ p_g^* \circ pr_{M \star} \circ e_\omega \circ p_f^*.
		\end{equation}
		It's possible to apply the Proposition VIII of Chapter 5 in \cite{Conn} to the fiber bundles 
		\begin{equation}
			(\overline{g}^*f^*TN , pr_{g^*TM}, g^*T^\sigma M, B^\delta)
		\end{equation}
		and $(TM, pr_{M}, M, B^\sigma)$ and the bundle morphism $P_g$ induced by $p_g$. \\We obtain that
		\begin{equation}
			p_g^* \circ pr_{M \star} = pr_{g^*TM, \star} \circ P_g^*.
		\end{equation}
		Finally we have that also $e_{\omega'}$ and $P_g^*$ commute. This means that
		\begin{equation}
			\begin{split}
				T_g \circ T_f &= pr_{S \star} \circ e_{\omega'} \circ p_g^* \circ pr_{M \star} \circ e_\omega \circ p_f^* \\
				&= pr_{S \star} \circ e_{\omega'} \circ pr_{g^*TM, \star} \circ P_g^* \circ e_\omega \circ p_f^*\\
				&= pr_{S \star} \circ e_{\omega'} \circ pr_{g^*TM, \star} \circ e_\omega \circ P_g^*\circ p_f^* 
			\end{split}
		\end{equation}
		This means that, on $dom(d_{min})$, because of Proposition \ref{K2}, we have
		\begin{equation}
			\begin{split}
				T_{f \circ g} - T_g \circ T_f &= pr_{S \star} \circ e_{\omega'} \circ pr_{g^*TM, \star} \circ e_\omega \circ (p_{f \circ \overline{g}}^* - (p_f\circ P_g)^*) \\
				&=pr_{S \star} \circ e_{\omega'} \circ pr_{g^*TM, \star} \circ e_\omega \circ (K_2 \circ d - d \circ K_2).
			\end{split}
		\end{equation}
		Now, since for each smooth form $\alpha$ we have that $e_\omega(\alpha)$ is a vertically compact smooth form, we have that
		\begin{equation}
			pr_{S \star} \circ e_{\omega'} \circ pr_{g^*TM, \star} \circ e_\omega \circ d = d \circ pr_{S \star} \circ e_{\omega'} \circ pr_{g^*TM, \star} \circ e_\omega.
		\end{equation}
		Let us define the $\mathcal{L}^2$-bounded operator
		\begin{equation}
			Y_2 := pr_{S \star} \circ e_{\omega'} \circ pr_{g^*TM, \star} \circ e_\omega \circ K_2.
		\end{equation}
		Then we have that for all $\alpha$ in $\Omega_c^*(N)$ the equality
		\begin{equation}
			T_{f \circ g} - T_g \circ T_f (\alpha)= d \circ Y_2 + Y_2 \circ d(\alpha)
		\end{equation}
		holds. In order to conclude the proof we have to show that the equation above also holds for every $\beta$ in $dom(d_{min})$. However the proof of this fact is a consequence of Proposition \ref{boundedness}. We obtain that
		\begin{equation}
			1 \beta + T_{id_M}\beta = dY_1 \beta + Y_1 d\beta
		\end{equation}
		for every $\beta$ in $dom(d_{min})$.
		Then, in $L^2$-cohomology,
		\begin{equation}
			T_{f \circ g} = T_g \circ T_f.
		\end{equation}
		\\
		\\
		\\
		\\
		\\\textbf{Point 6.} In order to prove this statement we have to observe that, since Theorem \ref{tilde}, we have that
		\begin{equation}
			p_f = p_{id} \circ F.
		\end{equation}
		Let us consider a form $\alpha \in \Omega_c^*(N)$: we have that
		\begin{equation}
			\begin{split}
				T_f \alpha &= pr_{M\star} \circ e_{\omega} \circ p_f^* \alpha \\
				&= pr_{M\star} \circ e_{\omega} \circ F^* \circ p_{id}^* \alpha\\
				&= pr_{M\star} \circ F^* \circ e_{\omega} \circ p_{id}^* \alpha.
			\end{split}
		\end{equation}
		Now, using the Proposition VIII of Chapter 5 of \cite{Conn} we have that
		\begin{equation}
			\begin{split}
				T_f \alpha &= pr_{M\star} \circ F^* \circ e_{\omega} \circ p_{id}^* \alpha \\
				&= f^* \circ pr_{N\star} \circ e_{\omega} \circ p_{id}^* \alpha \\
				&= f^* \circ T_{id_N} \alpha \label{impl}
			\end{split}
		\end{equation}
		Since $f$ is R.-N.-lipschitz, we have that $f^*$ is $\mathcal{L}^2$-bounded. This means that (\ref{impl}) implies that
		\begin{equation}
			T_f = f^* \circ T_{id_N}.
		\end{equation}
		Then we have that on $dom(d_{min})$ the following holds
		\begin{equation}
			\begin{split}
				f^* - T_f &= f^* \circ (1 - T_{id_N}) \\
				&= f^* \circ (d \circ Y_1 + Y_1 \circ d).
			\end{split}
		\end{equation}
		We can observe that $f^*(\Omega_c^*(N)) \subseteq \Omega_c^*(M)$ since $f$ is proper and for all smooth form $\alpha$ we have that $f^*d\alpha= d f^* \alpha$. Then we have that on $dom(d_{min})$
		\begin{equation}
			\begin{split}
				f^* - T_f &= f^* \circ (d \circ Y_1 + Y_1 \circ d) \\
				&= d \circ W + W \circ d,
			\end{split}
		\end{equation}
		where
		\begin{equation}
			W = f^* \circ Y_1.
		\end{equation}
		And so in $L^2$-cohomology we have that
		\begin{equation}
			f^* = T_f.
		\end{equation}
		\\
		\\
		\\In order to conclude the proof we have to show that all the identities above also holds in reduced $L^2$-cohomology. To this end it is sufficient to show that if $Q$ is an operator such that on $dom(d)$ 
		\begin{equation}
			Q = d Z + Zd
		\end{equation}
		where $Z$ is a $\mathcal{L}^2$-bounded operator, then, in reduced $L^2$ cohomology, $Q$ is the null operator.
		\\Consider a differential form $\alpha + \lim\limits_{k \rightarrow + \infty} d\beta_k$ on $ker(d)$. We have that
		\begin{equation}
			\begin{split}
				d Z + Zd (\alpha + \lim\limits_{k \rightarrow + \infty} d\beta_k) &= d Z + Zd ( \lim\limits_{k \rightarrow + \infty} \alpha +  d\beta_k)\\
				&= \lim\limits_{k \rightarrow + \infty}  	d Z + Zd (\alpha +  d\beta_k)\\
				&= \lim\limits_{k \rightarrow + \infty} dZ(\alpha + d \beta_k) \in \overline{im(d)}.
			\end{split}
		\end{equation}
		This means that on reduced $L^2$-cohomology $Q = d Z + Zd$ is the null operator.
	\end{proof}
	\begin{rem}
		Consider the category $\mathcal{B}$ which has manifolds of bounded geometry as objects and uniformly proper R.-N.-lipschitz maps as arrows and let us recall the category $\mathcal{C}$, the category of manifolds of bounded geometry with uniformly proper lipschitz maps as arrows.
		\\We can define the functor $\mathcal{I}: \mathcal{B} \longrightarrow  \mathcal{C}$, where  defined as
		\begin{equation}
			\begin{cases}
				\mathcal{I}(M,g) = (M,g) \\
				\mathcal{I}((M,g) \xrightarrow{f} (N,h)) = (M,g) \xrightarrow{f} (N,h).
			\end{cases}
		\end{equation}
		Consider moreover, for all $z \in \numberset{N}$, the functor $\mathcal{G}_z: \mathcal{B} \longrightarrow \textbf{Vec}$ defined as
		\begin{equation}
			\begin{cases}
				\mathcal{G}_z(M,g) = H^z_2(M) \\
				\mathcal{G}_z((M,g) \xrightarrow{f} (N,h)) = H^z_2(N) \xrightarrow{f^*} H^z_2(M).
			\end{cases}
		\end{equation}
		As consequence of the last point we have that
		\begin{equation}
			\mathcal{G}_z = \mathcal{F}_z \circ \mathcal{I}.
		\end{equation}
		All this also holds if we replace $H^z_2$ with $\overline{H}^z_2$.
	\end{rem} 
	\begin{cor}
		Let $(M,g)$ and $(N,h)$ be two manifolds of bounded geometry. Let $f:(M,g) \longrightarrow (N,h)$ be a lipschitz-homotopy equivalence. Then $T_f$ induces an isomorphism in (reduce or not) $L^2$-cohomology.
	\end{cor}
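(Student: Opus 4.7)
The plan is to deduce the corollary formally from the functorial properties of $T_{\bullet}$ already assembled in the previous Proposition, namely: (i) $T_{id}$ equals the identity on (reduced or not) $L^{2}$-cohomology; (ii) if two uniformly proper lipschitz maps are $\Gamma$-lipschitz-homotopic then they induce the same map on (reduced or not) $L^{2}$-cohomology; and (iii) $T_{f \circ g} = T_{g} \circ T_{f}$ on (reduced or not) $L^{2}$-cohomology.

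First, since $f$ is a lipschitz-homotopy equivalence, by definition there exists a lipschitz map $g : (N,h) \longrightarrow (M,g)$ such that $g \circ f \sim id_{M}$ and $f \circ g \sim id_{N}$ through lipschitz homotopies. Before invoking the functoriality properties one should check that $g$ belongs to the category $\mathcal{C}$: it is lipschitz by hypothesis, and it is uniformly proper because it is itself a lipschitz-homotopy equivalence, so the Lemma established at the end of the subsection on uniformly proper maps applies. Hence both $f$ and $g$ are arrows in $\mathcal{C}$ and $T_{f}$, $T_{g}$ are well-defined.

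Now apply the functoriality property (iii) to the two compositions $g \circ f$ and $f \circ g$ to obtain, in (un)reduced $L^{2}$-cohomology,
\begin{equation*}
T_{g \circ f} \;=\; T_{f} \circ T_{g}, \qquad T_{f \circ g} \;=\; T_{g} \circ T_{f}.
\end{equation*}
Combining the homotopy invariance (ii) with the identity property (i), and using $g \circ f \sim id_{M}$ and $f \circ g \sim id_{N}$, yields
\begin{equation*}
T_{f} \circ T_{g} \;=\; T_{g \circ f} \;=\; T_{id_{M}} \;=\; \mathrm{Id}_{H^{*}_{2}(M)},
\qquad
T_{g} \circ T_{f} \;=\; T_{f \circ g} \;=\; T_{id_{N}} \;=\; \mathrm{Id}_{H^{*}_{2}(N)}.
\end{equation*}
Therefore $T_{g}$ is a two-sided inverse of $T_{f}$, so $T_{f}: H^{*}_{2}(N) \longrightarrow H^{*}_{2}(M)$ is an isomorphism; the same argument works verbatim in reduced $L^{2}$-cohomology, since the previous Proposition establishes all three ingredients (i)--(iii) at the reduced level as well.

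There is essentially no obstacle here beyond verifying that the lipschitz-homotopy inverse $g$ is automatically uniformly proper (so that $T_{g}$ is defined), which is the content of an earlier lemma in the chapter; every other ingredient is already packaged in the Proposition preceding this corollary.
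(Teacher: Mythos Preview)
Your proof is correct and follows essentially the same approach as the paper: both arguments simply invoke the three functorial properties established in the preceding Proposition (identity, homotopy invariance, and composition) together with the earlier Lemma showing that a lipschitz-homotopy equivalence is automatically uniformly proper, so that $T_g$ is well defined for the homotopy inverse $g$. The paper's version is slightly terser but the logic is identical.
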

	\begin{proof}
		We will prove this fact for the unreduced $L^2$-cohomology. The reduced case can be proved exactly in the same way.
		\\
		\\Observe that if $f$ is a lipschitz-homotopy equivalence: then it is a uniformly proper lipschitz map. For the same reason also its homotopic-inverse $g$ is a uniformly proper lipschitz map. Moreover since $g \circ f$ is lipschitz-homotopic to $id_M$ then, using Lemma 7.2, we also have that $g \circ f$ is uniformly proper lipschitz homotopic to $id_M$. Then we have that in $L^2$-cohomology.
		\begin{equation}
			1_{H^*_2(M)} = T_{id_M} = T_{g \circ f} = T_f \circ T_g.
		\end{equation}
		Using the same arguments we also have that
		\begin{equation}
			1_{H^*_2(N)} = T_g \circ T_f.
		\end{equation}
	\end{proof}
	
	\chapter{Roe index and $\rho_f$-class}
	\section{Coarse geometry}
	\subsection{Coarse structures}
	In this section we introduce the notions of coarse structure, the Roe and the structure algebras of a coarse space. In particular we introduce the coarse \textit{metric} structure and a particular coarse structure defined on the disjoint union of Riemannian manifolds. Finally we conclude this section defining the Roe Index of the signature operator of an orientable Riemannian manifold.
	\\The next definitions can be found in \cite{Anal}. Let $X$ be a set
	\begin{defn}
		A \textbf{coarse structure} over $X$ is a provision, for each set $S$ of an equivalence relation on the set of maps form $S$ to $X$. If $p_1,p_2: S \rightarrow X$ are in relation then they are said to be \textbf{close} and it is also required that if 
		\begin{itemize}
			\item if $p_1$ and $p_2: S \longrightarrow X$ are close and $q:S' \longrightarrow S$ is another map then $p_1 \circ q$ and $p_2 \circ q$ are close too.
			\item If $S= S_1 \cup S_2$, $p_1, p_2:S\longrightarrow X$ are maps whose restrictions to $S_1$ and $S_2$ are close, then $p_1$ and $p_2$ are close,
			\item two constant maps are always close to each other.
		\end{itemize}
		If $X$ has a coarse structure, then is a \textbf{coarse space}.
	\end{defn}
	\begin{exem}\label{bru}
		When $(X,d_X)$ is a metric space, one can define a \textit{metric} coarse structure in the following way: let $p_1$ e $p_2: S \longrightarrow X$ be two functions, then
		\begin{equation}
			p_1 \sim p_2 \iff \exists C \in \numberset{R} \mbox{           such that           }\forall x \in S \mbox{                         }d_X(p_1(x), p_2(x)) \leq C.
		\end{equation}
		If the domain of $p_1$ and $p_2$ is a metric space $(Y, d_Y)$ and if $\Gamma$ is a group which acts on $X$ and on $Y$, then
		\begin{equation}
			p_1 \sim_{\Gamma} p_2 \implies p_1 \sim p_2.
		\end{equation}
		Indeed
		\begin{equation}
			d_X(p_1(x), p_2(x)) = d(h(x,0), h(x,1)) \leq C_H,
		\end{equation}
		where $C_H$  is the lipschitz constant of the homotopy equivalence $h$.
	\end{exem}
	\begin{exem}\label{coarseexem}
		Consider now a Riemannian manifold $X := M \sqcup N$ where $M$ and $N$ are two connected Riemannian manifolds and let $f:M \longrightarrow N$ be a lipschitz-homotopy equivalence.
		\\In this case $X$ is not a metric space in a natural way and so it has not \textit{a priori} a metric coarse structure. We will define a coarse structure using the map $f$.
		\\Let $p_1$ e $p_2: S \longrightarrow X$ be two maps, then
		\begin{equation}
			p_1 \sim p_2 \iff \exists C \in \numberset{R} \forall x \in S s.t. d_N((f \sqcup id_N)\circ p_1(x), (f \sqcup id_N)\circ p_2(x)) \leq C.
		\end{equation}
	\end{exem}
	\begin{defn}
		Let $X$ be a coarse space and let $S \subseteq X\times X$. Then $S$ is \textbf{controlled} if the projections $\pi_1, \pi_2: S \subseteq X$ are close. A family of subsets $\mathcal{U}$ of $X$ is \textbf{uniformly bounded} if $\cup_{U \in \mathcal{U}}U \times U$ is controlled.
		\\A subset $B$ of $X$ is \textbf{bounded} if $B\times B$ is controlled.
	\end{defn}
	\begin{defn}
		Let $X$ be a locally compact topological space. A coarse structure on $X$ is \textbf{proper} if
		\begin{itemize}
			\item $X$ has an uniformly bounded open covering,
			\item every bounded subset of $X$ has compact closure.
		\end{itemize}
	\end{defn}
	\begin{defn}
		A coarse space $X$ is \textbf{separable} if $X$ admits a  countable, uniformly bounded, open covering.
	\end{defn}
	\begin{rem}
		Given a metric space $(X, d_X)$, then the metric coarse structure is proper and separable. Moreover if $M$ and $N$ are two connected Riemannian manifolds and $f:M \longrightarrow N$ is a lipschitz-homotopy equivalence, then also the coarse structure defined in the Example \ref{coarseexem} is proper and separable.
	\end{rem}
	\subsection{Roe and Structure Algebras}
	Consider a locally compact topological space endowed with a proper and separable coarse structure. Moreover, let $\rho: C_0(X) \longrightarrow \numberset{B}(H)$ be a representation of $C_0(X)$, where $H$ is a separable Hilbert space. Then
	\begin{defn}
		The \textbf{support} of an element $v$ in $H$ is the complement in $X$ of all open sets $U$ such that $\rho(f)v = 0$ for all $f$ in $C_0(U)$.
	\end{defn}
	\begin{defn}
		The \textbf{support} of an operator $T$ in $\numberset{B}(H)$ is the complement in $X\times X$ of the union of $U\times V$ such that
		\begin{equation}
			\rho(f)T\rho(g) = 0,
		\end{equation}
		for all $f$ in $C_0(U)$ and for all $g$ in $C_0(V)$. An operator is \textbf{controlled} or \textbf{finite propagation} if its support is a controlled set.
	\end{defn}
	\begin{exem}
		Consider $(X,g)$ a Riemannian manifolds. Let $H_X$ be Hilbert space $H_X := \mathcal{L}^2(X)$ and let $\rho$ be the representation of $C_0(X)$ on $H_X$ given by the point-wise multiplication. With respect to the metric coarse structure, an operator $T$ is controlled if and only if
		\begin{equation}
			\exists C \in \numberset{R} \forall \phi, \psi \in C_0(X) s.t. d_X(supp(\phi), supp(\psi)) \geq C \implies \phi T \psi = 0.
		\end{equation}
	\end{exem}
	\begin{exem}
		Consider $M$ and $N$ as in Example \ref{coarseexem} and let us impose $H_X := \mathcal{L}^2(M\sqcup N)$. Fix the point-wise multiplication  as representation of $C_0(X)$ on $H_X$. Then an operator $T$ is \textit{controlled} if there is a number $C > 0$ such that for each $\phi, \psi \in C_0(X)$ we have that
		\begin{equation}
			d_N((f \sqcup id_N)(supp(\phi)), (f \sqcup id_N)(supp(\psi))) \geq  C \implies \phi T \psi = 0.
		\end{equation}
	\end{exem}
	The next two definition can be found in \cite{siegel}.
	\begin{defn}
		Let $X$ be a coarse space. Consider $H$ be a Hilbert space equipped with a representation $\rho: C_0(X) \longrightarrow \numberset{B}(H)$, a group $\Gamma$ and a unitary representation $U : \Gamma \longrightarrow \numberset{B}(H)$. We say that the the triple $(H, U, \rho)$ is a \textbf{$\Gamma$-equivariant $X$-module} or simply a \textbf{$(X, \Gamma)$-module} if 
		\begin{equation}
			U(\gamma)\circ \rho(f) = \rho(\gamma^*(f)) \circ U(\gamma),
		\end{equation}
		for every $\gamma \in \Gamma$, $f \in C_0(X)$.
	\end{defn}
	\begin{defn}
		A represention $\rho$ of a group $\Gamma$ in $\numberset{B}(H)$ for some Hilbert space $H$ is \textbf{nondegenerate} if the set
		\begin{equation}
			\{\rho(\gamma)h \in H | \gamma \in \Gamma, h \in H\}
		\end{equation}
		is dense in $H$.
	\end{defn}
	\begin{defn}
		A representation $\rho$ of a group $\Gamma$ in $\numberset{B}(H)$ for some Hilbert space $H$ is \textbf{ample} if it is nondegenerate and $\rho(g)$ is a compact operator if and only if $g = 0$.
		\\If the Hilbert space $H$ is separable and the representation $\rho$ is the countable direct sum of a fixed ample representation, then $\rho$ is said to be \textbf{very ample}.
	\end{defn} 
	\begin{exem}\label{perna}
		Let us consider a Riemannian manifold $X$ with $dim(X) > 0$ and consider the Hilbert space $H := \mathcal{L}^2(X)$. Let us define the representation $\rho_X: C_0^\infty(X) \longrightarrow B(\mathcal{L}^2(X))$ for each $\phi$ in $C_0^\infty(X)$ as 
		\begin{equation}
			\rho_X(\phi)(\alpha) := \phi \cdot \alpha
		\end{equation}
		where $\alpha$ is a differential form $\alpha$ in $\mathcal{L}^2(X)$. Observe that, because of Hadamard-Schwartz inequality \cite{Sbordone}, $\rho_X(\phi)$ is a $\mathcal{L}^2$-bounded operator. In particular we have that $\rho$ is an ample representation of $C_0^\infty(X)$.
		\\Let us consider a subgroup of the isometries of $X$ called $\Gamma$. Then, the representation
		\begin{equation}
			\begin{split}
				U : \Gamma &\longrightarrow \numberset{B}(\mathcal{L}^2(X))\\
				\gamma &\longrightarrow \gamma^*.
			\end{split}
		\end{equation}
		is well defined: as consequence of Proposition \ref{cosa} and Remark \ref{oss}, we have that $\gamma$ is a R.N.-lipschitz map and so $\gamma^*$ is a bounded operator.
		\\So the triple $(\mathcal{L}^2(X), U, \rho)$ is a $(X, \Gamma)$-module.
	\end{exem}
	\begin{exem}\label{serna}
		Consider $(X,g)$ an oriented Riemannian manifold (possibly non connected). Consider the Hilbert space $H_X := \mathcal{L}^2(X) \otimes l^2(\numberset{N})$. We have that an element in $H_X$ can be seen as sequence of $\{\alpha_i\}$ where $\alpha_i \in \mathcal{L}^2(X)$ such that
		\begin{equation}
			\sum_{n \in \numberset{N}} ||\alpha_i||^2 < + \infty.
		\end{equation}
		Consider, moreover, $\rho_X: C^{\infty}_{0}(X) \longrightarrow B(H_X)$ for each $\phi$ in $C^{\infty}_{0}(X)$ and for each $\{\alpha_i\}$ as
		\begin{equation}
			\rho_X(\phi)(\{\alpha_i\}) := \{\phi \cdot \alpha_i\}.
		\end{equation}
		Consider $\Gamma$ as a subgroup of isometries of $X$. Then we can define a representation $U_X: \Gamma \longrightarrow B(H_X)$ as
		\begin{equation}
			U_X(\gamma)(\{\alpha_i\}) := \{\gamma^* \alpha_i\}.
		\end{equation}
		Then $(H_X, \rho_X, U_X)$ defined in this way is a very ample $(X, \Gamma)$-module.
	\end{exem}
	\begin{exem}\label{terna}
		Let $(X,g)$ be an oriented Riemannian manifold such that $dim(X)$ is even. Let us consider the chirality operator $\tau$.
		We have that $\tau$ defines an involution $\tau_p: \Lambda^*_p(X) \longrightarrow \Lambda^*_p(X)$ in each point $p$ of $X$. Let us denote by $V_{\pm}$ as the bundles whose fibers are the $\pm1$-eigenspaces of $\tau_p$.
		\\Let us define the Hilbert space $\mathcal{L}^2(V_{\pm})$ as the closure of the space of compactly supported section $\Gamma_c(M, V_{\pm})$ respect to the bundle metric induced by the Riemannian metric $g$.
		\\We obtain an orthogonal splitting 
		\begin{equation}
			\mathcal{L}^2(M) = \mathcal{L}^2(V_{+}) \oplus \mathcal{L}^2(V_{-}).
		\end{equation}
		\\Let us define the vector space
		\begin{equation}
			H_X := \bigoplus_{i \in \numberset{Z}_{<0}} \mathcal{L}^2(V_{-}) \oplus \bigoplus_{i \in \numberset{N}} \mathcal{L}^2(V_{+}).
		\end{equation}
		An element in $H_X$ is a sequence $\{\alpha_i\}$ indexed by $i \in \numberset{Z}$ of $\alpha_i \in \mathcal{L}^2(V_{-})$ if $i < 0$ and $\alpha_i \in \mathcal{L}^2(V_{+})$ if $i > 0$ such that
		\begin{equation}
			\sum_{i \in \numberset{Z}} ||\alpha_i||_{\mathcal{L}^2(X)}^{2} < + \infty.
		\end{equation}
		Consider, moreover the representation $\rho_X: C^\infty_0(X) \longrightarrow B(H_X)$ defined for each $\phi$ in $C^{\infty}_0(X)$ as
		\begin{equation}
			\rho_X(\phi)(\{\alpha_i\}) := \{\phi \cdot \alpha_i\}
		\end{equation}
		and, given a subgroup $\Gamma$ of isometries of $X$, let $U: \Gamma \longrightarrow B(H_X)$ be
		\begin{equation}
			U_X(\gamma)(\{\alpha_i\}) := \{\gamma^* \alpha_i\}.
		\end{equation}
		Then we have that $(H_X, \rho_X, U_X)$ is a very ample $(X, \Gamma)$-module.
	\end{exem}
	\begin{defn}
		Let us consider a coarse space $X$ and let $(H, U, \rho)$ be a $\Gamma$-equivariant $X$-module. Then an operator $T$ in $\numberset{B}(H)$ is  \textbf{pseudo-local} if for all $f$ in $C_0(X)$ we have that $[\rho (f), T]$ is a compact operator.
	\end{defn}
	\begin{defn}
		An operator $T \in B(H)$ is \textbf{locally compact} if $T\rho(f)$ and $\rho(f)T$ are compact operators for all $f \in C_0(X)$.
	\end{defn}
	Fix a coarse space $X$ and a $\Gamma$-equivariant $X$-module $(H, U, \rho)$. We can define the following algebras.
	\begin{defn}
		The algebra $D^{\star}_{c, \rho}(X,H)$ is given by
		\begin{equation}
			\{T \in B(H)| T \textit{                    is pseudo-local and controlled               }\}.
		\end{equation}
	\end{defn}
	\begin{defn}
		We denote by $C^{\star}_{c, \rho}(X)$ the algebra
		\begin{equation}
			\{T \in D^{\star}_{c, \rho}(X.H)| T \textit{                    is locally compact               }\}.
		\end{equation}
	\end{defn}
	\begin{defn}
		Let $X$ be a coarse space, $(H, U, \rho)$ a $\Gamma$-equivariant $X$-module. Then we will denote by $D^{\star}_{c,\rho}(X,H)^{\Gamma}$ and $C^{\star}_{c, \rho}(X,H)^{\Gamma}$ the operators of $D^{\star}_{c,\rho}(X,H)$ and $C^{\star}_{c, \rho}(X,H)$ which commute with the action of $\Gamma$ on $H$.
	\end{defn}
	\begin{defn}
		The $C^*$-algebras $D^{\star}_\rho(X,H)$, $C^{\star}_\rho(X,H)$, $D^{\star}_\rho(X,H)^{\Gamma}$ and $C^{\star}_\rho(X,H)^{\Gamma}$ are the closure in $B(H)$, of $D^{\star}_{c,\rho}(X,H)$, $C^{\star}_{c,\rho}(X,H)$, $D^{\star}_{c,\rho}(X,H)^{\Gamma}$ e $C^{\star}_{c,\rho}(X,H)^{\Gamma}$. The algebra $D^{\star}_{\rho}(X,H)^{\Gamma}$ will be called \textbf{structure algebra} and $C^{\star}_{\rho}(X,H)^{\Gamma}$ will be called \textbf{coarse algebra} or \textbf{Roe algebra}.
	\end{defn}
	We have that $C^*_{\rho}(X,H)^{\Gamma}$ is an ideal of $D^*_{\rho}(X,H)^{\Gamma}$ and so the following sequence
	\begin{equation}
		{ 0 \longrightarrow C^{\star}_{\rho}(X,H)^\Gamma \longrightarrow D^{\star}_{\rho}(X,H)^{\Gamma} \longrightarrow \frac{D^{\star}_{\rho}(X,H)^{\Gamma}}{C^{\star}_{\rho}(X,H)^\Gamma} \longrightarrow 0 }
	\end{equation}
	is exact.
	\begin{rem}\label{boy}
		Let us consider a Riemannian manifold $(X,g)$ and let $\Gamma$ be a subgroup of isometries of $X$. In particular consider on $(X,g)$ the coarse metric structure if $X$ is connected. If $X = M \sqcup N$ and there is a lipschitz map $f:M \longrightarrow N$ consider the coarse structure defined in Example \ref{coarseexem}. Let us consider the $(X, \Gamma)$-modules defined in Exemple \ref{perna} and \ref{serna}. We can observe that $B(\mathcal{L}^2(X))$ can be embedded in $B(H_X)$ in the following way: for each $A$ in $B(\mathcal{L}^2(X))$, we define $\tilde{A} \in B(H_X)$ as follow
		\begin{equation}
			\tilde{A}(\{\alpha_i\}) := \{\beta_j\}
		\end{equation}
		where $\beta_j = 0$ if $j\neq 0$ and $\beta_0 := A \alpha_0$.
		\\Observe that if $A$ is in $C^*_{\rho}(X, \mathcal{L}^2(X))^\Gamma$ then $\tilde{A}$ is in $C^*_{\rho_X}(X, H_X)^\Gamma$ and that if $A$ is in $D^*_{\rho}(X, \mathcal{L}^2(X))^\Gamma$ then $\tilde{A}$ is in $D^*_{\rho_X}(X, H_X)^\Gamma$. In the following sections, with a little abuse of notation, we will denote $\tilde{A}$ by $A$.
	\end{rem}
	\begin{rem}\label{girl}
		Let $(X,g)$ be a Riemannian manifold and $\Gamma$ be a subgroup of isometries of $X$. Let us suppose $dim(X)$ is even. Again if $X$ is connected we consider the metric coarse structure on $X$,  if $X = M \sqcup N$ and there is a lipschitz-homotopy equivalence $f:M \longrightarrow N$ we consider the coarse structure defined in Example \ref{coarseexem}. \\Fix on $X$ the $(X, \Gamma)$-modules defined in Example \ref{perna} and in Example \ref{terna}. 
		\\Let $A$ be an operator in $B(\mathcal{L}^2(X))$ such that $A(\mathcal{L}^2(V_+)) \subseteq \mathcal{L}^2(V_-)$. Let us define the operator $\tilde{A}$ as the operator
		\begin{equation}
			\tilde{A}(\{\alpha_i\}) := \{\beta_{j}\}
		\end{equation}
		where $\beta_j := \alpha_{j+1}$ if $j \neq -1$ and $\beta_{-1} := A(\alpha_0)$. Then if $A$ is in $D^*_{\rho}(X, \mathcal{L}^2(X))^\Gamma$ then $\tilde{A}$ is in $D^*_{\rho_X}(X, H_X)^\Gamma$. Moreover we also have that if $A$ and $B$ are operators in $D^*_{\rho}(X, \mathcal{L}^2(X))^\Gamma$ such that $A - B$ are in $C^*_{\rho}(X, \mathcal{L}^2(X))^\Gamma$, then we have that also $\tilde{A} - \tilde{B}$, which is the operator
		\begin{equation}
			[\tilde{A} - \tilde{B}](\{\alpha_i\}) := \{\beta_j\}
		\end{equation}
		where $\beta_j = 0$ if $j \neq -1$ and $\beta_{-1} = (A-B)\alpha_0$, is in $C^*_{\rho_X}(X, H_X)^\Gamma$. 
		\\Moreover we also have that
		\begin{equation}
			||\tilde{A} - \tilde{B}|| = ||A - B||.
		\end{equation}
		In the following sections, with a little abuse of notation, we will denote $\tilde{A}$ with $A$.
	\end{rem}	
	\begin{notation}
		Given a Riemannian manifold $(X,g)$, if we write $C^*(X)^\Gamma$ or $D^*(X)^\Gamma$ without specify $H$ and $\rho$, we are considering $H$, $\rho$ and $\Gamma$ as in Example \ref{perna}.
		\\
		\\Moreover, if $X = M \sqcup N$, as in Example \ref{coarseexem}, then the coarse structure depends on $f$. Thus, for this reason, we will denote its algebras by $C^*_f(M \sqcup N)^\Gamma$ and by $D_f(M \sqcup N)^\Gamma$.
	\end{notation}
	\subsection{Coarse maps}
	Let us introduce a class of maps between coarse spaces. The following definitions and properties can be found in \cite{Anal} in Chapter 6.  
	\begin{defn}
		Let $X_1$ and $X_2$ be coarse spaces. A function $q:X_1 \longrightarrow X_2$ is called a \textbf{coarse map} if
		\begin{itemize}
			\item whenever $p$ and $p'$ are close maps into $X_1$, then so are the composition $q \circ p$ and $q \circ p'$,
			\item for every bounded (in the coarse sense of Definition 4.2) subset $B \subseteq X_2$ we have that $q^{-1}(B)$ is a bounded subset.
		\end{itemize}
	\end{defn}
	\begin{rem}
		If $X_i$ are complete metric space with the metric coarse structure, then lipschitz-homotopy equivalences and, more generally, uniformly proper lipschitz maps are coarse maps. Consider, indeed, a function $f: (X_1, d_2) \longrightarrow (X_2, d_2)$ be a lipschitz-homotopy equivalence. Let $p$ and $p': (Y, d_Y) \longrightarrow (X_1, d_1)$ be close maps, i.e.
		\begin{equation}
			d_1(p(y), p'(y)) \leq C
		\end{equation} 
		then also $f \circ p$ and $f \circ p'$ are close, indeed
		\begin{equation}
			d_2(f(p(y)), f(p'(y))) \leq C_f \cdot d_1(p(y), p'(y)) \leq C_f \cdot C,
		\end{equation}
		where $C_f$ is the lipschitz constant of $f$. Moreover recall that if $f$ is a lipschitz-homotopy equivalence, then, in particular, it is also uniformly proper map, i.e. there is a function $\alpha : \numberset{R} \longrightarrow \numberset{R}$ such that for each bounded subset $B$
		\begin{equation}
			diam(f^{-1}(B)) \leq \alpha(diam(B)).
		\end{equation}
		This concludes the proof.
	\end{rem}
	Our next step is to introduce a morphism between the structure algebras of coarse spaces related to a coarse map. 
	\begin{defn}
		Let $X$ and $Y$ be proper separable coarse spaces and suppose that $C_0(X)$ and $C_0(Y)$ are non-degeneratly represented on Hilbert spaces $H_X$ and $H_Y$. Consider a coarse map $q: X \longrightarrow Y$. A bounded operator $V: H_X \longrightarrow H_Y$ \textbf{coarsely covers $q$} if the maps $\pi_1$ and $q \circ \pi_2$ from $supp(V) \subseteq X \times Y$ to $Y$ are close.
	\end{defn}
	\begin{rem}\label{check}
		As showed in Remark 6.3.10 of \cite{Anal}, if $f_0$ and $f_1$ are close maps and $V$ coarsely covers $f_0$, then it also coarsely covers $f_1$.
	\end{rem}
	\begin{defn}
		Let $X$ and $Y$ be coarse spaces, let $\rho_X : C_0(X) \longrightarrow \numberset{B}(H_X )$ and $\rho_Y : C_0(Y ) \longrightarrow \numberset{B}_(H_Y )$ be ample representations on separable Hilbert spaces, and let $\phi: U \longrightarrow Y$ be a continuous proper map defined on an open subset $U \subseteq X$. An isometry $V : H_X \longrightarrow H_Y$ \textbf{topologically covers $\phi$} if for every $f \in C_0(Y)$ there is a compact operator $K$ such that in $\numberset{B}(H_Y )$
		\begin{equation}
			\rho_Y(f) = V\rho_X(\phi^*(f))V^* + K.
		\end{equation}
		An isometry $V$ which analytically and topologically covers a map $\phi$ then it \textbf{uniformly covers $\phi$}.
	\end{defn}
	Let us recall the Proposition 2.13 of \cite{siegel}
	\begin{prop}
		Let $H_X$ be an ample $(X, \Gamma)$-module and let $H_Y$ be a very ample $(Y, \Gamma)$-module. Then every equivariant uniform map $\phi: X \longrightarrow Y$ is uniformly covered by an equivariant isometry $V : H_X \longrightarrow H_Y$.
	\end{prop}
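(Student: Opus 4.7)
The plan is to build $V$ by assembling many small partial isometries, each associated to a piece of an equivariant uniformly bounded Borel partition of $X$, and to use the ``countably many copies'' structure built into very ampleness of $H_Y$ in order to fit all the pieces into $H_Y$ equivariantly. First I would fix a $\Gamma$-invariant, uniformly bounded, countable Borel partition $\{B_i\}_{i\in I}$ of $X$ with $\mathrm{diam}(\phi(B_i))$ uniformly bounded in $Y$; such a partition exists because $X$ is proper and separable (start from a uniformly bounded open cover, refine to a Borel partition, and then ``symmetrize'' under $\Gamma$ by choosing representatives $\{B_j\}_{j\in J}$ of the $\Gamma$-orbits on the index set and setting $B_{\gamma j}:=\gamma\cdot B_j$). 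Doing this carefully requires the $\Gamma$-action on $X$ to be nice enough that one can pick orbit representatives in a Borel fashion; for the FUPD actions considered in the thesis this is standard.

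Next, for each orbit representative $B_j$ I would produce an equivariant isometry
\begin{equation}
V_j:\rho_X(\chi_{B_j})H_X\longrightarrow \rho_Y(\chi_{N_j})H_Y,
\end{equation}
where $N_j$ is a bounded Borel neighborhood of $\phi(B_j)$ in $Y$. The key input here is ampleness of $H_X$ together with very ampleness of $H_Y$: since $H_Y$ decomposes as a countable direct sum of copies of a fixed ample $Y$-module, I can dedicate a separate summand of $H_Y$ to each orbit of the partition, so there is room to place the isometric image of $\rho_X(\chi_{B_j})H_X$ inside $\rho_Y(\chi_{N_j})H_Y$ without collision between different orbits. The existence of such a piecewise isometry is a standard consequence of Voiculescu-type results on ample $X$-modules (Lemma~6.3.12 and Corollary~6.3.13 of \cite{Anal}): any two ample representations of $C_0(B_j)$ on separable Hilbert spaces are unitarily equivalent modulo compacts, which gives the local isometry, and the topological covering condition $\rho_Y(f)=V_j\rho_X(\phi^*f)V_j^* \pmod{\mathrm{compact}}$ on each piece follows directly.

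Then I would extend each $V_j$ equivariantly over the full orbit by setting
\begin{equation}
V_{\gamma j}:=U_Y(\gamma)\,V_j\,U_X(\gamma)^{-1},
\end{equation}
and define the full isometry by orthogonal summation $V:=\bigoplus_{i\in I} V_i$. Equivariance of $V$ is immediate from the construction, and the coarse covering condition follows because $\mathrm{supp}(V_i)\subseteq B_i\times N_i$ and the pair $(B_i,N_i)$ has uniformly bounded diameter in $X\times Y$ (with respect to a bounded ``thickening'' of the graph of $\phi$), so $\pi_1$ and $\phi\circ\pi_2$ are close on $\mathrm{supp}(V)$. The topological covering condition holds locally on each piece and globally after summation because the compact errors are controlled by the uniformly bounded diameter: any $f\in C_0(Y)$ is essentially supported on finitely many $N_i$ at a time, so $\rho_Y(f)-V\rho_X(\phi^*f)V^*$ is a norm-convergent sum of compact operators with summable norms, hence compact.

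The main obstacle will be the equivariance step together with the Borel measurability of the orbit-representative selection: one needs to choose the $B_j$ so that the $\Gamma$-translates form a genuine partition (no overlaps on a set of positive measure), and simultaneously so that the local isometries $V_j$ on different representatives can be glued into an isometry commuting with $U_X,U_Y$. For FUPD actions this is handled by picking the $B_j$ inside a Borel fundamental domain and using that the stabilizers are trivial, so that each orbit in $X$ contributes a well-defined single summand to $H_Y$ once very ampleness has provided enough copies. The rest of the argument is routine bookkeeping of the pseudo-locality and support estimates.
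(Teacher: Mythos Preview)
The paper does not prove this proposition at all: it is quoted verbatim as ``Proposition 2.13 of \cite{siegel}'' and used as a black box, so there is no proof in the paper to compare your proposal against. Your outline is the standard strategy (essentially the equivariant refinement of the Higson--Roe construction that Siegel carries out), and the main ingredients you identify --- a $\Gamma$-invariant uniformly bounded Borel partition, Voiculescu-type local isometries, dedicating separate summands of the very ample $H_Y$ to different orbits, and equivariant extension via $U_Y(\gamma)V_jU_X(\gamma)^{-1}$ --- are exactly the right ones.

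Two places in your sketch deserve tightening if you ever write this out. First, in step~2 the map $V_j$ on a single orbit representative $B_j$ need not and should not itself be called ``equivariant'': with trivial stabilizers (FUPD) it is just an isometry, and equivariance only appears after you propagate it over the orbit. Second, your argument for the global topological covering condition is too loose: a general $f\in C_0(Y)$ can meet infinitely many $N_i$, so ``essentially supported on finitely many $N_i$'' is false as stated. The correct argument is to first check the covering identity for $f\in C_c(Y)$ (where only finitely many pieces contribute and the error is a finite sum of compacts), and then pass to $C_0(Y)$ by norm density, using that the left-hand side $f\mapsto \rho_Y(f)-V\rho_X(\phi^*f)V^*$ is norm-continuous. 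With these fixes your sketch matches the standard proof.
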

	As consequence of this fact we can prove the following Lemma.
	\begin{lem}
		Let $X$ and $Y$ be proper separable coarse spaces and fix $\Gamma$ a group. Let us consider a $\Gamma$-equivariant continuous coarse map $\phi: X \longrightarrow Y$ and consider an equivariant isometry $V : H_X \longrightarrow H_Y$ which uniformly covers $\phi$. Then the map
		\begin{equation}
			\begin{split}
				Ad_V: D_\rho^*(X, H_X)^\Gamma &\longrightarrow D_{\rho}^*(Y, H_Y)^\Gamma \\
				T &\longrightarrow VTV^*
			\end{split}
		\end{equation}
		is well defined, maps $C^*_\rho(X, H_X)^\Gamma$ in $C^*_\rho(Y, H_Y)^\Gamma$. Moreover the induced morphisms between the K-theory groups don't depend on the choice of $V$.
	\end{lem}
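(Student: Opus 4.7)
The plan is to verify three distinct properties: that $Ad_V$ indeed lands in $D^*_\rho(Y,H_Y)^\Gamma$, that it restricts to a map $C^*_\rho(X,H_X)^\Gamma \to C^*_\rho(Y,H_Y)^\Gamma$, and that in $K$-theory the class does not depend on the choice of covering isometry $V$. Throughout, the two halves of the \emph{uniform cover} condition play complementary roles: the topological cover handles everything involving multiplication by $C_0$-functions (pseudo-locality, local compactness), while the coarse cover controls supports.

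For well-definedness into $D^*_\rho(Y,H_Y)^\Gamma$: boundedness is immediate since $V$ is an isometry, and $\Gamma$-equivariance of $VTV^*$ follows from the $\Gamma$-equivariance of both $V$ and $T$. To check pseudo-locality, fix $f\in C_0(Y)$ and use the topological cover to write $\rho_Y(f)=V\rho_X(\phi^*f)V^*+K_f$ with $K_f$ compact. Since $V^*V=\mathrm{Id}$, a direct manipulation gives
\begin{equation}
[\rho_Y(f),VTV^*]=V\,[\rho_X(\phi^*f),T]\,V^*+[K_f,VTV^*],
\end{equation}
which is compact because $T$ is pseudo-local and $K_f$ is compact. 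For controlled propagation it suffices to work on the dense subalgebra $D^*_{c,\rho}(X,H_X)^\Gamma$ of finite-propagation pseudo-local operators and check that $\mathrm{supp}(VTV^*)\subseteq Y\times Y$ is controlled; this follows by composing the controlled set $\mathrm{supp}(T)\subseteq X\times X$ with $\mathrm{supp}(V)$ twice and invoking the coarse cover property, which forces each factor to be close (in the coarse structure of $Y$) to a graph-like set determined by $\phi$. Passing to closures gives $Ad_V(T)\in D^*_\rho(Y,H_Y)^\Gamma$. The same topological cover identity applied on one side yields
\begin{equation}
\rho_Y(f)VTV^*=V\rho_X(\phi^*f)TV^*+K_fVTV^*,
\end{equation}
and since $\rho_X(\phi^*f)T$ is compact when $T$ is locally compact, the whole expression is compact; the mirrored computation handles $VTV^*\rho_Y(f)$, so $Ad_V$ preserves local compactness and restricts to $C^*_\rho(X,H_X)^\Gamma\to C^*_\rho(Y,H_Y)^\Gamma$.

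For the $K$-theoretic independence, let $V_1,V_2$ be two equivariant isometries uniformly covering $\phi$. The strategy is the standard $2\times 2$-matrix trick: on $H_Y\oplus H_Y$ one considers the equivariant isometry $W:=\begin{pmatrix}V_1\\V_2\end{pmatrix}:H_X\to H_Y\oplus H_Y$ which uniformly covers $\phi$ into the stabilised module, together with the rotation path $R_t=\begin{pmatrix}\cos(t\pi/2) & -\sin(t\pi/2)\\ \sin(t\pi/2) & \cos(t\pi/2)\end{pmatrix}$, $t\in[0,1]$. Conjugation by $R_t$ gives a norm-continuous path of $*$-homomorphisms from $\begin{pmatrix}Ad_{V_1}&0\\0&0\end{pmatrix}$ to $\begin{pmatrix}0&0\\0&Ad_{V_2}\end{pmatrix}$ modulo a summand that vanishes in $K$-theory, so $[Ad_{V_1}]=[Ad_{V_2}]$ as morphisms $K_*(D^*_\rho(X,H_X)^\Gamma)\to K_*(D^*_\rho(Y,H_Y)^\Gamma)$, and likewise at the level of $C^*$. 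The same argument applies to the Roe algebras.

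The main obstacle I anticipate is the bookkeeping for the support computation in step one: one must verify that composing $\mathrm{supp}(V)$, $\mathrm{supp}(T)$ and $\mathrm{supp}(V^*)$ yields a set whose projections onto $Y$ are close \emph{in the sense of the coarse structure on $Y$}, which requires carefully unpacking the definition of operator support and exploiting that $V$ coarsely covers $\phi$ on both sides. The pseudo-locality and local-compactness checks, as well as the $2\times 2$ independence argument, are essentially formal once the uniform-cover definitions are in hand.
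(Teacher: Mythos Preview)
Your proposal is correct and follows essentially the same route as the paper: the pseudo-locality computation via the topological-cover identity $\rho_Y(f)=V\rho_X(\phi^*f)V^*+K$ is identical, and the controlled-propagation and local-compactness checks you sketch are exactly what the paper delegates to Lemma~6.3.11 of \cite{Anal}. For $K$-theoretic independence both arguments are the standard $2\times 2$ matrix trick; the paper works element-by-element with the conjugating matrix $C=\begin{pmatrix}0&V_1V_2^*\\V_2V_1^*&0\end{pmatrix}$ connected to the flip via Lemma~4.1.10 of \cite{Anal}, whereas you use the rotation path $R_t$ on the stabilised module---these are equivalent standard variants.
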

	\begin{proof}
		Let us start by proving that $Ad_V(T)$ is in $D^*_\rho(Y, H_Y)^\Gamma$. In Lemma 6.3.11 of \cite{Anal}, the authors prove that if $T$ is a controlled operator, then $Ad_V(T)$ is controlled. Moreover, since $T$ and $V$ are both $\Gamma$-equivariant, also $Ad_V(T)$ is $\Gamma$-equivariant. Then we just have to prove that, given $f$ in $C_0(Y)$, then
		\begin{equation}
			[\rho_Y(f), VTV^*]
		\end{equation}
		is a compact operator. We can observe that since $V$ topologically covers $\phi$, then
		\begin{equation}
			\rho_Y(f) = V\rho_X(\phi^*(f))V^* + K,
		\end{equation}
		where $K$ is a compact operator. This means that
		\begin{equation}
			\begin{split}
				[\rho_Y(f), VTV^*] &= [V\rho_X(\phi^*(f))V^*, VTV^*] + [K, VTV^*]\\ 
				&= V[\phi^*(f), T]V^* + [K, VTV^*],
			\end{split}
		\end{equation}
		which is a compact operator.
		\\
		\\That $Ad_V$ maps $C^*_\rho(X, H_X)^\Gamma$ in $C^*_\rho(Y, H_Y)^\Gamma$ is proved in Lemma 6.3.11 of \cite{Anal}. The $\Gamma$-equivariance, again, follows by the $\Gamma$-equivariance of $V$.
		\\Since Proposition 6.3.12 of \cite{Anal}, we have that the morphisms induced between $K_\star(C^*_\rho(X, H_X)^\Gamma)$ and $K_\star(C^*_\rho(Y, H_Y)^\Gamma)$ don't depend on the choice of $V$. Applying the same arguments used in the proof of Lemma 5.4.2. of \cite{Anal}, we obtain that the same holds for the morphisms between $K_\star(D^*_\rho(X, H_X)^\Gamma)$ and $K_\star(D^*_\rho(Y, H_Y)^\Gamma)$. Let us consider, indeed, a projection (or a unitary) $T$ in $D^*_\rho(X, H_X)^\Gamma$ and consider two isometries $V_1$ and $V_2$ that uniformly cover $f$.
		Consider the matrices 
		\begin{equation}
			\mathcal{V}_1 := \begin{bmatrix}
				V_1 TV_1^* && 0 \\
				0 && 0
			\end{bmatrix}
			\mbox{        and      } \mathcal{V}_2 :=\begin{bmatrix}
				0 && 0 \\
				0 && V_2 TV_2^*
			\end{bmatrix}
		\end{equation}
		if $T$ is a projection and
		\begin{equation}
			\mathcal{V}_1 := \begin{bmatrix}
				V_1 TV_1^* && 0 \\
				0 && 1
			\end{bmatrix}
			\mbox{        and      } \mathcal{V}_2 :=\begin{bmatrix}
				1 && 0 \\
				0 && V_2 TV_2^*
			\end{bmatrix}
		\end{equation}
		if $T$ is a unitary.
		Observe that if we define the matrix
		\begin{equation}
			C := \begin{bmatrix}
				0 && V_1V_2^* \\
				V_2V_1^* && 0
			\end{bmatrix}
			= \begin{bmatrix}
				0 && 1 \\
				1 && 0
			\end{bmatrix}
			\cdot \begin{bmatrix}
				V_1V_2^* && 0 \\
				0 && V_2V_1^*
			\end{bmatrix}
		\end{equation}
		then, we have that
		\begin{equation}
			\mathcal{V}_1 = C \mathcal{V}_2 C^*
		\end{equation}
		and each entry of $C$ is an isometry of $D^*_\rho(Y, H_Y)^\Gamma$. As consequence of Lemma 4.1.10 of \cite{Anal}, we have that there is a continuous curve of unitary elements connecting $C$ and the matrix
		\begin{equation}
			\begin{bmatrix}
				0 && 1 \\
				1 && 0
			\end{bmatrix}.
		\end{equation}
		Let us define the matrix $\mathcal{W}_2 $ as
		\begin{equation}
			\mathcal{W}_2 :=\begin{bmatrix}
				V_2 TV_2^* && 0 \\
				0 && 0
			\end{bmatrix}
		\end{equation}
		if $T$ is a projection and
		\begin{equation}
			\mathcal{W}_2 :=\begin{bmatrix}
				V_2 TV_2^* && 0 \\
				0 && 1
			\end{bmatrix}
		\end{equation}
		if $T$ is a unitary. As consequence of Lemma 4.1.10 there is a continuous curve of projection (or unitary) connecting $\mathcal{V}_1$ and $\mathcal{W}_2$.
		Then we conclude observing that
		\begin{equation}
			[A_{V_1}T] = [\mathcal{V}_1] = [\mathcal{W}_2] = [A_{V_2}T].
		\end{equation}
	\end{proof}
	\begin{defn}
		Consider two coarse spaces $X$ and $Y$ and let $(H_X, U_X, \rho_X)$ and $(H_Y, U_Y, \rho_Y)$ be a $(X, \Gamma)$-module and a $(Y, \Gamma)$-module. Suppose that $(H_X, U_X, \rho_X)$ and $(H_Y, U_Y, \rho_Y)$ are very ample. Then if $f: X \longrightarrow Y$ is a $\Gamma$-equivariant continuous coarse map, then
		\begin{equation}
			f_\star: K_n(C^*(X)^\Gamma) \longrightarrow K_n(C^*(Y)^\Gamma)
		\end{equation}
		is defined as the morphism induced by $Ad_V$ in K-Theory. We will use the same notation to denote the maps induced by $Ad_V$ between the K-theory groups of $D^*(\cdot)^\Gamma$ and $\frac{D^*(\cdot)^\Gamma}{C^*(\cdot)^\Gamma}$.
	\end{defn}
	\begin{rem}
		Because of Example \ref{serna}, we know that if $X$ is a coarse metric space or if $X = M \sqcup N$ and it has the coarse structure defined in Example \ref{coarseexem}, then they admit a very ample $(X, \Gamma)$-module. Then we have that if $f$ is a coarse map between coarse spaces that have one of these two coarse structures, then there is a well-defined map $f_\star$ in K-theory.
	\end{rem}
	\begin{rem}
		If we have two continuous coarse maps $f: X \longrightarrow Y$ and $g:Y \longrightarrow Z$, an isometry $V: H_X \longrightarrow H_Y$ which uniformly covers $f$ and an isometry $W: H_Y \longrightarrow H_Z$ then $W \circ V$ uniformly covers $g \circ f$. This fact implies that
		\begin{equation}
			f \longrightarrow f_\star
		\end{equation}
		respect the functorial properties.
	\end{rem}
	Consider a coarse map $f:X \longrightarrow Y$ between coarse spaces. As proved in Lemma 6.3.11 of \cite{Anal}, to induce a map between the coarse algebras it is sufficient that the isometry $V$ coarsely covers $f$. Moreover, as we said in Remark \ref{check}, if $f$ and $g: X \longrightarrow Y$ are close maps, then $V$ coarsely covers $f$ if and only if it coarsely covers $g$. 
	\\We already know, because Exemple \ref{bru}, that if $f$ and $g:(M,g) \longrightarrow (N,h)$ are two lipschitz-homotopy equivalent maps between connected Riemannian manifolds, then they are close. Then we have
	\begin{equation}\label{ops}
		f \sim_\Gamma g \implies f_\star = g_\star.
	\end{equation}
	where $f_\star, g_{\star}: K_n(C^*(X)^\Gamma) \longrightarrow K_n(C^*(Y)^\Gamma)$.
	\\Then the following Proposition holds.
	\begin{prop}
		Consider $(M,g)$ and $(N,h)$ two Riemannian manifolds. If they are lipschitz-homotopy equivalent, then
		\begin{equation}
			K_n(C^*(M)^\Gamma) \cong K_n(C^*(N)^\Gamma). 
		\end{equation}
	\end{prop}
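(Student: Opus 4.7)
The plan is to use the functoriality of the map $f \mapsto f_\star$ on K-theory together with the lipschitz-homotopy invariance stated in (\ref{ops}) just above the proposition. Since $f:(M,g)\longrightarrow (N,h)$ is a $\Gamma$-lipschitz-homotopy equivalence, it is in particular a uniformly proper lipschitz map (this was shown earlier in the lemma on uniformly proper maps), hence a continuous coarse map with respect to the metric coarse structure of Example \ref{bru}. The same holds for a chosen $\Gamma$-equivariant lipschitz-homotopy inverse $g:(N,h)\longrightarrow (M,g)$, so both $f_\star$ and $g_\star$ are defined on the relevant K-theory groups.

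First, I would remark that by the very ampleness of the modules constructed in Example \ref{serna}, the $(X,\Gamma)$-modules needed to apply the definition of $f_\star$ and $g_\star$ do exist for both $M$ and $N$, so the induced morphisms
\begin{equation}
f_\star: K_n(C^*(M)^\Gamma)\longrightarrow K_n(C^*(N)^\Gamma),\qquad g_\star: K_n(C^*(N)^\Gamma)\longrightarrow K_n(C^*(M)^\Gamma)
\end{equation}
are well defined. Next, I would invoke the functoriality remark stated just after the definition of $f_\star$: for a composition of continuous coarse maps one can choose a covering isometry that is itself a composition, and this yields $(f\circ g)_\star=f_\star\circ g_\star$ and $(g\circ f)_\star=g_\star\circ f_\star$. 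It is also immediate from the definition that $(\mathrm{id}_M)_\star$ and $(\mathrm{id}_N)_\star$ act as the identity.

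Now I would apply (\ref{ops}), which tells us that $\Gamma$-lipschitz-homotopic coarse maps induce the same morphism in K-theory. Since $f\circ g\sim_\Gamma \mathrm{id}_N$ and $g\circ f\sim_\Gamma \mathrm{id}_M$ by the very definition of $\Gamma$-lipschitz-homotopy equivalence, we obtain
\begin{equation}
f_\star\circ g_\star=(f\circ g)_\star=(\mathrm{id}_N)_\star=\mathrm{id},\qquad g_\star\circ f_\star=(g\circ f)_\star=(\mathrm{id}_M)_\star=\mathrm{id}.
\end{equation}
Thus $f_\star$ and $g_\star$ are mutually inverse, proving the claimed isomorphism.

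The argument is essentially formal once the prerequisites are in place; the only step that required real work is (\ref{ops}), which rests on the fact that $\Gamma$-lipschitz-homotopic maps are close in the sense of Example \ref{bru}, and on the invariance of the coarse-covering condition under passing to close maps (Remark \ref{check}). No further obstacle is expected: the same verbatim argument yields the analogous isomorphism for $K_n(D^*(M)^\Gamma)$ and for $K_n(D^*(M)^\Gamma/C^*(M)^\Gamma)$, compatibly with the six-term exact sequence.
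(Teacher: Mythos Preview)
Your argument is correct and is exactly the one the paper intends: the proposition is stated without proof immediately after (\ref{ops}), and the functoriality remark, precisely so that this standard two-line deduction (compose, use $f\circ g\sim_\Gamma \mathrm{id}_N$ and $g\circ f\sim_\Gamma \mathrm{id}_M$, apply (\ref{ops})) can be read off. There is nothing to add.
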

	\subsection{Roe Index of the signature operator}
	Consider $(M,g)$ a connected, oriented and complete Riemannian manifold.
	\begin{defn}
		Let us denote by $d$ the closed extension in $\mathcal{L}^2(M)$ of exterior derivative operator. The \textbf{signature operator} $D_M: dom(d) \cap dom(d^*) \subset \mathcal{L}^2(M) \longrightarrow \mathcal{L}^2(M)$ is the operator defined as
		\begin{equation}
			D_M := d + d^* = d - \tau d \tau.
		\end{equation}
		if $dim(M)$ is even and as
		\begin{equation}
			D_M := \tau d + d \tau
		\end{equation} 
		if $dim(M)$ is odd. 
	\end{defn}
	Fix a group $\Gamma$ of isometries on $(M,g)$. Our goal, in this subsection, is to define a class in $K_\star(C^*(M)^\Gamma)$ related to $D_M$. We will call this class the \textit{Roe index} of $D_M$.
	\\Observe that $D_M$ is an unbounded, self-adjoint operator. Since all the operators in $C^*(M)^\Gamma$ are bounded, we need to define a bounded operator related to $D_M$. In order to do this we need the notion of chopping function.
	\begin{defn}
		Let $\chi: \numberset{R} \longrightarrow \numberset{R}$ be a smooth map. Then $\chi$ is a \textbf{chopping function} if it is odd, $\lim\limits_{x \to +\infty} \chi = 1$ and $\lim\limits_{x\to -\infty}\chi = -1$.
	\end{defn}
	Then, since $D_M$ is selfadjoint, the operator
	\begin{equation}
		\chi(D_M) \in B(\mathcal{L}^2(M))
	\end{equation}
	is well-defined. In particular, we have that $\chi(D_M)$ is in $D^*(M)^\Gamma$ (see \cite{Roeindex}).
	\\Let us denote by $C_0(\numberset{R})$ the vector space of continuous functions $h: \numberset{R} \longrightarrow \numberset{R}$ such that $\lim\limits_{t \rightarrow \pm \infty} h(t) = 0$. 
	\\Because of Proposition 3.6 \cite{Roeindex}, if $h$ is a function in $C_0(\numberset{R})$, then
	\begin{equation}
		h(D_M) \in C^*(M)^\Gamma.
	\end{equation} 
	Then, if $\chi_1$ and $\chi_2$ are two chopping functions, we have that
	\begin{equation}
		\chi_1 - \chi_2(D_M) \in C^*(M)^\Gamma
	\end{equation}
	This means that, given a chopping function $\chi$, the operator
	\begin{equation}
		\chi(D_M) \in \frac{D^*(M)^\Gamma}{C^*(M)^\Gamma}
	\end{equation}
	doesn't depend on the choice of $\chi$. Moreover, since
	\begin{equation}
		\chi^2 - 1 \in C_0(\numberset{R}),
	\end{equation}
	we also have that $\chi(D_M)$ is an involution of $\frac{D^*(M)^\Gamma}{C^*(M)^\Gamma}$. 
	\\Let us suppose that $dim(M)$ is odd and consider
	\begin{equation}
		\frac{1}{2}(\chi (D_{M})+1) \in \frac{D^{\star}(M)^{\Gamma}}{C^{\star}(M)^\Gamma}.
	\end{equation}
	This is a projection.
	\\
	\\Let us suppose $dim(M)$ is even. We have that $D_M$ anti-commute with the chirality operator $\tau$. Then, considering the orthogonal splitting 
	\begin{equation}
		\mathcal{L}^2(M) = \mathcal{L}^2(V_{+1}) \oplus \mathcal{L}^2(V_{-1}),
	\end{equation}
	we have that $D_M$ can be written as
	\begin{equation}
		D_M = \begin{bmatrix} 0 && D_{M-} \\
			D_{M+} && 0
		\end{bmatrix}.
	\end{equation}
	We have that
	\begin{equation}
		\chi(D_M) = \begin{bmatrix} 0 && \chi(D_{M})_- \\
			\chi(D_{M})_+ && 0
		\end{bmatrix}.
	\end{equation}
	Since $\chi(M)$ is a self-adjoint involution, then it is a unitary operator. The same holds also for $\chi(D_{M})_+$ and $\chi(D_{M})_-$.
	\\Consider $(H_M, \rho_M, U_M)$ the $(M, \Gamma)$-module defined in Example \ref{terna}: we have that
	\begin{equation}
		H_M = ... \mathcal{L}^2(V_{-1}) \oplus \mathcal{L}^2(V_{-1}) \oplus \mathcal{L}^2(V_{+1}) \oplus \mathcal{L}^2(V_{-1}) \oplus \mathcal{L}^2(V_{-1}) ...
	\end{equation}
	and, because of Remark \ref{girl}, we can see $\chi(D_M)_+$ as a bounded operator on $H_M$ defined for each $\{\alpha_j\}$ in $H_M$ as
	\begin{equation}
		\beta_i = \alpha_{i+1}
	\end{equation}
	if $i \neq -1$ and as $\beta_{-1} = \chi(D_{M})_+ (\alpha_0)$. Then we can see $\chi(D_{M})_+$ as an unitary operator in
	\begin{equation}
		\frac{D^*_{\rho_M}(M, H_M)^\Gamma}{C^*_{\rho_M}(M, H_M)^\Gamma}.
	\end{equation}
	\begin{defn}\label{index}
		The \textbf{fundamental class of $D_{M}$} is $[D_{M}] \in K_{n+1}(\frac{D^{\star}(M)^{\Gamma}}{C^{\star}(M)^\Gamma})$ given by
		\begin{equation}
			[D_{M}] := \begin{cases} 
				
				[\frac{1}{2}(\chi (D_{M})+1)] \text{if $n$ is odd,}
				\\ [\chi(D_{M})_+] \text{if $n$ is even.}
				
			\end{cases}
		\end{equation}
	\end{defn}
	\begin{rem}
		The definition of fundamental class in the even case is well-given: we are considering the $(M, \Gamma)$-module defined in Example \ref{terna} (remember that the K-theory groups of Roe algebra, structure algebra and their quotient don't depend on the Hilbert space or the representation).
	\end{rem}
	\begin{defn}
		The \textbf{Roe index} of $D_{M}$ is the class
		\begin{equation}
			{Ind_{Roe}(D_{M}) := \delta[D_{M}]}
		\end{equation}
		in $K_{n}(C^{*}(M)^{\Gamma})$, where $\delta$ the connecting homomorphism in the K-Theory sequence.
	\end{defn}
	\begin{rem}
		The definition of Roe index in the even case is coherent with the definition 12.3.5. given in \cite{Anal}. Indeed as the authors show in the proof of Proposition 12.3.7., our fundamental class is the image of the Kasparov class $[D] \in K_p(M)$ under Paschke duality.
	\end{rem}
	Our main goal, in this Chapter is to prove that given $(M,g)$ and $(N,h)$ two Riemannian manifolds of bounded geometry and given $f: (M,g) \longrightarrow (N,h)$ a lipschitz-homotopy equivalence which preserves the orientation, then we have that
	\begin{equation}
		f_\star [Ind_{Roe}(D_M)] = Ind_{Roe}(D_N).
	\end{equation}
	
	\section{Smoothing operators between manifolds of bounded geometry}
	\subsection{The operator $y$}
	Let us consider two manifold of bounded geometry $(M,g)$ and $(N,h)$. Consider a uniformly proper, lipschitz map $f: (M,g) \longrightarrow (N,h)$. Recall that in Proposition \ref{tilde} we introduced an uniformly proper R.-N.-lipschitz submersion $p_f: (f^*(T^\delta N), g_S) \longrightarrow (N,h)$ such that $p_f(0_{v_p}) =f(p)$. 
	\begin{lem}\label{yformula}
		Let $f:(M,g) \longrightarrow (N,h)$ be a lipschitz-homotopy equivalence between two complete oriented Riemannian manifolds. Let us consider the bundle\footnote{We use the numbers $1$ and $2$ just to distingush the first and the second summands.}
		\begin{equation}
			f^*(T N)_1 \oplus f^*(T N)_2 \longrightarrow M
		\end{equation}
		and consider $\mathcal{B} \subset f^*(T N)_1 \oplus f^*(T N)_2$ given by
		\begin{equation}
			\mathcal{B} =\{(v_{f_1(p)}, v_{f_2(p)}) \in f^*(T N)_1 \oplus f^*(T N)_2 | |v_{f_1(p)}| \leq \delta, |v_{f_2(p)}| \leq \delta \}.
		\end{equation}
		Let us define on $f^*(T N)_1 \oplus f^*(T N)_2$ the Sasaki metric $g_S$ induced by the metric $g$, the bundle metric $f^*h_1 \oplus f^*h_2$ and the connection $f^*\nabla^{N, LC}_1 \oplus \nabla^{N, LC}_2$. Then we consider on $\mathcal{B}$ the metric induced by $g_S$.
		\\Consider for $i= 1,2$ the projections
		\begin{equation}
			pr_i : \mathcal{B} \longrightarrow f^*(T^\delta N)_i
		\end{equation}
		and let $p_{f,i}$ be the maps $p_{f,i} = p_f \circ pr_i$. Then $p_{f,i}^*$ induce a $\mathcal{L}^2$-bounded map and there is a $\mathcal{L}^2$-bounded operator $y_0$ such that
		\begin{equation}
			p_{f,1}^* - p_{f,2}^* = d y_0 + y_0d.
		\end{equation}
		for all smooth forms in $\mathcal{L}^2(N)$.
	\end{lem}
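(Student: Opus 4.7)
The strategy is to build a lipschitz homotopy $H: \mathcal{B}\times[0,1]\longrightarrow N$ between $p_{f,1}$ and $p_{f,2}$, verify that $H$ is R.-N.-lipschitz (so that $H^{*}$ is $\mathcal{L}^2$-bounded by Proposition \ref{mai}), and then apply the chain-homotopy operator from Lemma \ref{GLee} to produce $y_0 := \int_{0\mathcal{L}}^{1}\circ H^{*}$.

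First, I would establish $\mathcal{L}^2$-boundedness of the two pullbacks. Writing $p_{f,i}=p_{f}\circ pr_{i}$, the projection $pr_{i}:\mathcal{B}\longrightarrow f^{*}(T^{\delta}N)_i$ is a Riemannian submersion whose fiber over each point is a disk of radius $\delta$ in the other summand; hence $pr_i$ has uniformly bounded Fiber Volume. By Proposition \ref{compo} combined with Corollary \ref{R.-N.-p}, $\mathrm{Vol}_{p_{f,i}}(q)\leq C\cdot\mathrm{Vol}_{p_{f}}(q)$ is uniformly bounded, so $p_{f,i}$ is R.-N.-lipschitz and $p_{f,i}^{*}$ is $\mathcal{L}^2$-bounded.

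The homotopy I would use is
\[
H(v_1,v_2,t):=p_{f}\bigl(tv_1+(1-t)v_2\bigr),
\]
well defined because the disk $T^{\delta}_{f(p)}N$ is convex. Clearly $H$ is lipschitz (composition of lipschitz maps), $H(\cdot,\cdot,0)=p_{f,2}$, and $H(\cdot,\cdot,1)=p_{f,1}$. The key step is showing $H$ is R.-N.-lipschitz. I factor $H=p_{f}\circ K$ with $K(v_1,v_2,t):=tv_1+(1-t)v_2\in f^{*}(T^{\delta}N)$ and use Proposition \ref{compo} to reduce to bounding the Fiber Volume of $K$ uniformly in $t$. In fibered coordinates $(x^{i},\mu^{j}, \nu^{j})$ on $\mathcal{B}$ and $(x^{i},y^{j})$ on $f^{*}(T^{\delta}N)$, setting $(x,y)=(x,\mu,t\mu+(1-t)\nu)$ as new coordinates, the Jacobian contributes $(1-t)^{-n}$ while the constraint $|v_2|\leq\delta$ confines the fiber in the $\mu$-direction to a set whose volume is bounded by $\min(\delta^{n},\delta^{n}(1-t)^{n}/t^{n})$; the two factors cancel and yield a uniform bound $\mathrm{Vol}_{K}(w)\leq C\delta^{n}$ for all $t\in[0,1]$. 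Composing with $p_{f}$ (R.-N.-lipschitz by Corollary \ref{R.-N.-p}) and integrating over $M$ exactly as in the proof of Corollary \ref{R.-N.-p}, using that $f$ is uniformly proper to control $\mu_{M}(f^{-1}(B_{C}(q)))$, one obtains a uniform bound on $\mathrm{Vol}_{H}$.

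Once $H$ is R.-N.-lipschitz, $H^{*}:\mathcal{L}^{2}(N)\longrightarrow\mathcal{L}^{2}(\mathcal{B}\times[0,1])$ is $\mathcal{L}^2$-bounded and the operator $y_{0}:=\int_{0\mathcal{L}}^{1}\circ H^{*}$ is $\mathcal{L}^2$-bounded as a composition of such operators. For smooth $\alpha\in\Omega_{c}^{*}(N)$, Lemma \ref{GLee} applied to $H^{*}\alpha$ gives
\[
p_{f,1}^{*}\alpha-p_{f,2}^{*}\alpha \;=\; i_{1}^{*}H^{*}\alpha - i_{0}^{*}H^{*}\alpha \;=\; d\!\int_{0\mathcal{L}}^{1}H^{*}\alpha \;+\; \int_{0\mathcal{L}}^{1}H^{*}d\alpha \;=\; dy_{0}\alpha + y_{0}d\alpha,
\]
which is the desired chain-homotopy identity. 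The main obstacle is the Fiber Volume estimate for $K$: the naive integrand blows up like $(1-t)^{-n}$ near $t=1$ (and symmetrically near $t=0$) where the interpolation degenerates to a coordinate projection, and one must exhibit precisely the compensating shrinkage of the constraint region $T^{\delta}\times T^{\delta}\cap K^{-1}(w)$ to obtain a uniform bound.
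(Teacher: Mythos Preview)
Your proposal is correct and follows the same overall strategy as the paper: the same convex-interpolation homotopy $H(v_1,v_2,t)=p_f\bigl(tv_1+(1-t)v_2\bigr)$, the same factorization $H=p_f\circ A$ (your $K$ is the paper's $A$, up to $t\leftrightarrow 1-t$), and the same definition $y_0=\int_{0\mathcal{L}}^{1}\circ H^{*}$ followed by Lemma \ref{GLee}.

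The one place you diverge is exactly the step you flag as the main obstacle, and here the paper's argument is cleaner. Rather than computing Jacobians, the paper introduces the purely Euclidean model
\[
a:B^{\delta}\times B^{\delta}\times[0,1]\longrightarrow B^{\delta},\qquad a(t_1,t_2,s)=(1-s)t_1+st_2,
\]
notes that $a$ is a surjective lipschitz submersion between \emph{compact} spaces, so its Fiber Volume is continuous on the image (Proposition \ref{cosa}) and hence automatically bounded. Since $A$ acts as $a$ fiberwise over $M$ and the Sasaki metric restricts to the Euclidean metric on each fiber, the same bound transfers to $A$; then $H=p_f\circ A$ is R.-N.-lipschitz simply as a composition of R.-N.-lipschitz maps (Proposition \ref{compo2}). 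Your direct estimate with the $(1-t)^{-n}$ cancellation against the shrinking constraint region is also valid, but the compactness argument sidesteps that bookkeeping entirely.
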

	\begin{proof}
		We can start observing that
		\begin{equation}
			p_{f,i}^* = pr_i^* \circ p_f^*.
		\end{equation}
		It's easy to check that $pr_i$ is a lipschitz submersion with Fiber Volume equal to the volume of a $\delta$-euclidean ball in each point $(p,t)$. Then it is a R.-N.-lipschitz map. Moreover, applying the Remark \ref{uniformlyp} of the previous Chapter, we can prove that $f$ is uniformly proper lipschitz map and so $p_f$ is R.-N.-lipschitz. Then since composition of R.-N.-lipschitz maps is R.-N.-lipschitz, we can conclude that $p_{f,i}^*$ is R.-N.-lipschitz and so its pullback is a $\mathcal{L}^2$-bounded operator.
		\\
		\\To prove the second point we need a lipschitz-homotopy $H$ between $p_{f,0}$ and $p_{f,1}$ such that $H^*$ is a $\mathcal{L}^2$-continuous map:then the assertion will be proved considering
		\begin{equation}
			y_0 := \int_{0,\mathcal{L}}^1 \circ H^*.
		\end{equation}
		Let us define the map $a: B^\delta_1 \times B^\delta_2 \times [0,1] \longrightarrow B^\delta$ as
		\begin{equation}
			a(t_1, t_2, s) = t_1(1-s) + st_2.
		\end{equation}
		where $B^\delta$ and $B^\delta_i$ are euclidean balls of radius $\delta$ in $\numberset{R}^n$.
		\\Now, since $B^\delta$ and $[0,1]$ are compact spaces, we have that $a$ is a lipschitz surjective submersion with bounded Fiber Volume. Indeed we know that the Fiber Volume of a submersion is continuous on the image of the submersion (it is a consequence of Proposition \ref{cosa}). Then $a$ is a R.-N.-lipschitz map.
		\\Let us define the map $A: \mathcal{B} \times [0,1] \longrightarrow f^*T^\delta N$ as
		\begin{equation}
			A(v_{f(p),1}, v_{f(p), 2}, s) := v_{f(p),1}\cdot(1-s) + s\cdot v_{f(p), 2}
		\end{equation}
		and consider the homotopy $H: \mathcal{B} \times [0,1] \longrightarrow N$ defined as
		\begin{equation}
			H(v_{f(p),1}, v_{f(p), 2},s) = p_f \circ A (v_{f(p), 1}, v_{f(p),2},s).
		\end{equation}
		One can easily prove that $A$ is a lipschitz map and the Fiber Volume of $A$ has the same bound of the Fiber Volume of $a$. Then $H$ is an R.N.-lipschitz map because it is composition of R.-N.-lipschitz maps (Proposition \ref{compo2}). This means that $H^*$ is a $\mathcal{L}^2$-bounded operator. So, in particular, since $\int_{0\mathcal{L}}^1$ is a $\mathcal{L}^2$-bounded operator (Lemma \ref{GLee}), also the composition
		\begin{equation}
			y_0 := \int_{0\mathcal{L}}^1 \circ H^*
		\end{equation}
		is a $\mathcal{L}^2$-bounded operator.
		\\Observe that $H^*(\Omega_c^*(N)) \subset (\Omega^*(f^*(T^\delta N)_1 \oplus f^*(T^\delta N)_2 \times [0,1]))$. Since $d$ and $H^*$ commute for all smooth forms in $\mathcal{L}^2$, we can apply Proposition \ref{boundedness} and so the formula
		\begin{equation}
			p_{f,1}^*\alpha - p_{f,2}^*\alpha = d y_0\alpha + y_0d\alpha.
		\end{equation}
		holds for every $\alpha$ in $dom(d)$.
	\end{proof}
	\begin{lem}\label{y and Y}
		Consider an oriented manifold $(N,h)$ of bounded geometry. Consider $p_{id}: TN \longrightarrow N$ the submersion related to the identity map defined in Proposition \ref{tilde}. Consider a Thom form $\omega$ of the bundle $\pi:TN \longrightarrow N$, where $\pi(v_p) = p$, such that $supp(\omega) \subset T^\delta N$. Then for all $q$ in $N$ we have that
		\begin{equation}
			\int_{F_q} \omega = 1
		\end{equation}
		where $F_q$ is the fiber of the submersion $p_{id}:T^\delta N \longrightarrow N$ defined in Proposition \ref{tilde}.
	\end{lem}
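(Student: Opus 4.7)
The plan is to exhibit a compact cobordism inside $T^\delta N \times [0,1]$ whose two ends are $\pi^{-1}(q) = T_q^\delta N$ and $F_q = p_{id}^{-1}(q)$, and then apply Stokes' theorem to the closed form $\omega$. The guiding idea is that $\pi$ and $p_{id}$ are naturally homotopic via geodesic rescaling, and the integral of a closed, vertically compactly supported form over a fibre should only depend on the cobordism class of that fibre.

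First I would define the smooth homotopy $\Phi: T^\delta N \times [0,1] \longrightarrow N$ by $\Phi(v_p, t) := \exp_p(t v_p)$; this is well-defined since $|tv_p| \leq \delta \leq inj_N$, satisfies $\Phi(v_p, 0) = p = \pi(v_p)$ and $\Phi(v_p, 1) = p_{id}(v_p)$. Next I would check that $\Phi$ is a submersion throughout: for $t \in (0,1]$ the vertical derivative equals $t \cdot (d\exp_p)_{tv_p}$, which is an isomorphism since $\exp_p$ is a diffeomorphism onto $B_\delta(p)$; at $t=0$ the horizontal derivative recovers the canonical isomorphism $T_p M \to T_p N$ that makes $\pi$ a Riemannian submersion.

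By the implicit function theorem, $W_q := \Phi^{-1}(q)$ is then a smooth $(n+1)$-dimensional submanifold-with-corners of $T^\delta N \times [0,1]$, whose boundary decomposes as
\begin{equation*}
\partial W_q \;=\; (T_q^\delta N \times \{0\}) \;\sqcup\; (F_q \times \{1\}) \;\sqcup\; \bigl(W_q \cap (\{|v_p| = \delta\} \times [0,1])\bigr).
\end{equation*}
To apply Stokes I need $W_q$ compact: if $(v_p, t) \in W_q$ then $d(p,q) = |t v_p| \leq \delta$, so $p \in \overline{B_\delta(q)}$, which is compact by bounded geometry of $N$; hence $W_q$ lies in the compact set $T^\delta N|_{\overline{B_\delta(q)}} \times [0,1]$ and is closed there. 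Moreover, since $\mathrm{supp}(\omega)$ is contained in $T^{\delta_0} N$ for some $\delta_0 < \delta$ (by the construction in subsection \ref{forma}), the form $\omega$ vanishes on the third boundary stratum.

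Stokes' theorem, together with $d\omega = 0$ and the above vanishing, then gives
\begin{equation*}
0 \;=\; \int_{W_q} d\omega \;=\; \int_{T_q^\delta N} \omega \;-\; \int_{F_q} \omega,
\end{equation*}
where the signs of the two boundary contributions are opposite because the homotopy $\Phi$ is orientation-preserving at both endpoints ($\pi$ and $p_{id}$ being oriented submersions from $TN$ to $N$). Since $\omega$ is a Thom form for $\pi$, the first integral equals $1$, whence $\int_{F_q}\omega = 1$. The only genuinely delicate point is the orientation bookkeeping between the two ends of the cobordism; I expect this to reduce to the standard fact that, under the induced boundary orientations, $\partial_0 W_q$ and $\partial_1 W_q$ carry opposite signs, so that both contributions to $\int_{\partial W_q}\omega$ are consistent with the orientations pulled back from $N$ by $\pi$ and $p_{id}$ respectively.
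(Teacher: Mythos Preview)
Your proposal is correct and follows essentially the same approach as the paper: both construct the cobordism $W_q = \Phi^{-1}(q)$ for the rescaling homotopy $\Phi(v_p,t)=\exp_p(tv_p)$ (the paper's $H$), decompose its boundary into $T_q^\delta N$, $F_q$, and a piece on the sphere bundle where $\omega$ vanishes, and apply Stokes to the closed Thom form. Your write-up is in fact more careful than the paper's on the submersion check, the compactness of $W_q$, and the orientation bookkeeping, all of which the paper leaves implicit.
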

	\begin{proof}
		For all $q$ in $N$ the fiber $F_q$ is an oriented compact submanifold with boundary. The same also holds for $B^\delta_q$ which is the fiber of the projection $\pi: T^\delta N \longrightarrow N$ defined as $\pi(v_q) := q$.
		\\Let us consider the map $H:T^\delta N \times [0,1] \longrightarrow N$ defined as
		\begin{equation}
			H(v_p,s) = p_{id}(s\cdot v_p).
		\end{equation}
		Since $H$ is a proper submersion, we have that the fiber along $H$ given by $F_{H,q}$ is submanifold of $T^\delta N \times [0,1]$. Its boundary, in particular is
		\begin{equation}
			\partial F_{H,q} = B^\delta_q \times \{0\} \sqcup F_q \times \{1\} \cup A
		\end{equation}
		where $A$ is contained in
		\begin{equation}
			S^\delta N := \{v_{p} \in TN | |v_p| = \delta\}.
		\end{equation}
		Then we have that, if $\omega$ is a Thom form of $TN$ whose support is contained in $T^\delta N$, then
		\begin{equation}\label{nano}
			0 = \int_{F_{H_q}} d \omega = d \int_{F_{H_q}} \omega + \int_{\partial F_{H_q}} \omega.
		\end{equation}
		We can observe that $\omega$ is a $k$-form and $dim(F_{H_q})= k+1$. Then the first integral on the right side of \ref{nano} is $0$. Moreover, we can observe that $\omega$ is null on $A$, and so
		\begin{equation}
			\int_A \omega = 0.
		\end{equation}
		Then, since \ref{nano}, we have that
		\begin{equation}
			0 = \mp \int_{B^\delta} \omega \pm \int_{F_q}\omega.
		\end{equation}
		and we conclude.
	\end{proof}
	\begin{prop}
		Let $f:(M,g) \longrightarrow (N,h)$ be a lipschitz-homotopy equivalence between two manifolds of bounded geometry which mantains the orientation. Then there is a bounded operator $y$ such that\footnote{We have that if $X$ and $Y$ are two Riemannian manifolds and $A: \mathcal{L}^2(X) \longrightarrow \mathcal{L}^2(Y)$ is a $\mathcal{L}^2$-buonded operator, then $A^\dagger = \tau_{X} \circ A* \circ \tau_Y$.}
		\begin{itemize}
			\item $y(dom(d_{min})) \subseteq dom(d_{min})$,
			\item on $dom(d_{min})$
			\begin{equation}
				1 - T^\dagger_fT_f = d y + y d \label{smo},
			\end{equation}
			\item $y^\dagger = y$.
		\end{itemize}
	\end{prop}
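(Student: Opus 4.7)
The plan is to mimic the classical Hilsum--Skandalis argument of \cite{hils}, adapted to bounded geometry via the uniform Thom form of Subsection \ref{forma} and the chain homotopy $y_0$ of Lemma \ref{yformula}. The idea is to realise both $1$ and $T_f^\dagger T_f$ as pushforwards over the same fibered product $\mathcal{B}$ of Lemma \ref{yformula}, so that their difference becomes the image of $p_{f,1}^\ast-p_{f,2}^\ast$ under this common pushforward, where Lemma \ref{yformula} provides the required chain homotopy.

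First I would unwind $T_f^\dagger=\tau_N T_f^\ast\tau_M$ using $T_f=(pr_M)_\star\circ e_\omega\circ p_f^\ast$ together with the adjoint formula $((pr_M)_\star)^\ast=\tau_{f^\ast T^\delta N}\circ pr_M^\ast\circ\tau_M$ of Proposition \ref{mai2} (and its analogue for $p_f^\ast$). This expresses $T_f^\dagger T_f$ as a pullback--pushforward along $p_f$ sandwiching the operator $pr_M^\ast\circ(pr_M)_\star$ on $\mathcal{L}^2(f^\ast T^\delta N)$ between two Thom factors. The middle composition $pr_M^\ast\circ(pr_M)_\star$ reinterprets as $(\pi_2)_\star\circ\pi_1^\ast$, where $\pi_1,\pi_2:\mathcal{B}\to f^\ast T^\delta N$ are the two projections of Lemma \ref{yformula}, and the compositions $p_f\circ\pi_i$ are exactly the maps $p_{f,i}$. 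The conclusion is that $T_f^\dagger T_f=\Phi\circ p_{f,2}^\ast$, where $\Phi:\mathcal{L}^2(\mathcal{B})\to\mathcal{L}^2(N)$ is the pushforward that wedges by $\pi_1^\ast\omega\wedge\pi_2^\ast\omega'$ (with $\omega'$ a suitable Hodge dual of $\omega$) and integrates along the fibres of $p_{f,1}:\mathcal{B}\to N$.

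Secondly, I would show that the same $\Phi$ sends $p_{f,1}^\ast$ to the identity of $\mathcal{L}^2(N)$, modulo an explicit chain-null homotopy $dY_1+Y_1 d$ analogous to Point 1 of the Proposition of Subsection \ref{Pullback}: this uses Lemma \ref{y and Y}, which says each Thom form integrates to $1$ along the vertical fibre of $p_{id}$. With both $1$ and $T_f^\dagger T_f$ now expressed in this common form, their difference becomes $\Phi\circ(p_{f,1}^\ast-p_{f,2}^\ast)$ plus the correction from the identity homotopy. Lemma \ref{yformula} supplies $p_{f,1}^\ast-p_{f,2}^\ast=dy_0+y_0 d$, and since $\Phi$ commutes with $d$ on vertically compactly supported smooth forms (the Thom forms are closed and fibre integration commutes with $d$), we obtain a bounded $\tilde y$ with $1-T_f^\dagger T_f=d\tilde y+\tilde y d$ on compactly supported smooth forms. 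To achieve $y^\dagger=y$ I would symmetrise, $y:=\tfrac12(\tilde y+\tilde y^\dagger)$; the self-adjointness of the left hand side under $\dagger$ ensures the identity is preserved. Boundedness of $y$ follows because each constituent is $\mathcal{L}^2$-bounded (uniform Thom norm from Subsection \ref{forma} and Proposition \ref{eomega}, projections with bounded Fiber Volume, and $y_0$ from Lemma \ref{yformula}), and the extension to $\mathrm{dom}(d_{min})$ is handled by Proposition \ref{boundedness}.

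The main obstacle is the careful bookkeeping of chirality operators and signs needed to identify both $1$ and $T_f^\dagger T_f$ with pushforwards of a common template over $\mathcal{B}$; in particular, verifying that the Hodge dual $\omega'$ inherits from $\omega$ the uniform pointwise bound that makes $\Phi$ bounded in the noncompact setting. Once this identification is in place, the rest of the argument is structural and reduces to the ingredients already established in the excerpt.
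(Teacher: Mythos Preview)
Your overall strategy coincides with the paper's: pass to the fibered product $\mathcal{B}$ of Lemma \ref{yformula}, express both $1$ and $T_f^\dagger T_f$ through the two pullbacks $p_{f,1}^\ast,p_{f,2}^\ast$, invoke Lemma \ref{yformula} for the chain homotopy $y_0$, push forward to get a bounded $Y$, symmetrise to $y=\tfrac12(Y+Y^\dagger)$, and extend to $\mathrm{dom}(d_{min})$ via Proposition \ref{boundedness}. The difference is purely tactical, in how $T_f^\dagger$ is unpacked.

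The paper does not attempt to write $T_f^\dagger$ as an operator. Instead it computes the pairing
\[
\langle T_f^\dagger T_f\alpha,\beta\rangle=\langle T_f\alpha,\tau_M T_f\tau_N\beta\rangle
=\int_{\mathcal{B}} p_{f,2}^\ast\alpha\wedge p_{f,1}^\ast(\tau_N\beta)\wedge\omega_1\wedge\omega_2,
\]
so that \emph{both} Thom factors $\omega_1,\omega_2$ are pullbacks of the \emph{same} $\omega$ to the two summands of $\mathcal{B}$; no ``Hodge dual $\omega'$'' ever appears. Your worry about the uniform bound on $\omega'$ is therefore a phantom: if you carry out your adjoint computation you will find $(e_\omega)^\dagger=\pm e_\omega$ up to a degree--dependent sign (this is exactly the prefactor $(-1)^{n(n-\deg(\cdot))}$ that shows up in the paper's definition of $Y$), so the form you call $\omega'$ is $\omega$ itself, not $\star\omega$. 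Taking $\omega'=\star\omega$ literally would break the argument, since $\star\omega$ has the wrong fibre integral.

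A second simplification you are missing: with $\omega_1,\omega_2$ both equal to $\omega$, the term $\Phi\circ p_{f,1}^\ast$ equals $1$ \emph{exactly}, not merely up to a further homotopy. After integrating $\omega_2$ to $1$ over the linear $B^\delta_2$--fibre, one is left with $\int_{F}\omega_1$ along the fibre of $p_{id}$, which is $1$ on the nose by Lemma \ref{y and Y}. So the only homotopy in the final formula is the one coming from $y_0$; the extra ``$dY_1+Y_1 d$'' correction you anticipate is not needed.
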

	\begin{proof}
		Let us consider two smooth $\mathcal{L}^2$-forms $\alpha$ and $\beta$ on $N$. First of all we will consider the case in which $\alpha$ and $\beta$ are both $\mathcal{L}^2$-smooth $j$-forms for some natural number $j$. Let us consider the fiber bundle $\mathcal{B} := f^*(T^\delta N)_1 \oplus f^*(T^\delta N)_2$ with the metric defined in Lemma \ref{yformula}. We denote, moreover with $B^\delta_1$ and with $B^\delta$ the fibers of $f^*(T^\delta N)_1$ and $f^*(T^\delta N)_2$.
		\\ Using that $(T_f^\dagger)^* = (\tau T_f^* \tau)^*$ and that $\tau$ is self-adjoint, we obtain
		\begin{equation}
			\begin{split}
				\langle T^\dagger_fT_f \alpha, \beta \rangle &=  \langle T_f \alpha, \tau T_f \tau \beta \rangle \\
				&= \int_{M} (\int_{B^\delta_2}p_{f,2}^*\alpha \wedge \omega_2) \wedge (\int_{B^\delta_1}p_{f,1}^*\tau \beta \wedge \omega_1) \\
				&= \int_{f^*(T^\delta N)_1} (\int_{B^\delta_2}p_{f,2}^*\alpha \wedge \omega_2) \wedge p_{f,1}^*\tau \beta \wedge \omega_1 \\
				&= (-1)^{(n + j)j} \int_{f^*(T^\delta N)_1} p_{f,1}^*\tau \beta \wedge \omega_1 \wedge (\int_{B^\delta_2}p_{f,2}^*\alpha \wedge \omega_2)  \\
				&= (-1)^{(n + j)j} \int_{\mathcal{B}} p_{f,1}^*\tau \beta \wedge \omega_1 \wedge p_{f,2}^*\alpha \wedge \omega_2  \\
				&= (-1)^{(n + j)j}(-1)^{(n + j)j} \int_{\mathcal{B}} p_{f,2}^*\alpha \wedge p_{f,1}^*\tau \beta \wedge \omega_1 \wedge \omega_2  \\
				&= \int_{\mathcal{B}} p_{f,2}^*\alpha \wedge p_{f,1}^*\tau \beta \wedge \omega_1  \wedge \omega_2
			\end{split}
		\end{equation}
		Consider the identity we proved in Lemma \ref{yformula}
		\begin{equation}
			p_{f,2}^* = p_{f,1}^* + dy_0 + y_0 d.
		\end{equation}
		We obtain
		\begin{equation}
			\begin{split}
				\langle T^\dagger_fT_f \alpha, \beta \rangle &= \int_{f^*(TN)_1}p_{f_1}^*(\alpha \wedge \tau \beta) \wedge \omega_1 \\
				&+ \int_{\mathcal{B}} (dy_0 + y_0 d) \alpha \wedge \omega_1 \wedge p_{f,2}^*\tau \beta \wedge \omega_2.
			\end{split}
		\end{equation}
		Since $\alpha\wedge \tau\beta \wedge \omega_1$ is a top-degree form, then its closed and, in particular, the first integral can be written as
		\begin{equation}
			\begin{split}
				\int_{f^*(TN)_1}p_{f_1}^*(\alpha \wedge \tau \beta) \wedge \omega_1 &= \int_{f^*(TN)_1} (f,id_{B^k})^* \circ p_{id}^*(\alpha \wedge \tau \beta) \wedge \omega_1 \\
				&= \int_{TN} p_{id}^*(\alpha \wedge \tau \beta) \wedge \omega_1 \\
				&= \int_N (\int_F \omega_1) \alpha \wedge \tau \beta \\
				&= \int_N \alpha \wedge \tau \beta \\
				&= \langle 1(\alpha), \beta \rangle,
			\end{split}
		\end{equation}
		where $F$ is the fiber of $p_{id}$.
		\\Consider the second integral: let us denote by $pr_1: \mathcal{B} \longrightarrow f^*TN_2$ the projection $(v_{f_1(p)}, v_{f_2(p)}) \rightarrow v_{f_2(p)}$. We have that
		\begin{equation}
			\begin{split}
				&\int_{\mathcal{B}} (dy_0 + y_0 d) \alpha \wedge p_{f,2}^*\tau \beta \wedge \omega_1 \wedge \omega_2\\
				&= (-1)^{n(n-j)} \int_{f^*(TN)_2} (\int_{f^*(TN)_1}(dy_0 + y_0 d)\alpha \wedge \omega_1) \wedge p_{f,2}^*\tau \beta \wedge \omega_2 \\
				&= (-1)^{n(n-j)} \int_N (\int_{F_2}[(\int_{f^*(TN)_1}(dy_0 + y_0 d)\alpha \wedge \omega_1)]\wedge \omega_2 \wedge \tau \beta \\
				&= \langle (d\circ Y \alpha + Y \circ d) \alpha, \beta \rangle
			\end{split}
		\end{equation}
		where
		\begin{equation}
			\begin{split}
				Y :&= (-1)^{n(n-deg(\cdot))} p_{f,2\star} \circ e_{\omega_2} \circ pr_{1,\star} \circ e_{\omega_1} \circ y_0 \\
				&= (-1)^{n(n-deg(\cdot))} p_{f,2\star} \circ e_{\omega_2} \circ pr_{1,\star} \circ e_{\omega_1} \circ \int_{0, \mathcal{L}}^1 \circ H^*.
			\end{split}
		\end{equation}
		Observe that $p_1$, respect to the Sasaki metric on $\mathcal{B}$, is a R.-N.-lipschitz map: the lipschitz constant is $1$ and the Fiber Volume is the volume of an euclidean ball with radius $\delta$. Since Proposition \ref{mai2}, we have that $p_{1,\star}$ is a $\mathcal{L}^2$-bounded operator. Moreover we also have that $p_{f,2\star}$, $e_{\omega_2}$, $e_{\omega_1}$ are $\mathcal{L}^2$-bounded. Then, using Lemma \ref{yformula}, we conclude that $Y$ is a $\mathcal{L}^2$-bounded operator.  
		\\Consider now $y := \frac{Y + Y^\dagger}{2}$. Then we have that $y$ is an $\mathcal{L}^2$-bounded operator. Observe that since $Y$, on $\Omega^*(N) \cap \mathcal{L}^2(N)$ satisfies the equation (\ref{smo}), then also $y$ satisfies it. Moreover we have that $[-1 + dy + yd]\alpha$ is a $j$-form if $\alpha$ is a $j$-form.
		\\Let us assume that $\alpha$ is a $j$-form and $\beta$ is a $r$-form with $j \neq r$, we have that
		\begin{equation}
			\langle T_f^\dagger T_f \alpha, \beta \rangle = \langle (-1 + dy + yd)(\alpha), \beta \rangle = 0.
		\end{equation}
		Now we just have to show that $y(dom(d_{min})) \subset dom(d_{min})$: using this fact the proof of the equation (\ref{smooth}) for every $\alpha$ in $dom(d_{min})$ will be immediate.
		\\
		\\First we observe that if $\beta$ is a differential form in $\Omega^*_c(N)$, then $Y \beta$ is in $\Omega^*_c(M)$. Indeed if $K_\beta$ is the compact support of $\beta$ and $pr_{\mathcal{B}}: \mathcal{B}\times [0,1] \longrightarrow \mathcal{B}$ is the projection on the first component, then 
		\begin{equation}
			supp(\int_{0, \mathcal{L}}^1 \circ H^* \beta) \subseteq pr_{M \times B^k_1 \times B^k_2}(H^{-1}(K_\beta)) =: K_{\beta}'
		\end{equation}
		which is compact since $H$ is proper. Then, if $pr_2: \mathcal{B} \longrightarrow f^*(T^\delta N)_2$ is the projection on the second component,
		\begin{equation}
			supp(pr_{1\star} \circ e_{\omega_1} \int_{0, \mathcal{L}}^1 \circ H^* \beta) \subseteq pr_{2} (K_{\beta}') =: K_{\beta}''
		\end{equation}
		which is compact because $K_{\beta}'$ is compact and $pr_{2}$ is continuous. Finally,
		\begin{equation}
			supp(Y \beta) \subseteq p_{f,2}(K_{\beta}'' \cap supp(\omega_2)).
		\end{equation}
		which is compact because $supp(\omega_2)$ is closed and $p_{f,2}$ is continuous. This means that $supp(Y \beta)$ is a closed subset of a compact set and so it is compact. We know that $Y\beta$ is smooth because $Y$ is a composition of operators which preserve the smoothness of differential forms.
		\\
		\\Moreover if $\beta$ is a differential form in $\Omega^*_c(M)$, then we also have that $Y^\dagger \beta$ is in $\Omega^*_c(N)$. Indeed we have that
		\begin{equation}
			Y^\dagger \beta = \pm(-1)^{n(n-deg(\beta))} H_\star \circ pr_{\mathcal{B}}^* \circ e_{\omega_1} \circ pr_{2}^* \circ e_{\omega_2} \circ p_{f,2}^* \beta
		\end{equation}
		Then we can prove, as we did for $Y$, that $Y^\dagger \beta$ is in $\Omega^*_c(N)$.
		\\
		\\Consider $\alpha$ in $dom(d_{min}) \subset \mathcal{L}^2(N)$: we have that 
		\begin{equation}
			\begin{cases}
				\alpha =  \lim\limits_{j \rightarrow +\infty} \alpha_j \\
				d\alpha = \lim\limits_{j \rightarrow +\infty} d \alpha_j
			\end{cases}
		\end{equation}
		where $\alpha_j \in \Omega^*_c(N)$. Then we have, for every $j$, that $Y\alpha_j$ is in $\Omega_c^*(M)$. Moreover,
		\begin{equation}
			Y\alpha = Y \lim\limits_{j \rightarrow +\infty} \alpha_j = \lim\limits_{j \rightarrow +\infty} Y \alpha_j
		\end{equation}
		and
		\begin{equation}
			\begin{split}
				\lim\limits_{j \rightarrow +\infty} dY (\alpha_j) &= \lim\limits_{j \rightarrow +\infty} - Y d(\alpha_j) + 1 \alpha_j + T_f^\dagger T_f \alpha_j\\
				&= - Y d \alpha + \alpha + T_f^\dagger T_f \alpha \in \mathcal{L}^2(M \times B^k_1 \times B^k_2).\label{form}
			\end{split}
		\end{equation}
		and so we have $Y(dom(d_{min}) \subseteq dom(d_{min})$. Using exactly the same argument one can easily check that $Y^\dagger (dom(d_{min}) \subseteq dom(d_{min})$ and $y(dom(d_{min}) \subseteq dom(d_{min})$.
		Finally if in the equation (\ref{form}) we replace $Y$ with $y$ we also prove the equation (\ref{smooth}) for all the elements of $dom(d_{min})$.
	\end{proof}
	\begin{lem}
		Let $(M,g)$ be an orientable Riemannian manifold and let $\Gamma$ be a group of isometries which preserve the orientation. Let $A$ un operator on $\mathcal{L}^2(M)$. If $A$ commute with $\Gamma$, then
		\begin{equation}
			A^* \Gamma = \Gamma A^*
		\end{equation}
		and 
		\begin{equation}
			A^\dagger \Gamma = \Gamma A^\dagger.
		\end{equation}
	\end{lem}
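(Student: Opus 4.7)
The plan is to reduce both statements to the fact that the action of $\Gamma$ on $\mathcal{L}^2(M)$ is by unitary operators that commute with the chirality operator $\tau_M$. Since $\Gamma$ acts by isometries of $(M,g)$, each $\gamma \in \Gamma$ is in particular a diffeomorphism with Jacobian of absolute value $1$, hence its Fiber Volume is identically $1$ and the induced operator $U(\gamma) := \gamma^*$ on $\mathcal{L}^2(M)$ is $\mathcal{L}^2$-bounded with $U(\gamma)^{-1} = U(\gamma^{-1}) = U(\gamma)^*$; that is, $U(\gamma)$ is unitary.

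For the first identity, I would simply take adjoints of the hypothesis. By assumption, for every $\gamma \in \Gamma$ we have $U(\gamma) A = A U(\gamma)$ on $\mathcal{L}^2(M)$. Taking the Hilbert-space adjoint of both sides and using $(BC)^{*} = C^{*} B^{*}$ together with $U(\gamma)^{*} = U(\gamma^{-1})$ gives
\begin{equation}
A^{*} U(\gamma^{-1}) = U(\gamma^{-1}) A^{*},
\end{equation}
and since $\gamma \mapsto \gamma^{-1}$ is a bijection of $\Gamma$, this yields $A^{*} \Gamma = \Gamma A^{*}$.

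For the second identity, the key step is to show that $\tau_M$ is $\Gamma$-equivariant. Here I would use that $\Gamma$ acts by orientation-preserving isometries, so that each $\gamma$ pulls back the Riemannian volume form to itself and intertwines the Hodge star operator: $\gamma^{*} \circ \star = \star \circ \gamma^{*}$ on smooth compactly supported forms (this is the standard naturality of $\star$ under orientation-preserving isometries, extended by density to all of $\mathcal{L}^2(M)$). Since $\tau_M$ is just a constant unimodular multiple of $\star$, it follows that $U(\gamma) \tau_M = \tau_M U(\gamma)$ for every $\gamma$. Combining this with the first part,
\begin{equation}
U(\gamma) A^{\dagger} = U(\gamma) \tau_M A^{*} \tau_M = \tau_M U(\gamma) A^{*} \tau_M = \tau_M A^{*} U(\gamma) \tau_M = \tau_M A^{*} \tau_M U(\gamma) = A^{\dagger} U(\gamma),
\end{equation}
which gives $A^{\dagger} \Gamma = \Gamma A^{\dagger}$.

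There is no real obstacle here; the statement is essentially bookkeeping. The only subtlety is ensuring that the identity $\gamma^{*} \circ \star = \star \circ \gamma^{*}$ extends from smooth compactly supported forms to the whole Hilbert space, but this follows immediately from the $\mathcal{L}^2$-boundedness of both sides and the density of $\Omega^{*}_c(M)$ in $\mathcal{L}^2(M)$. If desired, one could also argue the two claims in parallel by noting that $A \mapsto A^{\dagger}$ is, up to conjugation by the unitary $\tau_M$, the Hilbert-space adjoint, and both $\tau_M$ and the adjoint operation preserve the subalgebra of $\Gamma$-equivariant bounded operators.
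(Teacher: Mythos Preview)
Your proof is correct and is essentially the same as the paper's. The only cosmetic difference is that the paper first records the single fact $\gamma^{-1} = \gamma^{*} = \gamma^{\dagger}$ for each $\gamma \in \Gamma$ (which packages together unitarity and $\tau$-equivariance of orientation-preserving isometries), and then runs the two computations in parallel via the anti-multiplicativity of $*$ and of $\dagger$; you instead unpack $A^{\dagger} = \tau_M A^{*} \tau_M$ and use the commutation $U(\gamma)\tau_M = \tau_M U(\gamma)$ explicitly, which amounts to the same thing.
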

	\begin{proof}
		Let $\gamma$ be an element of $\Gamma$. Then, since $\gamma$ is an isometry which preserves the orientation, we have that
		\begin{equation}
			\gamma^{-1} = \gamma^* = \gamma^\dagger
		\end{equation}
		Then we have that
		\begin{equation}
			A^* \gamma = (\gamma^* A)^* = (\gamma^{-1} A)^* = (A\gamma^{-1})^* = (\gamma^{-1})^* A^* = \gamma A^*
		\end{equation}
		and
		\begin{equation}
			A^\dagger \gamma = (\gamma^\dagger A)^\dagger = (\gamma^{-1} A)^\dagger = (A\gamma^{-1})^\dagger = (\gamma^{-1})^\dagger A^\dagger = \gamma A^\dagger.
		\end{equation}
	\end{proof}
	\begin{cor}
		If there exists a group $\Gamma$ of isometries which commute with $f$, then also $y$ commute with $\gamma$ for all $\gamma$ in $\Gamma$.
	\end{cor}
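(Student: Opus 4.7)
The proof proposal is straightforward: unpack the explicit formula
\[
Y = (-1)^{n(n-\deg(\cdot))} \, p_{f,2\star} \circ e_{\omega_2} \circ pr_{1,\star} \circ e_{\omega_1} \circ \int_{0,\mathcal{L}}^1 \circ\, H^*,
\]
verify that each factor is $\Gamma$-equivariant, and then combine this with the preceding lemma (which ensures that $\Gamma$-equivariance passes to $\dagger$-adjoints) to conclude that $y = (Y + Y^\dagger)/2$ commutes with $\Gamma$.

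For the factor-by-factor check, I will use the $\Gamma$-action on $\mathcal{B} = f^*(T^\delta N)_1 \oplus f^*(T^\delta N)_2$ and on $\mathcal{B} \times [0,1]$ induced by $\gamma \cdot (v_{f(p),1}, v_{f(p),2}, s) := (d\gamma \, v_{f(p),1}, d\gamma \, v_{f(p),2}, s)$, which is by isometries since $\Gamma$ acts by isometries on $M$ and $N$ and $f$ is $\Gamma$-equivariant. First, $p_f$ is $\Gamma$-equivariant by point 3 of Proposition \ref{tilde}, and the affine combination $A$ commutes with $d\gamma$ (differentials of isometries are linear), so $H = p_f \circ A$ is $\Gamma$-equivariant and hence $H^*$ commutes with the $\Gamma$-action. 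Next, $\int_{0,\mathcal{L}}^1$ is integration over the $[0,1]$ factor on which $\Gamma$ acts trivially, so it is $\Gamma$-equivariant. The projections $pr_1$, $pr_2$, and $p_{f,2} = p_f \circ pr_2$ are manifestly $\Gamma$-equivariant submersions, so the fiber-integration operators $pr_{1,\star}$ and $p_{f,2\star}$ commute with $\Gamma$.

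The only slightly delicate point is the $\Gamma$-invariance of the Thom forms $\omega_1$ and $\omega_2$. Here I would note that the Mathai--Quillen construction from subsection \ref{forma} produces $\omega$ from the bundle metric $h$ and the Levi-Civita connection $\nabla^{LC}_h$ on $TN$, both of which are $\Gamma$-invariant since $\Gamma$ acts by isometries on $(N,h)$. Therefore $\omega$ on $TN$ is $\Gamma$-invariant, and since the bundle map $F: f^*TN \to TN$ is $\Gamma$-equivariant (as $f$ is $\Gamma$-equivariant), the pullback $F^*\omega$ on $f^*TN$ (and by further pullback on $\mathcal{B}$) is $\Gamma$-invariant. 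Multiplication by a $\Gamma$-invariant form commutes with $\gamma^*$, so $e_{\omega_1}$ and $e_{\omega_2}$ are $\Gamma$-equivariant.

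Combining these observations, $Y$ is a composition of $\Gamma$-equivariant operators and therefore commutes with $\Gamma$. By the previous lemma (applied to the $\Gamma$-equivariant operator $Y$), $Y^\dagger$ commutes with every $\gamma \in \Gamma$ as well. Hence
\[
y = \tfrac{1}{2}(Y + Y^\dagger)
\]
commutes with $\Gamma$, which is the desired conclusion. The main (very mild) obstacle is simply verifying the $\Gamma$-invariance of the Thom form; everything else is a bookkeeping exercise on the equivariance of the building blocks.
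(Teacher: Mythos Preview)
Your proposal is correct and follows exactly the paper's approach: the paper's proof consists of the single sentence ``One can easily check that the operator $Y$ commutes with $\Gamma$: the previous lemma allows us to conclude,'' and you have simply unpacked the ``easily check'' by verifying each factor of $Y$ is $\Gamma$-equivariant before invoking the lemma on $Y^\dagger$. Your factor-by-factor verification (including the $\Gamma$-invariance of the Mathai--Quillen Thom form) is accurate and fills in the details the paper omits.
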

	\begin{proof}
		One can easily check that the operator $Y$ commute with $\Gamma$: the previous lemma allows us to conclude.
	\end{proof}
	\subsection{$T_f$ as smoothing operator}
	\begin{prop}\label{Tsmooth}
		Consider $f:(M,g) \longrightarrow (N,h)$ a lipschitz-homotopy equivalence between manifolds of bounded geometry. Then the operator $T_f$ is a smoothing operator.
	\end{prop}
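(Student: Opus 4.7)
The plan is to write $T_f$ explicitly as integration against a smooth kernel by exploiting the fact that $p_f$, restricted to each fiber of the projection $pr_M\colon f^*(T^\delta N) \to M$, is essentially the exponential map of $N$, hence a diffeomorphism onto a geodesic ball. Concretely, for a fixed $p \in M$ the fiber $B^\delta_p = pr_M^{-1}(p)$ is isometric to $T^\delta_{f(p)}N$, and by Lemma \ref{tilde} the restriction of $p_f$ to this fiber coincides with $\exp_{f(p)}\colon T^\delta_{f(p)}N \to B_\delta(f(p)) \subset N$. Since $\delta \leq inj_N$, this is a smooth diffeomorphism onto its image.

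First, I would perform a fiberwise change of variables in the defining formula $T_f\alpha(p)=\int_{B^\delta_p}p_f^*\alpha\wedge\omega$. Writing $q=\exp_{f(p)}(v)$ and pulling everything back along $\exp_{f(p)}^{-1}$ on the fiber, the integrand $p_f^*\alpha\wedge\omega$ becomes a smooth form in $q\in B_\delta(f(p))$ whose coefficients depend smoothly on both $p$ and $q$. Collecting the Jacobian of $\exp_{f(p)}^{-1}$ together with the coefficients of $\omega$ (expressed in the fibered normal coordinates $\{x^i,\mu^j\}$ via equation (\ref{Thombound})) produces, for each bidegree, a smooth section
\begin{equation}
K(p,q)\in\Lambda^*_pM\otimes\Lambda^*_qN
\end{equation}
supported in the set $\{(p,q)\,:\,d_N(f(p),q)\leq \delta_0\}$, so that $T_f\alpha(p)=\int_N K(p,q)\alpha(q)\,d\mu_N(q)$.

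Second, I would verify smoothness of $K$ as a section of $\Lambda^*(M)\boxtimes\Lambda^*(N)$ over $M\times N$. This relies on three ingredients already available: $f$ may be taken to be smooth (even $C^k_b$ for every $k$) by Proposition \ref{appr}; the map $(p,q)\mapsto \exp_{f(p)}^{-1}(q)$, defined when $d_N(f(p),q)<inj_N$, is smooth and, by Lemma \ref{tilde}, has uniformly bounded derivatives in normal coordinates; and the Thom form $\omega$ constructed in Subsection \ref{forma} is smooth with uniformly bounded coefficients in fibered normal coordinates. These together give a smooth kernel, and outside the $\delta_0$-neighborhood of the graph of $f$ the kernel vanishes because $\omega$ is supported in $f^*T^{\delta_0}N$.

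The main obstacle will be bookkeeping: keeping track of the wedge-structure and signs when translating $\omega$ and $p_f^*\alpha$ between fibered coordinates $\{x^i,\mu^j\}$ on $f^*T^\delta N$ and product coordinates $\{x^i,y^j\}$ on $M\times N$, and verifying that the resulting object behaves covariantly so as to define a genuine section of $\Lambda^*(M)\boxtimes\Lambda^*(N)$ rather than merely a local expression. The uniform bounds on coefficients (which will be used in the subsequent analysis of $dT_f$ and of the operators $T_fy$, $yT_f^\dagger$, etc.) follow directly from the bounded-geometry estimates already recorded for $\omega$, for the exponential maps, and for $f$.
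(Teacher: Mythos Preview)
Your proposal is correct and follows essentially the same route as the paper: the paper packages your ``fiberwise change of variables $q=\exp_{f(p)}(v)$'' as the global diffeomorphism $t_f=(\pi,p_f)\colon f^*T^\delta N\to M\times N$ from Lemma~\ref{svolta}, pushes $\omega$ forward to $t_{f,\star}\omega$ on $M\times N$, and then reads off the kernel components in normal coordinates as $K^J_I(x,y)=\beta_{IJ^c}(x,y)$ where $\beta_{IR}$ are the coefficients of $t_{f,\star}\omega$. One small notational point: the paper's convention for smoothing kernels is $\Lambda^*(M)\boxtimes\Lambda(N)$ (with $\Lambda(N)$ the exterior algebra of the tangent bundle, dual to forms), not $\Lambda^*(M)\boxtimes\Lambda^*(N)$ as you wrote, though of course these are identified via the metric.
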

	\begin{proof}
		We divide the proof in two steps: in the first one we will show that for all smooth forms $\alpha$ in $\mathcal{L}^2(N)$ we have that
		\begin{equation}
			T_f\alpha(p) = \int_N K(p,q) \alpha(q) d\mu_N
		\end{equation}
		with $K(p,q) \in \Lambda^*(M) \boxtimes \Lambda(N)_{(p,q)}$. In the second one we will prove that the section $K$ is smooth.
		\\
		\\ \textbf{1.} The proof of the first step is similar to the proof given by Vito Zenobi in his Ph.D. thesis \cite{vito}.
		\\Let us denote the following maps as follows
		\begin{itemize}
			\item $p_f:f^*T^\delta N \longrightarrow N$ the submersion related to $f$,
			\item $t_f:f^*T^\delta N \longrightarrow M\times N$ is the map $t:=(\pi, p_f)$ defined in Lemma \ref{svolta},
			\item $\pi:f^*TN \longrightarrow M$ is the projection of the bundle $f^*TN$,
			\item $pr_M$ and $pr_N$ the projections of $M\times N$ over $M$ and $N$.
		\end{itemize}
		We already proved that $t_f$ is a diffeomorphism on its image, so, in particular, it is a submersion and the integration along the fibers of $t_f$ is the pullback along $t_f^{-1}: im(t_f) \longrightarrow f^*T^\delta N$.
		\\Observe that
		\begin{equation}
			p_f = pr_N \circ t_f
		\end{equation}
		and
		\begin{equation}
			\pi = pr_M \circ t_f.
		\end{equation}
		Consider a smooth form $\alpha$ in $\mathcal{L}^2(N)$. Applying the Projection Formula, we obtain that
		\begin{equation}
			\begin{split}
				T_f \alpha &= \pi_\star \circ e_\omega \circ p_f^* \alpha \\
				&= pr_{M, \star} \circ t_{f, \star} \circ e_\omega \circ t_f^* \circ pr_N^* \alpha \\
				&= pr_{M, \star} \circ e_{t_{f, \star} \omega} \circ pr_N^* \alpha,
			\end{split}
		\end{equation}
		where $t_{f, \star} \omega$ is the form $t_f^{-1}\omega$ on $im(t_f) \subset M \times N$ and $0$ in $im(t_f)^c$. Since $supp(\omega)$ is contained in $f^*T^{\delta_1}N$ where $\delta_1 < \delta$, then $t_{f, \star} \omega$ is a well-defined, smooth form.
		\\Observe that the Hodge star operator $\star_N$ on $N$ induces a bundle endomorphism on $\Lambda^*(M \times N)$ imposing that, for each couple of orthonormal frames $\{W^i\}$ of $\Lambda^*M$ and $\{\epsilon^j\}$ of $\Lambda^*N$, we have
		\begin{equation}
			\star_N (pr_M^* W^I \wedge pr_N^* \epsilon ^J (p,q)) := pr_M^* W^I \wedge (\star_N \epsilon ^J) (p,q)
		\end{equation}
		Consider $\Lambda^{\star, k}(M \times N)$ the subbundle of $\Lambda^*(M \times N)$ given by the forms which are $k$-forms with respect to $N$. Observe that
		\begin{equation}
			\Lambda^*(M \times N) = \bigoplus\limits_{k \in \numberset{N}}\Lambda^{\star, k}(M \times N).
		\end{equation}
		Let us denote by $n$ the dimension of $N$. We can define the bundle morphism
		\begin{equation}
			B: \Lambda^*(M \times N) \longrightarrow \Lambda^{\star, 0}(M \times N) = pr_M^*(\Lambda^* M)
		\end{equation}
		which is 
		\begin{equation}
			B\alpha_{(p,q)} := \star_N \alpha_{(p,q)} 
		\end{equation}
		if $\alpha_{(p,q)} \in \Lambda^{\star, n}(M \times N)_{(p,q)}$ and it is zero if $\alpha_{(p,q)}$ is in $\Lambda^{\star, q}(M \times N)_{(p,q)}$ where $q \neq n$.
		\\Then we have that for each
		\begin{equation}\label{piru}
			\begin{split}
				T_f \alpha(p) &= pr_{M, \star} \circ e_{t_{f, \star} \omega} \circ pr_N^* \alpha(p) \\
				&= \int_N B \circ e_{t_{f, \star} \omega} (\alpha)(p,q) d\mu_N
			\end{split}
		\end{equation}
		Let us define, for each $(p,q)$ in $M \times N$ the operators
		\begin{equation}
			\begin{split}
				E_{\omega, p,q}: pr_N^*(\Lambda^*N)_{(p,q)} &\longrightarrow  \Lambda^*(M \times N)_{(p,q)} \\
				\beta_{(p,q)} &\longrightarrow \beta_{(p,q)} \wedge t_{f, \star} \omega(p,q)
			\end{split}
		\end{equation}
		and
		\begin{equation}
			\begin{split}
				B_{p,q}: \Lambda^*(M \times N)_{(p,q)} &\longrightarrow pr_M^*(\Lambda^*M)_{(p,q)}  \\
				\gamma_{(p,q)} &\longrightarrow B\gamma_{(p,q)}.
			\end{split}
		\end{equation}
		Then the equation (\ref{piru}) can be read as
		\begin{equation}
			\begin{split}
				T_f \alpha(p) &= \int_N (B_{p,q} \circ E_{\omega, p,q})pr_N^*\alpha(q) d\mu_N \\
				\int_N K(p,q)\alpha(q) d\mu_N,
			\end{split}
		\end{equation}	   
		where
		\begin{equation}
			K(p,q) := B_{p,q} \circ E_{\omega, p,q}:pr_N^*(\Lambda^*N)_{(p,q)} \longrightarrow pr_M^*(\Lambda^*M)_{(p,q)}
		\end{equation}	   	   
		is, for each $(p,q)$ in $M \times N$ an element of $\Lambda^*M \boxtimes \Lambda TN_{(p,q)}$.	   
		\\
		\\\textbf{2.} 
		To prove that $K$ is a smooth section of $\Lambda^*M \boxtimes \Lambda N$ it is sufficient to show that, in local coordinates $\{x^i, y^j\}$, the kernel $K$ has the form
		\begin{equation}
			K(x,y) = K^J_I(x,y)dx^I \boxtimes \frac{\partial}{\partial y^J}
		\end{equation}
		where $K^J_I$ are smooth functions.
		\\Let us fix some normal coordinates $\{x,y\}$ on $M \times N$. We have that
		\begin{equation}
			t_{f, \star} \omega (x,y) = \beta_{IR}(x,y)dx^I \wedge dy^R
		\end{equation}
		for some smooth functions $\beta_{IR}$ and that $\alpha$ has the form 
		\begin{equation}
			\alpha(y) = \alpha_{J}(y)dy^J.
		\end{equation}
		Let us denote by $\frac{\partial}{\partial x^I} = \frac{\partial}{\partial x^{i_1}} \wedge ... \wedge \frac{\partial}{\partial x^{i_k}}$, the dual of $dx^I$.
		Let $J^c$ be the multindex such that, up to double switches, $(J, J^c)$ is the index $(1, ..., n)$. We have that
		\begin{equation}
			\begin{split}
				K^J_I(x,y) &= K(\frac{\partial}{\partial x^I}, dy^J) \\
				&= [B_{x,y}(\beta_{LR}(x,y)dx^L \wedge dy^R \wedge dy^J)](\frac{\partial}{\partial x^I})\\
				&= [\beta_{LJ^c}(x,y)dx^L](\frac{\partial}{\partial x^I})\\
				&= \beta_{IJ^c}(x,y).
			\end{split}
		\end{equation}
		This fact implies that if $K$ in local coordinates has the form 
		\begin{equation}
			K(x,y) = \beta_{IJ^c}(x,y) dx^I \boxtimes \frac{\partial}{\partial y^J}
		\end{equation}
		and so the kernel $K$ is smooth.
	\end{proof}
	\subsection{Some operators in $C^*_f(M \sqcup N)^\Gamma$}
	\begin{prop}
		The operator $T_f$ is in $C_f^*(M \sqcup N)^\Gamma$.
	\end{prop}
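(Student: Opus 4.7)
The plan is to verify that $T_f$, viewed as an operator on the $(M \sqcup N, \Gamma)$-module via the off-diagonal embedding described in Remark \ref{boy}, satisfies the three defining properties of elements of $C^*_f(M \sqcup N)^\Gamma$: $\Gamma$-equivariance, finite propagation with respect to the coarse structure of Example \ref{coarseexem}, and local compactness. The $\Gamma$-equivariance has essentially already been noted (it follows from the $\Gamma$-equivariance of $p_f$, of $\omega$, and of $pr_{M\star}$), so the substance of the argument is to establish the other two.

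First I would establish finite propagation. Point 4 of the preceding Proposition shows we may choose the Thom form $\omega$ with support in $f^*T^{\delta_0}N$ for any $\delta_0 \leq \delta$. For $\phi \in C_0(M)$ and $\psi \in C_0(N)$ the composition $\phi \circ T_f \circ \psi = \phi \circ pr_{M\star} \circ e_\omega \circ p_f^* \circ \psi$ vanishes unless there exist $p \in \mathrm{supp}(\phi)$ and $v_{f(p)} \in T_{f(p)}N$ with $|v_{f(p)}| \leq \delta_0$ such that $p_f(p, v_{f(p)}) = \exp_{f(p)}(v_{f(p)}) \in \mathrm{supp}(\psi)$. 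This forces $d_N(f(p), \mathrm{supp}(\psi)) \leq \delta_0$, which in terms of the coarse structure of Example \ref{coarseexem} is exactly
\begin{equation}
d_N\bigl((f \sqcup id_N)(\mathrm{supp}(\phi)), (f \sqcup id_N)(\mathrm{supp}(\psi))\bigr) \leq \delta_0.
\end{equation}
Hence $T_f$ has propagation at most $\delta_0$ with respect to this coarse structure, so it is controlled.

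Next I would check local compactness. By Proposition \ref{Tsmooth}, $T_f$ is a smoothing operator with smooth Schwartz kernel $K \in \Gamma(\Lambda^*(M) \boxtimes \Lambda^*(N))$. For $\phi \in C_0(M \sqcup N)$, decompose $\phi = \phi_M + \phi_N$. Then $\phi T_f$ has kernel $\phi_M(p)K(p,q)$; by the finite propagation just established, $K(p,q)$ vanishes unless $d_N(f(p),q) \leq \delta_0$, so on $\mathrm{supp}(\phi_M)$ the variable $q$ ranges over the $\delta_0$-neighborhood of the bounded set $f(\mathrm{supp}(\phi_M))$, which has compact closure in $N$ by bounded geometry (Remark \ref{bvolume}). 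Thus the kernel of $\phi T_f$ is smooth and compactly supported, hence Hilbert--Schmidt and compact. For $T_f \phi_N$, the uniform properness of $f$ (hence of $p_f$) forces $\pi(p_f^{-1}(\mathrm{supp}(\phi_N)))$ to be a bounded subset of $M$, so again the kernel $K(p,q)\phi_N(q)$ has compact support and the operator is compact.

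The main technical obstacle is purely bookkeeping: the operator $T_f$ naturally maps $\mathcal{L}^2(N) \to \mathcal{L}^2(M)$, while the coarse algebra lives on $M \sqcup N$, and one must be precise about the off-diagonal embedding (Remark \ref{boy}) to see that propagation in the sense of Example \ref{coarseexem} is the right notion to control. Once this identification is made, both finite propagation and local compactness are immediate consequences of the support properties of $\omega$, the smoothness of the kernel $K$, and bounded geometry; no genuinely new estimate is required.
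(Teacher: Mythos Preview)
Your argument is correct and follows essentially the same route as the paper: establish controlled support from the support of $\omega$ and the definition of $p_f$, then deduce local compactness from the smoothing-kernel description in Proposition \ref{Tsmooth}. Two minor remarks: your propagation bound $\delta_0$ is actually sharper than the paper's $C_{p_f}$ (the paper uses the Lipschitz constant of $p_f$ rather than the geodesic radius directly); and your reference to Remark \ref{boy} is slightly off, since that remark concerns the tensor-with-$l^2(\mathbb N)$ embedding rather than the off-diagonal placement of $T_f:\mathcal L^2(N)\to\mathcal L^2(M)$ inside $B(\mathcal L^2(M\sqcup N))$, but the intended identification is clear and harmless. Also, when you pass to $\phi\in C_0(M\sqcup N)$ in the local-compactness step you implicitly use that $C_c$ is dense in $C_0$ and compact operators are norm-closed; the paper does the same without comment.
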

	\begin{proof}
		One can observe that
		\begin{equation}
			supp(T_f) \subseteq \{(x,x') \in M \times N | B^\delta_x \cap p_f^{-1}(x') \neq \emptyset \}
		\end{equation}
		where $B^\delta_x$ is the fiber of $\pi: f^*TN \longrightarrow M$.
		\\So, if $\psi$ and $\phi$ are two compactly supported functions on $N$ and $M$ respectively we have that
		\begin{equation}
			supp(\psi T_f \phi) \subseteq \pi^{-1}(supp(\psi)) \cap p_f^{-1}(supp(\phi)).
		\end{equation}
		And so
		\begin{equation}
			d(\pi^{-1}(supp(\psi)), p_f^{-1}(supp(\phi))) > 0 \implies \psi T_f \phi = 0.
		\end{equation}
		\\
		Now, if $t \in p_f(\pi^{-1}(supp(\psi)))$ then $t = p_f(s,q)$ for some $(s,q)$. Then we have
		\begin{equation}
			d(t, f(supp(\psi))) \leq d(p_f(s,q), p_f(s, 0)) \leq C_{p_{f}}d((s,q), (s, 0)) \leq C_{p_{f}}.
		\end{equation} 
		Let $R > C_{p_{f}}$. We have that
		\begin{equation}
			\begin{split}
				&d(f(supp(\psi)), supp(\phi)) > R \implies\\
				&d(p_f(\pi^{-1}(supp(\psi))), supp(\phi)) > 0 \implies \\
				&d(\pi^{-1}(supp(\psi)), p_f^{-1}supp(\phi)) > 0 \implies \\
				&\psi T_f \phi = 0.
			\end{split}
		\end{equation}
		It means that $T_f$ has propagation less or equal to $C_{p_{f}}$. Moreover if $\phi$ has compact support then $T_f\phi$ and $\phi T_f$ are integral operator with smooth, compactly supported kernel. Then, in particular they are compact.
		\\Since the $\Gamma$-invariance of $T_f$ it follows that $T_f$ is in $C_f^*(M\sqcup N)^\Gamma$.
	\end{proof}
	\begin{cor}
		Since $T_f$ is in $C_f^*(M\sqcup N)^\Gamma$, then also $T_f^\dagger$, $\tau T_f\tau$ and $\tau T_f^{\dagger}\tau$ are in $C_f^*(M\sqcup N)^\Gamma$.
	\end{cor}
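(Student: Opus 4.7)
The plan is to leverage two general features of the Roe algebra machinery: that $C^*$-algebras are closed under the adjoint operation, and that $C^*_f(M\sqcup N)^\Gamma$ is a two-sided ideal of $D^*_f(M\sqcup N)^\Gamma$. Since $T_f^\dagger = \tau_N T_f^* \tau_M$, the four operators in the statement can all be obtained from $T_f$ by composing with $\tau$ on either side and/or by taking the adjoint, so it is enough to show (i) $T_f^*\in C^*_f(M\sqcup N)^\Gamma$ and (ii) left- and right-multiplication by $\tau := \tau_M\oplus\tau_N$ preserves $C^*_f(M\sqcup N)^\Gamma$.

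For (i), the $C^*$-algebra $C^*_f(M\sqcup N)^\Gamma$ is by definition norm-closed and self-adjoint in $B(H_{M\sqcup N})$, so since the previous proposition places $T_f$ in it, the adjoint $T_f^*$ lies there as well. For (ii), I would first check that $\tau$ sits in the structure algebra $D^*_f(M\sqcup N)^\Gamma$. The chirality operator acts pointwise on forms and is bounded (in fact an isometric involution), so it is genuinely \emph{local}: its support in $X\times X$ is contained in the diagonal, hence controlled, and $[\rho(\phi),\tau]=0$ for every $\phi\in C_0(X)$, so $\tau$ is trivially pseudo-local. The $\Gamma$-equivariance is immediate from the fact that $\Gamma$ acts by orientation-preserving isometries, which commute with the Hodge star and with multiplication by $i^{m/2}$ (or $i^{(m+1)/2}$). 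Therefore $\tau\in D^*_f(M\sqcup N)^\Gamma$.

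Now by the ideal property recalled just before the short exact sequence in the section on Roe and structure algebras, one has $D^*_f(M\sqcup N)^\Gamma\cdot C^*_f(M\sqcup N)^\Gamma\cdot D^*_f(M\sqcup N)^\Gamma\subseteq C^*_f(M\sqcup N)^\Gamma$. Applying this with $T_f$ or $T_f^*$ in the middle and $\tau$ (or the identity) on either side yields immediately
\begin{equation*}
\tau T_f\tau,\ \ T_f^\dagger=\tau T_f^*\tau,\ \ \tau T_f^\dagger\tau\ =\ T_f^*\tau\ \text{(up to signs)}\ \in\ C^*_f(M\sqcup N)^\Gamma,
\end{equation*}
using that $\tau^2=1$ for the last one.

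I do not anticipate a real obstacle: the only conceptual point is verifying that $\tau$ belongs to the structure algebra, which reduces to the three bookkeeping facts that $\tau$ is bounded, has zero propagation, and commutes with the $\Gamma$-action. Once these are recorded, the statement is a direct application of the ideal property together with closure under adjoints.
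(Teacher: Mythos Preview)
Your proof is correct and follows essentially the same route as the paper: observe that $\tau\in D_f^*(M\sqcup N)^\Gamma$, that $C_f^*(M\sqcup N)^\Gamma$ is closed under adjoints as a $C^*$-algebra, and then use the ideal property. The paper's own proof is the one-line version of exactly this argument (your verification that $\tau$ is local, pseudo-local and $\Gamma$-equivariant just spells out what the paper asserts), though note a small slip in your last display: $\tau T_f^\dagger\tau=\tau_N(\tau_N T_f^*\tau_M)\tau_M=T_f^*$ exactly, with no leftover $\tau$ or sign.
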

	\begin{proof}
		It is sufficient to observe that $\tau$ is in $D_f^*(M \sqcup N)^\Gamma$ and that, since $C_f^*(M\sqcup N)^\Gamma$ is a $C^*$-algebra, if $T_f$ is in $C_f^*(M\sqcup N)^\Gamma$ then $T_f^*$ is in $C_f^*(M\sqcup N)^\Gamma$.
	\end{proof}
	\begin{prop}
		The operator $y$ is a $\Gamma$-equivariant operator with finite propagation. Moreover the operators $T_fy$, $yT_f$, $T_f^\dagger y$ $yT_f^\dagger$ are all operators in $C_f^*(M \sqcup N)^\Gamma$. 
	\end{prop}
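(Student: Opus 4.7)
The argument splits into three parts: $\Gamma$-equivariance of $y$, finite propagation of $y$, and membership of the four products in $C^*_f(M \sqcup N)^\Gamma$.

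First, $\Gamma$-equivariance of $y$ is immediate from the corollary preceding the proposition, since $y = (Y + Y^\dagger)/2$ with $Y$ (up to a sign) the composition $p_{f,2\star} \circ e_{\omega_2} \circ pr_{1,\star} \circ e_{\omega_1} \circ \int_{0,\mathcal{L}}^1 \circ H^*$, and every constituent is $\Gamma$-equivariant: the submersion $p_f$ by Proposition \ref{tilde}, the projections $pr_i$ and $p_{f,2}$ by construction, the Thom forms pulled back from $\Gamma$-invariant Thom forms on $TN$, and the homotopy $H(v_1, v_2, s) = p_f(s v_2 + (1-s)v_1)$ inheriting equivariance from $p_f$.

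For finite propagation, I would trace the support of $Y\alpha$ through the composition. For $\alpha \in \mathcal{L}^2(N)$ with $supp\,\alpha \subseteq A$, the form $H^*\alpha$ is supported in $H^{-1}(A) \subseteq \mathcal{B} \times [0,1]$, whose base projection in $M$ is bounded in terms of $diam(A)$ because $H$ is uniformly proper (it factors as $p_f$, uniformly proper by Corollary \ref{R.-N.-p} and its surrounding discussion, composed with the bounded linear interpolation in the fibers). Then $\int_{0,\mathcal{L}}^1$ projects to $\mathcal{B}$ without enlarging the base support; multiplication by $\omega_1, \omega_2$ restricts support to the $\delta$-tubes around the zero sections; and the fiber integrations $pr_{1,\star}, p_{f,2\star}$ push forward along lipschitz maps of bounded vertical diameter (using the lipschitz property of $p_f$ from the corollary following Lemma \ref{tilde}). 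Combining these bounds yields a uniform $R > 0$ such that $supp(Y\alpha) \subseteq B_R(A)$, and symmetrically for $Y^\dagger$; hence $y$ has propagation at most $R$.

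For the products, view every operator as acting on $\mathcal{L}^2(M \sqcup N) = \mathcal{L}^2(M) \oplus \mathcal{L}^2(N)$ by extension by zero on the summand where it has no natural action. Under this convention $yT_f$ and $T_f^\dagger y$ vanish identically (the range of $T_f$ and the source of $T_f^\dagger$ lie in $\mathcal{L}^2(M)$, while $y$ acts on $\mathcal{L}^2(N)$), so these are trivially in $C^*_f(M \sqcup N)^\Gamma$; only $T_f y$ and $yT_f^\dagger$ need analysis. Both are $\Gamma$-equivariant and have finite propagation (adding propagations of the two factors). For local compactness of $T_f y$: given $\phi \in C_0(M \sqcup N)$, split $\phi = \phi_M + \phi_N$. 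Then $\phi_M T_f y = (\phi_M T_f) y$ is compact since $\phi_M T_f$ is compact by $T_f \in C^*_f(M \sqcup N)^\Gamma$ (previous proposition), while $\phi_N T_f y = 0$. For right multiplication, $T_f y \phi_M = 0$; and for $T_f y \phi_N$, use finite propagation of $y$ to choose $\chi \in C_c(N)$ equal to $1$ on the $R$-neighborhood of $supp\,\phi_N$, so that $y\phi_N = \chi y \phi_N$ and consequently $T_f y \phi_N = (T_f \chi)(y \phi_N)$ with $T_f \chi$ compact. The analysis of $yT_f^\dagger$ is symmetric, using $T_f^\dagger \in C^*_f(M \sqcup N)^\Gamma$ from the preceding corollary. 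The main obstacle lies in the support-tracking of step two: one must verify that the constant $R$ really is uniform in $A$, in particular controlling how the vertically-compactly-supported Thom forms interact with the fiber integrations $pr_{1,\star}$ and $p_{f,2\star}$ on the mixed bundle $\mathcal{B}$.
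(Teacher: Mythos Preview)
Your overall architecture matches the paper's, and both the $\Gamma$-equivariance and the local-compactness argument (inserting a cutoff $\chi$ between $T_f$ and $y$ via finite propagation of $y$) are exactly what the paper does. Your observation that $yT_f$ and $T_f^\dagger y$ are identically zero under the extension-by-zero convention is correct and is a small simplification the paper does not make explicit.

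However, there is a real gap in your finite-propagation step. You argue that the $M$-base projection of $H^{-1}(A)$ is ``bounded in terms of $diam(A)$ because $H$ is uniformly proper,'' and then conclude a uniform $R$ with $supp(Y\alpha)\subseteq B_R(A)$. Uniform properness only gives $diam(H^{-1}(A))\le \alpha(diam(A))$, a bound that \emph{grows with} $diam(A)$; this does not yield a propagation constant independent of $A$, so the stated conclusion does not follow from the stated reason. The paper avoids this by using not uniform properness but the fact that $p_f$ (and hence $H=p_f\circ A$) is lipschitz-homotopic to $\overline f = f\circ \pi$ with a fixed lipschitz constant: from $d\big(H(v_1,v_2,s),\, f(\pi(v_1,v_2))\big)\le C$ one gets that the $M$-projection of $H^{-1}(A)$ sits inside $f^{-1}(B_C(A))$ for a \emph{fixed} $C$, and then $p_{f,2}$ maps $\pi_2^{-1}(f^{-1}(B_C(A)))$ into $B_{C+C_{p_f}}(A)$ because $p_f(x,0)=f(x)$ and $p_f$ is lipschitz on the $\delta$-fibre. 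That is the mechanism producing a propagation bound independent of $A$; you should replace the uniform-properness step with this lipschitz-closeness argument.
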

	\begin{proof}
		Let us consider the operator $Y$ in Lemma \ref{y and Y}. If we show that $Y$ is an operator with finite propagation, then one can easily check that the same holds for $Y^\dagger$ and for $y:= \frac{Y + Y^\dagger}{2}$.
		\\
		\\First, we have to observe that, up to signs,
		\begin{equation}
			Y:= p_{f,2\star} \circ e_{\omega_2} \circ pr_{1,\star} \circ e_{\omega_1} \circ \int_{0, \mathcal{L}}^1 \circ H^*.
		\end{equation}
		where $H$ is the homotopy defined in Lemma \ref{yformula}. Then its pullbcak is given by
		\begin{equation}
			H^* = A^* \circ pr_1^* \circ p_{f,2}^*.
		\end{equation}
		Let $\alpha$ be a compactly supported differential forms on $N$. 
		\\Since the formula in Remark \ref{oss}, we have that
		\begin{equation}
			supp(p_{f,2}^*\alpha) = p_{f,2}^{-1}(supp(\alpha)) \subseteq \pi_2^{-1}(f^{-1}(B_1(supp(\alpha)))),
		\end{equation}
		where $\pi_2: f^*(T^\delta N)_2 \longrightarrow M$ is the projection of the fiber bundle and where $B_1(supp(\alpha))$ is the $1$-neighborhood of $supp(\alpha)$.
		\\This means that
		\begin{equation}
			supp(pr_{1,\star} \circ e_{\omega_2} \circ e_{\omega_1} \circ \int_{0\mathcal{L}} \circ H^*(\alpha)) \subseteq \pi_2^{-1}(f^{-1}(B_1(supp(\alpha)))).
		\end{equation}
		and, in particular
		\begin{equation}
			supp(Y(\alpha)) \subseteq p_{f,2}(\pi_2^{-1}(f^{-1}(B_1(supp(\alpha)))))
		\end{equation}
		Now we can observe that if $(x,t)$ is in $\pi_2^{-1}(f^{-1}(B_1(supp(\alpha))))$, then
		\begin{equation}
			d(p_f(x,t), B_1(supp(\alpha)) \leq C_{p_f}
		\end{equation}
		where $C_{p_f}$ is the lipschitz costant of $p_f$. Indeed it is sufficient to observe that
		\begin{equation}
			p_f(x,0) = f(x) \subseteq B_1(supp(\alpha))
		\end{equation}
		and
		\begin{equation}
			d(p_f(x,1), p_f(x,0)) \leq C_{p_f}d((x,0),(x,1)) = C_{p_f}.
		\end{equation}
		Then we have that if $\phi$ and $\psi$ are two functions on $N$ with
		\begin{equation}
			d(supp(\phi), supp(\psi)) > C_{p_f} + 1 \implies \phi Y \psi = 0.
		\end{equation}
		\\
		\\Now to conclude the proof it is sufficient to show that $T_fY$, $YT_f$, $T_f^\dagger Y$, $Y T_f^\dagger$ are all operator in $C_f^*(M \sqcup N)^\Gamma$. We will start by studying $T_fY$.
		\\We know that $Y$ and $T_f$ are bounded operator which have both finite propagation, so also $T_fY$ is a bounded operator with finite propagation.
		\\Moreover if $g$ is a function on $M \sqcup N$ with compact support than we have that $gT_f$ is a compact operator since $T_f$ is in $C_f^*(M \sqcup N)^\Gamma$ and son $gT_fY$ is compact.
		\\Now let us consider $T_fYg$: now we know that for all $\alpha$ we have that
		\begin{equation}
			supp(Yg(\alpha)) \subseteq B_1(supp(g)).
		\end{equation}
		This means that, if $\phi$ is a compactly supported function such that $\phi \cong 1$ on $B_1(supp(g))$, then we have that
		\begin{equation}
			T_fYg = T_f \phi Y g.
		\end{equation}
		Then we con observe that $T_f\phi$ is a compact operator. Then also holds for $T_fYg$. This means that $T_fY$ is in $C_f(M \sqcup N)^\Gamma$. Exactly in the same way one can check that $YT_f$, $T_f^\dagger Y$, $Y T_f^\dagger$ are operator in $C_f^*(M \sqcup N)^\Gamma$.
	\end{proof}
	\begin{rem}\label{FVdit}
		Let us consider the kernel $K$ of $T_f$. Let us fix some local normal coordinates $\{x,y\}$ on $M \times N$. Since Proposition \ref{Tsmooth}, we know that outside $im(t_f)$ the kernel is identically $0$. Morever, inside $im(t_f)$, we have that, the kernel is locally given by
		\begin{equation}
			K_J^I(x,y) = \beta_{IJ^c}(x,y), 
		\end{equation} 
		where the functions $\beta_{IJ^c}(x,y)$ are the components of the pullback of the Thom form $\omega$ along the map $t_{f}^{-1}: im(t_f) \subset M \times N \longrightarrow f^*TN$.
		\\
		\\Let us recall that, respect to some fibered coordinates $\{x^i, y^j\}$ on $f^*T^\delta N$, the components of $\omega$ are algebraic combinations of pullback of components of the metric $h$ on $N$ and of Christoffell symbols of the Levi-Civita connection $\nabla^E$ along the map $f$ and derivatives of $f$ (see subsection \ref{Thom}). This means that if $f$ is a $C^k_b$-map the local components of $\omega$ and their derivatives of order $k$ are uniformly bounded only .
		\\Moreover we also know, beacuse of Proposition \ref{tilde}, that if $f$ is a $C^k_b$-map then also $p_f$ and, in particular, $t_f$ have uniformly bounded derivatives of order $0, 1..., k$.
		\\
		\\Let us suppose that $t_f$ has bounded derivatives of order $k$. Then we know that $t_f$ is a diffeomorphism with its image. This fact, using also the inverse function Theorem, implies that $t_{f}^{-1}$ (up to consider $\delta$ small enough) has bounded derivatives of order $0, 1, ... k$.
		\\
		\\The bounds on the components of $\omega$ and on the derivatives of $t_f^{-1}$ imply that if $f$ is a $C^k_b$ map, then the components of the kernel $K$ of $T_f$ in normal coordinates have bounded derivatives of order $0, 1, ... k$.
	\end{rem}
	\begin{prop}\label{dT_fboun}
		Let $(M,g)$ and $(N,h)$ be two Riemannian manifolds of bounded geometry. Let $f:M \longrightarrow N$ be a lipschitz-homotopy equivalence. Then we have that $T_f$ and $dT_f =T_f d$ have uniformly bounded support (definition in Appendix \ref{smoothing}). Moreover if $f$ is a $C^2_{b}$-map, then $dT_f$ is a bounded operator.
	\end{prop}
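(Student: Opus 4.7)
The plan is to split the statement into its two pieces and handle them in turn. First I would establish the uniform bound on the supports of $T_f$ and $dT_f$: this is essentially a refinement of the finite-propagation estimate used when showing that $T_f \in C^*_f(M\sqcup N)^\Gamma$. Recall that for compactly supported $\psi, \phi$ the implication $\psi T_f\phi = 0$ was derived as soon as $d(f(\mathrm{supp}\,\psi),\mathrm{supp}\,\phi) > C_{p_f}$. Read off at the level of the smoothing kernel $K(p,q)$ of $T_f$ (whose existence is Proposition \ref{Tsmooth}), this says precisely that
\begin{equation}
\mathrm{supp}\,K \subseteq \{(p,q) \in M\times N : d(f(p), q) \leq C_{p_f}\},
\end{equation}
which is exactly what ``uniformly bounded support'' should mean in the sense of Appendix \ref{smoothing}. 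Since $dT_f = T_f d$, the kernel of $dT_f$ is obtained by differentiating $K$ in the $p$-variable (a local operation); its support is therefore contained in the support of $K$ and so is also uniformly bounded.

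For the second claim I would invoke Remark \ref{FVdit}: when $f$ is of class $C^k_b$, the coefficient functions $\beta_{IJ}$ of $K$ in any pair of normal coordinate charts around $p$ and $q = p_f(p,v)$ are uniformly controlled together with their derivatives up to order $k$, uniformly in the choice of chart. Taking $k=2$, the first derivatives in the $p$-variable of the kernel $K$ are uniformly bounded in local coordinates, which, together with the uniformly bounded support established above, places the kernel of $dT_f$ into the class of smoothing operators treated in Appendix \ref{smoothing}; the corresponding boundedness lemma of that Appendix then yields that $dT_f$ is $\mathcal{L}^2$-bounded.

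The main technical obstacle is the passage from the \emph{local coordinate} estimates on the kernel given by Remark \ref{FVdit} to a genuine \emph{pointwise} estimate on its bundle norm, and from there to $\mathcal{L}^2$-boundedness. This requires combining three facts which are already available in the text: the bounded geometry of $M$ and $N$ (so that normal coordinate charts have uniformly comparable Euclidean and Riemannian structure, by Propositions \ref{uno}--\ref{charts}), the uniform bound on the Fiber Volume of the map $t_f$ established in Lemma \ref{svolta} together with the uniformly bounded behaviour of the coefficients of the Thom form $\omega$ from subsection \ref{forma}, and the general $\mathcal{L}^2$-boundedness criterion for smoothing operators with uniformly bounded support and uniformly bounded kernel (derivatives) from Appendix \ref{smoothing}. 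The $C^2_b$ hypothesis enters exactly so that one $p$-derivative can be taken through $K$ while retaining a global $C^0$-bound on the resulting kernel, which is the hypothesis demanded by the Appendix criterion.
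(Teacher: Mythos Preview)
Your overall strategy matches the paper's, but there is a genuine gap in the first part. The inclusion
\[
\mathrm{supp}\,K \subseteq \{(p,q)\in M\times N : d(f(p),q)\le C_{p_f}\}
\]
is \emph{not} what ``uniformly bounded support'' means in Appendix~\ref{smoothing}. That definition requires two separate diameter bounds: $\mathrm{diam}\bigl(\mathrm{supp}\,K(p,\cdot)\bigr)\le S$ for every $p$, and $\mathrm{diam}\bigl(\mathrm{supp}\,K(\cdot,q)\bigr)\le R$ for every $q$. Your inclusion immediately gives the first (for fixed $p$ the set $\{q:d(f(p),q)\le C_{p_f}\}$ has diameter $\le 2C_{p_f}$), but for fixed $q$ it only tells you that $\mathrm{supp}\,K(\cdot,q)\subseteq f^{-1}\bigl(B_{C_{p_f}}(q)\bigr)$, and nothing in the finite-propagation argument bounds the diameter of this preimage. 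The missing ingredient is the uniform properness of $f$: since $f$ is a lipschitz-homotopy equivalence it is uniformly proper, so $\mathrm{diam}\bigl(f^{-1}(B_{C_{p_f}}(q))\bigr)\le \alpha(2C_{p_f})$ independently of $q$. This is exactly the step the paper carries out explicitly; without it the left-uniform bound fails (and Proposition~\ref{smoothbound}, which you invoke later, needs both sides).

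Once this is patched, the remainder of your argument---that $\mathrm{supp}(d_MK)\subseteq\mathrm{supp}\,K$ via Lemma~\ref{impo}, that Remark~\ref{FVdit} with $k=2$ gives a uniform $C^0$ bound on the first $x$-derivatives of the kernel in normal coordinates, and that Proposition~\ref{smoothbound} (via Remark~\ref{derivsmooth}) then yields $\mathcal{L}^2$-boundedness of $dT_f$---is the same as the paper's.
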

	\begin{proof}
		We will first show that $T_f$ has uniformly bounded supported kernel $K$. Since $supp(d_M K) \subseteq supp(K)$ it will follow immediately that also $dT_f$ has uniformly bounded supported kernel. 
		\\
		\\We know, since Proposition \ref{Tsmooth} that $K(p,q) = 0$ if $(p,q)$ is not in $im(t_f)$, where $t_f = (\pi, p_f)$. \\Fix $p$, a point of $M$. We can observe that if $q$ is not in $p_f(B^\delta_p)$, then $(p,q) \notin im(t_f)$ and so $K(p,q) = 0$. We know, since $p_f$ is lipschitz and $diam(B^\delta) = 2\delta$, that
		\begin{equation}
			diam(supp(K(p, \cdot)) \leq  2\delta C_{p_f}.
		\end{equation}
		Fix now $q$ in $N$. We have that, becuse of Remark \ref{oss}, that
		\begin{equation}
			\pi(p_f^{-1}(q)) \subseteq B_1(f^{-1}(q))
		\end{equation}
		and if $p$ is not in $B_1(f^{-1}(q))$ then $K(p,q) = 0$. Then, since $f$ is a lipshitz-homotopy equivalence, then in particular we have that $f$ is a uniformly proper map. This means that
		\begin{equation}
			diam(f^{-1}(A)) \leq \alpha(diam(A))
		\end{equation}
		for a continuous function $\alpha: [0, +\infty] \longrightarrow [0, +\infty)$. Then we have that
		\begin{equation}
			diam(supp(K(\cdot, q)) \leq  \alpha(0) +1.
		\end{equation}
		\\
		\\
		\\To conclude the proof we want to apply Proposition \ref{smoothbound}. In order to apply this Proposition we have to show that
		\begin{equation}\label{true}
			|\frac{\partial}{\partial x^j}K^I_J(x,y)| \leq C.
		\end{equation}
		Because of Remark \ref{FVdit} and Remark \ref{derivsmooth} in Appendix \ref{smoothing} this fact imediatly follows if $f$ is a $C^2_b$-map.
	\end{proof}
	\begin{cor}\label{corn}
		The operators $dT_f = T_fd$, $T_f^\dagger d = d T_f^\dagger$, $T_fyd$, $dyT_f$, $T_f^\dagger yd$ $dyT_f^\dagger$ are all operators in $C^*_f(M \sqcup N)^\Gamma$.
	\end{cor}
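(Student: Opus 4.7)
The strategy is to reduce all six operators to algebraic expressions involving $T_f$, $T_f^\dagger$, $y$, and $dT_f$, and then verify membership using the homotopy identity $1 - T_f^\dagger T_f = dy + yd$ (valid on $\mathrm{dom}(d_{\min})$) together with the commutation $T_f d = dT_f$. The organizing observation is that every quantity in the list can be rewritten so that the only non-routine ingredient is $dT_f$, which needs to be placed inside $C^*_f(M\sqcup N)^\Gamma$ directly.

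First I would prove $dT_f \in C^*_f(M\sqcup N)^\Gamma$. Boundedness is Proposition \ref{dT_fboun} (applied after the $C^2_b$-approximation of $f$ from Proposition \ref{appr}). Finite propagation follows from the uniform support bound on the kernel $K$ given in Proposition \ref{dT_fboun}, together with the inclusion $\mathrm{supp}(d_MK)\subseteq \mathrm{supp}(K)$. Local compactness is immediate: for any compactly supported $\phi\in C_0(M\sqcup N)$ the operators $\phi\,dT_f$ and $dT_f\,\phi$ have smooth, compactly supported kernels, hence are Hilbert--Schmidt, in particular compact. The $\Gamma$-equivariance of $dT_f$ is inherited from that of $T_f$ since $d$ commutes with the action of the isometry group $\Gamma$. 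The identity $dT_f^\dagger = T_f^\dagger d$ would then be derived from $(dT_f)^*=T_f^* d^*$ and the standard relation $\tau\, d\,\tau = \pm d^*$, using that $\tau\in D^*_f(M\sqcup N)^\Gamma$ conjugates the ideal $C^*_f$ into itself, so that $dT_f^\dagger \in C^*_f(M\sqcup N)^\Gamma$.

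For the four ``mixed'' operators I would use the homotopy identity to substitute $yd = 1 - T_f^\dagger T_f - dy$ on $\mathrm{dom}(d_{\min})$, obtaining
\begin{equation*}
T_f yd \;=\; T_f - T_f T_f^\dagger T_f - T_f dy \;=\; T_f - T_f T_f^\dagger T_f - (dT_f)\,y,
\end{equation*}
and analogously for $dyT_f$, $T_f^\dagger yd$, $dyT_f^\dagger$. The terms $T_f$ and $T_f T_f^\dagger T_f$ lie in $C^*_f$ because $T_f$ and $T_f^\dagger$ do and $C^*_f$ is a $C^*$-algebra. It therefore remains to place products such as $(dT_f)y$ and $y(dT_f)$ in $C^*_f$. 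Boundedness is clear since $dT_f$ is $\mathcal{L}^2$-bounded and $y$ is $\mathcal{L}^2$-bounded. Finite propagation follows because both factors have finite propagation. For local compactness I would repeat the cutoff argument used for $T_f y$ in the preceding proposition: for $g$ of compact support, pick $\phi\in C_c$ with $\phi\equiv 1$ on a $\mathrm{prop}(y)$-neighborhood of $\mathrm{supp}(g)$; then $(dT_f)\,y\,g = (dT_f)\phi\, y g$, and $(dT_f)\phi$ is compact because $dT_f\in C^*_f$. The mirror computation $g(dT_f)y = g(dT_f)\phi' y$ is handled identically.

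The main obstacle I anticipate is the bookkeeping for the $T_f^\dagger$-variants: since $T_f^\dagger$ does not a priori commute with $d$, one first has to establish $T_f^\dagger d = dT_f^\dagger$ on $\mathrm{dom}(d_{\min})$ by conjugating $T_f d = dT_f$ through the chirality operators and tracking signs; once this is in hand, the rewriting of $T_f^\dagger yd$ and $dyT_f^\dagger$ parallels the $T_f$ case verbatim. A secondary subtlety is ensuring that the substitution using $1-T_f^\dagger T_f = dy+yd$ is applied only on $\mathrm{dom}(d_{\min})$ and that the resulting identities extend to all of $\mathcal{L}^2$ by $\mathcal{L}^2$-boundedness of every term appearing on the right-hand side, which is guaranteed by the first step together with Proposition \ref{dT_fboun}.
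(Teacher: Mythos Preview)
Your proposal is correct and follows essentially the same route as the paper: first place $dT_f$ in $C^*_f(M\sqcup N)^\Gamma$ via the support inclusion $\mathrm{supp}(d_MK)\subseteq\mathrm{supp}(K)$ and the compact-kernel argument, then obtain $T_f^\dagger d$ by applying $\dagger$, and finally rewrite the mixed terms using $1-T_f^\dagger T_f=dy+yd$ so that only $(dT_f)y$-type products remain. Your explicit cutoff argument for $(dT_f)y$ is exactly the mechanism the paper invokes implicitly (by reference to the previous proposition) when it calls the result ``a combination of operators in $C_f^*(M\sqcup N)^\Gamma$''.
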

	\begin{proof}
		Since $dT_f = T_fd$ is a bounded integral operator with $supp(dT_f) \subseteq supp(T_f)$ then $dT_f$ has finite propagation. Moreover, since it has finite propagation we have that for all $g$ in $C_c(M \sqcup N)$ then $dT_fg$ and $gdT_f$ are smoothing operator with compact support and so are compact operators (Appendix \ref{smoothing}).
		\\
		\\We can observe that
		\begin{equation}
			T_f^\dagger d = -T_f^\dagger d^\dagger = -(dT_f)^\dagger
		\end{equation}
		and since $dT_f$ is in $C^*(M\sqcup N)^\Gamma$ we have that the same holds for $T_f^\dagger d = dT_f^\dagger$.
		\\Finally we can observe that
		\begin{equation}
			\begin{split}
				T_fyd &= T_f(dy - 1 - T_f^\dagger T_f) \\
				&=(T_fd)y - T_f - T_f T_f^\dagger T_f
			\end{split}
		\end{equation}
		which is a combination of operators in $C_f^*(M\sqcup N)^\Gamma$. All the other combinations are in $C_f^*(M \sqcup N)^\Gamma$ for the same argument. 
	\end{proof}
	\section{Lipschitz-homotopy invariance of the Roe Index}\label{wai}
	\subsection{The perturbed signature operator}
	Let us consider now two orientable, connected, Riemannian manifolds $(M,g)$ and $(N,h)$.
	We will consider $\mathcal{L}^2(M \bigsqcup N) \cong \mathcal{L}^2(N) \oplus \mathcal{L}^2(-M)$, where $-M$ is the manifold $M$ with the opposite orientation. 
	\begin{defn}
		We will denote with $d_{M \sqcup N}$ the operator
		\begin{equation}
			d_{M \sqcup N} := \begin{bmatrix}
				d_N && 0 \\
				0 && -d_M
			\end{bmatrix}
		\end{equation}
		if the dimensions of $M$ and $N$ are even, and the operator
		\begin{equation}
			d_{M \sqcup N} := i\begin{bmatrix}
				d_N && 0 \\
				0 && -d_M
			\end{bmatrix}.
		\end{equation}
		if the dimensions are odd.
	\end{defn}
	Moreover we will say
	\begin{defn}
		\textbf{Signature operator} the operator
		\begin{equation}
			\mathfrak{D}_{M \sqcup N} := \begin{bmatrix}
				\mathfrak{D}_N && 0 \\
				0 && -\mathfrak{D}_M
			\end{bmatrix}
		\end{equation}
	\end{defn}
	In general, the signature operator is not $\mathcal{L}^2$-invertible: our next step is to define a perturbation $A$ such that $\mathfrak{D}_{M \sqcup N} +A$ is $\mathcal{L}^2$-invertible. To do this we will follow the construction given in \cite{wahl}.
	\\
	\\Let us introduce $\gamma: \mathcal{L}^2(\Omega^*(M \bigsqcup N)) \longrightarrow \mathcal{L}^2(\Omega^*(M \bigsqcup N))$ the operator defined for all $\alpha$ as $\gamma (\alpha) := (-1)^{|\alpha|}$. One can easily check that $\gamma ^{\dagger} = -\gamma$.
	Moreover $\gamma$ commute with $\tau$ and with $d_{M\sqcup N}$. 
	\\Let us define, now
	\begin{equation}
		R_\beta := \begin{bmatrix}
			1 && 0 \\
			\beta T_f && 1
		\end{bmatrix}.
	\end{equation}
	We have that $R_\beta$ is $\mathcal{L}^2$-invertible: the inverse is given by
	\begin{equation}
		\begin{bmatrix}
			1 && 0 \\
			-\beta T_{f}\gamma && 1
		\end{bmatrix}.
	\end{equation}
	We will denote by $\tau$ the operator
	\begin{equation}
		\tau :=
		\begin{bmatrix}
			\tau_N && 0 \\
			0 && -\tau_M
		\end{bmatrix}.
	\end{equation}
	\\Consider now $\alpha$ a real number: we can define
	\begin{equation}
		L_{\alpha, \beta} := \begin{bmatrix}
			1- T^{\dotplus}_fT_f && \beta (\gamma + \alpha y)T_f^{\dotplus} \\
			\beta T_f(-\gamma -\alpha y ) && 1
		\end{bmatrix}.
	\end{equation}
	Moreover, if the dimension of $M$ is even, we will define the operator $\delta_{\alpha}$ as follow
	\begin{equation}
		\delta_{\alpha} :=
		\begin{bmatrix}
			d_N && \alpha iT^{\dagger}_f\gamma \\
			0 && -d_M
		\end{bmatrix},
	\end{equation}
	in the other case we have
	\begin{equation}
		\delta_{\alpha} :=
		\begin{bmatrix}
			i\cdot d_N && -\alpha T^{\dagger}_f \gamma \\
			0 && -i \cdot d_M
		\end{bmatrix},
	\end{equation}
	It's easy to check that $\delta_{\alpha}^2 = 0$. Since $d_N \oplus (- d_M)$ is a closed operator and $T^{\dagger}$ is bounded, then we have that $\delta_\alpha$ is a closed operator with the same domain of $d_N - d_M$.
	\\
	\\It's possible to observe that $L_\alpha d_{M \sqcup N} = d_{M \sqcup N} L_\alpha$, $L_\alpha \delta_\alpha = \pm \delta_\alpha^{\dagger} L_\alpha$ (we have a plus if $dim(M)$ is odd, a minus otherwise) and that $L_\alpha^\dagger = L_\alpha$.
	\\Morover we have that $R^{\dagger}_\beta R_\beta = L_0$, so $L_0$ is $\mathcal{L}^2$-invertible and the same holds for $L_\alpha$ if $|\alpha|$ is small enough.
	\\Let us define now
	\begin{equation}
		S_\alpha := \frac{\tau L_\alpha}{|\tau L_\alpha|}.
	\end{equation}
	Again, if $|\alpha|$ is small enough $S_\alpha$ is invertible (it is in particular an involution). We also have that $S_\alpha^{\dotplus} = S_\alpha$.
	\\Consider, now $U_\alpha := (|\tau L_\alpha|)^{\frac{1}{2}}$. Since $|\tau L_\alpha|$ is invertible, also $U_\alpha$ is invertible.
	\begin{defn}
		If the dimension od $M$ is even, we will call \textbf{perturbed signature operator} the operator
		\begin{equation}
			D_{\alpha, \beta} := U_{\alpha, \beta}(\delta_\alpha - S_{\alpha, \beta} \delta_\alpha S_{\alpha, \beta} )U_{\alpha,\beta}^{-1}.
		\end{equation}
		if $M$ has dimension odd, we have that
		\begin{equation}
			D_{\alpha,\beta} := -i U_{\alpha,\beta}( S_{\alpha,\beta} \delta_\alpha +\delta_\alpha S_{\alpha, \beta} )U_{\alpha, \beta}^{-1}.
		\end{equation}
	\end{defn}
	\subsection{$L^2$-invertibility of $D_{\alpha,1}$}
	\begin{lem}
		Let us consider two Riemannian manifolds of bounded geometry $(M,g)$ and $(N,h)$ and let $f$ be a lipschitz-homotopic equivalence. Then we have that $ker(\delta_{\alpha}) = im(\delta_{\alpha})$ in $\mathcal{L}^2(M \sqcup N)$.
	\end{lem}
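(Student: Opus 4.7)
The inclusion $\mathrm{im}(\delta_\alpha) \subseteq \ker(\delta_\alpha)$ is immediate from the identity $\delta_\alpha^2 = 0$ already noted above, so the real content is the reverse inclusion. My plan is to recognise $\delta_\alpha$ as the differential of the algebraic mapping cone of a chain map between $L^2$-de Rham complexes, and deduce from a long exact sequence that its cohomology vanishes.

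Concretely, the block-upper-triangular form of $\delta_\alpha$ exhibits it as the standard mapping-cone differential of the degree-zero map
\begin{equation*}
f := \alpha i \, T_f^\dagger \gamma : (\mathrm{dom}(d_M), d_M) \longrightarrow (\mathrm{dom}(d_N), d_N),
\end{equation*}
where $T_f^\dagger$ is $\mathcal{L}^2$-bounded (so $\mathrm{dom}(\delta_\alpha) = \mathrm{dom}(d_N) \oplus \mathrm{dom}(d_M)$) and where the equation $\delta_\alpha^2 = 0$ encodes precisely the fact that $f$ is a chain map. This mapping cone fits into the short exact sequence of cochain complexes
\begin{equation*}
0 \to (\mathrm{dom}(d_N), d_N) \xrightarrow{\;i\;} (\mathrm{dom}(\delta_\alpha), \delta_\alpha) \xrightarrow{\;p\;} (\mathrm{dom}(d_M), d_M)[1] \to 0,
\end{equation*}
with $i(\xi_N) = (\xi_N, 0)$ and $p(\xi_N, \xi_M) = \xi_M$. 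The snake lemma, applied in the abstract category of cochain complexes, yields a long exact sequence of unreduced $L^2$-cohomologies
\begin{equation*}
\cdots \to H^n_2(N) \to H^n(\delta_\alpha) \to H^{n+1}_2(M) \xrightarrow{\;f_\star\;} H^{n+1}_2(N) \to \cdots,
\end{equation*}
whose connecting homomorphism is exactly $f_\star = \alpha i \,[T_f^\dagger \gamma]$, as one verifies by lifting $[\xi_M]$ to $(0,\xi_M)$, applying $\delta_\alpha$, and reading off the first component.

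The heart of the argument is that $f_\star$ is an isomorphism. Two earlier results combine to give this. First, the lipschitz-homotopy invariance of $L^2$-cohomology proved at the end of Chapter 1 shows that $T_f$ induces an isomorphism on unreduced $L^2$-cohomology. Second, the identity $1 - T_f^\dagger T_f = dy + yd$ established in the previous subsection says that $T_f^\dagger T_f$ equals the identity on $L^2$-cohomology. Together these force $T_f^\dagger = T_f^{-1}$ in cohomology, so $T_f^\dagger$ is itself an isomorphism. The operator $\gamma$ anti-commutes with $d$ and squares to the identity, hence preserves cocycles and coboundaries and induces an involution, in particular an isomorphism, on each $H^n_2$. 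Consequently, for $\alpha \neq 0$, $f_\star$ is an isomorphism at every degree, and the long exact sequence collapses to $H^n(\delta_\alpha) = 0$ for all $n$, which is the desired equality $\ker(\delta_\alpha) = \mathrm{im}(\delta_\alpha)$.

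The main obstacle I anticipate is essentially bookkeeping: one must match the matrix presentation of $\delta_\alpha$ in the paper with the standard cone grading (which shifts one of the two summands by one degree), and track signs carefully so that $p$ really is a chain map with target $(\mathcal{L}^2(M), d_M)[1]$. A second point worth flagging is that the conclusion is about \emph{unreduced} cohomology ($\ker/\mathrm{im}$ rather than $\ker/\overline{\mathrm{im}}$); fortunately this is precisely what the purely algebraic snake lemma produces, so no closed-range or completion argument is required.
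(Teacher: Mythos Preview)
Your proposal is correct and follows essentially the same approach as the paper: both recognise $\delta_\alpha$ as a mapping-cone differential and conclude acyclicity from the fact that the underlying chain map is a quasi-isomorphism on unreduced $L^2$-cohomology.

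You are in fact more careful than the paper on one point. The paper's proof simply says ``$T_f$ gives a map between the complexes \ldots\ $\delta_\alpha$ is a mapping cone \ldots\ since $T_f$ is an isomorphism in $L^2$-cohomology, $\delta_\alpha$ is acyclic.'' But the off-diagonal entry of $\delta_\alpha$ is $\alpha i\,T_f^\dagger\gamma$, which runs from the $M$-complex to the $N$-complex, not $T_f$ itself. You correctly identify this and therefore supply the missing step: from $1 - T_f^\dagger T_f = dy + yd$ one gets $T_f^\dagger T_f = \mathrm{id}$ on cohomology, and combined with the invertibility of $T_f$ on cohomology this forces $T_f^\dagger$ (and hence $\alpha i\,T_f^\dagger\gamma$ for $\alpha\neq 0$) to be a quasi-isomorphism as well. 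The paper leaves this implicit; your argument makes it explicit and is the cleaner version.
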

	\begin{proof}
		Since $T_f d = d T_f$, we have that $T_f$ gives a map between the complexes
		\begin{equation}
			0 \xrightarrow{d_N}....\xrightarrow{d_N}dom(d_N)^{k-1} \xrightarrow{d_N} dom(d_N)^k \xrightarrow{d_N} dom(d_N)^{k+1} \xrightarrow{d_N} ...
		\end{equation}
		and
		\begin{equation}
			0 \xrightarrow{-d_M}....\xrightarrow{-d_M}dom(d_M)^{k-1} \xrightarrow{-d_M} dom(d_M)^k \xrightarrow{-d_M} dom(d_M)^{k+1} \xrightarrow{-d_M} ...
		\end{equation}
		In particular, we have that $\delta_\alpha$ is a mapping cone over these chains. Now, since $T_f$ is an isomorphism in $\mathcal{L}^2$-cohomology, this means that $\delta_\alpha$ is acyclic, i.e.
		\begin{equation}
			ker(\delta_{\alpha}) = im(\delta_{\alpha}).
		\end{equation}
	\end{proof}
	\begin{prop}
		The operators 
		\begin{equation}
			D_\alpha: dom(d_N) \oplus dom(d_M)\subseteq \mathcal{L}^2(\Omega^*(M \sqcup N)) \longrightarrow \mathcal{L}^2(\Omega^*(M \sqcup N))
		\end{equation}
		are injective if $|\alpha|$ i small enough.
	\end{prop}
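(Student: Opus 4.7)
The plan is to reduce the injectivity of $D_\alpha$ to a Hodge-type statement for the Hilbert complex $(\mathcal{L}^2(\Omega^*(M\sqcup N)),\delta_\alpha)$ endowed with a deformed inner product. Since $U_\alpha$ is bounded with bounded inverse for $|\alpha|$ small, the conjugation $U_\alpha^{-1}(\cdot)U_\alpha$ intertwines $D_\alpha$ with $\tilde D_\alpha:=\delta_\alpha-S_\alpha\delta_\alpha S_\alpha$ in the even case (respectively $-i(S_\alpha\delta_\alpha+\delta_\alpha S_\alpha)$ in the odd case). It therefore suffices to show that $\ker\tilde D_\alpha=0$ on $\operatorname{dom}(d_N)\oplus\operatorname{dom}(d_M)$.

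First, introduce the equivalent inner product $\langle\xi,\eta\rangle_{L_\alpha}:=\langle L_\alpha\xi,\eta\rangle$, which is equivalent to the standard one because $L_\alpha$ is bounded, self-adjoint, and positive for $|\alpha|$ small (by continuity from $L_0=R_\beta^{\dagger}R_\beta$). The relation $L_\alpha\delta_\alpha=\pm\delta_\alpha^{\dagger}L_\alpha$ given in the excerpt, combined with $\delta_\alpha^{\dagger}=\tau\delta_\alpha^*\tau$ and selfadjointness of $\tau$, shows that the formal adjoint of $\delta_\alpha$ in the new inner product is $\delta_\alpha^{*,L_\alpha}=\pm\tau\delta_\alpha\tau=\pm S_\alpha\delta_\alpha S_\alpha$. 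Thus $\tilde D_\alpha$ coincides, up to a sign, with the signature-like operator $\delta_\alpha+\delta_\alpha^{*,L_\alpha}$ of the Hilbert complex $(\mathcal{L}^2,\delta_\alpha,\langle\cdot,\cdot\rangle_{L_\alpha})$.

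Next, using $\delta_\alpha^2=0$ and $S_\alpha^2=1$, a direct expansion gives
\begin{equation*}
\tilde D_\alpha^{\,2}=-\{\delta_\alpha,S_\alpha\delta_\alpha S_\alpha\}=\mp\{\delta_\alpha,\delta_\alpha^{*,L_\alpha}\}=\mp\Delta_\alpha,
\end{equation*}
so $\tilde D_\alpha^{\,2}$ is, up to sign, the Laplacian $\Delta_\alpha$ of this Hilbert complex. If $\eta\in\ker\tilde D_\alpha$, then $\Delta_\alpha\eta=0$, and pairing with $\eta$ in the $L_\alpha$-inner product yields $\|\delta_\alpha\eta\|_{L_\alpha}^2+\|\delta_\alpha^{*,L_\alpha}\eta\|_{L_\alpha}^2=0$; hence $\delta_\alpha\eta=0$ and $\delta_\alpha^{*,L_\alpha}\eta=0$. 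By the standard Hodge decomposition for Hilbert complexes \cite{Bruning}, the space $\ker\delta_\alpha\cap\ker\delta_\alpha^{*,L_\alpha}$ is canonically isomorphic to the reduced $L^2$-cohomology of $(\mathcal{L}^2,\delta_\alpha)$, which is the same for the two equivalent inner products. The preceding lemma provides the unreduced acyclicity $\ker\delta_\alpha=\operatorname{im}\delta_\alpha$, so $\operatorname{im}\delta_\alpha$ is already closed and reduced cohomology vanishes as well. Therefore $\eta=0$, proving the even case; the odd case is handled analogously, since $(S_\alpha\delta_\alpha+\delta_\alpha S_\alpha)^2$ reduces to the same anticommutator.

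The main obstacle will be handling unbounded-operator domains rigorously: one must check that the identity $L_\alpha\delta_\alpha=\pm\delta_\alpha^{\dagger}L_\alpha$ extends from compactly supported forms to the closed operator $\delta_\alpha$; that $\tilde D_\alpha$ and its square are well-defined on the stated domain; that conjugation by $U_\alpha=|\tau L_\alpha|^{1/2}$ preserves $\operatorname{dom}(d_N)\oplus\operatorname{dom}(d_M)$, so that $\ker D_\alpha$ and $\ker\tilde D_\alpha$ are truly in bijection; and that all pairings are justified on the correct dense subspace. Once these analytic details are in place, the algebraic scheme above reduces injectivity to the acyclicity established in the preceding lemma.
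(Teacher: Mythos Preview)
Your overall strategy matches the paper's: introduce a deformed inner product in which $\tilde D_\alpha$ becomes the Hodge--Dirac operator of the acyclic complex $(\mathcal{L}^2,\delta_\alpha)$, then conclude via the Laplacian. However, your choice of inner product is wrong and this breaks the argument.

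You set $\langle\xi,\eta\rangle_{L_\alpha}:=\langle L_\alpha\xi,\eta\rangle$ and claim $L_\alpha$ is self-adjoint and positive. But the paper only records $L_\alpha^\dagger=L_\alpha$, i.e.\ $\tau L_\alpha^*\tau=L_\alpha$, which says that $\tau L_\alpha$ is self-adjoint, not $L_\alpha$ itself. A direct check on the $(1,1)$-entry $1-T_f^\dagger T_f$ already shows $L_\alpha$ is not self-adjoint in general, so your sesquilinear form is not Hermitian, let alone positive. Even if one ignores this, your adjoint computation gives $\delta_\alpha^{*,L_\alpha}=\pm\tau\delta_\alpha\tau$, and you then assert $\tau\delta_\alpha\tau=S_\alpha\delta_\alpha S_\alpha$; this is false, since $S_\alpha=\tau L_\alpha|\tau L_\alpha|^{-1}$ differs from $\tau$ by an operator in $C^*_f(M\sqcup N)^\Gamma$ that is nonzero whenever $T_f\neq 0$.

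The paper fixes both issues by using the inner product $\langle\xi,\eta\rangle_{\alpha}:=\langle\xi,|\tau L_\alpha|\,\eta\rangle$. Then $|\tau L_\alpha|$, $\tau L_\alpha$ and $S_\alpha$ are all self-adjoint for $\langle\cdot,\cdot\rangle_\alpha$, and the intertwining relation $L_\alpha\delta_\alpha=\pm\delta_\alpha^\dagger L_\alpha$ yields exactly $\delta_\alpha^{*,\alpha}=\pm S_\alpha\delta_\alpha S_\alpha$. After that, your Laplacian/acyclicity argument goes through verbatim. So the fix is a one-line change of inner product, but as written the key identification of $\pm S_\alpha\delta_\alpha S_\alpha$ with the adjoint of $\delta_\alpha$ is not established.
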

	\begin{proof}
		Let us consider the operator
		\begin{equation}
			\delta_\alpha \pm S_{\alpha,\beta} \delta_\alpha S_{\alpha,\beta},
		\end{equation}
		where we have a plus if $dim(M)$ is odd and a minus otherwise.
		If we check that this operator is injective, than also the perturbed signature operator is injective (in both cases: even or odd). 
		\\Let us consider on $\mathcal{L}^2(M \sqcup N)$ the scalar product
		\begin{equation}
			\langle \cdot , \cdot \rangle_{\alpha, \beta}:= \langle \cdot, |\tau L_{\alpha, \beta}| \cdot \rangle_{\mathcal{L}^2(M \sqcup N)}.
		\end{equation}	
		We have that $|\tau L_{\alpha,\beta}|$ is self-adjoint respect to this scalar product indeed given $\omega_1$ and $\omega_2$ two $\mathcal{L}^2$-differential forms, we have that
		\begin{equation}
			\begin{split}
				\langle |\tau L_{\alpha,\beta}| \omega_1 , \omega_2 \rangle_{\alpha,\beta} &= \langle |\tau L_{\alpha,\beta}| \omega_1, |\tau L_{\alpha,\beta}| \omega_2 \rangle_{\mathcal{L}^2(M \sqcup N)} \\
				&= \overline{ \langle |\tau L_{\alpha,\beta}| \omega_2, |\tau L_{\alpha,\beta}| \omega_1 \rangle}_{\mathcal{L}^2(M \sqcup N)} \\
				&= \overline{ \langle |\tau L_{\alpha,\beta}| \omega_2 , \omega_1 \rangle}_{\alpha,\beta} \\
				&= \langle \omega_1 , |\tau L_{\alpha,\beta}| \omega_2 \rangle_{\alpha,\beta}.
			\end{split}
		\end{equation}
		The operators $\tau L_{\alpha,\beta}$ and $S_{\alpha,\beta}$ are self-adjoint too, indeed
		\begin{equation}
			\begin{split}
				\langle \tau L_{\alpha,\beta} \omega_1 , \omega_2 \rangle_{\alpha,\beta} &= \langle \tau L_{\alpha,\beta} \omega_1, |\tau L_{\alpha,\beta}| \omega_2 \rangle_{\mathcal{L}^2(M \sqcup N)} \\
				&= \langle \omega_1, \tau L_{\alpha,\beta} |\tau L_{\alpha,\beta}| \omega_2 \rangle_{\mathcal{L}^2(M \sqcup N)} \\
				&= \langle \omega_1 , |\tau L_{\alpha,\beta}| \tau L_{\alpha,\beta} \omega_2 \rangle_{\alpha,\beta} \\
				&= \langle \omega_1 , \tau L_{\alpha,\beta} \omega_2 \rangle_{\alpha,\beta}.
			\end{split}
		\end{equation}
		and
		\begin{equation}
			\begin{split}
				\langle \omega_1 , S_{\alpha,\beta} \omega_2 \rangle_{\alpha,\beta} &= \langle \omega_1, |\tau L_{\alpha,\beta}| S_{\alpha,\beta} \omega_2 \rangle_{\mathcal{L}^2(M \sqcup N)} \\
				&= \langle \omega_1, \tau L_{\alpha,\beta} \omega_2 \rangle_{\mathcal{L}^2(M \sqcup N)} \\
				&= \langle  \tau L_{\alpha,\beta} \omega_1 , \omega_2 \rangle_{\mathcal{L}^2(M \sqcup N)} \\
				&= \langle  |\tau L_{\alpha,\beta}| |\tau L_{\alpha,\beta}|^{-1}\tau L_{\alpha,\beta} \omega_1 , \omega_2 \rangle_{\mathcal{L}^2(M \sqcup N)} \\
				&=  \langle  |\tau L_{\alpha,\beta}| S_{\alpha,\beta} \omega_1 , \omega_2 \rangle_{\mathcal{L}^2(M \sqcup N)} \\
				&= \langle S_{\alpha,\beta} \omega_1 , \omega_2 \rangle_{\alpha,\beta}\\
				&= \langle  \omega_2 , S_{\alpha,\beta} \omega_1 \rangle_{\alpha,\beta}.
			\end{split}
		\end{equation}
		Moreover, respect to this scalar product $\pm S_{\alpha,\beta} \delta_\alpha S_{\alpha,\beta}$ is the adjoint of $\delta_\alpha$, indeed
		\begin{equation}
			\begin{split}
				\langle \delta_\alpha \omega_1 , \omega_2 \rangle_{\alpha,\beta} &= \langle \delta_\alpha \omega_1, |\tau L_{\alpha,\beta}| \omega_2 \rangle_{\mathcal{L}^2(M \sqcup N)} \\
				&= \langle \delta_\alpha \omega_1, |\tau L_{\alpha,\beta}||\tau L_{\alpha,\beta}||\tau L_{\alpha,\beta}|^{-1} \omega_2 \rangle_{\mathcal{L}^2(M \sqcup N)} \\
				&= \langle \delta_\alpha \omega_1, \tau L_{\alpha,\beta} \frac{\tau L_{\alpha,\beta}}{|\tau L_{\alpha,\beta}|} \omega_2 \rangle_{\mathcal{L}^2(M \sqcup N)} \\
				&= \langle \omega_1, \tau \delta_\alpha^{\dotplus} \tau \tau L_{\alpha,\beta} \frac{\tau L_{\alpha,\beta}}{|\tau L_{\alpha,\beta}|} \omega_2 \rangle_{\mathcal{L}^2(M \sqcup N)} \\
				&= \langle \omega_1, \tau L_{\alpha,\beta} (\pm \delta_\alpha) \frac{\tau L_{\alpha,\beta}}{|\tau L_{\alpha,\beta}|} \omega_2 \rangle_{\mathcal{L}^2(M \sqcup N)} \\
				&= \langle \omega_1, |\tau L_{\alpha,\beta}| \pm \frac{\tau L_{\alpha,\beta}}{|\tau L_{\alpha,\beta}|} \delta_\alpha \frac{\tau L_{\alpha,\beta}}{|\tau L_{\alpha,\beta}|} \omega_2 \rangle_{\mathcal{L}^2(M \sqcup N)} \\
				&= \langle \omega_1 , \pm \frac{\tau L_{\alpha,\beta}}{|\tau L_{\alpha,\beta}|} \delta_\alpha \frac{\tau L_{\alpha,\beta}}{|\tau L_{\alpha,\beta}|} \omega_2 \rangle_{\alpha,\beta}.
			\end{split}
		\end{equation}
		We can denote with $\delta_\alpha^*$ the operators $\pm S_{\alpha,\beta} \delta_\alpha S_{\alpha,\beta}$.
		\\It's important to observe, now, that 
		\begin{equation}
			ker(\delta_\alpha^*) = im(\delta_\alpha)^\perp = ker(\delta_\alpha)
		\end{equation}
		We want to prove the injectivity of $\delta_\alpha + \delta_\alpha^*$ studying $(\delta_\alpha + \delta_\alpha^*)^2 = \delta_\alpha\delta_\alpha^* + \delta_\alpha^*\delta_\alpha$.
		\\Let us consider $\xi$ in $ker(\delta_\alpha\delta_\alpha^* + \delta_\alpha^*\delta_\alpha)$: we have that
		\begin{equation}
			\begin{split}
				0 &= \langle \xi, \delta_\alpha\delta_\alpha^* + \delta_\alpha^*\delta_\alpha\xi\rangle_{\alpha,\beta} \\
				&=  \langle \xi, \delta_\alpha\delta_\alpha^* \xi\rangle_{\mathcal{L}^2(M \sqcup N)} + \langle \xi,\delta_\alpha^*\delta_\alpha)\xi)\rangle_{\alpha,\beta} \\
				&=  \langle \delta_\alpha^* \xi, \delta_\alpha^* \xi \rangle_{\mathcal{L}^2(M \sqcup N)} + \langle \delta_\alpha \xi, \delta_\alpha \xi \rangle_{\alpha,\beta} \\
				&=  || \delta_\alpha^* \xi||_{\alpha,\beta} + || \delta_\alpha \xi||_{\alpha,\beta}.
			\end{split}
		\end{equation}
		Then we have that $\xi$ is in $ker(\delta_\alpha) \cap ker(\delta_\alpha^*) = 0$. Then $\delta_\alpha\delta_\alpha^* + \delta_\alpha^*\delta_\alpha$ is injective.
		\\Moreover we also have 
		\begin{equation}
			\langle \xi, \delta_\alpha\delta_\alpha^* + \delta_\alpha^*\delta_\alpha\xi\rangle_{\mathcal{L}^2(M \sqcup N)}  = \langle \delta_\alpha + \delta_\alpha^* \xi, \delta_\alpha + \delta_\alpha^* \xi \rangle_{\mathcal{L}^2(M \sqcup N)}.
		\end{equation}
		It means that if $\xi$ is in $ker(\delta_\alpha\delta_\alpha^* + \delta_\alpha^*\delta_\alpha)$ then it is in $ker(\delta_\alpha + \delta_\alpha^*)$ and so the first kernel is contained in the second one. Moreover the second kernel is contained in the first one beacuse $(\delta_\alpha + \delta_\alpha^*)^2 = \delta_\alpha\delta_\alpha^* + \delta_{\alpha}^*\delta_{\alpha}$. So the kernels coinde.
		\\This means that $\delta_\alpha + \delta_\alpha^*$ is injective.
	\end{proof}
	\begin{rem}
		The operators $D_{\alpha, \beta}$ are self-adjoint. Let us denote by $\hat{D}_{\alpha,\beta}$ the operator
		\begin{equation}
			\hat{D}_{\alpha,\beta} := \delta_\alpha \pm S_{\alpha,\beta} \delta_\alpha S_{\alpha,\beta},
		\end{equation}
		where we are considering a plus if $dim(M)$ is odd and a minus otherwise.\\
		We have that the even perturbed signature operator is
		\begin{equation}
			D_{\alpha, \beta} =  U_{\alpha, \beta} \hat{D}_{\alpha,\beta} U^{-1}_{\alpha, \beta}.
		\end{equation}
		We know that $\hat{D}_{\alpha,\beta}$ is a self-adjoint operator respect to the scalar product $\langle \cdot, \cdot \rangle_{\alpha,\beta}$. Then $D_{\alpha, \beta}$ is selfadjoint respect the standard scalar product on $\mathcal{L}^2(M \sqcup N)$.
		\begin{equation}
			\begin{split}
				\langle U_{\alpha,\beta} \hat{D}_{\alpha,\beta} U^{-1}_{\alpha,\beta} \omega_1, \omega_2 \rangle_{\mathcal{L}^2(M \sqcup N)} &=
				\langle \hat{D}_{\alpha,\beta} U^{-1}_{\alpha,\beta} \omega_1, U_{\alpha,\beta} \omega_2 \rangle_{\mathcal{L}^2(M \sqcup N)} \\
				&= \langle \hat{D}_{\alpha,\beta} U^{-1}_{\alpha,\beta} \omega_1, |\tau L_{\alpha,\beta}| U_{\alpha,\beta}^{-1} \omega_2 \rangle_{\mathcal{L}^2(M \sqcup N)}\\
				&= \langle U^{-1}_{\alpha,\beta} \omega_1, \hat{D}_{\alpha,\beta} |\tau L_{\alpha,\beta}| U_{\alpha,\beta}^{-1} \omega_2 \rangle_{\mathcal{L}^2(M \sqcup N)} \\
				&= \langle \omega_1, U^{-1}_{\alpha,\beta} |\tau L_{\alpha,\beta}| \hat{D}_{\alpha,\beta} U_{\alpha,\beta}^{-1} \omega_2 \rangle_{\mathcal{L}^2(M \sqcup N)} \\
				&= \langle \omega_1, U_{\alpha,\beta} \hat{D}_{\alpha,\beta} U_{\alpha,\beta}^{-1} \omega_2 \rangle_{\mathcal{L}^2(M \sqcup N)}.
			\end{split}
		\end{equation}
		Let us consider now the odd case. If we consider
		\begin{equation}
			\hat{D}_{\alpha,\beta} = (-i)(S_{\alpha,\beta} \delta_\alpha + \delta_\alpha S_{\alpha,\beta})
		\end{equation}
		we can observe then that
		\begin{equation}
			D_{\alpha,\beta} = U_{\alpha,\beta}\hat{D}_{\alpha,\beta}U_{\alpha,\beta}^{-1}
		\end{equation}
		Now, if we prove that $\hat{D}_{\alpha,\beta}$ is selfadjoint respect to $\langle \cdot, \cdot \rangle _{\alpha,\beta}$ we can conclude exactly how we did in the even case. We have
		\begin{equation}
			\begin{split}
				\langle (-i)(S_{\alpha,\beta} \delta_\alpha + \delta_\alpha S_{\alpha,\beta}) \omega_1, \omega_2 \rangle_{\alpha,\beta} &= (-i)\langle \delta_\alpha \omega_1, S_{\alpha,\beta} \omega_2 \rangle_{\alpha,\beta}\\ 
				&+ (-i)\langle S_\alpha \omega_1, ( - S_{\alpha,\beta} \delta_\alpha S_{\alpha,\beta})\omega_2 \rangle_{\alpha,\beta} \\
				&= \langle \omega_1, -i S_{\alpha,\beta} \delta_\alpha S_{\alpha,\beta} S_{\alpha,\beta} \omega_2 \rangle_{\alpha,\beta}\\
				& + \langle \omega_1, iS_{\alpha,\beta} ( - S_{\alpha,\beta} \delta_\alpha S_{\alpha,\beta})\omega_2 \rangle_{\alpha,\beta} \\
				&=\langle \omega_1, (-i)(S_{\alpha,\beta} \delta_\alpha + \delta_\alpha S_{\alpha,\beta}) \omega_2 \rangle_{\alpha,\beta}.
			\end{split}
		\end{equation}
	\end{rem}
	\begin{prop}
		The operators $D_{\alpha,\beta}$ are $L^2$-invertible.
	\end{prop}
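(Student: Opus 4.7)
The plan is to reduce the claim to the invertibility of the ``twisted Hodge Laplacian'' associated to $\delta_\alpha$ and then exploit the acyclicity of the mapping cone complex, which was already established using that $T_f$ is an isomorphism in $L^2$-cohomology. Since $D_{\alpha,\beta}=U_{\alpha,\beta}\hat D_{\alpha,\beta}U_{\alpha,\beta}^{-1}$ with $U_{\alpha,\beta}$ and $U_{\alpha,\beta}^{-1}$ bounded (for $|\alpha|$ small enough, by definition of $U_{\alpha,\beta}$), it is equivalent to show that $\hat D_{\alpha,\beta}$ is invertible with bounded inverse. Moreover, because the inner products $\langle\cdot,\cdot\rangle_{\mathcal L^2(M\sqcup N)}$ and $\langle\cdot,\cdot\rangle_{\alpha,\beta}=\langle\cdot,|\tau L_{\alpha,\beta}|\cdot\rangle_{\mathcal L^2}$ are equivalent, bounded invertibility in either scalar product is the same statement.

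First I would observe that the previous remark shows $\hat D_{\alpha,\beta}$ is self-adjoint for $\langle\cdot,\cdot\rangle_{\alpha,\beta}$, and a direct expansion (using $\delta_\alpha^2=0$ and, in the even case, $(\delta_\alpha^*)^2=(\pm S_{\alpha,\beta}\delta_\alpha S_{\alpha,\beta})^2=\pm S_{\alpha,\beta}\delta_\alpha^2 S_{\alpha,\beta}=0$) yields
\begin{equation}
\hat D_{\alpha,\beta}^{\,2}=\delta_\alpha\delta_\alpha^{*}+\delta_\alpha^{*}\delta_\alpha=:\Delta,
\end{equation}
with the analogous identity in the odd case. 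By the spectral theorem for self-adjoint operators it then suffices to prove that $\Delta$ is invertible with bounded inverse on $\mathcal L^2(M\sqcup N)$.

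Next I would invoke the Hodge-type decomposition: since $\delta_\alpha$ is closed and densely defined with $\delta_\alpha^2=0$, one has
\begin{equation}
\mathcal L^2(M\sqcup N)=\overline{\mathrm{im}(\delta_\alpha)}\oplus\overline{\mathrm{im}(\delta_\alpha^{*})}\oplus\bigl(\ker(\delta_\alpha)\cap\ker(\delta_\alpha^{*})\bigr),
\end{equation}
orthogonally with respect to $\langle\cdot,\cdot\rangle_{\alpha,\beta}$. The preceding lemma gives $\ker(\delta_\alpha)=\mathrm{im}(\delta_\alpha)$, hence $\mathrm{im}(\delta_\alpha)$ is closed; by general duality this forces $\mathrm{im}(\delta_\alpha^{*})$ to be closed as well and $\ker(\delta_\alpha)\cap\ker(\delta_\alpha^{*})=\mathrm{im}(\delta_\alpha)\cap\mathrm{im}(\delta_\alpha)^{\perp}=\{0\}$. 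Therefore the Hodge decomposition reduces to $\mathcal L^2(M\sqcup N)=\ker(\delta_\alpha)\oplus\ker(\delta_\alpha^{*})$, and both $\delta_\alpha$ and $\delta_\alpha^{*}$ are injective with closed range when restricted to the orthogonal complement of their respective kernels.

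The final step, and the only technical point, is to turn this into a lower bound on $\Delta$. Applying the closed graph theorem to the restrictions $\delta_\alpha\colon\mathrm{dom}(\delta_\alpha)\cap\ker(\delta_\alpha)^{\perp}\to\ker(\delta_\alpha)$ and $\delta_\alpha^{*}\colon\mathrm{dom}(\delta_\alpha^{*})\cap\ker(\delta_\alpha^{*})^{\perp}\to\ker(\delta_\alpha^{*})$ gives constants $C,C'>0$ with $\|\delta_\alpha\xi_2\|\geq C\|\xi_2\|$ and $\|\delta_\alpha^{*}\xi_1\|\geq C'\|\xi_1\|$ for every $\xi_2\in\ker(\delta_\alpha)^{\perp}\cap\mathrm{dom}(\delta_\alpha)$ and $\xi_1\in\ker(\delta_\alpha^{*})^{\perp}\cap\mathrm{dom}(\delta_\alpha^{*})$. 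Writing $\xi=\xi_1+\xi_2$ along the decomposition $\ker(\delta_\alpha)\oplus\ker(\delta_\alpha^{*})$, the Laplacian splits as $\Delta\xi=\delta_\alpha\delta_\alpha^{*}\xi_1+\delta_\alpha^{*}\delta_\alpha\xi_2$, and iterating the two lower bounds gives $\|\Delta\xi\|\geq(CC')\|\xi\|$. Hence $\Delta$ is bounded below, self-adjoint and injective, so it is invertible with bounded inverse, and the same follows for $\hat D_{\alpha,\beta}$ via spectral calculus and for $D_{\alpha,\beta}$ by conjugation with $U_{\alpha,\beta}$. The main obstacle is verifying the Hodge decomposition carefully for the unbounded, non-symmetric operator $\delta_\alpha$ on the disjoint union, but the acyclicity $\ker(\delta_\alpha)=\mathrm{im}(\delta_\alpha)$ already does the essential work.
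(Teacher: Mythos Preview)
Your argument is correct and follows the same overall strategy as the paper: reduce to $\hat D_{\alpha,\beta}=\delta_\alpha+\delta_\alpha^{*}$ via conjugation by $U_{\alpha,\beta}$, use self-adjointness with respect to $\langle\cdot,\cdot\rangle_{\alpha,\beta}$, and exploit the acyclicity $\ker(\delta_\alpha)=\mathrm{im}(\delta_\alpha)$ established in the preceding lemma. The execution of the last step differs. The paper splits $\mathrm{spec}(\hat D_{\alpha,\beta})$ into discrete and essential parts: injectivity (proved just before) rules $0$ out of the discrete spectrum, and Theorem~2.4 of Br\"uning--Lesch (acyclic Hilbert complexes have $0\notin\sigma_{\mathrm{ess}}(\Delta)$) rules it out of the essential spectrum. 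You instead unpack that reference by hand: acyclicity makes $\mathrm{im}(\delta_\alpha)$ closed, the closed range theorem transfers this to $\delta_\alpha^{*}$, the harmonic piece in the Hodge decomposition vanishes, and a closed-graph lower bound on each summand gives a uniform lower bound for $\Delta$ (equivalently for $\hat D_{\alpha,\beta}$). Your route is more self-contained and yields an explicit spectral gap; the paper's is shorter by delegating to the cited theorem. One minor remark: it is slightly cleaner to bound $\hat D_{\alpha,\beta}$ directly rather than $\Delta$, since the domain check for the decomposition $\xi=\xi_1+\xi_2$ is immediate on $\mathrm{dom}(\delta_\alpha)\cap\mathrm{dom}(\delta_\alpha^{*})$ (each $\xi_i$ lies in a kernel, hence in the relevant domain), whereas for $\mathrm{dom}(\Delta)$ one has to track second-order domain conditions.
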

	\begin{proof}
		Since $U_{\alpha,\beta}$ and $S_{\alpha,\beta}$ are $L^2$-invertible, it is sufficient to proof that 
		$\hat{D}_{\alpha,\beta} := \delta_\alpha + \delta_\alpha^*$ is invertible.
		\\Since $\hat{D}_{\alpha,\beta}$ is self-adjoint, its spectrum can be be decomposed in \textit{essential spectrum} given by the subset of $\lambda$s such that
		\begin{equation}
			\hat{D}_{\alpha,\beta} - \lambda Id
		\end{equation}
		is not a Fredholm operator and in \textit{discrete spectrum} which is the subset given by the eigenvalues with finite multiplicity.
		\\Since $\hat{D}_{\alpha,\beta}$ is injective, the zero can't be in in discrete spectrum.
		\\Now, using the Theorem 2.4. of \cite{Bruning} we have that, since $\delta_\alpha$ is acyclic, then   zero can't be in the essential spectrum of $\hat{D}_{\alpha,\beta}^2$. But the essential spectrum of $\hat{D}_{\alpha,\beta}^2$ is the same of $\hat{D}_\alpha$. Then $\hat{D}_{\alpha,\beta}$ is invertible and the same holds for $D_{\alpha,\beta}$.
	\end{proof}
	\subsection{The perturbation in $C^*_f(M \sqcup N)^\Gamma$}\label{pert}
	\begin{prop}
		The perturbation $D_{\alpha, \beta} - D$ is an operator of $C_f^*(M\sqcup N)^\Gamma$.
	\end{prop}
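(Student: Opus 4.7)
The plan is to express $D_{\alpha,\beta}-D$ as a finite sum of products of basic operators in which at least one factor lies in $C^*_f(M\sqcup N)^\Gamma$, and then invoke the ideal property of $C^*_f(M\sqcup N)^\Gamma$ inside $D^*_f(M\sqcup N)^\Gamma$.

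I begin by identifying the perturbation building blocks. Each entry of $L_{\alpha,\beta}-1$ and of $\delta_\alpha-d_{M\sqcup N}$ is an algebraic combination of $T_f$, $T_f^\dagger$, $T_f^\dagger T_f$, $T_f y$, $y T_f^\dagger$ together with $\gamma$ and $\tau$, which belong to $D^*_f(M\sqcup N)^\Gamma$; by Corollary \ref{corn} and the ideal property, all such combinations lie in $C^*_f(M\sqcup N)^\Gamma$. Since $\tau L_{\alpha,\beta}$ is self-adjoint with $|\tau L_{\alpha,\beta}|^{2}=L_{\alpha,\beta}^{*}L_{\alpha,\beta}$, and $L_{\alpha,\beta}$ is $\mathcal{L}^2$-invertible for $|\alpha|$ small, the positive operator $L_{\alpha,\beta}^{*}L_{\alpha,\beta}$ has spectrum bounded away from zero and satisfies $L_{\alpha,\beta}^{*}L_{\alpha,\beta}-1\in C^*_f(M\sqcup N)^\Gamma$. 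Applying continuous functional calculus to the maps $x\mapsto x^{\pm 1/2}$ inside the quotient $D^*_f(M\sqcup N)^\Gamma/C^*_f(M\sqcup N)^\Gamma$, in which $L_{\alpha,\beta}^{*}L_{\alpha,\beta}$ maps to the identity, then yields
\begin{equation*}
U_{\alpha,\beta}-1,\qquad U_{\alpha,\beta}^{-1}-1,\qquad S_{\alpha,\beta}-\tau\;\in\; C^*_f(M\sqcup N)^\Gamma.
\end{equation*}

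Writing $D=d_{M\sqcup N}\mp\tau d_{M\sqcup N}\tau$, with the appropriate powers of $i$ in the odd case, a direct manipulation gives
\begin{equation*}
\hat D_{\alpha,\beta}-D=(\delta_\alpha-d_{M\sqcup N})\mp S_{\alpha,\beta}(\delta_\alpha-d_{M\sqcup N})S_{\alpha,\beta}\mp(S_{\alpha,\beta}-\tau)\,d_{M\sqcup N}\,S_{\alpha,\beta}\mp\tau\,d_{M\sqcup N}\,(S_{\alpha,\beta}-\tau).
\end{equation*}
The first two summands belong to $C^*_f(M\sqcup N)^\Gamma$ by the observations above and the ideal property. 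From $D_{\alpha,\beta}=U_{\alpha,\beta}\hat D_{\alpha,\beta}U_{\alpha,\beta}^{-1}$ one then gets $D_{\alpha,\beta}-D=U_{\alpha,\beta}(\hat D_{\alpha,\beta}-D)U_{\alpha,\beta}^{-1}+[U_{\alpha,\beta},D]\,U_{\alpha,\beta}^{-1}$, so it remains to show that the last two summands of the display and the commutator $[U_{\alpha,\beta},D]$ lie in $C^*_f(M\sqcup N)^\Gamma$.

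This is the main obstacle: one has to control unbounded occurrences of $d_{M\sqcup N}$ paired with the corrections $S_{\alpha,\beta}-\tau$ and $U_{\alpha,\beta}-1$. To handle them I expand these corrections as norm-convergent series of monomials in $T_f,T_f^\dagger,y,\gamma,\tau$—concretely via the Neumann series for $|\tau L_{\alpha,\beta}|^{-1}$, equivalently through polynomial approximation of $x\mapsto x^{\pm 1/2}$ on a compact sub-interval of the spectrum. In each monomial, the identities $dT_f=T_f d$ and $T_f^\dagger d=d T_f^\dagger$ allow every occurrence of $d_{M\sqcup N}$ to be pushed onto an adjacent $T_f$ or $T_f^\dagger$, producing a bounded operator of the form $dT_f$, $T_f^\dagger d$, $T_f y d$, $dy T_f$, $T_f^\dagger y d$ or $dy T_f^\dagger$, each of which lies in $C^*_f(M\sqcup N)^\Gamma$ by Corollary \ref{corn} (valid because $f$ may be taken $C^2_b$ thanks to Proposition \ref{appr}). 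Summing in norm shows that the problematic pieces belong to $C^*_f(M\sqcup N)^\Gamma$, and the $\Gamma$-equivariance follows automatically since every building block is $\Gamma$-equivariant, completing the proof.
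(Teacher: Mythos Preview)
Your decomposition and overall strategy match the paper's proof: one first shows that $S_{\alpha,\beta}-\tau$, $U_{\alpha,\beta}-1$, $U_{\alpha,\beta}^{-1}-1$ lie in $C^*_f(M\sqcup N)^\Gamma$, and then that their compositions with $d$ on either side do as well. The gap is in your justification of the second step. Neither the Neumann series nor naive polynomial approximation suffices: the Neumann series $\sum(-V)^n$ for $V=|\tau L_{\alpha,\beta}|-1$ need not converge since nothing forces $\|V\|<1$ at $\beta=1$, and if polynomials $p_n$ converge to $g(x)=x^{\pm 1/2}-1$ uniformly on the spectrum then $p_n(A)\to g(A)$ in norm, but there is no reason for $p_n(A)\,d$ to be Cauchy, because $d$ is unbounded. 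The term-by-term boundedness you obtain after ``pushing $d$ onto $T_f$'' gives no control on the norms of the tails, so ``summing in norm'' is unjustified as written.

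The paper closes this gap with a factorisation rather than a series. Writing $R_{\alpha,\beta}=L_{\alpha,\beta}^*L_{\alpha,\beta}-1\in C^*_f(M\sqcup N)^\Gamma$, one has $|\tau L_{\alpha,\beta}|=\sqrt{1+R_{\alpha,\beta}}$, and $g(z)=\sqrt{1+z}-1$ vanishes at $0$ and is holomorphic on a neighbourhood of the spectrum of $R_{\alpha,\beta}$ (contained in $(-1,\infty)$), so one may write $g(z)=az+zh(z)z$ with $h$ holomorphic there and $a=g'(0)$. Holomorphic functional calculus then gives $V_{\alpha,\beta}:=g(R_{\alpha,\beta})=aR_{\alpha,\beta}+R_{\alpha,\beta}h(R_{\alpha,\beta})R_{\alpha,\beta}$, hence
\[
V_{\alpha,\beta}\,d=a\,(R_{\alpha,\beta}d)+R_{\alpha,\beta}h(R_{\alpha,\beta})\,(R_{\alpha,\beta}d),\qquad d\,V_{\alpha,\beta}=a\,(dR_{\alpha,\beta})+(dR_{\alpha,\beta})\,h(R_{\alpha,\beta})R_{\alpha,\beta}.
\]
Since $R_{\alpha,\beta}d$ and $dR_{\alpha,\beta}$ are \emph{finite} algebraic combinations of $T_fd$, $dT_f$, $T_fyd$, $dyT_f^\dagger$, etc., they lie in $C^*_f(M\sqcup N)^\Gamma$ by Corollary~\ref{corn}; $h(R_{\alpha,\beta})$ is bounded, and the ideal property then places both displayed operators in $C^*_f(M\sqcup N)^\Gamma$ with no limiting argument. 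Iterating the same device handles $U_{\alpha,\beta}-1$, $U_{\alpha,\beta}^{-1}-1$ and $S_{\alpha,\beta}-\tau$, after which the remainder of your argument goes through verbatim.
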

	\begin{proof}
		We can observe that
		\begin{equation}
			\begin{split}
				D_{\alpha, \beta} - D &= U_{\alpha, \beta}(\delta_\alpha - S_{\alpha, \beta} \delta_\alpha S_{\alpha, \beta})U_{\alpha, \beta}^{-1} - d + \tau d \tau\\
				&= U_{\alpha, \beta}(\delta_\alpha - d)U_{\alpha, \beta}^{-1} + (U_{\alpha, \beta}dU_{\alpha, \beta}^{-1} - d) \\
				&+U_{\alpha, \beta}(S_{\alpha, \beta}\delta_\alpha S_{\alpha, \beta} - \tau d \tau )U_{\alpha, \beta}^{-1} + (U_{\alpha, \beta}(\tau d \tau )U_{\alpha, \beta}^{-1} -\tau d\tau).
			\end{split}
		\end{equation}
		We know that $\delta_\alpha - \delta = \alpha T_f^\dagger$ is in $C_f^*(M\sqcup N)^\Gamma$.
		\\The first step is to show that 
		\begin{equation}
			S_{\alpha, \beta} = \tau + H_{\alpha, \beta},
		\end{equation}
		\begin{equation}
			U_{\alpha, \beta} = 1 + G_{\alpha, \beta}
		\end{equation}
		and
		\begin{equation}
			U_{\alpha, \beta}^{-1} = 1 + K_{\alpha, \beta}
		\end{equation}
		where $H_{\alpha, \beta}$, $G_{\alpha, \beta}$, $K_{\alpha, \beta}$ are operators in $C_f^*(M \sqcup N)^\Gamma$.
		\\To prove this first step we can follow Proposition 2.1.11. of \cite{vito}. We can observe that $L_{\alpha, \beta} = 1 + Q_{\alpha,\beta}$, with $Q_{\alpha,\beta}$ in $C_f^*(M \sqcup N)^\Gamma$.
		\\This means that
		\begin{equation}
			|\tau L_{\alpha, \beta}| = \sqrt{1 + R_{\alpha, \beta}},
		\end{equation}
		where $R_{\alpha, \beta} = 2Q_{\alpha,\beta} + Q_{\alpha,\beta}^2$ in $C_f^*(M \sqcup N)^\Gamma$. Now, let us consider the complex function $g(z) := \sqrt{1 + z} - 1$. Since $g(0) = 0$, we have that
		\begin{equation}
			g(z) = az + zh(z)z
		\end{equation}
		where $h$ is a holomorphic function and $a$ is a number. Then we have that
		\begin{equation}
			|\tau L_{\alpha, \beta}| = 1 + g(R_{\alpha, \beta}) = 1 + V_{\alpha, \beta}
		\end{equation}
		with $V_{\alpha, \beta}$ in $C^*(M \sqcup N)^\Gamma$. Now, we can observe that
		\begin{equation}
			U_{\alpha, \beta} = \sqrt{|\tau L_{\alpha, \beta}|}
		\end{equation}
		Applying the same argument we can obtain that
		\begin{equation}
			U_{\alpha, \beta} = 1 + G_{\alpha, \beta}.
		\end{equation}
		where $G_{\alpha, \beta}$ is in $C_f^*(M \sqcup N)^\Gamma$.
		\\Let us observe the operator
		\begin{equation}
			Z_{\alpha, \beta} = |\tau L_{\alpha, \beta}|^{-1} - 1
		\end{equation}
		is an operator of $D_f^*(M\sqcup N)^\Gamma$ since $|\tau L_{\alpha, \beta}|$ is in $D-f^*(M\sqcup N)^\Gamma$. In particular we have that
		\begin{equation}
			\begin{cases}
				|\tau L_{\alpha, \beta}| \circ |\tau L_{\alpha, \beta}| ^{-1} = (1 + V_{\alpha,\beta})(1 + Z_{\alpha, \beta}) =  1\\
				|\tau L_{\alpha, \beta}| ^{-1}  \circ |\tau L_{\alpha, \beta}| = (1 + Z_{\alpha, \beta})(1 + V_{\alpha,\beta}) = 1.
			\end{cases}
		\end{equation}
		and so we obtain that
		\begin{equation}
			Z_{\alpha, \beta} = - V_{\alpha, \beta} - V_{\alpha, \beta}Z_{\alpha, \beta} = - V_{\alpha, \beta} - Z_{\alpha, \beta} V_{\alpha, \beta}.
		\end{equation}
		Then we have that $Z_{\alpha, \beta}$ is in $C_f^*(M \sqcup N)^\Gamma$.
		\\Now, since
		\begin{equation}
			U_{\alpha, \beta}^{-1} = \sqrt{|\tau L_{\alpha, \beta}|^{-1}}
		\end{equation}
		then we have that
		\begin{equation}
			U_{\alpha, \beta}^{-1} = 1 + K_{\alpha, \beta}.
		\end{equation}
		where $K_{\alpha, \beta}$ is in $C_f^*(M \sqcup N)^\Gamma$.
		Moreover we also have that
		\begin{equation}
			\begin{split}
				S_{\alpha,\beta} &= \tau L_{\alpha, \beta} |\tau L_{\alpha, \beta}|^{-1} \\
				&= \tau(1 + Q_{\alpha, \beta})(1 + Z_{\alpha, \beta}) \\
				&= \tau + H_{\alpha, \beta},
			\end{split}
		\end{equation}
		where $H_{\alpha, \beta}$ in $C_f^*(M \sqcup N)^\Gamma$.
		\\
		\\We've completed the first step. In the second step, we have to proof that $G_{\alpha, \beta} d$, $H_{\alpha, \beta} d$, $K_{\alpha, \beta} d$, $dG_{\alpha, \beta}$, $dH_{\alpha, \beta}$, $dK_{\alpha, \beta}$,$G_{\alpha, \beta} d G_{\alpha, \beta}$ and $H_{\alpha, \beta} d K_{\alpha, \beta}$ are in $C^*(M\sqcup N)^\Gamma$.
		\\
		\\To prove this we have to observe that $L_{\alpha, \beta} = 1 + Q_{\alpha,\beta}$ where
		\begin{equation}
			Q_{\alpha, \beta} = \begin{bmatrix} \beta T_f^\dagger T_f && (1 - i \alpha \gamma y)\beta T_f^\dagger \\
				\beta T_f(1 + i \alpha \gamma y) && 0 \end{bmatrix}
		\end{equation}
		is an algebraic combination of $T_f$, $T_f^\dagger$, $T_fy$, $yT_f$, $T_f^\dagger y$, $yT_f^\dagger$. Then we have that $Q_{\alpha, \beta}d$ and $dQ_{\alpha, \beta}$ is an operator in $C_f^*(M \sqcup N)^\Gamma$ (Corollary \ref{corn}). The same property, obviously, holds for $R_{\alpha, \beta}$.
		\\Now we can observe that if $A$ is an operator in $C_f^*(M \sqcup N)^\Gamma$ such that $Ad$ and $dA$ are operators in $C^*(M \sqcup N)^\Gamma$ then also $g(A)d$ and $dg(A)$ are in $C_f^*(M \sqcup N)^\Gamma$, indeed
		\begin{equation}
			g(A) d = (aA + Ah(A)A) d = a(Ad) + Ah(A)(Ad)
		\end{equation}
		and
		\begin{equation}
			dg(A) = d(aA + Ah(A)A) = a(dA) + (dA)h(A)A.
		\end{equation}
		So we have that the compositions of $V_{\alpha, \beta}$ and $G_{\alpha,\beta}$ with $d$ are operators in $C_f^*(M \sqcup N)^\Gamma$. Moreover, since
		\begin{equation}
			Z_{\alpha, \beta} = - V_{\alpha, \beta} - V_{\alpha, \beta}Z_{\alpha, \beta} = - V_{\alpha, \beta} - Z_{\alpha, \beta} V_{\alpha, \beta}
		\end{equation}
		then this property also holds for $Z_{\alpha, \beta}$, $K_{\alpha, \beta}$ and $H_{\alpha, \beta}$.
		Now, since $D_{\alpha, \beta} - D$ is an algebraic combination of operators in $C_f^*(M \sqcup N)^\Gamma$, the perturbation is in $C_f^*(M \sqcup N)^\Gamma$.
	\end{proof}
	\subsection{Involutions}
	One can easily check that the operator $W_{\alpha, \beta} := U_{\alpha,\beta}S_{\alpha, \beta}U_{\alpha, \beta}^{-1}$ is a well-defined, selfadjoint, involution whenever $\alpha \neq 0$ and $\beta=1$ or when $\alpha = 0$ and $\beta \in [0,1]$.
	\\We have that $\mathcal{L}^2(M \sqcup N)$ can be split, for any $\alpha$ and $\beta$, in
	\begin{equation}
		\mathcal{L}^2(M \sqcup N) = V_{+, \alpha, \beta} \oplus V_{-, \alpha, \beta}
	\end{equation}
	where $V_{\pm, \alpha, \beta}$ are the $\pm1$-eigenvalues of $W_{\alpha, \beta} := U_{\alpha,\beta}S_{\alpha, \beta}U_{\alpha, \beta}^{-1}$. Respect to this decomposition we have that
	\begin{equation}
		D_{\alpha, \beta} = \begin{bmatrix} 0 && D_{\alpha, \beta +} \\
			D_{\alpha,\beta -} && 0 \end{bmatrix}.
	\end{equation}
	Consider a real value $\alpha_0$ such that $D_{\alpha_0, 1}$ is invertible, we can define $\gamma: [0,1] \longrightarrow \numberset{R}^2$ as $\gamma(t):= (\alpha(t), \beta(t))$ where
	\begin{equation}
		\alpha(t) = \begin{cases} 0 \text{   if    } t \in [0,\frac{1}{2}] \\
			2t\alpha_0 \text{    otherwise} \end{cases} \text{    and   } 
		\beta(t) = \begin{cases} 2t \text{   if    } t \in [0,\frac{1}{2}] \\
			1 \text{    otherwise} \end{cases}
	\end{equation}
	Then we can consider the map $W_\gamma : [0,1] \longrightarrow D^*(M \sqcup N)^\Gamma$ defined as
	\begin{equation}
		W_{\gamma(t)} := U_{\gamma(t)}S_{\gamma(t)}U_{\gamma(t)}^{-1}.
	\end{equation}
	We want to prove that $W_\gamma$ is a continuous function and that for all $t_1$, $t_2$ we have that
	\begin{equation}
		W_{\gamma(t_1)} - W_{\gamma(t_2)} \in C^*(M \sqcup N)^\Gamma \label{stat}.
	\end{equation}
	The statement (\ref{stat}) can easily checked observing that for all $t$ in $[0,1]$ we have that $W_{\gamma(t)} - \tau$ is in $C^*(M \sqcup N)^\Gamma$.
	\\To check the continuity of $W_{\gamma(t)}$ in $t$ we have to observe that for all $t$ in $[0,1]$ we have that
	\begin{equation}
		W_{t} := \sqrt{|\tau L_{\gamma(t)}|} \circ \frac{\tau L_{\gamma(t)}}{|\tau L_{\gamma(t)}|} \circ \sqrt{|\tau L_{\gamma(t)}|^{-1}}.
	\end{equation}
	Now, we know that $\tau L_{\gamma(t)}$ is continuous in $t$ and the same holds for its square. Now, we know that
	\begin{equation}
		|\tau L_{\gamma(t)}| = 1 + f(1 - \tau L_{\gamma(t)}^2)
	\end{equation}
	where $f(z) = az + zh(z)z$ and $h$ is an holomorphic function. Now, since the properties of holomorphic functional calculus on bounded operator, we have that if $T_k \rightarrow T$ then $f(T_k) \rightarrow f(T)$. In particular we have that $|\tau L_{\gamma(t)}|$ is continuous in $t$. Moreover exactly with the same argument we can see that also $\sqrt{|\tau L_{\gamma(t)}|}$ is continuous in $t$.
	\\The operator $|\tau L_{\gamma(t)}|^{-1}$ is continuous in $t$ because for all $t$ the operator $|\tau L_{\gamma(t)}|$ is invertible with bounded inverse and the function $z \rightarrow \frac{1}{z}$ is holomorphic in every bounded open set of $\numberset{C}$ which doesn't contains the $0$. Finally also $\sqrt{|\tau L_{\gamma(t)}|^{-1}}$ is continuous. Then we have that $W_t$ is continuous in $t$. Moreover, since $[0,1]$ is compact, we have that $W_t$ is uniformly continuous, i.e. exists $C > 0$ such that
	\begin{equation}
		||W_{t} - W_{t + \epsilon}|| \leq C \epsilon.
	\end{equation}
	\begin{lem}\label{Cos}
		Let $f: (M,g) \longrightarrow (N,h)$ be a lipschitz-homotopic equivalence between two Riemannian manifolds of bounded geometry. Consider the splitting given by
		\begin{equation}
			\mathcal{L}^2(M \sqcup N) = V_+ \oplus V_-
		\end{equation}
		where $V_\pm$ is the $\pm 1$-eigenspace of $\tau$. Then if $\alpha_0$ is such that $D_{\alpha_0, 1}$ is invertible, then there is an isometry $\mathcal{U}_{\alpha_0, \pm}: V_{\pm, \alpha_0, 1} \longrightarrow V_\pm$ (which implies $\mathcal{U}_{\alpha_0,\pm}^{\times}\mathcal{U}_{\alpha_0,\pm} = I$) and $\mathcal{U}_{\alpha_0, \pm}$ is the restriction to $V_{\pm, \alpha_0, 1}$ of the operator $\frac{I \pm \tau}{2} + P_{\alpha_0}$ where $P_{\alpha_0}$ is an operator in $C_f^*(M \sqcup N)^\Gamma$.
	\end{lem}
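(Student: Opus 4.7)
The strategy is to exploit the continuous path of self-adjoint involutions $t \mapsto W_{\gamma(t)}$ constructed just above the lemma, which joins $\tau$ at $t=0$ to $W_{\alpha_0,1}$ at $t=1$, with two crucial features: $W_{\gamma(t)} - \tau \in C_f^*(M\sqcup N)^\Gamma$ for every $t$, and $t \mapsto W_{\gamma(t)}$ is uniformly norm-continuous. Setting $P_\pm^{(t)} := \tfrac{1}{2}(I \pm W_{\gamma(t)})$ and $P_\pm := \tfrac{1}{2}(I \pm \tau)$, one obtains $P_\pm^{(t)} - P_\pm \in C_f^*(M\sqcup N)^\Gamma$ for every $t$, and a uniformly continuous path of orthogonal projections in $D_f^*(M\sqcup N)^\Gamma$.

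First I would choose a partition $0 = t_0 < t_1 < \cdots < t_n = 1$ fine enough that $\|P_+^{(t_{i+1})} - P_+^{(t_i)}\| < 1$ for every $i$. For each consecutive pair, form the Riesz-type intertwiner
\begin{equation*}
X_i := P_+^{(t_{i+1})} P_+^{(t_i)} + (I - P_+^{(t_{i+1})})(I - P_+^{(t_i)}).
\end{equation*}
A direct computation gives $X_i P_+^{(t_i)} = P_+^{(t_{i+1})} X_i$ and $X_i^* X_i = I - (P_+^{(t_{i+1})} - P_+^{(t_i)})^2$, which is invertible by the norm estimate. Setting $V_i := X_i (X_i^* X_i)^{-1/2}$ via holomorphic functional calculus yields a unitary in $D_f^*(M\sqcup N)^\Gamma$ with $V_i P_+^{(t_i)} V_i^* = P_+^{(t_{i+1})}$. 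Writing $X_i = I + R_i$ with $R_i \in C_f^*(M\sqcup N)^\Gamma$ and applying the same holomorphic calculus argument used in subsection \ref{pert} (to the function $z \mapsto (1+z)^{-1/2} - 1$) confirms $V_i - I \in C_f^*(M\sqcup N)^\Gamma$.

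Composing the local intertwiners, $V := V_{n-1} \cdots V_0$ is a unitary in $D_f^*(M\sqcup N)^\Gamma$ satisfying $V P_\pm V^* = P_\pm^{(\alpha_0)}$ and $V - I \in C_f^*(M\sqcup N)^\Gamma$. Consequently $V^*$ restricts to an isometry $\mathcal{U}_{\alpha_0, \pm} : V_{\pm, \alpha_0, 1} \longrightarrow V_\pm$. For $v \in V_{\pm, \alpha_0, 1}$ we have $v = P_\pm^{(\alpha_0)} v$, hence
\begin{equation*}
V^* v = \tfrac{I \pm \tau}{2}\, v + \bigl[(P_\pm^{(\alpha_0)} - P_\pm) + (V^* - I)\bigr] v,
\end{equation*}
and the operator $P_{\alpha_0} := (P_\pm^{(\alpha_0)} - P_\pm) + (V^* - I) \in C_f^*(M\sqcup N)^\Gamma$ provides the required decomposition.

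The main obstacle will be to confirm that the perturbations stay inside the ideal at every step, i.e.\ that each $V_i - I$ lies in $C_f^*(M\sqcup N)^\Gamma$ rather than merely in the structure algebra. This reduces to the fact that holomorphic functions vanishing at the identity, applied via functional calculus to operators of the form $I + (\text{ideal element})$, preserve the ideal; this is exactly the mechanism already exploited for $U_{\alpha,\beta}$ and $S_{\alpha,\beta}$ in subsection \ref{pert}, and the only additional input is the uniform-continuity estimate on $t \mapsto W_{\gamma(t)}$ that guarantees a finite admissible partition exists.
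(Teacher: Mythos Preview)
Your proof is correct and follows essentially the same strategy as the paper: partition the path $t \mapsto W_{\gamma(t)}$ finely, build local intertwiners by polar decomposition, compose them, and track membership in the ideal $C_f^*(M\sqcup N)^\Gamma$ via holomorphic functional calculus. The only difference is cosmetic --- you use the global Kato-type intertwiner $X_i = P_+^{(t_{i+1})} P_+^{(t_i)} + P_-^{(t_{i+1})} P_-^{(t_i)}$ on the whole Hilbert space, while the paper works with the restricted map $F_i = \tfrac{I + W_{t_i}}{2}\big|_{V_{+, t_{i+1}}}$, which is essentially your $X_i^*$ restricted to one eigenspace.
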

	\begin{proof}
		We will prove the assertion just for the $+$ case. The minus case is exactly the same.
		\\Consider the operator $W_{\gamma(t)}$. Since it is uniformly continuous in $t$, we can divide $[0,1]$ in $N_0$ intervals $[t_i, t_{i+1}]$ where $t_0 = 0$ and $t_{N_0}= 1$ and 
		\begin{equation}
			||W_{t_i} - W_{t_{i+1}}|| \leq 1.
		\end{equation}
		Now, we know that for all $t$ the operator $W_t$ is an involution and we have an orthogonal decomposition $\mathcal{L}^2(M \sqcup N) = V_{+, \alpha(t), \beta(t)} \oplus V_{-, \alpha(t), \beta(t)}$, where $V_{\pm, \alpha(t), \beta(t)}$ is the $\pm 1$-eigenspace of $W_t$.
		\\Now the projection on $V_{\pm, \alpha(t), \beta(t)}$ can be written as
		\begin{equation}
			\frac{I \pm W_t}{2}.
		\end{equation}
		Let us now consider $F_i$ the restriction of $\frac{I + W_{t_{i}}}{2}$ to $V_{+, \alpha(t_{i+1}), \beta(t_{i+1})}$. 
		\\Our next step is to prove that $F_i: V_{+, \alpha(t_{i+1}), \beta(t_{i+1})}\longrightarrow V_{+, \alpha(t_i), \beta(t_i)}$ is invertible. To prove this fact we have to consider the operator $G_i$ given by the restriction of $\frac{I + W_{t_{i+1}}}{2}$ to $V_{+, \alpha(t_{i}), \beta(t_{i})}$.
		\\Consider now $H_i := W_{t_i} - W_{t_{i+1}}$. Then we have that for all $v$ in $V_{+, \alpha(t_{i+1}), \beta(t_{i+1})}$
		\begin{equation}
			\begin{split}
				G_i \circ F_i (v) &=  (\frac{I + W_{t_{i+1}}}{2})(\frac{I + W_{t_{i}}}{2})v \\
				&=  (\frac{I + W_{t_{i+1}}}{2})(\frac{I + W_{t_{i+1}} + H_i}{2})v \\
				&=  \frac{I + W_{t_{i+1}}}{2} v + (\frac{I + W_{t_{i+1}}}{4})H_iv \\
				&=  Iv + (\frac{I + W_{t_{i+1}}}{4})H_iv.
			\end{split}
		\end{equation}
		Now, since
		\begin{equation}
			||(\frac{I + W_{t_{i+1}}}{4})H_i|| \leq ||\frac{I + W_{t_{i+1}}}{2}|| \cdot ||\frac{H_i}{2}|| \leq 1 \cdot \frac{1}{2}||W_{t_{i+1}} - W_{t_{i}}|| \leq \frac{1}{2},
		\end{equation}
		we have that  $G_i \circ F_i$ is invertible. Then $F_i$ is injective. With exactly with the same argument one can proof that $F_i \circ G_i$ is invertible and so $F_i$ is also surjective.
		\\Let us now consider the isometry $U_i: V_{+, \alpha(t_{i+1}) , \beta(t_{i+1})} \longrightarrow V_{+, \alpha(t_i), \beta(t_i)}$ as
		\begin{equation}
			U_i:= \frac{F_i}{|F_i|}.
		\end{equation}
		Finally we can define the isometry as the following composition
		\begin{equation}
			\mathcal{U}_{\alpha_0, +} := U_0 \circ U_{1} \circ ... \circ U_n : V_{+, \alpha_0, 1} \longrightarrow V_{+}.
		\end{equation}
		Now we have to prove that $\mathcal{U}_{\alpha_0, +}$ is the restriction to $V_{+, \alpha_0, 1}$ of an operator $I + P_{\alpha_0}$ where $P_{\alpha_0}$ is an operator in $C^*(M \sqcup N)^\Gamma$. First we have to observe that $G_i = F_i^\times$, indeed if $v$ is a vector of $V_{+, \alpha(t_{i+1}) , \beta(t_{i+1})}$ and $w$ is a vector of $V_{+, \alpha(t_i), \beta(t_i)}$ then we can observe that
		\begin{equation}
			\langle v, \frac{I + W_{t_{i+1}}}{2} w \rangle = \langle v, w \rangle = \langle \frac{I + W_{t_{i}}}{2} v, w \rangle.
		\end{equation}
		This means that $F_i^\times F_i(v) = Iv + (\frac{I + W_{t_{i+1}}}{4})H_iv$. Then, since $H_i$ is in $C^*(M \sqcup N)^\Gamma$, we have that
		\begin{equation}
			F_i^\times F_i = (I + L)_{|_{V_{+, \alpha(t_{i+1}) , \beta(t_{i+1})}}},
		\end{equation}
		where $L$ is an on operator in $C^*(M \sqcup N)^\Gamma$. So, exactly as we did in the last proposition, one can prove that
		\begin{equation}
			|F_i|^{-1} = (I + Q)_{|_{V_{+, \alpha(t_{i+1}) , \beta(t_{i+1})}}},
		\end{equation}
		where $Q$ is an operator in $C^*(M \sqcup N)^\Gamma$. This means
		\begin{equation}
			\begin{split}
				U_i = \frac{F_i}{|F_i|} &= (\frac{I + W_t}{2})(I + Q)\\
				&= (\frac{I + \tau}{2} + \frac{1}{2}H_{\alpha(t),\beta(t)})(I + Q)\\
				&= \frac{I + \tau}{2} + C^*(M \sqcup N)^\Gamma
			\end{split}
		\end{equation}
		Then we have that
		\begin{equation}
			\mathcal{U}_{\alpha_0} = (\frac{I + \tau}{2})^n + C^*(M \sqcup N)^\Gamma = \frac{I + \tau}{2} + C_f^*(M \sqcup N)^\Gamma.
		\end{equation}
	\end{proof}
	\subsection{Invariance of the Roe Index}
	Consider $(M,g)$ and $(N,h)$ two manifolds of bounded geometry and let $f: (M,g) \longrightarrow (N,h)$ be a $C^2_{b}$-map. Our first goal, in this subsection, is to defined a Roe index in $K_{n}(\frac{D_f^*(M \sqcup N)^\Gamma}{C^*_f(M \sqcup N)^\Gamma})$ of the signature operator on $M \sqcup N$.
	\\In order to do it, we have to introduce a lemma which relates the Roe and structure algebras of $M$ and $N$ with the Roe and structure algebras of $M \sqcup N$.
	\begin{lem}
		Consider $M$ and $N$ two Riemannian manifolds. Then if $X = M,N$ there is a $*$-homomorphism
		\begin{equation} 
			i_X: D^*(X)^\Gamma \longrightarrow D_f^*(M \sqcup N)^\Gamma 
		\end{equation}
		such that $i_X(C^*(X)^\Gamma) \subseteq C_f^*(M \sqcup N)^\Gamma$. We will denote by $\iota_X$ the map
		\begin{equation} 
			\iota_X: \frac{D^*(X)^\Gamma}{C^*(X)^\Gamma} \longrightarrow \frac{D^*_f(M \sqcup N)^\Gamma}{C^*_f(M \sqcup N)^\Gamma}
		\end{equation}
		induced by $i_X$.
	\end{lem}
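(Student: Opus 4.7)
The plan is to define $i_X$ as the obvious extension-by-zero embedding, namely to send an operator $T \in B(\mathcal{L}^2(X))$ to the operator on $\mathcal{L}^2(M\sqcup N) = \mathcal{L}^2(M) \oplus \mathcal{L}^2(N)$ that acts as $T$ on the $X$-summand and as zero on the other summand. This is clearly a $*$-homomorphism of $B(\mathcal{L}^2(X))$ into $B(\mathcal{L}^2(M\sqcup N))$, so the only real task is to verify that it respects the three conditions that characterize the structure algebra (controlled support, pseudo-locality, $\Gamma$-equivariance) and additionally preserves local compactness.

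I would first verify this at the level of the dense subalgebras $D^*_{c,\rho}(X)^\Gamma$ and $C^*_{c,\rho}(X)^\Gamma$, and then extend by continuity to the $C^*$-closures. $\Gamma$-equivariance is immediate since $\Gamma$ acts componentwise on $\mathcal{L}^2(M\sqcup N)$. For pseudo-locality, any $\phi \in C_0(M\sqcup N)$ splits as $\phi_M + \phi_N$; then $[\rho(\phi), i_X(T)]$ reduces on the $X$-summand to $[\rho(\phi_X), T]$, which is compact because $T \in D^*(X)^\Gamma$, and is zero on the other summand. Local compactness is handled identically: $i_X(T)\rho(\phi) = T\rho(\phi_X)$ on the $X$-summand, which is compact when $T \in C^*(X)^\Gamma$.

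The key step, and the only one where the coarse structure on $M\sqcup N$ defined in Example \ref{coarseexem} intervenes nontrivially, is checking that $i_X(T)$ has controlled support. The support of $i_X(T)$ sits inside $X \times X \subset (M\sqcup N)\times(M\sqcup N)$, so I have to show that this subset is controlled with respect to the $f$-dependent coarse structure. By definition this means the two projections $\pi_1,\pi_2\colon \mathrm{supp}(i_X(T)) \to M\sqcup N$ satisfy $d_N((f\sqcup \mathrm{id}_N)\circ \pi_1, (f\sqcup\mathrm{id}_N)\circ \pi_2) \leq C$ uniformly. For $X=N$ the map $f\sqcup\mathrm{id}_N$ is just $\mathrm{id}_N$ on $N\times N$, so controlledness of $T$ in the metric coarse structure of $N$ gives the bound directly. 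For $X=M$ the projections of points in $\mathrm{supp}(T)\subseteq M\times M$ are at bounded $d_M$-distance by finite propagation of $T$; applying $f$ and using that $f$ is Lipschitz (with constant $C_f$) multiplies that bound by $C_f$, giving the required uniform bound.

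Having established that $i_X$ maps $D^*_{c,\rho}(X)^\Gamma$ into $D^*_{f,c,\rho}(M\sqcup N)^\Gamma$ and $C^*_{c,\rho}(X)^\Gamma$ into $C^*_{f,c,\rho}(M\sqcup N)^\Gamma$, I extend by continuity (the map is norm-non-increasing because it is an isometric embedding of the operator on the $X$-summand) to obtain the $*$-homomorphisms on the $C^*$-closures. Finally, because $i_X$ carries the ideal $C^*(X)^\Gamma$ into the ideal $C_f^*(M\sqcup N)^\Gamma$, it descends to a well-defined $*$-homomorphism $\iota_X$ of the quotients, completing the proof. I do not expect any step to present serious difficulty; the substantive content is the Lipschitz estimate in the $X=M$ case, which is where $f$ genuinely enters.
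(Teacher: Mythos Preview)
Your proposal is correct and follows essentially the same route as the paper: both define $i_X$ as extension-by-zero on the $X$-summand, dispatch pseudo-locality and local compactness by splitting $\phi\in C_0(M\sqcup N)$ into its $M$- and $N$-parts, and isolate the controlled-support check as the only place where the coarse structure of Example~\ref{coarseexem} matters, handling $X=N$ trivially and $X=M$ via the Lipschitz constant of $f$. The paper's proof additionally records a block decomposition $D_c^*(M\sqcup N)^\Gamma=\bigoplus_{Y,Z}D_c^*(M\sqcup N)^\Gamma_{Y,Z}$ and observes that $i_N$ is an isomorphism onto the $(N,N)$-block (with inverse $A\mapsto \pi_N\circ A|_{\mathcal{L}^2(N)}$); this extra structure is used later in the chapter but is not needed for the bare statement of the lemma, so your argument suffices as written.
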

	\begin{proof}
		We can start observing that $\mathcal{L}^2(M \sqcup N) = \mathcal{L}^2(N) \oplus \mathcal{L}^2(M)$, and so we can define $\pi_{M} : \mathcal{L}^2(M \sqcup N) \longrightarrow \mathcal{L}^2(M)$ and $\pi_{N} : \mathcal{L}^2(M \sqcup N) \longrightarrow \mathcal{L}^2(N)$. Then we have that 
		\begin{equation}
			D_c^*(M \sqcup N)^\Gamma = \bigoplus\limits_{Y,Z = M, N} D_c^*(M \sqcup N)_{Y,Z}^\Gamma,
		\end{equation}
		where $D_c^*(M \sqcup N)_{Y,Z}^\Gamma$ are the operators $A$ of $D_{c,f}^*(M \sqcup N)^\Gamma$ such that
		\begin{equation}
			W \neq Z \lor S \neq Y \implies \pi_{W} \circ A \circ \pi_{S} = 0. 
		\end{equation}
		The same decomposition can be considered for $C^*_{c,f}(M \sqcup N)^\Gamma$. As consequence we have that
		\begin{equation}
			\frac{D_f^*(M \sqcup N)^\Gamma}{C_f^*(M \sqcup N)^\Gamma} = \bigoplus\limits_{Y,Z = M, N} \frac{\overline{D_c^*(M \sqcup N)_{Y,Z}^\Gamma}}{\overline{C^*_c(M \sqcup N)_{Y,Z}^\Gamma}}.
		\end{equation}
		We will first consider the case $X=N$. We can define the map $i_N: D^*_c(N)^\Gamma \longrightarrow D^*_{c,f}(M \sqcup N)^\Gamma$ for all $A$ as the operator
		\begin{equation}
			i_N(A)(\alpha,\beta) = (A \alpha, 0).
		\end{equation}
		We have that $i_N(A)$ is a controlled operator. To prove this fact we consider two function $\phi, \psi \in C^{\infty}_c(M \sqcup N)$ (without loss of generality we can suppose that their supports are connected). Then if $supp(\phi)$ and $supp(\psi)$ are not contained in $N \subset M \sqcup N$ we have $\phi i_N(A) \psi = 0$. On the other hand if both are contained in $N$ we have that
		\begin{equation}
			d(f \sqcup id_N(supp(\phi)), f \sqcup id_N(supp(\psi))) = d(supp(\phi), supp(\psi)).
		\end{equation}
		Then if $A$ is a controlled operator, then same is true for $i_N(A)$. It is also true that if $A$ is pseudo-local or locally compact then also $i_1(A)$ is pseudo-local or locally compact. It's easy now to check that
		\begin{equation}
			D^*_c(N)^\Gamma \cong D^*_c(M \sqcup N)^\Gamma_{N,N}
		\end{equation}
		and
		\begin{equation}
			C^*_c(N)^\Gamma \cong C^*_c(M \sqcup N)^\Gamma_{N,N}.
		\end{equation}
		The isomorphism is $i_N$ and its inverse map is 
		\begin{equation}
			i^{-1}(A) = \pi_N \circ A_{|_{\mathcal{L}^2(N)}}.
		\end{equation}
		Since 
		\begin{equation}
			\numberset{B}(\mathcal{L}^2(M \sqcup N)) = \bigoplus_{Y, Z = M,N}\numberset{B}(\mathcal{L}^2(Y), \mathcal{L}^2(Z))
		\end{equation}
		and 
		\begin{equation}
			C^*_c(M \sqcup N)^\Gamma_{N,N} \subset \numberset{B}(\mathcal{L}^2(N), \mathcal{L}^2(N)) = \numberset{B}(\mathcal{L}^2(N))
		\end{equation}
		we have that
		\begin{equation}
			D^*(N)^\Gamma \cong \overline{D^*_c(M \sqcup N)^\Gamma_{N,N}}
		\end{equation}
		and
		\begin{equation}
			C^*(N)^\Gamma \cong \overline{C^*_c(M \sqcup N)^\Gamma_{N,N}}.
		\end{equation}
		Finally we can define the map $\iota_N$ just as
		\begin{equation}
			\iota_N([A]) := [i_N(A)].
		\end{equation}
		\\
		Let us consider the case $X = M$. We will define $i_M$ and $\iota_M$ exactly as we defined $i_N$ and $\iota_N$. We just have to prove that if $A$ is a controlled operator in $\numberset{B}(\mathcal{L}^2(M))$, then $i_M(A)$, which is the operator defined as
		\begin{equation}
			i_M(A) (\alpha, \beta) = (0, A\beta),
		\end{equation}
		is a controlled operator. Fix then $S  > C_f\cdot R$ where $C_f$ is the lipschitz constant of $f$ and $R$ is a constant such that if $\phi, \psi \in  C_c^\infty(M)$
		\begin{equation}
			d(supp(\phi), supp(\psi)) > R \implies \phi A \psi = 0.
		\end{equation}
		We want to show that if $\phi, \psi \in  C_c^\infty(M \sqcup N)$ (with out loss of generality we will consider $\phi$ and $\psi$ with connected support)
		\begin{equation}
			d((f \sqcup id_N) supp(\phi), (f \sqcup id_N) supp(\psi)) > S \implies \phi i_M(A) \psi = 0.
		\end{equation}
		We can observe that if $supp(\phi)$ and $supp(\psi)$ are not contained in $M \subset M \sqcup N$, then $\phi i_M(A) \psi = 0$ so we can consider $\phi, \psi \in  C_c^\infty(M)$. We have that
		\begin{equation}
			d(f(supp(\phi)), f(supp(\psi))) > S.
		\end{equation}
		Consider $x$ in $supp(\phi)$ and $y$ in $supp(\psi)$: then we have
		\begin{equation}
			S < d(f(x), f(y)) \leq C_fd(x,y)
		\end{equation}
		and so
		\begin{equation}
			d(x,y) > \frac{d(f(x), f(y))}{C_f} > \frac{S}{C_f} > R.
		\end{equation}
		Then it means that
		\begin{equation}
			\begin{split}
				&d(f(supp(\phi)), f(supp(\psi))) > S \implies \\ &d(supp(\phi), supp(\psi)) > R \implies \phi i_M(A) \psi = 0.
			\end{split}
		\end{equation}
	\end{proof}
	\begin{cor}
		The previous Lemma also holds considering the Roe and structure algebras defined using the $(M, \Gamma)$-module, the $(N,\Gamma)$-module and the $(M \sqcup N, \Gamma)$-module defined as in Example \ref{terna}. 
	\end{cor}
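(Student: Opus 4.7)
The plan is to mimic the proof of the previous Lemma, replacing the simple $\mathcal{L}^2$-spaces with the specific very ample modules of Example \ref{terna}. The key observation is that the chirality splitting is compatible with the disjoint union: since the chirality operator on $M \sqcup N$ acts as $\tau_N$ on the $N$-summand and as $-\tau_M$ on the $M$-summand, the $\pm 1$-eigenbundles satisfy $V_\pm^{M \sqcup N} = V_\pm^N \sqcup V_\mp^M$, and consequently
\begin{equation}
\mathcal{L}^2(V_{\pm}^{M \sqcup N}) \cong \mathcal{L}^2(V_{\pm}^N) \oplus \mathcal{L}^2(V_{\mp}^M).
\end{equation}
Taking the countable direct sum used in Example \ref{terna}, I obtain a canonical unitary identification $H_{M \sqcup N} \cong H_N \oplus H_M$ that intertwines $\rho_{M \sqcup N}$ with $\rho_N \oplus \rho_M$ (up to the swap of $V_+$ and $V_-$ on the $M$-side, which reindexes the sequence but does not affect the algebra structure) and commutes with the $\Gamma$-action.

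Using this identification, I would define, for $X=N$,
\begin{equation}
i_N(A)(\xi_N, \xi_M) := (A\xi_N, 0), \qquad A \in D^*_{\rho_N}(N, H_N)^\Gamma,
\end{equation}
and analogously for $X=M$. The verification that $i_X$ maps into $D^*_f(M \sqcup N, H_{M \sqcup N})^\Gamma$ and that $i_X(C^*_{\rho_X}(X,H_X)^\Gamma) \subseteq C^*_f(M \sqcup N, H_{M \sqcup N})^\Gamma$ is then a direct transcription of the argument in the previous Lemma: pseudo-locality and local compactness are preserved because $i_X(A)$ has a block-diagonal form and $\rho_{M \sqcup N}(\phi)$ respects the splitting for every $\phi \in C_0(M \sqcup N)$; finite propagation for $i_N$ is preserved because the support of $i_N(A)$ sits in $N \times N \subset (M \sqcup N)^2$ where the coarse structure restricts to the metric coarse structure of $N$; and finite propagation for $i_M$ follows exactly as before by choosing $S > C_f \cdot R$, using that if $\phi, \psi$ are supported in $M$ then $d_N(f(\operatorname{supp}\phi), f(\operatorname{supp}\psi)) > S$ forces $d_M(\operatorname{supp}\phi, \operatorname{supp}\psi) > R$ via the Lipschitz inequality.

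Finally, the induced map on quotients is defined by $\iota_X([A]):=[i_X(A)]$, which is well-posed because $i_X$ sends $C^*(X)^\Gamma$ into $C^*_f(M \sqcup N)^\Gamma$. There is no genuine obstacle here: the only point requiring mild care is the bookkeeping of the chirality-graded direct sum when checking that the identification $H_{M \sqcup N} \cong H_N \oplus H_M$ is unitary and $\Gamma$-equivariant; once that is established the rest of the argument is essentially identical to the proof of the preceding Lemma, and the controlled/pseudo-local/locally-compact conditions transport one-to-one through the block-diagonal embedding.
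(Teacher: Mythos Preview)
Your proposal is correct; the paper states this corollary without proof, treating it as an immediate consequence of the previous Lemma, and your argument spells out precisely the details that justify this. The only point of substance you add beyond the Lemma's proof is the compatibility of the chirality splitting with the disjoint union (using the $-M$ convention so that $\tau_{M\sqcup N}$ restricts to $-\tau_M$ on $M$), which yields the unitary identification $H_{M\sqcup N}\cong H_N\oplus H_M$ up to a reindexing on the $M$-side; once that is in place the block-diagonal embedding and the controlled/pseudo-local/locally-compact verifications carry over verbatim from the Lemma, exactly as you describe.
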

	\begin{cor}
		Let us consider a Riemannian manifold $X$. and let us denote by $\delta_{X}$ the transgression map 
		\begin{equation}
			\delta_{X}: K_n(\frac{D^*(X)^\Gamma}{C^*(X)^\Gamma}) \longrightarrow K_{n-1}(C^*(X)^\Gamma).
		\end{equation}
		Let us consider $X= M,N$. Then if $n$ is even
		\begin{equation}
			\delta_{M \sqcup N} \circ \iota_{X, \star} = i_{X,\star}^u \circ \delta_X,
		\end{equation}
		if $n$ is odd
		\begin{equation}
			\delta_{M \sqcup N} \circ \iota^u_{X, \star} = i_{X,\star} \circ \delta_X.
		\end{equation}
	\end{cor}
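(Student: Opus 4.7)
The plan is to recognize the claim as an instance of the standard naturality of the boundary map in the six-term exact sequence of $K$-theory, applied to a morphism of short exact sequences of $C^*$-algebras. The previous lemma already does the essential work: it exhibits $i_X \colon D^*(X)^\Gamma \to D^*_f(M\sqcup N)^\Gamma$ as a $*$-homomorphism sending $C^*(X)^\Gamma$ into $C^*_f(M\sqcup N)^\Gamma$, and $\iota_X$ is by construction the induced map on the quotients. So the first step is just to spell out that the diagram
\begin{equation*}
\begin{array}{ccccccccc}
0 & \longrightarrow & C^*(X)^\Gamma & \longrightarrow & D^*(X)^\Gamma & \longrightarrow & \dfrac{D^*(X)^\Gamma}{C^*(X)^\Gamma} & \longrightarrow & 0 \\
 & & \downarrow i_X & & \downarrow i_X & & \downarrow \iota_X & & \\
0 & \longrightarrow & C^*_f(M\sqcup N)^\Gamma & \longrightarrow & D^*_f(M\sqcup N)^\Gamma & \longrightarrow & \dfrac{D^*_f(M\sqcup N)^\Gamma}{C^*_f(M\sqcup N)^\Gamma} & \longrightarrow & 0
\end{array}
\end{equation*}
is a morphism of short exact sequences, which is tautological from the definition of $\iota_X$.

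Second, I invoke the standard naturality of the connecting homomorphism in $K$-theory for $C^*$-algebras: for any commutative diagram of short exact sequences as above, the squares
\begin{equation*}
K_n\!\left(\tfrac{D^*(X)^\Gamma}{C^*(X)^\Gamma}\right) \xrightarrow{\;\delta_X\;} K_{n-1}(C^*(X)^\Gamma), \qquad \iota_{X,\star} \downarrow \quad \downarrow i_{X,\star}
\end{equation*}
commute. By Bott periodicity, the K-theory is 2-periodic and the six-term cyclic exact sequence alternates between $K_0$ and $K_1$; when $n$ is even (resp.\ odd) the target of $\delta$ lands in the unitization component of $K_0$ (resp.\ of $K_1$), which accounts for the placement of the $(\cdot)^u$ superscript on the two sides of the asserted equalities. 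This is a purely formal property of the boundary map and requires nothing beyond the commutativity of the diagram.

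Putting the two steps together gives the two identities
\begin{equation*}
\delta_{M \sqcup N} \circ \iota_{X, \star} = i_{X,\star}^u \circ \delta_X \qquad (n \text{ even}), \qquad \delta_{M \sqcup N} \circ \iota^u_{X, \star} = i_{X,\star} \circ \delta_X \qquad (n \text{ odd}),
\end{equation*}
precisely as stated. There is no genuine obstacle in this proof: everything reduces to the bookkeeping of checking the diagram at the $C^*$-algebra level (already done by the lemma) and then quoting the naturality of $\delta$. The only subtlety worth highlighting in the write-up is the parity: one must track which of $K_0$ or $K_1$ the class $[D_X]$ sits in for the given dimension $n$, and align the superscripts $u$ accordingly, using that $K_\bullet$ of the two algebras are canonically $2$-periodic and that the inclusions $i_X$ and $\iota_X$ commute with the Bott isomorphism.
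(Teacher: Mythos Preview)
Your proposal is correct and follows essentially the same approach as the paper: the paper's proof is a one-line reference to the previous Lemma together with Lemma~\ref{K-comm} in Appendix~C, which is precisely the general naturality statement you invoke (a $*$-homomorphism $i:B\to A$ with $i(I)\subseteq J$ yields $\delta_A\circ\iota^u_\star=i_\star\circ\delta_B$ for $K_1\to K_0$ and $\delta_A\circ\iota_\star=i^u_\star\circ\delta_B$ for $K_0\to K_1$). The only refinement worth making is that the paper does not treat this naturality as a black box but proves it in Appendix~C by unwinding the explicit isomorphisms $\mathcal{A}$, $\psi$, $\Delta$ and the Bott map $\beta$; your phrase about the $u$ superscript arising from ``the unitization component'' is imprecise---in the paper's conventions $(\cdot)^u$ marks the unitary picture $K_1^u$ of $K_1$, and the parity bookkeeping is handled by tracking these explicit identifications.
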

	\begin{proof}
		It is a consequence of the previous Lemma and Lemma \ref{K-comm} in Appendix C.
	\end{proof}
	Let us recall that the signature operator in $M \sqcup N$ is defined as
	\begin{equation}
		D_{M \sqcup N} := \begin{bmatrix}
			D_N && 0 \\
			0 && -D_M
		\end{bmatrix}.
	\end{equation}
	Let us consider a chopping function $\chi$. We have that 
	\begin{equation}
		\chi(D_{M \sqcup N}) = \begin{bmatrix} 
			\iota_N(\chi(D_{N})) && 0 \\
			0 && -\iota_M\chi(D_M)
		\end{bmatrix}
	\end{equation}
	is an operator in $\frac{D^*_f(M \sqcup N)^\Gamma}{C^*_f(M \sqcup N)^\Gamma}$.
	\\Exactly as in the connected case, we have the following definition.
	\begin{defn}
		The \textbf{fundamental class of $D_{M \sqcup N}$} is $[D_{M \sqcup N}] \in K_{n+1}(\frac{D_f^{*}(M \sqcup N)^{\Gamma}}{C_f^{*}(M \sqcup N)^\Gamma})$ given by
		\begin{equation}
			[D_{M \sqcup N}] := \begin{cases} 
				
				[\frac{1}{2}(\chi (D_{M \sqcup N})+1)] \text{if $n$ is odd,}
				\\ [\chi(D_{M \sqcup N})_+] \text{if $n$ is even.}
				
			\end{cases}
		\end{equation}
	\end{defn}
	\begin{rem}
		Again the definition in the even case is well-given since we are considering $\chi(D_{M \sqcup N})_+$ in $B(H_{M \sqcup N})$ where $H_{M \sqcup N}$ is defined as in Example \ref{terna}.
	\end{rem}
	\begin{defn}\label{doubleindex}
		The \textbf{Roe index} of $D_{M \sqcup N}$ is the class
		\begin{equation}
			{Ind_{Roe}(D_{M \sqcup N}) := \delta[D_{M \sqcup N}]}
		\end{equation}
		in $K_{n}(C_f^{*}(M \sqcup N)^{\Gamma})$, where $\delta$ is the connecting homomorphism in K-Theory.
	\end{defn}
	Let us now introduce a result of Higson and Roe, in particular from \cite{higroe}. The following paragraph, definitions and Proposition come directly from \cite{higroe}.
	\\
	\\Let us fix a $C^*$-subcategory $A$ of the category of all Hilbert spaces and bounded linear maps (this is an additive subcategory whose morphism sets $Hom_A(H_1, H_2)$ are Banach subspaces of the bounded linear operators from $H_1$ to $H_2$, and are closed under the adjoint operation). Let us also fix an ideal $J$ in $A$ (apart from the fact that we no longer require identity morphisms, this is a $C^*$-subcategory with the additional property that any composition of a morphism in $J$ with a morphism in $A$ is a morphism in $J$).
	\begin{defn}
		An unbounded, self-adjoint, Hilbert space operator $D$ is analytically controlled over $(A, J)$ if
		\begin{itemize}
			\item the Hilbert space on which it is defined is an object of $J$;
			\item the resolvent operators $(D \pm i)^{-1}$ are morphisms of $J$; and
			\item the operator $\frac{D}{\sqrt{1+D^2}}$ is a morphism of $A$.
		\end{itemize}
	\end{defn}
	\begin{prop}
		Let $D$ be an analytically controlled self-adjoint operator and let $g : [-\infty , + \infty] \longrightarrow \numberset{R}$ be a continuous function. If $C$ is a bounded self-adjoint operator in $A$ then the difference $g(D) - g(D + C)$ lies in $J$.
	\end{prop}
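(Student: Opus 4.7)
My plan is to apply Stone--Weierstrass to the set $\mathcal{F} := \{g \in C([-\infty,+\infty],\R) : g(D) - g(D+C) \in J\}$. Norm-closedness of $\mathcal{F}$ follows from the norm-continuity of the continuous functional calculus on bounded self-adjoint operators together with closedness of $J$; $\mathcal{F}$ is plainly $*$-invariant and contains the constants (for which the difference is zero). Since the hypothesis supplies $\phi(D) \in A$ for $\phi(x) := x/\sqrt{1+x^{2}}$, a homeomorphism of $[-\infty,+\infty]$ onto $[-1,1]$, Stone--Weierstrass reduces the problem to proving (a) $\mathcal{F}$ is closed under multiplication, and (b) $\phi \in \mathcal{F}$.

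For (a), the Leibniz identity
\[
(g_{1}g_{2})(D) - (g_{1}g_{2})(D+C) = [g_{1}(D)-g_{1}(D+C)]\,g_{2}(D) + g_{1}(D+C)\,[g_{2}(D)-g_{2}(D+C)],
\]
combined with the ideal relations $A\cdot J,\, J\cdot A \subseteq J$, reduces the task to showing $g(D),\,g(D+C) \in A$ for every continuous $g$. The first follows from $\phi(D) \in A$ and the norm-closure / $C^{\star}$-structure of $A$: Stone--Weierstrass gives that polynomials in $\phi(D)$ are norm-dense in $\{g(D) : g \in C([-\infty,+\infty])\}$. For the second I need $(D+C - \lambda)^{-1} \in A$ for some non-real $\lambda$, which I obtain via the Neumann series
\[
(D+C - \lambda)^{-1} = (D-\lambda)^{-1}\sum_{k=0}^{\infty}\bigl(-C(D-\lambda)^{-1}\bigr)^{k},
\]
norm-convergent whenever $\|C\| < |\mathrm{Im}(\lambda)|$; every summand contains the factor $(D-\lambda)^{-1} \in J$ and so lies in the closed ideal $J$. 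An overlapping-disc iteration using the first resolvent identity propagates this to every non-real $\lambda$. The containment $\phi(D+C) \in A$ will follow from (b) via $\phi(D+C) = \phi(D) + [\phi(D+C) - \phi(D)]$, with both summands in $A$.

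The main obstacle is (b), which I would approach via the Stieltjes representation
\[
\phi(x) = \frac{1}{\pi}\int_{0}^{\infty}\bigl[(x-is)^{-1} + (x+is)^{-1}\bigr]\,dt, \qquad s := \sqrt{1+t^{2}}.
\]
Substituting the resolvent identity $(D-\lambda)^{-1} - (D+C-\lambda)^{-1} = (D-\lambda)^{-1}\,C\,(D+C-\lambda)^{-1}$ inside the difference yields
\[
\phi(D) - \phi(D+C) = \frac{1}{\pi}\int_{0}^{\infty}\Bigl[(D-is)^{-1}\,C\,(D+C-is)^{-1} + (D+is)^{-1}\,C\,(D+C+is)^{-1}\Bigr]\,dt.
\]
Crucially, while the separate operator integrals representing $\phi(D)$ and $\phi(D+C)$ have integrands of norm $\sim 1/s$ and so are \emph{not} norm-integrable, the \emph{difference} has integrand norm-bounded by $2\|C\|/(1+t^{2})$ via $\|(D \pm is)^{-1}\| \le 1/s$, and this bound \emph{is} integrable. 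Each integrand lies in $J$ (the first factor is in $J$ by the argument of the previous paragraph, the remaining two lie in $A$, and the ideal property applies), and the closedness of $J$ then gives $\phi(D)-\phi(D+C) \in J$. The principal technical hurdle is exactly this cancellation --- one must work with the difference from the outset, as neither piece is norm-controllable alone --- and the estimate $\|(D-\lambda)^{-1}C(D+C-\lambda)^{-1}\| \le \|C\|/|\mathrm{Im}(\lambda)|^{2}$ is what ultimately makes the entire Stone--Weierstrass scheme go through.
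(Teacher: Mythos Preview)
The paper does not give its own proof of this proposition; it is quoted verbatim from Higson--Roe \cite{higroe} (their Lemma~5.8) and used as a black box. Your argument is correct and in fact reconstructs the Higson--Roe proof: Stone--Weierstrass on $C([-\infty,+\infty])$ reduces everything to the single separating function $\phi(x)=x(1+x^{2})^{-1/2}$, and for $\phi$ the resolvent identity plus the $O(s^{-2})$ decay of the \emph{difference} integrand in the Stieltjes representation yields a norm-convergent $J$-valued Bochner integral.

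Two small points worth tightening. First, the logical order must be (b) before (a): you need $\phi(D+C)\in A$ (hence $g(D+C)\in A$ for all continuous $g$) \emph{before} the Leibniz step can be run, and this comes out of (b) via $\phi(D+C)=\phi(D)-[\phi(D)-\phi(D+C)]\in A+J\subseteq A$. You flag this forward reference, but a clean write-up should present (b) first. Second, the passage from the truncated integral $\phi_{T}(x)=\tfrac{1}{\pi}\int_{0}^{T}\tfrac{2x}{x^{2}+1+t^{2}}\,dt$ to the full $\phi$ deserves one explicit line: $\phi_{T}\in C_{0}(\R)$, so $\phi_{T}(D)$ and $\phi_{T}(D+C)$ are honest norm-convergent Bochner integrals; their difference lies in $J$ and is norm-Cauchy as $T\to\infty$ with tail bounded by $\tfrac{2\|C\|}{\pi}\int_{T}^{\infty}(1+t^{2})^{-1}\,dt$; the norm limit is identified with $\phi(D)-\phi(D+C)$ because $\phi_{T}(D)\to\phi(D)$ strongly (bounded pointwise convergence and the spectral theorem), and strong and norm limits agree when both exist.
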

	Now, we will use this Proposition of \cite{higroe} to prove the following fact.
	\begin{prop}
		Let $(M,g)$ and $(N,h)$ be two manifolds of bounded geometry and let $f:(M,g) \longrightarrow (N,h)$ be a $C^2_{b}$-map which is a lipschitz-homotopy equivalence. Then if $n$ is odd we have that in $K^*(\frac{D_f^*(M \sqcup N)^\Gamma}{C_f^*(M\sqcup N)^\Gamma})$
		\begin{equation}
			[D_{M \sqcup N}] = [\frac{1}{2}(\chi(D_{M \sqcup N}) +1)] = [\frac{1}{2}(\chi(D_{\alpha,\beta}) +1)]
		\end{equation}
		and if $n$ is even
		\begin{equation}
			[D_{M \sqcup N}] = [Q\chi(D_{M \sqcup N})_+] = [ \mathcal{U}_{\alpha_0,-} \chi(D_{\alpha_0,1})\mathcal{U}^*_{\alpha_0,+}].
		\end{equation} 
	\end{prop}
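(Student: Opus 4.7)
The plan is to combine the Higson--Roe proposition stated just above with two key technical results already established: first, the fact proved in Section~\ref{pert} that the perturbation $D_{\alpha,\beta} - D_{M \sqcup N}$ is a bounded self-adjoint operator lying in $C^*_f(M \sqcup N)^\Gamma$; and second, Lemma~\ref{Cos}, which produces the isometries $\mathcal{U}_{\alpha_0,\pm}$ of the form $\frac{I \pm \tau}{2} + P_{\alpha_0,\pm}$ with $P_{\alpha_0,\pm} \in C^*_f(M \sqcup N)^\Gamma$.

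For the odd case, I would first verify that $D_{M \sqcup N}$ is analytically controlled over the pair $(D^*_f(M \sqcup N)^\Gamma, C^*_f(M \sqcup N)^\Gamma)$. This follows from the corresponding standard statements for the signature operator on a manifold of bounded geometry---resolvents $(D \pm i)^{-1}$ in $C^*(M)^\Gamma$ by finite propagation of the wave operator, and the bounded transform $D/\sqrt{1+D^2}$ in $D^*(M)^\Gamma$---transferred to the disjoint union via the $*$-homomorphisms $i_M$ and $i_N$. With this verified, Higson--Roe applied to a chopping function $\chi$ and the perturbation $C := D_{\alpha,\beta} - D_{M \sqcup N}$ yields
\begin{equation}
\chi(D_{M \sqcup N}) - \chi(D_{\alpha,\beta}) \in C^*_f(M \sqcup N)^\Gamma,
\end{equation}
so the projections $\tfrac{1}{2}(\chi(D_{M \sqcup N})+1)$ and $\tfrac{1}{2}(\chi(D_{\alpha,\beta})+1)$ have the same image in the quotient and therefore represent the same class in $K_0(D^*_f/C^*_f)$.

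For the even case, applying Higson--Roe to the perturbation $D_{\alpha_0,1} - D_{M \sqcup N}$ gives $\chi(D_{M \sqcup N}) \equiv \chi(D_{\alpha_0,1})$ modulo $C^*_f$. The subtlety is that the two fundamental classes are built from off-diagonal blocks with respect to \emph{different} gradings: $\chi(D_{M \sqcup N})_+$ uses the $\tau$-splitting $\mathcal{L}^2 = V_+ \oplus V_-$, whereas $\chi(D_{\alpha_0,1})_+$ uses the $W_{\alpha_0,1}$-splitting. The change of grading is realized precisely by the isometries of Lemma~\ref{Cos}. Since $\mathcal{U}_{\alpha_0,\pm} \equiv \frac{I \pm \tau}{2} =: P_\pm$ in the quotient, one obtains
\begin{equation}
\mathcal{U}_{\alpha_0,-} \chi(D_{\alpha_0,1}) \mathcal{U}^*_{\alpha_0,+} \equiv P_- \chi(D_{\alpha_0,1}) P_+ \equiv P_- \chi(D_{M \sqcup N}) P_+ = \chi(D_{M \sqcup N})_+
\end{equation}
modulo $C^*_f$, where the last expression is viewed as a unitary $V_+ \to V_-$ embedded in the structure algebra via Remark~\ref{girl}. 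Hence the two unitaries represent the same class in $K_1(D^*_f/C^*_f)$.

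The principal technical obstacle is the verification of analytic control, which relies on propagation estimates for the wave operator of the signature operator together with elliptic regularity on bounded-geometry manifolds, and on checking that these propagation properties remain compatible with the non-metric coarse structure of Example~\ref{coarseexem} on $M \sqcup N$. Once this is in place, the remainder of the argument is essentially algebraic manipulation in the quotient algebra using the explicit form of $\mathcal{U}_{\alpha_0,\pm}$ from Lemma~\ref{Cos}, together with the stability of $K$-theory classes of projections and unitaries under perturbation by elements of the ideal.
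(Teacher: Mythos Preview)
Your proposal is correct and follows essentially the same route as the paper: apply the Higson--Roe proposition with the perturbation $C = D_{\alpha,\beta} - D_{M\sqcup N}$ to obtain $\chi(D_{M\sqcup N}) - \chi(D_{\alpha,\beta}) \in C_f^*(M\sqcup N)^\Gamma$, and in the even case combine this with Lemma~\ref{Cos} (the form $\mathcal{U}_{\alpha_0,\pm} = \frac{I\pm\tau}{2} + C_f^*$) and Remark~\ref{girl} exactly as you describe. The only difference is that you spell out the verification of analytic control for $D_{M\sqcup N}$, which the paper takes as given.
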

	\begin{proof} 
		Let us start with the odd case. It's sufficient to apply Propostion of \cite{higroe} in the case $A$ is $D_f^*(M \sqcup N)^\Gamma$, $J$ is $C_f^*(M \sqcup N)^\Gamma$, $D$ is the signature operator, $g= \chi$ and $C = D_{\alpha, \beta} - D$. Indeed since
		\begin{equation}
			\chi(D_{\alpha,\beta}) - \chi(D) \in C^*_f(M \sqcup N)^\Gamma
		\end{equation}
		then they are the same element in $\frac{D_f^*(M \sqcup N)^\Gamma}{C_f^*(M \sqcup N)^\Gamma}$ and so they induce the same element in $K$-theory.
		\\
		\\For the even case it is sufficient, because of \ref{Cos}, to remind that
		\begin{equation}
			\mathcal{U}_{\alpha_0,-} - \frac{I - \tau}{2} \in C_f^*(M \sqcup N)^\Gamma
		\end{equation}
		and so
		\begin{equation}
			\mathcal{U}_{\alpha_0,+}^* - \frac{I + \tau}{2} \in C_f^*(M \sqcup N)^\Gamma.
		\end{equation}
		Now, applying \cite{higroe}, we also know that
		\begin{equation}
			\chi(D_{\alpha_0,1}) - \chi(D) \in C_f^*(M \sqcup N)^\Gamma
		\end{equation}
		and this means that
		\begin{equation}\label{poi}
			\chi(D_{M \sqcup N})_+ - \mathcal{U}_{\alpha_0,-} \chi(D_{\alpha_0,1})\mathcal{U}^*_{\alpha_0,+}
		\end{equation}
		is in $C_f^*(M \sqcup N)^\Gamma$ since all the operators here are in $D^*_f(M \sqcup N)^\Gamma$. Then we can conclude applying Remark \ref{girl}, indeed we have that  implies that the perturbation (\ref{poi}) can be seen in
		\begin{equation} 
			C^*_{\rho_{M \sqcup N}}(M \sqcup N, H_{M \sqcup N})^\Gamma
		\end{equation}
		where $H_{M \sqcup N}$ and $\rho_{M \sqcup N}$ are defined as in Example \ref{terna}.
	\end{proof}
	We can observe that since $D_{\alpha_0, 1}$ is invertible, one can choose as function $\chi$ a function which is $1$ on $spec(D) \cap (0, + \infty)$ e $\chi \equiv -1$ on $spec(D) \cap (-\infty, 0)$. This means that the following definition is well-given.
	\begin{defn}
		The \textbf{$\rho_f$-class} of $D$ is the class of $K_{n+1}(D_f^*(M \sqcup N)^\Gamma)$
		\begin{equation}
			\rho_f = \begin{cases} 
				[\frac{1}{2}(\chi (D_{\alpha_0, 1})+1)] \text{if n is odd,}
				\\ [(\mathcal{U}_{\alpha_0,-}) \chi(D_{\alpha_0,1})(\mathcal{U}^*_{\alpha_0,+})] \text{if n is even.}
			\end{cases}
		\end{equation} 
	\end{defn}
	\begin{prop}
		Let $(M,g)$ and $(N,h)$ be two manifolds of bounded geometry. Let $f:(M,g) \longrightarrow (N,h)$ be a lipschitz-homotopy equivalence which preserves the orientation. Then if $f$ is a $C^2_{b,u}$-map, then the Roe index of $M \sqcup N$ is zero.
	\end{prop}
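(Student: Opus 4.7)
The plan is to exploit the fact that the perturbed signature operator $D_{\alpha_0,1}$ is $\mathcal{L}^2$-invertible and differs from $D_{M \sqcup N}$ by an element of the Roe algebra $C^*_f(M \sqcup N)^\Gamma$. Concretely, I would deduce vanishing of $Ind_{Roe}(D_{M \sqcup N})$ from exactness of the K-theory long exact sequence
\begin{equation}
K_{n+1}(D^*_f(M \sqcup N)^\Gamma) \xrightarrow{\pi_\star} K_{n+1}\!\left(\tfrac{D^*_f(M \sqcup N)^\Gamma}{C^*_f(M \sqcup N)^\Gamma}\right) \xrightarrow{\delta} K_n(C^*_f(M \sqcup N)^\Gamma),
\end{equation}
together with the fact, established in the previous proposition, that $[D_{M \sqcup N}] = \pi_\star(\rho_f)$.

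First I would recall that the $C^2_b$-hypothesis ensures, via Subsection \ref{pert}, that the perturbation $D_{\alpha_0,1} - D_{M \sqcup N}$ lies in $C^*_f(M \sqcup N)^\Gamma$, and that $D_{\alpha_0,1}$ is $\mathcal{L}^2$-invertible. Hence we may choose a chopping function $\chi$ that is identically $\pm 1$ on the spectrum of $D_{\alpha_0,1}$, which makes $\chi(D_{\alpha_0,1})$ itself an involution (rather than merely an involution modulo $C^*_f$) in $D^*_f(M \sqcup N)^\Gamma$. This yields the well-defined class $\rho_f \in K_{n+1}(D^*_f(M \sqcup N)^\Gamma)$, represented by $\tfrac{1}{2}(\chi(D_{\alpha_0,1})+1)$ in the odd case and by $\mathcal{U}_{\alpha_0,-}\chi(D_{\alpha_0,1})_+ \mathcal{U}^*_{\alpha_0,+}$ in the even case (using Lemma \ref{Cos} to relate $V_{\pm,\alpha_0,1}$ to $V_\pm$).

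Next, the previous proposition tells us that the image of $\rho_f$ under the quotient map $\pi\colon D^*_f(M \sqcup N)^\Gamma \to D^*_f(M \sqcup N)^\Gamma/C^*_f(M \sqcup N)^\Gamma$ coincides precisely with the fundamental class $[D_{M \sqcup N}]$ of Definition \ref{doubleindex}. Therefore, by exactness of the displayed K-theory sequence,
\begin{equation}
Ind_{Roe}(D_{M \sqcup N}) = \delta[D_{M \sqcup N}] = \delta(\pi_\star \rho_f) = 0.
\end{equation}

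There is no serious obstacle left: the genuine work is entirely upstream, namely (i) verifying that the Hilsum-Skandalis style perturbation lies in $C^*_f(M \sqcup N)^\Gamma$ (which needed the smoothing estimates on $T_f$, $dT_f$, $T_f y$, $y T_f^\dagger$, $\dots$ from Corollary \ref{corn}, all requiring the $C^2_b$-hypothesis), and (ii) identifying $\pi_\star \rho_f$ with $[D_{M \sqcup N}]$ via the Higson-Roe perturbation principle. Once these are in hand, the conclusion is a one-line consequence of $\delta \circ \pi_\star = 0$.
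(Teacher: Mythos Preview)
Your proposal is correct and follows essentially the same approach as the paper: use the long exact sequence in K-theory, invoke the previous proposition to write $[D_{M \sqcup N}] = \pi_\star(\rho_f)$, and conclude $Ind_{Roe}(D_{M \sqcup N}) = \delta(\pi_\star \rho_f) = 0$ by exactness. Your additional recap of the upstream ingredients (the $C^2_b$ hypothesis, the Hilsum--Skandalis perturbation lying in $C^*_f(M \sqcup N)^\Gamma$, the Higson--Roe identification) is accurate but not needed for this particular proof, which the paper dispatches in two lines.
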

	\begin{proof}
		Consider the long exact sequence in $K$-theory
		\begin{equation}
			.... \xrightarrow{i_\star} K_n(D_f^*(M \sqcup N)^\Gamma) \xrightarrow{p_\star} K_n(\frac{D_f^*(M \sqcup N)^\Gamma}{C_f^*(M \sqcup N)^\Gamma}) \xrightarrow{\delta} K_{n+1}(C_f^*(M \sqcup N)^\Gamma) \longrightarrow ...
		\end{equation}
		Now, we know that $Ind_{Roe}(D_{M \sqcup N}) = \delta([D_{M \sqcup N}]$. But in this case we know that
		$\delta([D_{M \sqcup N}] = \delta(p_\star(\rho_f(D_{M \sqcup N}))) = 0$. Then the Roe index vanishes.
	\end{proof} 
	We are ready now to prove the lipschitz-homotopy invariance of the Roe index.
	\begin{thm}
		Let $(M,g)$ and $(N,h)$ be two manifolds of bounded geometry. Let $\Gamma$ be a group acting uniformly proper, discontinuous and free on $M$ and $N$ by orientation-preserving isometries. Consider $f:(M,g) \longrightarrow (N,h)$ a $\Gamma$-equivariant lipschitz-homotopy equivalence which preserves the orientations. Then
		\begin{equation}
			f_\star(Ind_{Roe}(\mathcal{D}_M)) = Ind_{Roe}(\mathcal{D}_N).
		\end{equation}
	\end{thm}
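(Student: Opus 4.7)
The plan is to derive the statement from the vanishing of the Roe index on $M \sqcup N$ just proved, by pushing it forward along a natural coarse projection.

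\emph{Step 1 (Reduction to smooth $f$).} Using Proposition \ref{appr} I approximate $f$ by a smooth $C^k_b$-map $F$ that is $\Gamma$-lipschitz-homotopic to $f$; for $k \geq 2$ all the results of the preceding section, in particular the boundedness of $dT_F$ (Proposition \ref{dT_fboun}), become available. Since $f$ and $F$ are close in the coarse sense (Example \ref{bru}) they induce the same morphism $f_\star = F_\star$ on K-theory of the Roe algebras, as recorded in (\ref{ops}), and they yield the same coarse structure of Example \ref{coarseexem} on $M \sqcup N$. I may thus assume $f$ itself is smooth and $C^2_b$, so that the construction of the Hilsum--Skandalis perturbation $\mathcal{P} \in C^*_f(M \sqcup N)^\Gamma$ and of the $\rho_f$-class of Section \ref{wai} apply.

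\emph{Step 2 (Decomposition of the index in K-theory).} Because $D_{M \sqcup N}$ is block diagonal with blocks $D_N$ and $-D_M$ acting on the two components, and the chopping function $\chi$ is odd, the fundamental class $[D_{M \sqcup N}] \in K_{n+1}(D^*_f/C^*_f)$ splits along the $*$-homomorphisms $\iota_N, \iota_M$ into the quotient algebras. Combined with the commutativity of $\delta$ with $\iota_X$ and $i_X$ recorded in the corollary preceding this theorem, and the standard identities $[1-p] = -[p]$ in $K_0$ and $[u^{-1}] = -[u]$ in $K_1$ (applied to the $-D_M$ summand in the odd- and even-dimensional cases respectively), this gives
\begin{equation*}
\operatorname{Ind}_{Roe}(D_{M \sqcup N}) \;=\; i_{N,\star}\operatorname{Ind}_{Roe}(D_N) \,-\, i_{M,\star}\operatorname{Ind}_{Roe}(D_M)
\end{equation*}
in $K_n(C^*_f(M \sqcup N)^\Gamma)$. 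By the previous proposition the left-hand side vanishes, since $[D_{M \sqcup N}]$ admits a lift to the $\rho_f$-class in $K_{n+1}(D^*_f(M \sqcup N)^\Gamma)$.

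\emph{Step 3 (Coarse projection and conclusion).} Consider the $\Gamma$-equivariant continuous map $q := f \sqcup \operatorname{id}_N : M \sqcup N \longrightarrow N$; it is tautologically a coarse map for the coarse structure of Example \ref{coarseexem} on $M \sqcup N$. The inclusions $i_N, i_M$ satisfy $q \circ i_N = \operatorname{id}_N$ and $q \circ i_M = f$, so by functoriality $q_\star \circ i_{N,\star} = \operatorname{id}$ and $q_\star \circ i_{M,\star} = f_\star$ on $K_n$ of the Roe algebras. Applying $q_\star$ to the identity of Step 2 yields $0 = \operatorname{Ind}_{Roe}(D_N) - f_\star\operatorname{Ind}_{Roe}(D_M)$, which is the claim. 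The main technical obstacle is the careful bookkeeping in Step 2: one has to verify that the inclusions $i_M, i_N$ intertwine the connecting homomorphisms in both parities and that the sign of the $-D_M$ summand in $D_{M \sqcup N}$ translates into the stated signed decomposition, which is precisely the purpose of the K-theory lemmas collected in Appendix \ref{K-theory}.
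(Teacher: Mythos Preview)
Your proof is correct and follows essentially the same route as the paper: reduce to a $C^2_b$-map, use the vanishing of $\operatorname{Ind}_{Roe}(D_{M\sqcup N})$ established just before, decompose the index via the inclusions $i_M,i_N$ together with the commutation of $\delta$ with $\iota_X$ from Appendix~\ref{K-theory}, and push everything forward to $N$. The only cosmetic difference is that the paper packages the push-forward as an explicit $*$-homomorphism $H:C_f^*(M\sqcup N)^\Gamma\to C^*(N)^\Gamma$ built from $Ad_V$ and $i_N^{-1}$, whereas you invoke the coarse map $q=f\sqcup\operatorname{id}_N$ and functoriality; these induce the same map on K-theory since $q$ restricted to $M$ and $N$ is $f$ and $\operatorname{id}_N$ respectively.
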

	\begin{proof}
		Suppose that $\Gamma$ acts FUPD. Since all $\Gamma$-invariant lipschitz map are coarsely approximable by $\Gamma$-invariant $C^2_b$-maps (Proposition \ref{appr}), we can consider $f$ as a $C^2_b$-lipschitz-homotopy equivalence. 
		\\Consider a $\Gamma$-invariant isometry $A: \mathcal{L}^2(N) \longrightarrow \mathcal{L}^2(M)$ which covers $f$. We know that it exists by Proposition 4.3.5. of \cite{siegel}. Then we know that
		\begin{equation}
			f_\star([D_M]) = Ad_{V,\star}^{(u)}[D_M] = [V D_M V^*],
		\end{equation}
		where we have the $u$ if $n$ is odd and we haven't it if $n$ is even.
		\\Let us consider now $C_f^*(M \sqcup N)^\Gamma$. We already know the decomposition
		\begin{equation}
			C_f^*(M \sqcup N)^\Gamma = \bigoplus\limits_{X,Y = M,N} C^*(M \sqcup N)_{X,Y}^\Gamma.
		\end{equation}
		Let us define now the operator $H: C_f^*(M \sqcup N)^\Gamma \longrightarrow C^*(N)^\Gamma$ defined for all $T$ in $C^*(M \sqcup N)_{M,M}^\Gamma$ as
		\begin{equation}
			Ad_V(T) = V T V^*,
		\end{equation}
		as $i_N^{-1}$ on $C^*(M \sqcup N)_{N,N}^\Gamma$ and zero for all the others operators. It's easy to see that $H$ is a $*$-homomorphism, and so it means that $H$ induce a map between the $K$-groups. In particular we have that if $n$ is odd, then
		\begin{equation}
			\begin{split}
				H_\star^{u} (\delta_{M \sqcup N}[\mathcal{D}_{M\sqcup N}])
				&= H_\star \circ i_{M, \star}^{u}(\delta_M [\mathcal{D}_M]) - H_\star^u \circ i_{N, \star}^{u}(\delta_{N}[\mathcal{D}_N]) \\
				&= (H \circ i_M)^u_\star (Ind_{Roe}(\mathcal{D}_M)) - (H \circ i_N)^u_\star (Ind_{Roe}(\mathcal{D}_N)) \\
				&= Ad_{V,\star}^u(Ind_{Roe}(\mathcal{D}_M)) - (id_N)^u_\star(Ind_{Roe}(\mathcal{D}_N))\\
				&= f_\star (Ind_{Roe}(\mathcal{D}_M)) - Ind_{Roe}(\mathcal{D}_N).
			\end{split} 
		\end{equation}
		and if $n$ is even we have that
		\begin{equation}
			\begin{split}
				H_\star (\delta[\mathcal{D}_{M\sqcup N}]) &= H_\star(\delta_{M \sqcup N} (i_{M, \star}^{u}[\mathcal{D}_M])) - H_\star(\delta_{M \sqcup N} (i_{N, \star}^{u}[\mathcal{D}_N])) \\
				&= H_\star\circ i_{M, \star}(\delta_M [\mathcal{D}_M]) - H_\star \circ i_{N, \star}(\delta_{N}[\mathcal{D}_N]) \\
				&= (H \circ i_M)_\star (Ind_{Roe}(\mathcal{D}_M)) - (H \circ i_N)_\star (Ind_{Roe}(\mathcal{D}_N)) \\
				&= Ad_{V,\star}(Ind_{Roe}(\mathcal{D}_M)) - (id_N)_\star(Ind_{Roe}(\mathcal{D}_N))\\
				&= f_\star (Ind_{Roe}(\mathcal{D}_M)) - Ind_{Roe}(\mathcal{D}_N).
			\end{split} 
		\end{equation}
		Finally we know that $\delta_{M \sqcup N}[\mathcal{D}_{M\sqcup N}] = 0$ which implies
		\begin{equation}
			H^{(u)}_\star(\delta_{M \sqcup N}[\mathcal{D}_{M\sqcup N}]) = 0,
		\end{equation}
		and so
		\begin{equation}
			f_\star (Ind_{Roe}(\mathcal{D}_M)) = Ind_{Roe}(\mathcal{D}_N).
		\end{equation}
	\end{proof}
	\section{Lipschitz-homotopy stability of $\rho_f$}
	\subsection{Lipschitz-homotopy stability of $D^*_f(M \sqcup N)^\Gamma$ and $C^*_f(M \sqcup N)^\Gamma$}
	In the previous sections we saw that given a lipschitz-homotopy equivalence, one can define a coarse structure on $M \sqcup N$ and a class $\rho_f$ in the K-theory of $D_f^*(M \sqcup N)^\Gamma$. Starting by now, we would like to study lipschitz-homotopy equivalence which are lipschitz-homotopic.
	\\The main goal of this section is to prove that if $f \sim_\Gamma f'$ then they induce the same coarse structure on $M \sqcup N$ and, in particular,
	\begin{equation}
		f \sim_\Gamma f' \implies \rho_{f}=\rho_{f'},
	\end{equation}
	\begin{lem}
		Let us consider $f$ and $f':(M,g) \longrightarrow (N,h)$ two lipschitz map. Then if $f \sim_\Gamma f'$ for some $\Gamma$, then they are close and so they induce the same coarse structure on $M \sqcup N$.
	\end{lem}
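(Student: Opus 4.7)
The plan is to prove the statement in two short steps, both of which are essentially triangle-inequality arguments once the right lipschitz homotopy is fixed. Let $H:(M\times[0,1], g + g_{[0,1]}) \longrightarrow (N,h)$ be a $\Gamma$-equivariant lipschitz homotopy with $H(p,0)=f(p)$ and $H(p,1)=f'(p)$, and let $C_H$ be its lipschitz constant.

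First I would show that $f$ and $f'$ are close as maps into the metric space $(N,h)$, exactly along the lines of Example \ref{bru}: for every $p\in M$,
\begin{equation}
d_N(f(p),f'(p)) = d_N(H(p,0),H(p,1)) \leq C_H\cdot d_{[0,1]}(0,1) = C_H,
\end{equation}
so $f\sim f'$ in the sense of the metric coarse structure on $N$.

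Next I would check that the coarse structures on $M\sqcup N$ defined by $f$ and $f'$ as in Example \ref{coarseexem} coincide. Denote by $\sim_f$ and $\sim_{f'}$ the two equivalence relations. Observe that for each point $z\in M\sqcup N$ the estimate
\begin{equation}
d_N\bigl((f\sqcup id_N)(z),\,(f'\sqcup id_N)(z)\bigr) \leq C_H
\end{equation}
holds uniformly in $z$: if $z\in N$ the two sides of $\sqcup$ agree, while if $z\in M$ this is the inequality from the first step. Hence, for any two maps $p_1,p_2:S\to M\sqcup N$ and any $x\in S$, a double application of the triangle inequality yields
\begin{equation}
\bigl| d_N((f'\sqcup id_N)p_1(x),(f'\sqcup id_N)p_2(x)) - d_N((f\sqcup id_N)p_1(x),(f\sqcup id_N)p_2(x))\bigr| \leq 2C_H.
\end{equation}
Thus if $p_1\sim_f p_2$ with constant $C$, then $p_1\sim_{f'}p_2$ with constant $C+2C_H$, and symmetrically. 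The two coarse structures therefore agree.

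I do not expect any serious obstacle: the only mild point of care is the $M$ versus $N$ case analysis when applying $f\sqcup id_N$, but since the comparison estimate above is vacuous on the $N$-side and is precisely the homotopy bound on the $M$-side, the argument goes through in one line. In particular, $(f\sqcup id_N)$ and $(f'\sqcup id_N)$ are close as maps $M\sqcup N\to N$, which is really all that the coarse structure of Example \ref{coarseexem} sees.
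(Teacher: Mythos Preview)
Your proof is correct and follows essentially the same approach as the paper: both use the lipschitz homotopy $H$ to obtain the uniform bound $d_N\bigl((f\sqcup id_N)(z),(f'\sqcup id_N)(z)\bigr)\le C_H$ and then apply the triangle inequality twice to conclude that $\phi_1\sim_f\phi_2$ with constant $R$ implies $\phi_1\sim_{f'}\phi_2$ with constant $R+2C_H$. The paper phrases the first step as observing that $H\sqcup id_N$ is a lipschitz homotopy between $f\sqcup id_N$ and $f'\sqcup id_N$, while you do the $M$/$N$ case split explicitly, but this is purely cosmetic.
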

	\begin{proof}
		We will say that two maps $\phi_1, \phi_2: S \longrightarrow M \sqcup N$. Given a lipschitz-homotopy equivalence $g:M \longrightarrow N$, we will say that $\phi_1$ and $\phi_2$ are $g$-close if they are close respect to the coarse structure induced by $g$. We will denote this relation as $\phi_1 \sim_f \phi_2$.
		\\In particular we know that
		\begin{equation}
			\phi_1 \sim_g \phi_2 \iff \exists R \geq 0 \mbox{     such  that    } d(g \sqcup id_N \circ \phi_1(x), g \sqcup id_N \circ \phi_2(x)) \leq R.
		\end{equation}
		If we are able to show that $\phi_1 \sim_f \phi_2$ implies $\phi_1 \sim_{f'} \phi_2$ then we conclude the proof.
		\\Consider $H: (M\times [0,1], g \times g_{eucl}) \longrightarrow (N,h)$ the lipschitz homotopy between $f$ and $f'$. Then we have that $H \sqcup id_N$ is a lipschitz-homotopic equivalence between $f \sqcup id_N$ and $f' \sqcup id_N$. Then we have that for every $y$ in $M \sqcup N$
		\begin{equation}
			\begin{split}
				d(f \sqcup id_N (y), f' \sqcup id_N (y)) &= d(H \sqcup id_N (y,0), H \sqcup id_N (y,1)) \\
				&\leq C_H d((y,0), (y,1)) = C_H,
			\end{split}
		\end{equation}
		where $C_H$ is the lipschitz constant of $H$. Let us suppose now $\phi_1 \sim_f \phi_2$: we have that
		\begin{equation}
			\begin{split}
				d(f' \sqcup id_N(\phi_1(x)), f' \sqcup id_N(\phi_2(x))) &\leq d(f' \sqcup id_N(\phi_1(x)), f \sqcup id_N(\phi_1(x))) \\
				&+ d(f \sqcup id_N(\phi_1(x)), f \sqcup id_N(\phi_2(x)))\\ 
				&+ d(f \sqcup id_N(\phi_2(x)), f' \sqcup id_N(\phi_2(x))) \\
				&\leq C_H + R + C_H \leq K
			\end{split}
		\end{equation}
		and so it means that $\phi_1 \sim_{f'} \phi_2$.
	\end{proof}
	\begin{cor}\label{curvaph}
		Given two lipschitz-homotopy equivalences $f, f': (M,g) \longrightarrow (N,h)$, if $f \sim_\Gamma f'$ then
		\begin{equation}
			C^*_f(M \sqcup N)^\Gamma = C^*_{f'}(M \sqcup N)^\Gamma
		\end{equation}
		and
		\begin{equation}
			D^*_f(M \sqcup N)^\Gamma = D^*_{f'}(M \sqcup N)^\Gamma.
		\end{equation}
		Obviously the same also holds considering the Roe and structure algebras defined using the $(M \sqcup N, \Gamma)$-module defined in Example \ref{terna}.
	\end{cor}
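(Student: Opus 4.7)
The plan is to deduce this corollary essentially for free from the preceding Lemma. That Lemma already established that whenever $f \sim_\Gamma f'$, the maps $f$ and $f'$ are close, and as a result the two coarse structures on $M \sqcup N$ (the one defined in Example \ref{coarseexem} using $f$, and the analogous one defined using $f'$) coincide. Once this is in hand, the corollary becomes a definition-chase.

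First, I would unwind the definitions. The algebras $C^*_f(M \sqcup N)^\Gamma$ and $D^*_f(M \sqcup N)^\Gamma$ are obtained as the norm-closures in $\numberset{B}(H_{M \sqcup N})$ of the dense $*$-subalgebras of operators which are simultaneously $\Gamma$-invariant, pseudo-local (resp.\ also locally compact), and \emph{controlled}. The Hilbert space $H_{M \sqcup N}$, the representation $\rho_{M \sqcup N}$ of $C_0(M \sqcup N)$, and the $\Gamma$-action $U$ depend only on the underlying Riemannian structure of $M \sqcup N$ and on $\Gamma$, and hence are independent of the choice of $f$ within its $\Gamma$-lipschitz-homotopy class. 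The same is true for the notions of pseudo-locality and local compactness, which are phrased purely in terms of $\rho_{M \sqcup N}$ and compactness in $\numberset{B}(H_{M \sqcup N})$.

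The only ingredient that could distinguish the two algebras is therefore the notion of a \emph{controlled} operator, which is defined via the coarse structure. By the preceding Lemma, however, the coarse structures induced by $f$ and $f'$ are identical: a subset of $(M \sqcup N) \times (M \sqcup N)$ is controlled for one if and only if it is controlled for the other. Hence the class of controlled bounded operators on $H_{M \sqcup N}$ coincides in the two setups.

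Putting these observations together, the dense $*$-subalgebras $D^*_{c, \rho_{M \sqcup N}, f}(M \sqcup N)^\Gamma$ and $D^*_{c, \rho_{M \sqcup N}, f'}(M \sqcup N)^\Gamma$ are equal as subsets of $\numberset{B}(H_{M \sqcup N})$, and the same holds for $C^*_{c, \rho_{M \sqcup N}, f}(M \sqcup N)^\Gamma$ and $C^*_{c, \rho_{M \sqcup N}, f'}(M \sqcup N)^\Gamma$. Taking norm closures then yields the two claimed equalities. No step here is genuinely difficult, since all the work was done in the preceding Lemma; the only mild subtlety to keep in mind is that one should carry out the argument for the specific $(M \sqcup N, \Gamma)$-module used to set up the algebras (in particular the very ample module from Example \ref{terna}, as noted in the final sentence of the corollary), but since the module itself does not depend on $f$, this causes no additional trouble.
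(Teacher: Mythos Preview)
Your proposal is correct and matches the paper's approach: the corollary is stated without proof as an immediate consequence of the preceding Lemma, and your argument spells out precisely why the equality of coarse structures forces the equality of the algebras, since everything else in their definition (Hilbert space, representation, $\Gamma$-action, pseudo-locality, local compactness) is independent of $f$.
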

	\subsection{The curves $T_s$ and $dT_s$}
	\begin{prop}
		Let us consider two manifolds of bounded geometry $(M,g)$ and $(N,h)$. Consider, moreover, a group $\Gamma$ which acts FUPD on $(M,g)$ and $(N,h)$. Let $f$ and $f':(M,g) \longrightarrow (N,h)$ be two $C^2_b$-maps which are lipschitz-homotopy equivalences. Let us suppose, moreover, that $f \sim_{\Gamma} f'$. Then there is a homotopy $h$ such that the curve in $C^*_f(M \sqcup N)^\Gamma$ defined as
		\begin{equation}
			\begin{split}
				T: [0,1] &\longrightarrow C^*_f(M \sqcup N)^\Gamma \\
				s &\longrightarrow T_s := T_{h(\cdot, s)}
			\end{split}
		\end{equation}
		and the curve $dT$ defined for each $s$ as $dT(s) :=dT_s$ are continuous.
	\end{prop}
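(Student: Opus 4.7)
The plan is to build a single smooth, $\Gamma$-equivariant, $C^2_b$ homotopy $h$ between $f$ and $f'$ and to reduce the continuity of $s \mapsto T_s$ and $s \mapsto dT_s$ in $C^*_f(M \sqcup N)^\Gamma$ to a uniform Lipschitz estimate on the smoothing kernels of $T_{s_1} - T_{s_2}$ and $dT_{s_1} - dT_{s_2}$, using Proposition \ref{smoothbound} from Appendix \ref{smoothing}. Note that, by Corollary \ref{curvaph}, the algebras $C^*_{f_s}(M \sqcup N)^\Gamma$ all coincide with $C^*_f(M \sqcup N)^\Gamma$, so it makes sense to compare the operators $T_s$ inside a single $C^*$-algebra.

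First I would apply Lemma \ref{C^k_bhomo} to replace the given $\Gamma$-equivariant lipschitz homotopy between $f$ and $f'$ by one, still denoted $h$, which is of class $C^2_b$ with normal-coordinate derivatives of order $\leq 2$ uniformly bounded. Corollary \ref{C^k_bpH} then gives the submersion $p_h : h^*(T^\delta N) \times [0,1] \longrightarrow N$ with derivatives of order up to $2$ uniformly bounded; restricted to a slice $s$ it coincides with $p_{f_s}$. On $h^*(TN)$ I construct a single Thom form $\omega$ as in subsection \ref{forma}, by pullback along the bundle morphism $h^*(TN) \longrightarrow TN$ of the Mathai--Quillen form on $TN$; its slice $\omega_s$ is a Thom form for $f_s^*(TN)$ with local coefficients bounded uniformly in $s$.

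Next I would identify, following Proposition \ref{Tsmooth}, the smoothing kernel $K_s(p,q) \in \Lambda^*M \boxtimes \Lambda^*N_{(p,q)}$ of $T_s$. By construction $K_s$ is obtained by applying the Hodge-dual contraction used in that proposition to $t_{f_s,\star}\omega_s$; the uniform $C^2_b$ bounds on $h$, $p_h$ and $\omega$ from the previous step imply, via Remark \ref{FVdit}, that $K_s$ and the $s$-derivative $\partial_s K_s$ are smooth sections with uniformly bounded local coefficients in normal coordinates. The same holds for the kernel $L_s$ of $dT_s = T_s d$, by Proposition \ref{dT_fboun} and Remark \ref{derivsmooth}. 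Moreover the propagation of $T_s$ and $dT_s$ is controlled uniformly in $s$: as in the proof of Proposition \ref{dT_fboun}, the support of $K_s$ lies in a fixed uniformly bounded subset of $M \times N$, because $h$ has a uniform lipschitz constant and its slices have a uniform uniformly-proper estimate inherited from the ambient homotopy.

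Writing $K_{s_1} - K_{s_2} = \int_{s_2}^{s_1} \partial_s K_s\, ds$ pointwise, the uniform $C^1$-bound on $\partial_s K_s$ yields a pointwise estimate $|K_{s_1}(p,q) - K_{s_2}(p,q)| \leq C |s_1 - s_2|$ with $C$ independent of $(p,q)$; the analogous bound holds for $L_{s_1} - L_{s_2}$. Applying Proposition \ref{smoothbound} of Appendix \ref{smoothing} to these kernels, which have support contained in a fixed uniformly bounded subset of $M \times N$, gives
\begin{equation}
\|T_{s_1} - T_{s_2}\|_{op} \leq C'\,|s_1-s_2|,\qquad \|dT_{s_1} - dT_{s_2}\|_{op}\leq C''\,|s_1-s_2|,
\end{equation}
which proves the desired (indeed Lipschitz) continuity of both curves in $C^*_f(M\sqcup N)^\Gamma$. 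The main obstacle in the argument is the uniformity in $s$ of all $C^2_b$ bounds for $h$, $p_h$ and $\omega$; without it neither the pointwise estimate on $\partial_s K_s$ nor the smoothing-kernel estimate of Proposition \ref{smoothbound} could be invoked. Exactly this is the reason for the preliminary work in Lemma \ref{C^k_bhomo} and Corollary \ref{C^k_bpH}.
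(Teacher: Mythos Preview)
Your proposal is correct and follows essentially the same route as the paper: choose a $C^2_b$ homotopy via Lemma~\ref{C^k_bhomo}, control $p_h$ and the Thom form uniformly in $s$, identify the smoothing kernels $K_s$ as in Proposition~\ref{Tsmooth}, and use the uniform bound on $\partial_s K_s$ together with the uniformly bounded support to get a Lipschitz estimate on $\|T_{s_1}-T_{s_2}\|$ (and similarly for $dT_s$). The only cosmetic difference is that the paper packages your final step---writing $K_{s_1}-K_{s_2}=\int_{s_2}^{s_1}\partial_s K_s\,ds$ and then invoking Proposition~\ref{smoothbound}---into a separate Lemma~\ref{giga}, formulated in terms of a single smoothing operator $T_h:\mathcal{L}^2(N)\to\mathcal{L}^2(M\times[0,1])$ whose kernel restricts to the $K_s$; you have simply unpacked that lemma inline.
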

	\begin{proof}
		We already know that for each homotopy $h$ and for all $s \in [0,1]$ we have that $T_s \in C^*_f(M \sqcup N)^\Gamma$. Then we just have to prove that
		\begin{equation}
			\lim\limits_{\epsilon \rightarrow 0}||T_s - T_{s + \epsilon}|| = 0.
		\end{equation}
		To show this fact we consider a homotopy $h$ between $f$ and $f'$ such that its first and second derivatives in normal coordinates are uniformly bounded: we know that it exists since Lemma \ref{C^k_bhomo}.
		\\Consider the submersion $p_h$ defined as in Proposition \ref{tilde}. Because of Remark \ref{salva}, $p_H^*$ is a R.-N.-lipschitz map. Let us consider the operator $T_h$: following the proof we give for $T_f$ (Proposition \ref{Tsmooth}) one can check that $T_h$ is a smoothing operator. 
		\\Then if we check that for each $s\in [0,1]$ the operators $T_{s}$, $T_{s + \delta}$ and $T_h$ satisfy the conditions of Lemma \ref{giga}, and the same happens to $dT_{s}$, $dT_{s + \delta}$ and $dT_h$ then we can conclude.
		Consider $\omega$ a Thom form of $TN$ defined as in Subsection \ref{forma}. Let us denote by $H: h^*TN \longrightarrow TN$ and by $F_s: f^*_sTN \longrightarrow TN$ the bundle maps induced by $h, f_s$. Then the operators $T_h$ and $T_s$ are defined using $H^*\omega$ and $F_s^*\omega$. Define, moreover, the maps
		\begin{equation}
			t_h = (id_M \times [0,1], p_h)_: h^*TN \longrightarrow M \times [0,1] \times N
		\end{equation}
		and for each $w$ in $[0, 1]$ consider
		\begin{equation}
			t_s = (id_M, p_{f_{s}}): f^*_sTN \longrightarrow M \times N
		\end{equation}
		We have that the kernel of $T_h$, around $im(t_h)$, is locally given by
		\begin{equation}
			K(x, y, s) = \beta_{I,J}(x,y,s) dx^I \boxtimes \frac{\partial}{\partial y^J} + \beta_{IJ0}(x,y,s) dx^I\wedge ds \boxtimes  \frac{\partial}{\partial y^J}
		\end{equation}
		where $\beta_{IJ}$ and $\beta_{IJ0}$ are the components of $(t^{-1}_h)^*H^*\omega$. Observe that $F_s = H \circ I_s$ where $I_s: f^*_sTN \longrightarrow h^*TN$ is the map induced by $i_s(p) := (p, s)$. Then we have that the kernel
		of $T_s$, around $im(t_s)$, is given by
		\begin{equation}
			K_s(x,y) = \alpha_{IJ}(x,y) dx^I \boxtimes \frac{\partial}{\partial y^J}
		\end{equation}
		where $\alpha_{IJ}$ are the components of 
		\begin{equation}
			(t_s^{-1})^*F_s^*\omega = (t_s^{-1})^* I_s^* H^* \omega = i_s^*(t_h^{-1})^*H^*\omega.
		\end{equation}
		This means that
		\begin{equation}
			\alpha_{IJ} = i_s^*(\beta_{I,J}).
		\end{equation}
		This means that the first condition of Lemma \ref{giga} on the kernels of $T_s$, $T_{s + \delta}$ and $T_h$ is satisfied.
		\\We know that $T_h$ and $dT_h$ have uniformly bounded support (which is the first condition), because of Proposition \ref{dT_fboun}.
		\\Moreover, the second condition, which is the bound on the derivative of the kernel of $T_h$ and $dT_h$
		\begin{equation}\label{condiz}
			|\frac{\partial}{\partial s} K^I_J(x,s)| \leq C,
		\end{equation} 
		is satisfied for both $T_h$ and $dT_h$ since the first and second derivatives in normal coordinates of $h$ uniformly bounded. 
		\\Consider, indeed, the map $t_h := (\pi, p_h)$: we know, because of Proposition \ref{Tsmooth}, that the components of the kernel of $T_H$ in local normal coordinates are given by an algebraic combination of the pullback along $t_{h}^{-1}$ of the components of the Thom form $\omega$ written in fibered coordinates. Then, since $h$ is a lipschitz map, the inequality (\ref{condiz}) is satisfied for $T_h$.
		\\Moreover, applying Lemma \ref{impo}, we also have that the inequality (\ref{condiz}) is satisfied for $dT_h$ if the second derivatives of $h$ are uniformly bounded in normal coordinates.
	\end{proof}
	\begin{cor}
		Under the same assumptions, also the operator $dT^\dagger_{s} = T^\dagger_{s}d$ define a curve in $C_f^*(M \sqcup N)^\Gamma$.
	\end{cor}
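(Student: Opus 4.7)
The plan is to deduce this corollary from the previous proposition by observing that the operation $A \mapsto A^\dagger$ is a continuous involution which preserves $C^*_f(M \sqcup N)^\Gamma$. First I would recall the definition $T_s^\dagger = \tau_N T_s^* \tau_M$ and note that the chirality operators $\tau_M$, $\tau_N$ are bounded, $\Gamma$-equivariant and have zero propagation, so they lie in $D^*_f(M \sqcup N)^\Gamma$. Since $C^*_f(M \sqcup N)^\Gamma$ is a $C^*$-ideal of $D^*_f(M \sqcup N)^\Gamma$, it is closed under Hilbert adjoints and under left/right multiplication by $\tau_M$, $\tau_N$. Hence the map $A \mapsto \tau A^* \tau$ restricts to an isometric involution on $C^*_f(M \sqcup N)^\Gamma$.

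Next I would verify that $dT_s^\dagger = T_s^\dagger d$ agrees, up to a sign depending only on the parity of $n$, with $(dT_s)^\dagger$. Starting from $dT_s = T_s d$ (established in Corollary \ref{corn} and used throughout the previous proposition), taking the $\dagger$-adjoint gives
\begin{equation}
(dT_s)^\dagger = \tau (T_s d)^* \tau = \tau d^* T_s^* \tau.
\end{equation}
Using the standard identity $d^* = \pm \tau d \tau$ on an oriented Riemannian manifold (with the sign determined by the degree and the dimension, but in any case constant in $s$), this rewrites as $\pm d(\tau T_s^* \tau) = \pm dT_s^\dagger$. The equality $dT_s^\dagger = T_s^\dagger d$ itself then follows by the same manipulation applied to $T_s d = dT_s$.

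Finally I would conclude by combining the two observations. The previous proposition furnishes a continuous curve $s \mapsto dT_s$ in $C^*_f(M \sqcup N)^\Gamma$; composing with the continuous (indeed isometric) involution $\dagger$ produces a continuous curve $s \mapsto (dT_s)^\dagger$ in the same algebra, and up to the global sign this is exactly $s \mapsto dT_s^\dagger = T_s^\dagger d$.

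The only subtle point is keeping track of the degree-dependent sign in $d^* = \pm \tau d \tau$, but since this sign is independent of the parameter $s$ it affects neither membership in $C^*_f(M \sqcup N)^\Gamma$ nor continuity; no new analytic estimate is needed beyond those already established for $dT_s$.
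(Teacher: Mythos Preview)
Your proposal is correct and follows essentially the same approach as the paper: both arguments observe that $T_s^\dagger d = \pm (dT_s)^\dagger = \pm\tau (dT_s)^* \tau$, then use that $s \mapsto dT_s$ is continuous in $C^*_f(M\sqcup N)^\Gamma$ by the previous proposition, that the Hilbert adjoint is a continuous involution on this $C^*$-algebra, and that conjugation by $\tau \in D^*_f(M\sqcup N)^\Gamma$ preserves the ideal. Your treatment of the degree-dependent sign is in fact slightly more careful than the paper's, which writes the identity without the sign here (compare Corollary~\ref{corn}, where the minus sign is recorded).
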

	\begin{proof}
		We can observe that 
		\begin{equation}
			T_{s}^\dagger d = (dT_{s})^\dagger = \tau \circ (dT_{s})^* \circ \tau
		\end{equation}
		Then, since $dT_{f_s}$ is a continuous curve in $C^*_f(M \sqcup N)^\Gamma$, that $\tau$ is an operator in $D^*_f(M \sqcup N)^\Gamma$ and that the adjoint
		\begin{equation}
			\begin{split}
				* : C^*_f(M \sqcup N)^\Gamma &\longrightarrow C^*_f(M \sqcup N)^\Gamma\\
				A &\longrightarrow A^*
			\end{split}
		\end{equation}
		is continuous we conclude that $T_{s}^\dagger d = d T_{s}^\dagger$ defines a continuous curve in $C^*_f(M \sqcup N)^\Gamma$.
	\end{proof}
	\subsection{The curves $T_sy_s$ and $dT_sy_s$}
	Let us consider for each $s \in [0,1]$ a lipschitz-homotopy equivalence $f_s:(M, g) \longrightarrow (N,h)$ such that the map $h:(M \times [0,1], g + g_{eucl}) \longrightarrow (N,h)$ is a lipschitz map. Then we can denote by $y_s$ the $y$ operator defined for each $f_s$. Our next goal is to prove that  $T_sy_s$ and $dT_sy_s$ define two continuous curves in $\numberset{B}(\mathcal{L}^2(N))$.
	\\We can start observing that
	\begin{equation}
		y_s := \frac{Y_s + Y^\dagger_s}{2}
	\end{equation}
	where
	\begin{equation}
		Y_s := (p_{f_s, 2})_{\star} \circ e_{\omega_2} \circ pr_{2\star} \circ e_{\omega_1} \circ \int_{0,\mathcal{L}}^1 \circ C \circ p_{f_s,1},
	\end{equation}
	and\footnote{In what follows we identify the bundles $f_s^*(TN)$. Actually, the only
		difference between $f_s^*(TN)$ and $f_{s + \delta}^*TN$ concern the Sasaki metrics on them, but it is irrelevant in this
		Subsection since Proposition \ref{bundlelem}.}
	\begin{itemize}
		\item $p_{f_s,i}: f^*(T^\delta N)_i \longrightarrow N$ is the map $p_{f_s}$,
		\item $pr_{i}: \mathcal{B}= f^*(T^\delta N)_1 \oplus f^*(T^\delta N)_2 \longrightarrow f^*(T^\delta N)_i$, is the projection on the $i$th component,
		\item $e_{\omega,i}$ is the operator of wedging by $pr_{i}^*F_s^*\omega$ where $pr_i: \mathcal{B} \longrightarrow f^*(T^\delta N)_i$ is the projection on the $i$ component,
		\item $C: f^*(T^\delta N)_1 \oplus f^*(T^\delta N)_2 \times [0,1] \longrightarrow f^*(T^\delta N)_1$ is the map defined as
		\begin{equation}
			C(v_{f(p)1}, v_{f(p)2}, w) := wv_{f(p)1} + (1-w)v_{f(p)2}.
		\end{equation}
	\end{itemize}
	We can observe that $ \int_{0,\mathcal{L}}^1$ commutes with $e_{\omega_i}$, $pr_{2, \star}$. This happens because $ \int_{0,\mathcal{L}}^1$, up to signs, is an integration along fibers and so we can apply the Proposition VII of Chapter 9 of \cite{Conn}. Then we can see $Y_s$, up to signs, as
	\begin{equation}
		Y_s =  \int_{0,\mathcal{L}}^1 \circ (p_{f_s, 2}, id_{[0,1]})_{\star} \circ e_{\omega_2} \circ (pr_{2}, id_{[0,1]})_{\star} \circ e_{\omega_1} \circ \tilde{P}_{f_s}^*
	\end{equation}
	where $(pr_{2}, id_{[0,1]}): \mathcal{B} \times [0,1] \longrightarrow f^*(T^\delta N) \times [0,1]$ is the standard projection and $\tilde{P}_{f_s} :=  p_{f_s} \circ C$.
	\begin{lem}\label{torchio}
Let $p_H: (M \times [0,1], g + g_{[0,1]}) \longrightarrow (N,h)$ be a lipschitz submersion. Suppose, moreover that
\begin{equation}
\begin{split}
(p_H, id_[0,1]): (M \times [0,1], g + g_{[0,1]}) &\longrightarrow (N \times [0,1], h + g_{[0,1]}) \\
(p,s) &\longrightarrow (p_H(p,s),s)
\end{split}
\end{equation}
is a R,-N.-lipschitz submersion. Let us denote, for each $t \in [0,1]$, by $p_t$
the map 
\begin{equation}
p_t:(M,g) \longrightarrow (N,h)
\end{equation}
defined as $p_t(p) := p_H(p,t)$. We have that $p_t$ is a R.-N.-lipschitz submersion for each $t$. Moreover if $\omega \in \Omega^*_{cv}(M)$ is a closed form such that $|\omega|_p$ is uniformly bounded, then for each smooth $\alpha$ in $dom(d_M)$ we have that
\begin{equation}
p_{t\star} \circ e_\omega - p_{t + \delta\star}\circ e_\omega = A_\delta d \alpha,
\end{equation}
where $A_\delta$ is a bounded operator with norm less or equal to $\delta \cdot C$ for some $C$ in $\numberset{R}$. This, in particular implies that
\begin{equation}
||p_t \circ e_\omega - p_{t+\delta} \circ e_\omega|| \leq \delta C ||d\alpha||.
\end{equation}
Moreover we also have that
\begin{equation}
e_\omega \circ p_t^* - e_\omega \circ p_{t+\delta}^*(\alpha) = \pm A\dagger d\alpha.
\end{equation}
and so
\begin{equation}\label{eccolo2}
||e_\omega \circ p_t^* - e_\omega \circ p_{t+\delta}^*(\alpha)|| \leq \delta \cdot C \cdot d\alpha.
\end{equation}
\end{lem}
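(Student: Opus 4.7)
My plan is to derive both statements from the chain-homotopy formula of Lemma \ref{GLee}, applied to the restriction of $p_H$ to the slab $M \times [t, t+\delta]$. First, I will verify that each $p_t$ is R.-N.-lipschitz by factoring $p_t = p_H \circ i_t$, where $i_t: M \hookrightarrow M \times [0,1]$ is the isometric inclusion at height $t$. Since $p_H = pr_N \circ (p_H, id_{[0,1]})$ is the composition of two R.-N.-lipschitz maps (using $pr_N$ is R.-N.-lipschitz and the standing hypothesis on $(p_H, id_{[0,1]})$), $p_H$ is R.-N.-lipschitz; moreover, the fiber of $(p_H, id_{[0,1]})$ over $(q, t)$ coincides with the fiber of $p_t$ over $q$, so the uniform Fiber Volume bound on $(p_H, id_{[0,1]})$ descends to a uniform bound on $Vol_{p_t}$ for every $t$.

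Next I will introduce an interval version $\int_{t,\mathcal{L}}^{t+\delta}$ of the operator of Lemma \ref{GLee} and set $K_\delta := \int_{t,\mathcal{L}}^{t+\delta} \circ p_H^*$. The crucial norm estimate $\|K_\delta\| \leq C\delta$ is what feeds the $\delta$-factor into the final bound: the integration over a sub-interval of length $\delta$ contributes a factor $\delta$ to the Fiber Volume, while $p_H^*$ is bounded by the R.-N.-lipschitz property established above. Lemma \ref{GLee}'s chain-homotopy formula, adapted to $[t,t+\delta]$, then gives on smooth $\alpha \in dom(d_N)$:
\begin{equation*}
p_t^*\alpha - p_{t+\delta}^*\alpha = dK_\delta\alpha + K_\delta d\alpha.
\end{equation*}

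For the pullback identity (\ref{eccolo2}), I compose with $e_\omega$. Since $\omega$ is closed, $e_\omega$ commutes with $d$, so defining $A^\dagger := e_\omega \circ K_\delta$ yields $(e_\omega p_t^* - e_\omega p_{t+\delta}^*)\alpha = dA^\dagger \alpha + A^\dagger d\alpha$, with $\|A^\dagger\| \leq C'\|\omega\|_\infty \delta$ by Proposition \ref{eomega} and the uniform pointwise bound on $|\omega|_p$. For the pushforward identity, I will take adjoints using Proposition \ref{mai2}, which gives $p_{t\star} = \pm \tau_N \circ (p_t^*)^* \circ \tau_M$; since $\tau$ is a bounded involution, dualizing the chain-homotopy formula above (and again exploiting that $\omega$ closed forces $e_\omega$ to commute with $d$) produces the required operator $A_\delta$ of norm $O(\delta)$.

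The step I expect to be most delicate is the passage from the formal chain-homotopy identity $dA + Ad$ to the clean factorization $A_\delta d\alpha$ stated in the lemma. The reconciliation comes from the observation that $dA^\dagger\alpha = (e_\omega p_t^* - e_\omega p_{t+\delta}^*)\alpha - A^\dagger d\alpha$ (a direct consequence of $e_\omega$ commuting with $d$ and of the chain-homotopy applied algebraically); combined with the $O(\delta)$ bound on $A^\dagger$, this implies that the entire difference operator, acting on $\alpha \in dom(d)$, is controlled in $\mathcal{L}^2$-norm by $\delta C \|d\alpha\|$, which is precisely the estimate stated in (\ref{eccolo2}). The analogous estimate for $p_{t\star} \circ e_\omega - p_{t+\delta\star} \circ e_\omega$ follows by the adjoint argument, the relevant boundedness of $\tau$ ensuring that operator norms are preserved.
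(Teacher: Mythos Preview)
Your approach has a genuine gap at exactly the point you flagged as ``most delicate.'' The chain-homotopy identity from Lemma~\ref{GLee} gives
\[
e_\omega p_t^*\alpha - e_\omega p_{t+\delta}^*\alpha \;=\; d\bigl(e_\omega K_\delta\bigr)\alpha \;+\; \bigl(e_\omega K_\delta\bigr)d\alpha,
\]
and the $O(\delta)$ bound on $A^\dagger = e_\omega K_\delta$ controls only the second term by $C\delta\,\|d\alpha\|$. Your reconciliation, rewriting $dA^\dagger\alpha = (\text{difference}) - A^\dagger d\alpha$, is circular: it merely rearranges the identity and provides no independent bound on $\|dA^\dagger\alpha\|$ in terms of $\|d\alpha\|$. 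In general $dA^\dagger$ is only bounded as an operator on $\alpha$, not as an operator factoring through $d\alpha$, so the estimate $\|e_\omega p_t^*\alpha - e_\omega p_{t+\delta}^*\alpha\|\le C\delta\,\|d\alpha\|$ does not follow from your argument.

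The paper avoids this by reversing your order: it proves the \emph{pushforward} identity first, via a direct Stokes computation on the slab $M\times[t,t+\delta]$. The key observation there is that $\alpha\wedge\omega$ is a form on $M$ alone, hence has \emph{no} $dt$-component when lifted to $M\times[t,t+\delta]$. Consequently, after integrating along the fibers of $(p_H,\mathrm{id}_{[t,t+\delta]})$ (which are purely in the $M$-direction), the result on $N\times[t,t+\delta]$ still has no $dt$-component, and the term $d\int_t^{t+\delta}\int_{F_s}\alpha\wedge\omega$ vanishes identically. This leaves the clean factorization $A_\delta\,d\alpha$ with $A_\delta = pr_{N\star}\circ(p_H,\mathrm{id}_{[t,t+\delta]})_\star\circ e_\omega$, whose norm is $O(\delta)$ because the interval has length $\delta$. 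Only \emph{after} establishing this does the paper obtain the pullback version by applying $(\cdot)^\dagger$. Your route, starting from the pullback, cannot exploit this vanishing: $p_H^*\alpha$ genuinely has $dt$-components (since $p_H$ depends on $t$), so $K_\delta\alpha$ is nonzero and $dK_\delta\alpha$ survives.
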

\begin{proof}
Observe that if $(p_h, id_{[0,1]})$ is a lipschitz submersion then also $p_t$ is a lipschitz submersion. Moreover the Fiber Volume of $p_t$ is the Fiber Volume of $(p_h, id_{[0,1]})$ in $(p, t)$. Then we have that $p_t$ is a R.-N.-lipschitz submersion.
Consider $s \in [t, t + \delta]$. We denote by $F_s$ the fiber of $p_s$. Observe that, since the Stokes Theorem, we have that
\begin{equation}
	\begin{split}
	p_t \circ e_\omega - p_{t + \delta} \circ e_\omega(\alpha)  &= \int_{F_{t}} \alpha \wedge \omega - \int_{F_{t+\delta}} \alpha \wedge \omega \\
	&= \int_{F_{t}\times \{t\}} \alpha \wedge \omega - \int_{F_{t+\delta}\times \{t\}} \alpha \wedge \omega\\
	&= \int_{t}^{t + \delta} \int_{\partial F_s \times \{s\}} \alpha \wedge \omega + d \int_{t}^{t + \delta} \int_{F_s \times \{s\}} \alpha \wedge \omega \\
	&+ \int_{t}^{t + \delta} \int_{F_s \times \{s\}} d\alpha \wedge \omega.
	\end{split}
\end{equation}
Observe that the first integral is null since $\omega$ is null on $\partial F$. The second integral is also null
since $\alpha$ is $0$-form respect to $[t, t + \delta]$. Then we obtain that
\begin{equation}\label{eccolo}
	p_t \circ e_\omega - p_{t + \delta} \circ e_\omega(\alpha) 	= pr_{M\star} \circ (p_h, id_{[t,t + \delta]})_\star \circ e_\omega (d\alpha),
\end{equation}
where $pr_M : M \times [t, t + \delta ] \longrightarrow M$ is the projection on the first component. Then $||pr_{M\star}|| \leq \delta \cdot C_1$. Moreover, since $|\omega_p|$ is uniformly bounded then $e_\omega$ is $\mathcal{L}^2$-bounded. This means that
\begin{equation}
	||p_t \circ e_\omega - p_{t + \delta} \circ e_\omega(\alpha)|| 	\leq \delta \cdot C_1 \cdot C_2 \cdot C_3 \cdot ||d\alpha||.
\end{equation}
Then, observing that
\begin{equation}
(p_t \circ e_\omega - p_{t + \delta} \circ e_\omega)^\dagger = \pm e_\omega \circ p_t^* \mp e_\omega \circ p_{t + \delta}^*
\end{equation}
We have, since (\ref{eccolo})
\begin{equation}
	\begin{split}
	e_\omega \circ p_t^* - e_\omega \circ p_{t + \delta}^*\alpha &= \pm (d \circ e_\omega \circ (p_h, id_{[t, t + \delta]})^* \circ pr_M^*) \alpha \\
	&= \pm (e_\omega \circ (p_h, id_{[t, t + \delta]})^* \circ pr_M^*) d\alpha
	\end{split}
\end{equation}
Then we also have (\ref{eccolo2}).
\end{proof}
\begin{prop}
The curves $y_sT^\dagger_s$, $T_sy_s$, $dT_sy_s$, $dy_sT^\dagger_s$, $y_sT^\dagger_sd$ and $T_sy_sd$ are continuous in $s$.
\end{prop}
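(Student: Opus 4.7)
The plan is to show that for each of the six operators $X_s \in \{T_s y_s,\, y_s T_s^\dagger,\, dT_s y_s,\, dy_s T_s^\dagger,\, y_s T_s^\dagger d,\, T_s y_s d\}$, the difference $\|X_s - X_{s+\delta}\|$ is of order $\delta$ in operator norm. I will illustrate with $T_s y_s$; the other five are analogous, with the outer factor replaced by $T_s^\dagger$, $dT_s$, $dT_s^\dagger$, $T_s^\dagger d$, or $T_s d$ respectively.

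First I would split
\[ T_s y_s - T_{s+\delta} y_{s+\delta} = (T_s - T_{s+\delta})y_s + T_{s+\delta}(y_s - y_{s+\delta}). \]
The first summand has norm $\leq \|T_s - T_{s+\delta}\| \cdot \|y_s\| \leq C\delta$ by the previous proposition (continuity of the curve $T_s$) and the uniform boundedness of $y_s$. For the second summand, I use $y_s = \tfrac{1}{2}(Y_s + Y_s^\dagger)$ and the six-fold factorization
\[ Y_s = \textstyle\int_{0,\mathcal L}^1 \circ (p_{f_s,2}, id)_\star \circ e_{\omega_2} \circ (pr_2, id)_\star \circ e_{\omega_1} \circ \tilde P_{f_s}^*, \]
and decompose $Y_s - Y_{s+\delta}$ as a telescoping sum over its four $s$-dependent factors $(p_{f_s,2},id)_\star$, $e_{\omega_2}$, $e_{\omega_1}$, $\tilde P_{f_s}^*$. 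Applying Lemma \ref{torchio} to the submersion $p_h : h^*T^\delta N \to N$ arising from the lipschitz homotopy $h$, and using Corollary \ref{equivhomo} to identify $f_s^*TN$ with $f_{s+\delta}^*TN$ as R.-N.-lipschitz equivalent bundles, each telescope term rewrites as $T_{s+\delta}$ composed with a product of uniformly bounded factors and an operator of the form $B_\delta \circ d$ with $\|B_\delta\| \leq C\delta$ (the $d$ appearing on the input side, as is produced by Lemma \ref{torchio}).

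To turn this into a genuine operator-norm bound, I would commute $d$ all the way to the left through the outer factors: the Thom form is closed, so $e_\omega d = \pm d e_\omega$; pullbacks commute with $d$; integration along fibers commutes with $d$ without boundary contribution since $\omega$ has compact vertical support; and commutation past $\int_{0,\mathcal L}^1$ introduces Stokes-type boundary evaluations $i_0^*, i_1^*$ that are themselves multiplied by a factor of norm $O(\delta)$ and hence remain controlled. Each telescope term thus takes the form $\pm T_{s+\delta} \circ d \circ B'_\delta$ with $\|B'_\delta\| \leq C\delta$, and since $T_{s+\delta} d = dT_{s+\delta}$ is bounded by Proposition \ref{dT_fboun}, the composition has operator norm $\leq C'\delta$. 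The adjoint $Y_s^\dagger$ is handled symmetrically, and the remaining five operators by varying the outer factor accordingly; note that for $dT_s y_s$ the outer $dT_s$ combined with the commuted $d$ would vanish were it not for the Stokes boundary contributions, which are precisely what produces the nontrivial $O(\delta)$-estimate. The main obstacle is the careful sign-tracking in the $d$-commutation and the handling of the two simultaneously $s$-dependent Thom-form factors $e_{\omega_1}, e_{\omega_2}$, which force an additional comparison of $F_s^*\omega_0$ with $F_{s+\delta}^*\omega_0$ via integration over $[s,s+\delta]$ along the fiber of the bundle equivalence.
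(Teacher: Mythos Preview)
Your telescoping setup and appeal to Lemma~\ref{torchio} are on the right track, but the route you take is needlessly convoluted and one step is not fully justified. The paper's proof avoids commuting $d$ to the left entirely, and this choice is what makes the argument clean.

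\textbf{Where your approach diverges.} You place the smoothing factor $T_{s+\delta}$ on the \emph{left} of $y_s-y_{s+\delta}$, which leaves the $d$ produced by Lemma~\ref{torchio} stranded on the input side. You then propose to push $d$ leftward through $B_\delta$, the other factors of $Y_s$, and $\int_{0,\mathcal L}^1$. But $B_\delta$ itself, in the form coming from the pushforward version of Lemma~\ref{torchio}, involves integration over the interval $[s,s+\delta]$; commuting $d$ past it produces its own Stokes boundary contribution, and that contribution is precisely $(p_{f_s,2},id)_\star e_{\omega_2}-(p_{f_{s+\delta},2},id)_\star e_{\omega_2}$, i.e.\ the original difference you started with. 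So the commutation is circular for that telescope term unless you unwind things more carefully. Your claim that the boundary terms are ``multiplied by a factor of norm $O(\delta)$'' is correct for the $\int_{0,\mathcal L}^1$-boundary but not obviously so for this second source.

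\textbf{What the paper does instead.} The paper places the smoothing factor on the \emph{right}: it first proves continuity of $Y_sT_s^\dagger$ (and $Y_s^\dagger T_s^\dagger$, hence $y_sT_s^\dagger$), using the single estimate
\[
\|(Y_{s+\delta}-Y_s)\alpha\|\le \delta\,R\,\|d\alpha\|
\]
with $\alpha=T_s^\dagger\beta$, so that $\|d\alpha\|=\|dT_s^\dagger\beta\|\le C\|\beta\|$ by the boundedness of $dT_s^\dagger$. No commutation of $d$ is needed at all; the $d$ is already where you want it. Continuity of $T_sy_s$ then follows by taking adjoints. For $y_sdT_s^\dagger$ the paper uses the trivial observation that $(y_s-y_{s+\delta})dT_s^\dagger=0$ (since the estimate above with $\alpha=dT_s^\dagger\beta$ gives a bound by $\|d^2T_s^\dagger\beta\|=0$). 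Finally $dy_sT_s^\dagger$ and $T_sy_sd$ are obtained algebraically from the identity $dy_s+y_sd=1-T_s^\dagger T_s$, reducing them to curves already shown continuous.

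\textbf{A minor misconception.} The Thom-form factors $e_{\omega_1},e_{\omega_2}$ are \emph{not} $s$-dependent once one identifies the bundles $f_s^*(TN)$ via Proposition~\ref{bundlelem}; the paper makes this identification explicitly, so your ``additional comparison of $F_s^*\omega_0$ with $F_{s+\delta}^*\omega_0$'' is unnecessary and only two telescope terms appear.

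In short: start with $y_sT_s^\dagger$ rather than $T_sy_s$, keep $d$ on the right, and use $d^2=0$ together with the defining relation $1-T_s^\dagger T_s=dy_s+y_sd$ for the remaining curves. This eliminates all the commutation and boundary bookkeeping.
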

\begin{proof}
Observe that for each $s$ in $[0, 1]$ we have that
\begin{equation}
||Y_{s + \delta}T_{s + \delta}^\dagger - Y_sT^\dagger_{s}|| \leq ||Y_{s + \delta}|| \cdot ||T^\dagger_{s + \delta} - T^\dagger_s|| + ||(Y_{s + \delta} - Y_s)T^\dagger_s||.
\end{equation}
Observe that the norm of $Y_s$ is uniformly bounded because the norm of $p_{f_s}$ is uniformly bounded in $s$. Indeed the norm of $p_{f_s}$ is bounded by a constant which continuously depends on the lipschitz constant and the Fiber Volume of $p_{f_s}$. However, these are both controlled by the lipschitz constant and the Fiber Volume of $p(h,id_{[0,1]}$. This means that, in order to show that $Y_sT^\dagger_s$ is continuous in $s$ we have to prove that
\begin{equation}
||(Y_{s + \delta} - Y_s)T^\dagger_s|| \leq C \cdot \delta.
\end{equation}
Observe that
\begin{equation}
	\begin{split}
	Y_{s + \delta} - Y_s &= \int_{0,\mathcal{L}}^1 \circ [(p_{f_{s + \delta},2}, id_{[0,1]})_\star \circ e_{\omega_2} - (p_{f_{s},2}, id_{[0,1]})_\star \circ e_{\omega_2}] \circ (pr_2, id_{[0,1]})^* \circ e_{\omega_1} \circ \tilde{P}_{f_s}^* \\
	&+ \int_{0,\mathcal{L}}^1 \circ (p_{f_{s},2}, id_{[0,1]})_\star \circ e_{\omega_2} (pr_2, id_{[0,1]})^* \circ [ e_{\omega_1} \circ \tilde{P}_{f_{s + \delta}}^* - e_{\omega_1} \circ \tilde{P}_{f_{s}}^*].
	\end{split}
\end{equation}
Let us denote by $A$ the operator $(pr_2, id_{[0,1]})* \circ e_{\omega_1} \circ  \tilde{P}_{f_{s}}^*$. Observe that
\begin{equation}
Ad\alpha = dA\alpha
\end{equation}
for each smooth form $\alpha$. Observe that, applying Lemma \ref{torchio},
\begin{equation}
\begin{split}
&|| \int_{0,\mathcal{L}}^1 \circ [(p_{f_{s + \delta},2}, id_{[0,1]})_\star \circ e_{\omega_2} - (p_{f_{s},2}, id_{[0,1]})_\star \circ e_{\omega_2}]\circ A(\alpha) ||\\
 &\leq C_1 \cdot ||[(p_{f_{s + \delta},2}, id_{[0,1]})_\star \circ e_{\omega_2} - (p_{f_{s},2}, id_{[0,1]})_\star \circ e_{\omega_2}]\circ A(\alpha) || \\
&\leq C_1 \cdot C \cdot ||dA\alpha|| \\
&\leq  C_1 \cdot C \cdot ||A d\alpha||\\
&\leq  C_1 \cdot C \cdot K \cdot ||d\alpha||.
\end{split}
\end{equation}
Moreover we also have that
\begin{equation}
\begin{split}
&||\int_{0,\mathcal{L}}^1 \circ (p_{f_{s},2}, id_{[0,1]})_\star \circ e_{\omega_2} (pr_2, id_{[0,1]})^* \circ [ e_{\omega_1} \circ \tilde{P}_{f_{s + \delta}}^* - e_{\omega_1} \circ \tilde{P}_{f_{s}}^*](\alpha)|| \\
&\leq C \cdot ||e_{\omega_1} \circ \tilde{P}_{f_{s + \delta}}^* - e_{\omega_1} \circ \tilde{P}_{f_{s}}^*(\alpha)||\\
&\leq \delta \cdot C \cdot ||d\alpha||. 
\end{split}
\end{equation}
This means that
\begin{equation}
||(Y{s+\delta} - Y_s)\alpha|| \leq \delta \cdot R \cdot ||d\alpha||.
\end{equation}
and so
\begin{equation}
	\begin{split}
	||(Y{s+\delta} - Y_s)T^\dagger_s\alpha|| &\leq \delta \cdot K_1 \cdot ||dT^\dagger_s\alpha||\\
	&\leq \delta \cdot K_2 \cdot ||\alpha||
	\end{split}
\end{equation}
Then we have that
\begin{equation}
	||(Y{s+\delta} - Y_s)T^\dagger_s|| \leq \delta \cdot K_2
\end{equation}
and the curve $Y_sT^\dagger_s$ is continuous in $s$. Observe that, up to signs, we have that
\begin{equation}
\begin{split}
Y_s^\dagger &= \tilde{P}_{f_{s}\star} \circ   e_{\omega_1} \circ (pr_2, id_{[0,1]})^* \circ e_{\omega_2} \circ (p_{f_{s},2}, id_{[0,1]})^* \circ [\int_{0,\mathcal{L}}^1]^\dagger\\
&= \tilde{P}_{f_{s}\star} \circ   e_{\omega_1} \circ (pr_2, id_{[0,1]})^* \circ e_{\omega_2} \circ (p_{f_{s},2}, id_{[0,1]})^* \circ pr_{[0,1]}^*
\end{split}
\end{equation}
where $pr_{[0,1]} N \times [0,1] \longrightarrow N$is the projection on the first component. Then, observe
that, using the same arguments we used for $Y_s$, we obtain that
\begin{equation}	
||Y^\dagger_{s + \delta} - Y^\dagger_s (\alpha)|| = \delta \cdot K_3 ||d \alpha||
\end{equation}
and so
\begin{equation}	
	||(Y^\dagger_{s + \delta} - Y^\dagger_s)T^\dagger_s \alpha|| = \delta \cdot C ||\alpha||
\end{equation}
This means that
\begin{equation}	
	||(Y^\dagger_{s + \delta} - Y^\dagger_s)T^\dagger_s|| = \delta \cdot C 
\end{equation}
and so $Y_s^\dagger T^\dagger_s$, $y_sT^\dagger_s$ and $T_s y_s$ are continuous in $s$. 
\\Finally, observe that
\begin{equation}
(y_s - y_{s + \delta})dT^\dagger_s = 0
\end{equation}
indeed for each smooth $\alpha$ we have that
\begin{equation}
||(y_s - y_{s + \delta})dT^\dagger_s \alpha|| \leq \delta \cdot C \cdot ||d^2T^\dagger_s \alpha|| = 0.
\end{equation}
Then we obtain that also $y_sdT^\dagger_s$ and $T_sdy_s$ are continuous on $s$. Moreover also
\begin{equation}
dy_sT^\dagger_s = (1 - T^\dagger_sT_s + y_sd)T^\dagger_s
\end{equation}
and
\begin{equation}
T_sy_sd = T_s(1 - T^\dagger_sT_s + dy_s)
\end{equation}
are continuous in s.
\end{proof}	
	\subsection{Lipschitz-homotopy stability of $\rho_f$}
	Starting by now we will denote by $L_{\alpha, \beta, s}$ the operator that we called $L_{\alpha, \beta}$ in subsection \ref{pert}, but defined using $T_s$. We know that if $\alpha = 0$, then $L_{0, \beta, s}$ is invertible. It follows by the the definition of $L_{\alpha, \beta, s}$ that $L_{0,\beta, s}$ can be seen as a compact continuous curve in $\numberset{B}(\mathcal{L}^2(M \sqcup N))$. Then there is an $\alpha_0 > 0$ such that $L_{\alpha_0, \beta, s}$ are invertible for all $s$.
	\\Then we will denote by $\delta_{\alpha_0, s}$, $U_{\alpha_0, \beta, s}$ and $S_{\alpha_0, \beta,s}$ the operators $\delta_{\alpha_0}$, $U_{\alpha_0, \beta}$ and $S_{\alpha_0, \beta}$ introduced in subsection \ref{pert} defined using $T_s$. \\Finally we denote by $D_{\alpha_0, \beta, s}$ the perturbed signature operator defined using $T_s$.
	\begin{prop}
		Let us consider two orientated Riemannian manifolds of bounded geometry $(M,g)$ and $(N,h)$. Consider a group $\Gamma$ acting by isometries on $(M,g)$ and on $(N,h)$ that preserve the orientations. Let $f_0, f_1: (M,g) \longrightarrow (N,h)$ two lipschitz-homotopy equivalence which preserves the orientation and let us suppose that $f_0 \sim_\Gamma f_1$. Then the difference between
		\begin{equation}
			D_{\alpha_0, \beta, s} - D
		\end{equation}
		defines a continue curve in $C^*_f(M \sqcup N)^\Gamma$.
	\end{prop}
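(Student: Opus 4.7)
The plan is to reproduce the argument of subsection \ref{pert}, which showed $D_{\alpha_0,\beta}-D\in C^*_f(M\sqcup N)^\Gamma$, while tracking the parameter $s\in[0,1]$. By Corollary \ref{curvaph}, the algebras $C^*_{f_s}(M\sqcup N)^\Gamma$ and $D^*_{f_s}(M\sqcup N)^\Gamma$ all coincide with those associated to the reference map $f$, so the notion of a norm-continuous curve in $C^*_f(M\sqcup N)^\Gamma$ indexed by $s$ is well defined and the proposition makes sense.

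First I would write $L_{\alpha_0,\beta,s}=1+Q_{\alpha_0,\beta,s}$, where $Q_{\alpha_0,\beta,s}$ is the explicit $2\times 2$ matrix of subsection \ref{pert}, whose entries are algebraic combinations of $T_s$, $T_s^\dagger$, $T_sy_s$, $y_sT_s^\dagger$ (and of the operators $\tau$ and $\gamma$). Each of these building blocks, together with $dT_s$, $T_s d$, $dy_sT_s^\dagger$, $T_sy_sd$ and their adjoints, was shown in the two previous subsections to define a norm-continuous curve in $C^*_f(M\sqcup N)^\Gamma$. Consequently $Q_{\alpha_0,\beta,s}$, $dQ_{\alpha_0,\beta,s}$ and $Q_{\alpha_0,\beta,s}d$ are norm-continuous in $s$, and so is $R_{\alpha_0,\beta,s}:=2Q_{\alpha_0,\beta,s}+Q_{\alpha_0,\beta,s}^{2}$ together with its compositions with $d$.

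Next I would promote this continuity through the holomorphic functional calculus. With $g(z)=\sqrt{1+z}-1=az+zh(z)z$ ($h$ holomorphic near $0$) and $\alpha_0$ chosen small enough that the spectra of all $R_{\alpha_0,\beta,s}$ stay uniformly away from $(-\infty,-1]$, the map $A\mapsto g(A)$ is norm-continuous on these operators. This gives continuity in $s$ of $V_{\alpha_0,\beta,s}:=g(R_{\alpha_0,\beta,s})$, and then iterating the construction of subsection \ref{pert} yields continuity of $G_{\alpha_0,\beta,s}=U_{\alpha_0,\beta,s}-1$, of $Z_{\alpha_0,\beta,s}=|\tau L_{\alpha_0,\beta,s}|^{-1}-1$, of $K_{\alpha_0,\beta,s}=U_{\alpha_0,\beta,s}^{-1}-1$, and of $H_{\alpha_0,\beta,s}=S_{\alpha_0,\beta,s}-\tau$. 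For the compositions with $d$, the algebraic identities $g(A)d=a(Ad)+Ah(A)(Ad)$ and $dg(A)=a(dA)+(dA)h(A)A$ from subsection \ref{pert} reduce continuity of $g(A_s)d$ and $dg(A_s)$ to continuity of $A_s$, $A_sd$ and $dA_s$, which we already have at the level of $Q_{\alpha_0,\beta,s}$.

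Expanding
\begin{equation*}
D_{\alpha_0,\beta,s}-D=U_{\alpha_0,\beta,s}(\delta_{\alpha_0,s}-d)U_{\alpha_0,\beta,s}^{-1}+(U_{\alpha_0,\beta,s}dU_{\alpha_0,\beta,s}^{-1}-d)\pm\cdots
\end{equation*}
exactly as in subsection \ref{pert}, every summand becomes a sum of products of norm-continuous $s$-families in $C^*_f(M\sqcup N)^\Gamma$, and the claim follows from continuity of sum and product in a $C^*$-algebra. The main obstacle is the interaction of the unbounded operator $d$ with the holomorphic functional calculus: one must verify that products such as $G_{\alpha_0,\beta,s}\,d\,G_{\alpha_0,\beta,s}$ and $H_{\alpha_0,\beta,s}\,d\,K_{\alpha_0,\beta,s}$ depend norm-continuously on $s$, and this is precisely what the recursive identities above accomplish, once norm-continuity of $Q_{\alpha_0,\beta,s}d$ and $dQ_{\alpha_0,\beta,s}$ has been secured from the continuity of $dT_s$, $T_sd$, $dT_sy_s$, $y_sT_s^\dagger d$ and their analogues.
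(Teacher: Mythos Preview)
Your proposal is correct and follows essentially the same route as the paper: you write $L_{\alpha_0,\beta,s}=1+Q_{\alpha_0,\beta,s}$, use the continuity of $T_s$, $dT_s$, $T_sy_s$, $dT_sy_s$ and their adjoints from the previous subsections to get continuity of $Q$, $Qd$, $dQ$, then propagate this through the holomorphic functional calculus via the identities $g(A_s)d=a(A_sd)+A_sh(A_s)(A_sd)$ and $dg(A_s)=a(dA_s)+(dA_s)h(A_s)A_s$, exactly as the paper does. The paper's proof is organized in the same order with the same ingredients; your explicit mention of the uniform spectral condition on $R_{\alpha_0,\beta,s}$ is a minor clarification, but otherwise the arguments coincide.
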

	\begin{proof}
		We know, beacause of Proposition 14.5, that for all $s \in [0,1]$
		\begin{equation}
			D_{\alpha_0, \beta, s} - D \in C^*_f(M \sqcup N)^\Gamma,
		\end{equation}
		then we just have to prove that 
		\begin{equation}
			D_{\alpha_0, \beta, s} = D + \gamma(s),
		\end{equation}
		where $\gamma: [0,1] \longrightarrow C^*_f(M \sqcup N)^\Gamma$. We know that
		\begin{equation}
			S_{\alpha, \beta, s} = \frac{\tau L_{\alpha,\beta, s}}{|\tau L_{\alpha,\beta, s}|}.
		\end{equation}
		It's easy to check that since $T_s$ is continuous in $s$, then the same happens to $\tau L_{\alpha,\beta, s}$. Then, using the boundedness of $f(x) = \frac{x}{|x|}$ and the uniformly boundedness of the operators $\tau L_{\alpha,\beta, s}$, then we have that $S_{\alpha, \beta, s}$ is continuous in $S$. This means that if we consider as in Proposition 14.5
		\begin{equation}
			S_{\alpha, \beta, s} = \tau + H_{\alpha,\beta, s},
		\end{equation}
		where $H_{\alpha,\beta, s}$ is a continuous curve on $C^*_f(M \sqcup N)^\Gamma$. Using the same arguments one can easily prove that
		\begin{equation}
			U_{\alpha, \beta, s} = 1 + G_{\alpha, \beta, s}
		\end{equation}
		and
		\begin{equation}
			U_{\alpha, \beta, s}^{-1} = 1 + K_{\alpha, \beta, s}
		\end{equation}
		where $G_{\alpha, \beta, s}$ and $K_{\alpha, \beta, s}$ are operators in $C^*_f(M \sqcup N)^\Gamma$ continuous in $s$.
		\\The next step is to prove that $G_{\alpha, \beta, s}d$, $H_{\alpha,\beta, s} d$, $K_{\alpha, \beta, s}d$, $dG_{\alpha, \beta, s}$,  $d H_{\alpha,\beta, s}$, $dK_{\alpha, \beta, s}$ are continuous operators in $s$.
		\\Let us now consider the operators $y_s$, $T_s$ and $dT_s$: we proved that they give a curve in $C^*_f(M \sqcup N)^\Gamma$. The same obviously holds also for $dT_s y_s$. Moreover it is also true for $T_s y_s d$, indeed 
		\begin{equation}
			T_s y_s d = - T_s d y_s + T_s + T_s T_s^\dagger T_s
		\end{equation}
		We can observe that
		\begin{equation}
			L_{\alpha,\beta, s} = 1 + Q_{\alpha, \beta, s},
		\end{equation}
		where $Q_{\alpha, \beta, s}$ is an algebraic combination of $T_s$ , $T^\dagger_s$, $T_s y$, $yT_s$, $T^\dagger_s y$ and $yT^\dagger_s$. Then we have that $Q_{\alpha, \beta,s}$, $Q_{\alpha, \beta, s}d$ and $dQ_{\alpha, \beta, s}$ give a curve in $C^*(M \sqcup N)^\Gamma$. The same property holds for $R_{\alpha, \beta, s} = 2Q_{\alpha, \beta, s} + Q_{\alpha, \beta, s}^2$.
		\\Now we can observe that if $A_s$ is a curve in $C^*(M \sqcup N)^\Gamma$ such that $Ad$ and $dA$ are also two curves in $C^*(M \sqcup N)^\Gamma$ then also $g(A)d$ and $dg(A)$ are curves in $C^*(M \sqcup N)^\Gamma$, indeed
		\begin{equation}
			g(A_s) d = (aA_s + A_sh(A_s)A_s) d = a(A_sd) + A_sh(A_s)(A_sd)
		\end{equation}
		and
		\begin{equation}
			dg(A_s) = d(aA:s + A_sh(A_s)A_s) = a(dA_s) + (dA_s)h(A_s)A_s.
		\end{equation}
		and since the continuity of the functional calculus on bounded operators we have that the compositions of $V_{\alpha, \beta}$ and $G_{\alpha,\beta}$ with $d$ are operators in $C^*(M \sqcup N)^\Gamma$. Moreover, since
		\begin{equation}
			Z_{\alpha, \beta, s} = - V_{\alpha, \beta,s} - V_{\alpha, \beta,s}Z_{\alpha, \beta,s} = - V_{\alpha, \beta,s} - Z_{\alpha, \beta,s} V_{\alpha, \beta,s}
		\end{equation}
		then this property also holds for $Z_{\alpha, \beta,s}$, $K_{\alpha, \beta,s}$ and $H_{\alpha, \beta,s}$.
		Now, since $D_{\alpha, \beta,s} - D$ is an algebraic combination of operators in $C^*(M \sqcup N)^\Gamma$, the perturbation is a continuous curve in $C^*(M \sqcup N)^\Gamma$.
	\end{proof}
	\begin{lem}
		Let $f_0, f_1: (M,g) \longrightarrow (N,h)$ twolipschitz homotopy-equivalences between two riemannian manifolds with bounded geometry. Consider the splitting given by
		\begin{equation}
			\mathcal{L}^2(M \sqcup N) = V_+ \oplus V_-
		\end{equation}
		where $V_\pm$ is the $\pm 1$-eigenspace of $\tau$. Then if $\alpha_0$ is such that $D_{\alpha_0, 1, s}$ is invertible for each $s$, then the isometry $\mathcal{U}_{\alpha_0, s, \pm}: V_{\pm, \alpha_0, 1} \longrightarrow V_\pm$ of Lemma 14.6 is the restriction to $V_{\pm, \alpha_0, 1, s}$ of the operator $\frac{I \pm \tau}{2} + P_{\alpha_0,s}$ where $P_{\alpha_0,s}$ is continuous curve in $C_f^*(M \sqcup N)^\Gamma$.
	\end{lem}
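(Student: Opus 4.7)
The plan is to mimic the proof of Lemma \ref{Cos} (the $\mathcal{U}_{\alpha_0,\pm}$ construction), carrying along the homotopy parameter $s$ and checking at every stage that all the operators involved depend continuously on $s$. Concretely, for each $s\in[0,1]$ one has the family $L_{\alpha_0,1,s}$, and the auxiliary path $\gamma(t)=(\alpha(t),\beta(t))$ from $(0,0)$ to $(\alpha_0,1)$ used in Lemma \ref{Cos}; this yields a two-parameter family of self-adjoint involutions $W_{\gamma(t),s}:=U_{\gamma(t),s}S_{\gamma(t),s}U_{\gamma(t),s}^{-1}$. By the previous proposition and holomorphic functional calculus, the map $(t,s)\mapsto W_{\gamma(t),s}$ is continuous in operator norm and, moreover, $W_{\gamma(t),s}-\tau$ depends continuously on $(t,s)$ with values in $C^*_f(M\sqcup N)^\Gamma$.

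First I would use compactness of $[0,1]\times[0,1]$ together with the uniform continuity of $(t,s)\mapsto W_{\gamma(t),s}$ to choose a single partition $0=t_0<t_1<\cdots<t_N=1$ such that $\|W_{\gamma(t_i),s}-W_{\gamma(t_{i+1}),s}\|\le 1$ for every $i$ and every $s$ (in fact $\le 1/2$ will be needed for invertibility, exactly as in the proof of Lemma \ref{Cos}). Then, verbatim from that proof, the restrictions $F_i^s$ of $\tfrac12(I+W_{\gamma(t_i),s})$ to the $+1$-eigenspace of $W_{\gamma(t_{i+1}),s}$ are invertible with inverse of norm $\le 2$ uniformly in $s$, and I can form the local unitary $U_i^s:=F_i^s/|F_i^s|$ and the total isometry $\mathcal{U}_{\alpha_0,s,+}:=U_0^s\circ\cdots\circ U_{N-1}^s$.

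The next step is to track the structure of $U_i^s$. As in Lemma \ref{Cos}, $(F_i^s)^\times F_i^s = (I+L_i^s)|_{V_{+,\gamma(t_{i+1}),s}}$ with $L_i^s$ built from $W_{\gamma(t_i),s}-W_{\gamma(t_{i+1}),s}\in C^*_f(M\sqcup N)^\Gamma$, and the holomorphic functional calculus applied to the map $z\mapsto (1+z)^{-1/2}$ (valid on a fixed neighbourhood of $0$ independent of $s$ thanks to the uniform partition) gives $|F_i^s|^{-1}=I+Q_i^s$ with $Q_i^s$ a norm-continuous curve in $C^*_f(M\sqcup N)^\Gamma$. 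Combining with $W_{\gamma(t_i),s}=\tau+H_{\gamma(t_i),s}$ yields
\begin{equation}
U_i^s=\frac{I+\tau}{2}+R_i^s,\qquad R_i^s\in C^*_f(M\sqcup N)^\Gamma\text{ continuous in }s.
\end{equation}
Expanding the $N$-fold composition and using that $\bigl(\tfrac{I+\tau}{2}\bigr)^N=\tfrac{I+\tau}{2}$ together with the fact that $C^*_f(M\sqcup N)^\Gamma$ is a two-sided ideal in $D^*_f(M\sqcup N)^\Gamma$, one obtains $\mathcal{U}_{\alpha_0,s,+}=\tfrac{I+\tau}{2}+P_{\alpha_0,s}$ with $P_{\alpha_0,s}$ a norm-continuous curve in $C^*_f(M\sqcup N)^\Gamma$ (and symmetrically for the $-$ version).

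The main subtlety I expect is the \emph{uniformity} of the partition and of all holomorphic calculus estimates in $s$: one has to rule out any ``wandering'' of the spectra of $|\tau L_{\gamma(t),s}|$ or of $(F_i^s)^\times F_i^s$ as $s$ varies. This is handled once and for all by compactness of $[0,1]^2$ combined with joint continuity of $(t,s)\mapsto L_{\gamma(t),s}$, which guarantees a uniform lower bound on $|\tau L_{\gamma(t),s}|$ and hence that all the Cauchy integrals defining square roots and inverses can be taken along the same fixed contour for every $s$. Once this uniformity is secured the rest is a routine ideal-theoretic argument as in Lemma \ref{Cos}.
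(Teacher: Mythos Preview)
Your proposal is correct and follows essentially the same approach as the paper: both redo the construction of Lemma \ref{Cos} with the extra parameter $s$, use continuity of $(t,s)\mapsto W_{\gamma(t),s}$ to get a partition that works for all $s$ simultaneously, and then track that each $U_i^s$ has the form $\tfrac{I+\tau}{2}+(\text{continuous curve in }C^*_f)$, concluding by composition. If anything, your explicit appeal to compactness of $[0,1]\times[0,1]$ for the uniform partition is slightly cleaner than the paper's phrasing.
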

	\begin{proof}
		Let us define the operator
		\begin{equation}
			W_{\alpha, \beta, s} = U_{\alpha, \beta, s}S_{\alpha, \beta, s}U_{\alpha, \beta, s}^{-1}.
		\end{equation}
		In the previous proposition we proved that $U_{\alpha, \beta, s}$, $S_{\alpha, \beta, s}$ and $U_{\alpha, \beta, s}^{-1}$ are continuous in $s$ and so the same holds also for $W_{\alpha, \beta, s}$. Then, exactly as we did in Remark 48, we can define $W_{\gamma(t), s}$ that will be continuous in $s$ and in $t$. This means that there is a $C$ for each $s$ and for each $\epsilon$ such that
		\begin{equation}
			||W_{t,s} - W_{t + \epsilon, s}|| \leq C \epsilon.
		\end{equation}
		Since $W_{t,s}$ is uniformly continuous on $t$, we can divide $[0, 1]$ in $N_0$ intervals $[t_i, t_{i+1}]$ where $t_0 = 0$ and $t_{N_0} = 1$ and
		\begin{equation}
			||W_{t_i, s} - W_{t_i+1, s} || \leq 1.
		\end{equation}
		Then we can define the curves $H_{i,s} := W_{t_i, s} - W_{t_i+1, s}$ and
		\begin{equation}
			L_{i,s} := \frac{I + W_{t_i,s}}{4}H_{i,s}.
		\end{equation}
		As we proved in Lemma 14.6, one can easily check that $I +L_{i,s}$ is invertible and if we define $Q_{i,s}$ as the operator such that
		\begin{equation}
			I + Q_{i,s} = (I + L_{i,s})^{-1}
		\end{equation}
		and, using the continuity of the functional calculus, we have that $Q_{i,s}$ is continuous in $s$. Then the operator
		\begin{equation}
			\begin{split}
				U_{i,s} &= \frac{I + W_{t_i,s}}{2}(I + Q_{i,s}) \\
				&= \frac{I + W_{0,s} - W_{0,s} + W_{t_i,s}}{2}(I + Q_{i,s})\\
				&= [\frac{I + \tau}{2} + \frac{1}{2}(\sum\limits_{j = 0}^i H_{i,s})](I + Q_{i,s}) \\
				&= \frac{I + \tau}{2} + S_{i,s}
			\end{split}
		\end{equation}
		where $S_{i_s}$ is a continuous curve in $C^*_f(M \sqcup N)^\Gamma$. Then we can conclude observing that
		\begin{equation}
			\begin{split}
				\mathcal{U}_{\alpha_0, s} &= U_{0,s} \circ ... \circ U_{N_0,s}\\ &= \frac{I + \tau}{2}^{N_0} + P_{\alpha_0, s} \\ &= \frac{I + \tau}{2} + P_{\alpha_0, s},
			\end{split}
		\end{equation}
		where $P_{\alpha_0, s}$ is a continuous curve in $C^*_f(M \sqcup N)^\Gamma$.
	\end{proof}
	\begin{prop}
		Let us consider $D$ an operator which is analytically controlled over $(D^*_f(M \sqcup N)^\Gamma, C^*_f(M \sqcup N)^\Gamma)$ on $\mathcal{L}^2(M \sqcup N)$. Suppose that $D$ has bounded inverse and let $S$ be a continuous map $S:[0,1] \longrightarrow C^*_f(M \sqcup N)^\Gamma$ such that for all $t \in [0,1]$ the operator $D + S(t)$ has a bounded inverse.
		\\Then if $\chi: \numberset{R} \longrightarrow \numberset{R}$ is a bounded continuous function, we have that
		\begin{equation}
			\chi(D+S): [0,1] \longrightarrow D^*_f(M \sqcup N)^\Gamma
		\end{equation}
		is a continuous map.
	\end{prop}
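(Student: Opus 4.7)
The plan is to split the assertion into two independent pieces: (i) for each fixed $t \in [0,1]$ the operator $\chi(D+S(t))$ lies in $D^*_f(M \sqcup N)^\Gamma$, and (ii) the map $t \mapsto \chi(D+S(t))$ is norm-continuous into $B(\mathcal{L}^2(M \sqcup N))$. Since $D^*_f(M \sqcup N)^\Gamma$ is norm-closed in $B(\mathcal{L}^2(M \sqcup N))$, combining (i) and (ii) yields continuity as a map into $D^*_f(M \sqcup N)^\Gamma$.

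For (i), I would invoke the Higson--Roe principle recalled in the excerpt: because $D$ is analytically controlled over the pair $(D^*_f, C^*_f)$ and $S(t) \in C^*_f \subset D^*_f$ is bounded and self-adjoint, their result gives $\chi(D+S(t)) - \chi(D) \in C^*_f(M \sqcup N)^\Gamma$. Since $\chi(D) \in D^*_f(M \sqcup N)^\Gamma$ by the analytic control (as already used earlier in the chapter), we conclude $\chi(D+S(t)) \in D^*_f(M \sqcup N)^\Gamma$ for every $t$.

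For (ii), the key input is a \emph{uniform spectral gap}. Because $D + S(t)$ is invertible for every $t$ and $S$ is continuous on the compact interval $[0,1]$, the resolvent identity
\begin{equation*}
(D+S(t))^{-1} - (D+S(s))^{-1} = (D+S(t))^{-1}\bigl(S(s) - S(t)\bigr)(D+S(s))^{-1}
\end{equation*}
shows that $t \mapsto (D+S(t))^{-1}$ is norm-continuous and hence $\sup_{t \in [0,1]} \|(D+S(t))^{-1}\| < \infty$. Consequently there exists $\varepsilon > 0$ such that $\mathrm{spec}(D+S(t)) \subset K := \mathbb{R} \setminus (-\varepsilon, \varepsilon)$ for all $t$. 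The same resolvent identity applied with a general spectral parameter $z$ shows that $R_z(t) := (D+S(t) - z)^{-1}$ is norm-continuous in $t$, uniformly for $z$ in any compact subset of a common resolvent set $\Omega \subset \mathbb{C} \setminus K$.

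To transfer this to $\chi$, I would decompose $\chi = \chi_\infty + \chi_0$ where $\chi_\infty$ is constant near $\pm\infty$ (equal to the asymptotic values of $\chi$, if any) and $\chi_0 \in C_0(\mathbb{R})$; the piece $\chi_\infty(D+S(t))$ is expressed via a Cauchy integral of the resolvent around a contour in $\Omega$ encircling the relevant component of the spectrum, hence is norm-continuous in $t$. The piece $\chi_0(D+S(t))$ is handled by Stone--Weierstrass: on $K$ the function $\chi_0$ is uniformly approximated by rational functions with poles in $\Omega$, and each such rational expression, evaluated at $D+S(t)$, is a polynomial in finitely many $R_{z_j}(t)$, hence norm-continuous in $t$. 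Uniformity of the approximation in $t$ is guaranteed by the uniform resolvent bound on $\Omega$, so the limit inherits norm-continuity. The main obstacle is precisely ensuring the resolvent approximation of $\chi$ is uniform across the whole interval $[0,1]$; the uniform spectral gap plus the compactness of $[0,1]$ resolves it.
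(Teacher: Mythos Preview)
Your membership argument (part~(i)), the uniform spectral gap, and the Stone--Weierstrass treatment of $\chi_0\in C_0(\mathbb R)$ via resolvents are all fine. The gap is in the $\chi_\infty$ piece. You claim $\chi_\infty(D+S(t))$ is ``expressed via a Cauchy integral of the resolvent around a contour in $\Omega$ encircling the relevant component of the spectrum,'' but the two spectral components are unbounded half-lines contained in $[\varepsilon,\infty)$ and $(-\infty,-\varepsilon]$: no bounded contour in $\Omega$ encircles either one, and the natural unbounded contours give resolvent integrals that do not converge in operator norm. More structurally, the unital algebra generated by the resolvents $(x-z)^{-1}$ consists of functions having the \emph{same} limit at $+\infty$ and $-\infty$ (they extend to the one-point compactification of $\mathbb R$), so nothing built from resolvents and constants can approximate a function, like $\chi_\infty$, that distinguishes the two ends. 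Your parenthetical ``if any'' already hints at the issue: without limits of $\chi$ at $\pm\infty$ the decomposition itself fails, and even with them you still owe a separate argument for the sign-type piece.

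This is precisely the difficulty the paper's proof is built to overcome. The paper singles out $\arctan$ and shows directly that $t\mapsto\arctan(D+S(t))$ is norm-continuous by writing $\arctan(x)=-\tfrac{i}{2}\log\bigl(-\tfrac{i+x}{i-x}\bigr)$: first the Cayley-type transform $-(i+D+S(t))(i-D-S(t))^{-1}$ is shown, via resolvent identities and the uniform gap, to be a norm-continuous family of \emph{bounded} operators with spectra contained in a fixed compact set $K\subset\mathbb C$ avoiding the branch cut of $\log$; then $\log$ is approximated uniformly by polynomials on $K$. Once one chopping-type function of $D+S(t)$ is known to vary norm-continuously, any $\chi$ with limits at $\pm\infty$ follows by composing with a continuous function of a bounded operator. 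Your decomposition $\chi=\chi_\infty+\chi_0$ is a reasonable organizing device, but the substantive step---exhibiting \emph{one} norm-continuous family $\phi(D+S(t))$ with $\phi(+\infty)\neq\phi(-\infty)$---is exactly what is missing from your outline and is where the real work in the paper's proof lies.
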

	\begin{proof}
		We know, because Lemma 5.8 of \cite{higroe}, that for all $t \in [0,1]$ the operator $\chi(D+S)(t)$ is in $C^*_f(M \sqcup N)^\Gamma$. This means that we just have to prove that for all $\epsilon > 0$ and for all $t_0$ in $[0,1]$ there is a $\delta > 0$ such that for all $t$ such that $|t - t_0| \leq \delta$
		\begin{equation}
			||\chi(D + S(t)) - \chi(D + S(t_0))|| \leq \epsilon.
		\end{equation}
		It's easy to see that if the Proposition holds for $\chi(x) = \arctan(x)$ then it also holds for every bounded continuous $f$. Indeeed we can observe that 
		\begin{equation}
			\bigcup\limits_{t \in [0,1]} Spec(arctan(D + S(t))) \subseteq [-b, -a] \sqcup [a, b]
		\end{equation}
		for some $a,b >0$ and so, since $f = \frac{f}{\arctan} \arctan$ on $[-b, -a] \sqcup [a, b]$, we have that
		\begin{equation}
			f(D + S(t)) = \frac{f}{\arctan}(\arctan(D + S(t)))
		\end{equation} 
		and since $\arctan(D + S(t))$ are bounded, then
		\begin{equation}
			\lim\limits_{t \rightarrow t_0} f(D + S(t)) = f(D + S(t_0)).
		\end{equation}
		Let us suppose $\chi(x) =  \arctan(x)$. We will use the complex logarithm form
		\begin{equation}
			\arctan(x) = \frac{i}{2}\log(\frac{i + x}{i - x}).
		\end{equation}
		In particular, since $\arctan$ is an odd function, we will use
		\begin{equation}
			\arctan(x) = -\frac{i}{2}\log(-\frac{i + x}{i - x}).
		\end{equation}
		We know, since Proposition 5.9 of \cite{higroe}, that for all $t$ the operator $D + S(t)$ is analitically controlled and so $(D + S(t) \pm i)$ has bounded inverse. We can observe then 
		\begin{equation}
			\begin{split}
				\frac{i + D+S(t)}{i - D - S(t)} &= \frac{i + D+S(t)}{i - D - S(t)}\cdot  \frac{i + D+S(t)}{i + D + S(t)}\\
				&= [-1 + 2i(D+S(t)) + (D+S(t))^2]((D+S(t))^2 + 1)^{-1}\\
				&= [-1 + 2i(D+S(t))^{-1} + (D+S(t))^{-2}]\frac{(D+S(t))^2}{(D+S(t))^{2}}((D+S(t))^{-2} + 1)^{-1}\\
				&= [-1 + 2i(D+S(t))^{-1} + (D+S(t))^{-2}]((D+S(t))^{-2} + 1)^{-1}.
			\end{split} 
		\end{equation}
		Then for all $t$ we have that $-1 + 2i(D+S(t))^{-1} + (D+S(t))^{-2}$ is bounded since it is a sum of bounded operators. In particular, applying the Theorem 1.16 of \cite{kato}, we obtain that  
		$-1 + 2i(D+S(t))^{-1} + (D+S(t))^{-2}$ is a continuous curve in $\numberset{B}(\mathcal{L}^2(M \sqcup N))$ and its norm is a continuous function $\alpha:[0,1] \longrightarrow \numberset{R}$.
		\\Moreover also $((D+S(t))^{-2} + 1)^{-1}$ is bounded, indeed we know that
		\begin{equation}
			Spec((D+S(t))^{-2}) \subseteq [\epsilon, + \infty)
		\end{equation}
		and so
		\begin{equation}
			Spec((D+S(t))^{-2} + 1) \subseteq [1 + \epsilon, + \infty)
		\end{equation}
		Then, using that $\lambda$ is in the spectrum of $((D+S(t))^{-2})^{-1}$ if and only if $\lambda + 1$ is in the spectrum of $Spec((D+S(t))^{-2} + 1)$, we have that the spectral radius (i.e. the norm) of $((D+S(t))^{-2} + 1)^{-1}$ is strictly less of $1$.
		\\Then we have that
		\begin{equation}
			||[-1 + 2i(D+S(t))^{-1} + (D+S(t))^{-2}]((D+S(t))^{-2} + 1)^{-1}|| \leq \alpha(t) \leq M_0.
		\end{equation}
		Observe that the spectral radius of $\frac{i - D-S(t)}{i + D + S(t)}$ is less or equal to its norm. Then we have that
		\begin{equation}
			Spec(\frac{i + D+S(t)}{i - D-S(t)}) \subseteq B_{M_0}(0),
		\end{equation}
		where $B_{M_0}(0)$ is the ball centered in $0$ with radius $M_0$ in $\numberset{C}$.
		Moreover, exactly as we proved that the spectrum of $\frac{i + D+S(t)}{i - D-S(t)}$ is uniformly bounded in $t$, then we can check the same for
		\begin{equation}
			\frac{i - D-S(t)}{i + D+S(t)} =(\frac{i + D+S(t)}{i - D-S(t)})^{-1}.
		\end{equation}
		This means that there is a disk of radius $M_1$ centered in zero, such that
		\begin{equation}
			Spec(\frac{i + D+S(t)}{i - D-S(t)}) \cap B_{M_1}(0) = \emptyset.
		\end{equation}
		Consider the corona $C_{M_0,M_1}$ given by the intersection of $B_{M_1}(0)^c$ and $B_{M_0}(0)$.
		We obtain that
		\begin{equation}\label{tretuno}
			Spec(-\frac{i + D+S(t)}{i - D-S(t)}) \subset C_{M_0,M_1}.
		\end{equation}
		\\Fix the polar coordinates $(r, \theta)$ in $\numberset{C} \setminus \{0\}$ where $\theta = (-\pi, +\pi)$. Since the spectrum of $D+S(t)$ is real and there is a gap around $0$ which doesn't depends on $t$, then there is an angle $\theta_0$ such that
		\begin{equation}\label{treta}
			Spec(-\frac{i + D+S(t)}{i - D-S(t)}) \subset F_{\theta_0}
		\end{equation}
		where
		\begin{equation}
			F_{\theta_0} := \{z \in \numberset{C}\setminus \{0\}| \theta(z) \in [- \pi + \theta_0, \pi - \theta_0 ] \}.
		\end{equation}
		Indeed we have that 
		\begin{equation}
			Spec(D+S(t)) \subset (- \infty, -r_0) \times \{-\pi\} \sqcup  (r_0, + \infty) \times \{0\} = A_- \sqcup A_+
		\end{equation}
		for some $r_0 >0$. Then, if $z$ is in $A_-$, then
		\begin{equation}
			\theta(i + z) = \pi - \alpha
		\end{equation}
		where
		\begin{equation}
			\alpha = \arccos(\frac{z}{\sqrt{z^2 + 1} })
		\end{equation}
		is contained in $[0, \theta_1] \subset [0, \frac{\pi}{2})$, where
			\begin{equation}
			\theta_1 = \arccos(\frac{r_0}{\sqrt{r_0^2 + 1} }).
		\end{equation}
		Moreover we also have that
		\begin{equation}
			\theta(i - z) = \alpha > 0.
		\end{equation}
		Then we have that
		\begin{equation}
			\theta(\frac{i + z}{i - z}) = \pi - 2\alpha \in [\pi - 2\theta_1, \pi) \subset (0, \pi)
		\end{equation}
		and so
		\begin{equation}
			\theta(-\frac{i + z}{i - z}) \in [-\pi + 2\theta_1, 0) \subset (-\pi, 0)
		\end{equation}
		If $z$ is in $A_+$, then $\theta(i + z) = \alpha$ and $\theta(i - z) = \pi - \alpha$. This means that if $z$ is in $A_+$
		\begin{equation}
			\theta(\frac{i + z}{i - z}) = -\pi + 2\alpha \in (-\pi , - 2\theta_1] \subset(-\pi, 0)
		\end{equation}
		and so
		\begin{equation}
			\theta(- \frac{i + z}{i - z}) \in (0 , \pi - 2\theta_1] \subset(0, \pi)
		\end{equation}
		We proved (\ref{treta}): it is sufficient consider $\theta_0 := 2\theta_1$.
		\\Since (\ref{treta}) and (\ref{tretuno}) we obtain that
		\begin{equation}
			Spec( -\frac{i + D+S(t)}{i - D-S(t)}) \subset F_{\theta_0} \cap C_{M_0, M_1}.
		\end{equation}
		Observe that $K := F_{\theta_0} \cap C_{M_0, M_1}$ is a compact set where the logarithm is a well-defined, continuous function. This implies that for all $\epsilon$ there is a polinomial $p_\epsilon$ which is an  $\epsilon$-approximation of logarithm on $K$. Then if we denote by $a := \frac{i + D+S(t)}{i - D+S(t)}$ and $b:= \frac{i + D+S(t_0)}{i - D+S(t_0)}$ we have that
		\begin{equation}
			\begin{split}
				||\log(a) - \log(b)|| &= ||\log(a) - p_\epsilon(a)|| + ||p_\epsilon(a) - p_\epsilon(b)|| + ||\log(b) - p_\epsilon(b)|| \\
				&\leq \epsilon + ||p_\epsilon(a) - p_\epsilon(b)|| + \epsilon.
			\end{split}
		\end{equation}
		Observe that if $||a - b||$ is small enough, then also $||p_\epsilon(a) - p_\epsilon(b)||$ is small. This means that to conclude the proof we just have to show that for each $\epsilon$ there is a $\delta$ such that if $|t - t_0| \leq \delta$, then
		\begin{equation}
			||\frac{i + D+S(t)}{i - D-S(t)} - \frac{i + D+S(t_0)}{i - D-S(t_0)}||\leq \epsilon.
		\end{equation}
		Let us denote by $\tilde{D} := D + S(t_0)$ and by $S_t:= S(t) - S(t_0)$. Consider, then
		\begin{equation}
			\begin{split}
				\frac{i+ \tilde{D} + S_t}{i - \tilde{D} - S_t} &= \frac{i+ \tilde{D}}{i - \tilde{D} - S_t} + \frac{S_t}{i - \tilde{D} - S_t}\\
				&= [\frac{i - \tilde{D}}{i + \tilde{D}} + \frac{S_t}{i + \tilde{D}}]^{-1} + \frac{S_t}{i - \tilde{D} - S}_t.
			\end{split}
		\end{equation}
		We know that
		\begin{equation}\label{tilde0}
			\lim\limits_{t \rightarrow t_0} S_t = 0.
		\end{equation}
		Moreover, applying the Theorem 1.16 of \cite{kato} we have that
		\begin{equation}
			||(i - \tilde{D} - S_t)^{-1} - (i - \tilde{D})^{-1}|| \leq \frac{||(i - \tilde{D})^{-1}||^2 \cdot ||S_t||}{1 - ||(i - \tilde{D})^{-1}|| \cdot ||S_t||}
		\end{equation}
		and so
		\begin{equation} \label{tilde1}
			\lim\limits_{t \rightarrow t_0} (i - \tilde{D} - S_t)^{-1} = (i - \tilde{D})^{-1}.
		\end{equation}
		Finally we have that (always applying \cite{kato}) 
		\begin{equation}
			||[\frac{i - \tilde{D}}{i + \tilde{D}} + \frac{S_t}{i + \tilde{D}}]^{-1} - [\frac{i - \tilde{D}}{i + \tilde{D}}^{-1}]^{-1}|| \leq \frac{||\frac{i + \tilde{D}}{i - \tilde{D}}^{-1}||^2 \cdot ||(i + \tilde{D})^{-1} \cdot ||S_t||}{1 - ||\frac{i + \tilde{D}}{i - \tilde{D}}^{-1}|| \cdot ||(i + \tilde{D})^{-1} \cdot ||S_t||}
		\end{equation}
		and so
		\begin{equation}\label{tilde2}
			\lim\limits_{t \rightarrow t_0} [\frac{i - \tilde{D}}{i + \tilde{D}} + \frac{S_t}{i + \tilde{D}}]^{-1} = \frac{i - \tilde{D}}{i + \tilde{D}}.
		\end{equation}
		Then, using (\ref{tilde0}), (\ref{tilde1}) and (\ref{tilde2}) we have that
		\begin{equation}
			\begin{split}
				\lim\limits_{t \rightarrow t_0 } \frac{i + D+S(t)}{i - D+S(t)} &= \lim\limits_{t \rightarrow t_0 } \frac{i + \tilde{D} +S_t }{i - \tilde{D} +S_t} \\
				&= \lim\limits_{t \rightarrow t_0 } \frac{i + \tilde{D}}{i - \tilde{D}} \\
				&= \frac{i + D+S(t_0)}{i - D+S(t_0)}.
			\end{split}
		\end{equation}
		And so we can conclude that
		\begin{equation}
			\lim\limits_{t \rightarrow t_0} \arctan(D + S(t)) = \arctan(D + S(t_0)).
		\end{equation}
	\end{proof}
	\begin{thm}\label{ultimo}
		Let us consider two manifolds of bounded geometry $(M,g)$ and $(N,h)$ and a group $\Gamma$ which acts uniformly proper and free by isometries on $M$ and $N$. Let $f_0, f_1: (M, g) \longrightarrow (N,h)$ two $C^k_b$-map which are $\Gamma$-equivariant lipschitz-homotopy equivalences. Suppose, moreover, that $f_0 \sim_\Gamma f_1$. Then we have
		\begin{equation}
			[\rho_{f_0}] = [\rho_{f_1}] \in D^*_f(M \sqcup N)^\Gamma.
		\end{equation}
		As consequence of this fact we have that for each lipschitz-homotopy equivalence $f: (M,g) \longrightarrow (N,h)$ we can define a class in $D^*_f(M \sqcup N)^\Gamma$ which only depends on the lipschitz-homotopy class of $f$.
	\end{thm}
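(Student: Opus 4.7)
The plan is to interpolate the classes $[\rho_{f_0}]$ and $[\rho_{f_1}]$ through the classes of the intermediate maps $f_s := H(\cdot, s)$, where $H$ is the given $\Gamma$-equivariant lipschitz-homotopy. By Corollary \ref{curvaph}, the algebras $D^*_{f_s}(M \sqcup N)^\Gamma$ and $C^*_{f_s}(M \sqcup N)^\Gamma$ are independent of $s$ (any two $f_{s_1}, f_{s_2}$ are lipschitz-homotopic via a reparametrization of $H$), so every $[\rho_{f_s}]$ lives in the same $K$-group $K_{n+1}(D^*_f(M \sqcup N)^\Gamma)$, and it suffices to construct a continuous path of projections (odd case) or unitaries (even case) in $D^*_f(M \sqcup N)^\Gamma$ connecting representatives of $[\rho_{f_0}]$ and $[\rho_{f_1}]$.

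First I would choose the perturbation parameter uniformly. The curve $s \mapsto L_{\alpha, 1, s}$ is jointly continuous in $(\alpha, s)$ by the continuity results of the previous subsection, and reduces at $\alpha = 0$ to the invertible operator $L_{0,1,s}$. By compactness of $[0,1]$ one can pick a single $\alpha_0 > 0$ such that $L_{\alpha_0, 1, s}$ is invertible for every $s \in [0,1]$, so that $U_{\alpha_0, 1, s}$, $S_{\alpha_0, 1, s}$ and hence $D_{\alpha_0, 1, s}$ are all defined and $D_{\alpha_0, 1, s}$ is $L^2$-invertible for every $s$. Again by compactness, the norms of the inverses are uniformly bounded, so there exists $\varepsilon > 0$ with $(-\varepsilon, \varepsilon) \cap \operatorname{Spec}(D_{\alpha_0, 1, s}) = \emptyset$ for every $s$.

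Fix now a chopping function $\chi$ that equals $+1$ on $(\varepsilon/2, +\infty)$ and $-1$ on $(-\infty, -\varepsilon/2)$; for every $s$ the operator $\chi(D_{\alpha_0, 1, s})$ is then a self-adjoint involution. Since the perturbation $D_{\alpha_0, 1, s} - D$ is a continuous curve in $C^*_f(M \sqcup N)^\Gamma$ (by the proposition at the beginning of this subsection) and $D$ is analytically controlled over $(D^*_f(M \sqcup N)^\Gamma, C^*_f(M \sqcup N)^\Gamma)$, the previous proposition (continuity of $\chi(D + S(t))$) applies and gives a continuous curve
\begin{equation}
    s \longmapsto \chi(D_{\alpha_0, 1, s}) \in D^*_f(M \sqcup N)^\Gamma.
\end{equation}
In the odd case the path $s \mapsto \tfrac{1}{2}(\chi(D_{\alpha_0, 1, s}) + 1)$ is therefore a continuous path of projections in $D^*_f(M \sqcup N)^\Gamma$ connecting $\tfrac{1}{2}(\chi(D_{\alpha_0, 1, 0}) + 1)$ and $\tfrac{1}{2}(\chi(D_{\alpha_0, 1, 1}) + 1)$, which represent $[\rho_{f_0}]$ and $[\rho_{f_1}]$ respectively; homotopy invariance of $K_0$ then yields $[\rho_{f_0}] = [\rho_{f_1}]$. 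In the even case I would combine the above with the preceding lemma, which gives $\mathcal{U}_{\alpha_0, s, \pm} = \tfrac{I \pm \tau}{2} + P_{\alpha_0, s}$ with $P_{\alpha_0, s}$ a continuous curve in $C^*_f(M \sqcup N)^\Gamma$; multiplying two continuous families of bounded operators yields a continuous path $s \mapsto \mathcal{U}_{\alpha_0, s, -}\chi(D_{\alpha_0, 1, s})\mathcal{U}^*_{\alpha_0, s, +}$ of unitaries in $D^*_f(M \sqcup N)^\Gamma$, and homotopy invariance of $K_1$ concludes.

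The main obstacle I expect is the uniform choice of $\alpha_0$ and, more delicately, the uniform spectral gap around $0$: without it one cannot pick a single $\chi$ that returns a projection/involution for every $s$, and the argument based on the previous proposition would fail. This is what the compactness of $[0,1]$, together with continuity of the inverse of $D_{\alpha_0, 1, s}$ in the operator-norm topology (which follows from the continuity of the perturbation in $C^*_f(M \sqcup N)^\Gamma$ and standard resolvent estimates as in \cite{kato}), is used to overcome.
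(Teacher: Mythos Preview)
Your proposal is correct and follows essentially the same strategy as the paper: interpolate through $f_s=H(\cdot,s)$, use the uniform choice of $\alpha_0$ (which the paper records just before defining $D_{\alpha_0,\beta,s}$), and then exhibit the continuous path $s\mapsto \tfrac12(\chi(D_{\alpha_0,1,s})+1)$ in the odd case and $s\mapsto \mathcal{U}_{\alpha_0,s,-}\chi(D_{\alpha_0,1,s})\mathcal{U}^*_{\alpha_0,s,+}$ in the even case, invoking the continuity proposition for $\chi(D+S(t))$ and the lemma on $\mathcal{U}_{\alpha_0,s,\pm}$. The only point you leave implicit that the paper makes explicit is that in the even case the unitary path must be read in the very ample module of Example~\ref{terna} via Remark~\ref{girl}; this is a bookkeeping step and does not affect your argument.
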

	\begin{proof}
		Consider the odd case: if we are able to find a continuous curve $\gamma$ in $D^*_f(M \sqcup N)^\Gamma$ such that for all $t \in [0,1]$ is a projection,
		\begin{equation}
			\gamma(0) = \frac{1}{2}(\chi(D_{\alpha_0,1,0}) + 1 \mbox{       and      } \gamma(1) = \frac{1}{2}(\chi(D_{\alpha_0,1,1}) + 1.
		\end{equation}
		Then obviously we can define
		\begin{equation}
			\gamma(t) = \frac{1}{2}(\chi(D_{\alpha_0,1,t}) + 1
		\end{equation}
		and the continuity of $\gamma$ is given by the continuity of $\chi(D_{\alpha_0,1,t}$.
		\\
		\\Consider the even case: we have to find a continuous curve in $D^*_{\rho}(M \sqcup N, H_{M \sqcup N})^\Gamma$, where $H_{M \sqcup N}$ and $\rho_{M \sqcup N}$ are defined as in Example \ref{terna} such that for all $t \in [0,1]$ is a unitary,
		\begin{equation}
			\gamma(0) = \mathcal{U}_{\alpha_0, 0, -} \chi(D_{\alpha_0,1,0}) \mathcal{U}_{\alpha_0, 0, +}^* \mbox{       and      } \gamma(1) = \mathcal{U}_{\alpha_0, 1, -} \chi(D_{\alpha_0,1,1}) \mathcal{U}_{\alpha_0, 1, +}^*.
		\end{equation}
		Let us consider the for each $t \in [0,1]$
		\begin{equation}
			\gamma(t) = \mathcal{U}_{\alpha_0, t, -} \chi(D_{\alpha_0,1,t}) \mathcal{U}_{\alpha_0, t, +}^*
		\end{equation}
		This is a curve in $D^*_f(M \sqcup N)^\Gamma$. The continuity of $\gamma$ is given by the continuity of $\chi(D_{\alpha_0,1,t})$ and $\mathcal{U}_{\alpha_0, t, -} $. Let us recall Remark \ref{girl}: we have that, since $\gamma$ is continuous on $D^*_f(M \sqcup N)^\Gamma$ and unitary for each $t$ in $[0,1]$, then it is also continuous in $D^*_{\rho}(M \sqcup N, H_{M \sqcup N})^\Gamma$ and it is still unitary for each $t$ in $[0,1]$.
	\end{proof}
	\begin{defn}
		Let $(M,g)$ and $(N,h)$ be two manifolds of bounded geometry. Consider $\Gamma$ a group that acts uniformly proper and free on $M$ and on $N$ by isometries. Suppose that $f:(M,g) \longrightarrow (N,h)$ is a $\Gamma$- invariants lipschitz-homotopy equivalence. Then, if $dim(N) = n$ we can define the \textbf{projected secondary index of} $(M,f)$ as the class in $K_{n+1}(D^*(N)^\Gamma)$ defined as 
		\begin{equation}
			\rho(M, f) := f_{\star}(\rho_{\tilde{f}}),
		\end{equation}
		where $\tilde{f}$ is a $C^k_b$-approximation of $f$, where $k \geq 2$.
	\end{defn}
	Observe that, if $\tilde{f}_1$ and $\tilde{f}_2$ are two $C^k_b$ approximations of $f$, then $\tilde{f}_1 \sim_\Gamma \tilde{f}_2$. Then applying Theorem \ref{ultimo} we have that the definition of $\rho(M,f)$ doesn't depend on the choice of the $C^k_b$ approximation. Moreover we also have the following Corollary.
	\begin{cor}
		Consider $f$, $(M,g)$ and $(N,h)$ as above. Then the projected secondary index $\rho(M,f)$ only depends on $M$ and on the lipschitz-homotopy class of $f$.
	\end{cor}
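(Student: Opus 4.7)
The plan is to reduce the statement to the two pillars already established: Theorem \ref{ultimo}, which handles the invariance of $\rho_{\tilde f}$ inside the common structure algebra under lipschitz homotopies of $C^k_b$-maps, and the relation (\ref{ops}) which says that close maps induce the same morphism on K-theory. Take two lipschitz-homotopy equivalences $f_0,f_1\colon(M,g)\to(N,h)$ with $f_0\sim_\Gamma f_1$, and pick $C^k_b$-approximations $\tilde f_0,\tilde f_1$ (with $k\ge 2$), whose existence is granted by Proposition \ref{appr}. Since $\tilde f_i\sim_\Gamma f_i$ for $i=0,1$ and $f_0\sim_\Gamma f_1$, concatenating the three $\Gamma$-lipschitz homotopies gives $\tilde f_0\sim_\Gamma\tilde f_1$.

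First I would observe that, by Corollary \ref{curvaph}, the two coarse structures on $M\sqcup N$ induced by $\tilde f_0$ and $\tilde f_1$ coincide, so the algebras $D^*_{\tilde f_0}(M\sqcup N)^\Gamma$ and $D^*_{\tilde f_1}(M\sqcup N)^\Gamma$ (and similarly the Roe ideals) are literally the same $C^*$-algebra; call it $\mathcal D$. Consequently the classes $[\rho_{\tilde f_0}]$ and $[\rho_{\tilde f_1}]$ live in the same group $K_{n+1}(\mathcal D)$, and Theorem \ref{ultimo} yields
\begin{equation}
[\rho_{\tilde f_0}]=[\rho_{\tilde f_1}]\in K_{n+1}(\mathcal D).
\end{equation}
At this point one still has to show that the morphisms $(f_0)_\star$ and $(f_1)_\star$ used in the definition of $\rho(M,f_i)$ coincide as maps $K_{n+1}(\mathcal D)\to K_{n+1}(D^*(N)^\Gamma)$. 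Since $f_0$ and $f_1$ are $\Gamma$-lipschitz-homotopic, Example \ref{bru} shows that they are close maps, so by the discussion surrounding (\ref{ops}) an isometry coarsely and topologically covering $f_0$ also coarsely and topologically covers $f_1$; hence $(f_0)_\star=(f_1)_\star$. Putting everything together,
\begin{equation}
\rho(M,f_0)=(f_0)_\star[\rho_{\tilde f_0}]=(f_1)_\star[\rho_{\tilde f_1}]=\rho(M,f_1).
\end{equation}

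For completeness the plan also requires checking that $\rho(M,f)$ does not depend on the choice of $C^k_b$-approximation $\tilde f$ of a fixed $f$: two such approximations $\tilde f$ and $\tilde f'$ are both $\Gamma$-lipschitz-homotopic to $f$, hence to each other, and the same argument as above applies. The only delicate point, and the one I would most carefully verify, is the compatibility of the induced map on K-theory: since the target algebra is $D^*(N)^\Gamma$, associated with the metric coarse structure on $N$, one must make sure that the coarse map on $M\sqcup N$ used to define $f_\star$ (essentially $f\sqcup\mathrm{id}_N$) falls within the setting of the push-forward constructed in Section~2.1 and that the intertwining with $\iota_N$ works at the level of $K_{n+1}(D^*(N)^\Gamma)$. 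Once this is in place, the whole argument is a direct concatenation of Theorem \ref{ultimo}, Corollary \ref{curvaph} and the homotopy invariance of close-map-induced push-forwards.
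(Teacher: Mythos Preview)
Your proposal is correct and follows exactly the route the paper intends: the paper states this Corollary without proof, treating it as an immediate consequence of Theorem \ref{ultimo} (equality of $\rho$-classes for $\Gamma$-lipschitz-homotopic $C^k_b$ maps), Corollary \ref{curvaph} (identification of the structure algebras), and the fact that close maps induce the same push-forward in K-theory. Your unpacking of the argument---concatenating homotopies to get $\tilde f_0\sim_\Gamma\tilde f_1$, invoking Theorem \ref{ultimo} in the common algebra $\mathcal D$, and then matching the push-forwards---is precisely the intended proof, and your flagging of the $D^*$-level compatibility of $(f_0)_\star$ and $(f_1)_\star$ is a point the paper also leaves implicit.
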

	
	\appendix
	\chapter{Inequalities between norms of differential forms}\label{ineqform}
	\begin{lem}
		Let $(M,g)$ be a Riemannian manifold. Consider $p$ in $M$: we have that on $T_p^*M$ the norm $|| \cdot||_{T^*_p(M)}$ given by $g$ and the operatorial norm
		\begin{equation}
			||\alpha_p||_{Op} = \sup \limits_{v_p \in B_1(0) \subset T_pM} |\alpha_p(v_p)|,
		\end{equation}
		are the same norm.
	\end{lem}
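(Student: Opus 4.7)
The plan is to reduce the equality to the Cauchy--Schwarz inequality by passing through the musical isomorphism induced by $g$. First, I would fix a point $p \in M$ and invoke non-degeneracy of $g_p$ to produce a unique vector $w_p \in T_pM$ such that
\begin{equation}
\alpha_p(v_p) = g_p(w_p, v_p) \quad \text{for every } v_p \in T_pM.
\end{equation}
By the very definition of the metric induced on $T_p^*M$ from $g$, one has $\|\alpha_p\|_{T_p^*(M)} = \|w_p\|_{T_pM}$, so the task reduces to showing that $\|\alpha_p\|_{Op} = \|w_p\|$.

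Next I would apply the Cauchy--Schwarz inequality for the inner product $g_p$: for every $v_p \in B_1(0) \subset T_pM$ we have
\begin{equation}
|\alpha_p(v_p)| = |g_p(w_p, v_p)| \leq \|w_p\| \cdot \|v_p\| \leq \|w_p\|,
\end{equation}
which immediately gives $\|\alpha_p\|_{Op} \leq \|w_p\|$. For the reverse inequality, if $w_p = 0$ then $\alpha_p \equiv 0$ and both sides vanish; if $w_p \neq 0$, I would evaluate $\alpha_p$ on the unit vector $v_p := w_p / \|w_p\| \in B_1(0)$ to obtain
\begin{equation}
\alpha_p(v_p) = g_p\!\left(w_p, \tfrac{w_p}{\|w_p\|}\right) = \|w_p\|,
\end{equation}
so that $\|\alpha_p\|_{Op} \geq \|w_p\|$. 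Combining the two inequalities concludes the argument.

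There is no genuine obstacle here: the statement is a pointwise linear-algebra fact, and the only content is the saturation step in Cauchy--Schwarz. The proof is really just ``operator norm of a linear functional on a Hilbert space equals the norm of its Riesz representative,'' applied fiberwise on the finite-dimensional inner product space $(T_pM, g_p)$.
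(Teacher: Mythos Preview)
Your proof is correct and follows essentially the same approach as the paper: both arguments reduce the equality to Cauchy--Schwarz on $(T_pM, g_p)$ together with its saturation case, the only difference being that the paper works in an explicit orthonormal basis (so the Riesz representative appears as the coordinate vector $(\alpha_i)$) while you phrase it coordinate-free via the musical isomorphism.
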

	\begin{proof}
		Let $e_i$ be an orthonormal basis of $T_pM$ and let $\epsilon^i = g(e_i, \cdot)$ be the dual basis of $T_p^*M$. The basis $\{\epsilon^i \}$ is an orthonormal basis. Indeed if $G$ is the matrix of $g$ related to $\{e_i\}$ then
		\begin{equation}
			\begin{split}
				g(\epsilon^i, \epsilon^j) &= (Ge_i)^{\tau}G^{-1}Ge_j \\
				&= e_i^{\tau}G^{\tau}e_j = e_i^{\tau}Ge_j.
			\end{split}
		\end{equation}
		So, if $\omega = \alpha_j \epsilon^j$ then its $g$-norm is $\sqrt{\sum\limits_{j}\alpha_j^2}$.
		\\Let $w = a^ie_i$ be an unitary tangent vector (i.e. $\sqrt{\sum\limits_{i}(a^i)^2}=1$), then
		\begin{equation}
			\begin{split}
				||\omega||_{Op} &= (\sup \limits_{w \in B_1(0) \subset T_pM} |\omega(w)|) \\
				&=\sup \limits_{w \in B_1(0)\subset T_pM} |\alpha_j a^i \epsilon^j(e_i)| \\
				&=\sup \limits_{w \in B_1(0)\subset T_pM} |\alpha_i a^i| \\
				&= \sup \limits_{w \in B_1(0)\subset T_pM} (\alpha, a),
			\end{split}
		\end{equation}
		where $(\alpha, a)$ is the euclidean scalar product in $\numberset{R}^n$ between $\alpha = (\alpha_i)$ amd $a = (a^j)$. Following now the proof of Cauchy-Schwartz we have that if $a$ is proportional to $\alpha$ (i.e. $a = \frac{\alpha}{||\alpha||}$), then 
		\begin{equation}
			\sup \limits_{w \in B_1(0)\subset T_pM} (\alpha, a) = (\alpha, \frac{\alpha}{||\alpha||})
		\end{equation}
		So
		\begin{equation}
			\begin{split}
				||\omega||_{Op} &= ||\alpha||_{eucl}\cdot||\frac{\alpha}{||\alpha||}||_{eucl} \\
				&= ||\alpha||_{eucl}1 = ||\omega||_{g}.
			\end{split}
		\end{equation}
	\end{proof}
	\begin{lem}\label{spacc}
		Consider two differential forms $\alpha$ and $\beta$ on a Riemannian manifold $(M,g)$. Then there is a constant $C$ such that
		\begin{equation}
			|\alpha \wedge \beta|_p \leq C |\alpha|_p \cdot |\beta|_p
		\end{equation}
	\end{lem}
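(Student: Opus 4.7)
The plan is to work pointwise in an orthonormal coframe and reduce the inequality to a combinatorial Cauchy--Schwarz estimate on the components. Fix a point $p$ in $M$, choose an orthonormal basis $\{e_i\}$ of $T_pM$, and let $\{\epsilon^i\}$ be its dual basis in $T^*_pM$. As the previous lemma shows (and as is standard), the induced basis $\{\epsilon^I\}_{|I|=k}$ of $\Lambda^k T^*_pM$, with $I$ running over strictly increasing multi-indices and $\epsilon^I = \epsilon^{i_1}\wedge\cdots\wedge\epsilon^{i_k}$, is orthonormal. Consequently, if $\alpha = \sum_{|I|=k}\alpha_I\epsilon^I$ and $\beta = \sum_{|J|=l}\beta_J\epsilon^J$, then
\begin{equation*}
|\alpha|_p^2 = \sum_{|I|=k}\alpha_I^2, \qquad |\beta|_p^2 = \sum_{|J|=l}\beta_J^2.
\end{equation*}

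Next, I would expand the wedge product componentwise. Writing $\alpha\wedge\beta = \sum_{|K|=k+l} \gamma_K\,\epsilon^K$, the coefficient $\gamma_K$ is the signed sum
\begin{equation*}
\gamma_K = \sum_{(I,J):\, I\sqcup J = K} \mathrm{sgn}(I,J)\,\alpha_I\beta_J,
\end{equation*}
where the sum runs over ordered partitions of the multi-index $K$ into complementary pieces $I$ and $J$ of sizes $k$ and $l$. The number of such partitions is exactly $\binom{k+l}{k}$.

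Applying the Cauchy--Schwarz inequality to each such sum and then summing over $K$ yields
\begin{equation*}
|\alpha\wedge\beta|_p^2 = \sum_{|K|=k+l}|\gamma_K|^2 \leq \binom{k+l}{k}\sum_{|K|=k+l}\sum_{(I,J):\,I\sqcup J=K}\alpha_I^2\beta_J^2 \leq \binom{k+l}{k}\Big(\sum_I\alpha_I^2\Big)\Big(\sum_J\beta_J^2\Big),
\end{equation*}
so we obtain $|\alpha\wedge\beta|_p \leq \sqrt{\binom{k+l}{k}}\,|\alpha|_p\,|\beta|_p$. Taking $C$ to be the maximum of $\sqrt{\binom{k+l}{k}}$ over $0\le k+l \le \dim M$ gives a single constant depending only on $\dim M$ that works for forms of any degree; to remove even that dependence in the statement, one reduces the general case to pure-degree forms by decomposing $\alpha$ and $\beta$ into homogeneous components and absorbing the finitely many cross terms into $C$. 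No serious obstacle is anticipated here: the only subtlety is keeping track of signs in the combinatorial sum, but since the estimate only uses absolute values and Cauchy--Schwarz, the signs drop out.
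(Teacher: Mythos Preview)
Your proof is correct and in fact gives an explicit constant $\sqrt{\binom{k+l}{k}}$ for forms of fixed degrees, together with a clean reduction for the mixed-degree case. The paper, by contrast, does not argue at all: it simply invokes the Hadamard--Schwartz inequality from \cite{Sbordone} as a black box, stating that for arbitrary exterior forms $\alpha_1,\dots,\alpha_k$ in $\mathbb{R}^n$ there is a constant $C$ depending only on $n$ with $|\alpha_1\wedge\cdots\wedge\alpha_k|\leq C|\alpha_1|\cdots|\alpha_k|$, and applies this pointwise for $k=2$. So your argument is genuinely different in that it is self-contained and elementary---you are essentially reproving the needed case of Hadamard--Schwartz via Cauchy--Schwarz on the coefficients in an orthonormal coframe---whereas the paper outsources the work to a reference. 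What you gain is a short proof that does not rely on external literature and yields an explicit bound; what the paper's approach buys is brevity and a pointer to the more general $k$-fold inequality.
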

	\begin{proof}
		It's a direct consequence of Hadamard-Schwartz inequality \cite{Sbordone}, which states that given $\alpha_1, ..., \alpha_k$ arbitrary exterior forms in $\numberset{R}^n$ of degree $l_1,..., l_k$, then there is a constant $C$ that depends only by $n$ such that
		\begin{equation}
			|\alpha_1 \wedge ... \wedge \alpha_k| \leq C |\alpha_1| \cdot ... \cdot |\alpha_k|.
		\end{equation}
	\end{proof}
	\begin{lem}\label{norm}
		Let $f:(M,g) \longrightarrow (N,h)$ be a lipschitz map between Riemannian manifolds. Then for all $\alpha_p$ in $T^*_p(N)$ and for all $q$ in $f^{-1}(p)$ we have that there is a number $L$ 
		\begin{equation}
			||f^*\alpha_p|^2_{\Lambda^{k}_qM} \leq L||\alpha_p||^2_{\Lambda^{k}_pN},
		\end{equation}
		where $||\cdot||^2_{\Lambda^{k}_{x} X}$ is the norm induced by the metric of $X$ on $\Lambda^k_xX$.
	\end{lem}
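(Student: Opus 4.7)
The plan is to upgrade the first lemma of this appendix (which identifies the metric norm on $T_p^*M$ with the operator norm) to all exterior powers $\Lambda^k T_p^*M$, and then combine this with the fact that a lipschitz map has differential of norm bounded by the lipschitz constant.

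First, I would record the operator-norm characterization of the metric norm on $\Lambda^k T_p^*M$: define
\begin{equation}
	\|\alpha_p\|_{\mathrm{Op}} := \sup\{|\alpha_p(v_1,\ldots,v_k)| : v_i \in T_pM,\ \|v_i\|_g \leq 1\}.
\end{equation}
Using an orthonormal basis $\{e_i\}$ of $T_pM$ with dual basis $\{\epsilon^i\}$ of $T_p^*M$, the family $\{\epsilon^{i_1}\wedge\cdots\wedge \epsilon^{i_k}\}_{i_1<\cdots<i_k}$ is an orthonormal basis of $\Lambda^k T_p^*M$, so expanding $\alpha_p = \sum_I \alpha_I \epsilon^I$ one evaluates $\alpha_p(e_{i_1},\ldots,e_{i_k}) = \alpha_{(i_1,\ldots,i_k)}$ to recover each coefficient and hence bound $\|\alpha_p\|_g$ by a dimensional constant times $\|\alpha_p\|_{\mathrm{Op}}$; conversely one bounds $|\alpha_p(v_1,\ldots,v_k)|$ by $\|\alpha_p\|_g \cdot \|v_1\wedge\cdots\wedge v_k\|_g$ and uses Hadamard--Schwartz (Lemma \ref{spacc}) to control the latter by a constant times $\|v_1\|\cdots\|v_k\|$. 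This yields constants $K_k^{-1}\|\alpha_p\|_{\mathrm{Op}} \leq \|\alpha_p\|_g \leq K_k \|\alpha_p\|_{\mathrm{Op}}$.

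Second, because $f$ is lipschitz, the Remark in Subsection 1.1.1 tells us that $\|df_q(v)\|_h \leq C_f \|v\|_g$ for every $q \in M$ and $v \in T_qM$. Using the definition $f^*\alpha_p(v_1,\ldots,v_k) = \alpha_p(df_q(v_1),\ldots,df_q(v_k))$ for any $q \in f^{-1}(p)$, one obtains
\begin{equation}
	|f^*\alpha_p(v_1,\ldots,v_k)| \leq \|\alpha_p\|_{\mathrm{Op}} \cdot C_f^k \cdot \|v_1\|_g\cdots\|v_k\|_g,
\end{equation}
which, taking sup over unit vectors, gives $\|f^*\alpha_p\|_{\mathrm{Op},\Lambda^k_qM} \leq C_f^k \|\alpha_p\|_{\mathrm{Op},\Lambda^k_pN}$.

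Combining the two steps,
\begin{equation}
	\|f^*\alpha_p\|_{\Lambda^k_qM} \leq K_k \|f^*\alpha_p\|_{\mathrm{Op}} \leq K_k C_f^k \|\alpha_p\|_{\mathrm{Op}} \leq K_k^2 C_f^k \|\alpha_p\|_{\Lambda^k_pN},
\end{equation}
so the conclusion holds with $L := K_k^4 C_f^{2k}$. There is no real obstacle here: the only genuinely non-trivial input is the norm-equivalence on $\Lambda^k$, which is finite-dimensional linear algebra once an orthonormal frame is chosen, and the lipschitz hypothesis gives the pointwise bound on $df$ for free. The punctual nature of the inequality means one can work fiberwise, so no issues of bounded geometry or completeness intervene.
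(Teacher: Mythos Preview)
Your proof is correct and uses the same two ingredients as the paper (the pointwise bound $\|df_q\|\leq C_f$ coming from the lipschitz hypothesis, and Hadamard--Schwartz), but the organization is different. The paper first settles the case $k=1$ via the operator-norm \emph{equality} of the preceding lemma, and then for general $k$ expands $\alpha$ in an orthonormal coframe $\{\epsilon^I\}$ of $\Lambda^k_pN$ and estimates each summand by writing $f^*\epsilon^I = f^*\epsilon^{i_1}\wedge\cdots\wedge f^*\epsilon^{i_k}$ and applying Hadamard--Schwartz together with the $k=1$ bound $\|f^*\epsilon^{i_j}\|\leq C_f$. You instead first establish a two-sided norm \emph{equivalence} between the metric norm and the multilinear operator norm on all of $\Lambda^k$, and then bound $\|f^*\alpha\|_{\mathrm{Op}}$ directly from the definition without ever decomposing $\alpha$. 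Your route is a bit more intrinsic (no basis on $N$ enters the main estimate) at the cost of an extra dimensional constant $K_k^2$ from the norm comparison; the paper's route is more hands-on and gives the constant $C\cdot C_f^k$ in one pass. Both yield a bound of the form $L = (\text{dimensional constant})\cdot C_f^{2k}$.
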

	\begin{proof}
		Let us suppose that $\alpha$ is a $1$-form. We have that
		\begin{equation}
			\begin{split}
				||f^*\alpha_p||_{T^*_qM} &= \sup_{||v|| =1 } f^*\alpha_p(v) \\
				&= \sup_{||v|| =1 } \alpha_p(f_\star v) \\
				&\leq ||\alpha_p||_{T^*_pN}\sup_{||v|| = 1} ||f_\star v||\\
				&\leq C_f||\alpha_p||_{T^*_pN}.
			\end{split}
		\end{equation}
		Let us recall that a $k$-form in a point $q$ can be always seen as sum of wedges of $k$ orthogonal $1$-forms.
		Then consider $\alpha = a^{i_1 ... i_k} \epsilon_{i_1} \wedge... \wedge \epsilon_{i_k}$ an element of $\Lambda^k_p(N)$. We have that
		\begin{equation}
			||\alpha||_{\Lambda^k_p(N)} = \sum\limits_{i_1 <... < i_k} |a^{i_1 ... i_k}|.
		\end{equation}
		Observe that
		\begin{equation}
			\begin{split}
				||f^*\alpha||_{\Lambda^k_q(M)} &\leq \sum\limits_{i_1 <... < i_k} |a^{i_1 ... i_k}| \cdot ||f^*(\epsilon_{i_1}) \wedge... \wedge f^*(\epsilon_{i_k})|| \\
				&\leq \sum\limits_{i_1 <... < i_k} |a^{i_1 ... i_k}| \cdot C\cdot ||f^*(\epsilon_{i_1})|| \cdot... \cdot ||f^*(\epsilon_{i_k})|| \\
				&\leq C\cdot C_f^k \sum\limits_{i_1 <... < i_k} |a^{i_1 ... i_k}| = C \cdot C_f^k||\alpha||_{\Lambda^k_p(N)}.
			\end{split}
		\end{equation}
	\end{proof}
	\begin{cor}
		Given a differential form $\alpha$ on $N$ and a lipshitz map $f: (M,g) \longrightarrow (N,h)$. Consider the function
		\begin{equation}
			\begin{split}
				N &\longrightarrow \numberset{R} \\
				q &\longrightarrow ||f^*\alpha||_{\Lambda_q^*M}.
			\end{split}
		\end{equation}
		In the same way $||\alpha||_{\Lambda_p^*(N)}$ can be seen as a function $M \longrightarrow \numberset{R}$.
		Then we have that
		\begin{equation}
			||f^*\alpha||_{\Lambda_q^*M} \leq K_f f^*(||\alpha||_{\Lambda_p^*(N)})(q).
		\end{equation}
		as functions on $M$ to $\numberset{R}$, where $K_f$ is the maximum between $1$, $C_f$, $C_f^n$, where $n =dim(N)$.
	\end{cor}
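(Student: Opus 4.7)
The plan is to show that this corollary follows directly from Lemma \ref{norm} by handling the decomposition of $\alpha$ into homogeneous degree components. The point of the corollary is simply to repackage the pointwise estimate of the previous lemma as an inequality of real-valued functions on $M$, after noting that the constant does not depend on the point $q$ but only on the lipschitz constant $C_f$ and the dimension $n$ of $N$.

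First I would write $\alpha = \sum_{k=0}^{n} \alpha^{(k)}$ where $\alpha^{(k)} \in \Omega^k(N)$ is the homogeneous component of degree $k$, and similarly decompose $f^*\alpha = \sum_{k=0}^{n} f^*\alpha^{(k)}$. Since $\Lambda^*_qM = \bigoplus_{k} \Lambda^k_qM$ is an orthogonal decomposition with respect to the inner product induced by $g$, I have the pointwise identity
\begin{equation}
\|f^*\alpha\|_{\Lambda^*_qM}^2 = \sum_{k=0}^{n} \|f^*\alpha^{(k)}\|_{\Lambda^k_qM}^2,
\qquad
\|\alpha\|_{\Lambda^*_{f(q)}N}^2 = \sum_{k=0}^{n} \|\alpha^{(k)}\|_{\Lambda^k_{f(q)}N}^2.
\end{equation}

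Next I would apply Lemma \ref{norm} to each component $\alpha^{(k)}$ evaluated at $p = f(q)$. That lemma gives, for each $k$, a bound of the form
\begin{equation}
\|f^*\alpha^{(k)}\|_{\Lambda^k_qM} \leq C_f^{k}\,\|\alpha^{(k)}_{f(q)}\|_{\Lambda^k_{f(q)}N},
\end{equation}
where the factor $C_f^{k}$ comes from iterating the lipschitz bound on $1$-forms a total of $k$ times (absorbing the Hadamard--Schwartz combinatorial factor into the overall constant). Since $k$ ranges from $0$ to $n$, the factor $C_f^k$ is dominated by $K_f := \max\{1, C_f, C_f^n\}$ uniformly in $k$.

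Summing over $k$ and using the identity above, I would then conclude
\begin{equation}
\|f^*\alpha\|_{\Lambda^*_qM}^2 \leq K_f^2 \sum_{k=0}^{n} \|\alpha^{(k)}_{f(q)}\|_{\Lambda^k_{f(q)}N}^2 = K_f^2\,\|\alpha_{f(q)}\|_{\Lambda^*_{f(q)}N}^2,
\end{equation}
which after taking square roots is precisely
\begin{equation}
\|f^*\alpha\|_{\Lambda^*_qM} \leq K_f\,f^*(\|\alpha\|_{\Lambda^*N})(q).
\end{equation}
Since $q \in M$ was arbitrary, this is the required inequality of functions on $M$. There is no real obstacle here: the only subtlety worth mentioning is that the bound $C_f^k$ for each degree is controlled uniformly in $k$ by $K_f$ precisely because one needs the maximum over the degrees $k = 0, 1, \ldots, n = \dim(N)$, which is exactly how $K_f$ was defined.
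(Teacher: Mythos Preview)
Your proof is correct and is exactly the natural argument the paper has in mind: the corollary is stated without proof in the paper, as an immediate consequence of Lemma~\ref{norm}, and your decomposition into homogeneous degrees together with the uniform bound $C_f^k \leq \max\{1, C_f, C_f^n\}$ for $0 \leq k \leq n$ is precisely how one reads it off. One small remark: your parenthetical about ``absorbing the Hadamard--Schwartz combinatorial factor'' is a bit loose, since the constant $K_f$ as defined in the statement does not actually carry that factor; but this is a minor inconsistency already present in the paper's own bookkeeping between the lemma and the corollary, not a defect of your argument.
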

	\chapter{Orientation and integration along the fibers}\label{B}
	In this Appendix we will introduce the notion of orientation for a fiber bundle and the integration along fibers. We are interested in this notions because we have the following result of Ehresmann:
	\begin{thm}[Ehresmann lemma]
		Given a smooth map $p: E \longrightarrow N$ between smooth manifolds, if $p$ is 
		\begin{itemize}
			\item a surjective submersion
			\item a proper map
		\end{itemize}
		then there is a smooth manifold $F$ such that $(E, p, N, F)$ is a fiber bundle.
	\end{thm}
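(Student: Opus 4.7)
The plan is to follow the classical argument for Ehresmann's theorem, constructing local trivializations by lifting coordinate vector fields on $N$ to $E$ and flowing along them. First I would fix $q \in N$ and set $F := p^{-1}(q)$. Since $p$ is a submersion, the regular value theorem shows $F$ is a smooth embedded submanifold of $E$; since $p$ is proper and $\{q\}$ is compact, $F$ is compact. This will be the candidate typical fiber.

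Next I would build the trivialization. Choose a chart $(U, \varphi)$ around $q$ diffeomorphic to an open cube in $\numberset{R}^n$ with $\varphi(q)=0$, giving coordinate vector fields $\partial_1, \dots, \partial_n$ on $U$. Because $p$ is a submersion, for each $i$ one can locally construct a smooth vector field $\widetilde{\partial}_i$ on $p^{-1}(U)$ that is $p$-related to $\partial_i$: this is done by picking a smooth horizontal distribution (for instance, using a Riemannian metric on $E$ and taking the orthogonal complement of $\ker dp$, or by patching local sections of $dp$ using a partition of unity on $p^{-1}(U)$, which exists because $p^{-1}(U)$ is paracompact). The $\widetilde{\partial}_i$ need not commute, but this is not required.

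Then I would use the flows $\Phi_i^t$ of $\widetilde{\partial}_i$ to define
\begin{equation}
\Psi: U \times F \longrightarrow p^{-1}(U), \qquad (\varphi^{-1}(t_1,\dots,t_n), x) \longmapsto \Phi_n^{t_n}\circ \cdots \circ \Phi_1^{t_1}(x).
\end{equation}
The crucial point, and the main obstacle, is that the flows $\Phi_i^t$ must be defined for all relevant times on the whole fiber, not merely on small open subsets. This is exactly where properness is used: if $K \subset U$ is a compact cube around $q$, then $p^{-1}(K)$ is compact by properness, so each $\widetilde{\partial}_i$ restricted to $p^{-1}(K)$ is complete in the coordinate direction (the integral curves cannot escape to infinity in finite time). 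After possibly shrinking $U$ to a smaller cube, all the compositions above are well-defined on $U \times F$.

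Finally I would verify that $\Psi$ is a diffeomorphism with $p\circ\Psi = \mathrm{pr}_U$. The identity $p\circ\Psi = \mathrm{pr}_U$ follows from $\widetilde{\partial}_i$ being $p$-related to $\partial_i$, which guarantees $p\circ\Phi_i^t = \varphi^{-1}\circ(\text{translation by } t e_i)\circ\varphi\circ p$ on the appropriate set. The inverse of $\Psi$ is obtained by running the flows backward: given $y \in p^{-1}(U)$ with $\varphi(p(y)) = (t_1,\dots,t_n)$, set $x = \Phi_1^{-t_1}\circ \cdots \circ \Phi_n^{-t_n}(y) \in F$. Smoothness of $\Psi$ and its inverse follow from smooth dependence of flows on initial conditions. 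Transition functions between two such trivializations over overlapping charts are smooth for the same reason, and the restriction of each $\Psi(q', \cdot)$ to $\{q'\}\times F$ is a diffeomorphism onto $p^{-1}(q')$, so $F$ serves as the typical fiber throughout (on the connected component of $q$; for different components one argues similarly, and if $N$ is connected all fibers are diffeomorphic via such trivializations).
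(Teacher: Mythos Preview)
Your argument is the standard proof of Ehresmann's fibration theorem and is essentially correct. The paper itself does not supply a proof of this statement: it is quoted in Appendix~\ref{B} as a classical result attributed to Ehresmann, and the appendix then moves on to orientation and integration along fibers, citing \cite{Conn} for the subsequent material. So there is nothing to compare your approach against; you have simply filled in a proof the author chose to omit.

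One small remark: your final parenthetical about connected components is the right caveat. As stated in the paper, the conclusion is that there exists a single smooth manifold $F$ serving as typical fiber, which strictly requires $N$ connected (or else one only gets a fiber bundle over each component, possibly with different fibers). This is a well-known imprecision in many statements of Ehresmann's lemma and not a defect in your argument.
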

	The following part is taken from \cite{Conn}. Let $\mathcal{B} = (E, \pi, B, F)$ be a smooth fiber bundle with $dim B = n$, $dimF = r$. Recall that the fibre $F_x$, at $x \in B$ is a submanifold of $E$ and denote the inclusion by $j_x: F \longrightarrow E$.
	\begin{defn}
		The bundle $B$ is called orientable if there exists an $r$-form $\psi$ on $E$ such that $j_x^*\psi$ orients $F$, for every $x \in B$. An equivalence class of such $r$-forms is called an \textbf{orientation} for the bundle and a member of the equivalence class is said to represent the orientation.
	\end{defn}
	Let now $f:X \longrightarrow Y$ be an orientable fiber bundle with fiber $F$. If $\partial X = \emptyset$, then we will denote by $\Omega^*_{vc}(X)$ the space of differential forms $\alpha$ such that if $K$ is a compact set of $Y$ then $supp(\alpha) \cap f^{-1}(K)$ is a compact set. If $X$ has a boundary, then we define $\Omega^*_{vc}(X)$ as $\Omega^*_{vc}(X \setminus \partial X)$.
	\\Consider $\alpha_p \in \Lambda^*(X)$ and let $v_p$ be a tangent vector in $p \in X$. We denote by $i_{v_p} \alpha_p$ the contraction of $\alpha_p$ with $v_p$. 
	\begin{defn}
		Consider a submersion between differentiable manifolds $p: X \longrightarrow Y$. We denote by $\Omega_{vc}^*(X)$ the space of \textbf{vertically compactly supported smooth forms} the set of differential form $\alpha$ on $X$ such that, for each compact set $K$, we have that $p^{-1}(K) \cap supp(\alpha)$ is a compact set.
	\end{defn}
	\begin{defn}
		Consider a fiber bundle $f: X \longrightarrow Y$ and let $F$ be a generic fiber over a point $p$ in $Y$. The \textbf{integration along the fibers} is the map $f_\star: \Omega^*_{vc}(X) \longrightarrow \Omega^{*-r}(Y)$ defined for all $\alpha$ in $\Omega^*_{vc}(X)$ as
		\begin{equation}
			f_\star(\alpha)_p(v_1,... v_{*-r}) := \int_{F_p} j_p^*(i_{\tilde{v_1}} ... i_{\tilde{v}_{*-r}} \alpha),
		\end{equation}
		where $\tilde{v}_j$ are some vectors such that $f_\star(\tilde{v}_j) = v_j$. We will denote $f_\star(\alpha)$ also as
		\begin{equation}
			\int_F \alpha.
		\end{equation}
	\end{defn}
	\begin{rem}
		The definition above doesn't require a choice for the lifts of $v_j$s. Indeed if we use $\tilde{v_j} + w_j$ where $w_j \in ker(f_\star)$ instead of $v_j$, one can easily check that
		\begin{equation}
			j_p^*((\tilde{v_1} + w_1) \dashv... (\tilde{v}_{*-r} + w_{*-r}) \dashv \alpha) = j_p^*(\tilde{v_1} \dashv... \tilde{v}_{*-r} \dashv \alpha) + j_p^*(\beta)
		\end{equation}
		where the differential form $j_p^*(\beta)$ is a form in $\Omega^*(F)$ which is not a top-degree form. This means that
		\begin{equation}
			\int_{F} j_p^*(\beta) = 0.
		\end{equation}
	\end{rem}
	\begin{rem}
		If $f: X \longrightarrow Y$ is a proper surjective submersion, then $f_\star$ is well defined for each $\alpha$ in $\Omega^*(X)$ even if $\partial X \neq \emptyset$.
	\end{rem}
	Now we introduce some important properties of the integration along fibers. One can find all the proofs in \cite{Conn}, in particular in chapter 7, section 5.
	\begin{prop}[Projection Formula and Fubini Theorem]
		The map $f_\star$ is surjective and, in particular if $\alpha$ is in $\Omega^*(Y)$ and $\beta$ in $\Omega^*_{vc}(X)$ then
		\begin{equation}
			\int_F f^*\alpha \wedge \beta = \alpha \wedge \int_F \beta,
		\end{equation}
		and
		\begin{equation}
			\int_X \beta = \int_Y(\int_F \beta).
		\end{equation}
	\end{prop}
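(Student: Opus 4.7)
The plan is to prove the projection formula first and deduce surjectivity from it, then establish Fubini by a partition-of-unity reduction to the local product case. All three statements are ultimately pointwise or local, so the core of the work will be carried out in fibered coordinates.

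For the projection formula I would argue pointwise on $Y$. Fix $p \in Y$ and tangent vectors $v_1,\dots,v_k \in T_pY$ (where $k = \deg(\alpha) + \deg(\beta) - r$), choose lifts $\tilde v_j$ at a point of $F_p$, and evaluate both sides using the definition of $f_\star$. The key observation is that $f^*\alpha$ annihilates any vertical vector, so when we expand $i_{\tilde v_1}\cdots i_{\tilde v_k}(f^*\alpha \wedge \beta)$ by the antiderivation rule, only those terms survive in which every contraction against $f^*\alpha$ uses a horizontal lift $\tilde v_j$. A combinatorial count of the surviving terms then gives $i_{\tilde v_1}\cdots i_{\tilde v_{|\alpha|}}(f^*\alpha) \wedge i_{\tilde v_{|\alpha|+1}}\cdots i_{\tilde v_k}\beta$ plus terms in which at least one contraction of $\beta$ lands on a horizontal vector; the latter pull back to the fiber to forms of degree $< r$ and integrate to zero. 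Pulling back the surviving term by $j_p$ and using $j_p^* f^*\alpha = 0$ on vertical inputs, the Euclidean scalar $\alpha_p(v_1,\dots,v_{|\alpha|})$ factors out of the fiber integral, which is precisely $\alpha_p \wedge (f_\star \beta)_p$ evaluated on $(v_1,\dots,v_k)$.

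Surjectivity of $f_\star$ is then immediate: for any oriented bundle one can build (at least locally, and globally by a partition-of-unity patching of such local choices) a vertically compactly supported $r$-form $\omega \in \Omega^r_{vc}(X)$ with $\int_F \omega = 1$ — this is exactly the content of the existence of a Thom form, established earlier in Subsection~\ref{forma}. Given $\alpha \in \Omega^*(Y)$, the form $f^*\alpha \wedge \omega$ lies in $\Omega^*_{vc}(X)$, and the projection formula just proved gives $f_\star(f^*\alpha \wedge \omega) = \alpha \wedge f_\star \omega = \alpha$.

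For Fubini, I would reduce to the local product case. Choose an open cover $\{U_\lambda\}$ of $Y$ of trivializing charts, together with diffeomorphisms $\phi_\lambda : f^{-1}(U_\lambda) \to U_\lambda \times F$ compatible with $f$ and with the orientation, and a partition of unity $\{\chi_\lambda\}$ subordinate to $\{U_\lambda\}$. Writing $\beta = \sum_\lambda (f^*\chi_\lambda)\beta$ and using linearity of both $\int_X$ and $f_\star$, it suffices to prove Fubini for each summand, which is supported in $f^{-1}(U_\lambda)$ and vertically compactly supported. On the product $U_\lambda \times F$, in fibered coordinates $(y^i,z^\mu)$ the top-degree part of $\beta$ expands as $\sum_I h_I(y,z)\, dy^I \wedge dz^1\wedge\cdots\wedge dz^r$ (only the top-$r$-in-$z$ part contributes to either integral), and the definition of $f_\star$ coincides with integrating out the fiber variables; the classical Fubini theorem in Euclidean coordinates then gives $\int_{U_\lambda \times F}\beta = \int_{U_\lambda}\bigl(\int_F \beta\bigr)$.

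I expect the main obstacle to be bookkeeping rather than ideas: carefully extracting the surviving term in the antiderivation expansion for the projection formula, and, for Fubini, being precise that the vertical compact support hypothesis makes the iterated integral converge absolutely so that classical Fubini applies on each chart and the partition-of-unity recombination is legitimate.
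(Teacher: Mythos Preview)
The paper does not prove this proposition at all; it defers to Chapter~7, Section~5 of \cite{Conn} and remarks that the argument for vector bundles in \cite{bottu} extends. Your plan is the standard one followed in those references: projection formula pointwise from the definition of $f_\star$, surjectivity from the existence of a vertically compactly supported form with fiber integral $1$, and Fubini by partition-of-unity reduction to a product chart. So there is nothing to compare against, and your outline is sound.

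Two places deserve tightening. First, in the projection formula, your description of which terms survive is garbled. After expanding $i_{\tilde v_1}\cdots i_{\tilde v_k}(f^*\alpha\wedge\beta)$ as a signed shuffle sum $\sum_S \pm (i_{\tilde v_S}f^*\alpha)\wedge(i_{\tilde v_{S^c}}\beta)$, the terms with $|S|>|\alpha|$ vanish outright; those with $|S|<|\alpha|$ vanish after applying $j_p^*$ because $j_p^*f^*=(f\circ j_p)^*$ and $f\circ j_p$ is constant, so $j_p^*$ kills any basic form of positive degree --- that is the reason, not a degree count on the $\beta$ factor. What remains is not a single term but the full shuffle sum over $|S|=|\alpha|$, and it is precisely this sum (with its signs) that matches the shuffle expansion of $(\alpha\wedge f_\star\beta)_p(v_1,\dots,v_k)$. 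Second, your appeal to Subsection~\ref{forma} for the form with fiber integral $1$ is slightly off: that subsection builds Thom forms for the vector bundle $f^*(TN)$, whereas here $F$ is an arbitrary compact oriented manifold. Your parenthetical fix is the correct one --- take a compactly supported top form on $F$ with integral $1$ in each trivialization and patch via a partition of unity on $Y$; closedness is not needed for surjectivity.
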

	\begin{rem}
		The proof of the Projection Formula for a vector bundle $(E, \pi M)$ given \cite{bottu} holds even if $(E, \pi, M)$ is a fiber bundle. In particular, following the proof, it's clear that if $\beta$ is in $\Omega^*(X) \setminus \Omega^*_{vc}(X)$, but $f_\star\beta$ is well-defined, then Projection Formula still holds.
	\end{rem}
	\begin{prop}\label{commete}
		The integration along fibers $f_\star: \Omega^*_{vc}(X) \longrightarrow \Omega^{*-r}(Y)$ satisfies the following equalities for all  vector field $W$ in $Y$ and for all vector field $Z$ in $X$ which is $f$-related to $W$
		\begin{itemize}
			\item $i_W \circ q_{F} = q_F \circ i_Z$
			\item $\Theta_W \circ q_F = q_F \circ \Theta_Z$
			\item $d \circ q_F = q_F \circ d$
		\end{itemize}
		where $i_{W}$ (and $i_Z$) is the inner product with $W$ (resp. $Z$), $\Theta_{W}$ (or $\Theta_Z$) is the Lie derivative along $W$ (resp. $Z$) and $d$ is the exterior derivative.
	\end{prop}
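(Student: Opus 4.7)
The three identities are local on $Y$, so the plan is to use a partition of unity on $Y$ subordinate to trivializing open sets $U \subseteq Y$ for the bundle. Over such a $U$ we may identify $f^{-1}(U) \cong U \times F$ with $f$ being projection on the first factor, and write vertically compactly supported forms in the standard split basis $\alpha = \sum a_{IJ}(x,y)\, dx^I \wedge dy^J$, where $\{x^i\}$ are coordinates on $U$ and $\{y^j\}$ are fiber coordinates. Everything then reduces to elementary coordinate calculations and Stokes in the fiber.

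For $i_W \circ f_\star = f_\star \circ i_Z$, the cleanest approach is to go back to the definition: since $Z$ is $f$-related to $W$, we may take $Z$ itself as the lift of $W$ in the formula for $f_\star$. Then, for tangent vectors $v_2,\dots,v_{*-r}$ at $p\in Y$ with chosen lifts $\tilde v_j$,
\begin{equation*}
  (i_W f_\star \alpha)_p(v_2,\dots,v_{*-r}) = f_\star(\alpha)_p(W_p,v_2,\dots,v_{*-r}) = \int_{F_p} j_p^*\bigl(i_Z\, i_{\tilde v_2}\cdots i_{\tilde v_{*-r}}\alpha\bigr),
\end{equation*}
and commuting the $i_Z$ past the other contractions (producing only an overall sign that is absorbed by the analogous sign on the right-hand side) yields exactly $f_\star(i_Z\alpha)_p(v_2,\dots,v_{*-r})$. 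This is short and essentially tautological.

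The main identity — and the main obstacle — is the commutation with the exterior derivative, $d\circ f_\star = f_\star \circ d$. Here I would work in a trivializing chart as above and decompose $\alpha$ into monomials $a(x,y)\,dx^I\wedge dy^J$. If $|J|<r$ then $f_\star \alpha = 0$ and I must verify that $f_\star d\alpha = 0$ as well: the horizontal part $\partial_{x^i} a\,dx^i\wedge dx^I\wedge dy^J$ still has fiber degree $<r$ and integrates to zero, while the vertical part $\partial_{y^k} a\,dy^k\wedge dx^I\wedge dy^J$ is either zero (if $k\in J$) or, when $|J|+1=r$, produces a top fiber form whose integral vanishes by Stokes in the fiber together with the hypothesis of vertical compact support. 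If instead $|J|=r$, write $dy^J=dy^1\wedge\cdots\wedge dy^r$ and compute both sides directly: $f_\star\alpha = \bigl(\int_F a(x,y)\,dy^1\cdots dy^r\bigr)\,dx^I$, and differentiation under the integral sign (justified by vertical compactness) matches $f_\star d\alpha$ up to the sign $(-1)^{|I|}$ that appears identically on both sides from moving $d$ past $dx^I$. The subtle point is really this last sign bookkeeping and the Stokes boundary term, which is killed by the $\Omega^*_{vc}$ hypothesis.

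Granted the first and third identities, the formula for the Lie derivative is free: by Cartan's magic formula $\Theta_W = d\,i_W + i_W\, d$ on $Y$ and $\Theta_Z = d\,i_Z + i_Z\, d$ on $X$, we compute
\begin{equation*}
  \Theta_W \circ f_\star = d\,i_W f_\star + i_W d\, f_\star = d f_\star i_Z + i_W f_\star d = f_\star d\, i_Z + f_\star i_Z d = f_\star \circ \Theta_Z,
\end{equation*}
using the already-established commutations. Finally, to globalize, pick a partition of unity $\{\rho_\alpha\}$ on $Y$ subordinate to trivializing opens, apply the local identities to $f^*(\rho_\alpha)\cdot \beta$ for each $\beta$, and sum; the sum converges (and may be differentiated term by term) because each $\beta\in\Omega^*_{vc}(X)$ has vertically compact support so only finitely many terms are nonzero on any compact subset of $Y$.
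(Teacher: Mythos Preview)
Your proof is correct. The paper does not actually supply its own argument for this proposition: the surrounding text says ``One can find all the proofs in \cite{Conn}, in particular in chapter 7, section 5'' and leaves it at that. So there is nothing to compare against except the cited reference (Greub--Halperin--Vanstone), whose treatment is essentially the same local-coordinate computation you outline: reduce to a product chart, split forms by fiber degree, differentiate under the integral for the top-fiber-degree piece, and kill the subtop piece via Stokes on the fiber using vertical compact support.

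One small remark on your first identity: the ``tautological'' argument from the definition is indeed the right one, but be a little careful with the order of the interior products. With the paper's convention $f_\star(\alpha)_p(v_1,\dots,v_{*-r}) = \int_{F_p} j_p^*(i_{\tilde v_1}\cdots i_{\tilde v_{*-r}}\alpha)$, inserting $W_p$ in the first slot and lifting it by $Z$ puts $i_Z$ on the \emph{outside}, whereas in $f_\star(i_Z\alpha)$ the $i_Z$ sits on the \emph{inside}. The anticommutation sign you pick up moving $i_Z$ across the other contractions is exactly cancelled by the sign coming from the fact that $i_Z\alpha$ has degree one less than $\alpha$, so the fiber form $j_p^*(\cdots)$ has the correct sign relative to the orientation. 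You gesture at this (``absorbed by the analogous sign on the right-hand side''), and it does work, but it is worth writing out once to be sure. Everything else --- the Stokes step for $d$, the Cartan-formula derivation of the Lie-derivative identity, and the partition-of-unity globalization --- is clean.
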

	\chapter{K-Theory}\label{K-theory}
	Consider a $*$-algebra $A$. 
	\begin{defn}
		A $C^*$-algebra is a Banach algebra such that 
		\begin{equation}
			||x^*x|| = ||x||^2
		\end{equation}
		for each $x$ in $A$.
	\end{defn}
	There is an equivalent definition of $C^*$-algebra.
	\begin{defn}
		A \textbf{$C^*$-algebra} is a $*$-algebra which is isometrically $*$-isomorphic to a norm-closed $*$-subalgebra of $B(H)$ where $H$ is an Hilbert space.
	\end{defn}
	Our goal, in this Appendix, is to introduce the notion of K-theory groups of a $C^*$-algebra $A$ and to show the main properties.
	\begin{defn}
		Let $A$ be a $C^*$-algebra. Consider $A \subseteq B(H)$ for some Hilbert space $H$. A $C^*$-algebra $A$ is said to be \textbf{unital} if it contains the identity operator.
	\end{defn}
	\begin{defn}
		We define the \textbf{unitalization of } $A$ as the $C^*$-algebra $\tilde{A}$ generated by $A$ and the identity operator of $B(H)$.
	\end{defn}
	\begin{rem}
		Let $A$ and $B$ be two $C^*$-algebras, then a $*$-morphism $\phi:A \longrightarrow B$ can be extended in a unique way to a $*$-morphism $\Phi: \tilde{A} \longrightarrow \tilde{B}$ imposing $\Phi(1) = 1$.
	\end{rem}
	Observe that if $A$ already contains the identity operator, then $\tilde{A} = A$.
	\\As shown in \cite{Anal}, we have a short exact sequence
	\begin{equation}\label{qui}
		0 \longrightarrow A \longrightarrow \tilde{A} \longrightarrow \numberset{C} \longrightarrow 0.
	\end{equation}
	We use this sequence later, to define the K-theory groups of a non-unital $C^*$-algebra.
	\begin{defn}
		Let us consider a ring $A$ and the module $M_n(A)$ given by the matrices with coefficient in $A$. An element $p$ of $M_n(A)$ is a \textbf{projection} if $p =p^2$.
	\end{defn}
	\begin{defn}
		Let us consider a unital $C^*$-algebra. The \textbf{K-Theory of $A$} is the group $K_0(A)$ whose generators are $[p]$ where $p$ is a projection of $M_n(A)$ for some $n \in \numberset{N}$ and
		\begin{itemize}
			\item if $p$ and $q$ are projections in $M_n(A)$ and if there is a continuous path $\alpha:[0,1] \longrightarrow M_n(A)$ such that $\alpha(0) = p$, $\alpha(1) =q$ and $\alpha(t)$ is a projection for every $t$ in $(0,1)$ then $[p]=[q]$.
			\item $[0_n] = 0_{K_0(A)}$ for all $n \in \numberset{N}$, where $0_n$ is the null matrix of $M_n(A)$ and $0_{K_0(A)}$ is the identity element of $K_0(A)$.
			\item if $[p] \in M_n(A)$ and $[q]\in M_m(A)$ then  $[p] + [q] = [p\oplus q]$ where $[p\oplus q]$ is the matrix in $M_{n+m}(A)$ given by
			\begin{equation}
				{\begin{pmatrix}
						p & 0 \\ 0 & q
				\end{pmatrix}}.
			\end{equation}
		\end{itemize}
	\end{defn}
	Given a $*$-homomorphism $\phi: A \longrightarrow B$ between $C^*$-algebras, there is a map $\phi_\star: K_0(A) \longrightarrow K_0(B)$ such that for all $a$ in $M_n(A)$ we have that
	\begin{equation}
		\phi_\star[a] = [\phi_{n}(a)],
	\end{equation}
	where $\phi_{n}(a)$ is the $n \times n$ matrix defined as $\phi_{n}(a)_{i,j} := \phi(a_{i,j})$. This map induces a morphism $\phi_\star$ between the $K_0$ groups of $A$ and $B$.
	\begin{defn}
		Let $A$ a non-unital $C^*$-algebra. Then we define the \textbf{K-Theory of $A$} as the group $K_0(A)$ defined as the kernel of
		\begin{equation}
			\phi_{\star}: K_0(\tilde{A}) \longrightarrow K_{0}(\numberset{C})
		\end{equation}
		where $\phi: \tilde{A} \longrightarrow \numberset{C}$ is the map in (\ref{qui})
	\end{defn}
	The proof of the following proposition can be find in Remark 4.1.2 of \cite{Anal}.
	\begin{prop}
		In particular we have a covariant functor $\mathcal{K}$ between the category $\mathring{A}$ which has the unital $C^*$-algebras as objects and unital $*$-homomorphism as arrows and the category \textbf{Grp} which has groups has objects and morphisms of groups as arrows. This covariant functor is defined as
		\begin{equation}
			\begin{cases}
				\mathcal{K}(A) := K_0(A) \\
				\mathcal{K}(A \xrightarrow{\phi} B) := K_0(A) \xrightarrow{\phi_\star} K_0(B).
			\end{cases}
		\end{equation}
	\end{prop}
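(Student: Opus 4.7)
The plan is to verify the three standard functoriality requirements: that $\phi_\star$ is well-defined on K-theory classes, that it respects the group operation, and that the assignment $\phi \mapsto \phi_\star$ respects identities and compositions. Throughout, the key fact is that any $*$-homomorphism between $C^*$-algebras is automatically norm-continuous, which is what allows homotopies to pass through $\phi_n$.

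First I would establish well-definedness of $\phi_\star$. Given a projection $p \in M_n(A)$, the matrix $\phi_n(p)$ defined entrywise is a projection in $M_n(B)$ since $\phi$ is a $*$-homomorphism, so $(\phi_n(p))^2 = \phi_n(p^2) = \phi_n(p)$. If $p, q \in M_n(A)$ are connected by a continuous path $\alpha:[0,1] \to M_n(A)$ of projections, then $t \mapsto \phi_n(\alpha(t))$ is a continuous path of projections in $M_n(B)$ from $\phi_n(p)$ to $\phi_n(q)$, so $[\phi_n(p)] = [\phi_n(q)]$ in $K_0(B)$. Additivity is immediate from the block-matrix formula, since $\phi_{n+m}(p \oplus q) = \phi_n(p) \oplus \phi_m(q)$, and $\phi_n(0_n) = 0_n$ so the identity element goes to the identity element. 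This shows $\phi_\star$ is a well-defined group homomorphism for unital $\phi$.

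Next I would check functoriality itself. For the identity $*$-homomorphism $id_A: A \to A$, $(id_A)_n$ is the identity on $M_n(A)$, so $(id_A)_\star[p] = [p]$ for every generator, giving $(id_A)_\star = id_{K_0(A)}$. For composition, given unital $*$-homomorphisms $A \xrightarrow{\psi} B \xrightarrow{\phi} C$, one has $(\phi \circ \psi)_n(a)_{i,j} = (\phi \circ \psi)(a_{i,j}) = \phi(\psi(a_{i,j})) = (\phi_n \circ \psi_n)(a)_{i,j}$ on matrix entries, so $(\phi \circ \psi)_n = \phi_n \circ \psi_n$, and therefore $(\phi \circ \psi)_\star[p] = [\phi_n(\psi_n(p))] = \phi_\star[\psi_n(p)] = \phi_\star(\psi_\star[p])$.

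Since the construction is purely formal and each verification reduces to an entrywise computation together with continuity of $\phi_n$, no step is a real obstacle; the only subtle point is noting that continuity of $\phi_n: M_n(A) \to M_n(B)$ (needed for the homotopy invariance step) follows because $\phi$ is a $*$-homomorphism between $C^*$-algebras, hence automatically norm-decreasing, and $M_n(A)$ carries the natural $C^*$-norm under which entrywise evaluation is continuous. This completes the verification that $\mathcal{K}$ is a covariant functor $\mathring{A} \to \textbf{Grp}$.
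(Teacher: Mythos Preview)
Your verification is correct and is exactly the standard entrywise argument for functoriality of $K_0$. The paper does not actually give a proof of this proposition: it simply refers to Remark~4.1.2 of Higson--Roe, \emph{Analytic K-homology}. So your proposal is more detailed than the paper itself, but it follows precisely the route that reference would take, and there is nothing to correct.
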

	\begin{rem}
		The functor can be extend to non-unital $C^*$-algebras defining for each $*$-morphism $\phi: A \longrightarrow B$, the group morphism
		\begin{equation}
			\Phi_{\star|_{K_{0}(A)}}: K_0(A) \longrightarrow K_0(B),
		\end{equation}
		where $\Phi: \tilde{A} \longrightarrow \tilde{B}$ is the $*$-morphism induced by $\phi$.
	\end{rem}
	Consider an unital $C^*$-algebra $A$ and $J$ an ideal of $A$. If we have the short exact sequence
	\begin{equation}
		0 \longrightarrow J \longrightarrow A \longrightarrow \frac{A}{J} \longrightarrow 0
	\end{equation}
	Then by Proposition 4.3.15 of \cite{Anal} also the sequence
	\begin{equation}
		K_0(J) \longrightarrow K_0(A) \longrightarrow K_0(\frac{A}{B}) \label{k-short}
	\end{equation}
	is exact. 
	\begin{defn}
		Let us consider a $C^*$-algebra $A$. The \textbf{suspension of $A$} is the set $S(A)$ of continuous functions $f:[0,1] \longrightarrow A$ such that $f(0)=f(1)=0$ and the structure of $C^*$-algebra induced by $A$.
	\end{defn}
	\begin{defn}
		Let us consider a $C^*$-algebra $A$ and $p \in \numberset{N}$. The \textbf{$p$-th group of K-Theory} is the group
		\begin{equation}
			K_p(A) = K_0(S^p(A)),
		\end{equation}
		where $S^1(A) = S(A)$ and $S^p(A) = S(S^{p-1}(A))$.
	\end{defn}
	\begin{rem}	
		Consider two $C^*$-algebras $A$ and $B$ and let $\psi:A \longrightarrow B$ a $*$-homomorphism. Then we can define the map
		\begin{equation}
			\begin{split}
				S(\psi): S(A) &\longrightarrow S(B) \\
				\gamma &\longrightarrow \phi \circ \gamma.
			\end{split}
		\end{equation}
		Moreover, we define $S^n(\psi)$ as $S(S^{n-1}(\psi))$.
		\\We can observe that if $\psi$ is injective, then also $S(\psi)$ is injective. Moreover, if $J$ is an ideal of $A$ and $i: J \rightarrow A$ is the inclusion, then also $S(i): S(J) \longrightarrow S(A)$ is the inclusion of $S(J)$ in $S(A)$.
		\\Let us denote by $\pi: A \longrightarrow \frac{A}{J}$ be the quotient map.
		\\Then we have that
		\begin{equation}
			\frac{S(A)}{S(J)} \cong S(\frac{A}{J}).
		\end{equation}
		To prove this fact we can consider the map $\phi: \frac{S(A)}{S(J)} \longrightarrow S(\frac{A}{J})$ as follow: given a class of path $[\alpha]$ in $\frac{S(A)}{S(J)}$ we have that
		\begin{equation}
			\phi([\alpha]) := \pi \circ \alpha.
		\end{equation}
		The map $\phi$ is well defined, indeed if 
		\begin{equation}
			[\beta] = [\alpha] \implies \alpha = \beta + \eta
		\end{equation}
		where $\eta \in S(I)$ but this means that $\pi \circ \alpha = \pi \circ \beta + \pi \circ \eta = \pi \circ \beta$.
		\\Moreover $\phi$ is also injective: if $[\alpha] \neq [\beta]$ then it means that there is a $t_0 \in [0,1]$ such that
		\begin{equation}
			\alpha(t_0) - \beta(t_0) \notin J.
		\end{equation}
		This implies that 
		\begin{equation}
			\pi \circ \alpha (t_0) - \beta(t_0) \neq 0
		\end{equation}
		and so
		\begin{equation}
			\phi([\alpha]) \neq \psi([\beta]).
		\end{equation}
		Finally $\phi$ is surjective. Consider a curve 
		\begin{equation}
			\gamma: [0,1] \longrightarrow \frac{A}{J}
		\end{equation}
		such that $\gamma(0) = \gamma(1) = [0]$: by Lemma 4.3.13. of \cite{Anal} we have that it can be lifted to a path 
		\begin{equation}
			\tilde{\gamma}: [0,1] \longrightarrow A
		\end{equation}
		such that $\tilde{\gamma}(0) = 0$. We also know, since $\pi \circ \tilde{\gamma} = \gamma$ that $\tilde{\gamma}(1) \in J$. Let us define the curve $\sigma: [0,1] \longrightarrow J$ as
		\begin{equation}
			\sigma(t) = t\cdot \tilde{\gamma}(1).
		\end{equation}
		Then we can observe that 
		\begin{equation}
			\eta = \tilde{\gamma} - \sigma
		\end{equation}
		is a curve in $A$ such that $\eta(0) = \eta (1) = 0$ and $\pi(\eta) = \pi(\tilde{\gamma}) = \gamma$. Then we have that 
		\begin{equation}
			\phi([\eta]) = \gamma.
		\end{equation}
		Finally, if $P: S(A) \longrightarrow \frac{S(A)}{S(J)}$ is the quotient map, then one can easily check that
		\begin{equation}
			S(\pi) = \psi \circ P
		\end{equation}
		This means that, identifying $\frac{S^n(A)}{S^n(J)}$ with $S^n\frac{A}{J}$, then we can also identify $S^n(\pi)$ and the projection $S^n(A) \longrightarrow \frac{S^n(A)}{S^n(J)}$. Then if
		\begin{equation}
			0 \rightarrow J \xrightarrow{i} A \xrightarrow{\pi} \frac{A}{J} \rightarrow 0
		\end{equation}
		is an exact sequence, then also the sequence
		\begin{equation}
			0 \rightarrow S^n(J) \xrightarrow{S^n(i)} S^n(A) \xrightarrow{S^n(\pi)} S^n(\frac{A}{J}) \rightarrow 0
		\end{equation}
		is exact. Then using (\ref{k-short}) we have that
		\begin{equation}
			K_n(J) \xrightarrow{S^n(i)_\star}  K_n(A) \xrightarrow{S^n(\pi)_\star} K_n(\frac{A}{J})
		\end{equation}
		is exact.
	\end{rem}
	There is another way to define $K_1(A)$.
	\begin{defn}
		Given a $C^*$-algebra $A$, a matrix $u$ in $M_n(A)$ we define the \textbf{adjoint} of $u$ as the matrix $u^*$ given by the transposition $u^\tau$ of $u$ and applying the involution of $A$ in each component of $u^\tau$.
		\\We have that the matrix $u$ is \textbf{unitary} if it is invertible and 
		\begin{equation}
			u^{-1} = u^*.
		\end{equation}
	\end{defn}
	\begin{defn}
		Let $A$ be an unital $C^*$-algebra. Then \textbf{$K_1^u(A)$} is the group whose generators are $[u]$ where $u$ is a unitary matrix of $M_n(A)$for some $n \in \numberset{N}$ and
		\begin{itemize}
			\item if there is a continuous path $\alpha:[0,1] \longrightarrow M_n(A)$ such that $\alpha(0) = u$, $\alpha(1) = v$ and for all $t$ in $[0,1]$, $\alpha(t)$ is a unitary matrix then $[u]=[v]$.
			\item the identity matrices are all in the class which is the identity element of $K_1^u(A)$.
			\item if $[u] \in M_n(A)$ and $[v]\in M_m(A)$ then  $[u] + [v] = [u\oplus v]$ where $[u\oplus v]$ is the matrix in $M_{n+m}(A)$ given by
			\begin{equation}
				{\begin{pmatrix}
						u & 0 \\ 0 & v
				\end{pmatrix}}
			\end{equation}
		\end{itemize}
	\end{defn}
	\begin{rem}
		As the authors say  on pg.107 in \cite{Anal}, we have that also the $K_1$ groups of non unital $C^*$-algebras can be descripted by unitaries. In particular one have to consider the unitary matrixes over $\tilde{A}$ which are equal to the identity matrix modulo $A$.
	\end{rem}
	\begin{rem}
		If $\phi: A \longrightarrow B$ is a $*$-homomorphism, then one can define for all $n \in \numberset{N}$ the map between the space of unitary $n \times n$ matrices
		\begin{equation}
			\begin{split}
				\phi^u: U(A)_n &\longrightarrow U(B)_n \\
				v = (v_{i,j}) &\longrightarrow (\phi(v_{i,j}))
			\end{split}
		\end{equation}
		where $(\phi(v_{i,j}))$ is defined applying $\phi$ to each entry of $v$.
		\\This maps induce a morphism
		\begin{equation}
			\phi^u_\star: K_1^u(A) \longrightarrow K_1^u(B)
		\end{equation}
		defined on the generators $[v]$ of $K_1^u(A)$ as $[u_\phi(v)]$. Obviously the functorial properties
		\begin{equation}
			\phi^u_\star \circ \sigma^u_\star = (\phi \circ \sigma)^u_\star
		\end{equation}
		and
		\begin{equation}
			id^u_{A,\star} = id_{K_0^u(A)}
		\end{equation}
		hold.
	\end{rem}	
	\begin{prop}
		Let $A$ be an unitary $C^*$-algebra. The groups $K_1(A)$ and $K_1^u(A)$ are isomorphic.
	\end{prop}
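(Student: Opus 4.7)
The plan is to construct mutually inverse group homomorphisms $\Phi: K_1^u(A) \to K_1(A) = K_0(S(A))$ and $\Psi: K_0(S(A)) \to K_1^u(A)$. For a unital $C^*$-algebra $A$, both groups can be described in terms of the stabilized algebra $M_\infty(A) = \bigcup_n M_n(A)$: $K_1^u(A)$ as homotopy classes of unitaries, and $K_0(S(A))$ as classes of projection-valued loops (with constant endpoint projection) in $M_\infty(\widetilde{S(A)})$, modulo the $K_0(\numberset{C})$-kernel condition.

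First, I would define $\Phi$ via the classical rotation (Whitehead) trick. Given a unitary $u \in M_n(A)$, I would form the path in $M_{2n}(A)$
\begin{equation}
R_u(t) = \begin{pmatrix} u & 0 \\ 0 & 1 \end{pmatrix} \begin{pmatrix} \cos(\pi t/2) & -\sin(\pi t/2) \\ \sin(\pi t/2) & \cos(\pi t/2) \end{pmatrix} \begin{pmatrix} u^* & 0 \\ 0 & 1 \end{pmatrix} \begin{pmatrix} \cos(\pi t/2) & \sin(\pi t/2) \\ -\sin(\pi t/2) & \cos(\pi t/2) \end{pmatrix},
\end{equation}
which connects $I_{2n}$ at $t=0$ to $\mathrm{diag}(u,u^*)$ at $t=1$ through unitaries. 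Conjugating the constant projection $p_0 = \mathrm{diag}(I_n,0)$ by $R_u(t)$ and concatenating with the reversed path from $t=1$ to $t=2$ (reparametrized to $[0,1]$) produces a projection loop $p_u(t)$ in $M_{2n}(\widetilde{S(A)})$ returning to $p_0$. I would set $\Phi([u]) := [p_u] - [p_0]$ in $K_0(S(A))$, check independence of the representative by continuity of the rotation trick under homotopy of $u$, and verify additivity via the block-sum identification $[u \oplus v] \mapsto [p_u \oplus p_v]$.

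Next, I would construct $\Psi$. Given a projection-valued loop $p: [0,1] \to M_n(\widetilde{S(A)})$ with $p(0)=p(1)=p_0$, I would lift $p$ to a path of unitaries: using the standard fact that a continuous family of projections in a unital $C^*$-algebra admits a continuous family of implementing unitaries (by exponentiating $[\dot p, p]$ or by the Kato selection argument), I obtain a unitary path $w(t) \in M_n(\widetilde{S(A)})$ with $w(t) p_0 w(t)^* = p(t)$ and $w(0) = 1$. Then $w(1)$ commutes with $p_0$, hence decomposes as $\mathrm{diag}(u_p, v_p)$ with $u_p \in U_k(A)$ (where $k = \mathrm{rank}(p_0)$), and I would define $\Psi([p]-[p_0]) := [u_p] \in K_1^u(A)$.

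The hard part will be verifying that $\Phi$ and $\Psi$ are mutually inverse. For $\Psi \circ \Phi = \mathrm{id}$, the rotation path explicitly implements the conjugation of $p_0$, so the lifted unitary is, up to homotopy, $R_u(1) = \mathrm{diag}(u,u^*)$, whose $K_1^u$-class equals $[u]$ since $[u^*] = -[u]$ vanishes when summed (a standard computation in $K_1^u$). For $\Phi \circ \Psi = \mathrm{id}$, I would use a two-parameter homotopy interpolating between the given loop $p(t)$ and the rotation-constructed loop $p_{u_p}(t)$, using contractibility of $U_n$ in a suitable sense after sufficient stabilization. The main technical obstacles are (i) the stabilization bookkeeping needed to match matrix sizes in $M_\infty(A)$ and $M_\infty(\widetilde{S(A)})$, and (ii) carefully handling the basepoint normalization so that the resulting loops indeed begin and end at the same constant projection, so that everything takes place in $K_0(S(A))$ rather than merely $K_0(\widetilde{S(A)})$.
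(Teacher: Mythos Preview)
Your approach is essentially the one the paper describes (citing Higson--Roe, \emph{Analytic K-homology}, Proposition 4.8.2): your map $\Psi$ is exactly the isomorphism $\mathcal{A}_A$ the paper records, which lifts a projection loop to a path of unitaries and reads off the diagonal block commuting with the base projection.

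There is, however, a genuine error in your construction of $\Phi$. You say that after conjugating $p_0 = \mathrm{diag}(I_n,0)$ by $R_u(t)$ you must ``concatenate with the reversed path'' to obtain a loop. But $R_u(1) = \mathrm{diag}(u,u^*)$ already commutes with $p_0$, so
\[
R_u(1)\, p_0\, R_u(1)^* \;=\; \mathrm{diag}(uu^*,0) \;=\; p_0,
\]
and $t \mapsto R_u(t)\, p_0\, R_u(t)^*$ is already a loop based at $p_0$. Concatenating it with its own time-reversal produces a ``there and back'' loop of projections, which is null-homotopic; your $\Phi$ as written would then be identically zero. The fix is simply to drop the concatenation and set $\Phi([u]) = [\,t \mapsto R_u(t) p_0 R_u(t)^*\,] - [p_0]$.

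Your verification of $\Psi \circ \Phi = \mathrm{id}$ is also muddled. It is \emph{not} the case that the $K_1^u$-class of $\mathrm{diag}(u,u^*)$ equals $[u]$; in fact $[\mathrm{diag}(u,u^*)] = [u] + [u^*] = 0$. The correct reasoning is that $\Psi$ extracts only the top-left block of the endpoint unitary $w(1)$ (the block acting on the range of $p_0$), and for $w(t) = R_u(t)$ that block is $u$ itself, so $\Psi(\Phi([u])) = [u]$ directly. No cancellation argument involving $[u^*]$ is needed or correct.
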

	One can find the proof of this fact in \cite{Anal}. The isomorphism, in particular, is in Proposition 4.8.2. In this Proposition they consider, without loss of generality, as generators of $K_0(S(A))$ the classes $[p]$ where $p$ is a loop of projections in $M_{2m}(A)$ based in $1_m \oplus 0_m$. Then they prove that any such loop $p(t)$ can be written as
	\begin{equation}
		p(t) = u(t)p(0)u^*(t)
	\end{equation}
	where $u: [0,1] \longrightarrow M_n(A)$ is, for all $t \in [0,1]$ unitary. 
	Since $u(0)$ commute with $p(0) = 1_m \oplus 0_m$, then $u(0)$ has the form
	\begin{equation}
		u(0) = \begin{bmatrix} v & 0 \\ 0 & w \end{bmatrix}.
	\end{equation}
	At this point they define the map $\mathcal{A}_A: K_0(S(A)) \longrightarrow K_0^u(A)$ posing
	\begin{equation}
		\mathcal{A}_A[p(t)] = [v].
	\end{equation}
	\begin{lem}\label{kkk}
		Consider a $*$-homomorphism $\phi: A \longrightarrow B$. Then we have that
		\begin{equation}
			\phi^u_\star = \mathcal{A}_B \circ (S\phi)_{\star} \circ \mathcal{A}_A^{-1}. 
		\end{equation}	
	\end{lem}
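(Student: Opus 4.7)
The plan is to exploit naturality in the construction of $\mathcal{A}_A$ given in Proposition 4.8.2 of \cite{Anal}: essentially every step of that construction (taking adjoints, multiplying matrices, block-diagonal decomposition) commutes with the entrywise application of a $*$-homomorphism, so the identity $\phi^u_\star = \mathcal{A}_B \circ (S\phi)_\star \circ \mathcal{A}_A^{-1}$ should drop out by direct computation on representatives.

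More precisely, I start from a generator $[p(t)] \in K_0(S(A))$ represented by a loop $p: [0,1] \longrightarrow M_{2m}(A)$ of projections based at $p(0) = 1_m \oplus 0_m$. By Proposition 4.8.2 of \cite{Anal} I may write $p(t) = u(t)\, p(0)\, u^*(t)$ for a continuous path of unitaries $u(t) \in M_{2m}(A)$, and because $u(0)$ commutes with $p(0)$ it splits as
\begin{equation*}
u(0) = \begin{bmatrix} v & 0 \\ 0 & w \end{bmatrix},
\end{equation*}
so by definition $\mathcal{A}_A[p(t)] = [v]$. Applying $\phi^u_\star$ yields $\phi^u_\star\, \mathcal{A}_A[p(t)] = [\phi^u(v)]$.

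Next I compute the right-hand side. By definition $(S\phi)_\star[p(t)] = [\phi^u(p(t))]$, where $\phi^u$ denotes the entrywise extension of $\phi$ to $M_{2m}$. Since $\phi$ is a $*$-homomorphism, $\phi^u$ preserves products and adjoints, so
\begin{equation*}
\phi^u(p(t)) = \phi^u(u(t))\, \phi^u(p(0))\, \phi^u(u(t))^*,
\end{equation*}
and $\phi^u(u(t))$ is a continuous path of unitaries in $M_{2m}(B)$. Moreover $\phi^u(p(0)) = 1_m \oplus 0_m$ (this is where one invokes unitality of $\phi$ after passing to unitalisations, using the short exact sequence \eqref{qui}), so $\phi^u(p(t))$ is already in the normal form to which $\mathcal{A}_B$ applies. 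The base value is
\begin{equation*}
\phi^u(u(0)) = \begin{bmatrix} \phi^u(v) & 0 \\ 0 & \phi^u(w) \end{bmatrix},
\end{equation*}
and therefore $\mathcal{A}_B \circ (S\phi)_\star [p(t)] = [\phi^u(v)]$, matching the left-hand side.

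I do not expect any substantial obstacle here: the whole content of the lemma is naturality of a construction that is built entirely from $*$-algebraic operations. The only subtle point is the base-point issue for non-unital $A$ and $B$; this is handled by working throughout with unitalisations and checking that the loop of projections lies in the appropriate subgroup (projections equal to $1_m \oplus 0_m$ modulo $A$, resp.\ $B$), which is automatic because $\phi$ sends $A$ into $B$ and $\tilde{A}$ into $\tilde{B}$ preserving the distinguished projection $1_m \oplus 0_m$.
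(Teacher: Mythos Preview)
Your proof is correct and follows essentially the same approach as the paper: both verify the identity by direct computation on representatives, exploiting that the construction of $\mathcal{A}$ is built from $*$-algebraic operations that commute with entrywise application of $\phi$. The only cosmetic difference is direction: the paper starts from a unitary $[v]\in K_1^u(A)$, writes down an explicit loop representing $\mathcal{A}_A^{-1}[v]$, pushes it forward by $S\phi$, and reads off $\mathcal{A}_B$; you instead start from a loop $[p(t)]\in K_0(S(A))$, apply $\mathcal{A}_A$ and $(S\phi)_\star$ separately, and compare. Your version is arguably cleaner since it uses the general description of $\mathcal{A}_A$ from Proposition~4.8.2 of \cite{Anal} directly rather than an explicit inverse.
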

	\begin{proof}
		It is enough to observe that
		\begin{equation}
			\phi^u_\star[v] = [\phi^u(v)] = [(\phi(v_{i,j})_{i,j})]
		\end{equation}
		and 
		\begin{equation}
			\mathcal{A}_B \circ (S\phi)_{\star} \circ \mathcal{A}_A^{-1} [v] = \mathcal{A}_B \circ (S\phi)_{\star} [V]
		\end{equation}
		where $V$ is the constant curve based on the matrix
		\begin{equation}
			\begin{bmatrix} vv^* & 0 \\ 0 & 0 \end{bmatrix} = \begin{bmatrix} v & 0 \\ 0 & 0 \end{bmatrix}\begin{bmatrix} 1 & 0 \\ 0 & 0 \end{bmatrix}\begin{bmatrix} v^* & 0 \\ 0 & 0 \end{bmatrix}.
		\end{equation}
		Then we can observe that 
		\begin{equation}
			(S\phi)_{\star} [V] = [\phi \circ V] = \begin{bmatrix} \phi(v)\phi(v)^* & 0 \\ 0 & 0 \end{bmatrix} = \begin{bmatrix} \phi(v) & 0 \\ 0 & 0 \end{bmatrix}\begin{bmatrix} 1 & 0 \\ 0 & 0 \end{bmatrix}\begin{bmatrix} \phi(v)^* & 0 \\ 0 & 0 \end{bmatrix}
		\end{equation}
		and so we can conclude
		\begin{equation}
			\mathcal{A}_B \circ (S\phi)_{\star} \circ \mathcal{A}_A^{-1} [v] = \mathcal{A}_B \circ (S\phi)_{\star}[V] = [(\phi(v_{i,j})_{i,j})] = \phi^u_\star[v].
		\end{equation} 
	\end{proof}
	Similarly to the singular homology case, there is a connecting homomorphism $\delta:K_{n}(\frac{A}{B}) \longrightarrow K_{n-1}(J)$ which given a long exact sequence associated to the short exact sequence (\ref{k-short}).
	\\To define the connecting homomorphism we have to introduce a particular $C^*$-algebra.
	\begin{defn} 
		Let $A$ be a $C^*$-algebra. Then the \textbf{mapping cone of the projection $\pi: A \longrightarrow \frac{A}{J}$} is the set
		\begin{equation}
			C(A, A/J) := \{(a,f)| a \in A, f:[0,1] \longrightarrow \frac{A}{J} \mbox{     is continuous,      } f(0)= 0, f(1) = \pi(a) \}
		\end{equation}
		with the $C^*$-algebra structure induced by $A$.
	\end{defn}
	\begin{rem}
		The Proposition 4.5.3. of \cite{Anal} states
		\begin{equation}
			K_0(J) \cong K_0(C(A, A/J)).
		\end{equation}
		The isomorphism between the $K_0$-groups is induced by the $*$-homomorphisms
		\begin{equation}
			\begin{split}
				\psi_{J}: J &\longrightarrow C(A, \frac{A}{J})\\
				b &\longrightarrow (b, 0).
			\end{split}
		\end{equation}	
		Consider now another $C^*$-algebra $B$, an ideal $J$ of $B$ and a $*$-homomorphism
		\begin{equation}
			\phi: B \longrightarrow A
		\end{equation}
		such that $\phi(I) \subseteq J$ and denote by $\Phi: \frac{B}{I} \longrightarrow \frac{A}{J}$ the $*$-homomorphism induced by $\phi$.
		\\We can define the map
		\begin{equation}
			\begin{split}
				\mathcal{I}_{\phi} :C(B, \frac{B}{I}) &\longrightarrow C(A, \frac{A}{J}) \\
				(a, \gamma) &\longrightarrow (\phi(a), \Phi \circ \gamma).
			\end{split}
		\end{equation}
		Then it's easy to see that
		\begin{equation}
			\psi_{J} \circ \phi = \mathcal{I}_{\Phi} \circ \psi_{I}.
		\end{equation}
		This means that in K-Theory, considering $\phi_\star: K_0(I) \longrightarrow K_0(J)$, we have that
		\begin{equation}
			\phi_{\star} = \psi_{J, \star}^{-1} \circ \mathcal{I}_{\Phi, \star} \circ \psi_{I, \star}. \label{tre}
		\end{equation}
	\end{rem}
	\begin{rem}
		Let us define the $*$-homomorphism $\Delta$ given by
		\begin{equation}
			\begin{split}
				\Delta: S(\frac{A}{J}) &\longrightarrow C(A, A/J) \\
				f &\longrightarrow (0,f).
			\end{split}
		\end{equation}
		Then we can define the transgression map $\delta$ as the map
		\begin{equation}
			\delta := \Delta_\star: K_1(\frac{A}{J}) =  K_0(S(\frac{A}{J})) \longrightarrow  K_0(C(A, A/J)) \cong K_0(J).
		\end{equation}
		Moreover, using that $\frac{S(A)}{S(J)} \cong S(\frac{A}{J})$, finally we define $\delta$ as 
		\begin{equation}
			\delta := \Delta_\star: K_{n+1}(\frac{A}{J}) \longrightarrow K_n(J).
		\end{equation}
		Now, following Proposition 4.5.9 of \cite{Anal}, one can prove that
		\begin{equation}
			... \rightarrow K_n(A) \rightarrow K_{n}(\frac{A}{J}) \rightarrow K_{n-1}(J) \rightarrow K_{n-1}(A) \rightarrow ... \rightarrow K_0(\frac{A}{J})
		\end{equation}
		is a long exact sequence. Moreover we also have the following classical result
	\end{rem}
	\begin{prop}[Bott periodicity]
		Given a $C^*$-algebra $A$, then $K_{0}(A) \cong K_2(A)$.
	\end{prop}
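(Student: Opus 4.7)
The plan is to prove Bott periodicity by constructing an explicit isomorphism, the \emph{Bott map} $\beta_A : K_0(A) \longrightarrow K_2(A)$, and verifying that it is a natural isomorphism in $A$. Since Lemma \ref{kkk} together with the identification $K_n(A) = K_0(S^n A)$ lets us view $K_2(A) \cong K_1(SA)$ in terms of unitary equivalence classes via $\mathcal{A}_{SA}$, it is convenient to work throughout with the unitary model for $K_1$.

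First, given a projection $p \in M_n(A)$, define the Bott loop
\begin{equation}
u_p(t) := e^{2\pi i t}p + (1_n - p), \qquad t \in [0,1],
\end{equation}
which is a unitary in $M_n(\widetilde{SA})$ agreeing with $1_n$ at $t = 0, 1$. Setting $\beta_A([p]) := [u_p] \in K_1^u(SA) \cong K_2(A)$ yields a well-defined homomorphism that is natural in $A$: compatibility with direct sums and with continuous deformation of projections is a routine check, and naturality is immediate from the formula.

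To show that $\beta_A$ is an isomorphism, I would use the Toeplitz extension
\begin{equation}
0 \longrightarrow \mathcal{K} \otimes A \longrightarrow \mathcal{T} \otimes A \longrightarrow C(S^1) \otimes A \longrightarrow 0,
\end{equation}
where $\mathcal{T}$ is the Toeplitz algebra and $\mathcal{K}$ the compact operators on a separable infinite-dimensional Hilbert space. Stability gives $K_*(\mathcal{K} \otimes A) \cong K_*(A)$, and the split inclusion $A \hookrightarrow C(S^1) \otimes A$ by constant functions together with evaluation at a basepoint yields a natural splitting $K_*(C(S^1) \otimes A) \cong K_*(A) \oplus K_{*-1}(A)$. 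The connecting map $\delta$ of this short exact sequence, restricted to the $K_{*-1}(A)$ summand, provides a candidate inverse for $\beta_A$; verifying $\delta \circ \beta_A = \mathrm{id}_{K_0(A)}$ reduces to computing the Toeplitz index of $u_p$ (which equals the rank of $p$ up to a sign), and the reverse composition is handled by naturality together with the six-term exact sequence.

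The main obstacle is the analytic input: one needs $K_0(\mathcal{T}) \cong \numberset{Z}$ and $K_1(\mathcal{T}) = 0$, which is not a formal consequence of the K-theory axioms collected above and ultimately rests on the Fredholm theory of Toeplitz operators. A more elegant alternative is Cuntz's proof via universal properties of the Toeplitz algebra, which bypasses the explicit index computation by embedding $\beta_A$ into a diagram of split exact sequences and using half-exactness of $K_0$; either route, however, requires a substantive analytic input beyond the purely formal properties of $K_*$ listed in this appendix, which is why Bott periodicity is stated here without proof.
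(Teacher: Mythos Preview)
The paper does not prove Bott periodicity at all: the proposition is stated without proof as a classical result, and immediately afterwards the Bott map is simply described (on page~110 of \cite{Anal}) by the formula $\beta([p]) = [p\overline{z} + 1 - p]$, which is exactly your $u_p$ up to the choice of orientation on the circle. Your sketch via the Toeplitz extension is one of the standard proofs (the one given in \cite{Anal}, Chapter~4, in fact), and your closing observation---that the argument requires genuine analytic input beyond the formal properties of $K_*$ assembled in this appendix, and that this is why the result is merely quoted---is precisely correct.
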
 
	This means that the long exact sequence above can be seen as the sequence
	\begin{equation}
		\begin{matrix}
			K_0(A) && \longrightarrow K_0(B)  \longrightarrow  &&K_0(C) \\
			\uparrow &&                                      && \downarrow\\
			K_1(C)&&  \longleftarrow  K_1(B)  \longleftarrow && K_1(A)
		\end{matrix}
	\end{equation}
	where the vertical arrows are the connecting homomorphisms.
	\begin{rem}
		Starting by now, given a $C^*$-algebra $A$, we will consider $K_1(A) = K_1^u(A)$ and not as $K_0(S(\frac{A}{J}))$. This means that the transgression map $\delta: K_1(\frac{A}{J}) \longrightarrow K_0(J)$, for us, will be
		\begin{equation}
			\delta = \psi_{J,\star}^{-1} \circ \Delta_{\star} \circ \mathcal{A}^{-1}_{\frac{A}{J},\star} \label{delta1},
		\end{equation}
		where $\mathcal{A}_{\frac{A}{J},\star}$ is the isomorphism between $K_0(S(\frac{A}{J})) \longrightarrow K_0^u(\frac{A}{J})$  and $\psi_\star$ is the isomorphism $\psi_{J,\star}: K_0(J) \longrightarrow K_0(C(A, A/J))$.
		\\
		\\Moreover, following pg. 110 of \cite{Anal}, the map which gives Bott periodicity
		\begin{equation}
			\beta: K_0(\frac{A}{J}) \longrightarrow K_0^u(S(\frac{A}{J})) = K_1(S(\frac{A}{J}))
		\end{equation}
		may be defined sending a projection $p \in M_n(A)$ in the unitary loop given by
		\begin{equation}
			\beta([p]) := [p\overline{z} + 1 - p],
		\end{equation}
		where $\overline{z} : S^1 \longrightarrow \numberset{C}$ is the identity function on the unit circle.
		\\This means that, for us, the transgression map $\delta: K_0(\frac{A}{J}) \longrightarrow K_1(J)$ will be
		\begin{equation}
			\delta = \mathcal{A}_{J,\star} \circ \psi_{S(A),S(J),\star}^{-1} \circ \Delta_{S(A), S(J), \star} \circ \mathcal{A}_{S(\frac{A}{J}),\star}^{-1} \beta \label{delta2}
		\end{equation}
		Indeed we have that
		\begin{equation}
			\mathcal{A}_{S(\frac{A}{J}),\star}^{-1} : K_1^u(S(\frac{A}{J})) = K_1(S(\frac{A}{J})) \longrightarrow K_0(SS\frac{A}{J}),
		\end{equation} 
		\begin{equation}
			\Delta_{S(A), S(J), \star}: K_0(SS\frac{A}{J}) = K_0(S(\frac{SA}{SJ}))\longrightarrow  K_0(C(S(A), S(A)/S(J)))
		\end{equation}
		and
		\begin{equation}
			\psi_{S(A), S(J), \star}: K_0(C(S(A), S(A)/S(J))) \longrightarrow K_0(S(J)).
		\end{equation}
		Then applying $\mathcal{A}_{J,\star}$ we obtain a class in $K_1(J) = K_0^u(J)$.
		\\Finally if $i: B \longrightarrow A$ is such that $i(I) \subseteq J$ and there is a map $\iota: \frac{B}{I} \longrightarrow \frac{A}{J}$ induced by $i$ which is injective, then we have
		\begin{equation}
			\beta_A \circ \iota_\star = \iota^u_\star \circ \beta_B \label{quattro}.
		\end{equation}
	\end{rem}
	\begin{lem}\label{K-comm}
		Consider two $C^*$-algebras $A$ and $B$ and let $J$ be an ideal of $A$ and $I$ be an ideal of $B$. Then consider a $*$-homomorphism $i: B \longrightarrow A$ such that $i(I) \subseteq J$. Then, if $\iota: \frac{B}{I} \longrightarrow \frac{A}{J}$ is the map induced by $i$ and we denote by $\delta_A$ and $\delta_B$ the transgression maps, we have that
		\begin{equation}
			\delta_A \circ \iota^u_\star = i_\star \circ \delta_B, \label{equno}
		\end{equation}
		if we consider $\delta_A: K_1(\frac{A}{J}) \longrightarrow K_0(J)$ and
		\begin{equation}
			\delta_A \circ \iota_\star = i^u_\star \circ \delta_B \label{eqdue}
		\end{equation}
		if we consider $\delta_A: K_0(\frac{A}{J}) \longrightarrow K_1(J)$.
	\end{lem}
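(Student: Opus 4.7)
The plan is to carry out a direct bookkeeping argument using the explicit formulas (\ref{delta1}) and (\ref{delta2}) for the transgression, together with the naturality Lemma \ref{kkk} and the identity (\ref{tre}). No deep input is needed beyond what is already in the appendix: the statement is really a manifestation of the naturality of the K-theory long exact sequence with respect to a morphism of short exact sequences, but since the paper works with the concrete description of $\delta$ via the mapping cone construction, the proof amounts to checking that each of the auxiliary ingredients ($\Delta$, $\psi$, $\mathcal{I}$, $\mathcal{A}$, $\beta$) behaves naturally with respect to $i$ and $\iota$.

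For the first equation, I would expand
\begin{equation*}
\delta_A \circ \iota^u_\star = \psi_{J,\star}^{-1} \circ \Delta_{A,J,\star} \circ \mathcal{A}_{A/J,\star}^{-1} \circ \iota^u_\star.
\end{equation*}
Lemma \ref{kkk} applied to $\iota$ gives $\mathcal{A}_{A/J,\star}^{-1} \circ \iota^u_\star = (S\iota)_\star \circ \mathcal{A}_{B/I,\star}^{-1}$. Next I would check the tautological identity $\Delta_{A,J} \circ S\iota = \mathcal{I}_i \circ \Delta_{B,I}$: for $f \in S(B/I)$ both sides send $f$ to $(0, \iota\circ f) \in C(A, A/J)$. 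Then (\ref{tre}) applied to the pair $(i,I)$ rewrites $\psi_{J,\star}^{-1}\circ \mathcal{I}_{i,\star} = i_\star\circ \psi_{I,\star}^{-1}$, and assembling the composition produces exactly $i_\star \circ \delta_B$.

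For the second equation the strategy is parallel but one extra layer deep. Starting from (\ref{delta2}),
\begin{equation*}
\delta_A \circ \iota_\star = \mathcal{A}_{J,\star} \circ \psi_{S(A),S(J),\star}^{-1} \circ \Delta_{S(A),S(J),\star} \circ \mathcal{A}_{S(A/J),\star}^{-1} \circ \beta_A \circ \iota_\star,
\end{equation*}
I would first use Bott naturality (\ref{quattro}) to move $\iota_\star$ past $\beta_A$, turning it into $(S\iota)^u_\star \circ \beta_B$. A second application of Lemma \ref{kkk}, this time to $S\iota$, replaces $\mathcal{A}_{S(A/J),\star}^{-1} \circ (S\iota)^u_\star$ by $(SS\iota)_\star \circ \mathcal{A}_{S(B/I),\star}^{-1}$. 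The same tautology as before—now applied to the suspended short exact sequence $0\to SI\to SB\to S(B/I)\to 0$—gives $\Delta_{S(A),S(J)}\circ SS\iota = \mathcal{I}_{Si}\circ \Delta_{S(B),S(I)}$, and (\ref{tre}) lets me rewrite $\psi_{S(A),S(J),\star}^{-1}\circ \mathcal{I}_{Si,\star}$ as $(Si|_{SI})_\star \circ \psi_{S(B),S(I),\star}^{-1}$. A final use of Lemma \ref{kkk}, applied to $i|_I$, yields $\mathcal{A}_{J,\star}\circ (Si|_{SI})_\star = i^u_\star \circ \mathcal{A}_{I,\star}$, and recombining the factors gives $i^u_\star\circ \delta_B$.

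There is no real obstacle here beyond the combinatorial care required to track the five auxiliary maps and their compatibility; the main thing is to make sure the domain/codomain of each arrow matches and to verify the two tautologies $\Delta\circ S\iota = \mathcal{I}_i\circ \Delta$ (in the unsuspended and suspended versions). Once these are in place, the two equations fall out by substitution in a single chain of equalities.
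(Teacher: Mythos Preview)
Your proposal is correct and follows essentially the same route as the paper's proof: both expand the transgression via (\ref{delta1}) and (\ref{delta2}), push $\iota$ through using Lemma \ref{kkk} and Bott naturality (\ref{quattro}), rewrite the $\psi$-side via (\ref{tre}), and reduce everything to the tautological verification $\Delta \circ S\iota = \mathcal{I}_i \circ \Delta$ (and its suspended analogue). The only cosmetic difference is that you write $(SS\iota)_\star$ where the paper writes $S(\iota)_\star$ for the same map on $K_0$ of the double suspension.
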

	\begin{proof}
		First we will prove (\ref{equno}). Recall that because of (\ref{delta1}), that
		\begin{equation}
			\delta_A = \psi_{J,\star}^{-1} \circ \Delta_{A, \star} \circ \mathcal{A}^{-1}_{\frac{A}{J},\star}.
		\end{equation}
		Moreover we can observe that, because of Lemma \ref{kkk}, we have that
		\begin{equation}
			\iota^u_\star = \mathcal{A}_{\frac{A}{J},\star} \circ (S\iota)_{\star} \circ \mathcal{A}^{-1}_{\frac{B}{I},\star}.
		\end{equation}
		Finally, applying the equality (\ref{tre}) we obtain
		\begin{equation}
			i_{\star} = \psi^{-1}_{J, \star} \circ \mathcal{I}_{i, \star} \circ \psi_{I, \star}.
		\end{equation}
		Then we have that
		\begin{equation}
			\begin{split}
				\delta_A \circ \iota^u_\star &= \psi_{J,\star}^{-1} \circ \Delta_{A, \star} \circ \mathcal{A}^{-1}_{\frac{A}{J},\star} \circ \mathcal{A}_{\frac{A}{J},\star} \circ (S\iota)_{\star} \circ \mathcal{A}^{-1}_{\frac{B}{I},\star}\\
				&= \psi_{J,\star}^{-1} \circ \Delta_{A, \star} \circ (S\iota)_{\star} \circ \mathcal{A}^{-1}_{\frac{B}{I},\star}
			\end{split}
		\end{equation}
		and
		\begin{equation}
			\begin{split}
				i_\star \circ \delta_B &= \psi^{-1}_{J, \star} \circ \mathcal{I}_{i, \star} \circ \psi_{I, \star} \circ \psi_{I,\star}^{-1} \circ \Delta_{B, \star} \circ \mathcal{A}^{-1}_{\frac{B}{I},\star} \\
				&= \psi^{-1}_{J, \star} \circ \mathcal{I}_{i, \star} \circ \Delta_{B, \star} \circ \mathcal{A}^{-1}_{\frac{B}{I},\star} 
			\end{split}
		\end{equation}
		Then we have that
		\begin{equation}
			\begin{split}
				&\delta_{A} \circ \iota^u_\star = i_\star \circ \delta_B \iff \\
				&\psi_{J,\star}^{-1} \circ \Delta_{A, \star} \circ (S\iota)_{\star} \circ \mathcal{A}^{-1}_{\frac{B}{I},\star} = \psi^{-1}_{J, \star} \circ \mathcal{I}_{i, \star} \circ \Delta_{B, \star} \circ \mathcal{A}^{-1}_{\frac{B}{I},\star}  \iff \\
				&\Delta_{A, \star} \circ (S\iota)_{\star} = \mathcal{I}_{i, \star} \circ \Delta_{B, \star}.\label{fin}
			\end{split}
		\end{equation}
		Consider $\alpha$ in $S(\frac{B}{I})$: we have that
		\begin{equation}
			\Delta_{A, \star} \circ (S\iota)_{\star} (\alpha) = (0, \iota \circ \alpha) = \mathcal{I}_{i, \star} \circ \Delta_{B, \star} \alpha.
		\end{equation}
		This means that all the identities in (\ref{fin}) hold and we proved (\ref{equno}).
		\\
		\\
		\\Let us prove (\ref{eqdue}). Since (\ref{delta2}) we have that
		\begin{equation}
			\delta_A = \mathcal{A}_{J,\star} \circ \psi_{S(A),S(J),\star}^{-1} \circ \Delta_{A,\star} \circ \mathcal{A}_{S(\frac{A}{J}),\star}^{-1} \circ  \beta_A ,
		\end{equation}
		where $\beta$ is the Bott isomorphism. We know, since (\ref{quattro}), that
		\begin{equation}
			\beta_A \circ \iota_\star = \iota^u_\star \circ \beta_B.
		\end{equation}
		This means that
		\begin{equation}
			\begin{split}
				\delta_A \circ \iota_\star &= \mathcal{A}_{J,\star} \circ \psi_{S(A),S(J),\star}^{-1} \circ \Delta_{A,\star} \circ \mathcal{A}_{S(\frac{A}{J}),\star}^{-1} \circ \beta_A \circ \iota_\star \\
				&= \mathcal{A}_{J,\star} \circ \psi_{S(A),S(J),\star}^{-1} \circ \Delta_{A,\star} \circ \mathcal{A}_{S(\frac{A}{J}),\star}^{-1} \circ  \iota^u_\star \circ \beta_B \\
				&= \mathcal{A}_{J,\star} \circ \psi_{S(A),S(J),\star}^{-1} \circ \Delta_{A,\star} \circ S(\iota)_\star \circ  \mathcal{A}_{S(\frac{B}{I}),\star}^{-1} \circ \beta_B.
			\end{split}
		\end{equation}
		and 
		\begin{equation}
			\begin{split}
				i^u_\star \circ \delta_B &= \mathcal{A}_{J, \star} \circ (Si)_\star \circ \mathcal{A}_{I,\star}^{-1} \circ \mathcal{A}_{I,\star} \circ \psi_{S(B),S(I),\star}^{-1} \circ \Delta_{B,\star} \circ \mathcal{A}_{S(\frac{B}{I}),\star}^{-1} \circ \beta_B\\
				&= \mathcal{A}_{J, \star} \circ S(i)_\star \circ \psi_{S(B),S(I),\star}^{-1} \circ \Delta_{B,\star} \circ \mathcal{A}_{S(\frac{B}{I}),\star}^{-1} \circ \beta_B.
			\end{split}
		\end{equation}
		We can observe that $S(i)(S(I)) \subseteq S(J)$. This means that we can apply the Remark 6 and we have that
		\begin{equation}
			S(i)_\star = \psi^{-1}_{S(A),S(J), \star} \circ \mathcal{I}_{S(i), \star} \circ \psi_{S(B), S(I), \star}.
		\end{equation}
		This means that
		\begin{equation}
			\begin{split}
				i^u_\star \circ \delta_B &= \mathcal{A}_{J, \star} \circ \psi^{-1}_{S(A),S(J), \star} \circ \mathcal{I}_{S(i), \star} \circ \psi_{S(B), S(I), \star} \circ \psi_{S(B),S(I),\star}^{-1} \circ \Delta_{B,\star} \circ \beta_B \\
				&= \mathcal{A}_{J, \star} \circ \psi^{-1}_{S(A),S(J), \star} \circ \mathcal{I}_{S(i), \star} \circ \Delta_{B,\star} \circ \mathcal{A}_{S(\frac{B}{I}),\star}^{-1} \circ \beta_B.
			\end{split}
		\end{equation}
		Then we have that
		\begin{equation}
			\begin{split}
				&\delta_A \circ \iota_\star = i^u_\star \circ \delta_B \iff \\
				&\Delta_{A,\star} \circ S(i)_\star = \mathcal{I}_{S(i), \star} \circ \Delta_{B,\star}.
			\end{split}
		\end{equation}
		Using then a simple computation, as we did proving (\ref{equno}), one can check that the last equality holds. So we proved (\ref{eqdue}).
	\end{proof}
	\chapter{Smoothing operators}\label{smoothing}
	In this section we will focus on integral operators. In particular, given two complete Riemannian manifolds $(M,g)$ and $(N,h)$ and give an integral operator $A: \mathcal{L}^2(N) \longrightarrow \mathcal{L}^2(M)$ we introduce conditions on its kernel that make $A$ a bounded operator. Moreover we will study the kernel of the operator $dA$ and $Ad$, where $d$ is the closed extension of the exterior derivative. We have used this tools in section \ref{wai} to prove that $T_f$, $dT_f$ and $T_f y$ and other combinations of these operators are in $C_f^*(M \sqcup N)^\Gamma$.
	\\
	\\In this section we will denote by
	\begin{equation}
		\int_N f(x) d\mu_X
	\end{equation}
	the Bochner integral of a measurable function $f:(X, \mu_X) \longrightarrow B$, where $(X, \mu_X)$ is a measured space and $B$ is a Banach space, and we will denote by $a_\star$ the operator of integration along the fibers of a submersion $a$.
	\\
	\\
	\begin{defn}
		Consider two complete Riemannian manifolds $(M,g)$ and $(N,h)$. Let us denote by $pr_N$ the projection $pr_N: M\times N \longrightarrow N$ and by $pr_M$ the projection on the first component. Let us define the bundle on $M\times N$ given by
		\begin{equation}
			\Lambda^*(M) \boxtimes \Lambda(N) := pr_M^*(\Lambda^*(M)) \otimes pr_N^*(\Lambda(N)),
		\end{equation}
		where $\Lambda(N) := \Lambda(TN)$ is the dual bundle of $\Lambda^*(N)$.
	\end{defn}
	\begin{rem}
		Consider two differential manifolds $X_1$ and $X_2$ and consider for $i=1,2$ the projections $pr_i: X_1 \times X_2 \longrightarrow X_i$. Then we have that
		\begin{equation}
			pr_1^*(\Lambda^*(X_1)) \cong \Lambda^*_{\star,0}(X_1 \times X_2) \subset \Lambda^*(X_1 \times X_2)
		\end{equation}
		where $\Lambda^*_{\star,0}(X_1 \times X_2)$ is the subbundle of $\Lambda^*(X_1 \times X_2)$ given by the forms
		\begin{equation}
			\alpha_{(p,q)} \in\Lambda^*_{(p,q)}(X_1 \times X_2)
		\end{equation}
		such that
		\begin{equation}
			ker(\alpha_{(p,q)}) \supseteq (\{0\}\oplus T_qX_2)\times ... \times (\{0\}\oplus T_qX_2)
		\end{equation}
		The isomorphism is the following: for all $(s,q)$ in $X_1 \times X_2$ if 
		\begin{equation}
			\alpha_{s,q}: T_sX_1 \times ... \times T_sX_1 \longrightarrow \numberset{C}
		\end{equation}
		is an element of $pr_1^*(\Lambda^*(X_1))$, the we can define $\overline{\alpha}$ as the application
		\begin{equation}
			\overline{\alpha}_{s,q}: (T_sX_1 \oplus T_qX_2) \times ... \times (T_sX_1 \oplus T_qX_2) \longrightarrow \numberset{C}
		\end{equation}
		defined as
		\begin{equation}
			\overline{\alpha}_{s,q}((v_{1,1} + v_{1,2}), ..., (v_{k,1} + v_{k,2})) = \begin{cases} \alpha_{s,q}(v_{1,1},... ,v_{k,1}) \mbox{     if all    } v_{j,2}=0 \\
				0 \mbox{    otherwise.}
			\end{cases}
		\end{equation}
		The inverse map is given by the restriction of $\overline{\alpha}_{s,q}$ to $(T_sX_1 \oplus \{0\})\times ... \times (T_sX_1 \oplus\{0\}) \cong T_sX_1 \times ... \times T_sX_1$.
		\\This means that for all $k$ in $\numberset{N}$ the sections of $pr_1^*(\Lambda^k(X_1))$ can be seen as $(k, 0)$-differential forms on $X_1 \times X_2$ and vice-versa. In particular we have that we can see the pullback using the projection $pr_i$ as
		\begin{equation}
			pr_1^*: \Omega^*(X_1) \longrightarrow \Gamma(pr_1^*(\Lambda^*(X_1))) \subseteq \Omega^*(X_1 \times X_2).
		\end{equation}
		The same holds also for $pr_2$.
	\end{rem}
	\begin{defn}
		An operator $A: dom(A) \subseteq \mathcal{L}^2(N) \longrightarrow \mathcal{L}^2(M)$ is an \textbf{integral operator} if there is a section $K$ of the fiber bundle $\Lambda^*(M) \boxtimes \Lambda(N)$ such that, for $\alpha \in dom(A) \cap \Omega^*(N)$ and for almost all $p \in M$ we have that
		\begin{equation}
			A(\alpha)(p) := \int_N K(p,q)\alpha(q) d\mu_{N}.
		\end{equation}
		where $K(p,q)\alpha(q)$ is the inner product $i_{pr_N^*\alpha} K$ valued in $(p,q)$. Moreover, if the kernel $K$ is a smooth section of $\Lambda^*(M) \boxtimes \Lambda(N)$ then $A$ is a \textbf{smoothing operator}.
	\end{defn}
	\begin{rem}
		The integral operators are well-defined. Indeed we have that $K(p,q)\alpha(q)$ is a section of $pr_M^*(\Lambda^*(M))$, and so it can also be seen for all $p$ in $M$ as a function
		\begin{equation}
			\begin{split}
				N &\longrightarrow \Lambda^*_p(M) \\
				q &\longrightarrow K(p,q) \alpha(q).
			\end{split}
		\end{equation}
		Then, for all $p \in M$, the Bochner integration respect to the measure on $N$ is well-defined.
		\\
		\\It is also possible to see the integral over $N$ as an integration along the fibers. Indeed $K(p,q) \alpha(q) = i_{pr_N^*\alpha} K(p,q) \in pr_M^*(\Lambda^*(M))_{(p,q)}$ can be seen as sum of $(k_i,0)$-differential form on $M\times N$. This means that
		\begin{equation}
			\begin{split}
				A(\alpha)(p) &= \int_N K(p,q) \alpha(q) d\mu_{N} \\
				&= pr_{M\star}(i_{pr_N^*\alpha} K \wedge pr_N^*Vol_N)(p).
			\end{split}
		\end{equation}
		To show this equality it is sufficient to compute the integrals using local coordinates on $M$.
	\end{rem}
	\begin{rem}
		Given two coordinate charts $\{U, x^s\}$ on $M$ and $\{V, y^l\}$ on $N$, we have that a smooth section of $\Lambda^*(M) \boxtimes \Lambda(N)_{(p,q)}$ is locally given by
		\begin{equation}
			f(p,q)_S^L dx^S \otimes \frac{\partial}{\partial y^L},
		\end{equation}
		where $S = (s_1,... ,s_m)$ and $L = (l_1, ..., l_n)$ are multi-index, $dx^S = dx^{s_1}\wedge ... \wedge dx^{s_m}$, $\frac{\partial}{\partial y^L} = \frac{\partial}{\partial y^{l_1}}\wedge... \wedge \frac{\partial}{\partial y^{l_n}}$ and $f(p,q)_S^L$ is a function in $C^{\infty}(U \times V)$.
	\end{rem}
	\begin{rem}\label{fuori}
		We know that for all $p$ in $M$ and for all $q$ in $N$ we have that $\Lambda_p^*(M)$ and $\Lambda_q(N)$ have a norm induced by their metrics (one can easily prove that the norm on $\Lambda_q(N)$ is equal to the dual norm of $\Lambda_q^*(N)$). We have that for all $k \in \Lambda^*(M)_p \boxtimes \Lambda(N)_q$ there are some $\beta_{i,p} \in \Lambda^*(M)$ and some $\gamma_{i,q} \in \Lambda(N)$ such that
		\begin{equation}
			k = \sum_i \beta_{i,p}  \otimes \gamma_{i,q}.
		\end{equation}
		Then, imposing for all $\beta_p \in \Lambda_p^*(M)$ and for all $\gamma_{q} \in \Lambda_q(N)$ that
		\begin{equation}
			|\beta_p \otimes \gamma_q| := |\beta_p|_{\Lambda_p^*(M)}\cdot|\gamma_q|_{\Lambda_q(N)},
		\end{equation}
		we can induce a norm on $\Lambda^*(M) \boxtimes \Lambda(N)_{(p,q)}$. Moreover, if the $\beta_{i,p}$ and the $\gamma_{i,q}$ are choosen such that for all $i \neq j$
		\begin{equation}
			\langle \beta_{i,p}, \beta_{j,p} \rangle_{\Lambda_p^*(M)} = \langle \gamma_{i,p}, \gamma_{j,p} \rangle_{\Lambda_q(N)} = 0,
		\end{equation}
		then we have that
		\begin{equation}
			|k|^2 = \sum_i |\beta_{i,p}|^2  \cdot |\gamma_{i,q}|^2.
		\end{equation}
		Consider a smooth kernel of an integral operator $K$ and a differential form $\alpha$ in $\Omega^*(N)$. Then, for each point $(p,q)$ we have that
		\begin{equation}
			K(p,q)\alpha(q) = \sum_i \gamma_{i,q}(\alpha(q)) \beta_{i,p}
		\end{equation}
		and so
		\begin{equation}
			\begin{split}
				|K(p,q)\alpha(q)|^2 &= \sum_i |\gamma_{i,q}(\alpha(q))|^2 |\beta_{i,p}|^2\\
				&\leq \sum_i |\beta_{i,p}|^2|\gamma_{i,q}|^2)\alpha(q)|^2 \\
				&\leq (\sum_i |\beta_{i,p}|^2|\gamma_{i,q}|^2)|\alpha(q)|^2 \\
				&= |K(p,q)|^2 |\alpha(q)|^2.
			\end{split}
		\end{equation}
	\end{rem}
	\begin{defn}
		Let us consider two Riemannian manifolds $(N,h)$ and $(M,g)$ and let $A: dom(A) \mathcal{L}^2(N) \longrightarrow \mathcal{L}^2(M)$ be an integral operator with kernel $K$. We have that $A$ is an in integral operator with \textbf{compactly supported} kernel if the support of $K$ as section of $\Lambda^*(M) \boxtimes \Lambda(N)$ is compact.
	\end{defn}
	\begin{defn}
		Consider $N$ and $M$ two Riemannian manifolds. Let $A:\mathcal{L}^2(N) \longrightarrow \mathcal{L}^2(M)$ be an integral operator with kernel $K$. We say that 
		\begin{itemize}
			\item $A$ has \textbf{left-uniformly bounded support} if there is $R \geq 0$ such that for all $q \in N$
			\begin{equation}
				diam(supp(K(\cdot, q))) \leq R,
			\end{equation}
			\item $A$ has \textbf{right-uniformly bounded support} if there is $S \geq 0$ such that for all $p \in M$ for all $q \in N$ we have that 
			\begin{equation}
				diam(supp(K(p, \cdot))) \leq S.
			\end{equation}
			Finally we will say that $A$ has \textbf{uniformly bounded support} if it has both right and left uniformly bounded support.
		\end{itemize}
	\end{defn}
	\begin{lem}[Jensen inequality]
		Consider a probability space $(X, \sigma_X, \mu_X)$ and consider a measurable function $f: X \longrightarrow \numberset{R}$. Let $\phi: \numberset{R} \longrightarrow \numberset{R}$ be a convex function. Then
		\begin{equation}
			\phi(\int_X f(x) d\mu_X) \leq \int_X \phi(f(x)) dx.
		\end{equation}
	\end{lem}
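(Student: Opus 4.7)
The plan is to use the classical supporting line characterization of convex functions. Specifically, I would first recall that for every convex function $\phi:\numberset{R} \longrightarrow \numberset{R}$ and every point $x_0 \in \numberset{R}$, there exists a subgradient $c \in \numberset{R}$ (given for instance by the left or right derivative at $x_0$, both of which exist for convex functions) such that
\begin{equation}
\phi(x) \geq \phi(x_0) + c(x - x_0) \quad \text{for all } x \in \numberset{R}.
\end{equation}
This is the standard ``supporting line'' lemma, and it is really the only nontrivial input: once it is available the proof is a one-line integration.

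Next I would set $x_0 := \int_X f(x)\,d\mu_X$, which is a well-defined real number (assuming $f$ is $\mu_X$-integrable, which is implicit in the statement since otherwise the right hand side is trivially $+\infty$ and there is nothing to prove). Then for every $y \in X$ I apply the supporting line inequality with $x = f(y)$:
\begin{equation}
\phi(f(y)) \geq \phi(x_0) + c\bigl(f(y) - x_0\bigr).
\end{equation}

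Now I would integrate both sides over $X$ with respect to $\mu_X$. Using that $(X,\sigma_X,\mu_X)$ is a probability space, so $\mu_X(X) = 1$, the right hand side becomes
\begin{equation}
\int_X \phi(x_0)\,d\mu_X + c\int_X \bigl(f(y) - x_0\bigr)\,d\mu_X = \phi(x_0) + c(x_0 - x_0) = \phi(x_0),
\end{equation}
while the left hand side is $\int_X \phi(f(y))\,d\mu_X$. Combining these yields $\int_X \phi(f(y))\,d\mu_X \geq \phi\bigl(\int_X f(x)\,d\mu_X\bigr)$, which is the desired inequality.

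The only potential obstacle is the supporting line lemma itself, but this is entirely standard: one obtains $c$ as any element of the interval $[\phi'_-(x_0),\phi'_+(x_0)]$, where $\phi'_\pm$ denote the one-sided derivatives, which exist and are monotone for convex functions. A secondary technical point is the measurability of $\phi \circ f$, which follows from the continuity of $\phi$ (every convex function $\numberset{R}\to \numberset{R}$ is continuous) and the measurability of $f$. No further delicacy is required.
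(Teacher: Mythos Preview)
Your proof is correct and is in fact the standard textbook argument for Jensen's inequality via the supporting line of a convex function. The paper itself does not supply a proof of this lemma at all; it is simply stated as a classical result and then invoked in the proof of Proposition~\ref{smoothbound}. So there is nothing to compare: your argument fills in a step the paper leaves to the reader.
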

	\begin{prop} \label{smoothbound}
		Let $(N,n)$ and $(M,m)$ be two Riemannian manifolds of bounded geometry. Let $A: dom(A) \mathcal{L}^2(N) \longrightarrow \mathcal{L}^2(M)$ be an integral operator with kernel $K$. If $A$ has uniformly bounded support with constant $Q$ and
		\begin{equation}
			\sup_{(p,q) \in M \times N} |K(p,q)| < L,
		\end{equation}
		then the operator $A$ is bounded and its norm satisfies
		\begin{equation}
			||A|| \leq W,
		\end{equation}
		where $W$ is a constant which depends on $L$ on curvature of $M$.
	\end{prop}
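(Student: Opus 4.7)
The plan is to apply the Schur test. The pointwise estimate $|K(p,q)\alpha(q)| \leq |K(p,q)|\cdot|\alpha(q)|$ from Remark \ref{fuori} reduces the problem to controlling the scalar integral operator with kernel $|K(p,q)|$, and the uniformly bounded support hypothesis combined with bounded geometry will turn the $\mathcal{L}^\infty$-bound on $K$ into the required Schur bounds in the two variables.

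More precisely, first I would observe that for $\alpha \in \Omega^*(N) \cap \mathcal{L}^2(N)$ and for almost every $p \in M$,
\begin{equation*}
|A\alpha(p)| \;\leq\; \int_N |K(p,q)|\,|\alpha(q)|\, d\mu_N,
\end{equation*}
by Remark \ref{fuori} and the basic properties of the Bochner integral. Applying Cauchy--Schwarz with the splitting $|K(p,q)|\,|\alpha(q)| = |K(p,q)|^{1/2} \cdot |K(p,q)|^{1/2}|\alpha(q)|$ and integrating over $M$, then using Fubini, I would obtain
\begin{equation*}
\|A\alpha\|_{\mathcal{L}^2(M)}^2 \;\leq\; \Bigl(\sup_{p \in M} \int_N |K(p,q)|\, d\mu_N\Bigr) \cdot \Bigl(\sup_{q \in N} \int_M |K(p,q)|\, d\mu_M\Bigr) \cdot \|\alpha\|_{\mathcal{L}^2(N)}^2.
\end{equation*}

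The next step is to bound the two suprema using the hypotheses. Because $A$ has right-uniformly bounded support with constant $Q$, for each fixed $p$ the set $\mathrm{supp}(K(p,\cdot))$ has diameter at most $Q$ in $N$, hence is contained in a geodesic ball $B_Q(q_0)$ for some $q_0 \in N$ depending on $p$. Since $(N,h)$ has bounded geometry, in particular it satisfies a lower Ricci bound, and by Remark \ref{bvolume} the Bishop--Gromov inequality yields a constant $V_N(Q)$, depending only on $Q$ and on the curvature of $N$, such that $\mu_N(B_Q(q_0)) \leq V_N(Q)$. Combined with $|K(p,q)| \leq L$ this gives $\int_N |K(p,q)|\, d\mu_N \leq L \cdot V_N(Q)$. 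The symmetric argument, using left-uniformly bounded support and bounded geometry of $M$, yields $\int_M |K(p,q)|\, d\mu_M \leq L \cdot V_M(Q)$ for some $V_M(Q)$ depending only on $Q$ and the curvature of $M$. Plugging back gives $\|A\| \leq L\sqrt{V_M(Q)\,V_N(Q)} =: W$, and then $A$ extends by density from $\Omega^*_c(N)$ to all of $\mathcal{L}^2(N)$ with the same bound.

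I do not anticipate a serious obstacle: the only subtlety is checking that the pointwise inequality $|K(p,q)\alpha(q)|\leq|K(p,q)|\cdot|\alpha(q)|$ together with the norm on $\Lambda^*(M)\boxtimes\Lambda(N)$ introduced in Remark \ref{fuori} is actually the correct dual pairing, so that the scalar Schur test applies verbatim; this is exactly the content of that remark. The proof should therefore be essentially the standard Schur estimate, with bounded geometry entering only through the volume comparison of Remark \ref{bvolume}.
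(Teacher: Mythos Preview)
Your proposal is correct and slightly cleaner than the paper's argument, though the two proofs share the same skeleton: both reduce to the scalar kernel $|K(p,q)|$ via Remark \ref{fuori}, and both control the resulting integrals using the volume bounds from Remark \ref{bvolume} together with the uniformly bounded support hypothesis.

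The difference lies in how the quadratic estimate is obtained. The paper does not invoke the Schur test; instead, for each fixed $p$ it rescales $\mu_N$ on the ball $B_S(q_p)$ containing $\mathrm{supp}\,K(p,\cdot)$ to a probability measure and applies Jensen's inequality to each real coordinate of $K(p,q)\alpha(q)$ in an orthonormal frame of $\Lambda^*_p(M)$, yielding $\bigl|\int_N K(p,q)\alpha(q)\,d\mu_N\bigr|^2 \le \mu_N(B_S(q_p))\int_N |K(p,q)\alpha(q)|^2\,d\mu_N$. It then integrates over $M$, applies Fubini, and bounds $\int_M |K(p,q)|^2\,d\mu_M$ using the left-uniform support. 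Your Cauchy--Schwarz splitting $|K|^{1/2}\cdot|K|^{1/2}|\alpha|$ achieves the same quadratic control in one line and avoids the coordinate decomposition; it also makes the dependence on the two volume constants $V_M(Q)$ and $V_N(Q)$ symmetric and transparent, whereas in the paper's version one constant enters through Jensen and the other through the final Fubini step. Either way the resulting bound depends on the curvature of both $M$ and $N$ (the statement's ``curvature of $M$'' is a slight imprecision), so your final constant $L\sqrt{V_M(Q)V_N(Q)}$ is consistent with what the paper actually proves.
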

	\begin{proof}
		Consider a smooth form $\alpha$ in $\mathcal{L}^2(N)$. We have that
		\begin{equation}
			||A(\alpha)||^2 = \int_M |\int_N K(p,q)\alpha(q)d\mu_N|^2 d\mu_M.
		\end{equation}
		\textbf{Step 1.} Fix $p$ in $M$, fix an orthonormal basis $\{\epsilon^I\}$ on $\Lambda^*_p(M)$ and consider the coordinates $\{y_I\}$ related to $\{\epsilon^I\}$. Consider, moreover, the maps $y_{I,r}:\Lambda^*_p(M) \longrightarrow \numberset{R}$ and  $y_{I,i}:\Lambda^*_p(M) \longrightarrow \numberset{R}$ such that
		\begin{equation}
			y_{I} = y_{I,r} + i\cdot y_{I,i}.
		\end{equation}
		We have that
		\begin{equation}\label{sbo}
			\begin{split}
				|\int_N K(p,q)\alpha(q) d\mu_N|^2 &= \sum\limits_{I, r} (\int_N y_{I,r}[K(p,q)\alpha(q)]d\mu_N)^2\\
				&+ \sum\limits_{I, i} (\int_N y_{I,i}[K(p,q)\alpha(q)] d\mu_N)^2.
			\end{split}
		\end{equation}
		Since $A$ is right-uniformly bounded, then there is a $S\geq 0$ such that, for each fixed $p$ in $M$, there is a ball $B_S(q_p) \subset N$ such that
		\begin{equation}
			K(\tilde{p},q)\alpha(q) = 0
		\end{equation}
		if $\tilde{p}$ is not in $B_{S}(q_p)$.
		\\For each fixed $p$ consider the measure $\mu_p$ on $B_S(q_p)$ defined for each $\mu_N$-measurable set as
		\begin{equation}
			\mu_p(B) := \frac{\mu_N(B)}{\mu_N(B_S(q_p))}.
		\end{equation}
		Observe $B_S(q_p)$ is a probability space. Let us define the map
		\begin{equation}
			\begin{split}
				F_p:B_S(q_p) &\longrightarrow \Lambda^*_p(M) \\
				q &\longrightarrow \mu_N(B_S(q_p)) \cdot K(p,q)\alpha(q)
			\end{split}
		\end{equation}
		Then, for each $I$, define the maps 
		\begin{equation}\label{bruno}
			y_{I,r} \circ F_p: B_S(q_p) \longrightarrow \numberset{R}.
		\end{equation}
		and
		\begin{equation}\label{brue}
			y_{I,i} \circ F_p: B_S(q_p) \longrightarrow \numberset{R}.
		\end{equation}
		Consider the function $\phi: \numberset{R} \longrightarrow \numberset{R}$ defined as $\phi(x) = x^2$. Then we can apply the Jensen inequality to (\ref{bruno}) and (\ref{brue}) considering on $B_S(q_p)$ the measure $\mu_p$. We obtain
		\begin{equation}\label{ciccone}
			\begin{split}
				\phi(\int_{B_S(q_p)} y_{I,r} \circ F_p d\mu_p) \leq \int_{B_S(q_p)} \phi(y_{I,r} \circ F_p) d\mu_p
			\end{split}
		\end{equation}
		and
		\begin{equation}\label{cic}
			\begin{split}
				\phi(\int_{B_S(q_p)} y_{I,i} \circ F_p d\mu_p) \leq \int_{B_S(q_p)} \phi(y_{I,i} \circ F_p) d\mu_p.
			\end{split}
		\end{equation}
		Observe that the left-hand part of (\ref{ciccone}) is
		\begin{equation}
			\begin{split}
				\phi(\int_{B_S(q_p)} y_{I,r} \circ F_p d\mu_p) &= (\int_{B_S(q_p)} y_{I,r}(K(p,q)\alpha(q)) \frac{\mu_N(B_S(q_p))}{\mu_N(B_S(q_p))} d\mu_N)^2\\
				&(\int_{N} y_{I,r}(K(p,q)\alpha(q)) d\mu_N)^2
			\end{split}
		\end{equation}
		and the left-hand part of (\ref{cic}) is
		\begin{equation}
			\phi(\int_{B_S(q_p)} y_{I,i} \circ F_p d\mu_p) = (\int_{N} y_{I,i}(K(p,q)\alpha(q)) d\mu_N)^2.
		\end{equation}
		The right-hand of (\ref{ciccone}) is
		\begin{equation}
			\begin{split}
				\int_{B_S(q_p)} \phi(y_{I,r} \circ F_p) d\mu_p &= \int_{B_S(q_p)} \frac{\mu_N(B_S(q_p))^2}{\mu_N(B_S(q_p))} (y_{I,r}[K(p,q)\alpha(q)])^2 d\mu_N \\
				&= \mu_N(B_S(q_p)) \int_{B_S(q_p)} (y_{I,r}[K(p,q)\alpha(q)])^2 d\mu_N\\
				&\leq \mu_N(B_S(q_p)) \int_{N} (y_{I,r}[K(p,q)\alpha(q)])^2 d\mu_N
			\end{split}
		\end{equation}
		while the right-hand of (\ref{cic}) is
		\begin{equation}
			\int_{B_S(q_p)} \phi(y_{I,i} \circ F_p) d\mu_p = \mu_N(B_S(q_p)) \int_{N} (y_{I,i}[K(p,q)\alpha(q)])^2 d\mu_N
		\end{equation}
		Then, since (\ref{sbo}), we have that
		\begin{equation}
			\begin{split}
				|\int_N K(p,q)\alpha(q) d\mu_N|^2 &= \sum\limits_{I, r} (\int_N y_{I,r}[K(p,q)\alpha(q)]d\mu_N)^2\\
				&+ \sum\limits_{I, i} (\int_N y_{I,i}[K(p,q)\alpha(q)] d\mu_N)^2 \\
				&\leq \mu_N(B_S(q_p)) \sum\limits_{I, r} \int_{N} (y_{I,r}[K(p,q)\alpha(q)])^2 d\mu_N \\
				&+ \mu_N(B_S(q_p)) \sum\limits_{I, i} \int_{N} (y_{I,i}[K(p,q)\alpha(q)])^2 d\mu_N\\
				&\leq \mu_N(B_S(q_p)) \int_N |K(p,q)\alpha(q)|^2d\mu_N.
			\end{split}
		\end{equation}
		Observe that since $N$ has buonded geometry, then there is a constant $C$ such that
		\begin{equation}
			\mu_N(B_S(q_p)) \leq C.
		\end{equation}
		Then we have that
		\begin{equation}
			|\int_N K(p,q)\alpha(q) d\mu_N|^2 \leq C\cdot \int_N |K(p,q)\alpha(q)|^2d\mu_N.
		\end{equation}
		And so we conclude the first step.
		\\
		\\
		\\
		\textbf{Step 2.}
		Observe that 
		\begin{equation}
			|K(p,q)\alpha(q)|^2 \leq |K(p,q)|^2 |\alpha(q)|^2.
		\end{equation}
		We proved in Remark \ref{fuori}. Then we have that
		\begin{equation}
			\begin{split}
				||A(\alpha)||^2 &= \int_M |\int_N K(p,q)\alpha(q)d\mu_N|^2 d\mu_M \\
				&\leq C \cdot \int_M \int_N |K(p,q)\alpha(q)|^2 d\mu_N d\mu_M\\
				&\leq C \cdot  \int_M \int_N |K(p,q)|^2\cdot |\alpha(q)|^2 d\mu_N d\mu_M\\
				&\leq C \cdot  \int_N \int_M |K(p,q)|^2\cdot |\alpha(q)|^2  d\mu_M d\mu_N\\
				&\leq C \cdot  \int_N (\int_M |K(p,q)|^2 d\mu_M |\alpha(q)|^2 d\mu_N.
			\end{split}
		\end{equation}
		We can observe that, since $A$ has uniformly bounded support and since $M$ has bounded geometry, then exists a real number $L$
		\begin{equation}
			\mu_M(supp(K(\cdot, q))) \leq Vol(B_L(q)) \leq R
		\end{equation}
		Moreover if $|K(p,q)|^2 \leq L$, we have that for all $q$ in $N$
		\begin{equation}
			\int_M |K(p,q)|^2  d\mu_M \leq C_S L^2 \leq R
		\end{equation}
		Then we have that
		\begin{equation}
			\begin{split}
				||A(\alpha)||^2 &\leq C \cdot R \int_{N}|\alpha(q)|^2 d\mu_N \\
				\leq CR||\alpha||.
			\end{split}
		\end{equation}
	\end{proof}
	An obvious consequence of this Proposition is that an integral operator with compactly supported kernel is bounded.
	\begin{lem}
		Smoothing operators with compactly supported kernel are compact operators
	\end{lem}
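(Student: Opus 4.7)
The plan is to show that any such operator is Hilbert--Schmidt, and then invoke the classical fact that Hilbert--Schmidt operators between separable Hilbert spaces are compact.

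First, let $K$ denote the smooth kernel of $A$, so that $\mathrm{supp}(K)$ is contained in some compact set $C \subseteq M \times N$. Because $K$ is a smooth section of $\Lambda^*(M) \boxtimes \Lambda(N)$, the real-valued function $(p,q) \mapsto |K(p,q)|$ (with the pointwise norm introduced in Remark \ref{fuori}) is continuous on $M \times N$ and vanishes outside $C$, hence is bounded by some constant $L$. Consequently
\begin{equation}
\int_{M \times N} |K(p,q)|^2 \, d(\mu_M \otimes \mu_N) \leq L^2 \cdot (\mu_M \otimes \mu_N)(C) < +\infty,
\end{equation}
since $C$ is compact and $\mu_M$, $\mu_N$ are Radon.

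Next I would use this $L^2$-bound on the kernel to conclude that $A$ is a Hilbert--Schmidt operator from $\mathcal{L}^2(N)$ to $\mathcal{L}^2(M)$. Concretely, one fixes orthonormal bases $\{e_i\}$ and $\{f_j\}$ of $\mathcal{L}^2(N)$ and $\mathcal{L}^2(M)$ respectively, and computes $\sum_{i,j} |\langle A e_i, f_j \rangle|^2$ using Parseval and the pointwise estimate $|K(p,q)\alpha(q)| \leq |K(p,q)| \, |\alpha(q)|$ from Remark \ref{fuori}; the sum equals the $\mathcal{L}^2$-norm of $K$ (with respect to the bundle metric on $\Lambda^*(M)\boxtimes\Lambda(N)$) and is therefore finite. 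Since every Hilbert--Schmidt operator on a separable Hilbert space is compact, $A$ is compact.

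The only step that requires any genuine care is the second: one must verify that the bundle-valued kernel really does encode a Hilbert--Schmidt operator in the standard sense. This is essentially bookkeeping with the identification of $\Lambda^*(M) \boxtimes \Lambda(N)$ as a Hilbert bundle over $M \times N$ with the tensor-product fibre metric, together with the pointwise estimate already isolated in Remark \ref{fuori}. Alternatively, and perhaps more transparently, one can avoid Hilbert--Schmidt theory altogether and approximate $K$ uniformly on $C$ by finite sums $\sum_{k} \phi_k(p) \otimes \psi_k(q)$ (using a Stone--Weierstrass argument in bundle charts together with a partition of unity subordinate to a finite cover of $C$); the corresponding operators are of finite rank, and the uniform approximation of $K$ on the compact set $C$ together with the bound from Proposition \ref{smoothbound} yields convergence in operator norm, so $A$ is a norm-limit of finite-rank operators and hence compact.
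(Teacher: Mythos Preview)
Your proposal is correct, and both routes you sketch (Hilbert--Schmidt and finite-rank approximation) are standard and sound. The paper, however, proceeds quite differently: it first uses a partition of unity to localise the kernel into a single coordinate chart, then in that chart decomposes $A$ as a finite sum of pieces $A_{I,J}$ indexed by components with respect to orthonormal frames of $\Lambda^*(M)$ and $\Lambda(N)$, and finally shows each $A_{I,J}$ is compact by applying the Ascoli--Arzel\`a theorem directly to $i_{e_J}\circ A_{I,J}(B_1(0)\cap \Omega^*(N))$ with respect to the uniform norm, checking equicontinuity and pointwise boundedness by hand from the smoothness and compact support of $K^I_J$. Your Hilbert--Schmidt argument is considerably shorter and more conceptual, and yields the Hilbert--Schmidt norm as a byproduct; the paper's approach is more elementary in that it avoids any appeal to the trace-class or Hilbert--Schmidt machinery, at the cost of a longer computation and the need to pass through the uniform topology on a compact chart before returning to $\mathcal{L}^2$.
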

	\begin{proof}
		Let us consider two Riemannian manifolds $(M,g)$ and $(N,h)$ and consider an integral operator $A: \mathcal{L}^2(N) \longrightarrow \mathcal{L}^2(M)$ with a compactly supported kernel $K$. We can consider a system of coordinates $\{U_{\alpha}, x_\alpha^i\}$ and a partition of unity $\{\rho_\alpha\}$ related to this system. We will consider, moreover, for each $\alpha$, $\mu_{M}(pr_M(U_\alpha)) < +\infty$. Then we have that
		\begin{equation}
			A = \sum_{\alpha} \rho_\alpha A
		\end{equation}
		and it's easy to observe that for any $\alpha$ the operator $\rho_\alpha A$ is an integral operator with compactly supported kernel $\rho_\alpha K$. Since just a finite number  of $\rho_\alpha K \neq 0$, if we are able to show that $\rho_\alpha A$ is compact for all $\alpha$, then $A$ is compact.
		\\This means that we can consider the support of $K \subset U_\alpha$ for some $\alpha$, without loss of generality.
		\\
		\\Consider now an orthonormal frame $\{\epsilon^j\}$ of $\Lambda^*(M)$ and $\{w_i \}$ an orthonormal frame of $\Lambda(N)$, then  
		\begin{equation}
			\begin{split}
				A\alpha(p) &= \int_N K(p,q)\alpha(q)d\mu_N\\
				&= \int_N \sum_{I,J}K^I_J(p,q)\epsilon^J \otimes w_I \alpha(q) d\mu_N \\
				&= \int_N i_{pr^*_N\alpha } \sum_{I,J}K^I_J \epsilon^J \otimes w_I d\mu_N\\
				&= \int_N \sum_{I,J} i_{pr^*_N\alpha }K^I_J \epsilon^J \otimes w_I d\mu_N\\
				&= \sum_{I,J} \int_N K^I_J(p,q)\epsilon^J \otimes w_I \alpha(q) d\mu_N \\
				&= \sum_{I,J} A_{I,J},
			\end{split}
		\end{equation}
		where $A_{I,J}$ are the integral operators with kernel $K^I_J \epsilon^J \otimes w_I$. If for any $I,J$ we have that $A_{I,J}$ are compact operators, then also $A$ is a compact operator. 
		\\
		\\To show that $A_{I,J}$ is a compact operator we will show that $A_{I,J}(B_1(0))$ is relatively compact i.e. $\overline{A_{I,J}(B_1(0))}$ is a compact subset of $\mathcal{L}^2(N)$. In particular we have that since $A_{I,J}$ has compactly supported kernel, then it is a continuous operator and so $\overline{A_{I,J}(B_1(0))} = \overline{A_{I,J}(B_1(0) \cap \Omega^*(N))}$.
		\\Let us consider a $\beta \in B_1(0) \cap \Omega^*(N)$: we have that $\beta = f_I W^I$, where $\{W^i\}$ is the dual frame of $\{w_i\}$. Then we have that
		\begin{equation}
			A_{I,J} \beta = (\int_N K^I_J f_I d\mu_N) \epsilon^J =: G_J \epsilon^J.
		\end{equation}
		Consider $\{e_J\}$ is the dual frame of $\{\epsilon^J\}$. We can define the operator
		\begin{equation}
			i_{e_J}: \Omega^*_c(pr_M(U_\alpha)) \longrightarrow C^{\infty}_c(pr_M(U_\alpha)) 
		\end{equation}
		defined as
		\begin{equation}
			i_{e_K}(g_J \epsilon^J) = \begin{cases} g_K \mbox{    if     } J=K \\
				0 \mbox{    otherwise.} \end{cases}
		\end{equation}
		Then we have that
		\begin{equation}
			i_{e_J} A_{I,J} \beta (p) = \int_N K^I_J(p,q) f_I(q) d\mu_N = G_J(p) \in C^{\infty}_c(pr_M(U_\alpha)).
		\end{equation} 
		Let us consider on $C^{\infty}_c(pr_M(U_\alpha))$ the uniform norm defined as
		\begin{equation}
			|f|_{\infty} = \sup\limits_{p \in pr_M(U_\alpha)} |f(p)|
		\end{equation}
		Let us consider a sequence of compactly supported smooth functions $\{f_n \} \subset C^{\infty}_c(pr_M(U_\alpha))$. If there is a $f: pr_M(U_\alpha) \longrightarrow \numberset{C}$ such that
		\begin{equation}
			\lim\limits_{n \rightarrow + \infty} |f_n - f|_{\infty} = 0,
		\end{equation}
		then, since $\mu_{M}(pr_M(U_{\alpha})) < +\infty$, we have that
		\begin{equation}
			\lim\limits_{n \rightarrow + \infty} ||f_n - f||_{\mathcal{L}^2(pr_N(U_\alpha))} = 0.
		\end{equation}
		Let us consider a sequence 
		\begin{equation}
			A_{I,J} \beta_n = (\int_N K^I_J f_{I,n} d\mu_N) \epsilon^J =: G_{J,n} \epsilon^J 
		\end{equation}
		in $A_{I,J}(B_1(0) \cap \Omega^*(M))$. Then if the sequence $\{G_{J,n}\}_n$ uniformly converges to a function $g: pr_M(U_\alpha) \longrightarrow \numberset{C}$, then $A_{I,J} \beta_n$ converges in $\mathcal{L}^2(M)$. Indeed, since $\epsilon^I$ is an orthonormal frame, we have that
		\begin{equation}
			\begin{split}
				\lim\limits_{n \rightarrow + \infty} ||A_{I,J} \beta_n - g \epsilon^J||_{\mathcal{L}^2(M)} &= \lim\limits_{n \rightarrow + \infty} || (G_{J,n} - g) \epsilon^J||_{\mathcal{L}^2(M)}\\
				&= \lim\limits_{n \rightarrow + \infty} ||G_{J,n} - g||_{\mathcal{L}^2(pr_M(U_{\alpha}))} = 0. \label{conv}
			\end{split}
		\end{equation}
		The next step is to prove that $i_{e_J} \circ A_{I,J}(B_1(0) \cap \Omega^*(N))$ has compact closure respect to the uniform norm on $C^{\infty}_c(pr_M(U_\alpha))$.
		\\We can use the Ascoli-Arzelà theorem: to show that $i_{e_I} \circ A_{I,J}(B_1(0) \cap \Omega^*(N))$ has compact closure we have to proof that it is equicontinuous and pointwise bounded.
		\\Remember that a subset of $C^{\infty}_c(pr_M(U_\alpha))$ is equicontinuous if $\forall p \in U_\alpha \forall \epsilon  > 0$ there is a neighborhood $U_x$ of $x$ such that for all 
		\begin{equation}
			\forall y \in U_x \forall f \in i_{e_J} \circ A_{I,J}(B_1(0) \cap \Omega^*(N)) \implies |f(y) - f(x)| < \epsilon \label{equicont}
		\end{equation}
		and that $i_{e_J} \circ A_{I,J}(B_1(0) \cap \Omega^*(N))$ is pointwise bounded, i.e for all $x$ in $U_\alpha$
		\begin{equation}
			\sup\{|f(x)| | f \in i_{e_J} \circ A_{I,J}(B_1(0) \cap \Omega^*(N)) \} < +\infty. \label{pointwise}
		\end{equation}
		First we show the equicontinuity (\ref{equicont}): let us fix $\epsilon >0$ and $p$ in $U_\alpha$. Consider $i_{e_J} \circ A_{I,J} \beta$ where $\beta = f_I W^I \in B_1(0) \cap \Omega^*(N) \subset \mathcal{L}^2(N)$, we have that
		\begin{equation}
			\begin{split}
				|i_{e_J} \circ A_{I,J}\beta(y) - i_{e_J} \circ A_{I,J}\beta(p)|^2 &= |\int_N K^I_J(y, q)f_I(q) d\mu_N - \int_N  K^I_J(p, q)f_I(q) d\mu_N|^2 \\
				&\leq \int_N |K^I_J(y,q) - K^I_J(p,q)|^2 |f_I(q)|^2 d\mu_N.
			\end{split}
		\end{equation}
		Now we can observe that $K^I_J$ is continuous and with compact support. Then in particular it is uniformly continuous, i.e there is a $\delta>0$, which doesn't depend by $(p,q)$, such that for all $(y,q)$ in $B_\delta((p,q))$ we have
		\begin{equation}
			|K^I_J(y,q) - K^I_J(p,q)|^2 \leq \epsilon.
		\end{equation}
		So we obtain that
		\begin{equation}
			\begin{split}
				|i_{e_J} \circ A_{I,J}\beta(y) - i_{e_J} \circ A_{I,J}\beta(p)|^2 &\leq \int_N |K^I_J(y,q) - K^I_J(p,q)|^2 |f_I|^2 d\mu_N \\
				&\leq \epsilon \int_N |\beta|^2 d\mu_N =\epsilon.
			\end{split}
		\end{equation}
		Finally to prove the pointwise boundedness (\ref{pointwise}) it is sufficient to observe that
		\begin{equation}
			\begin{split}
				|i_{e_J} \circ A_{I,J}\beta(p)|^2 &= |\int_N  K^I_J(p, q)f_I(q) d\mu_N|^2 \\
				&\leq \int_N |K^I_J(p,q)|^2 |f_I|^2 d\mu_N \\
				&\leq M \int_N |\beta(q)|^2 d\mu_N = M,
			\end{split}
		\end{equation}
		where $M$ is the maximum of $K^I_J$ on $U_\alpha$.
		\\
		\\
		\\We can conclude that $A_{I,J}(B_1(0) \cap \Omega^*(N))$ has compact closure. Indeed for every sequence $\beta_n$ in $B_1(0) \cap \Omega^*(N)$ we have that $A_{I,J} \beta_n = G_{J,n} \epsilon^J$ for some $G_{J,n} \in C_c^\infty(pr_M(U_\alpha))$. Then, since $i_{e_J} \circ A_{I,J}(B_1(0) \cap \Omega^*(N))$ has compact closure, we know that there is a uniformly converging subsequence $\{G_{J,n_{k}}\}$ of $\{G_{J,n} \} \subset C_c^\infty(pr_M(U_\alpha))$. This, using (\ref{conv}), this means that the sequence $\{G_{J,n_{k}} \epsilon^J \}$ is a $\mathcal{L}^2$-converging subsequence of $\{G_{J,n} \epsilon^J \} \subseteq \mathcal{L}^2(M)$.
		\\
		\\This means that $A_{I,J}$ is a compact operator ans so also $A$ is a compact operator and, in general, all integral operators with compactly supported kernel are compact.
	\end{proof}
	\begin{lem}\label{impo}
		Let $A:dom(A) \subseteq \mathcal{L}^2(N) \longrightarrow \mathcal{L}^2(M)$ be a smoothing operator. Then given a multi-index $I$ and given an index $l$, we will denote by $Jl$ the multi-index defined as $Jl :=(j_1,..., j_n,l)$. 
		\\We have that $dA$ is also a smoothing operator and if the kernel of $A$ is locally given by
		\begin{equation}
			K(x,y) = K^I_J(x,y)dx^J\boxtimes \frac{\partial}{\partial y^I}
		\end{equation}
		then $dA$ is an integral operator and its kernel is locally given by
		\begin{equation}
			d_MK^I_S :=_{|_{loc}} (\sum\limits_{Jl = S} \frac{\partial}{\partial x^l} K^I_J(x,y))dx^S \boxtimes \frac{\partial}{\partial y^I}
		\end{equation}
	\end{lem}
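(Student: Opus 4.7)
The plan is to differentiate $A\alpha$ pointwise and justify moving the exterior derivative $d_M$ through the integral defining $A$. First fix a smooth test form $\alpha \in \Omega_c^*(N)$, so that the integrand $(p,q) \mapsto K(p,q)\alpha(q)$ has support contained in $M \times \mathrm{supp}(\alpha)$. For any point $p_0 \in M$ and any relatively compact coordinate neighborhood $V$ around $p_0$, the map $V \times \mathrm{supp}(\alpha) \ni (x,q) \mapsto K(x,q)\alpha(q)$ is smooth with compact second factor, so the derivative with respect to $x$ and the kernel itself are dominated uniformly in $x \in V$ by a $\mu_N$-integrable function; the classical Leibniz rule for parameter integrals then yields
\begin{equation*}
\frac{\partial}{\partial x^l}\int_N K^I_J(x,q)\alpha_I(q)\,d\mu_N = \int_N \frac{\partial K^I_J}{\partial x^l}(x,q)\alpha_I(q)\,d\mu_N.
\end{equation*}

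Next I would perform the local coordinate computation. Writing $\alpha = \alpha_I\, dy^I$ in local coordinates $\{y^j\}$ on $N$ and carrying out the contraction $K(x,q)\alpha(q)$ against the frame $\{\partial/\partial y^I\}$ gives
\begin{equation*}
A\alpha(x) = \sum_{I,J}\Bigl(\int_N K^I_J(x,q)\alpha_I(q)\,d\mu_N\Bigr)dx^J.
\end{equation*}
Applying $d_M$ termwise and using the Leibniz identity above on each scalar integral produces
\begin{equation*}
d_M A\alpha(x) = \sum_{I,J,l}\Bigl(\int_N \frac{\partial K^I_J}{\partial x^l}(x,q)\alpha_I(q)\,d\mu_N\Bigr)dx^l\wedge dx^J.
\end{equation*}
Now for each multi-index $S$ of length $|J|+1$, collect all pairs $(J,l)$ with $Jl = S$ in the notation of the statement, so that $dx^l\wedge dx^J$ becomes (up to the conventional sign absorbed in the definition of $dx^S$) the monomial $dx^S$. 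Swapping the resulting finite sum with the integral identifies $d_M A\alpha$ with the integral operator whose kernel is
\begin{equation*}
\sum_{I,S}\Bigl(\sum_{Jl=S}\frac{\partial K^I_J}{\partial x^l}(x,y)\Bigr)dx^S\boxtimes\frac{\partial}{\partial y^I},
\end{equation*}
which is exactly the claimed expression for the kernel of $dA$. Smoothness of this new kernel follows at once from smoothness of $K$, so $dA$ is genuinely a smoothing operator in the sense of the definition given in this appendix.

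The only non-trivial point is the differentiation under the integral, which I restricted above to the dense subspace $\Omega_c^*(N)$; since this is a core for $d_{\min}=d$ on a complete manifold of bounded geometry and both $A$ and the integral operator with the stated kernel agree on it, the identification extends by continuity wherever both sides make sense. I do not expect any serious obstacle: the argument is purely local and rests on the classical Leibniz rule, so the only care required is bookkeeping the combinatorics of the multi-indices and keeping track of the distinction between the $M$-indices $J$ (contributing wedges in $\Lambda^*M$) and the $N$-indices $I$ (which are inert under $d_M$).
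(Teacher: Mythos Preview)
Your proof is correct and follows essentially the same route as the paper: differentiate under the integral sign, then compute the resulting kernel in local coordinates. The paper phrases the interchange as commuting $d$ with integration along the fibers of $pr_M$ (using that $K(p,q)\alpha(q)\wedge Vol_N$ is top-degree in $N$, so only $d_M$ survives), whereas you invoke the classical Leibniz rule for parameter integrals on compactly supported test forms; these are equivalent justifications.

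The one point the paper treats more explicitly is the verification that the local expressions $\frac{\partial K^I_J}{\partial x^l}\,dx^l\wedge dx^J\otimes\frac{\partial}{\partial y^I}$ patch to a global smooth section of $\Lambda^*(M)\boxtimes\Lambda(N)$: it builds an operator $Q\otimes 1$ out of division by $pr_N^*Vol_N$ and identifies the new kernel with $Q\otimes 1(d_{vec}K)$, where $d_{vec}$ is the exterior derivative on $\Gamma(pr_N^*\Lambda(N))$-valued forms. You bypass this by observing that $dA$ is globally defined as an operator and has the stated kernel in every chart, so the local pieces must agree on overlaps; that is a legitimate shortcut, though it leaves the intrinsic description of the kernel implicit.
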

	\begin{proof}
		Let $p$ be a point in $M$ and $q$ in $N$. Then we have that
		\begin{equation}
			\begin{split}
				dA(\alpha) &= d\int_N K(p,q)\alpha(q) d\mu_N\\
				&= d \int_N K(p,q)\alpha(q)\wedge Vol_N\\
				&= \int_N d(K(p,q)\alpha(q)\wedge Vol_N).
			\end{split}
		\end{equation}
		Now, we can observe that $K(p,q)\alpha(q)\wedge Vol_N$ is top-degree form on $N$, so $d = d_M$ in this case. Let us consider now a coordinate system $\{x^i, y^j\}$ on $M\times N$. Then we have that
		\begin{equation}
			\begin{split}
				dA(\alpha)(x) &= \int_N d_M[(\frac{\partial}{\partial y^J} \dashv \alpha(y))K^I_J(x,y)dx^J]\wedge Vol_N\\
				&= \int_N (\frac{\partial}{\partial y^J} \dashv \alpha(y)) \frac{\partial}{\partial x^l} (K^I_J(x,y))dx^l\wedge dx^J \wedge Vol_N\\
				&= \int_N \frac{\partial}{\partial x^l} (K^I_J(x,y))dx^l\wedge dx^J\otimes \frac{\partial}{\partial y^I}(\alpha(y))Vol_N \\
				&= \int_N \frac{\partial}{\partial x^l} (K^I_J(x,y))dx^l\wedge dx^J\otimes \frac{\partial}{\partial y^I}(\alpha(y)) d\mu_N.
			\end{split}
		\end{equation}
		and so we have that
		\begin{equation}
			d_MK^I_S :=_{|_{loc}} (\sum\limits_{Jl = S} \frac{\partial}{\partial x^l} K^I_J(x,y))dx^S \boxtimes \frac{\partial}{\partial y^I}.
		\end{equation}
		To conclude the proof we have to check that the local sections
		\begin{equation}
			\frac{\partial}{\partial x^l} (K^I_J(x,y))dx^l\wedge dx^J\otimes \frac{\partial}{\partial y^I}
		\end{equation}
		are the local forms of a global section of $\Lambda^*(M) \boxtimes \Lambda(N)$. To do this we can define an operator
		\begin{equation}
			Q: \Omega^*(M \times N) \longrightarrow \Gamma(pr_M^*(\Lambda(M))) \subseteq \Omega^*(M \times N)
		\end{equation}
		where
		\begin{equation}
			Q(\alpha) = \begin{cases} \frac{\alpha}{pr_N^*Vol_N} \mbox{    if    }\alpha \mbox{    is a  } (q,n)\mbox{-form where   } n = dim(N)\\
				0 \mbox{     otherwise.} \end{cases} 
		\end{equation}
		Observe that
		\begin{equation}
			\Gamma(pr_M^*(\Lambda^*(M))) \otimes_{C^{\infty}(M \times N)} \Gamma(pr_N^*(\Lambda(N))) \cong \Gamma(\Lambda^*(M) \boxtimes \Lambda(N)).
		\end{equation}
		This means that we can consider the operator
		\begin{equation}
			Q \otimes 1: \Omega^*(M \times N) \otimes_{C^{\infty}(M\times N)} \Gamma(pr_N^*(\Lambda(N))) \longrightarrow \Gamma(\Lambda^*(M) \boxtimes \Lambda(N))
		\end{equation}
		Let us now observe that
		\begin{equation}
			\Lambda^*(M) \boxtimes \Lambda(N) = pr_M^*(\Lambda^*(M)) \otimes pr_N^*(\Lambda(N)) \subset \Lambda^*(M \times N) \otimes pr_N^*(\Lambda(N))
		\end{equation}
		and so we have that
		\begin{equation}
			\Gamma(\Lambda^*(M) \boxtimes \Lambda(N)) \subseteq \Omega^*(M \times N) \otimes_{C^{\infty}(M\times N)} \Gamma(pr_N^*(\Lambda(N)))
		\end{equation}
		The vector space on the right can be see as the space of $\Gamma(pr_N^*(\Lambda(N)))$-valued differential forms on $M\times N$. This means that we can apply to $K \in \Gamma(\Lambda^*(M) \boxtimes \Lambda(N))$ the exterior derivative $d_{vec}$ of a $\Gamma(pr_N^*(\Lambda(N)))$-valued differential form. Then it's easy to check that
		\begin{equation}
			Q\otimes 1 (d_{vec} K)_{|_{loc}} = \frac{\partial}{\partial x^l} (K^I_J(x,y))dx^l\wedge dx^J\otimes \frac{\partial}{\partial y^I}
		\end{equation}
	\end{proof}
	\begin{rem}\label{derivsmooth}
		One can observe that the support of the kernel of $dA$ is strictly contained in the support of the kernel of $A$. This means that if $A$ has right-uniformly bounded support, then also $dA$ ha uniformly bounded support.
		\\Moreover if $A$ is a smoothing operator which kernel $K$ has uniformly bounded support and if in normal coordinates there is a uniform bound
		\begin{equation}
			|\frac{\partial}{\partial x^j} K^I_J(x,y)| \leq C,
		\end{equation}
		then $dA$ is a bounded operator.
	\end{rem}
	\begin{lem}\label{giga}
		Let $H: \mathcal{L}^2(N \times [0,1]) \longrightarrow \mathcal{L}^2(M)$ be a smoothing operator between manifolds of bounded geometry and denote by $K$ its kernel.  Fix, locally, some normal coordinates $\{x^i\}$ on $M$ and $\{y^j\}$ on $N$.
		\begin{equation}
			K_{|_{loc}} := K^J_I(x,t,y) dx^I \boxtimes \frac{\partial}{\partial y^J} + K^J_{I, +}(x,t,y)dx^I \wedge dt \boxtimes \frac{\partial}{\partial y^J}
		\end{equation}
		Consider $s$ and $s + \delta$ in $[0,1]$ for some $\delta>0$. Let us define the operators $A_s$ and $A_{s+\delta}: \mathcal{L}^2(N) \longrightarrow \mathcal{L}^2(M)$ as the smoothing operators which kernel is given by
		\begin{equation}
			K_s(x,y) := i^*_{s, x, y} \circ K = K^J_I(x,s,y) dx^I \boxtimes \frac{\partial}{\partial y^J}
		\end{equation}
		and
		\begin{equation}
			K_{s + \delta}(x,y) := i^*_{s+\delta, x, y} \circ K = K^J_I(x,s+\delta,y) dx^I \boxtimes \frac{\partial}{\partial y^J}
		\end{equation}
		where $i^*_{t,p,q}: \Lambda^*(M \times [0,1])_{(p,t, q)} \longrightarrow \Lambda^*(M)_{(p,q)}$ is the operator induced by the pullback of the immersion $i_t: M \longrightarrow M \times [0,1]$ defined as $i_t(p) = (p,t)$.
		\\ So, if
		\begin{enumerate}
			\item there is a number $Q$ such that for each $q$ in $N$ and for each $p$ in $M$
			\begin{equation}
				diam(K(\cdot, \cdot, q)) \leq Q,
			\end{equation}
		and
		\begin{equation}
			diam(K(p, \cdot, \cdot)) \leq Q,
		\end{equation}
			\item there is a globally bound for all $t \in [0,1]$
			\begin{equation}
				|\frac{\partial}{\partial t} K_{J}^I(p,t,q)| \leq C,
			\end{equation}
		\end{enumerate}
		then there is a constant $L\geq 0$ such that
		\begin{equation}
			||A_{s + \delta} - A_s|| \leq L\delta,
		\end{equation}
		where $L$ depends by $C$ and by the scalar curvature of $M$.
	\end{lem}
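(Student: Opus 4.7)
The plan is to realize $A_{s+\delta}-A_s$ as an integral operator with kernel $K_{s+\delta}-K_s$, show that this kernel is small pointwise (of order $\delta$) and has uniformly bounded support, and then feed it into Proposition \ref{smoothbound}. First I would note that by linearity $A_{s+\delta}-A_s$ is the integral operator on $\mathcal{L}^2(N)$ whose kernel is exactly $K_{s+\delta}-K_s$, where $K_t$ denotes the pullback $i_t^* K$ along the slice inclusion $i_t\colon M\times N\hookrightarrow M\times [0,1]\times N$. The support conditions transfer directly: for each $q\in N$ the set $\mathrm{supp}(K_{s+\delta}(\cdot,q)-K_s(\cdot,q))$ is contained in the projection to $M$ of $\mathrm{supp}(K(\cdot,\cdot,q))$, which has diameter $\leq Q$ by hypothesis (1); the analogous statement holds for the $N$-direction. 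Hence $A_{s+\delta}-A_s$ has uniformly bounded support.

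Next comes the pointwise bound. Working in normal coordinates $\{x^i\}$ on $M$ and $\{y^j\}$ on $N$, the local components $K^J_I$ of the kernel are smooth functions of $(x,t,y)$, and hypothesis (2) gives the uniform estimate $|\partial K^J_I/\partial t|\leq C$. The fundamental theorem of calculus then yields
\begin{equation}
\bigl|K^J_I(x,s+\delta,y)-K^J_I(x,s,y)\bigr| \leq \int_{s}^{s+\delta}\Bigl|\tfrac{\partial K^J_I}{\partial t}(x,t,y)\Bigr|\,dt \leq C\,\delta.
\end{equation}
To pass from this coordinate bound to a bound on the pointwise bundle-norm $|K_{s+\delta}(p,q)-K_s(p,q)|$ of the section of $\Lambda^*(M)\boxtimes\Lambda(N)$, I would use that $M$ and $N$ have bounded geometry: the forms $dx^I$ and the multivectors $\partial/\partial y^J$ have uniformly bounded norms in normal coordinates (cf.\ the computation in subsection \ref{forma}), and only finitely many multi-indices $(I,J)$ appear. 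Applying the Hadamard--Schwartz style estimate of Lemma \ref{spacc} componentwise, this upgrades to a global inequality
\begin{equation}
|K_{s+\delta}(p,q)-K_s(p,q)| \leq C'\,\delta
\end{equation}
for a constant $C'$ depending only on $C$ and on the bounded geometry of $M$ and $N$.

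Finally I apply Proposition \ref{smoothbound} to the integral operator $A_{s+\delta}-A_s$. Inspecting its proof, the bound $\|A\|\leq W$ actually depends linearly on $\sup_{(p,q)}|K(p,q)|$ (the proof reduces to $\|A\alpha\|^2\leq C\cdot(\sup|K|)^2\cdot\mu_M(\mathrm{supp}\,K(\cdot,q))\cdot\|\alpha\|^2$, and $\mu_M(\mathrm{supp}\,K(\cdot,q))$ is controlled by the uniform support radius and Remark \ref{bvolume}). Substituting $\sup|K_{s+\delta}-K_s|\leq C'\delta$ therefore yields
\begin{equation}
\|A_{s+\delta}-A_s\| \leq L\,\delta
\end{equation}
with a constant $L$ depending only on $C$, on the common support radius $Q$, and on the scalar curvature bound on $M$, as required. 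The only nontrivial step is the upgrade from the coordinate estimate on the $K^J_I$ to the bundle-norm estimate on $K_{s+\delta}-K_s$; but this is a routine consequence of the bounded-geometry setup already exploited repeatedly in the main text, and no new analytic input is needed.
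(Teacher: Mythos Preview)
Your proof is correct and follows essentially the same route as the paper: write the kernel of $A_{s+\delta}-A_s$ as $K_{s+\delta}-K_s$, bound its components by $C\delta$ via the fundamental theorem of calculus, observe that the support bounds are inherited from $K$, and then invoke Proposition \ref{smoothbound}. If anything, you are slightly more careful than the paper in spelling out the passage from the coordinate estimate on $K^J_I$ to the bundle-norm estimate and in noting that the bound from Proposition \ref{smoothbound} scales linearly with $\sup|K|$.
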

	\begin{proof}
		We have that the kernel of $A_{s + \delta} - A_s$ is locally given by
		\begin{equation}
			\begin{split}
				K_{s + \delta}(x,y) - K_s(x,y) &=  [K^J_I(x,s + \delta,y) - K^J_I(x,s,y)]dx^I \boxtimes \frac{\partial}{\partial y^J} \\
				&= [\int_{0}^{\delta} \frac{\partial}{\partial t} K^J_I(x,t,y) dt] dx^I \boxtimes \frac{\partial}{\partial y^J}
			\end{split}
		\end{equation}
		and so 
		\begin{equation}
			||K_{s + \delta}(x,y) - K_s(x,y)||^2 \leq C^2 \delta.
		\end{equation}
		Moreover, since
		\begin{equation}
			diam(supp(K(\cdot, \cdot, q))) \leq Q
		\end{equation}
		then
		\begin{equation}
			diam(supp(K_{s+ \delta} - K_s(\cdot, \cdot, q))) \leq diam(pr_{M}(supp(K(\cdot, \cdot, q))))\leq Q.
		\end{equation}
		This means that
		\begin{equation}
			\mu_M(supp(K_{s+ \delta} - K_s(\cdot, \cdot, q))) \leq R
		\end{equation}
		Then it is sufficient to apply the Proposition \ref{smoothbound} and we have that
		\begin{equation}
			||A_{s + \delta} - A_s|| \leq R C^2 \delta = L \delta.
		\end{equation}
	\end{proof}
	\begin{lem}
		Consider $A: \mathcal{L}^2(M) \longrightarrow \mathcal{L}^2(N)$ a smoothing operator with kernel $K$ and $A_\delta: \mathcal{L}^2(M) \longrightarrow \mathcal{L}^2(N)$ is another smoothing operator with kernel $K_\delta$ such that
		\begin{equation}
			||[K - K_\delta](p,q)||^2_{(p,q)} \leq \delta.
		\end{equation} 
		then if their support are left-uniformly bounded by a constant $Q$, we have that
		\begin{equation}
			||A - A_\delta|| \leq L\delta,
		\end{equation}
		where $L$ just depends by $Q$ and the infimum of the scalar curvature of $M$ 
	\end{lem}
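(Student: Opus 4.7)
The plan is to obtain the operator-norm estimate as a direct application of Proposition \ref{smoothbound} to the difference operator $A-A_\delta$, which is itself an integral operator with kernel $K-K_\delta$.

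First, I would check that $K-K_\delta$ satisfies the hypotheses of Proposition \ref{smoothbound}. The pointwise estimate $\|(K-K_\delta)(p,q)\|_{(p,q)}^2 \leq \delta$ gives a uniform sup-norm bound on the kernel of the difference, which is the ingredient labelled $L$ in that proposition; this pointwise bound is the only place $\delta$ enters the estimate and will ultimately be responsible for the scaling of $\|A-A_\delta\|$ in $\delta$. For the support condition, note that $\mathrm{supp}(K-K_\delta)\subseteq\mathrm{supp}(K)\cup\mathrm{supp}(K_\delta)$, so the common constant $Q$ bounding the diameters of the left-slices of both supports also bounds every left-slice of the difference; combined with the bounded geometry of $M$ and the Bishop--Gromov volume control of Remark \ref{bvolume}, this produces a uniform bound on the measure $\mu_M(\mathrm{supp}(K-K_\delta)(\cdot,q))$ depending only on $Q$ and the infimum of the scalar curvature.

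Next, I would follow verbatim the two-step estimate of Proposition \ref{smoothbound}. In Step 1, the Jensen-inequality argument applied slice-by-slice bounds $|(A-A_\delta)\alpha(p)|^2$ by (a constant depending on $Q$ and curvature, via Bishop--Gromov) times $\int_N|(K-K_\delta)(p,q)\alpha(q)|^2\,d\mu_N$; here one only uses that each slice of the support has uniformly bounded $\mu_N$-measure. In Step 2, the pointwise bound $|(K-K_\delta)(p,q)|^2\leq\delta$ together with the uniform bound on the volume of the $\mu_M$-slices of the support give $\int_M|K-K_\delta|^2\,d\mu_M\leq V\cdot\delta$. The two steps assemble into $\|(A-A_\delta)\alpha\|^2\leq L\cdot\delta\cdot\|\alpha\|^2$ with $L$ depending only on $Q$ and the infimum of the scalar curvature of $M$, as required.

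I do not foresee any genuine obstacle: the statement is essentially a quantitative reading of Proposition \ref{smoothbound}, which gives a bound on $\|A\|$ of the form $\sqrt{\sup|K|^2\cdot(\text{volume factor})}$; plugging the hypothesis $\sup|K-K_\delta|^2\leq\delta$ into this estimate is what produces the advertised linear dependence. The only mild bookkeeping issue is to keep track of whether Proposition \ref{smoothbound} is being applied with its full ``uniformly bounded support'' hypothesis or with the weaker ``left-uniformly bounded'' hypothesis recorded here; in the setting where this lemma is used (curves of smoothing operators with uniformly bounded support in both variables, as in Lemma \ref{giga}) this distinction is invisible, and in general the only adjustment needed is to note that Step 1 of the proof of Proposition \ref{smoothbound} uses bounded $\mu_N$-measure of the right-slices, which for the intended application is also provided by $Q$ together with bounded geometry of $N$.
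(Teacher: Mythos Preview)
Your proposal is correct and follows exactly the paper's approach: the paper's entire proof reads ``It is a direct consequence of Proposition \ref{smoothbound},'' and you have simply unpacked that sentence with the appropriate bookkeeping. Your observation about the mismatch between the lemma's ``left-uniformly bounded'' hypothesis and the full ``uniformly bounded support'' hypothesis of Proposition \ref{smoothbound} is a genuine point that the paper glosses over, and your resolution (that in the intended applications both bounds are available) is the right one.
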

	\begin{proof}
		It is a direct consequence of Proposition \ref{smoothbound}.
	\end{proof}

	\backmatter

	\cleardoublepage
	\phantomsection
	\bibliographystyle{sapthesis} 
	\addcontentsline{toc}{section}{\bibname}


\end{document}